\date{} 
\title{Imaginary Geometry I: Interacting $\SLE$s}
\author{Jason Miller and Scott Sheffield}
\newif\ifhyper\IfFileExists{hyperref.sty}{\hypertrue}{\hyperfalse}
\ifhyper\usepackage{hyperref}\fi
\newif\ifdraft
\def\note#1/{\ifdraft {\bf [#1]}\fi}
\long\def\comment#1{}
\numberwithin{equation}{section}
\numberwithin{figure}{section}
\newtheorem{theorem}{Theorem}
\numberwithin{theorem}{section}
\newtheorem{corollary}[theorem]{Corollary}
\newtheorem{lemma}[theorem]{Lemma}
\newtheorem{proposition}[theorem]{Proposition}
\theoremstyle{remark}\newtheorem{definition}[theorem]{Definition}
\theoremstyle{remark}\newtheorem{remark}[theorem]{Remark}
\newcommand{\C}{\mathbf{C}}
\newcommand{\D}{\mathbf{D}}
\newcommand{\E}{\mathbf{E}}
\newcommand{\N}{\mathbf{N}}
\newcommand{\Z}{\mathbf{Z}}
\newcommand{\p}{\mathbf{P}}
\newcommand{\Q}{\mathbf{Q}}
\newcommand{\R}{\mathbf{R}}
\newcommand{\s}{\mathbf{S}}
\newcommand{\h}{\mathbf{H}}
\newcommand{\Fg}{\mathfrak {g}}
\newcommand{\Fh}{\mathfrak {h}}
\newcommand{\CA}{\mathcal {A}}
\newcommand{\CB}{\mathcal {B}}
\newcommand{\CC}{\mathcal {C}}
\newcommand{\CD}{\mathcal {D}}
\newcommand{\CF}{\mathcal {F}}
\newcommand{\CJ}{\mathcal {J}}
\newcommand{\CK}{\mathcal {K}}
\newcommand{\CS}{\mathcal {S}}
\newcommand{\CU}{\mathcal {U}}
\newcommand{\CZ}{\mathcal {Z}}
\newcommand{\CG}{\mathcal {G}}
\newcommand{\dist}{{\rm dist}}
\newcommand{\diam}{{\rm diam}}
\newcommand{\var}{{\rm Var}}
\newcommand{\im}{{\rm Im}}
\newcommand{\re}{{\rm Re}}
\newcommand{\cov}{{\rm Cov}}
\newcommand{\supp}{{\rm supp }}
\newcommand{\SLE}{{\rm SLE}}
\newcommand{\CLE}{{\rm CLE}}
\newcommand{\BES}{{\rm BES}}
\newcommand{\strip}{\CS}
\newcommand{\striptop}{\partial_U \CS}
\newcommand{\stripbot}{\partial_L \CS}
\newcommand{\lightcone}{{\mathbf L}}
\newcommand{\fan}{{\mathbf F}}
\newcommand{\wh}{\widehat}
\newcommand{\wt}{\widetilde}
\newcommand{\ol}{\overline}
\newcommand{\ul}{\underline}
\newcommand{\one}{{\bf 1}}
\newcommand{\giv}{\,|\,}
\def\diam{\mathop{\mathrm{diam}}}
\def\dist{\mathop{\mathrm{dist}}}
\def\Im{{\rm Im}\,}
\newcommand{\Ito}{It\^o}
\def \E {{\bf E}}
\def\hcap{{\rm hcap}}
\def\@rst #1 #2other{#1}
\newcommand\MR[1]{\relax\ifhmode\unskip\spacefactor3000 \space\fi
  \MRhref{\expandafter\@rst #1 other}{#1}}
\newcommand{\MRhref}[2]{\href{http://www.ams.org/mathscinet-getitem?mr=#1}{MR#2}}
\begin{document}
\maketitle

\begin{abstract}
Fix constants $\chi >0$ and $\theta \in [0,2\pi)$, and let $h$ be an instance of the Gaussian free field on a planar domain.  We study flow lines of the vector field $e^{i(h/\chi+\theta)}$ starting at a fixed boundary point of the domain.  Letting $\theta$ vary, one obtains a family of curves that look locally like $\SLE_\kappa$ processes with $\kappa \in (0,4)$ (where $\chi = \tfrac{2}{\sqrt{\kappa}} -\tfrac{ \sqrt{\kappa}}{2}$), which we interpret as the rays of a random geometry with purely imaginary curvature.

We extend the fundamental existence and uniqueness results about these paths to the case that the paths intersect the boundary.  We also show that flow lines of different angles cross each other at most once but (in contrast to what happens when $h$ is smooth) may bounce off of each other after crossing.  Flow lines of the same angle started at different points merge into each other upon intersecting, forming a tree structure.  We construct so-called {\em counterflow lines} ($\SLE_{16/\kappa}$) within the same geometry using ordered ``light cones'' of points accessible by angle-restricted trajectories and develop a robust theory of flow and counterflow line interaction.

The theory leads to new results about $\SLE$.  For example, we prove that $\SLE_\kappa(\rho)$ processes are almost surely continuous random curves, even when they intersect the boundary, and establish Duplantier duality for general $\SLE_{16/\kappa}(\rho)$ processes.
\end{abstract}

\newpage
{\setlength{\parskip}{0.05cm plus1mm minus1mm}
\tableofcontents}
\newpage

\medbreak {\noindent\bf Acknowledgments.}  We thank Bertrand Duplantier, Steffen Rohde, Oded Schramm, Wendelin Werner, and David Wilson for helpful discussions.  We also thank Nike Sun, Sam Watson, and several anonymous referees for very helpful comments which have led to many significant improvements.

\section{Introduction}
\label{sec::intro}

All readers are familiar with two dimensional Riemannian geometries whose Gaussian curvature is purely positive (the sphere), purely negative (hyperbolic space), or zero (the plane).  In this paper, we study ``geometries'' whose Gaussian curvature is purely {\em imaginary}.  We call them {\em imaginary geometries}.

Imaginary geometries have zero real curvature, which means (informally) that when a small bug slides without twisting around a closed loop, the bug's angle of rotation is unchanged.  However, the bug's {\em size} may change (an {\em Alice in Wonderland} phenomenon that further justifies the term ``imaginary'').\footnote{In both real and imaginary geometries, parallel transport about a simple loop multiplies a $\C$-identified tangent space by $e^{i C}$ where $C$ is the integral of the enclosed curvature; these transformations are rotations when $C$ is real, dilations when $C$ is imaginary.}  ``Straight lines'' and ``angles'' are well-defined in imaginary geometry, and the angles of a triangle always sum to $\pi$, but ``distance'' is not defined.

A simply connected imaginary geometry can be described by a simply connected subdomain $D$ of the complex plane $\C$ and a function $h\colon D \to \R$.\footnote{In the language of differential geometry, an {\em imaginary geometry} is a two dimensional manifold endowed with a torsion-free affine connection whose holonomy group consists entirely of dilations (c.f.\ ordinary Riemannian surfaces, whose Levi-Civita holonomy groups consist entirely of rotations), and straight lines are geodesic flows of the connection.  The connection endows the manifold with a conformal structure, and by the uniformization theorem one can conformally map the geometry to a planar domain on which the geodesics are determined by some function $h$ in the manner described here \cite{SHE_WELD}.}  The angle-$\theta$ ray beginning at a point $z \in D$ is the flow line of $e^{i(h + \theta)}$ beginning at $z$, i.e., the solution to the ODE
\begin{equation}
\eta'(t) = e^{i \left(h(\eta(t))+\theta\right)} \quad\text{for}\quad t > 0, \quad \eta(0) = z
\end{equation}
as in Figure~\ref{fig::flow_lines_example}.\footnote{Imaginary geometries have also been called ``altimeter-compass'' geometries \cite{She_SLE_lectures}. If the graph of $h$ is viewed as a mountainous terrain, then a hiker holding an analog altimeter---with a needle indicating altitude modulo $2\pi$---in one hand and a compass in the other can trace a ray by walking at constant speed (continuously changing direction as necessary) in such a way that the two needles always point in the same direction.}  In this paper we concern ourselves only with these rays, which we view as a simple and complete description of the imaginary geometry.\footnote{This description is canonical up to conformal coordinate change, see Figure~\ref{fig::coordinatechange}.}
Our goal is to make sense of and study the properties of these flow lines when $h$ is a constant multiple of a random {\em generalized} function called the Gaussian free field.

\subsection{Overview} 
\label{ss::overview}

Given an instance $h$ of the Gaussian free field (GFF), constants $\chi>0$ and $\theta \in [0,2\pi)$, and an initial point $z$, is there always a canonical way to define the flow lines of the complex vector field $e^{i(h/\chi + \theta)}$, i.e., solutions to the ODE
\begin{equation}
\label{eqn::flowode}
\eta'(t) = e^{i(h(\eta(t))/\chi+\theta)} \quad\text{for}\quad t > 0,
\end{equation}
beginning at $z$?  The answer would obviously be yes if $h$ were a smooth function (Figure~\ref{fig::flow_lines_example}), but it is less obvious for an instance of the GFF, which is a distribution (a.k.a.\ a generalized function), not a function (Figures~\ref{fig::flowlines}--\ref{fig::flowlines4}).

\captionsetup{labelsep=none}
\begin{figure}[ht!]
\begin{center}
\subfigure[The vector field $e^{ih(z)}$ where $h(z) = |z|^2$, together with a flow line started at zero.]{
\includegraphics[width=0.46\textwidth]{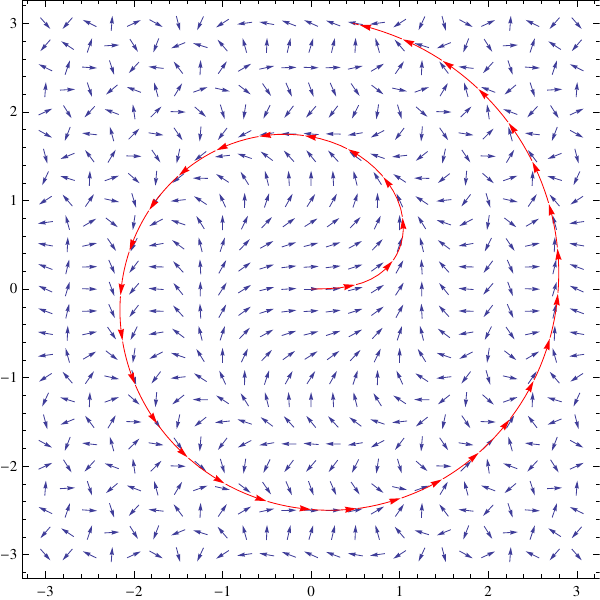}}
\hspace{0.04\textwidth}
\subfigure[Flow lines of $e^{i(h(z)+\theta)}$ for $12$ uniformly spaced $\theta$ values.]{
\includegraphics[width=0.46\textwidth]{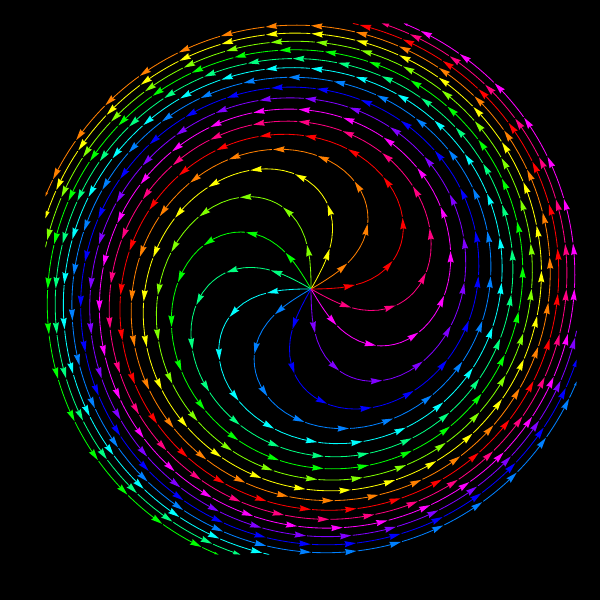}}
\caption{\label{fig::flow_lines_example}}
\end{center}
\end{figure}
\captionsetup{labelsep=colon}

Several works in recent years have addressed special cases and variants of this question \cite{She_SLE_lectures, DUB_PART, MakarovSmirnov09, SchrammShe10, HagendorfBauerBernard10, IzyurovKytola10, SHE_WELD} and have shown that in certain circumstances there is a sense in which the paths are well-defined (and uniquely determined) by $h$, and are variants of the Schramm-Loewner evolution (SLE).  In this article, we will focus on the case that $z$ is point on the boundary of the domain where $h$ is defined and establish a more general set of results.  (Flow lines beginning at interior points will be addressed in a subsequent paper.)  In particular, we show that the paths exist and are determined by $h$ even in settings where they hit and bounce off of the boundary, and we will also describe the interaction of multiple flow lines that hit the boundary and cross or bounce off each other.  These topics have never been previously addressed.  Ultimately, our goal is to establish a robust theory of the imaginary geometry of the GFF, with a complete description of all the rays and the way they interact with each other.  This will have a range of applications in $\SLE$ theory: in particular, this paper will establish continuity results for $\SLE_\kappa(\ul{\rho})$ curves and generalizations of so-called Duplantier duality (i.e., descriptions of the boundaries of $\SLE_{16/\kappa}(\ul{\rho})$ curves), along with a ``light cone'' interpretation of $\SLE_{16/\kappa}(\ul{\rho})$ that allows these curves to be constructed and decomposed in surprising ways.

This paper is the first in a four-paper series that also includes \cite{MS_IMAG2, MS_IMAG3,MS_INTERIOR}.  Among other things, the later papers will use the theory established here to produce descriptions of the time-reversals of $\SLE_\kappa(\ul{\rho})$ for all values of $\kappa$, a complete construction of trees of flow lines started from {\em interior} points, the first proof that conformal loop ensembles $\CLE_{\kappa'}$ are canonically defined when $\kappa' \in (4,8)$, and a geometric interpretation of these loop ensembles.  In subsequent works, we expect these results to be useful to the theory of Liouville quantum gravity, allowing one to generalize the results about ``conformal weldings'' of random surfaces that appear \cite{SHE_WELD}, and to complete the program outlined in \cite{2011arXiv1108.2241S} for showing that discrete loop-decorated random surfaces have $\CLE$-decorated Liouville quantum gravity as a scaling limit, at least in a certain topology.  We will find that many basic $\SLE$ and $\CLE$ properties can be established more easily and in more generality using the theory developed here.

\begin{figure}[ht!]
\begin{center}
\includegraphics[width=0.95\textwidth, height=0.95\textwidth ,clip=true, trim = 1mm 1mm 1mm 1mm]{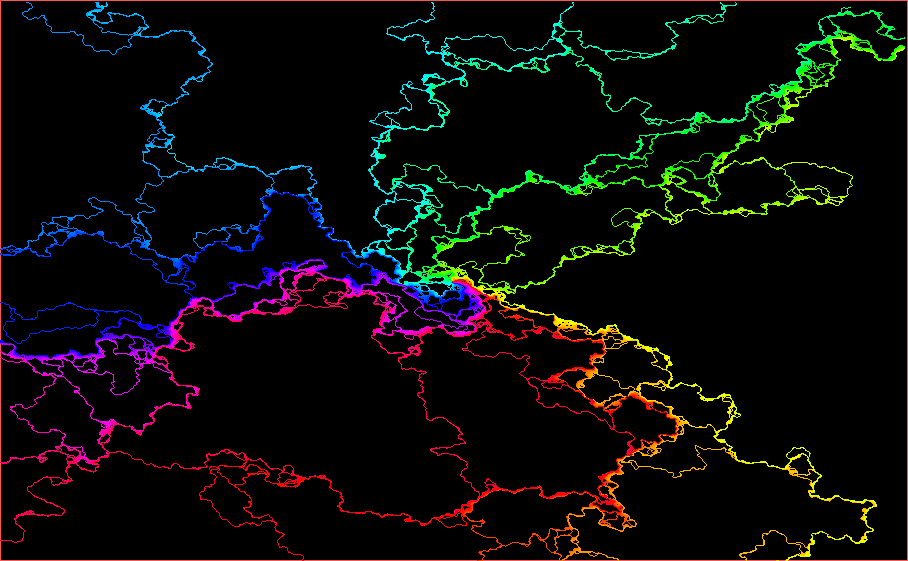}
\end{center}
\caption{ \label{fig::flowlines} Numerically generated flow lines, started at a common point, of $e^{i(h/\chi+\theta)}$ where $h$ is the projection of a GFF onto the space of functions piecewise linear on the triangles of a $300 \times 300$ grid; $\kappa=4/3$ and $\chi = 2/\sqrt{\kappa} - \sqrt{\kappa}/2 = \sqrt{4/3}$.  Different colors indicate different values of $\theta \in [0,2\pi)$.  We expect but do not prove that if one considers increasingly fine meshes (and the same instance of the GFF) the corresponding paths converge to limiting continuous paths (an analogous result was proven for $\kappa = 4$ \cite{SS09,SchrammShe10}).}
\end{figure}

\begin{figure}[ht!]
\begin{center}
\includegraphics[width=\textwidth ,height=0.8\textwidth,clip=true, trim = 1mm 1mm 1mm 1mm]{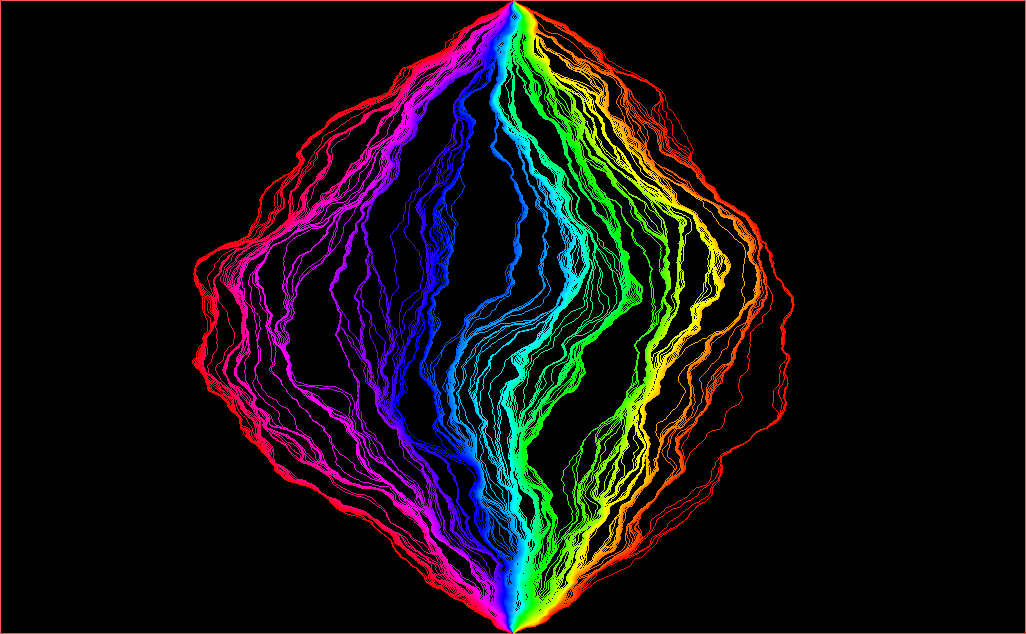}
\end{center}
\caption{ \label{fig::flowlines2} Numerically generated flow lines, started at $-i$ of $e^{i(h/\chi+\theta)}$ where $h$ is the projection of a GFF on $[-1,1]^2$ onto the space of functions piecewise linear on the triangles of a $300 \times 300$ grid; $\kappa=1/8$.  Different colors indicate different values of $\theta \in [-\tfrac{\pi}{2},\tfrac{\pi}{2}]$.  The boundary data for $h$ is chosen so that the central (``north-going'') curve shown should approximate an $\SLE_{1/8}$ process.}
\end{figure}

\begin{figure}[h!]
\begin{center}
\includegraphics[width=\textwidth ,height=0.8\textwidth,clip=true, trim = 1mm 1mm 1mm 1mm]{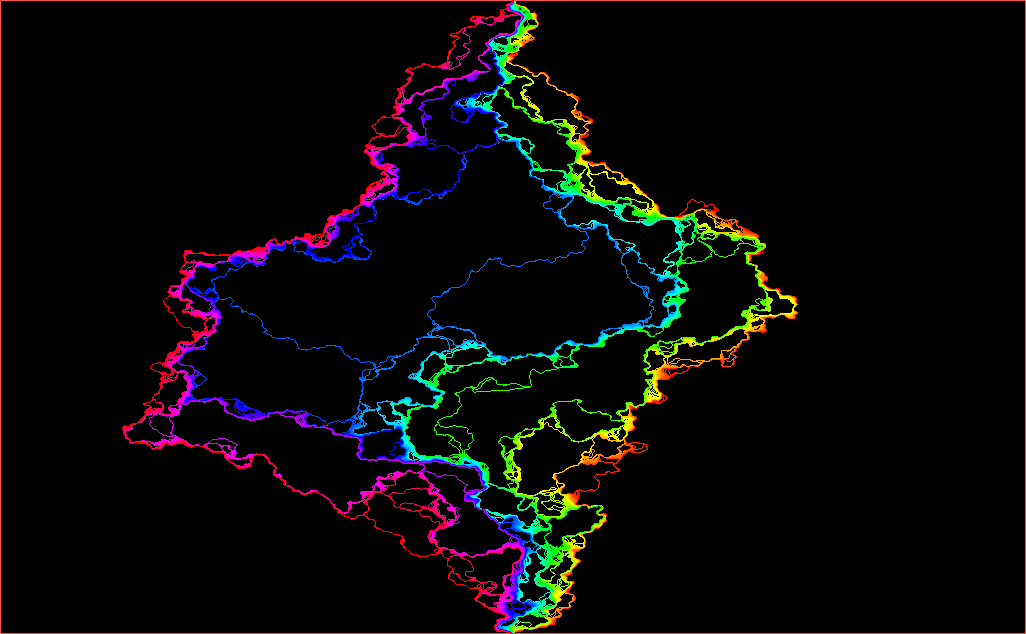}
\end{center}
\caption{\label{fig::flowlines3} Numerically generated flow lines, started at $-i$ of $e^{i(h/\chi+\theta)}$ where $h$ is the projection of a GFF on $[-1,1]^2$ onto the space of functions piecewise linear on the triangles of a $300 \times 300$ grid; $\kappa=1$.  Different colors indicate different values of $\theta \in [-\tfrac{\pi}{2},\tfrac{\pi}{2}]$.  The boundary data for $h$ is chosen so that the central (``north-going'') curve shown should approximate an $\SLE_1$ process.}
\end{figure}

\begin{figure}[h!]
\begin{center}
\includegraphics[width=\textwidth ,height=0.8\textwidth,clip=true, trim = 1mm 1mm 1mm 1mm]{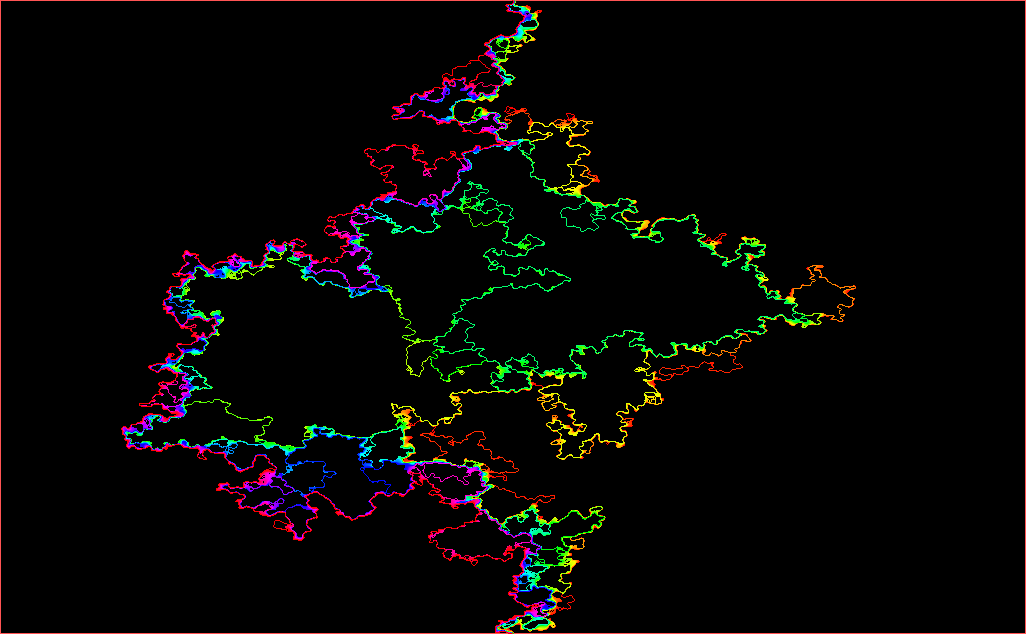}
\end{center}
\caption{\label{fig::flowlines4} Numerically generated flow lines, started at $-i$ of $e^{i(h/\chi+\theta)}$ where $h$ is the projection of a GFF on $[-1,1]^2$ onto the space of functions piecewise linear on the triangles of a $300 \times 300$ grid; $\kappa=2$.  Different colors indicate different values of $\theta \in [-\tfrac{\pi}{2},\tfrac{\pi}{2}]$.  The boundary data for $h$ is chosen so that the central (``north-going'') curve shown should approximate an $\SLE_2$ process.}
\end{figure}

We will fix $\chi > 0$ and interpret the paths corresponding to different $\theta$ values as ``rays of a random geometry'' angled in different directions and show that different paths started at a common point never cross one another.  Note that these are the rays of ordinary Euclidean geometry when $h$ is a constant.

Theorem~\ref{thm::coupling_existence} and Theorem~\ref{thm::coupling_uniqueness} establish the fact that the flow lines are well-defined and uniquely determined by $h$ almost surely.  Theorem~\ref{thm::coupling_existence} is the same as a theorem proved in \cite{DUB_PART}.  For convenience, we have restated it here and provided a proof in Section~\ref{subsec::existence_proof}.  (As stated in \cite{DUB_PART}, the theorem was conditional on the existence of solutions to a certain SDE, but we will prove this existence in Section~\ref{sec::sle}.)  This theorem establishes the existence of a coupling between $h$ and the path with certain properties.  Theorem~\ref{thm::coupling_uniqueness} then shows that in this coupling, the path is almost surely determined by the field.  Theorem~\ref{thm::coupling_uniqueness} is an extension of a result in \cite{DUB_PART}.  Unlike the result in \cite{DUB_PART}, our Theorem~\ref{thm::coupling_uniqueness} applies to paths that interact with the domain boundaries in non-trivial ways, and this requires new tools.

The boundary-intersecting case of Theorem~\ref{thm::coupling_uniqueness} and other ideas will then be used to describe the way that distinct flow lines interact with one another when they intersect (see Figure~\ref{fig::flow_line_interaction}).  We show that the flow lines started at the same point, corresponding to different $\theta$ values, may bounce off one another (depending on the angle difference) but almost surely do not cross one another (see Proposition~\ref{prop::angle_varying_monotonicity}), that flow lines started at distinct points with the same angle can ``merge'' with each other, and that flow lines started at distinct points with distinct angles almost surely cross at most once.  We give a complete description of the {\em conditional} law of $h$ given a finite collection of (possibly intersecting) flow lines.  (The conditional law of $h$ given multiple flow line segments is discussed in \cite{DUB_PART}, but the results there only apply to {\em non-intersecting} segments.  Extending these results requires, among other things, ruling out pathological behavior of the conditional expectation of the field --- given the paths --- near points where the paths intersect.)  These are some of the fundamental results one needs to begin to understand (continuum analogs of) Figures~\ref{fig::flowlines}--\ref{fig::flowlines4},~\ref{fig::grid}, and~\ref{fig::two_fans}.

As mentioned above, we also establish some new results in classical SLE theory.  For example, the flow line technology enables us to show in Theorem~\ref{thm::continuity} that the so-called $\SLE_\kappa(\ul{\rho})$ curves are a.s.\ continuous even when they hit the boundary.  Rohde and Schramm proved that ordinary $\SLE_\kappa$ on a Jordan domain is continuous when $\kappa \not = 8$ \cite{RS05}; the continuity of $\SLE_8$ was proved by Lawler, Schramm, and Werner in \cite{LSW04}  (extensions to more general domains are proved in \cite{2008arXiv0810.4327G}) but their techniques do not readily apply to boundary intersecting $\SLE_\kappa(\ul{\rho})$, and the lack of a proof for $\SLE_\kappa(\ul{\rho})$ has been a persistent gap in the literature.  Another approach to proving Theorem~\ref{thm::continuity} in the case of a single force point, based on extremal length arguments, has been proposed (though not yet published) by Kemppainen, Schramm, and Sheffield \cite{KSS}.

The random geometry point of view also gives us a new way of understanding other random objects with conformal symmetries.  For example, we will use the flow-line geometry to construct so-called {\em counterflow lines}, which are forms of $\SLE_{16/\kappa}$ ($\kappa \in (0,4)$) that arise as the ``light cones'' of points accessible by certain angle-restricted $\SLE_{\kappa}$ trajectories.  To use another metaphor, we say that a point $y$ is ``downstream'' from another point $x$ if it can be reached from $x$ by an angle-varying flow line whose angles lie in some allowed range; the counterflow line is a curve that traces through all the points that are downstream from a given boundary point $x$, but it traces them in an ``upstream'' (or ``counterflow'') direction.   This is the content of Theorem~\ref{thm::lightconeroughstatement}, which is stated somewhat informally.  (A more precise statement of Theorem~\ref{thm::lightconeroughstatement}, which applies to $\SLE_{16/\kappa}(\ul{\rho})$ processes that are not boundary intersecting, appears in Proposition~\ref{prop::light_cone_construction}; the general version is explained precisely in Section~\ref{subsubsec::light_cone_general}.)  In contrast to what happens when $h$ is smooth, the light cones thus constructed are not simply connected sets when $\kappa \in(2,4)$.  It also turns out that one can reach all points in the light cone by considering paths that alternate between the two extreme angles.  See Figures~\ref{fig::sle6_lightcone}--\ref{fig::sle6_decomposition} for discrete simulations of light cones generated in this manner (the two extreme angles differ by $\pi$; see also Figure~\ref{fig::pi_turn} for an explanation of the fact that a path with angle changes of size $\pi$ does not just retrace itself).

We will also show in Proposition~\ref{prop::fan_does_not_hit} that, for any $\kappa \in (0,4)$, the closure of the union of all the flow lines starting at a given point $z$ with angles in a countable, dense set (as depicted in Figures~{\ref{fig::flowlines}--\ref{fig::flowlines4}}) almost surely has Lebesgue measure zero.  (It is easy to see that the resulting object does not depend on the choice of countable, dense set.)  Put somewhat fancifully, this states that when a person holds a gun at a point $z$ in the imaginary geometry, there are certain other points (in fact, almost all points) that the gun cannot hit no matter how carefully it is aimed.  (One might guess this to be the case from the amount of black space in Figures~\ref{fig::flowlines}--\ref{fig::flowlines4},~\ref{fig::sle64_fan}.) Generally, random imaginary geometry yields many natural ways of coupling and understanding multiple SLEs on the same domain, as well as SLE variants on non-simply-connected domains.

The flow lines constructed here also turn out to be relevant to the study of Liouville quantum gravity.  For example, we plan to show in a subsequent joint work with Duplantier that the rays in Figures~\ref{fig::flowlines}--\ref{fig::flowlines4} arise when gluing together independent Liouville quantum gravity surfaces via the conformal welding procedure presented in  \cite{SHE_WELD}.  The tools developed here are essential for that program.

\subsection{Background and setting}

Let $D \subseteq \C$ be a domain with harmonically non-trivial boundary (i.e., a Brownian motion started at a point $z \in D$ almost surely hits $\partial D$) and let $C_0^\infty(D)$ denote the space of compactly supported $C^\infty$ functions on $D$.  For $f,g \in C_0^\infty(D)$, let
\[ (f,g)_\nabla := \frac{1}{2\pi} \int_D \nabla f (x) \cdot \nabla g(x)dx\]
denote the Dirichlet inner product of $f$ and $g$ where $dx$ is the Lebesgue measure on~$D$.  Let $H(D)$ be the Hilbert space closure of $C_0^\infty(D)$ under $(\cdot,\cdot)_\nabla$.  The continuum Gaussian free field $h$ (with zero boundary conditions) is the so-called standard Gaussian on $H(D)$.  It is given formally as a random linear combination
\begin{equation}
\label{eqn::gff_definition}
 h = \sum_n \alpha_n \phi_n,
\end{equation}
where $(\alpha_n)$ are i.i.d.\ $N(0,1)$ and $(\phi_n)$ is an orthonormal basis of $H(D)$.  (We will give a more formal introduction to the GFF in Section~\ref{sec::gff}.)

The GFF is a two-dimensional-time analog of Brownian motion.  Just as many random walk models have Brownian motion as a scaling limit, many random (real or integer valued) functions on two dimensional lattices have the GFF as a scaling limit \cite{BAD96, NS97,  KEN01, RV08, MillerGLCLT}.

The GFF can be used to generate various kinds of random geometric structures, including both Liouville quantum gravity and the imaginary geometry discussed here \cite{SHE_WELD}.  Roughly speaking, the former corresponds to replacing a Euclidean metric $dx^2 + dy^2$ with $e^{\gamma h}(dx^2 + dy^2)$ (where $\gamma \in (0,2)$ is a fixed constant and $h$ is the Gaussian free field).  The latter is closely related, and corresponds to considering $e^{ih/\chi}$, for a fixed constant $\chi >0$.  Informally, as discussed above, the ``rays'' of the imaginary geometry are flow lines of the complex vector field $e^{i(h/\chi+\theta)}$, i.e., solutions to the ODE~\eqref{eqn::flowode},
for given values of $\eta(0)$ and $\theta$.

A brief overview of imaginary geometry (as defined for general functions $h$) appears in \cite{SHE_WELD}, where the rays are interpreted as geodesics of a variant of the Levi-Civita connection associated with Liouville quantum gravity.  One can interpret the $e^{ih}$ direction as ``north'' and the $e^{i(h + \pi/2)}$ direction as ``west'', etc.  Then $h$ determines a way of assigning a set of compass directions to every point in the domain, and a ray is determined by an initial point and a direction.  (We have not described a Riemannian geometry, since we have not introduced a notion of length or area.)  When $h$ is constant, the rays correspond to rays in ordinary Euclidean geometry.  For more general continuous~$h$, one can still show that when three rays form a triangle, the sum of the angles is always $\pi$ \cite{SHE_WELD}.

Throughout the rest of this article, when we say that $\eta$ is a flow line of $h$ it is to be interpreted that $\eta$ is a flow line of the vector field $e^{ih/\chi}$; both $h$ and $\chi$ will be clear from the context.  In particular, the statement that $\eta$ is a flow line of $h$ with angle $\theta$ is equivalent to the statement that $\eta$ is a flow line of $h+\theta \chi$.

We next remark that if $h$ is a smooth function on $D$, $\eta$ a flow line of $e^{ih/\chi}$, and $\psi \colon \wt D \to D$ a conformal transformation, then by the chain rule, $\psi^{-1} \circ \eta$ is a flow line of $h \circ \psi - \chi \arg \psi'$ (note that a reparameterization of a flow line remains a flow line), as in Figure~\ref{fig::coordinatechange}. With this in mind, we define an {\bf imaginary surface} to be an equivalence class of pairs $(D,h)$ under the equivalence relation
\begin{equation}
\label{eqn::ac_eq_rel}
 (D,h) \rightarrow (\psi^{-1}(D), h \circ \psi - \chi \arg \psi') = (\wt{D},\wt{h}).
\end{equation}
Note that this makes sense even for $h$ which are not necessarily smooth.  We interpret~$\psi$ as a (conformal) {\em coordinate change} of the imaginary surface.  In what follows, we will generally take $D$ to be the upper half plane, but one can map the flow lines defined there to other domains using~\eqref{eqn::ac_eq_rel}.

\begin{figure}[h]
\begin{center}
\includegraphics[scale=0.85]{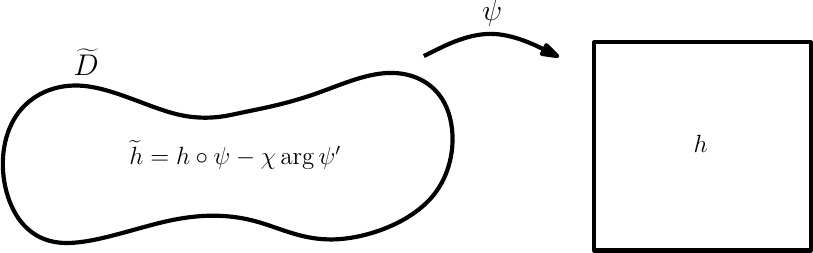}
\caption{\label{fig::coordinatechange} The set of flow lines in $\wt{D}$ will be the pullback via a conformal map $\psi$ of the set of flow lines in $D$ provided $h$ is transformed to a new function $\wt{h}$ in the manner shown.}
\end{center}
\end{figure}
When $h$ is an instance of the GFF on a planar domain, the ODE~\eqref{eqn::flowode} is not well-defined, since $h$ is a distribution-valued random variable and not a continuous function.  One could try to approximate one of these rays by replacing the $h$ in~\eqref{eqn::flowode} by its projection onto a space of continuous functions --- for example, the space of functions that are piecewise linear on the triangles of some very fine lattice.  This approach (and a range of~$\theta$ values) was used to generate the rays in Figures~\ref{fig::flowlines}--\ref{fig::flowlines4},~\ref{fig::grid},~\ref{fig::two_fans},~\ref{fig::sle6_lightcone}-\ref{fig::sle6_decomposition}, and~\ref{fig::flow_line_interaction}.  We expect that these rays will converge to limiting path-valued functions of $h$ as the mesh size gets finer.  This has not been proved, but an analogous result has been shown for level sets of $h$ \cite{SS09,SchrammShe10}.

\begin{figure}[h!]
\begin{center}
\includegraphics[width=\textwidth ,height=0.8\textwidth,clip=true, trim = 1mm 1mm 1mm 1mm]{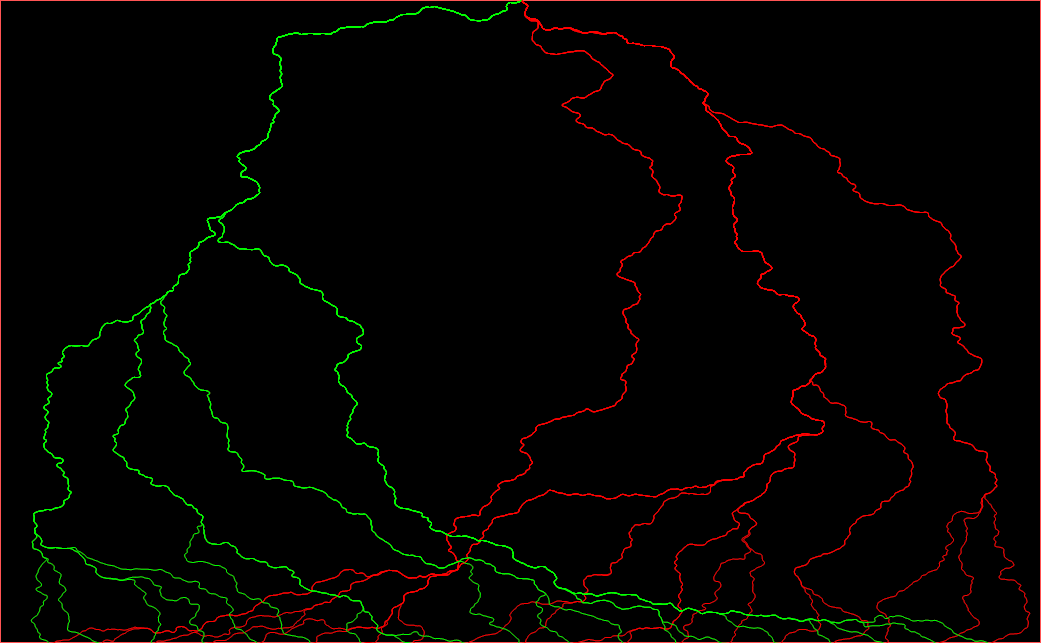}
\end{center}
\caption{ \label{fig::grid} Numerically generated flow lines, started at evenly spaced points on $[-1-i,1-i]$ of $e^{i h/\chi}$ where $h$ is the projection of a GFF on $[-1,1]^2$ onto the space of functions piecewise linear on the triangles of a $300 \times 300$ grid; $\kappa=1/2$.  The angle of the green lines is $\tfrac{\pi}{4}$ and the angle of the red lines is $-\tfrac{\pi}{4}$.  Flow lines of the same color appear to merge, but the red and green lines always cross at right angles.  The boundary data of $h$ was given by taking $0$ boundary conditions on $\h$ and then applying the transformation rule~\eqref{eqn::ac_eq_rel} with a conformal map $\psi \colon \h \to [-1,1]^2$ where $\psi(0) = -i$ and $\psi(\infty) = i$.}
\end{figure}

\begin{figure}[h!]
\begin{center}
\includegraphics[width=\textwidth ,height=0.8\textwidth,clip=true, trim = 1mm 1mm 1mm 1mm]{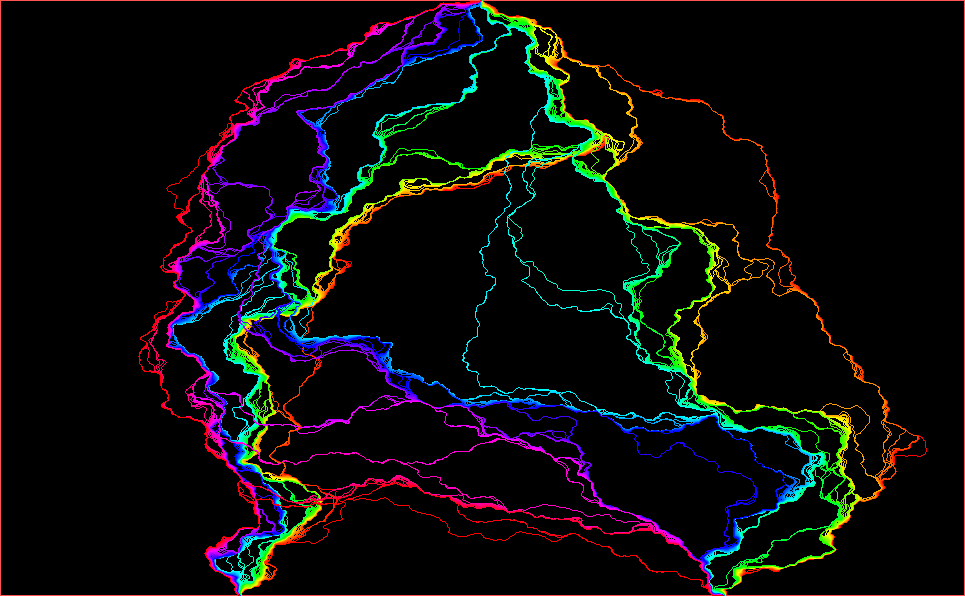}
\end{center}
\caption{ \label{fig::two_fans} Numerically generated flow lines, started at $-1/2-i$ and $1/2-i$ of $e^{i(h/\chi+\theta)}$ with angles evenly spaced in $[-\tfrac{\pi}{4},\tfrac{\pi}{4}]$ where $h$ is the projection of a GFF on $[-1,1]^2$ onto the space of functions piecewise linear on the triangles of a $300 \times 300$ grid; $\kappa=1/2$.  Flow lines of different colors appear to cross at most once and flow lines of the same color appear to merge.  The boundary data for $h$ is the same as in Figure~\ref{fig::grid}.}
\end{figure}

As we discussed briefly in Section~\ref{ss::overview}, it turns out that it is possible to make sense of these flow lines and level sets directly in the continuum, without the discretizations mentioned above.  The construction is rather interesting.  One begins by constructing explicit couplings of $h$ with variants of the Schramm-Loewner evolution and showing that these couplings have certain properties.  Namely, if one conditions on part of the curve, then the conditional law of $h$ is that of a GFF in the complement of the curve with certain boundary conditions.  Examples of these couplings appear in \cite{She_SLE_lectures, DUB_PART, SchrammShe10, SHE_WELD} as well as variants in \cite{MakarovSmirnov09,HagendorfBauerBernard10,IzyurovKytola10}.  This step is carried out in some generality in \cite{DUB_PART, SHE_WELD}.  A second step (implemented only for some particular boundary value choices in \cite{DUB_PART} and \cite{SchrammShe10}) is to show that in such a coupling, the path is actually completely {\em determined} by $h$, and thus can be interpreted as a path-valued function of $h$.

Before we describe the rigorous construction of the flow lines of $e^{i(h/\chi+\theta)}$, let us offer some geometric intuition.  Suppose that $h$ is a continuous function and consider a flow line of the complex vector field $e^{ih/\chi}$ in $\h$ beginning at $0$.  That is, $\eta \colon [0,\infty) \to \h$ is a solution to the ODE
\begin{equation}
\label{eqn::flowode_ic}
\eta'(t) = e^{ih(\eta(t))/\chi} \quad\text{for}\quad t > 0,\ \ \ \eta(0) = 0.
\end{equation}
Note that $\| \eta'(t) \| = 1$.  Thus, the time derivative $\eta'(t)$ moves continuously around the unit circle $\s^1$ and $\big(h(\eta(t)) - h(\eta(0))\big) / \chi$ describes the net amount of \emph{winding} of $\eta'$ around $\s^1$ between times $0$ and $t$.  Let $g_t$ be the Loewner map of $\eta$.  That is, for each $t$, $g_t$ is the unique conformal transformation of the unbounded connected component of $\h \setminus \eta([0,t])$ to $\h$ that looks like the identity at infinity: $\lim_{z \to \infty} |g_t(z)-z| = 0$.  Loewner's theorem says that $g_t$ is a solution to the equation
\begin{equation} \label{eqn::loewner_ode}
 \partial_t g_t(z) = \frac{2}{g_t(z) - W_t},\ \ \ g_0(z) = z,
\end{equation}
where $W_t = g_t(\eta(t))$, provided $\eta$ is parameterized appropriately.  It will be convenient for us to consider the centered Loewner flow $f_t = g_t - W_t$ of $\eta$ in place of $g_t$.  The reason for this particular choice is that $f_t$ maps the tip of $\eta|_{[0,t]}$ to $0$.  Note that
\begin{equation}
\label{eqn::loewner_ode_centered}
  d f_t(z) = \frac{2}{f_t(z)} dt - dW_t.
\end{equation}
We may assume that $\eta$ starts out in the vertical direction, so that the winding number is approximately $\pi/2$ as $t \downarrow 0$.  We claim that the statement that $\eta|_{[0,t]}$ is a flow line of $e^{i(h/\chi + \pi/2)}$ is equivalent to the statement that for each $x$ on $\eta((0,t))$, we have
\begin{equation}
\label{eqn::flow_left}
 \chi \arg f_t'(z) \to -h(x) - \chi \pi/2
\end{equation}
as $z$ approaches from the left side of $\eta$ and
\begin{equation}
\label{eqn::flow_right}
 \chi \arg f_t'(z) \to -h(x) + \chi \pi/2
\end{equation}
as $z$ approaches from the right side of $\eta$.  To see this, first note that both $s \mapsto f_t^{-1}(s)|_{(0,s_+)}$ and $s \mapsto f_t^{-1}(-s)|_{(s_-,0)}$ are parameterizations of $\eta|_{[0,t]}$ where $s_-,s_+$ are the two images of~$0$ under $f_t$.  One then checks~\eqref{eqn::flow_left} (and~\eqref{eqn::flow_right} analogously) by using that $\eta(s) = f_t^{-1}(\phi(s))$ for $\phi \colon (0,\infty) \to (0,\infty)$ a smooth decreasing function and applying~\eqref{eqn::flowode_ic}.   If $\chi = 0$, then~\eqref{eqn::flow_left} and~\eqref{eqn::flow_right} hold if and only if $h$ is identically zero along the path, which is to say that $\eta$ is a zero-height contour line of $h$.  Roughly speaking, the flow lines of $e^{i(h/\chi + \pi/2)}$ and level sets of $h$ are characterized by~\eqref{eqn::flow_left} and~\eqref{eqn::flow_right}, though it turns out that the ``angle gap'' must be modified by a constant factor in order to account for the roughness of the field.  In a sense there is a constant ``height gap'' between the two sides of the path, analogous to what was shown for level lines of the GFF in \cite{SS09, SchrammShe10}.  The law of the flow line of $h$ starting at $0$ is determined by the boundary conditions of $h$.  It turns out that if the boundary conditions of $h$ are those shown in Figure~\ref{fig::flowlineheights}, then the flow line starting at $0$ is an $\SLE_\kappa$ process (with $\ul{\rho} \equiv 0$).  Namely, one has $-\lambda$ and $\lambda$ along the left and right sides of the axis and along the path one has $-\lambda'$ plus the winding on the left and $\lambda'$ plus the winding on the right, for the particular values of $\lambda$ and $\lambda'$ described in the caption.  Each time the path makes a quarter turn to the left, heights go up by $\tfrac{\pi}{2} \chi$.  Each time the path makes a quarter turn to the right, heights go down by $\tfrac{\pi}{2} \chi$.

\begin{figure}[h!]
\begin{center}
\includegraphics[scale=0.85]{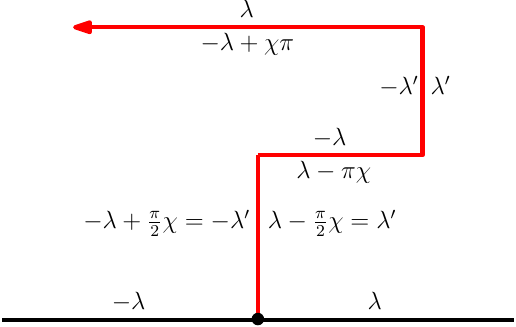}
\caption{\label{fig::flowlineheights} Fix $\kappa \in (0,4)$ and set $\lambda = \lambda(\kappa) = \frac{\pi}{\sqrt{\kappa}}$.  Write $\lambda' = \lambda(16/\kappa) = \frac{\pi \sqrt{\kappa}}{4}$.
Conditioned on a flow line, the heights of the field are given by (a constant plus)
$\chi$ times the winding of the path minus $\lambda'$ on the left side and $\chi$ times the
winding plus $\lambda'$ on the right side. For a fractal curve, these heights are not
pointwise defined (though their harmonic extension is well-defined). The figure
illustrates these heights for a piecewise linear curve.  In Figure~\ref{fig::winding}, we will describe a more compact notation for indicating the boundary heights in figures.
}
\end{center}
\end{figure}

\begin{figure}[h!]
\begin{center}
\subfigure[\label{fig::winding_left}]{\includegraphics[scale=0.85]{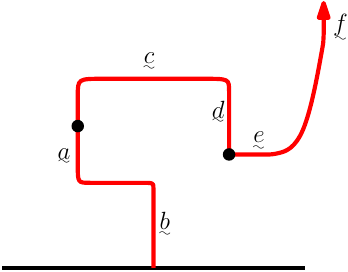}}
\hspace{0.02\textwidth}
\subfigure[\label{fig::winding_right}]{\includegraphics[scale=0.85]{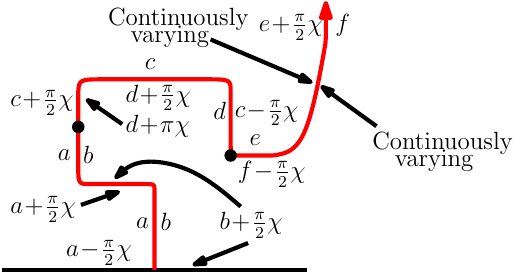}}
\caption{\label{fig::winding}  Throughout this article, we will need to consider Gaussian free fields whose boundary data changes with the winding of the boundary.  In order to indicate this succinctly, we will often make use of the notation depicted on the left hand side.  Specifically, we will delineate the boundary $\partial D$ of a Jordan domain $D$ with black dots.  On each arc $L$ of $\partial D$ which lies between a pair of black dots, we will draw either a horizontal or vertical segment $L_0$ and label it with \uwave{$x$} where $x \in \R$.  This serves to indicate that the boundary data along $L_0$ is given by $x$ as well as describe how the boundary data depends on the winding of $L$.  Whenever $L$ makes a quarter turn to the right, the height goes down by $\tfrac{\pi}{2} \chi$ and whenever $L$ makes a quarter turn to the left, the height goes up by $\tfrac{\pi}{2} \chi$.  More generally, if $L$ makes a turn which is not necessarily at a right angle, the boundary data is given by $\chi$ times the winding of $L$ relative to $L_0$.  When we just write $x$ next to a horizontal or vertical segment, we mean to indicate the boundary data at that segment and nowhere else.  The right panel above has exactly the same meaning as the left panel, but in the former the boundary data is spelled out explicitly everywhere.  Even when the curve has a fractal, non-smooth structure, the {\em harmonic extension} of the boundary values still makes sense, since one can transform the figure via the rule in Figure~\ref{fig::coordinatechange} to a half plane with piecewise constant boundary conditions. The notation above is simply a convenient way of describing the values of the constants.  We will often include horizontal or vertical segments on curves in our figures (even if the whole curve is known to be fractal) so that we can label them this way.
}
\end{center}
\end{figure}

\subsection{Coupling of paths with the GFF} \label{subsec::coupling}

We will now review some known results about coupling the GFF with $\SLE$.  For convenience and concreteness, we take $D$ to be the upper half-plane $\h$.  Couplings for other simply connected domains are obtained using the change of variables described in Figure~\ref{fig::coordinatechange}.  Recall that $\SLE_\kappa$ is the random curve described by the centered Loewner flow~\eqref{eqn::loewner_ode_centered} where $W_t = \sqrt{\kappa} B_t$ and $B_t$ is a standard Brownian motion.  More generally, an $\SLE_\kappa(\ul{\rho})$ process is a variant of $\SLE_\kappa$ in which one keeps track of multiple additional points, which we refer to as force points.  Throughout the rest of the article, we will denote configurations of force points as follows.  We suppose $\ul{x}^L = (x^{k,L} < \cdots < x^{1,L})$ where $x^{1,L} \leq 0$, and $\ul{x}^R = (x^{1,R} < \cdots < x^{\ell,R})$ where $x^{1,R} \geq 0$.  The superscripts $L,R$ stand for ``left'' and ``right,'' respectively.  If we do not wish to refer to the elements of $\ul{x}^L,\ul{x}^R$, we will denote such a configuration as $(\ul{x}^L;\ul{x}^R)$.  Associated with each force point $x^{i,q}$, $q \in \{L,R\}$ is a weight $\rho^{i,q} \in \R$ and we will refer to the vector of weights as $\ul{\rho} = (\ul{\rho}^L;\ul{\rho}^R)$.  An $\SLE_\kappa(\ul{\rho})$ process with force points $(\ul{x}^L;\ul{x}^R)$ corresponding to the weights $\ul{\rho}$ is the measure on continuously growing compact hulls $K_t$ --- compact subsets of $\ol{\h}$ so that $\h \setminus K_t$ is simply connected --- such that the conformal maps $g_t \colon \h \setminus K_t \to \h$, normalized so that $\lim_{z \to \infty} |g_t(z) - z| = 0$, satisfy~\eqref{eqn::loewner_ode_centered} with $W_t$ replaced by the solution to the system of (integrated) SDEs
\begin{align}
W_t &= \sqrt{\kappa} B_t + \sum_{q \in \{L,R\}} \sum_{i} \int_0^t \frac{\rho^{i,q}}{W_s - V_s^{i,q}} ds, \label{eqn::sle_kappa_rho_sde}\\
V_t^{i,q} &= \int_0^t \frac{2}{V_s^{i,q} - W_s} ds + x^{i,q},\ \ \ q \in \{L,R\}. \label{eqn::force_point_evolution}
\end{align}
We will provide some additional discussion of both $\SLE_\kappa$ and $\SLE_\kappa(\ul{\rho})$ processes in Section~\ref{sec::sle}.  The general coupling statement below applies for all $\kappa>0$.  Theorem~\ref{thm::coupling_existence} below gives a general statement of the existence of the coupling.  Essentially, the theorem states that if we sample a particular random curve on a domain $D$ --- and then sample a Gaussian free field on $D$ minus that curve with certain boundary conditions --- then the resulting field (interpreted as a distribution on all of $D$) has the law of a Gaussian free field on $D$ with certain boundary conditions.

It is proved in \cite{DUB_PART} that Theorem~\ref{thm::coupling_existence} holds for any $\kappa$ and $\ul{\rho}$ for which a solution to~\eqref{eqn::sle_kappa_rho_sde} exists (this can also be extended to a continuum of force points; this is done for a time-reversed version of $\SLE$ in \cite{SHE_WELD}).  The special case of $\pm \lambda$ boundary conditions also appears in \cite{She_SLE_lectures}.  (See also \cite{SHE_WELD} for a more detailed version of the argument in \cite{She_SLE_lectures} with additional figures and explanation.)

The question of when~\eqref{eqn::sle_kappa_rho_sde} has a solution is not explicitly addressed in \cite{DUB_PART}.  In Section~\ref{sec::sle}, we will prove the existence of a unique solution to~\eqref{eqn::sle_kappa_rho_sde} up until the {\bf continuation threshold} is hit --- the first time $t$ that $W_t = V_t^{j,q}$ where $\sum_{i=1}^j \rho^{i,q} \leq -2$, for some $q \in \{L,R\}$.  This is the content of Theorem~\ref{thm::slekrdef}.  We will reprove Theorem~\ref{thm::coupling_existence} here for the convenience of the reader.  It is a straightforward consequence of Theorem~\ref{thm::slekrdef} and \cite[Theorem~6.4]{DUB_PART}.

All of our results will hold for $\SLE_\kappa(\ul{\rho})$ processes up until (and including) the continuation threshold.  It turns out that the continuation threshold is infinite almost surely if and only if
\[ \sum_{i=1}^j \rho^{i,L} > -2 \quad\text{for all}\quad 1 \leq j \leq k \quad\text{and}\quad \sum_{i=1}^j \rho^{i,R} > -2 \quad\text{for all}\quad 1 \leq j \leq \ell.\]

\begin{theorem}
\label{thm::coupling_existence}
Fix $\kappa > 0$ and a vector of weights $(\ul{\rho}^L;\ul{\rho}^R)$.  Let $K_t$ be the hull at time $t$ of the $\SLE_\kappa(\ul{\rho})$ process generated by the Loewner flow~\eqref{eqn::loewner_ode_centered} where $(W,V^{i,q})$ solves~\eqref{eqn::sle_kappa_rho_sde},~\eqref{eqn::force_point_evolution}.  Let $\Fh_{t}^0$ be the function which is harmonic in $\h$ with boundary values
\begin{align*}
   -&\lambda\left(1+ \sum_{i=0}^j \rho^{i,L}\right) \quad\text{if}\quad  s \in [V_t^{j+1,L},V_t^{j,L}),\\
   &\lambda\left(1+\sum_{i=0}^j \rho^{i,R}\right)  \quad\text{if}\quad  s \in [V_t^{j,R},V_t^{j+1,R}),
\end{align*}
where $\rho^{0,L} = \rho^{0,R} = 0$, $x^{0,L} = 0^-$, $x^{k+1,L} = -\infty$, $x^{0,R} = 0^+$, and $x^{\ell+1,R} = \infty$.  (See Figure~\ref{fig::conditional_boundary_data}.)
Let
\[ \Fh_t(z) = \Fh_{t}^0(f_t(z)) - \chi \arg f_t'(z),\ \ \ \chi = \frac{2}{\sqrt{\kappa}} - \frac{\sqrt{\kappa}}{2}.\]
Let $(\CF_t)$ be the filtration generated by $(W,V^{i,q})$.  There exists a coupling $(K,h)$ where $\wt{h}$ is a zero boundary GFF on $\h$ and $h = \wt{h} + \Fh_0$ such that the following is true.  Suppose~$\tau$ is any $\CF_t$-stopping time which almost surely occurs before the continuation threshold is reached.  Then $K_\tau$ is a local set for $h$ and the conditional law of $h|_{\h \setminus K_\tau}$ given $\CF_\tau$ is equal to the law of $\Fh_\tau + \wt{h} \circ f_\tau$.
\end{theorem}

We will give a review of the theory of local sets \cite{SchrammShe10} for the GFF in Section~\ref{subsec::local_sets}.

\begin{figure}[h!]
\begin{center}
\includegraphics[scale=0.85]{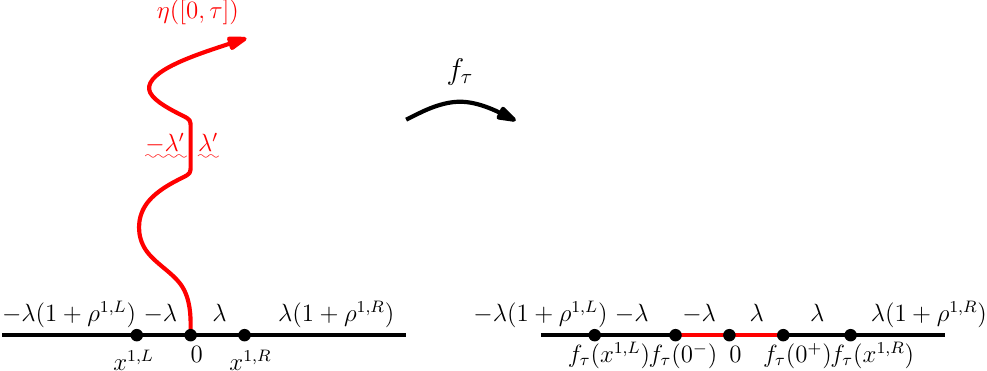}
\caption{\label{fig::conditional_boundary_data}The function $\Fh_{\tau}^0$ in Theorem~\ref{thm::coupling_existence} is the harmonic extension of the boundary values depicted in the right panel in the case that there are two boundary force points, one on each side of $0$.  The function $\Fh_\tau = \Fh_{\tau}^0 \circ f_\tau - \chi \arg f_\tau'$ in Theorem~\ref{thm::coupling_existence} is the harmonic extension of the boundary data specified in the left panel.  (Recall the relationship between $\lambda$ and $\lambda'$ indicated in Figure~\ref{fig::flowlineheights}.)}
\end{center}
\end{figure}

Notice that $\chi > 0$ when $\kappa \in (0,4)$, $\chi < 0$ when $\kappa > 4$, and that $\chi(\kappa) = -\chi(\kappa')$ for $\kappa' = 16/\kappa$ (though throughout the rest of this article, whenever we write $\chi$ it will be assumed that $\kappa \in (0,4)$).  This means that in the coupling of Theorem~\ref{thm::coupling_existence}, the conditional law of $h$ given either an $\SLE_{\kappa}$ or an $\SLE_{\kappa'}$ curve transforms in the same way under a conformal map, up to a change of sign.  Using this, we are able to construct $\eta \sim \SLE_\kappa$, $\kappa \in (0,4)$, and $\eta' \sim \SLE_{\kappa'}$ curves within the same imaginary geometry (see Figure~\ref{fig::counterflowline}).  We accomplish this by taking $\eta$ to be coupled with $h$ and $\eta'$ to be coupled with $-h$, as in the statement of Theorem~\ref{thm::coupling_existence} (this is the reason we can always take $\chi > 0$).

{\bf Definition.}
When $\kappa \in (0,4)$, we will refer to an $\SLE_\kappa(\ul \rho)$ curve (if it exists) coupled with a GFF~$h$ on~$\h$ with boundary conditions as in Theorem~\ref{thm::coupling_existence} as a {\bf flow line} of $h$. One can use the conformal coordinate change of  Figure~\ref{fig::coordinatechange} to extend this definition to simply connected domains other than~$\h$. To spell out this point explicitly, suppose that~$D$ is a simply connected domain homeomorphic to the disk, $x,y \in \partial D$ are distinct, and $\psi \colon D \to \h$ is a conformal transformation with $\psi(x) = 0$ and $\psi(y) = \infty$.  Let us assume that we have fixed a branch of $\arg \psi'$ that is defined continuously on all of $D$. We assume further that $\ul{x}^L$ (resp.\ $\ul{x}^R$) consists of $k$ (resp.\ $\ell$) distinct marked prime ends in the clockwise (resp.\ counterclockwise) segment of $\partial D$ (as defined by $\psi$) which are in clockwise (resp.\ counterclockwise) order.  We take $x^{0,L} = x = x^{0,R} = x$ and $x^{k+1,L} = x^{\ell+1,R} = y$.  We then suppose that $h$ is a GFF on $D$ with boundary conditions in the clockwise (resp.\ counterclockwise) segment of $\partial D$ from $x^{j,L}$ to $x^{j+1,L}$ (resp.\ $x^{j,R}$ to $x^{j+1,R}$) given by $-\lambda \left(1+ \sum_{i=0}^j \rho^{i,L}\right) - \chi \arg \psi'$ (resp.\ $\lambda\left(1+\sum_{i=0}^j \rho^{i,R} \right) - \chi \arg \psi'$).  We refer to an $\SLE_\kappa(\ul{\rho})$ curve $\eta$ (if it exists) from $x$ to $y$ on $D$, $\kappa \in (0,4)$, coupled with $h$ as a {\bf flow line} of $h$ if the curve $\psi(\eta)$ in $\h$  is coupled as a flow line of the GFF $h \circ \psi^{-1} - \chi \arg (\psi^{-1})'$ on $\h$.  (Recall~\eqref{eqn::ac_eq_rel} and Figure~\ref{fig::coordinatechange}.)

{\bf Remark.} Observe that in the discussion above, the choice of the branch of $\arg \psi'$ was important. Changing the branch chosen would in some sense correspond to adding a multiple of $2 \pi \chi$ to either side of the  $\SLE_\kappa(\ul \rho)$ curve, and if one did this then (in order for the curve to remain a flow line) one would have to compensate by adding the same quantity to the boundary data. In some sense, changing the branch of $\arg \psi'$ is equivalent to adding a multiple of $2 \pi \chi$ to the boundary data.
If one wishes to be fully concrete, one can fix the branch of $\arg \psi'$ in an arbitrary way --- say, so that $\arg \psi'(\psi^{-1}(i)) \in (-\pi,\pi]$ --- and then assume that the boundary data is adjusted accordingly. In practice, when we discuss flow lines (in the half plane or elsewhere) we will usually specify boundary data using a figure and the notation explained in Figure~\ref{fig::winding} (or in Figure~\ref{fig::conditional_boundary_data}). This approach will avoid any ``multiple of $2\pi\chi$'' ambiguity and will make it completely clear exactly what the boundary data is along the curve. This remark also applies to the definition of counterflow line given below.

We will give several examples of coordinate changes in Section~\ref{sec::dubedat}.  See also Figure~\ref{fig::flowlineheights} and Figure~\ref{fig::winding} for an illustration of how the boundary data for the GFF changes when applying~\eqref{eqn::ac_eq_rel}.

The fact that $\SLE_\kappa(\ul \rho)$ is generated by a continuous curve up until hitting the continuation threshold will be established for general $\rho$ values in Theorem~\ref{thm::continuity}.  It is not obvious from the coupling described in Theorem~\ref{thm::coupling_existence} that such paths are deterministic functions of $h$.  That this is in fact the case is given in Theorem~\ref{thm::coupling_uniqueness}.

As mentioned earlier, we will sometimes use the phrase {\em flow line of angle $\theta$} to denote the corresponding curve that one obtains when $\theta \chi$ is added to the boundary data (so that $h$ is replaced by $h + \theta \chi$).

{\bf Definition.}
We will refer to an $\SLE_{\kappa'}(\ul \rho)$ curve (if it exists), $\kappa' \in (4,\infty)$, coupled with a GFF $-h$ (note the sign change here; this accounts for the $\chi(\kappa)$ vs.\ $\chi(\kappa')$ issue discussed just above) as in Theorem~\ref{thm::coupling_existence} as a {\bf counterflow line} of $h$.  
Again, one can use conformal maps to extend this definition to simply connected domains other than~$\h$. Suppose that~$D$ is a non-trivial simply connected domain, $x,y \in \partial D$ are distinct, and $\psi \colon D \to \h$ is a conformal transformation with $\psi(x) = 0$ and $\psi(y) = \infty$, and that a branch of $\arg \psi'$ has been fixed (as in the flow line definition above).  We assume further that $\ul{x}^L$ (resp.\ $\ul{x}^R$) consists of $k$ (resp.\ $\ell$) distinct marked prime ends in the clockwise (resp.\ counterclockwise) segment of $\partial D$ (as defined by $\psi$) which are in clockwise (resp.\ counterclockwise) order.  We take $x^{0,L} = x = x^{0,R} = x$ and $x^{k+1,L} = x^{\ell+1,R} = y$.  We then suppose that $h$ is a GFF on $D$ with boundary conditions in the clockwise (resp.\ counterclockwise) segment of $\partial D$ from $x^{j,L}$ to $x^{j+1,L}$ (resp.\ $x^{j,R}$ to $x^{j+1,R}$) given by $\lambda' \left(1+ \sum_{i=0}^j \rho^{i,L}\right) - \chi \arg \psi'$ (resp.\ $-\lambda'\left(1+\sum_{i=0}^j \rho^{i,R} \right) - \chi \arg \psi'$); here $\chi = \chi(\kappa) > 0$.  We refer to an $\SLE_{\kappa'}(\ul{\rho})$ curve $\eta'$ (if it exists) from $x$ to $y$ on $D$, $\kappa' \in (4,\infty)$, coupled with $h$ as a {\bf counterflow line} of $h$ if the curve $\psi(\eta')$ in $\h$ is coupled as a counterflow line of the GFF $h \circ \psi^{-1} - \chi \arg (\psi^{-1})'$ on $\h$; here $\chi = \chi(\kappa) > 0$.  (Recall~\eqref{eqn::ac_eq_rel} and Figure~\ref{fig::coordinatechange}.)

Again, the fact that $\SLE_{\kappa'}(\ul \rho)$ is generated by a continuous curve up until hitting the continuation threshold is established for general $\rho$ values in Theorem~\ref{thm::continuity}.

As in the setting of flow lines, it is not obvious from the coupling described in Theorem~\ref{thm::coupling_existence} that such paths are deterministic functions of $h$.  That this is in fact the case is given in Theorem~\ref{thm::coupling_uniqueness}.  The reason for the terminology ``counterflow line'' is that, as briefly mentioned earlier, it will turn out that the set of the points hit by an $\SLE_{\kappa'}$ counterflow line can be interpreted as a ``light cone'' of points accessible by certain angle-restricted $\SLE_\kappa$ flow lines; the $\SLE_{\kappa'}$ passes through the points on each of these flow lines in the opposite (``counterflow'') direction.  We will provide some additional explanation near the statement of Theorem~\ref{thm::lightconeroughstatement}.

The correction $-\chi \arg f_t'$ which appears in the statement Theorem~\ref{thm::coupling_existence} has the interpretation of being the harmonic extension of $\chi$ times the {\bf winding} of $\partial (\h \setminus \eta([0,\tau]))$.  We will use the informal notation $\chi \cdot {\rm winding}$ for this function throughout this article and employ a special notation to indicate this in figures.  See Figure~\ref{fig::winding} for further explanation of this point.

Similar couplings are constructed in \cite{IzyurovKytola10} for the GFF with Neumann boundary data on part of the domain boundary, and \cite{HagendorfBauerBernard10} couples the GFF on an annulus with annulus $\SLE$.  Makarov and Smirnov extend the $\SLE_4$ results of \cite{She_SLE_lectures,SchrammShe10} to the setting of the massive GFF and a massive version of $\SLE$ in \cite{MakarovSmirnov09}.

\begin{figure}[h!]
\begin{center}
\includegraphics[scale=0.85]{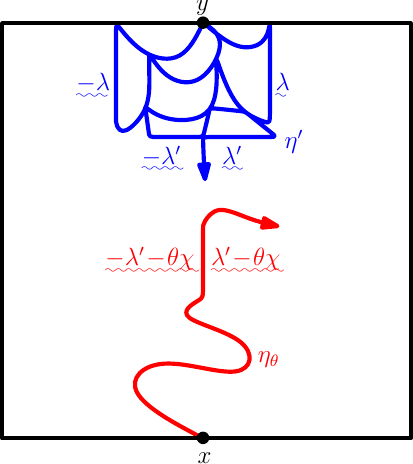}
\end{center}
\caption{\label{fig::counterflowline} We can construct $\SLE_\kappa$ flow lines, $\kappa \in (0,4$), and $\SLE_{\kappa'}$, $\kappa' = 16/\kappa$, counterflow lines within the same imaginary geometry.  This is depicted above for a single counterflow line $\eta'$ emanating from $y$ and a flow line $\eta_\theta$ with angle $\theta$ starting from $x$.  In this coupling, $\eta_\theta$ is coupled with $h+\theta \chi$ and $\eta'$ is coupled with $-h$ as in Theorem~\ref{thm::coupling_existence}.  Also shown is the boundary data for $h$ in $D \setminus (\eta'([0,\tau']) \cup \eta_\theta([0,\tau]))$ conditional on $\eta_\theta([0,\tau])$ and $\eta'([0,\tau'])$ where $\tau$ and $\tau'$ are stopping times for $\eta_\theta$ and $\eta'$ respectively (we intentionally did not specify the boundary data of $h$ on $\partial D$).  Assume that $\eta'$ is non-boundary filling.  Then if $\theta = \tfrac{1}{\chi}(\lambda'-\lambda) = -\tfrac{\pi}{2}$ so that the boundary data on the right side of $\eta_\theta$ matches that on the right side of $\eta'$, then $\eta_\theta$ will almost surely hit and then ``merge'' into the right boundary of $\eta'$.  The analogous result holds if $\theta = \tfrac{1}{\chi}(\lambda-\lambda') = \tfrac{\pi}{2}$ so that the boundary data on the left side of $\eta_\theta$ matches that on the left side of $\eta'$.  This fact is known as Duplantier duality (or $\SLE$ duality).  More generally, if $\theta \in [-\tfrac{\pi}{2},\tfrac{\pi}{2}]$ then $\eta_\theta$ is almost surely contained in $\eta'$ but the union of the traces of $\eta_\theta$ as $\theta$ ranges over the entire interval $[-\tfrac{\pi}{2},\tfrac{\pi}{2}]$ is almost surely a strict subset of the range of $\eta'$.  We will show, however, that the range of $\eta'$ can be constructed as a ``light cone'' of $\SLE_\kappa$ trajectories whose angle is allowed to vary in time but is restricted to $[-\tfrac{\pi}{2},\tfrac{\pi}{2}]$.
}
\end{figure}

\subsection{Main results} \label{subsec::mainresult}

In the case that $\rho = 0$ and $\eta$ is ordinary $\SLE$, Dub\'edat showed in \cite{DUB_PART} that in the coupling of Theorem~\ref{thm::coupling_existence} the path is actually a.s.\ determined by the field.  A $\kappa = 4$ analog of this statement was also shown in \cite{SchrammShe10}. In this paper, we will extend these results to the more general setting of Theorem~\ref{thm::coupling_existence}.

\begin{theorem}
\label{thm::coupling_uniqueness}  Suppose that $h$ is a GFF on $\h$ and that $\eta \sim \SLE_\kappa(\ul{\rho})$.  If $(\eta,h)$ are coupled as in the statement of Theorem~\ref{thm::coupling_existence}, then $\eta$ is almost surely determined by~$h$.
\end{theorem}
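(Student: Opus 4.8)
\medskip
\noindent\textbf{Proof strategy.} The plan is to adapt the argument of Dub\'edat \cite{DUB_PART} and to add the estimates needed when the curve interacts with $\partial\h$. As a first, standard reduction, pass to a regular version of the conditional law of $\eta$ given $h$ and build on an enlarged space a triple $(\eta^1,\eta^2,h)$ in which $\eta^1,\eta^2$ are conditionally independent given $h$ and each of $(\eta^i,h)$ has the law of the coupling $(\eta,h)$ of Theorem~\ref{thm::coupling_existence}. Then $\eta$ is almost surely determined by $h$ if and only if $\eta^1=\eta^2$ almost surely, and this is what I would prove. When the $\SLE_\kappa(\ul{\rho})$ in question a.s.\ does not touch $\partial\h$ away from its starting point the argument of \cite{DUB_PART} already gives this, so the real content is the boundary-touching case, where the complement of a stopped curve may have several components and the conditional mean of $h$ must be controlled near the points where the curve meets the boundary.

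The central new ingredient is the identification of the conditional law of $h$ given \emph{two} stopped flow lines. Writing $\CF^i_{\tau^i}=\sigma(\eta^i|_{[0,\tau^i]})$, one first recalls from Theorem~\ref{thm::coupling_existence} that the conditional law of $h$ given $\CF^1_{\tau^1}$ is $\Fh_{\tau^1}$ plus a zero-boundary GFF on $\h\setminus\eta^1([0,\tau^1])$; since $\eta^2$ is conditionally independent of $\eta^1|_{[0,\tau^1]}$ given $h$, it remains a flow line of this conditioned field, so, up until it first hits $\eta^1([0,\tau^1])$, $\eta^2$ is (after the coordinate change \eqref{eqn::ac_eq_rel}) again an $\SLE_\kappa(\ul{\rho})$-type process inside the relevant component, coupled to the field there as in Theorem~\ref{thm::coupling_existence}. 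Iterating this and then conditioning additionally on $\eta^2|_{[0,\tau^2]}$, one expects the conditional law of $h$ given $\CF^1_{\tau^1}\vee\CF^2_{\tau^2}$ to be a GFF on $\h\setminus(\eta^1([0,\tau^1])\cup\eta^2([0,\tau^2]))$ whose boundary values are, along each curve, $\chi$ times the winding plus the appropriate multiple of $\lambda'$, consistently with the one-curve statement. The delicate point---the one that makes the non-intersecting hypothesis of \cite{DUB_PART} insufficient---is that when the two curves intersect each other, or intersect $\partial\h$, one must rule out an extra singular contribution to the conditional mean $\mathbf{E}[h\mid\CF^1_{\tau^1}\vee\CF^2_{\tau^2}]$ supported on the (generally fractal) intersection set. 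I would prove this by first establishing the clean description for truncations $\tau^1,\tau^2$ chosen so that $\eta^1([0,\tau^1])$ and $\eta^2([0,\tau^2])$ are disjoint and stay away from the relevant part of $\partial\h$---there it is just two applications of Theorem~\ref{thm::coupling_existence}---and then passing to the limit as the truncations are removed, using an a priori bound: the conditional mean is harmonic with Dirichlet energy (relative to $\Fh$) bounded uniformly along the truncation, and a harmonic function of finite Dirichlet energy cannot have a singular part supported on a polar set. The estimates on \eqref{eqn::sle_kappa_rho_sde} near the force points and near the continuation threshold proved in Section~\ref{sec::sle} are what make this control uniform near the boundary.

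Granting the two-curve conditional law, I would conclude by contradiction: if $\mathbf{P}[\eta^1\neq\eta^2]>0$ then, using continuity and the common starting point, on this event there is a point $z_0$ beyond which some component $U$ of $\h\setminus(\eta^1\cup\eta^2)$ has a nontrivial arc of $\eta^1$ and a nontrivial arc of $\eta^2$ on its boundary, both emanating from $z_0$. The two-curve conditional law prescribes the boundary data of the GFF in $U$ along both arcs; combining this with the flow-line characterization and with the behavior of the driving SDE \eqref{eqn::sle_kappa_rho_sde} near force points (and the identity $\lambda'=\lambda(16/\kappa)$, which governs the height gap across each curve), one shows that near $z_0$ this data is over-determined and incompatible with both arcs being flow lines of $h$ of the same angle unless they coincide. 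Hence $\eta^1=\eta^2$ almost surely, so the conditional law of $\eta$ given $h$ is a point mass and $\eta$ is a measurable function of $h$.

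I expect the main obstacle to be precisely the regularity statement of the second paragraph---ruling out pathological behavior of the conditional expectation of $h$ near the points where the flow lines meet each other or $\partial\h$, so that the ``GFF with prescribed winding-dependent boundary data in the slit domain'' description survives in the presence of (possibly many, possibly fractal) intersections. Everything else---the reduction to two conditionally i.i.d.\ copies, the iteration of Theorem~\ref{thm::coupling_existence}, and the closing geometric step---should be comparatively routine once that estimate is available, together with well-posedness of \eqref{eqn::sle_kappa_rho_sde} up to the continuation threshold and continuity of the curve (both from Section~\ref{sec::sle}).
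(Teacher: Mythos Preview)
Your strategy diverges substantially from the paper's, and the closing step has a genuine gap.

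\textbf{What the paper actually does.} The paper never runs a ``two conditionally i.i.d.\ copies'' argument for same-angle flow lines. Instead, for $\kappa\in(0,4]$ it first treats the non-boundary-intersecting regime via \emph{duality}: it shows (Section~\ref{sec::dubedat}) that a suitably coupled counterflow line $\eta'$ has the flow line $\eta$ as its left boundary, so $\eta$ is a deterministic functional of $\eta'$; since $\eta$ and $\eta'$ are conditionally independent given $h$, $\eta$ must be $\sigma(h)$-measurable. For the boundary-intersecting two-force-point case (Section~\ref{subsec::two_boundary_force_points}) it conditions on auxiliary flow lines $\eta_{\theta_1},\eta_{\theta_2}$ of \emph{different} angles $\theta_1<0<\theta_2$---already known to be $h$-measurable from the first step---sandwiching $\eta$ between them, and shows that in each pocket the conditional law of $(\eta,h)$ is again a coupling of the Theorem~\ref{thm::coupling_existence} type but now with non-boundary-intersecting data; uniqueness then follows from the first step plus an independence argument (Lemma~\ref{lem::two_force_points_determined}). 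The many-force-point case is an induction (Lemma~\ref{lem::functional_many_force_points}), and $\kappa'>4$ is handled by analogous but more elaborate configurations (Section~\ref{subsec::counterflow}).

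\textbf{The gap in your argument.} Your final paragraph asserts that if $\eta^1\neq\eta^2$ then near a separation point $z_0$ the boundary data in the pocket $U$ is ``over-determined and incompatible with both arcs being flow lines of $h$ of the same angle.'' But there is no incompatibility: both curves carry the \emph{same} boundary prescription ($-\lambda'+\chi\cdot\text{winding}$ on the left, $\lambda'+\chi\cdot\text{winding}$ on the right), so the conditional field in $U$ has perfectly consistent boundary values---$\lambda'+\chi\cdot\text{winding}$ along the $\eta^2$-side and $-\lambda'+\chi\cdot\text{winding}$ along the $\eta^1$-side. Given $\eta^2$, the continuation of $\eta^1$ from $z_0$ is a legitimate $\SLE_\kappa(\rho)$ in the pocket with $\rho=-1-\kappa/4\in(-2,-1)$, and nothing in the SDE or in the identity $\lambda'=\lambda(16/\kappa)$ forbids this. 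Showing that two same-angle flow lines from the same seed coincide is essentially the content of the theorem itself; the paper obtains it only \emph{after} establishing uniqueness, as the merging statement in Proposition~\ref{prop::merging_and_crossing}. Without an independent mechanism---duality, or comparison with flow lines of other angles---your contradiction does not close.

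\textbf{Secondary points.} You correctly flag the need to control the conditional mean at intersection points, and the paper does devote Section~\ref{sec::interacting} to exactly this; but it uses that control for configurations of flow lines with \emph{distinct} angles (or a flow line and a counterflow line), where monotonicity is already available, not for two copies of the same path. Also, continuity of the trace is not supplied by Section~\ref{sec::sle} (which gives only well-posedness of the driving SDE); it is proved in Section~\ref{sec::uniqueness} jointly with uniqueness, and in fact the boundary-intersecting continuity relies on the very machinery above.
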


The basic idea of our proof is as follows.  First, we extend the argument of \cite{DUB_PART} for $\SLE_\kappa$, $\kappa \in (0,4]$, to the case of $\eta \sim \SLE_\kappa(\ul{\rho})$ with $\ul{\rho} = (\rho^L;\rho^R)$ where $\rho^L$ and $\rho^R$ are real numbers satisfying $\rho^L \geq \tfrac{\kappa}{2}-2$ and $\rho^R \geq 0$.  This condition implies that $\eta$ almost surely does not intersect $\partial \h$ after time $0$ and allows us to apply the argument from \cite{DUB_PART} with relatively minor modifications.  We then reduce the more general case that $\rho^L,\rho^R > -2$ to the former setting by studying the flow lines $\eta_\theta$ of $e^{i(h/\chi + \theta)}$ emanating from $0$.  In this case, these are also $\SLE_\kappa(\ul{\rho})$ curves with force points at $0^-$ and $0^+$.  We will prove that if $\theta_1 < 0 < \theta_2$, then $\eta_{\theta_1}$ almost surely lies to the right of $\eta$ which in turn almost surely lies to the right of $\eta_{\theta_2}$.  We will next show that the conditional law of $\eta$ given $\eta_{\theta_1},\eta_{\theta_2}$ is an $\SLE_\kappa(\rho^L(\theta_1);\rho^R(\theta_2))$ process independently in each of the connected components of $\h \setminus (\eta_{\theta_1} \cup \eta_{\theta_2})$ which lie between $\eta_{\theta_1}$ and $\eta_{\theta_2}$.  By adjusting $\theta_1,\theta_2$, we can obtain any combination of $\rho^L(\theta_1),\rho^R(\theta_2) > -2$.  We then extend this result to the setting of many force points by systematically studying the case with two boundary force points which are both to the right of $0$ and then employing the absolute continuity properties of the GFF combined with an induction argument.  The idea for $\kappa > 4$ follows from a more elaborate variant of this general strategy.

By applying the same set of techniques used to prove Theorem~\ref{thm::coupling_uniqueness}, we also obtain the continuity of the $\SLE_\kappa(\ul{\rho})$ trace.

\begin{theorem}
\label{thm::continuity}
Suppose that $\kappa > 0$.  If $\eta \sim \SLE_\kappa(\ul{\rho})$ on $\h$ from $0$ to $\infty$ then $\eta$ is almost surely a continuous path, up to and including the continuation threshold. On the event that the continuation threshold is not hit before $\eta$ reaches $\infty$, we have a.s.\ that $\lim_{t \to \infty} |\eta(t)| = \infty$.
\end{theorem}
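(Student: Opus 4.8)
\medskip
\noindent\textbf{Proof proposal for Theorem~\ref{thm::continuity}.}
The plan is to bootstrap the continuity of ordinary $\SLE_\kappa$ --- Rohde--Schramm \cite{RS05} for $\kappa \neq 8$ and Lawler--Schramm--Werner \cite{LSW04} for $\kappa = 8$ --- up to general $\SLE_\kappa(\ul\rho)$, using the coupling and sandwiching machinery that also drives Theorem~\ref{thm::coupling_uniqueness}. Parametrizing $\eta$ by half-plane capacity, we must show that $t \mapsto \eta(t)$ extends continuously to $[0,T]$, where $T$ is the continuation threshold (with $T = \infty$ allowed), and separately that $|\eta(t)| \to \infty$ as $t \to \infty$ on the event $T = \infty$. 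The base case is $\kappa \in (0,4)$ with weights satisfying $\sum_{i=1}^j \rho^{i,L} \ge \tfrac{\kappa}{2} - 2$ and $\sum_{i=1}^j \rho^{i,R} \ge 0$ for every $j$ (in particular the two--force-point case $\rho^L \ge \tfrac{\kappa}{2}-2$, $\rho^R \ge 0$), for which $\eta$ a.s.\ does not return to $\partial \h$ after time $0$. Here the Bessel-type analysis behind Theorem~\ref{thm::slekrdef} shows $W_t$ stays strictly separated from each $V_t^{i,q}$ on time intervals bounded away from $0$, so on such intervals the law of the driving function is mutually absolutely continuous with that of $\sqrt{\kappa}\,B_t$ by Girsanov; continuity of $\eta$ transfers from \cite{RS05, LSW04}, and a direct estimate handles $t \downarrow 0$. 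The case $\kappa = 4$ reduces similarly to the level-line results of \cite{SchrammShe10}.

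For general $\rho^L, \rho^R > -2$ with $\kappa \in (0,4)$, I would argue as in Theorem~\ref{thm::coupling_uniqueness}: couple $\eta$ to a GFF as a flow line, choose angles $\theta_1 < 0 < \theta_2$ so that the flow lines $\eta_{\theta_1}, \eta_{\theta_2}$ fall into the base-case regime (hence are continuous), and use Proposition~\ref{prop::angle_varying_monotonicity} to place $\eta_{\theta_1}$ to the right of $\eta$ to the right of $\eta_{\theta_2}$. Conditionally on these two curves, in each connected component (``pocket'') of $\h \setminus (\eta_{\theta_1} \cup \eta_{\theta_2})$ lying between them, $\eta$ is an $\SLE_\kappa(\ul\rho')$ whose two force points sit on the (continuous) pocket boundary, with $\ul\rho'$ adjustable --- by choice of $\theta_1,\theta_2$ --- so as to satisfy the non-boundary-hitting condition inside the pocket; mapping each pocket conformally to $\h$ and invoking the base case gives continuity of $\eta$ restricted to it up to and including both endpoints. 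Since the pockets are visited in a definite order, are separated by arcs of the continuous curves $\eta_{\theta_1}, \eta_{\theta_2}$, and the whole path is generated by one Loewner flow, the continuous pieces assemble into a continuous trace; continuity ``up to and including the continuation threshold'' falls out of the same analysis (the terminal pocket degenerating to a boundary point). Finally, absolute continuity of the GFF together with an induction on the number of force points --- first the case of two force points on the same side of $0$, then adding force points one at a time, since only those near the tip matter locally --- removes the two--force-point restriction.

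The range $\kappa > 4$ is handled by the dual construction: an $\SLE_{\kappa'}(\ul\rho)$ ($\kappa' = 16/\kappa$) counterflow line, coupled with $-h$ as in Theorem~\ref{thm::coupling_existence}, has an outer boundary built from flow lines of $h$, which are continuous by the $\kappa < 4$ case (Duplantier duality, cf.\ Figure~\ref{fig::counterflowline}); these flow lines cut the counterflow line into pockets in each of which it is, after conformal mapping, an ordinary $\SLE_{\kappa'}$, so continuity again assembles pocket by pocket from \cite{RS05, LSW04}. For the transience statement, once continuity on $[0,\infty)$ is known, $\hcap(\eta([0,t])) = 2t \to \infty$ already forces $\limsup_{t} |\eta(t)| = \infty$; to upgrade this to a limit I would show, via a L\'evy $0$--$1$ argument, that after $\eta$ has travelled far from the (finitely many, fixed) force points its future is comparable in law --- on bounded regions, by absolute continuity --- to ordinary $\SLE_\kappa$ and hence has conditional probability bounded below of never re-entering a fixed ball, so that a.s.\ it returns only finitely often and $|\eta(t)| \to \infty$.

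I expect the main obstacle to be the pocket argument in the boundary-intersecting case: controlling $\eta$ near the countably many ``pocket tips'' where the sandwiching curves meet each other or meet $\eta$, and ruling out pathological accumulation of the trace there. This is exactly the delicate point --- understanding the conditional law of the field near points where the paths intersect --- that also makes Theorem~\ref{thm::coupling_uniqueness} hard, and for $\kappa > 4$ it additionally requires the full duality/light-cone machinery just to set up the pocket decomposition.
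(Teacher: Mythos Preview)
Your overall architecture --- Girsanov for the non-boundary-intersecting base case, a sandwich between auxiliary flow lines for two force points, induction to many force points, and duality for $\kappa'>4$ --- is exactly the paper's. But the direction of the sandwich for $\kappa\in(0,4)$ is inverted, and this is a genuine gap. You propose to couple the \emph{target} $\eta\sim\SLE_\kappa(\rho^L;\rho^R)$ with its GFF $h$ and then find $\theta_1<0<\theta_2$ so that the flow lines $\eta_{\theta_i}$ of this same $h$ lie in the base-case regime. When $\rho^L$ is close to $-2$ this is impossible: the left weight of $\eta_{\theta_2}$ is $\rho^L-\theta_2\chi/\lambda$, which only decreases as $\theta_2>0$ grows, so $\eta_{\theta_2}$ is forced to be boundary-intersecting and you have no continuity for it to bequeath. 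The paper runs the argument the other way (Section~\ref{subsec::two_boundary_force_points}): take $h$ with \emph{large} boundary data so that the zero-angle flow line $\eta$ and both $\eta_{\theta_i}$ are non-boundary-intersecting and hence continuous \emph{a priori}; then the conditional law of $\eta$ in each pocket between $\eta_{\theta_1},\eta_{\theta_2}$ is computed to be $\SLE_\kappa(\theta_2\chi/\lambda-2;\,-\theta_1\chi/\lambda-2)$, and continuity of this conditional process is inherited from the already-known continuity of $\eta$ together with the pocket being a Jordan domain. Varying $\theta_1,\theta_2$ then realizes every pair $(\rho^L,\rho^R)\in(-2,\infty)^2$ as such a conditional law.

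The $\kappa'>4$ step has the same inversion, plus an additional inaccuracy: conditioning on the outer boundary (flow lines at $\pm\tfrac{\pi}{2}$) gives $\SLE_{\kappa'}(\tfrac{\kappa'}{2}-4;\tfrac{\kappa'}{2}-4)$ in each pocket, not ordinary $\SLE_{\kappa'}$, so \cite{RS05,LSW04} does not close the loop. The paper (Lemmas~\ref{lem::cfl_cont_det_part1}--\ref{lem::cfl_cont_det_part2}) again starts from a non-boundary-intersecting counterflow line and conditions on auxiliary flow lines at other angles --- and, to reach the range $\rho^L,\rho^R\in(-2,\tfrac{\kappa'}{2}-4)$, on \emph{angle-varying} flow lines that may cross --- so as to realize the desired $\SLE_{\kappa'}(\rho^L;\rho^R)$ as a conditional law. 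The many-force-point step for $\kappa'>4$ also needs a four-case analysis because counterflow lines can hit force points before the continuation threshold. Your L\'evy $0$--$1$ proposal for transience is a reasonable alternative route; the paper instead obtains it from duality (Proposition~\ref{prop::transience}) and carries it through the pocket decomposition.
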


The continuity of $\SLE_\kappa$ (with $\rho = 0)$ was first proved by Rohde and Schramm in \cite{RS05}.  By invoking the Girsanov theorem, one can deduce from \cite{RS05} that $\SLE_\kappa(\ul{\rho})$ processes are also continuous, but only up until just before the first time that a force point is absorbed.  The main idea of the proof in \cite{RS05} is to control the moments of the derivatives of the reverse $\SLE_\kappa$ Loewner flow near the origin.  These estimates involve martingales whose corresponding PDEs become complicated when working with $\SLE_\kappa(\ul{\rho})$ in place of usual $\SLE_\kappa$.  Our proof uses the Gaussian free field as a vehicle to construct couplings which allow us to circumvent these technicalities.

Another achievement of this paper will be to show how to jointly construct all of the flow lines emanating from a single boundary point.  This turns out to give us a flow-line based construction of $\SLE_{16/\kappa}(\ul{\rho})$, $\kappa \in (0,4)$.  That is, $\SLE_{16/\kappa}$ variants occur naturally within the same imaginary geometry as $\SLE_{\kappa}$.  Note that $16/\kappa$ assumes all possible values in $(4, \infty)$ as $\kappa$ ranges over $(0,4)$.  Imprecisely, we have that the set of all points reachable by proceeding from the origin in a possibly varying but always ``northerly'' direction (the so-called ``light cone'') along $\SLE_\kappa$ flow lines is a form of $\SLE_{16/\kappa}$ for $\kappa \in (0,4)$ generated in the reverse direction (see Figure~\ref{fig::counterflowline}).

\begin{figure}[h!]
\begin{center}
\subfigure{
\includegraphics[width=0.63\textwidth]{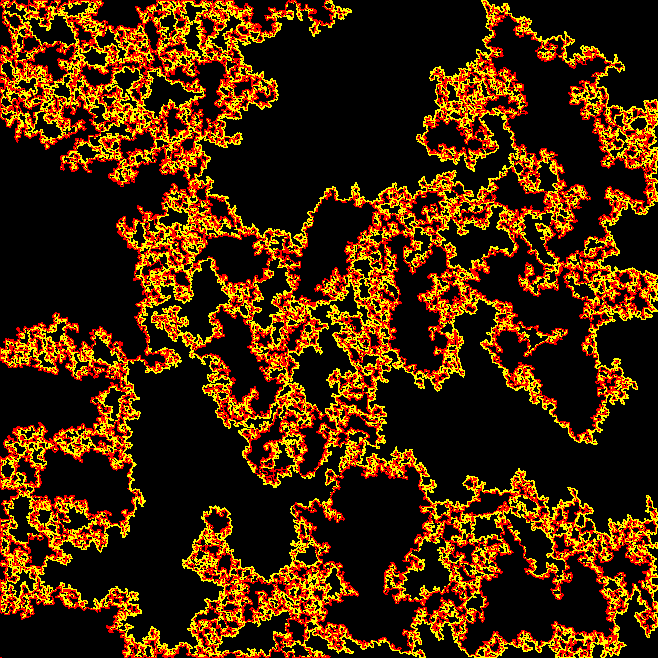}}

\subfigure{
\includegraphics[width=0.31\textwidth]{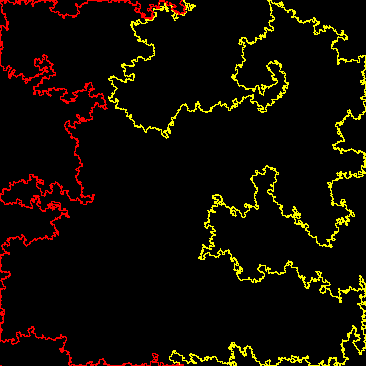}}
\subfigure{
\includegraphics[width=0.31\textwidth]{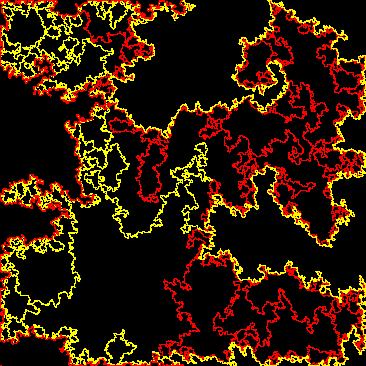}}
\subfigure{
\includegraphics[width=0.31\textwidth]{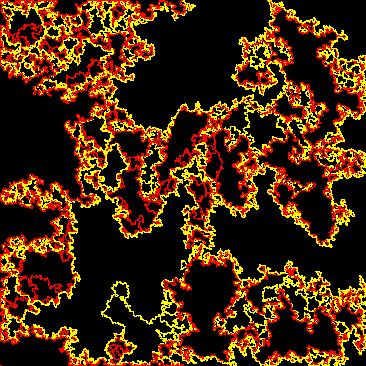}}
\end{center}
\caption{\label{fig::sle6_lightcone} {Simulation of the light cone construction of an $\SLE_6$ curve $\eta'$ in $[-1,1]^2$ from $i$ to $-i$, generated using a projection $h$ of a GFF on $[-1,1]^2$ onto the space of functions piecewise linear on the triangles of an $800 \times 800$ grid.
The lower left panel shows left and right boundaries of $\eta'$, which consist of points accessible by flowing in the vector field $e^{i h/\chi}$ for $\chi = 2/\sqrt{8/3} - \sqrt{8/3}/2$ at angle $\tfrac{\pi}{2}$ (red) and $-\tfrac{\pi}{2}$ (yellow), respectively, from $-i$.  The lower middle panel shows points accessible by flowing at angle $\tfrac{\pi}{2}$ (red) or angle $-\tfrac{\pi}{2}$ (yellow) from the yellow and red points, respectively, of the left picture; the lower right shows another iteration of this.  The top picture illustrates the light cone, the limit of this procedure.  (All paths are red or yellow;
any shade variation is a rendering artifact.)
}}
\end{figure}

\begin{figure}[h!]
\begin{center}
\subfigure{
\includegraphics[width=0.63\textwidth]{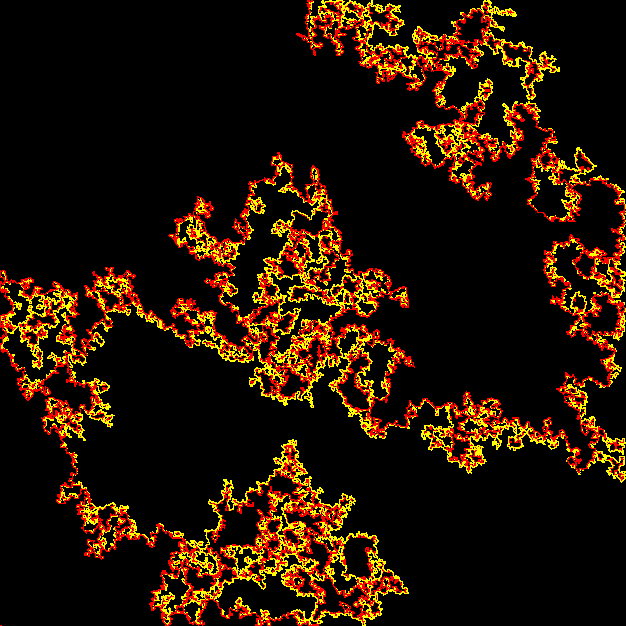}}

\subfigure{
\includegraphics[width=0.31\textwidth]{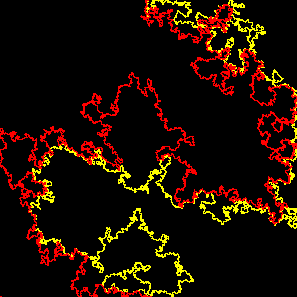}}
\subfigure{
\includegraphics[width=0.31\textwidth]{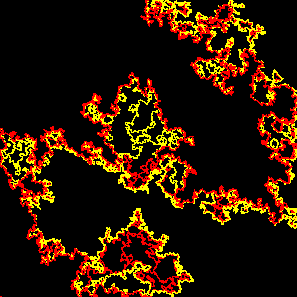}}
\subfigure{
\includegraphics[width=0.31\textwidth]{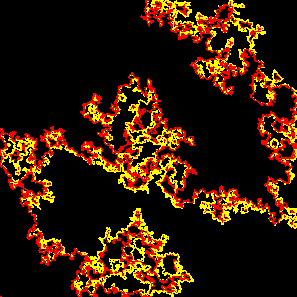}}
\end{center}
\caption{\label{fig::sle16_3_lightcone} {Numerical simulation of the light cone construction of an $\SLE_{16/3}$ process $\eta'$ in $[-1,1]^2$ from $i$ to $-i$ generated using a projection $h$ of a GFF on $[-1,1]^2$ onto the space of functions piecewise linear on the triangles of an $800 \times 800$ grid.  The lower left panel depicts the left and right boundaries of $\eta'$, which correspond to the set of points accessible by flowing in the vector field $e^{i h/\chi}$ for $\chi = 2/\sqrt{3} - \sqrt{3}/2$ at angle $\tfrac{\pi}{2}$ (red) and $-\tfrac{\pi}{2}$ (yellow), respectively, from $-i$.  The lower middle panel shows the set of points accessible by flowing at angle $\tfrac{\pi}{2}$ (red) or angle $-\tfrac{\pi}{2}$ (yellow) from the yellow and red points, respectively, of the left picture and the lower right panel depicts another iteration of this.  The top picture illustrates the light cone, which is the limit of this procedure.
}}
\end{figure}

\begin{figure}[h!]
\begin{center}
\subfigure{
\includegraphics[width=0.58\textwidth]{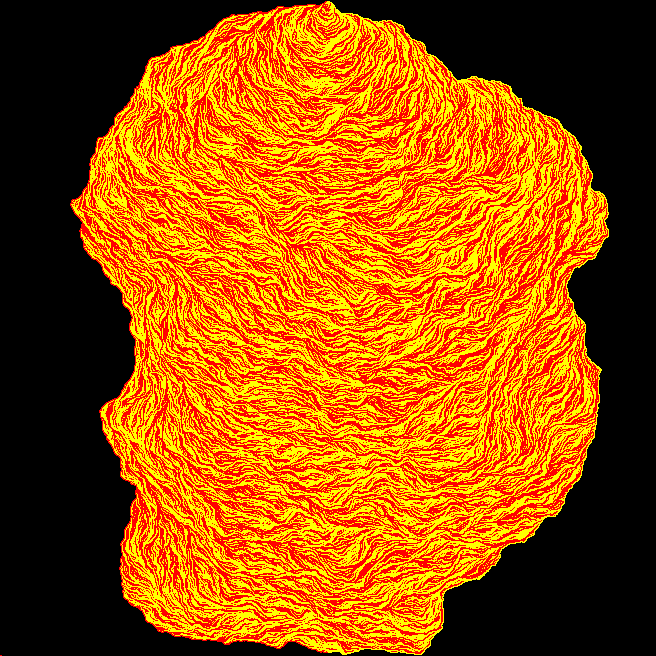}}

\subfigure{
\includegraphics[width=0.31\textwidth]{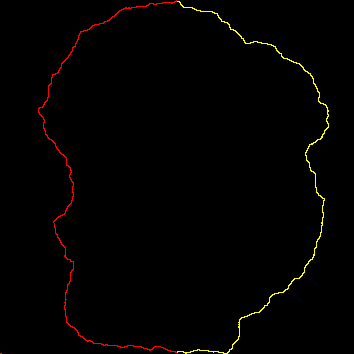}}
\subfigure{
\includegraphics[width=0.31\textwidth]{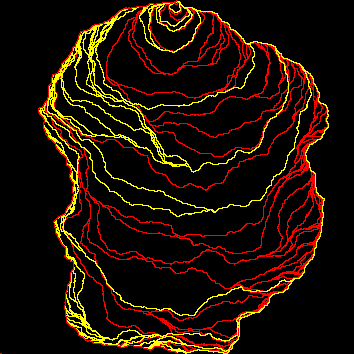}}
\subfigure{
\includegraphics[width=0.31\textwidth]{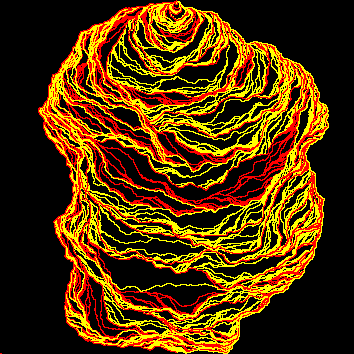}}
\end{center}
\caption{\label{fig::sle64_lightcone} { Numerical simulation of the light cone construction of an $\SLE_{64}(32;32)$ process $\eta'$ in $[-1,1]^2$ from $i$ to $-i$ generated using a projection $h$ of a GFF on $[-1,1]^2$ onto the space of functions piecewise linear on the triangles of an $800 \times 800$ grid. (It turns out that $\SLE_{64}(\rho_1;\rho_2)$ processes are boundary filling only when $\rho_1,\rho_2 \leq 28$.)  The lower left panel depicts the left and right boundaries of $\eta'$, which correspond to the set of points accessible by flowing in the vector field $e^{i h/\chi}$ for $\chi = 2/\sqrt{1/4} - \sqrt{1/4}/2$ at angle $\tfrac{\pi}{2}$ (red) and $-\tfrac{\pi}{2}$ (yellow), respectively, from $-i$.  The lower middle panel shows the set of points accessible by flowing at angle $\tfrac{\pi}{2}$ (red) or angle $-\tfrac{\pi}{2}$ (yellow) from the yellow and red points, respectively, of the left picture and the lower right panel depicts another iteration of this.  The top picture illustrates the light cone, which is the limit of this procedure.
}}
\end{figure}

\begin{figure}[h!]
\begin{center}
\includegraphics[width=0.85\textwidth]{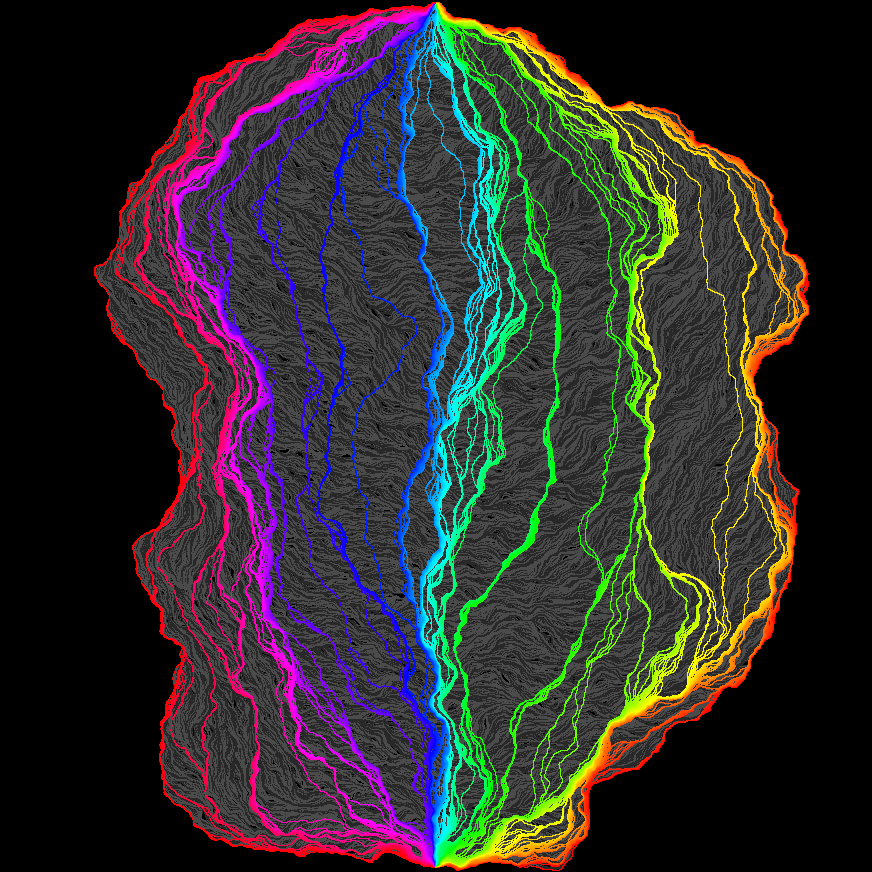}
\end{center}
\caption{\label{fig::sle64_fan} The simulation of the light cone from the top panel of Figure~\ref{fig::sle64_lightcone} where trajectories which flow at angle $\tfrac{\pi}{2}$ are dark gray and those which flow at angle $-\tfrac{\pi}{2}$ are depicted in a medium-dark gray.  The fan from $-i$ --- the set of all points accessible by fixed-angle trajectories with angles in $[-\tfrac{\pi}{2},\tfrac{\pi}{2}]$ starting at $-i$ --- is drawn on top of the light cone.  The different colors indicate trajectories with different angles.  The simulation shows that the fan does not fill the light cone; we establish this fact rigorously in Proposition~\ref{prop::fan_does_not_hit}.}
\end{figure}

\begin{figure}[h!]
\begin{center}
\includegraphics[width=0.95\textwidth,clip=true, trim = 1mm 1mm 1mm 1mm]{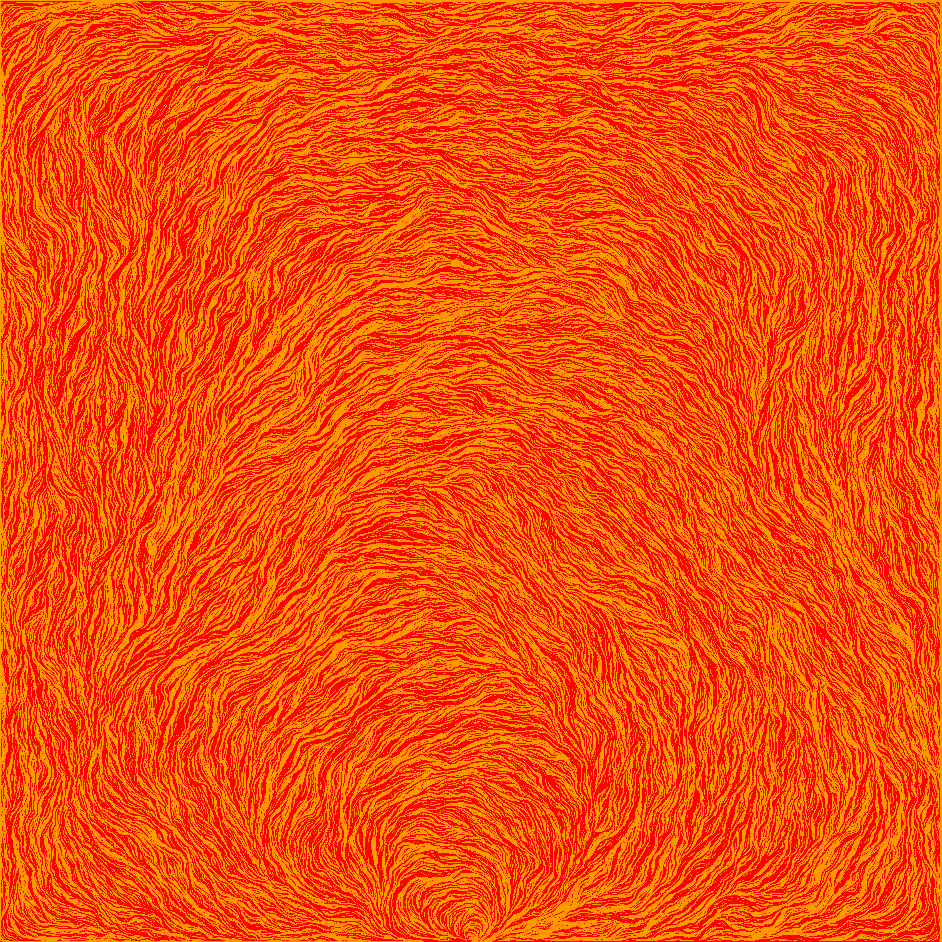}
\end{center}
\caption{\label{fig::sle128_lightcone} {Numerical simulation of the light cone construction of an $\SLE_{128}$ process $\eta'$ in $[-1,1]^2$ from $i$ to $-i$ generated using a projection $h$ of a GFF on $[-1,1]^2$ onto the space of functions piecewise linear on the triangles of an $800 \times 800$ grid.  The red and yellow curves depict the left and right boundaries, respectively, of the time evolution of $\eta'$ as it traverses $[-1,1]^2$.
}}
\end{figure}

\begin{figure}[h!]
\begin{center}
\subfigure[An $\SLE_6$ process $\eta'$ from $i$ to $-i$ generated using the light cone construction.]{
\includegraphics[width=.45\textwidth]{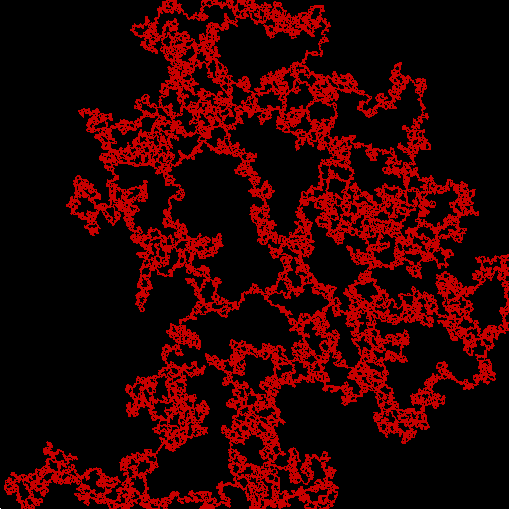}}
\hspace{0.03\textwidth}
\subfigure[The zero angle flow line $\eta$ from $-i$ to $i$ drawn on top of $\eta'$.]{
\includegraphics[width=.45\textwidth]{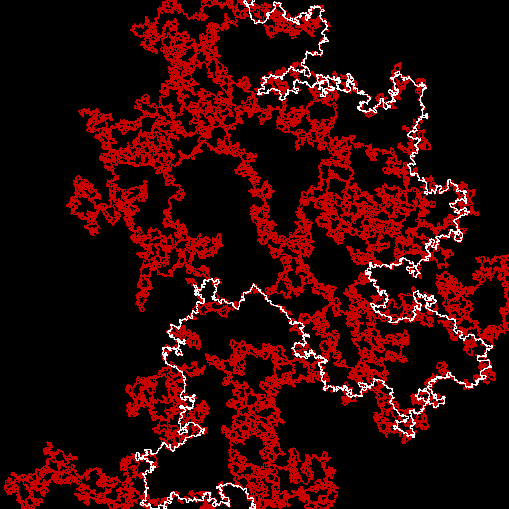}}
\subfigure[The fan from $-i$ to $i$.  The rays are $\SLE_{8/3}(\rho_1;\rho_2)$ processes.]{
\includegraphics[width=.45\textwidth]{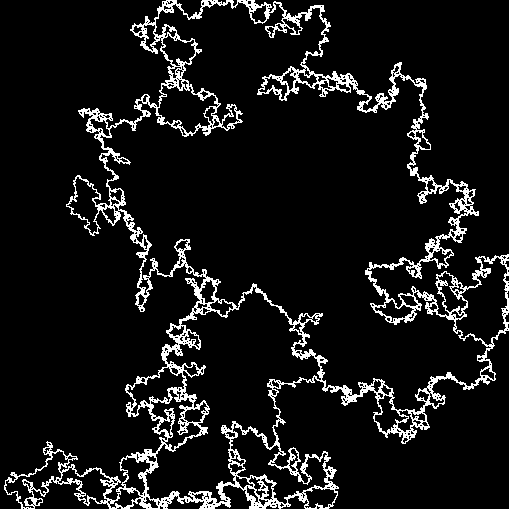}}
\hspace{0.03\textwidth}
\subfigure[The fan drawn on top of $\eta'$.  It does not cover the range of $\eta'$.]{
\includegraphics[width=.45\textwidth]{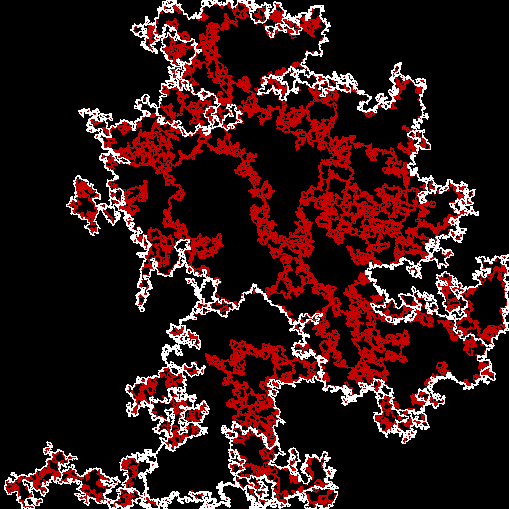}}
\end{center}
\caption{\label{fig::sle6_decomposition} { Numerical simulation of the light cone construction of an $\SLE_6$ process $\eta'$ in $[-1,1]^2$ from $i$ to $-i$ and its interaction with the zero angle flow line $\eta \sim \SLE_{8/3}(-1;-1)$ and the fan starting from $-i$, generated using a projection $h$ of a GFF on $[-1,1]^2$ onto the space of functions piecewise linear on the triangles of an $800 \times 800$ grid.  In the top right panel, the conditional law of the restrictions of $\eta'$ given $\eta$ to the left and right sides of $[-1,1]^2 \setminus \eta$ are independent $\SLE_6(-\tfrac{3}{2})$ processes.}}
\end{figure}

\begin{figure}[h!!!]
\begin{center}
\includegraphics[scale=0.85]{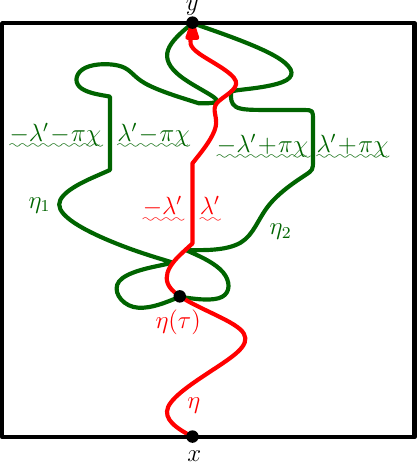}
\end{center}
\caption{\label{fig::pi_turn} Let $h$ be a GFF on a Jordan domain $D$, fix $x,y \in \partial D$ distinct, and let $\eta$ be the flow line of $h$ starting at $x$ targeted at $y$.  Let $\tau$ be any stopping time for $\eta$ and let $\eta_1$ and $\eta_2$ be the flow lines of $h$ conditional on $\eta$ starting at $\eta(\tau)$ with angles $\pi$ and $-\pi$, respectively, in the sense shown in the figure.  If $h$ were a smooth function, then we would have $\eta_1 = \eta_2$ and since $\pi$ and $-\pi$ are the same modulo $2\pi$, both paths would trace $\eta([0,\tau])$ in the reverse direction.  For the GFF, we think of $\eta_1$ (resp.\ $\eta_2$) as starting infinitesimally to the left (resp.\ right) of $\eta(\tau)$; due to the roughness of the field, $\eta_1$ and $\eta_2$ do not merge into (and in fact cannot hit) $\eta([0,\tau])$.  If $\kappa \in (2,4)$, then $\eta_1$ and $\eta_2$ can hit $\eta|_{(\tau,\infty)}$ and if $\kappa \in (0,2]$ then $\eta_1$ and $\eta_2$ do not hit $\eta|_{(\tau,\infty)}$.  If $\kappa \in (8/3,4)$, then $\eta_1$ can hit $\eta_2$ and if $\kappa \in (0,8/3]$ then $\eta_1$ cannot hit $\eta_2$.  This, in particular, explains why the yellow and red curves of Figures~\ref{fig::sle6_lightcone}--\ref{fig::sle64_lightcone} do not trace each other.}
\end{figure}

Theorem~\ref{thm::lightconeroughstatement} below is stated somewhat informally.  As mentioned earlier, precise statements will appear in Proposition~\ref{prop::light_cone_construction} and in Section~\ref{subsubsec::light_cone_general}.

\begin{theorem}
\label{thm::lightconeroughstatement}
Suppose that $h$ is a GFF on $\h$ with piecewise constant boundary data.  Let $\eta'$ be the counterflow line of $h$ starting at $\infty$ targeted at $0$.  Assume that the continuation threshold for $\eta'$ is almost surely not hit.  Then the range of $\eta'$ is almost surely equal to the set of points accessible by $\SLE_\kappa$ trajectories of $h$ starting at $0$ whose angles are restricted to be in $[-\tfrac{\pi}{2},\tfrac{\pi}{2}]$ but may change in time.  Let $\eta_L$ be the flow line of $h$ with angle $\tfrac{\pi}{2}$ starting at $0$ and $\eta_R$ the flow line of $h$ with angle $-\tfrac{\pi}{2}$.  It is almost surely the case that if $\eta'$ is nowhere boundary filling (i.e., $\eta' \cap \R$ has empty interior), then $\eta_L$ and $\eta_R$ do not hit the continuation threshold before reaching $\infty$ and are the left and right boundaries of $\eta'$.

A similar statement holds on the event that $\eta'$ is boundary filling on one or more segments of $\R$.  In this case, $\eta_L$ and $\eta_R$ hit their continuation thresholds before reaching $\infty$, but they can be extended to describe the entire left and right boundaries of $\eta'$ in the manner explained in Figure~\ref{fig::lightcone_boundary_filling}.
\end{theorem}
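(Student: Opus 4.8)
The plan is to work in $\h$ and to couple $\eta'\sim\SLE_{16/\kappa}(\ul{\rho})$ with $-h$ exactly as in Theorem~\ref{thm::coupling_existence}, so that by Theorem~\ref{thm::coupling_uniqueness} (applied with $16/\kappa>4$) the curve $\eta'$ is a measurable function of $h$, as is every flow line $\eta_\theta$ of $h$ and, by iterating Theorem~\ref{thm::coupling_uniqueness} together with the Markov property of the coupling, every angle-varying flow line obtained by concatenating finitely many flow-line segments of $h$ with angles drawn from $[-\tfrac\pi2,\tfrac\pi2]$. Write $\lightcone$ for the closure of the union of all such angle-varying flow lines started at $0$. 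The theorem amounts to the two inclusions $\lightcone\subseteq\overline{\eta'}$ and $\overline{\eta'}\subseteq\lightcone$, together with the identification of $\eta_{\pi/2}$ and $\eta_{-\pi/2}$ with the two boundary arcs of $\eta'$; since all of these objects are $h$-measurable, it suffices to prove each inclusion almost surely, and we are free to reveal $\eta'$ first and then argue about the flow lines, or vice versa.

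First I would pin down the two extreme flow lines. Conditioning on $\eta'|_{[0,\tau']}$, the field $-h$ restricted to the component $C$ of $\h\setminus\eta'([0,\tau'])$ containing $0$ is, by Theorem~\ref{thm::coupling_existence}, a GFF with boundary data $\lambda'+\chi\cdot\mathrm{winding}$ along one side of $\eta'$ and $-\lambda'+\chi\cdot\mathrm{winding}$ along the other (Figure~\ref{fig::counterflowline}); a short computation with $\lambda=\tfrac\pi{\sqrt\kappa}$, $\lambda'=\tfrac{\pi\sqrt\kappa}{4}$ and $\chi$ shows this is exactly the boundary data along which a flow line of $h$ of angle $+\tfrac\pi2$ (resp.\ $-\tfrac\pi2$) merges, in the sense of the flow-line merging results. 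Hence $\eta_{\pi/2}$ and $\eta_{-\pi/2}$ cannot cross $\eta'$ and, forced by their boundary data to run along it, trace its left and right boundaries; reading the partial-sum weights of $\eta_{\pm\pi/2}$ as $\SLE_\kappa(\ul{\rho})$ processes off the same boundary data, one checks that they are all $>-2$ precisely when $\eta'$ is nowhere boundary filling, which is exactly when Theorem~\ref{thm::continuity} guarantees that $\eta_{\pm\pi/2}$ reach $\infty$ without hitting a continuation threshold. Combining this with the angle monotonicity of Proposition~\ref{prop::angle_varying_monotonicity} (so every fixed-angle $\eta_\theta$, $\theta\in[-\tfrac\pi2,\tfrac\pi2]$, lies between $\eta_{\pi/2}$ and $\eta_{-\pi/2}$) and the flow-line/counterflow-line containment results (so such $\eta_\theta$ do not enter the complementary pockets that $\eta'$ surrounds) gives $\eta_\theta\subseteq\overline{\eta'}$; an induction on the number of angle changes, applying the same reasoning inside each conditional component to the continuation of the path, upgrades this to $\lightcone\subseteq\overline{\eta'}$.

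For the reverse inclusion — the genuinely new content — I would argue recursively, ``peeling'' $\overline{\eta'}$ by extreme-angle flow lines. Revealing $\eta_{\pi/2}$, the conditional law of $\eta'$ inside each component $U$ of $\h\setminus\eta_{\pi/2}$ lying to the right of $\eta_{\pi/2}$ is again that of a counterflow line in $U$ of the conditional field, so inside $U$ its own pair of extreme flow lines is a bona fide flow line of $h$ of angle $\pm\tfrac\pi2$ started from a point of $\eta_{\pi/2}$, i.e.\ a legitimate second segment of an angle-varying flow line; iterating produces a countable family of ``generations'' of $\pm\tfrac\pi2$-flow lines whose union $\lightcone_0\subseteq\lightcone$. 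It remains to show $\overline{\eta'}\subseteq\overline{\lightcone_0}$, and this I expect to be the main obstacle: one must show that every $z$ in the range of $\eta'$ is actually reached (not merely approximated) by a finite-generation path. The mechanism is to run $\eta'$ from $\infty$ until the first time $\sigma_z$ it hits or disconnects $z$ from $0$; then $z$ lies on $\partial C$ for the component $C\ni 0$, on an arc carrying $\eta'$-type boundary data, so for a suitable angle in $[-\tfrac\pi2,\tfrac\pi2]$ — here using the flexibility of boundary-intersecting flow lines and the continuity of Theorem~\ref{thm::continuity} — a flow line of the conditional field from $0$ runs to $z$ along $\partial C$; identifying this flow line with the generation structure above and controlling the convergence of the recursion (showing the diameters of the successively revealed components shrink to $0$, so finitely many generations come within any $\eps$ of $z$ and the limit reaches $z$ exactly) completes the argument. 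The same excursion decomposition also shows it suffices to alternate between the two extreme angles, which is the content of Figures~\ref{fig::sle6_lightcone}--\ref{fig::sle64_lightcone} and of the precise Proposition~\ref{prop::light_cone_construction}.

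Finally, in the boundary-filling case the only change is that, at the endpoint of each maximal boundary arc that $\eta'$ fills, the flow line $\eta_{\pm\pi/2}$ reaches a force point whose weights sum to $\le -2$ — its continuation threshold — so, using the theory of boundary-intersecting flow lines developed earlier, one restarts the flow line on the far side of the filled arc, and the concatenation of these segments spliced across the filled arcs (as in Figure~\ref{fig::lightcone_boundary_filling}) describes the full left and right boundaries of $\eta'$. The light-cone identity $\overline{\eta'}=\lightcone$ persists verbatim, since the recursive peeling never used that $\eta'$ avoids $\R$; the general precise version is what is spelled out in Section~\ref{subsubsec::light_cone_general}.
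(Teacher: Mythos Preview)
Your forward inclusion $\lightcone\subseteq\overline{\eta'}$ is broadly in the right spirit, though the paper argues it more directly: for any angle-varying flow line $\eta_{\theta_1\cdots\theta_\ell}$ with angles in $[-\tfrac\pi2,\tfrac\pi2]$ and any stopping time $\sigma$, one conformally maps away $\eta_{\theta_1\cdots\theta_\ell}([0,\sigma])$ and checks (via Lemma~\ref{lem::hitting_single_point}) that $\eta'$ must first exit the complement exactly at the tip $\eta_{\theta_1\cdots\theta_\ell}(\sigma)$. This immediately gives both containment and the crucial fact that $\eta'$ hits the points of the flow line in reverse chronological order, without needing to invoke monotonicity or a separate pocket-avoidance argument.

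The substantive gap is in your reverse inclusion. Your recursive ``peeling'' --- reveal $\eta_{\pi/2}$, then in each right component reveal the next extreme flow line, and iterate --- does produce a nested family of angle-varying paths, but you have not supplied any mechanism for showing the diameters of the residual components shrink to zero, and this is exactly the hard step. Your alternative (run $\eta'$ to $\sigma_z$ and then find a flow line from $0$ to $z$ along $\partial C$) is circular: you would need to already know that some angle-restricted flow line reaches $z$, which is what you are trying to prove. The paper's actual argument (Proposition~\ref{prop::light_cone_construction}) is quite different and contains a genuine idea you are missing. One fixes a stopping time $\tau'$ for $\eta'$, conformally maps the unbounded component of $\strip\setminus\eta'([0,\tau'])$ to $\h$ sending $\eta'(\tau')$ to $\infty$, and then constructs an explicit angle-varying flow line alternating between the extreme angles $\pm\tfrac\pi2$. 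The point is to show this path is unbounded in the image, hence accumulates at $\eta'(\tau')$. Each angle change adds two force points to the driving SDE, but because the angles alternate the corresponding weights alternate in sign with equal magnitude, so the total drift they contribute stays uniformly bounded (this is Lemma~\ref{lem::sle_kappa_rho_crossing}). A Girsanov comparison then gives a uniform positive lower bound on the capacity accrued during each left-to-right traversal, independent of the generation, and the strong law of large numbers forces the capacity, hence the diameter, to infinity. This SDE/capacity estimate is the engine of the proof; without it (or a substitute of comparable strength) your peeling recursion cannot be closed.
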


The light cone construction of $\SLE_{16/\kappa}$ processes described in the statement of Theorem~\ref{thm::lightconeroughstatement} includes what is known as \emph{Duplantier duality} or \emph{$\SLE$ duality} --- that the outer boundary of an $\SLE_{16/\kappa}$ process is equal in law to a kind of $\SLE_\kappa$ process.  This was proven in certain cases by Zhan \cite{ZHAN_DUALITY_1,ZHAN_DUALITY_2} and Dub\'edat \cite{DUB_DUAL}.  Theorem~\ref{thm::lightconeroughstatement} provides a more general version of this duality. It shows that the law of the right boundary of any $\SLE_{16/\kappa}(\ul{\rho}')$ process $\eta'$ from $\infty$ to $0$ in $\h$ is given by the flow line of angle $-\tfrac{\pi}{2}$ in the same imaginary geometry.  Analogously, the law of the left boundary of any $\SLE_{16/\kappa}(\ul{\rho}')$ process $\eta'$ is given by the flow line of angle $\tfrac{\pi}{2}$ in the same imaginary geometry.

We can also compute the conditional law of $\eta'$ given either $\eta_L$ or $\eta_R$.
These results are described in more detail in Section~\ref{subsubsec::light_cone_general}.  (One version of this statement also appears in \cite[Section~8]{DUB_PART}, where it is called ``strong duality''.)   We will also describe the law of $\eta'$ conditioned on the boundaries of the portions of $\eta'$ traced before and after $\eta'$ hits a given boundary point.  This result will be of particular interest to us in a subsequent work, in which we will prove the time reversal symmetry of $\SLE_{16/\kappa}$ processes when $\kappa \in (2,4)$ (so that $16/\kappa \in (4,8)$).

\begin{figure}[h!]
\begin{center}
\includegraphics[scale=0.85]{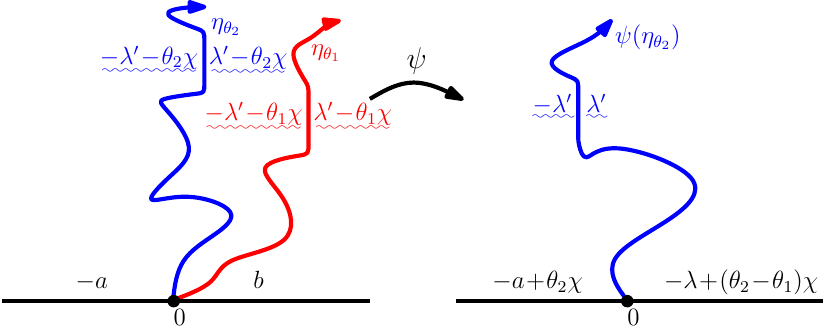}
\caption{\label{fig::monotonicity}  Suppose that $h$ is a GFF on $\h$ with the boundary data on the left panel.  For each $\theta \in \R$, let $\eta_\theta$ be the flow line of the GFF $h+\theta \chi$.  This corresponds to setting the angle of $\eta_\theta$ to be $\theta$.  Just as if $h$ were a smooth function, if $\theta_1 < \theta_2$ then $\eta_{\theta_1}$ lies to the right of $\eta_{\theta_2}$.  The conditional law of $h$ given $\eta_{\theta_1}$ and $\eta_{\theta_2}$ is a GFF on $\h \setminus \bigcup_{i=1}^2 \eta_{\theta_i}$ whose boundary data is shown above.  By applying a conformal mapping and using the transformation rule~\eqref{eqn::ac_eq_rel}, we can compute the conditional law of $\eta_{\theta_2}$ given the realization of $\eta_{\theta_1}$ and vice-versa.  That is, $\eta_{\theta_2}$ given $\eta_{\theta_1}$ is an $\SLE_\kappa((a-\theta_2 \chi)/\lambda -1; (\theta_2-\theta_1)\chi/\lambda-2)$ process independently in each of the connected components of $\h \setminus \eta_{\theta_1}$ which lie to the left of $\eta_{\theta_1}$.  Moreover, $\eta_{\theta_1}$ given $\eta_{\theta_2}$ is an $\SLE_\kappa((\theta_2-\theta_1) \chi/\lambda -2;(b+\theta_1\chi)/\lambda-1)$ independently in each of the connected components of $\h \setminus \eta_{\theta_2}$ which lie to the right of~$\eta_{\theta_2}$.  Versions of this result also hold for flow lines which start at different points as well as in the setting where the boundary data is piecewise constant (see Theorem~\ref{thm::monotonicity_crossing_merging}).}
\end{center}
\end{figure}

\begin{figure}[h!]
\begin{center}
\subfigure[If $\theta_1 < \theta_2$, then $\eta_{\theta_1}^{x_1}$ stays to the right of $\eta_{\theta_2}^{x_2}$.]{
\includegraphics[width=0.45\textwidth,height=0.4\textwidth,clip=true, trim = 1mm 1mm 1mm 1mm]{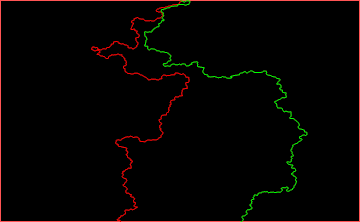}}
\hspace{0.02\textwidth}
\subfigure[If $\theta_1 = \theta_2$, then $\eta_{\theta_1}^{x_1}$ merges with $\eta_{\theta_2}^{x_2}$ upon intersecting.]{
\includegraphics[width=0.45\textwidth,height=0.4\textwidth,clip=true, trim = 1mm 1mm 1mm 1mm]{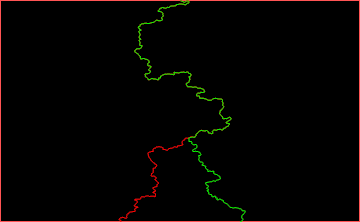}}
\subfigure[If $\theta_2 < \theta_1 < \theta_2+\pi$, then $\eta_{\theta_1}^{x_1}$ crosses $\eta_{\theta_2}^{x_2}$ upon interesting but does not cross back.]{
\includegraphics[width=0.45\textwidth,height=0.4\textwidth,clip=true, trim = 1mm 1mm 1mm 1mm]{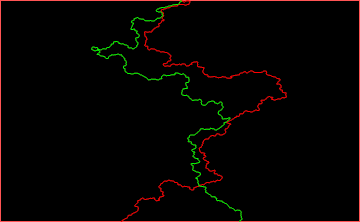}}
\end{center}
\caption{\label{fig::flow_line_interaction}  Numerical simulations which depict the three types of flow line interaction, as described in the statement of Theorem~\ref{thm::monotonicity_crossing_merging}.  In each of the simulations, we fixed $x_2 < x_1$ in $[-1-i,1-i]$, $\theta_1,\theta_2 \in \R$, and took $\eta_{\theta_1}^{x_1}$ (resp.\ $\eta_{\theta_2}^{x_2}$) to be the flow line of a projection of a GFF on $[-1,1]^2$ to the space of functions piecewise linear on the triangles of a $300 \times 300$ grid starting at $x_1$ (resp.\ $x_2$) with angle $\theta_1$ (resp.\ $\theta_2$).}
\end{figure}

The final result we wish to state concerns the interaction of imaginary rays with different angle and starting point.  In contrast with the case that $h$ is smooth, these rays may bounce off of each other and even merge, but they have the same monotonicity behavior in their starting point and angle as in the smooth case.  This result leads to a theoretical understanding of the phenomena simulated in Figures~\ref{fig::flowlines}--\ref{fig::flowlines4},~\ref{fig::grid}, and~\ref{fig::two_fans}.  The following statement is somewhat imprecise (as it does not describe all the constraints on boundary data that affect whether the distinct flow lines are certain to intersect before getting trapped at other boundary points) but a more detailed discussion appears in Section~\ref{sec::uniqueness}; see also Figure~\ref{fig::monotonicity} and Figure~\ref{fig::flow_line_interaction}.

\begin{theorem}
\label{thm::monotonicity_crossing_merging}
Suppose that $h$ is a GFF on $\h$ with piecewise constant boundary data.  For each $\theta \in \R$ and $x \in \partial \h$ we let $\eta_\theta^x$ be the flow line of $h$ starting at $x$ with angle $\theta$.  Fix $x_1,x_2 \in \partial \h$ with $x_1 \geq x_2$.
\begin{enumerate}[(i)]
\item If $\theta_1 < \theta_2$ then $\eta_{\theta_1}^{x_1}$ almost surely stays to the right of $\eta_{\theta_2}^{x_2}$.  If, in addition, $\theta_2-\theta_1 <  \pi \kappa/(4-\kappa)$, then $\eta_{\theta_1}^{x_1}$ and $\eta_{\theta_2}^{x_2}$ can bounce off of each other; otherwise the paths almost surely do not intersect (except possibly at their starting point).
\item If $\theta_1 = \theta_2$, then $\eta_{\theta_1}^{x_1}$ may intersect $\eta_{\theta_2}^{x_2}$ and, upon intersecting, the two curves merge and never separate.
\item Finally, if $\theta_2+\pi > \theta_1 > \theta_2$, then $\eta_{\theta_1}^{x_1}$ may intersect $\eta_{\theta_2}^{x_2}$ and, upon intersecting, crosses and then never crosses back.  If, in addition, $\theta_1-\theta_2 < \pi \kappa/(4-\kappa)$, then $\eta_{\theta_1}^{x_1}$ and $\eta_{\theta_2}^{x_2}$ can bounce off of each other; otherwise the paths almost surely do not subsequently intersect.
\end{enumerate}
\end{theorem}

The monotonicity component of Theorem~\ref{thm::monotonicity_crossing_merging} (i.e., the fact that $\eta_{\theta_1}^{x_1}$ almost surely stays to the right of $\eta_{\theta_2}^{x_2}$) will be first proved in settings where $\eta_{\theta_1}^x,\eta_{\theta_2}^x$ almost surely do not intersect $\partial \h$ after time $0$ (and have the same starting point) in Section~\ref{sec::non_boundary_intersecting}.  In Section~\ref{sec::uniqueness}, we will extend this result to the boundary intersecting regime and establish the merging and crossing statements.  We will also explain in Section~\ref{sec::interacting} and Section~\ref{sec::uniqueness} how in the setting of Theorem~\ref{thm::monotonicity_crossing_merging} one can compute the conditional law of $\eta_{\theta_1}^{x_1}$ given $\eta_{\theta_2}^{x_2}$ and vice-versa (see Figure~\ref{fig::monotonicity} for an important special case of this).

Note that the angle restriction $\theta_2 < \theta_1 < \theta_2+\pi$ is also the one that allows the Euclidean lines to cross (i.e., would allow for $\eta_{\theta_2}$ to cross from the left side of $\eta_{\theta_1}$ to the right side if $h$ were constant).  Although we will not explore this issue here, we remark that it is also interesting to consider what would happen if we took $\theta_1 \geq \theta_2 + \pi$.  It turns out that in this regime extra crossings can occur at points where both paths intersect $\R$, which is somewhat more complicated to describe.

\subsection{Outline}

The remainder of this article is structured as follows.  In Section~\ref{sec::sle}, we will prove the existence and uniqueness of solutions to the $\SLE_\kappa(\ul{\rho})$ equation~\eqref{eqn::sle_kappa_rho_sde}, even with force points starting at $0^-,0^+$.  We will also show that solutions to~\eqref{eqn::sle_kappa_rho_sde} are characterized by a certain martingale property.  Next, in Section~\ref{sec::gff}, we will review the construction and properties of the Gaussian free field which will be relevant for this work.  The notion of a ``local set,'' first introduced in \cite{SchrammShe10}, will be of particular importance to us.  We will also provide an independent proof of Theorem~\ref{thm::coupling_existence}.  In Section~\ref{sec::dubedat}, we will give a new presentation of Dub\'edat's proof of $\SLE$-duality --- that the outer boundary of an $\SLE_{16/\kappa}$ process is described by a certain $\SLE_\kappa$ process for $\kappa \in (0,4)$.  Following Dub\'edat, we explain how this result (and a slight generalization) implies Theorem~\ref{thm::coupling_uniqueness} for flow lines which are non-boundary intersecting.  The purpose of Section~\ref{sec::non_boundary_intersecting} is to establish the monotonicity of flow lines in their angle and to prove Theorem~\ref{thm::lightconeroughstatement} --- that the range of an $\SLE_{16/\kappa}$ trace can be realized as a light cone of points which are accessible by angle restricted $\SLE_\kappa$ trajectories, $\kappa \in (0,4)$ --- in a certain special case.  Then, in Section~\ref{sec::interacting}, we will prove a number of technical estimates which allow us to rule out pathological behavior in the conditional mean of the GFF when multiple flow and counterflow lines interact.  This will allow us to compute the conditional law of one path given several others.  Finally, in Section~\ref{sec::uniqueness} we will complete the proofs of our main theorems.

The general strategy in Sections~\ref{sec::dubedat}--\ref{sec::uniqueness} is the following:
\begin{enumerate}
\item We first show that non-boundary intersecting flow and counterflow lines are deterministic functions of the field and respect certain monotonicity properties (Section~\ref{sec::dubedat} and Section~\ref{sec::non_boundary_intersecting}).
\item We will then explain how to compute the conditional law of one non-boundary-intersecting path given several others in Section~\ref{sec::interacting}.  The conditional law will always be an $\SLE_\kappa(\rho)$ type process.  Even though the paths we consider in Sections~\ref{sec::dubedat}--\ref{sec::interacting} do not intersect the boundary, they can intersect each other. 
\item We will use this in Section~\ref{sec::uniqueness} to derive the corresponding statements (as well as continuity of the trajectories) for boundary intersecting paths from our results in the case that the paths are not boundary intersecting using conditioning arguments.
\end{enumerate}

\section{The Schramm-Loewner evolution}
\label{sec::sle}

\subsection{Overview of $\SLE_\kappa$}

$\SLE_\kappa$ is a one-parameter family of conformally invariant random curves, introduced by Oded Schramm in \cite{S0} as a candidate for (and later proved to be) the scaling limit of loop erased random walk \cite{LSW04} and the interfaces in critical percolation \cite{S01, CN06}.  Schramm's curves have been shown so far also to arise as the scaling limit of the macroscopic interfaces in several other models from statistical physics: \cite{S07,CS10U,SS05,SS09,MillerSLE}.  More detailed introductions to $\SLE$ can be found in many excellent survey articles of the subject, e.g., \cite{W03, LAW05}.

An $\SLE_\kappa$ in $\h$ from $0$ to $\infty$ is defined by the random family of conformal maps $g_t$ obtained by solving the Loewner ODE~\eqref{eqn::loewner_ode} with $W = \sqrt{\kappa} B$ and $B$ a standard Brownian motion.  Write $K_t := \{z \in \h: \tau(z) \leq t \}$ where $\tau(z) = \sup\{t \geq 0 : \im(g_t(z)) > 0\}$.  Then $g_t$ is the unique conformal map from $\h_t := \h \setminus K_t$ to $\h$ satisfying $\lim_{|z| \to \infty} |g_t(z) - z| = 0$.

Rohde and Schramm showed that there a.s.\ exists a curve $\eta$ (the so-called $\SLE$ \emph{trace}) such that for each $t \geq 0$ the domain $\h_t$ of $g_t$ is the unbounded connected component of $\h \setminus \eta([0,t])$, in which case the (necessarily simply connected and closed) set $K_t$ is called the ``filling'' of $\eta([0,t])$ \cite{RS05}.  An $\SLE_\kappa$ connecting boundary points $x$ and $y$ of an arbitrary simply connected Jordan domain can be constructed as the image of an $\SLE_\kappa$ on $\h$ under a conformal transformation $\varphi \colon \h \to D$ sending $0$ to $x$ and $\infty$ to $y$.  (The choice of $\varphi$ does not affect the law of this image path, since the law of $\SLE_\kappa$ on $\h$ is scale invariant.)

\subsection{Definition of SLE$_\kappa(\rho)$}
\label{subsubsec::sle_kappa_rho_sde}
The so-called $\SLE_\kappa(\ul{\rho})$ processes are an important variant of $\SLE_\kappa$ in which one keeps track of additional marked points.  Just as with regular $\SLE_\kappa$, one constructs $\SLE_\kappa(\ul{\rho})$ using the Loewner equation except that the driving function $W$ is replaced with a solution to the SDE~\eqref{eqn::sle_kappa_rho_sde}.  The purpose of this section is to construct solutions to~\eqref{eqn::sle_kappa_rho_sde} in a careful and canonical way.  We will not actually need to think about the Loewner evolution on the half plane for any of the discussion in this subsection.  It will be enough for now to think about the Loewner evolution restricted to the real line.

We first recall that the Bessel process of dimension $\delta > 0$, also written $\BES^\delta$, is in some sense a (non-negative) solution to the SDE
\begin{equation}
\label{eqn::bes_definition}
dX_t = dB_t + \frac{\delta-1}{2 X_t} dt,\quad X_0 \geq 0
\end{equation}
where $B$ is a standard Brownian motion.  A detailed construction of the Bessel processes appears, for example, in \cite[Chapter~XI]{RY04}. We review a few of the basic facts here.  When $\delta > 1$,~\eqref{eqn::bes_definition} holds in the sense that $X$ is a.s.\ {\em instantaneously reflecting} at $0$ (i.e., the set of times for which $X_t =0$ has Lebesgue measure zero) and a.s.\ satisfies
\begin{equation}
\label{eqn::integral_bes_definition}
X_t = X_0 + B_t + \int_0^t \frac{\delta-1}{2 X_s} ds,\quad X_0 \geq 0.
\end{equation}
In particular, assuming $\delta > 1$, the integral in~\eqref{eqn::integral_bes_definition} is finite a.s.\ so that $X_t$ is a semi-martingale.  The solution is a strong solution in the sense of \cite{RY04}, which means that~$X$ is adapted to the filtration generated by the Brownian motion~$B$.  The law of~$X$ is determined by the fact that it is a solution to~\eqref{eqn::bes_definition} away from times where $X_t = 0$, instantaneously reflecting where $X_t = 0$, and adapted to the filtration generated by $B$.

Regardless of $\delta$, standard SDE results imply that~\eqref{eqn::integral_bes_definition} has a unique solution up until the first time $t$ that $X_t = 0$.  When $\delta < 1$, however,~\eqref{eqn::integral_bes_definition} cannot hold beyond times at which $X_t = 0$ without a so-called principal value correction, because the integral in~\eqref{eqn::integral_bes_definition} is almost surely infinite beyond such times (see \cite[Section~3.1]{SHE_CLE} for additional discussion of this point).  Bessel processes can be defined for all time whenever $\delta > 0$ but they are not semi-martingales when $\delta \in (0,1)$.  For this paper, it turns out not to be necessary to consider settings that require a principal value correction.  We may always assume that either $\delta > 1$ or that $\delta \leq 1$ but we only consider the process up to the first time that $X$ reaches zero.

Fix a value $\rho > -2$ and write
\[ \delta = 1+ \frac{2(\rho+2)}{\kappa},\]
noting that $\delta > 1$.  Let $X$ be an instantaneously reflecting solution to~\eqref{eqn::bes_definition} for some $X_0 = x_0 \geq 0$.  We would like to define a pair $W$ and $V^R$ that solves the
SDE~\eqref{eqn::sle_kappa_rho_sde} with $W_0 = 0$ and some fixed initial value $V_0^R = x_0^R \geq 0$.  To motivate the definition, note that~\eqref{eqn::sle_kappa_rho_sde} formally implies that the difference $V^R - W$ solves the same SDE as $\sqrt \kappa X$, away from times where it is equal to zero.  Thus it is natural to write
\begin{equation}
\label{eqn::sle_bessel}
\begin{split}
V_t^R &= x_0^R + \int_0^t \frac{2}{\sqrt \kappa X_s} ds,\\
 W_t &= V_t^R - \sqrt{\kappa} X_t.
\end{split}
\end{equation}
The standard definition of (single-force-point) $\SLE_\kappa(\rho)$ is the Loewner evolution driven by the process $W$ defined in~\eqref{eqn::sle_bessel}.

Let us now extend the definition to the multiple-force-point setting.  Although the definition is straightforward, we have not found a construction of the law of multiple-force-point $\SLE$ in the literature that applies in the generality we consider here.  There are some minor technicalities that arise when solving the SDE~\eqref{eqn::sle_kappa_rho_sde} that do not seem to have been fully addressed previously.  One definition of $\SLE_\kappa(\ul{\rho})$ in the case of two force points, one left and one right, both starting at zero (constructed by continuously rescaling so that the force points stay at~$0$ and~$1$ and using a time change to reduce the problem to a one-dimensional diffusion) appeared in \cite{SS09}, and it was shown that when $\kappa = 4$ the process defined this way is a scaling limit of discrete GFF level lines with certain boundary conditions.  However, \cite{SS09} did not provide a general-$\kappa$ explanation of the sense in which the definition was canonical.  One could worry that subtle changes to the way that the process gets started, or the way the process behaves when force points collide with~$W_t$, could lead to different but equally valid definitions of~$\SLE_\kappa(\ul{\rho})$.

\begin{definition}
\label{def::slekrdef}
Let $B$ be a standard Brownian motion.  We will say that the continuous processes $W$ and $V^{i,q}$ describe an SLE$_\kappa(\ul{\rho})$ evolution corresponding to~$B$ (up to some stopping time) if $B$ is a Brownian motion with respect to the filtration $\CF_t = \sigma( B_s, W_s, V_s^{i,q} : s \leq t)$ and the following hold (up to that stopping time):
\begin{enumerate}
\item \label{lbl::awayfromcollision} For every stopping time $\tau$ for $(W,V^{i,q})$ which is almost surely a non-collision time for $W$ and the $V^{i,q}$, we have that the processes $W$, $V^{i,q}$, and $B$ satisfy~\eqref{eqn::sle_kappa_rho_sde} in the time interval $[\tau,\sigma]$ where $\sigma$ is the first time after $\tau$ that $W$ collides with one of the $V^{i,q}$.  Moreover, $(W,V^{i,q})$ in $[\tau,\sigma]$ is adapted to the filtration generated by $(W_\tau,V_\tau^{i,q})$ and $B|_{[\tau,\sigma]}$.
\item \label{lbl::instantaneousreflection} We have instantaneous reflection of $W$ off the $V^{i,q}$, i.e., it is almost surely the case that for Lebesgue almost all times $t$ we have $W_t \neq V_t^{i,q}$ for each $q$ and $i$.
\item \label{lbl::nolocalpushes} We also have almost surely that $V_t^{i,q} = x^{i,q} + \int_0^t \frac{2}{V_s^{i,q}- W_s}ds$ for each~$q$ and~$i$.
\end{enumerate}
\end{definition}

The three conditions are equivalent to the integral form of~\eqref{eqn::sle_kappa_rho_sde} (as explained just below), but it will be convenient to treat them separately.  The definition stated above is motivated by but does not make any reference to Loewner evolution.

Once we are given the first two conditions, Condition~\ref{lbl::nolocalpushes} rules out extraneous ``local time pushes'' that might be made to {\em both} $W$ and $V^{i,q}$ on the set of collision times.  Condition~\ref{lbl::nolocalpushes} actually implies Condition~\ref{lbl::instantaneousreflection} (since instantaneous reflection is required in order for the integral in Condition~\ref{lbl::nolocalpushes} to be defined).  We will use the term $\SLE_\kappa(\ul{\rho})$ to describe the Loewner evolution $(g_t)$ driven by $W$ or the corresponding trace (which we will eventually prove to be a continuous path almost surely).

We allow for the possibility that some of the $V^{i,L}$ may be equal to one another when $t=0$ or that they may merge into each other at some $t>0$ (and similarly for the $V^{i,R}$).  We define the {\bf continuation threshold} to be the infimum of the $t$ values for which either
\[ \sum_{i: V_t^{i,L} = W_t} \rho^{i,L} \leq -2 \quad\text{or}\quad \sum_{i: V_t^{i,R} = W_t} \rho^{i,R} \leq -2.\]
We will only construct $\SLE_\kappa(\ul{\rho})$ for $t$ below the continuation threshold.

We will now explain why Conditions~\ref{lbl::awayfromcollision}--\ref{lbl::nolocalpushes} from Definition~\ref{def::slekrdef} imply that the processes~$W_t$ and~$V_t^{i,q}$ satisfy the integral form of~\eqref{eqn::sle_kappa_rho_sde}.  

Fix $T, \wt{\epsilon} > 0$ and~$S \in (0,T)$ (non-random).  Let $S_{\wt{\epsilon}}$ be the first time $t$ after $S$ that both $V_t^{1,L} - W_t \leq -\wt{\epsilon}$ and $V_t^{1,R} - W_t \geq \wt{\epsilon}$ and let $T_{\wt{\epsilon}}$ be the minimum of $T$ and the first time after~$S_{\wt{\epsilon}}$ that either
\begin{enumerate}
\item there are at least two force points within distance~$\wt{\epsilon}$ of~$W$ or
\item $W$ is within distance $\wt{\epsilon}$ of a force point with weight less than or equal to $-2$.
\end{enumerate}
Note that $T_{\wt{\epsilon}}$ occurs before the continuation threshold is hit.  We are going to show that Conditions~\ref{lbl::awayfromcollision}--\ref{lbl::nolocalpushes} imply that $W_t$ and $V_t^{i,q}$ satisfy the integral form of~\eqref{eqn::sle_kappa_rho_sde} in the time interval $[S_{\wt{\epsilon}},T_{\wt{\epsilon}}]$.  Once we have shown this, it is then clear that $W_t$ and $V_t^{i,q}$ satisfy the integral form of~\eqref{eqn::sle_kappa_rho_sde} up until time~$T$ (or the continuation threshold is hit).  Indeed, by sending $\wt{\epsilon} \to 0$, we see that the integrated version of the equation is solved in the time interval from $S$ up until the first time after $S$ that there is a collision of force points (in which case the force points merge), the continuation threshold is hit, or time $T$ is reached.  The result thus follows by inducting on the number of force points and then taking a limit as $S \to 0$.

Fix $\epsilon \in (0,\wt{\epsilon})$.  Let $\sigma_1 = \inf\{t \geq S_{\wt{\epsilon}} : \min_{i,q} | W_t - V_t^{i,q}| = 0\}$ and let $i_1,q_1$ be such that $W_{\sigma_1} = V_{\sigma_1}^{i_1,q_1}$.  Let $\tau_1 = \inf\{t \geq \sigma_1 : |W_t - V_t^{i_1,q_1}| \geq \epsilon\}$ and note by the monotonicity of the force points (i.e., the $V_t^{i,L}$ are decreasing in $i$ and the $V_t^{i,R}$ are increasing in $i$) that $\min_{i,q} |W_{\tau_1} - V_{\tau_1}^{i,q}| = \epsilon > 0$.  Suppose that $\sigma_j$, $\tau_j$ have been defined for $1 \leq j \leq k$.  We then let $\sigma_{k+1} = \inf\{t \geq \tau_k : \min_{i,q} |W_t - V_t^{i,q}| = 0\}$ and let $i_{k+1},q_{k+1}$ be such that $W_{\sigma_{k+1}} = V_{\sigma_{k+1}}^{i_{k+1},q_{k+1}}$.  Let $\tau_{k+1} = \inf\{t \geq \sigma_{k+1} : |W_t - V_t^{i_{k+1},q_{k+1}}| \geq \epsilon\}$ and note by the monotonicity of the force points that $\min_{i,q} |W_{\tau_{k+1}} - V_{\tau_{k+1}}^{i,q}| = \epsilon > 0$.

Condition~\ref{lbl::awayfromcollision} implies that there exists a standard Brownian motion $B$ such that
\begin{equation}
\label{eqn::initial_sde}
\begin{split}
  &\sum_j (W_{\sigma_{j+1} \wedge T_{\wt{\epsilon}}} - W_{\tau_j \wedge T_{\wt{\epsilon}}} ) - \sum_{i,j,q} \int_{\tau_j \wedge T_{\wt{\epsilon}}}^{\sigma_{j+1} \wedge T_{\wt{\epsilon}}} \frac{\rho^{i,q}}{W_s - V_s^{i,q}} ds\\
= &\sum_j \sqrt{\kappa} (B_{\sigma_{j+1} \wedge T_{\wt{\epsilon}}} - B_{\tau_j \wedge T_{\wt{\epsilon}}}).
\end{split}
\end{equation}
Let $N_{\wt{\epsilon}} = \min\{j \geq 1 : \tau_j \geq T_{\wt{\epsilon}}\}$.  By the definition of the stopping times, we have that
\begin{align}
      &\sum_j |W_{\tau_j \wedge T_{\wt{\epsilon}}} - W_{\sigma_j \wedge T_{\wt{\epsilon}}}|
 \leq N_{\wt{\epsilon}} \epsilon + \sum_j |V_{\tau_j \wedge T_{\wt{\epsilon}}}^{i_j,q_j} - V_{\sigma_j \wedge T_{\wt{\epsilon}}}^{i_j,q_j}| \notag\\
 \leq& N_{\wt{\epsilon}} \epsilon + \sum_{i,j,q} |V_{\tau_j \wedge T_{\wt{\epsilon}}}^{i,q} - V_{\sigma_j \wedge T_{\wt{\epsilon}}}^{i,q}|. \label{eqn::w_v_change}
\end{align}
Condition~\ref{lbl::nolocalpushes} implies that the $V_t^{i,q}$ are absolutely continuous, hence the sum on the right hand side of~\eqref{eqn::w_v_change} almost surely tends to $0$ as $\epsilon \to 0$.

We turn to explain why $N_{\wt{\epsilon}} \epsilon \to 0$ almost surely as $\epsilon \to 0$ (at least along a positive sequence $(\epsilon_k)$ tending to $0$ sufficiently quickly).  As we will explain momentarily in more detail, this follows in the case that we have a single force point with weight $\rho > -2$ because of the tail for the amount of time it takes for a Bessel process of dimension $\delta > 1$ to exit $[0,\wt{\epsilon}]$ when starting from $\epsilon$.  We can reduce the case of many force points to this case in the following manner.  

We group the intervals $[\tau_j,\sigma_{j+1}]$ up until time $T_{\wt{\epsilon}}$ into two different types: those intervals in which
\begin{enumerate}
\item $W$ starts at distance $\epsilon$ of $V^{1,L}$ and
\item $W$ starts at distance $\epsilon$ of $V^{1,R}$. 
\end{enumerate}
(By relabeling the $V^{i,q}$ due to merging, we call $V^{1,L}$ and $V^{1,R}$ the rightmost and leftmost force point, respectively, which is to the left and right, respectively, of~$W$ after time~$S_{\wt{\epsilon}}$.)  Let $\CJ^L$ (resp.\ $\CJ^R$) consist of those $j$ of the first (resp.\ second) type.  For each $j$, we let $\xi_{j+1} = \inf\{t \geq \sigma_j : |W_t - V_t^{i_j,q_j}| \geq \wt{\epsilon}\}$ and let $\wt{\sigma}_{j+1} = \min(\sigma_{j+1},\xi_{j+1},T_{\wt{\epsilon}})$.  By the Girsanov theorem, the law of the sequence $\big( W_{t - \tau_j} - V_{t-\tau_j}^{1,L} : t \in [\tau_j,\wt{\sigma}_{j+1}] \big)$ for $j \in \CJ^L$ up until the first $j \in \CJ^L$ with $\tau_j \geq T_{\wt{\epsilon}}$ is absolutely continuous with respect to the corresponding sequence for single-force-point $\SLE_\kappa(\rho)$ with $\rho > -2$ restricted to the corresponding intervals of time (i.e., when the driving function starts from distance~$\epsilon$ of its force point and then is run until either hitting the force point or reaching distance at least $\wt{\epsilon}$ from the force point).

We are now going to explain why the Radon-Nikodym derivative $Z^L$ between these two sequences does not degenerate when we take a limit as $\epsilon \to 0$ (with $S,T,\wt{\epsilon}$ fixed).  This argument will likewise give that the same is true when we consider $j \in \CJ^R$ in place of $j \in \CJ^L$.  Let
\[ M^L = -\frac{1}{\sqrt{\kappa}} \sum_{j \in \CJ^L} \int_{\tau_j}^{\wt{\sigma}_{j+1}} \sum_{(i,q) \neq (1,L)} \frac{\rho^{i,q}}{W_s - V_s^{i,q}} d B_s \]
and also let
\[ \langle M^L \rangle = \frac{1}{\kappa} \sum_{j \in \CJ^L} \int_{\tau_j}^{\wt{\sigma}_{j+1}} \left( \sum_{(i,q) \neq (1,L)} \frac{\rho^{i,q}}{W_s - V_s^{i,q}} \right)^2 ds\]
be the quadratic variation of $M^L$.  The Girsanov theorem implies that the result of weighting the law of $(W,V^{i,q})$ by
\begin{equation}
\label{eqn::rn_bessel_form}
Z^L = \exp(M^L - \tfrac{1}{2} \langle M^L \rangle)
\end{equation}
(as a consequence of the deterministic upper bound on $\langle M^L \rangle$ that we will momentarily obtain, we will see that there are not any integrability issues with $Z^L$) is a process which evolves as a single-force-point $\SLE_\kappa(\rho)$ with $\rho > -2$ in each of the intervals $[\tau_j,\wt{\sigma}_{j+1}]$ for $j \in \CJ^L$ where the force point and driving function start at distance $\epsilon$ from each other and the evolution is stopped once they either collide or reach distance $\wt{\epsilon}$ from each other.  Let $\p_{\epsilon,\wt{\epsilon}}^*$ denote the resulting law.  That is, $d\p_{\epsilon,\wt{\epsilon}}^* / d\p = Z^L$.  Since in each of the intervals $[\tau_j,\wt{\sigma}_{j+1}]$, we know that the distance of $W$ to $V^{i,q}$ for $(i,q) \neq (1,L)$ is at least $\wt{\epsilon}$, it then follows that with $n$ given by the number of force points and $C = \kappa^{-1} \max_{i,q} (\rho^{i,q})^2$ we have the deterministic bound
\[ \langle M^L \rangle \leq C n^2 T \wt{\epsilon}^{-2}.\]
Using that $\exp(- M^L - \tfrac{1}{2} \langle M^L \rangle)$ has mean $1$ in the third step below (as a consequence of the bound on $\langle M^L \rangle$ given just above), we have that
\begin{align*}
   \E[ (Z^L)^{-1} ]
&\leq e^{C n^2 T \wt{\epsilon}^{-2}/2} \E[ \exp(-M^L)]
  \leq e^{C n^2 T \wt{\epsilon}^{-2}} \E[ \exp(-M^L - \tfrac{1}{2}\langle M^L \rangle)]
  = e^{C n^2 T \wt{\epsilon}^{-2}}.
\end{align*}
We thus have that
\[ \E_{\epsilon,\wt{\epsilon}}^*[ (Z^L)^{-2}] = \E[ (Z^L)^{-2} \cdot Z^L] = \E[ (Z^L)^{-1}] \leq e^{C n^2 T \wt{\epsilon}^{-2}}.\]
Then for any event $A$, we have that
\begin{equation}
\label{eqn::rn_prob_bound}
 \p[A] = \E_{\epsilon,\wt{\epsilon}}^*[\one_A (Z^L)^{-1}] \leq (\p^*[A])^{1/2} (\E_{\epsilon,\wt{\epsilon}}^*[ (Z^L)^{-2} ] )^{1/2} \leq e^{C n^2 T \wt{\epsilon}^{-2}/2} (\p_{\epsilon,\wt{\epsilon}}^*[A])^{1/2}.
\end{equation}

We claim that for each fixed $\zeta > 0$ we have that
\begin{equation}
\label{eqn::n_eps_eps_bound}
\limsup_{\epsilon \to 0} \p_{\epsilon,\wt{\epsilon}}^*[ N_{\wt{\epsilon}} \epsilon \geq \zeta] = 0.
\end{equation}
Indeed,~\eqref{eqn::n_eps_eps_bound} follows because of the tail of the amount of time it takes for a Bessel process of dimension $\delta \in (1,2)$ starting from $\epsilon$ to hit either $0$ or $\wt{\epsilon}$.  To make this precise, we suppose that $X$ is a Bessel process of dimension $\delta \in (1,2)$ starting from $\epsilon$.  Fix $\alpha > \epsilon$.  Let $\tau_0$ (resp.\ $\tau_\alpha$) be the first time that $X$ hits $0$ (resp.\ $\alpha$) and let $\tau = \tau_0 \wedge \tau_\alpha$ be the first time that $X$ exits $[0,\alpha]$.  Using that $X_t^{2-\delta}$ is a continuous local martingale, the optional stopping theorem implies that
\begin{equation}
\label{eqn::bessel_loc_mg}
\epsilon^{2-\delta}  = X_0^{2-\delta} = \E[ X_\tau^{2-\delta} ] = \alpha^{2-\delta} \p[ \tau_\alpha < \tau_0].
\end{equation}
Rearranging~\eqref{eqn::bessel_loc_mg} implies that
\begin{equation}
\label{eqn::bessel_hit_alpha}
\p[ \tau_\alpha < \tau_0 ] = \left( \frac{\epsilon}{\alpha} \right)^{2-\delta}.
\end{equation}
Applying~\eqref{eqn::bessel_hit_alpha} for $\alpha = \wt{\epsilon}/2$ and using that Bessel processes satisfy Brownian scaling, it follows that the probability that $X$ takes at least $\wt{\epsilon}^2$ time to exit $[0,\wt{\epsilon}]$ is at least an $\wt{\epsilon}$-dependent constant times $\epsilon^{2-\delta}$.  Since $2-\delta \in (0,1)$ so that the exponent of $\epsilon^{2-\delta}$ is in $(0,1)$, we see that~\eqref{eqn::n_eps_eps_bound} follows from Chebyshev's inequality as one can stochastically dominate from below the sum of the lengths of time required by the process starting from $\epsilon$ to exit $[0,\wt{\epsilon}]$ by a sum of independent random variables which take the value $\wt{\epsilon}^2$ with probability proportional to $\epsilon^{2-\delta}$ and $0$ otherwise.

Combining,~\eqref{eqn::rn_prob_bound} and~\eqref{eqn::n_eps_eps_bound} along with the Borel-Cantelli lemma implies that $N_{\wt{\epsilon}} \epsilon \to 0$ as $\epsilon \to 0$ almost surely (at least along a positive sequence $\epsilon_k$ tending to $0$ sufficiently quickly).

Since $N_{\wt{\epsilon}} \epsilon \to 0$ almost surely as $\epsilon \to 0$, by combining~\eqref{eqn::initial_sde} and~\eqref{eqn::w_v_change} we consequently have that
\begin{align*}
 &\sum_j (W_{\sigma_{j+1} \wedge T_{\wt{\epsilon}}} - W_{\tau_j \wedge T_{\wt{\epsilon}}}) - \sum_{i,j,q} \int_{\tau_j \wedge T_{\wt{\epsilon}}}^{\sigma_{j+1} \wedge T_{\wt{\epsilon}}} \frac{\rho^{i,q}}{W_s - V_s^{i,q}} ds\\
\to& W_{T_{\wt{\epsilon}}} - W_{S_{\wt{\epsilon}}} - \sum_{i,q} \int_{S_{\wt{\epsilon}}}^{T_{\wt{\epsilon}}} \frac{\rho^{i,q}}{W_s - V_s^{i,q}} ds \quad\text{as}\quad \epsilon \to 0.
\end{align*}
We also have that
\begin{align*}
&     \left( \sum_j (B_{\tau_j \wedge T_{\wt{\epsilon}}} - B_{\sigma_j \wedge T_{\wt{\epsilon}}}) \right)^2
= \sum_{i,j} (B_{\tau_j \wedge T_{\wt{\epsilon}}} - B_{\sigma_j \wedge T_{\wt{\epsilon}}})(B_{\tau_i \wedge T_{\wt{\epsilon}}} - B_{\sigma_i \wedge T_{\wt{\epsilon}}})\\ 
=& \sum_j (B_{\tau_j \wedge T_{\wt{\epsilon}}} - B_{\sigma_j \wedge T_{\wt{\epsilon}}})^2 + 2 \sum_{i < j} (B_{\tau_j \wedge T_{\wt{\epsilon}}} - B_{\sigma_j \wedge T_{\wt{\epsilon}}})(B_{\tau_i \wedge T_{\wt{\epsilon}}} - B_{\sigma_i \wedge T_{\wt{\epsilon}}}).
\end{align*}
The second summand above has zero expectation by the optional stopping theorem since the stopping times are bounded by $T$.  Since
\[ (B_{t \wedge T_{\wt{\epsilon}}} - B_{\sigma_j \wedge T_{\wt{\epsilon}}})^2 - (t \wedge T_{\wt{\epsilon}} - \sigma_j \wedge T_{\wt{\epsilon}}) \quad\text{for}\quad t \geq \sigma_j\]
is a martingale for each $j$, it follows from the optional stopping theorem that the expectation of the first summand above is equal to
\[ \E\left[ \sum_j \big(\tau_j \wedge T_{\wt{\epsilon}} - \sigma_j \wedge T_{\wt{\epsilon}}\big) \right].\]
Note that the quantity inside of the expectation is bounded from above by the minimum of $T$ and the amount of time that~$W$ spends within distance~$\epsilon$ of the~$V^{i,q}$.  Condition~\ref{lbl::instantaneousreflection} implies that this latter quantity tends to~$0$ almost surely as~$\epsilon \to 0$.  Therefore the expectation tends to zero as $\epsilon \to 0$ by the dominated convergence theorem (we may use the constant function $T$ as our dominating function).  Consequently, it follows that the sum of the changes to $B$ in the intervals $[\sigma_j \wedge T_{\wt{\epsilon}},\tau_j \wedge T_{\wt{\epsilon}}]$ for $1 \leq j \leq N_{\wt{\epsilon}}$ tends to $0$ in probability as $\epsilon \to 0$.  Combining, we have that
\[ W_{T_{\wt{\epsilon}}} - W_{S_{\wt{\epsilon}}} - \sum_{i,q} \int_{S_{\wt{\epsilon}}}^{T_{\wt{\epsilon}}} \frac{\rho^{i,q}}{W_s - V_s^{i,q}} ds = \sqrt{\kappa} (B_{T_{\wt{\epsilon}}} - B_{S_{\wt{\epsilon}}}) + o(1)\]
where the $o(1)$ term tends to $0$ in probability as $\epsilon \to 0$.  By passing along a subsequence as $\epsilon \to 0$, we have that the $o(1)$ term tends to $0$ almost surely, hence
\[ W_{T_{\wt{\epsilon}}} - W_{S_{\wt{\epsilon}}} - \sum_{i,q} \int_{S_{\wt{\epsilon}}}^{T_{\wt{\epsilon}}} \frac{\rho^{i,q}}{W_s - V_s^{i,q}} ds = \sqrt{\kappa} (B_{T_{\wt{\epsilon}}} - B_{S_{\wt{\epsilon}}}).\]
By sending $\wt{\epsilon} \to 0$ and then repeating the argument at the successive merging times of the force points, we thus have almost surely with $\tau$ given by the continuation threshold that
\begin{equation}
\label{eqn::w_t_w_s_diff}
W_{T \wedge \tau} - W_{S \wedge \tau} - \sum_{i,q} \int_{S \wedge \tau}^{T \wedge \tau} \frac{\rho^{i,q}}{W_s - V_s^{i,q}} ds = \sqrt{\kappa}(B_{T \wedge \tau} - B_{S \wedge \tau}).
\end{equation}
Since both sides of~\eqref{eqn::w_t_w_s_diff} are continuous, it follows that~\eqref{eqn::w_t_w_s_diff} holds almost surely for all $0 < S < T$.  It therefore follows that~\eqref{eqn::sle_kappa_rho_sde} is satisfied in integrated form.

\begin{theorem}
\label{thm::slekrdef}
Given the vector $\ul{\rho}$ and the initial values $V_0^{i,q}$, Definition~\ref{def::slekrdef} uniquely determines a joint law for $W_t$, $B_t$, and the $V_t^{i,q}$ --- each defined for all $t$ up to the continuation threshold.  Under this law, the values $W_t$, $B_t$, and $V_t^{i,q}$ taken together are a continuous multidimensional Markovian process indexed by $t$.
\end{theorem}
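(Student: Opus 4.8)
The plan is to reduce the entire construction to the theory of Bessel processes and to build the solution by concatenating ``Bessel pieces'' across the (possibly infinitely many) times at which $W_t$ collides with a cluster of coincident force points and the (finitely many) times at which force points merge with one another. The key local observation is that if, on some interval, $W_t$ is interacting only with a single cluster $G$ of coincident right force points --- write $\rho = \sum_{i \in G} \rho^{i,R}$ and $Z_t = V_t^{i,R} - W_t$ for $i \in G$ --- then \eqref{eqn::sle_kappa_rho_sde}--\eqref{eqn::force_point_evolution} force
\[ dZ_t = \frac{\rho+2}{Z_t}\,dt - \sqrt{\kappa}\,dB_t \]
off the zeros of $Z$, so that $Z_t = \sqrt{\kappa}\,X_t$ with $X$ a $\BES^\delta$, $\delta = 1 + \tfrac{2(\rho+2)}{\kappa}$; and strictly below the continuation threshold one has $\rho > -2$, hence $\delta > 1$. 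Thus $Z$ is a genuine semimartingale, instantaneously reflecting at $0$, and --- the crucial input quoted from \cite{RY04} and recalled above --- its law is \emph{characterized} by the facts that it solves this SDE off its zero set, reflects instantaneously, and is adapted to the filtration of $B$. Condition~\ref{lbl::nolocalpushes} of Definition~\ref{def::slekrdef} is exactly what forbids an extraneous local-time push of $W$ at the collision times and lets one identify $Z$ with $\sqrt{\kappa}X$; the remaining coordinates are then recovered by integrating \eqref{eqn::force_point_evolution}, whose right-hand sides are bounded away from collisions and have Bessel-integrable singularities at collisions since $\delta>1$.

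I would first dispose of the single-force-point case, which is the model for everything. Existence is immediate from \eqref{eqn::sle_bessel}: take $X$ an instantaneously reflecting $\BES^\delta$ from $x_0\ge 0$ (driven by $-B$), set $V_t^R = x_0^R + \int_0^t \tfrac{2}{\sqrt{\kappa}X_s}\,ds$ and $W_t = V_t^R - \sqrt{\kappa}X_t$, and verify directly that Conditions~\ref{lbl::awayfromcollision}--\ref{lbl::nolocalpushes} hold and that everything is adapted to $B$. For uniqueness, any triple $(W,V^R,B)$ satisfying the three conditions has $Z = V^R - W$ solving the Bessel SDE off its zeros, reflecting instantaneously, and adapted to $B$; by the quoted characterization the law of $Z$ --- hence of $(W,V^R)$ via $V^R = x_0^R + \int_0^\cdot \tfrac{2}{Z_s}\,ds$ and $W = V^R - Z$ --- is determined. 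Continuity and the Markov property are inherited from $\BES^\delta$.

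I would then treat the general two-sided, multi-point case by induction on the configuration of clusters. On each side the force points keep their order and can only merge (the adjacent gaps $V_t^{i+1,q}-V_t^{i,q}$ are monotone in $t$ once positive), so there are at most $(k-1)+(\ell-1)$ merging events; moreover the gap $V_t^{1,R}-V_t^{1,L}$ between the innermost force points is non-decreasing once positive, so for $t>0$ the driving function $W_t$ can be colliding with a left cluster \emph{or} a right cluster but never with both at once. Within each maximal interval on which the cluster configuration is fixed, I apply the single-cluster analysis above to whichever gap $W_t$ is currently hitting --- using the strong Markov property of the Bessel process to restart at the endpoints of its excursions --- and integrate \eqref{eqn::force_point_evolution} for the other coordinates, then concatenate across the finitely many merging times. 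One must check that this concatenation is consistent (the resulting law is independent of the chopping), that it does not accumulate before the continuation threshold (it cannot, since each piece is a continuous Bessel-driven process defined for all time and the merging times are finitely many), and that the pasted process is jointly continuous and Markov in $(W_t, (V_t^{i,q})_{i,q}, B_t)$; the latter reduces again to the strong Markov property of Bessel processes together with the fact that the ``regime'' a state lies in (which force points are coincident, and whether $W$ is at one of them) is a measurable function of the current state. A convenient device here, and for later use in the paper, is to record the promised martingale characterization: below the continuation threshold the process is the unique law under which a suitable family of local martingales built from the Loewner/Bessel data are martingales, repackaging Conditions~\ref{lbl::awayfromcollision}--\ref{lbl::nolocalpushes} in a form stable under the absolute-continuity arguments used elsewhere.

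The main obstacle I anticipate is \emph{starting} the process when force points sit at $0^-$ and $0^+$ with $W_0=0$, i.e.\ when $W$, a left cluster, and a right cluster are all initially at the origin: there is then no initial interval on which $W$ avoids the force points, so the concatenation scheme has no first piece, and one must make sense of the simultaneous reflection off two clusters at time $0$ and show that Conditions~\ref{lbl::awayfromcollision}--\ref{lbl::nolocalpushes} still pin down a unique law. I would handle this by an approximation/scaling argument: replace the force points at $0^\pm$ by force points at $\pm\varepsilon$, run the already-constructed process, and let $\varepsilon\downarrow 0$, using Brownian scaling to see that the law of the $\varepsilon$-process is $\varepsilon$ times a fixed law, so that a limit exists and inherits Conditions~\ref{lbl::awayfromcollision}--\ref{lbl::nolocalpushes}; combined with the observation that, the innermost gap being non-decreasing, $W$ separates from at least one of the two clusters immediately for $t>0$, so the two-sided interaction degenerates instantly to the one-sided analysis already in hand. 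Ruling out a hidden extra local-time push at this double collision, and checking that the limit is independent of how $\varepsilon\downarrow 0$, is the delicate point; Condition~\ref{lbl::nolocalpushes} is precisely the hypothesis that makes it work, and verifying that it survives the limit is where the real care is required.
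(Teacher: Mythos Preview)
Your overall architecture matches the paper's: the single-force-point case via the Bessel construction \eqref{eqn::sle_bessel}, an inductive build-up to the many-force-point case by concatenating across merging times, and the $0^\pm$ degeneracy handled last by an approximation/scaling limit. Two points deserve correction or sharpening.

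\textbf{The multi-force-point step.} Your ``local observation'' that $Z_t = V_t^{i,R} - W_t$ satisfies $dZ_t = \tfrac{\rho+2}{Z_t}\,dt - \sqrt{\kappa}\,dB_t$ is only correct when the cluster $G$ comprises \emph{all} force points. When other clusters are present, their contribution to \eqref{eqn::sle_kappa_rho_sde} adds a bounded drift term to $dZ_t$, so $Z$ is not a Bessel process and the characterization you quote from \cite{RY04} does not apply directly. The paper does not attempt a direct Bessel analysis here; instead it observes that the extra drift is smooth (the other force points are at positive distance from $W$), so Girsanov's theorem makes the law of $(W,V^{i,q})$ absolutely continuous with respect to the single-force-point law up to a bounded stopping time before $W$ hits another force point. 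Existence and uniqueness are then inherited from the single-force-point case, and one iterates across the finitely many times a new force point is hit. Your scheme can be repaired in exactly this way, but the Girsanov reduction is a genuine missing step, not a cosmetic one.

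\textbf{The $0^\pm$ case.} You correctly flag this as the delicate point and propose starting from $\pm\varepsilon$ and letting $\varepsilon\downarrow 0$, noting scale invariance. The paper does the same, but scaling alone does not yield convergence: knowing that the $\varepsilon$-process is a rescaling of the $1$-process tells you nothing about whether the family has a limit. The paper supplies the missing compactness/coupling argument. It considers the set $R\subset[0,\infty)$ of log-scales $r$ at which $|V^{1,L}_t - W_t| = |W_t - V^{1,R}_t| = e^r$ for some $t$, shows (via a uniform estimate on how much the gap can grow before $W$ is recentred) that the gaps of $R$ have finite mean under a L\'evy measure, and uses this renewal structure to couple the processes started from two small scales $r,r'$ so that they agree up to a rescaling close to $1$ after a short time. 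This is what forces the limit to exist and be unique; your acknowledgement that ``real care is required'' is accurate, and the renewal/coupling argument is that care.
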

\begin{proof}
When there is only a single force point, Theorem~\ref{thm::slekrdef} follows from standard facts about Bessel processes (see Chapter XI of \cite{RY04}; recall also~\eqref{eqn::sle_bessel}) and the definition coincides with the standard definition of SLE$_\kappa(\ul{\rho})$.

If there are multiple force points but all of the $V_0^{i,q}$ are non-zero except
for one (without loss of generality, we may suppose that only $V_0^{1,R}$ is possibly zero) then one can obtain existence of a process with the properties above, defined up until the first time that one of the {\em other} force points collides with $W_t$, using a Girsanov transformation (see the discussion of Girsanov's Theorem, e.g., in \cite{KS98,RY04}) applied to the standard one-force-point SLE$_\kappa(\rho)$ that one would obtain if $V_0^{1,R}$ were the only force point.  Girsanov's theorem applies because the remaining force points introduce a smooth drift to the Brownian motion, and the new process obtained is absolutely continuous with respect to the one-force-point process (as long as one stops at a bounded stopping time that occurs before $W_t$ gets within some fixed constant distance of one the other force points).

One can also reverse this procedure (starting with a process defined for multiple force points and applying Girsanov's theorem to produce the process corresponding to one force point).  If there were multiple possibilities for the joint laws of the $W_t$, $B_t$, and $V_t^{i,q}$ in the multiple-force-point case, then this would produce multiple possibilities for the joint laws in the single-force-point case, contradicting what we have already established.

This gives us existence and uniqueness of the law up until the first time that one of the {\em other} force points (besides $V_0^{1,R}$) hits $W_t$.  When this happens, one can use this other force point in place of $V_0^{1,R}$ (or if this other point is on the right, it will have merged with $V_0^{1,R}$ and one can subsequently treat the two force points together) and continue until a force point other than this new one is hit.  Iterating this process uniquely defines the law all the way up to the continuation threshold.  (To check this formally, one has to rule out the possibility that infinitely many of these iterations may occur in a finite period of time.  Since there are only finitely many force points, the number of times at which two right force points merge, or two left points merge, is finite.  Thus, one needs only to check that it takes an infinite amount of time almost surely for $W_t$ to alternate between hitting a left force point and hitting a right force point infinitely often, which is a simple exercise, given that the $V_t^{i,L}$ are decreasing in time while the $V_t^{i,R}$ are increasing.)

The only remaining case to treat is the possibility that there are two force points immediately to the left and right of the origin at time zero.  Let us first consider the case that these are the only two force points.  We then need to construct a triple of processes $V^{1,L}_t \leq W_t \leq V^{1,R}_t$ starting at zero.  The hypotheses imply that in any such construction, we cannot have equality of all three processes at any positive time.  Thus, if we know the processes up to any positive time, then the results above imply that the law of the continuation is uniquely determined thereafter.  In a sense, the problem is figuring out how to ``get the process started''.
Since both force points start at the origin, we will be able to use scale invariance to help us deduce existence and uniqueness of the law.

As an alternative warm-up problem, suppose we start the process off at time zero with $V^{1,L}_0 = -1$ and $W_0 = 0$ and $V^{1,R}_0 = 1$.   The previous discussion yields existence and uniqueness of the law in this case.  Let $R$ be the set of values $r$ for which there exists a $t$ such that $e^r = |V^{1,L}_t -  W_t| = |W_t - V^{1,R}_t|$.  Then $R$ is a subset of $[0,\infty)$ that contains $0$.  By scale invariance and the Markovian property, $R$ has a certain renewal property: namely, for each fixed $a$, we have that conditioned on $a' = \inf \{ R \cap [a, \infty) \}$, the conditional law of $R \cap [a', \infty)$ is the same as the original law of $R$ {\em translated} by $a'$ units to the right.

Moreover, we claim that $R$ possesses an additional expectation-boundedness property: namely, that the expectation of $|a' - a|$ is bounded independently of $a$.  In fact, we claim a stronger result: namely, given {\em any} choices for $V^{1,L}_t$ and $V^{1,R}_t$ and $W_t$ at a fixed starting time $t$, the expected value of $\log \frac{|V^{1,L}_\tau - V^{1,R}_\tau|}{|V^{1,L}_t - V^{1,R}_t|}$ (i.e., the amount that the $\log$ distance between the force points changes between times $t$ and $\tau$), where $\tau$ is the smallest value greater than $t$ satisfying $|V^{1,L}_\tau -  W_\tau| = |W_\tau -  V^{1,R}_\tau|$, is at most some fixed constant.  This follows from the fact that, no matter where the force points begin at some fixed starting time, there is a uniformly positive probability that $W_t$ will be exactly between those two force points before the distance between them doubles.  It is enough to show this for the worst case in which $W_t$ starts out equal to one of the two force points, and this follows from absolute continuity with respect to Bessel processes.

We remark that renewal property described above is enjoyed by other random sets familiar to the reader: e.g., the zero set of a Brownian motion or more generally the zero set of a Bessel process with dimension in $(0,2)$.  However, these random sets do not enjoy the expectation-boundedness property described above.   We further recall the well known fact that each of these random sets can be written as the range of an increasing stable L\'evy process, where the L\'evy jump measure $\nu$ is an infinite measure on $(0,\infty)$ whose density function is a power law.  It is not hard to deduce from the renewal property that the random set $R$ is also the range of an increasing stable L\'evy process, albeit with a different (not necessarily power law) measure $\nu$.

Given a fixed $a>0$ and the largest value $a''$ in $(-\infty, a] \cap R$, the law of $a' - a''$ is given by the measure $\nu$ restricted to the interval $(a - a', \infty)$, and normalized to be a probability measure.  The finite expectation argument above implies that $\int_b^\infty r d \nu(r) < \infty$ for any $b > 0$.  Now a natural way to construct the $\SLE_\kappa(\ul{\rho})$ process is to take a very negative value $r$ and start the process with $V^{1,L}_0 = -e^r$, $W_0 = 0$, and $V^{1,R}_0 = e^r$.  Taking the limit as $r \to -\infty$, the law of the corresponding sets $R$ (and of the entire triple of processes $V^{1,L}_t$, $W_t$, $V^{1,R}_t$) converges to a limit w.r.t.\ the Hausdorff topology on compact subsets of $\R$.  One can show this by considering two very small values $r < r'$, generating corresponding sets $R$ and $R'$, and then taking $r''$ to be the smallest value which lies in one of the sets $R$ and $R'$ and is of distance at most $\delta$ from the other set.  One can discover this point via a sort of ``leapfrog'' exploration.  Namely, one first explores the points in $R$ (following the L\'evy process) until the first time one discovers a point larger than (or within $\delta$ of) $r'$.  One then observes the points in $R'$ until discovering a point larger than (or within $\delta$ of) the set of discovered points of $R$, and so forth.  After discovering the point $r''$, one can then couple $R$ and $R'$ to be translations of each other (by an amount less than $\delta$) ever after; by scale invariance, one can take the corresponding $V^{1,L}_t$, $W_t$, $V^{1,R}_t$ processes (after the corresponding small times) to be rescalings of each other by a factor close to $1$.  If we fix some $K > 0$, then Hausdorff-metric compactness implies the existence of subsequential limits of the laws of $R \cap [-K,K]$ and $R' \cap [-K,K]$ exist.  The argument above shows that any such limits can be coupled in such a way that they agree (up to Hausdorff distance~$\delta$) with probability arbitrarily close to one; since $\delta$ is arbitrary, this implies that there must be a unique limit.  Since $K$ can also be arbitrary, we obtain both the existence of a limiting random set on $\R$ and the fact that there is a unique process satisfying the hypotheses of Definition~\ref{def::slekrdef}.

The extension from two force points (both at the origin) to many force points (two at the origin) is the same Girsanov argument given above.
\end{proof}

\begin{remark}
\label{rem::continuity_non_boundary} 
Suppose that $\eta$ is an $\SLE_\kappa(\ul{\rho}^L;\ul{\rho}^R)$ process where $\sum_{i=1}^j \rho^{i,q} \geq \tfrac{\kappa}{2}-2$ for all $1 \leq j \leq |\ul{\rho}^q|$ and $q \in \{L,R\}$.  Assume further that $x^{1,L} < 0 < x^{1,R}$.  Then $\eta$ is almost surely a continuous curve because its law is mutually absolutely continuous with respect to the law of an $\SLE_\kappa$ process (with no force points) up to every fixed time~$t$.  The reason is that, in this case, a comparison with Bessel processes implies that $W_t \neq V_t^{i,q}$ for all $1 \leq i \leq |\ul{\rho}^q|$ and $q \in \{L,R\}$, so one can compute the Radon-Nikodym derivative explicitly using Girsanov's theorem.  Moreover, $\eta$ is almost surely continuous even if $x^{1,L} = 0^-$ and $x^{1,R} = 0^+$, the reason being that we can apply the same Girsanov argument to $\eta|_{[s,t]}$ for every $0 < s < t$.  We will use this fact repeatedly, often without reference, throughout the article.
\end{remark}

\subsection{Martingale characterization of SLE$_\kappa(\rho)$}

The $\SLE_\kappa(\ul{\rho})$ processes are singled out by the following martingale characterization, which we will use repeatedly.  A version of this result for $\SLE_\kappa$ ($\ul{\rho} \equiv 0$) appears in \cite[Section~7.2]{DUB_PART}.  The argument that we present here is similar to the one in \cite{DUB_PART}.

If we are given any process $W_t$ with $W_0=0$ we can define the Loewner evolution $g_t$.  If we are also given a set of points $x^{i,L} \leq 0$ and $x^{i,R} \geq 0$ then we can define processes $V^{i,q}_t$ such that if $x^{i,q}$ has not yet been absorbed by the Loewner hull $K_t$ then $V^{i,q}_t = g_t(x^{i,q})$, and otherwise $x^{i,L}$ (resp., $x^{i,R}$) is the $g_t$ image of the left (resp., right) endpoint of $\R \cap K_t$.

\begin{theorem}
\label{thm::martingale}
Suppose we are given a random continuous curve $\eta$ on $\overline \h$ from $0$ to $\infty$ whose Loewner driving function $W_t$ is almost surely continuous.  Suppose that $x^{i,q}$ and $\rho^{i,q}$ values are given and that the $V_t^{i,q}$ are defined to be the images of the $x^{i,q}$ under the corresponding Loewner evolution (as described just above).  Let $\Fh_t$ be the corresponding harmonic function in the statement of Theorem~\ref{thm::coupling_existence}.  Then $W_t$ and the $V_t^{i,q}$ can be coupled with a standard Brownian motion $B_t$ to describe an $\SLE_\kappa(\ul{\rho})$ process (up to the continuation threshold) if and only if $\Fh_t(z)$ evolves as a continuous local martingale in $t$ for each fixed $z \in \h$ until the time $z$ is absorbed by $K_t$.
\end{theorem}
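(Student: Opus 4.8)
The two implications rest on a single It\^o computation, which I would set up first. Write $f_t = g_t - W_t$ for the centered Loewner map of $\eta$ (from \eqref{eqn::loewner_ode}), and recall from Theorem~\ref{thm::coupling_existence} that $\Fh_t(z) = \Fh_t^0(f_t(z)) - \chi\arg f_t'(z)$, where $\Fh_t^0$ is harmonic in $\h$ with piecewise constant boundary data whose only jumps are at $0$ (of size $2\lambda$) and at the points $f_t(x^{j,q}) = V_t^{j,q} - W_t$ (of size $\lambda\rho^{j,q}$). Thus $\Fh_t(z) = \Re\Psi_t(z)$ for the explicit holomorphic function
\[
\Psi_t(z) = c \;-\; \frac{i}{\pi}\Big( 2\lambda\log f_t(z) + \sum_{q\in\{L,R\}}\sum_j \lambda\rho^{j,q}\log\big(f_t(z)-f_t(x^{j,q})\big)\Big) \;-\; i\chi\log f_t'(z),
\]
where $c$ is a deterministic constant. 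For \emph{any} continuous driving function, $g_t(z)$ and $\log g_t'(z)$ are of finite variation in $t$ --- they solve $dg_t(z) = \tfrac{2}{g_t(z)-W_t}dt$ and $d\log g_t'(z) = -\tfrac{2}{(g_t(z)-W_t)^2}dt$, with no martingale part --- so the only ``rough'' input to $\Fh_t(z)$ is $W_t$ itself. The plan: (i) show $W_t$ is a continuous semimartingale; (ii) writing $W_t = M_t + C_t$ with $M$ a continuous local martingale and $C$ of finite variation, apply It\^o's formula to express the drift of $\Fh_t(z)$ as $\Delta_t(z) = \Re\big(\widetilde G_t(f_t(z))\big)$ for an explicit rational function $\widetilde G_t$ with poles only at $0$ and at the $f_t(x^{j,q})$ and with $\widetilde G_t(\infty)=0$; (iii) observe that $\Delta_t(z)\equiv 0$ (for all $z$, for a.e.\ $t$) if and only if $d\langle M\rangle_t = \kappa\,dt$ and $dC_t = \sum_{i,q}\frac{\rho^{i,q}}{W_t-V_t^{i,q}}\,dt$; (iv) read off both directions.

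For step (i) I would expand $\Fh_t(z)$ as $z\to\infty$. Using $g_t(z) = z + \tfrac{2t}{z} + O(z^{-2})$ and $g_t'(z) = 1 + O(z^{-2})$, a short computation gives, uniformly in $t$ on $[0,\tau]$ for any stopping time $\tau$ less than the continuation threshold (so $W_t$ and the $V_t^{j,q}$ stay bounded and $z=iy$ is unabsorbed for $y$ large),
\[
\Fh_t(z) = c_\infty - c_1\arg z + \frac{1}{\pi}\Big( 2\lambda W_t + \sum_{q,j}\lambda\rho^{j,q}V_t^{j,q}\Big)\Im(1/z) + O(|z|^{-2}),
\]
with $c_\infty,c_1$ deterministic. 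Taking $z=iy$ kills the $\arg z$ term, so $y\big(\Fh_t(iy) - (c_\infty-\tfrac{\pi}{2}c_1)\big)$ --- a continuous local martingale in $t$ for each fixed $y$ --- converges as $y\to\infty$, uniformly on $[0,\tau]$, to $D_t := -\tfrac1\pi\big(2\lambda W_t + \sum_{q,j}\lambda\rho^{j,q}V_t^{j,q}\big)$; a limiting argument (using that the $O(|z|^{-2})$ term is uniform in $t$ on $[0,\tau]$) then shows $D_t$ is itself a continuous local martingale. Since each $V_t^{j,q}$ is of finite variation (it solves $dV_t^{j,q} = \tfrac{2}{V_t^{j,q}-W_t}dt$ by construction), $W_t = \tfrac{1}{2\lambda}\big(\pi D_t - \sum_{q,j}\lambda\rho^{j,q}V_t^{j,q}\big)$ is a continuous semimartingale. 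I expect this step to be the main obstacle: one must justify the exchange of ``$y\to\infty$'' with the local martingale property, and handle the times at which a force point is swallowed or the continuation threshold is approached.

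For steps (ii)--(iii): with $W_t=M_t+C_t$ a semimartingale, $f_t(z)$ and the $f_t(x^{j,q})$ are semimartingales with $df_t(z) = \tfrac{2}{f_t(z)}dt - dW_t$, and It\^o applied to $\Fh_t(z)=\Re\Psi_t(z)$ produces, besides a martingale term, a drift $\Delta_t(z)$ with contributions from the finite-variation motion of $g_t(z)$ and $\log g_t'(z)$ (a double and a simple pole at $0$, simple poles at the $f_t(x^{j,q})$), from $dC_t$ (simple poles at $0$ and the $f_t(x^{j,q})$), and from $d\langle M\rangle_t$ (a double pole at $0$); all of these decay at $\infty$. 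Since $\Delta_t(z)=\Re\widetilde G_t(f_t(z))$ with $\widetilde G_t$ rational and $f_t$ conformal, the local martingale hypothesis forces every principal part of $\widetilde G_t$ to vanish, and matching them gives, in order: the double pole at the tip forces $\tfrac{\lambda}{\pi}d\langle M\rangle_t = \big(\tfrac{4\lambda}{\pi}-2\chi\big)dt$, i.e.\ $d\langle M\rangle_t = \kappa\,dt$ --- precisely here the identity $\tfrac{2\pi\chi}{\lambda}=4-\kappa$ (equivalently $\chi=\tfrac{2}{\sqrt\kappa}-\tfrac{\sqrt\kappa}{2}$) is used; the poles at the $f_t(x^{j,q})$ cancel automatically; and the residual simple pole at the tip forces $dC_t = \sum_{i,q}\frac{\rho^{i,q}}{W_t-V_t^{i,q}}dt$. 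By L\'evy's characterization $M_t=\sqrt\kappa\,B_t$ for a Brownian motion $B_t$, so $W_t$, $V_t^{j,q}$, $B_t$ satisfy \eqref{eqn::sle_kappa_rho_sde}--\eqref{eqn::force_point_evolution} on intervals avoiding the force points (Condition~\ref{lbl::awayfromcollision} of Definition~\ref{def::slekrdef}); Condition~\ref{lbl::nolocalpushes} holds by the construction of $V_t^{j,q}$; and Condition~\ref{lbl::instantaneousreflection} (instantaneous reflection) follows by comparison with Bessel processes --- near a collision with $V_t^{j,q}$, $(W_t-V_t^{j,q})/\sqrt\kappa$ is an absolutely continuous perturbation of a $\BES^\delta$ with $\delta = 1 + \tfrac{2(\rho^{j,q}+2)}{\kappa} > 1$ below the continuation threshold (with $\sum\rho$ in place of $\rho^{j,q}$ when several force points collide at once), which is instantaneously reflecting \cite{RY04}. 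Running this up to the continuation threshold proves the \emph{if} direction.

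The \emph{only if} direction is the same computation in reverse: if $W_t$, $V_t^{j,q}$ describe an $\SLE_\kappa(\ul\rho)$ evolution then, by Theorem~\ref{thm::slekrdef}, $W_t$ is a continuous semimartingale with $d\langle W\rangle_t = \kappa\,dt$ and drift $\sum_{i,q}\frac{\rho^{i,q}}{W_t-V_t^{i,q}}dt$; substituting these into $\widetilde G_t$ and using $\tfrac{2\pi\chi}{\lambda}=4-\kappa$ shows $\widetilde G_t\equiv 0$, so $\Fh_t(z)$ is a continuous local martingale in $t$ until $z$ is absorbed by $K_t$. (This recovers the It\^o computation behind Theorem~\ref{thm::coupling_existence}, carried out in \cite[Section 7.2]{DUB_PART}, and for $\kappa=4$ in \cite{SchrammShe10}.)
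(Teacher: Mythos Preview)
Your It\^o computation and the identification of Condition~\ref{lbl::awayfromcollision} are essentially the same as the paper's, though the paper extracts the semimartingale property of $W_t$ more cleanly: it passes to the harmonic conjugate $\wt{\Fh}_t(z)$ (which is a linear functional of $\Fh_t$, hence also a local martingale), writes the complex function $M_t(z)=\Fh_t(z)+i\wt{\Fh}_t(z)$ explicitly as a sum of logarithms, and observes that every term except $-2\log f_t(z)=-2\log(g_t(z)-W_t)$ is visibly differentiable in $t$. This avoids the limit-exchange issue you flagged in your step~(i).

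The genuine gap is in Conditions~\ref{lbl::instantaneousreflection} and~\ref{lbl::nolocalpushes}. Your claim that Condition~\ref{lbl::nolocalpushes} ``holds by the construction of $V_t^{j,q}$'' is incorrect. The Loewner equation gives $dV_t^{j,q}=\tfrac{2}{V_t^{j,q}-W_t}\,dt$ only away from collision times; at collision times the integrand is singular and the geometrically-defined $V_t^{j,q}$ (as the $g_t$-image of an endpoint of $K_t\cap\R$) could in principle carry a local-time-type contribution there. Your drift-matching in step~(iii) cannot see this: at a collision time with $V_t^{1,L}$, any singular push $dI_t^{1,L}$ in $V_t^{1,L}$ enters the drift of $\Fh_t(z)$ through a factor $\tfrac{1}{g_t(z)-V_t^{1,L}}=\tfrac{1}{f_t(z)}$, the \emph{same} pole as the singular part of $dC_t$, so the It\^o identity only forces a linear relation between the two, not that either vanishes. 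The paper rules out $I_t$ by a separate martingale argument: on the union of excursion intervals where $|W_t-V_t^{1,L}|<\epsilon$, the quadratic variation of $\Fh_t(z)$ tends to zero as $\epsilon\to0$ (by Condition~\ref{lbl::awayfromcollision}), so the net change of the bounded martingale $\Fh_t(z)$ over those intervals tends to zero, while that net change is also comparable to the $I_t$-increment; hence $I_t\equiv0$.

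Your Bessel comparison for Condition~\ref{lbl::instantaneousreflection} is then circular: to say $(W_t-V_t^{j,q})/\sqrt\kappa$ is an absolutely continuous perturbation of a $\BES^\delta$ you need the SDE for $V_t^{j,q}$ to hold \emph{through} collisions, which is precisely Condition~\ref{lbl::nolocalpushes}. The paper instead proves Condition~\ref{lbl::instantaneousreflection} by a purely deterministic fact (Lemma~\ref{lem::zerocapacityonboundary}): for any continuous curve with continuous driving function, the set $\{t:\eta(t)\in\R\}$ has Lebesgue measure zero, and collision times are a subset of this set.
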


The assumption that $\eta$ has a continuous Loewner driving function implies that $\eta$ is non-self-tracing and does not trace $\partial \h$ and that $\eta$ does not enter into the bounded components that it draws.  See Proposition~\ref{prop::cont_driving_function}.

\begin{proof}[Proof of Theorem~\ref{thm::martingale}]
That $\Fh_t$ evolves as a continuous local martingale if $W_t$ and $V^{i,q}_t$ correspond to an $\SLE_\kappa(\ul{\rho})$ can be seen by applying {\Ito}'s formula.  Thus, we need only prove the reverse implication.  We will assume that $\Fh_t$ is a continuous local martingale for each $z \in \h$ and verify the conditions of Definition~\ref{def::slekrdef} one at a time:

{\bf Proof of Condition~\ref{lbl::awayfromcollision}:}
Let $\wt{\Fh}_t$ be the harmonic conjugate of $\Fh_t$.  This is only defined a priori up to additive constant, but since the harmonic conjugate of $\arg(z)$ is $\log|z|$, we can fix the additive constant by writing $\Fh_t^*(z) := -\wt{\Fh}_t(z) + i \Fh_t(z)$ as follows:
\begin{equation}
\label{eqn::harmonic_form}
\begin{split}
&\ \ \ \ -\sum_{i=0}^k \rho^{i,L} \log(f_t(z) - f_t(x^{i,L})) - \log(f_t(z)) + \\
&\sum_{i=0}^\ell \rho^{i,R} \big(i\pi-\log(f_t(z) - f_t(x^{i,R})\big)+
 \big(i\pi - \log(f_t(z)) \big) - \frac{\pi \chi}{\lambda} \log f_t'(z).
 \end{split}
\end{equation}
One can show that $\wt{\Fh}_t(z) - \wt{\Fh}_t(y)$ is a local martingale for any fixed $y$ and $z$ by using the fact that this quantity is a linear function of $\Fh_t$ (representable as the integral of $\Fh_t$ times a test function) and applying Fubini's theorem.  Taking one of these points to infinity, we find that in fact $\wt{\Fh}_t$ and hence $\Fh_t^*$ is a local martingale.

Observe that $\Fh_t(z) = \im(\Fh_t^*(z))$ evolves as a continuous semi-martingale in the intervals in which the $f_t(x^{i,q})$ are not colliding with $W_t$.  Indeed, note that in the expression~\eqref{eqn::harmonic_form} above $f_t(z) - f_t(x^{i,L}) = g_t(z) - g_t(x^{i,L})$ and $f_t'(z) = g_t'(z)$ are both differentiable in~$t$.  Thus, the terms of the form $\log(f_t(z) - f_t(x^{i,L}))$ in~\eqref{eqn::harmonic_form} are semi-martingales (and likewise when $L$ is replaced by $R$ and for $\log f_t'(z)$).  Note also that $W_t$ appears only in the term $-2\log(f_t(z)) = -2\log(g_t(z) - W_t)$.  Since the other terms are semi-martingales, this term is a semi-martingale, as is its exponential, which implies that $W_t$ is a semi-martingale.

Write
\[ W_t = v_t + m_t \]
where, in the non-collision intervals, $v_t$ (resp.\ $m_t$) evolves as a process of bounded variation (resp.\ continuous local martingale).  We will next show that $m_t$ evolves as $\sqrt{\kappa}$ times a Brownian motion in the non-collision intervals by proving $d\langle m \rangle_t = \kappa dt$ for such~$t$ and invoking the L\'evy characterization of Brownian motion.  To see this, we compute the {\Ito} derivative of~\eqref{eqn::harmonic_form}.  Observe
\[ \frac{d}{dt} \log f_t'(z) =
   \frac{1}{f_t'(z)} \left(\frac{2}{f_t(z)} \right)' =
   -\frac{2}{f_t^2(z)}.\]
Consequently, the drift of the {\Ito} derivative of~\eqref{eqn::harmonic_form} takes the form
\begin{align*}
&\ \ \ \ \ \ \sum_{i=0}^k \frac{2\rho^{i,L} dt}{f_t(z) f_t(x^{i,L})} + \sum_{j=0}^\ell \frac{2 \rho^{i,R} dt}{f_t(z)f_t(x^{i,R})}\\
+&\frac{2\chi}{\lambda f_t^2(z)} dt -  \frac{4}{f_t^2(z)}dt + \frac{2}{f_t(z)} dv_t + \frac{1}{f_t^2(z)} d\langle m \rangle_t.
\end{align*}
This has to vanish since $\Fh_t^*$ is a local martingale.  Thus if we multiply through by $f_t^2(z)$ and evaluate at two different points (or simply consider points for which $f_t(z)$ is extremely close to zero), we see that we must have
\[ \left(\frac{2 \chi}{\lambda} - 4 \right) dt + d\langle m \rangle_t = 0.\]
This implies $d\langle m \rangle_t = \kappa dt$, as desired.  Inserting this back into the formula for the drift and solving for $v_t$ shows that $v_t$ takes on the desired form.

What we have shown so far implies that if $\tau$ is any stopping time for the driving process $(W,V^{i,q})$ which is almost surely not a collision time then we have that
\begin{equation}
\label{eqn::bm_non_collide}
W_{\tau + t} - W_\tau - \int_\tau^{\tau + t} \sum_{i,q} \frac{\rho^{i,q}}{W_s - V_s^{i,q}} ds = \sqrt{\kappa} B_t^{(\tau)}
\end{equation}
where $B^{(\tau)}$ is a standard Brownian motion, at least up until the first time $t \geq 0$ such that $t+\tau$ is a collision time of $(W,V^{i,q})$.  We will now argue that there is in fact a single Brownian motion $B$ such that 
\begin{equation}
\label{eqn::bm_stopping}
 W_{\tau + t} - W_\tau - \int_\tau^{\tau + t} \sum_{i,q} \frac{\rho^{i,q}}{W_s - V_s^{i,q}} ds = \sqrt{\kappa}(B_{\tau + t} - B_\tau)
\end{equation}
for all such stopping times $\tau$ up until the first $t$ so that $\tau + t$ is a collision time of $(W,V^{i,q})$.  This will complete the proof of Condition~\ref{lbl::awayfromcollision}.  To see this, we fix $\epsilon > 0$ and inductively define stopping times $\sigma_j,\tau_j$ as follows.  We let $\sigma_0$ be the infimum of times~$t$ that the distance between $W$ and the $V^{i,q}$ is at least $\epsilon$ and let $\tau_0$ be the first time after $\sigma_0$ that $W$ collides with one of the $V^{i,q}$.  Assuming that $\sigma_j,\tau_j$ have been defined for $0 \leq j \leq n$, we let $\sigma_{n+1}$ be the first time after $\tau_n$ that the distance between $W$ and the $V^{i,q}$ is at least $\epsilon$ and let $\tau_{n+1}$ be the first time after $\sigma_{n+1}$ that $W$ collides with one of the $V^{i,q}$.  We can define a Brownian motion $B^\epsilon$ by defining $B_{t+\sigma_j}^\epsilon - B_{\sigma_j}^\epsilon$ as in~\eqref{eqn::bm_non_collide} for $t \in [0,\tau_j - \sigma_j]$ for each $j$ and sampling the evolution of $B^\epsilon$ as a standard Brownian motion independently of everything else in the intervals of the form $[\tau_j,\sigma_{j+1}]$.  Note that the joint law of $B^\epsilon$ and $(W,V^{i,q})$ restricted to any compact time interval $[0,T]$ is tight as $\epsilon \to 0$ as the marginal laws of $B^\epsilon$ and $(W,V^{i,q})$ do not change with $\epsilon$.  Consequently, there exists a sequence $(\epsilon_k)$ of positive numbers decreasing to $0$ such that the joint law of $B^{\epsilon_k}$ and $(W,V^{i,q})$ converges weakly to a limit.  This gives us a coupling of a standard Brownian motion $B$ with $(W,V^{i,q})$.

We will now show that~\eqref{eqn::bm_stopping} holds (up until the first $t$ such that $\tau+t$ is a collision time) for the coupling of $B$ with $(W,V^{i,q})$.  In what follows, it will be helpful to introduce some extra notation.  For each $\epsilon > 0$, we will let $(W^\epsilon,V^{i,q,\epsilon})$ and $B^\epsilon$ have the same joint law as $(W,V^{i,q})$ and $B^\epsilon$ introduced just above.  We suppose that $\tau$ is a fixed stopping time as above and write $\tau^\epsilon$ for the corresponding stopping time for $(W^\epsilon,V^{i,q,\epsilon})$.  By compactness, there exists a subsequence $(\epsilon_{j_k})$ of $(\epsilon_k)$ such that the joint law of $(W^{\epsilon_{j_k}},V^{i,q,\epsilon_{j_k}})$, $\tau^{\epsilon_{j_k}}$, and $B^{\epsilon_{j_k}}$ converges as $k \to \infty$ to a triple $(W,V^{i,q})$, $\wt{\tau}$, and $B$.  Note that the joint law of $(W,V^{i,q})$ and $\wt{\tau}$ is the same as the joint law of $(W,V^{i,q})$ and $\tau$ because it is the same as the joint law of $(W^{\epsilon_{j_k}}, V^{i,q,\epsilon_{j_k}})$ and $\tau^{\epsilon_{j_k}}$ for all $k$.  This implies that $\wt{\tau}  = \tau$ is determined by $(W,V^{i,q})$ (as $\tau$ is a stopping time for $(W,V^{i,q})$).  Consequently, the limiting joint law of $(W,V^{i,q})$, $\wt{\tau} = \tau$, and $B$ does not depend on the choice of subsequence $(\epsilon_{j_k})$ of $(\epsilon_k)$ and therefore we have the weak convergence of the joint law of $(W^{\epsilon_k},V^{i,q,\epsilon_k})$, $\tau^{\epsilon_k}$, and $B^{\epsilon_k}$ to the joint law of $(W,V^{i,q})$, $\tau$, and $B$.  By the Skorohod representation theorem, we may couple the $(W^{\epsilon_k},V^{i,q,\epsilon_k})$, $\tau^{\epsilon_k}$, $B^{\epsilon_k}$ and $(W,V^{i,q})$, $\tau$, $B$ onto a common probability space so that the convergence is almost sure.

By the definition of $B^{\epsilon_k}$, we observe that~\eqref{eqn::bm_stopping} holds for $(W^{\epsilon_k},V^{i,q,\epsilon_k})$, $\tau^{\epsilon_k}$, and $B^{\epsilon_k}$ (up until the first $t$ such that $\tau^{\epsilon_k}+t$ is a collision time) on the event that the distance between $W_{\tau^{\epsilon_k}}^{\epsilon_k}$ and the $V_{\tau^{\epsilon_k}}^{i,q,\epsilon_k}$ is at least $\epsilon_k$.  Using that the joint law of $(W^{\epsilon_k}, V^{i,q,\epsilon_k})$ and $\tau^{\epsilon_k}$ does not depend on $k$, we have that the probability of this event tends to $1$ as $k \to \infty$.  We will now deduce from this that~\eqref{eqn::bm_stopping} holds for $(W,V^{i,q})$, $\tau$, and $B$ (up until the first $t$ such that $\tau+t$ is a collision time).  For each $t \geq 0$, letting
\begin{align*}
     \Delta_t^{\epsilon_k} &= W_{\tau^{\epsilon_k}+t}^{\epsilon_k} - W_{\tau^{\epsilon_k}}^{\epsilon_k} - \int_{\tau^{\epsilon_k}}^{\tau^{\epsilon_k}+t} \sum_{i,q} \frac{\rho^{i,q}}{W_s^{\epsilon_k} - V_s^{i,q,\epsilon_k}} ds - \sqrt{\kappa}(B_{\tau^{\epsilon_k}+t}^{\epsilon_k} - B_{\tau^{\epsilon_k}}^{\epsilon_k}) \quad\text{and}\\
     \Delta_t &= W_{\tau+t} - W_{\tau} - \int_{\tau}^{\tau+t} \sum_{i,q} \frac{\rho^{i,q}}{W_s - V_s^{i,q}} ds - \sqrt{\kappa}(B_{\tau+t} - B_{\tau}),
\end{align*}
and, for each $\delta,T > 0$, $\tau_{\delta,T}$ be the minimum of $\tau+T$ and the first time $t$ after $\tau$ that the distance between $W_t$ and the $V_t^{i,q}$ is at most $\delta$,
we will show this by arguing that
\begin{equation}
\label{eqn::delta_to_zero}
   \sup_{t \in [0,\tau_{\delta,T}-\tau]} |\Delta_t^{\epsilon_k} - \Delta_t| \to 0 \quad\text{as}\quad k \to \infty.
\end{equation}

Note that on the event that the distance between $W_\tau$ and the $V_\tau^{i,q}$ is equal to or smaller to $\delta$, we have $\tau = \tau_{\delta,T}$.  We define $\tau_{\delta,T}^{\epsilon_k}$ similarly with $(W^{\epsilon_k}, V^{i,q,\epsilon_k})$ in place of $(W,V^{i,q})$.  By the argument explained just above (with $\tau_{\delta,T}^{\epsilon_k}$ and $\tau_{\delta,T}$ in place of $\tau^{\epsilon_k}$ and $\tau$) and recoupling the laws if necessary using the Skorohod representation theorem, we almost surely have that $\tau_{\delta,T}^{\epsilon_k} \to \tau_{\delta,T}$ as $k \to \infty$.  The almost sure local uniform convergence of $W^{\epsilon_k},V^{i,q,\epsilon_k},B^{\epsilon_k}$ to $W,V^{i,q},B$, the convergence of $\tau^{\epsilon_k}$ to $\tau$, and $\tau_{\delta,T}^{\epsilon_k}$ to $\tau_{\delta,T}$ as $k \to \infty$ implies we have both
\begin{align*}
    \sup_{t \in [0,\tau_{\delta,T}-\tau]} | ( W_{\tau^{\epsilon_k}+t}^{\epsilon_k} - W_{\tau^{\epsilon_k}}^{\epsilon_k} ) - (W_{\tau+t} - W_\tau)| \to 0 \quad&\text{as}\quad k \to \infty\\
    \sup_{t \in [0,\tau_{\delta,T}-\tau]} | ( B_{\tau^{\epsilon_k}+t}^{\epsilon_k} - B_{\tau^{\epsilon_k}}^{\epsilon_k} ) - (B_{\tau+t} - B_\tau)| \to 0 \quad&\text{as}\quad k \to \infty
\end{align*}
Thus to finish proving~\eqref{eqn::delta_to_zero}, we need to prove the uniform convergence of the integral in $\Delta_t^{\epsilon_k}$ to the integral in $\Delta_t$.  Let $I_s^{\epsilon_k}$ (resp.\ $I_s$) denote the integrand in the integral in the definition of $\Delta_t^{\epsilon_k}$ (resp.\ $\Delta_t$).  Then we have that
\begin{align}
\label{eqn::integrals_converge}
   &\left|\int_{\tau^{\epsilon_k}}^{\tau^{\epsilon_k}+t} I_s^{\epsilon_k} ds - 
 \int_{\tau}^{\tau+t} I_s ds\right|
\leq \int_{\tau^{\epsilon_k} \wedge \tau}^{\tau^{\epsilon_k} \vee \tau} | I_s^{\epsilon_k}| + | I_{s+t}^{\epsilon_k}| ds +  \int_\tau^{\tau+t} | I_s^{\epsilon_k} - I_s| ds.
\end{align}
The convergence of $\tau^{\epsilon_k}$ to $\tau$ and the uniform convergence of $W^{\epsilon},V^{i,q,\epsilon_k}, B^{\epsilon_k}$ to $W,V^{i,q},B$ in $[0,\tau_{\delta,T}]$ implies that $I_s^{\epsilon_k}$ converges uniformly to $I_s$ in $[\tau,\tau_{\delta,T}]$ as $k \to \infty$.  Therefore second integral on the right hand side of~\eqref{eqn::integrals_converge} tends to $0$ as $k \to \infty$.  It is similarly not difficult to see that the first integral on the right side of~\eqref{eqn::integrals_converge} tends to $0$ as $k \to \infty$.

We have shown that~\eqref{eqn::bm_stopping} holds for $(W,V^{i,q})$, $\tau$, and $B$ (up until $\tau_{\delta,T} - \tau$).  Since $\delta,T > 0$ were arbitrary, we therefore have that~\eqref{eqn::bm_stopping} holds for $(W,V^{i,q})$, $\tau$, and $B$ (up until the first $t$ that $\tau+t$ is a collision time).

We note that at this point we have constructed a coupling of a standard Brownian motion $B$ with $(W,V^{i,q})$ so that Condition~\ref{lbl::awayfromcollision} holds.  We will now check that $B$ is a Brownian motion with respect to the filtration $\CF_t = \sigma( W_s, V_s^{i,q}, B_s : s \leq t)$.  To prove this, it suffices to show that $B|_{[\tau,\infty)} - B_\tau$ is a Brownian motion independently of $(W_\tau,V_\tau^{i,q})$ where $\tau$ is a stopping time as above.  In order to justify this, we will first argue that the conditional law of $B^{\epsilon}|_{[\tau^{\epsilon},\infty)} - B_{\tau^{\epsilon}}^{\epsilon}$ given $(W_{\tau^{\epsilon}}^{\epsilon},V_{\tau^{\epsilon}}^{i,q,\epsilon})$ is that of a standard Brownian motion.  Let $\sigma_j^\epsilon,\tau_j^\epsilon$ be as defined above for $(W^\epsilon,V^{i,q,\epsilon})$.  By definition, we can write
\begin{align}
\label{eqn::b_eps_rep}
   B_t^\epsilon = M_t^{1,\epsilon} + M_t^{2,\epsilon} :=  \sum_j \left( \wt{B}_{t \wedge \tau_j^\epsilon}^\epsilon - \wt{B}_{t \wedge \sigma_j^\epsilon}^\epsilon \right) + \sum_j \left( \wh{B}_{t \wedge \sigma_{j+1}^\epsilon} - \wh{B}_{t \wedge \tau_j^\epsilon}^\epsilon \right)
\end{align}
where $\wt{B}_t^\epsilon$ for $t \in [\sigma_j^\epsilon,\tau_j^\epsilon]$ is determined from $(W^\epsilon,V^{i,q,\epsilon})$ as in~\eqref{eqn::bm_non_collide} and $\wh{B}^\epsilon$ is a standard Brownian motion which is independent of $(W^\epsilon,V^{i,q,\epsilon})$.  From the representation~\eqref{eqn::b_eps_rep}, it is easy to see that $M_{\tau^\epsilon+t}^{1,\epsilon} - M_{\tau^\epsilon+t}^{1,\epsilon}$ and $M_{\tau^\epsilon+t}^{2,\epsilon} - M_{\tau^\epsilon}^{2,\epsilon}$ are both continuous martingales with respect to the filtration generated by $(W_s^\epsilon,V_s^{i,q,\epsilon})$ and $\wh{B}_s^\epsilon$ for $s \leq \tau^\epsilon+t$.  Consequently, $B_{\tau^\epsilon+t}^\epsilon - B_{\tau^\epsilon}^\epsilon$ is also a continuous martingale with respect to the same filtration.  Moreover, it is easy to see from~\eqref{eqn::b_eps_rep} that $\langle B_{\tau^\epsilon + \cdot}^\epsilon - B_{\tau^\epsilon}^\epsilon \rangle_t = t$ for all $t \geq 0$.  Therefore the L\'evy characterization of Brownian motion implies that $B_{[\tau^\epsilon,\infty)}^\epsilon - B_{\tau^\epsilon}^\epsilon$ has the law of a standard Brownian motion conditionally on $(W_{\tau^\epsilon}^\epsilon,V_{\tau^\epsilon}^{i,q,\epsilon})$.

Fix $T > 0$ and a bounded function $f$ on the Cartesian product of the space of continuous functions defined on the interval $[0,T]$ equipped with the uniform topology and $\R^n$ where $n$ is the number of elements of the vector $(W_{\tau^{\epsilon_k}}^{\epsilon_k},V_{\tau^{\epsilon_k}}^{i,q,\epsilon_k})$.  This implies that
\[ \lim_{k \to \infty} \E[ f( B^{\epsilon_k}(\tau^{\epsilon_k}+\cdot)|_{[0,T]} - B_{\tau^{\epsilon_k}}^{\epsilon_k} , W_{\tau^{\epsilon_k}}^{\epsilon_k},V_{\tau^{\epsilon_k}}^{i,q,\epsilon_k})] = \E[ f(\wt{B}, W_\tau,V_\tau^{i,q})]\]
where $\wt{B}$ is a standard Brownian motion on $[0,T]$ which is independent of $W_\tau,V_\tau^{i,q}$.  Since the joint law of $B^{\epsilon_k}|_{[\tau^{\epsilon_k},\tau^{\epsilon_k}+T]}$ and $(W_{\tau^{\epsilon_k}}^{\epsilon_k},V_{\tau^{\epsilon_k}}^{i,q,\epsilon_k})$ converges to the joint law of $B|_{[\tau,\tau+T]}$ and $W_\tau,V_\tau^{i,q}$ as $k \to \infty$ (as explained just above), we have that
\[ \E[ f(\wt{B}, W_\tau,V_\tau^{i,q})] = \E[ f(B(\tau+\cdot)|_{[0,T]} - B_\tau, W_\tau,V_\tau^{i,q})].\]
The claim thus follows since $f$ was an arbitrary bounded, continuous function.

{\bf Proof of Condition~\ref{lbl::instantaneousreflection}:}
To obtain instantaneous reflection, note that the set of times $t$ at which $W_t$ is equal to a force point is a subset of the set of times at which $\eta(t) \in \partial \h = \R$.  It turns out that this set must have Lebesgue measure zero for any continuous path $\eta$ with a continuous driving function.  This fact is stated and proved as Lemma~\ref{lem::zerocapacityonboundary} below.

{\bf Proof of Condition~\ref{lbl::nolocalpushes}:}  We know that $V_t^{1,L}$ is a non-increasing process which, by Condition~\ref{lbl::awayfromcollision}, evolves according to the Loewner flow driven by $W_t$ in those intervals of time in which $V_t^{1,L} \neq W_t$.  In particular, this implies that if we have any finite collection of disjoint open intervals $(a_j,b_j)$ for $1 \leq j \leq k$ such that $V_s^{1,L} \neq W_s$ for all $s \in (a_j,b_j)$ and $1 \leq j \leq k$ then for $t \geq b_k$ we have that
\begin{align*}
        V_t^{1,L}
 \leq \sum_{j=1}^k \big( V_{b_j}^{1,L} - V_{a_j}^{1,L} \big)
 = \sum_{j=1}^k \int_{a_j}^{b_j} \frac{2}{V_s^{1,L} - W_s} ds.
\end{align*}
Since this inequality holds for any finite collection of disjoint open intervals contained in $\{s \in [0,t] : W_s \neq V_s^{1,L}\}$ and $\{ s \in [0,t] : W_s = V_s^{1,L}\}$ has Lebesgue measure zero, by the monotone convergence theorem we have that
\begin{align*}
     V_t^{1,L} &\leq \int_0^t \frac{2}{V_s^{1,L} - W_s} ds.
\intertext{In particular, $(V_s^{1,L} - W_s)^{-1} \leq 0$ is integrable on bounded intervals of time in $[0,\infty)$.  The same argument implies that}
V_t^{1,L} - V_u^{1,L} &\leq \int_u^t \frac{2}{V_s^{1,L} - W_s} ds
\intertext{for all $0 \leq u < t$.  Therefore}
I_t &= \int_0^t \frac{2}{V_s^{1,L}-W_s}ds - V^{1,L}_t
\end{align*}
is non-decreasing.

We claim that $I_t$ is almost surely zero.  To prove this, consider a force point $V^{1,L}_t$ with which $W_t$ can collide.  We define an interval of time $(s_1,t_1)$ such that $s_1$ is the first time at which $V^{1,L}_t = W_t$ and $t_1$ is the first subsequent time at which $|V^{1,L}_t - W_t| = \epsilon$.  Inductively, we define $s_k$ to be the first time after $t_{k-1}$ at which $V^{1,L}_t = W_t$ and then take $t_k$ to be the first subsequent time at which $|V^{1,L}_t - W_t| = \epsilon$.  We consider how much the quantities $V^{1,L}_t$, $I_t$ and $\Fh_t(z)$ change during the intervals $(s_k,t_k)$ with $t_k \leq T$ where $T$ is a stopping time that a.s.\ occurs before $W_t$ gets within some fixed distance of $V_t^{1,L}$.  Fix some $z_0 \in \h$ and further assume that $T$ is a.s.\ bounded by some fixed constant, that $T$ a.s.\ occurs before $\Im(g_t(z_0))$ gets below some fixed positive value and also before $\Fh_t(z_0)$ a.s.\ changes by at most some fixed constant amount.

The sum of the changes to $\int_0^t 2(V^{1,L}_s - W_s)^{-1}ds$ during the intervals $(s_j,t_j)$ tends to zero as $\epsilon \to 0$ (simply because the integral is finite a.s.\ and the combined Lebesgue measure of the intervals tends to zero with $\epsilon$).  Thus the overall sum of changes to $V^{1,L}_t$ during these times tends to the $I_t$ change as $\epsilon \to 0$.  We will now argue that it is almost surely the case that the sum of the changes $V^{1,L}_t - W_t$ makes during the intervals $(s_j,t_j)$ tends to zero as $\epsilon \to 0$.  To see this, we note that the change in each interval is equal to $\epsilon$ by definition.  Thus controlling the total change is equivalent to controlling the number of such intervals before time $T$.  This, in turn follows, from the same argument that we used to show that $N_{\wt{\epsilon}} \epsilon \to 0$ as $\epsilon \to 0$ in the proof that Conditions~\ref{lbl::awayfromcollision}--\ref{lbl::nolocalpushes} imply that the integrated version of~\eqref{eqn::sle_kappa_rho_sde} given above (which we emphasize only uses the form of the evolution of the process at times when it is not colliding with a force point).  Thus, the overall sum of the changes to $W_t$ during these intervals must also tend to the $I_t$ change as $\epsilon \to 0$.  Since the Lebesgue measure of the union of the intervals tends to zero as $\epsilon \to 0$, it follows from Loewner evolution that the sum of the changes made to any force point $V^{i,q}$ other than $V^{1,L}_t$ tends to zero.

Recalling the definition of $\Fh_t$ (in terms of $W_t$ and the $V^{i,q}$) in Theorem~\ref{thm::coupling_existence} and the fact that $\Im g_t(z_0)$ is bounded below (recall that we fixed $z_0$ above), we find that if $\epsilon$ is sufficiently close to zero, the sum of the net changes to $\Fh_t(z_0)$ during the intervals is between constant non-zero-same-sign multiples of the $I_t$ change (due to the corresponding changes to the pair $W_t$ and $V^{1,L}_t$ during these intervals --- the effect from changes to other force points becomes negligible as $\epsilon \to 0$).  Thus to prove that the amount $I_t$ changes up until time $T$ is zero, it suffices to prove that the sum of the net changes made to $\Fh_t(z_0)$ during these intervals tends to zero as $\epsilon \to 0$.
 
To this end, note that the expected size of the total change of $\Fh_t(z_0)$ during these intervals is zero since $\Fh_t(z_0)$ is a local martingale that is bounded if stopped at time $T$ (hence a martingale if stopped at time $T$).  

We claim that Condition~\ref{lbl::awayfromcollision} and Condition~\ref{lbl::instantaneousreflection} together imply that the quadratic variation of $\Fh_t(z_0)$ that occurs during these intervals tends to zero as $\epsilon \to 0$.  To see this, we recall that in the non-collision intervals $(t_j,s_{j+1})$ we have that the evolution of $W_t - V_t^{1,L}$ is absolutely continuous with respect to the law of $\sqrt{\kappa}$ times a Bessel process with dimension in $(1,2)$ where in each such interval the Bessel process starts from $\epsilon$ and is then run until it first hits $0$.  In particular, we can weight the law of $(W,V^{i,q})$ by a Radon-Nikodym derivative $Z^\epsilon$ which has the same form as in~\eqref{eqn::rn_bessel_form} so that, under the weighted law, in the intervals of time of the form $(t_j,s_{j+1})$ we have that $W_t - V_t^{1,L}$ evolves as $\sqrt{\kappa}$ times a Bessel process up until the stopping time $T$ defined above.  That is, under the weighted law, we can construct a coupling between $W_t - V_t^{1,L}$ and $\sqrt{\kappa}$ times a Bessel process $X$ so that the two processes agree in the intervals of time in which they are making excursions from $\epsilon$ back to $0$ up until the stopping time $T$ defined just above.  Since we are considering $(W,V^{i,q})$ up until time $T$, just as in the case of~\eqref{eqn::rn_bessel_form} in our proof that Conditions~\ref{lbl::awayfromcollision}--\ref{lbl::nolocalpushes} imply that~\eqref{eqn::sle_kappa_rho_sde} is satisfied in integrated form, we have that this Radon-Nikodym derivative $Z^\epsilon$ has finite moments of all (positive and negative) orders, each of which is bounded uniformly in $\epsilon$ (recall the arguments just after~\eqref{eqn::rn_bessel_form}).  

For each $\epsilon > 0$, we let $(W^\epsilon,V^{i,q,\epsilon})$, $Z^\epsilon$ be an instance of the processes described just above and let $T^\epsilon$ be the stopping which corresponds to $T$.  Then there exists a sequence $(\epsilon_k)$ of positive numbers decreasing to $0$ so that the joint law of $(W^{\epsilon_k},V^{i,q,\epsilon_k})$, $Z^{\epsilon_k}$, $T^{\epsilon_k}$ converges weakly (with respect to the uniform topology on compact intervals) to a limit $(W,V^{i,q})$, $Z$, $T$ (where the joint law of $(W,V^{i,q})$ and $T$ is the same as just above; this follows from the argument given in the proof of Condition~\ref{lbl::awayfromcollision}).  By the construction, the law of the non-collision intervals of $W - V_t^{1,L}$ under the law weighted by $Z$ agree in law with those of $\sqrt{\kappa}$ times a Bessel process, up until the stopping time $T$.  That is, under the weighted law, we can construct a coupling of $W - V_t^{1,L}$ with $\sqrt{\kappa}$ times a Bessel process so that the excursions that each makes from $0$ are the same, up time time $T$.  Combining this with Condition~\ref{lbl::instantaneousreflection} and \cite[Proposition~3.3]{SHE_CLE}\footnote{\label{fn::prop3.3}We note that \cite[Proposition~3.3]{SHE_CLE} states that if $X$ is a continuous process which is adapted to the filtration of a Brownian motion $B$, solves the Bessel SDE of a fixed dimension $\delta \in (0,2)$ when it is not hitting $0$, and is instantaneously reflecting at $0$, then $X$ has the law of a Bessel process of dimension $\delta$.  This result holds under more generality, namely one may assume that for every stopping time $\tau$ for $X$ such that $X_\tau \neq 0$ almost surely with $\sigma = \inf\{t \geq \tau : X_t = 0\}$ we have that $X|_{[\tau,\sigma]}$ is adapted to the filtration $\CF_t = \sigma(X_\tau, B_s : s \leq t)$ and that $B|_{[\tau,\infty)}$ is a Brownian motion with respect to $\CF_t$ for $t \geq \tau$.  To see this, we note that under these hypotheses we have that for each $\epsilon > 0$ the law of the ordered collection of excursions that $X$ makes from $\epsilon$ to $0$ has the same law as the corresponding collection for a Bessel process $\wt{X}$ as the Bessel SDE has a unique strong solution.  Therefore we can couple the laws of $X$ and $\wt{X}$ together so that these excursions are equal.  By sending $\epsilon \to 0$, we get an asymptotic coupling between the laws of $X$ and $\wt{X}$ so that the ordered collection of excursions that $X$ makes from $0$ is equal to the corresponding collection for $\wt{X}$.  Fix $t > 0$ and let $A_t$ (resp.\ $\wt{A}_t$) be the amount of quadratic variation accumulated by $X$ in $\{ s \leq t : X_s \neq 0\}$ (resp.\ $\wt{X}$ in $\{s \leq t : \wt{X}_s \neq 0\}$).  Then $A_t$ (resp.\ $\wt{A}_t$) is equal to the amount of time that $X$ (resp.\ $\wt{X}$) has spent in $\{s : X_s \neq 0\}$ (resp.\ $\{ s : \wt{X}_s \neq 0\}$) by time $t$.  Since both $X$ and $\wt{X}$ are instantaneously reflecting at $0$, it follows that $A_t = \wt{A}_t = t$ almost surely.  On the other hand, by the construction of our coupling between $X$ and $\wt{X}$ we almost surely have for any fixed $t > 0$ that $X_{A_t} = \wt{X}_{\wt{A}_t}$.  Combining, this implies that $X_t = \wt{X}_t$ almost surely for each fixed $t > 0$.  Thus as both $X$ and $\wt{X}$ are continuous, we have that $X_t = \wt{X}_t$ for all $t > 0$ almost surely, as desired.}, this implies that $W_t - V_t^{1,L}$ is absolutely continuous with respect to $\sqrt{\kappa}$ times a Bessel process even in the intervals in which it is hitting~$0$.  Combining this with the argument used to prove Condition~1, we have that $(W,V^{i,q})$ is a semimartingale for all times and the quadratic variation of $W$ is given by $\kappa t$ for all $t$.  Consequently, we have that $d\langle \Fh_t(z_0) \rangle = \kappa \im(1/f_t(z_0))^2 dt$ for all $t$, which proves the claim.

Let~$T_\epsilon$ denote absolute value of the total (cumulative) change to $\Fh_t(z_0)$ that occurs during these intervals.  On the event that the quadratic variation is less than some value~$\delta$, the probability that $T_\epsilon$ exceeds a constant $a>0$ is bounded by the probability that the absolute value of a Brownian motion run for time~$\delta$ exceeds~$a$.

Now, one can use the Borel-Cantelli lemma to show  that we can take a sequence~$(\epsilon_k)$ of~$\epsilon$ values decreasing fast enough so that the event that the quadratic variation corresponding to~$\epsilon_k$ a.s.\ exceeds $2^{-k}$ for at most finitely many~$k$.  On this event, one can use the probability bound above and another application of Borel-Cantelli to show that~$T_{\epsilon_k}$ a.s.\ exceeds~$a$ for at most finitely many values of $k$.  Since this is true for any $a$, we conclude that a.s. $\lim_{k \to \infty} T_{\epsilon_k} = 0$. Hence the sum of the changes to $I_t$ during these intervals tends to zero, and since $I_t$ only changes when $W_t - V^{1,L}_t = 0$ we find that $I_t$ is almost surely constant up until time $T$.  Repeating this procedure iteratively (choosing new $z_0$ values as necessary) allows us to conclude that $I_t$ is almost surely constant for all time.

We have now shown that $(W,V^{i,q})$ satisfies Conditions~\ref{lbl::awayfromcollision}--\ref{lbl::nolocalpushes} in Definition~\ref{def::slekrdef}, which completes the proof.
\end{proof}

\begin{lemma}
\label{lem::zerocapacityonboundary}
Suppose that~$\eta$ is a continuous (non-random) curve on $\overline \h$ from~$0$ to~$\infty$ with a continuous Loewner driving function~$W_t$.  Then the set $\{t : \eta(t) \in \R \}$ has Lebesgue measure zero.
\end{lemma}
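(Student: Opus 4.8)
The plan is to argue by contradiction, assuming $|T|>0$, where $T:=\{t:\eta(t)\in\R\}$; note $T$ is closed since $\eta$ is continuous and $\R\cup\{\infty\}$ is closed in $\overline\h$. The two facts I would combine are: (i) the half-plane capacity normalization $\hcap(K_t)=2t$ together with additivity, $\hcap(K_t)-\hcap(K_s)=\hcap(g_s(K_t\setminus K_s))$ for $s<t$; and (ii) the elementary bound that a compact hull $A\subseteq\overline\h$ meeting $\R$ satisfies $\hcap(A)\le C(\diam A)^2$ for an absolute constant $C$, obtained from monotonicity of $\hcap$ by comparing $A$ with the half-disk $\{|z-x_0|<\diam A\}\cap\h$ about a real point $x_0\in A$. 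A first consequence of (i) is that $T$ has empty interior: if $\eta|_{[c,d]}\subseteq\R$ then $K_d$ and $K_c$ differ by a subset of $\R$, hence $\hcap(K_d)=\hcap(K_c)$, forcing $c=d$.

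First I would reduce to a local statement at a Lebesgue density point. If $|T|>0$, fix a density point $t_0$ of $T$. Applying $g_{t_0}$ turns $s\mapsto\eta(t_0+s)$ into a continuous curve $\widetilde\eta(s)=g_{t_0}(\eta(t_0+s))$ (using that $\partial(\h\setminus K_{t_0})$ is locally connected near the tip $\eta(t_0)$, so $g_{t_0}$ extends continuously there) whose Loewner driving function is $s\mapsto W_{t_0+s}$ — still continuous — and which generates the hulls $g_{t_0}(K_{t_0+s}\setminus K_{t_0})$ with $\hcap=2s$; moreover $\eta(t_0+s)\in\R$ if and only if $\widetilde\eta(s)\in\R$. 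Thus I may assume $\eta(0)\in\R$, $W_0=0$, and that $0$ is a density point of $T$. Then for small $\delta$, writing $[0,\delta]\setminus T=\bigsqcup_i(a_i,b_i)$, one has $\sum_i(b_i-a_i)=\delta-|T\cap[0,\delta]|=o(\delta)$ as $\delta\to0$, while uniform continuity of $\eta$ near $0$ gives $\sup_i\diam\eta([a_i,b_i])\to0$; in particular each excursion $\eta|_{[a_i,b_i]}$ returns to $\R$ at both ends and has vanishing diameter.

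The core step — and the one I expect to be the main obstacle — is to show that this configuration forces $\hcap(K_\delta)=o(\delta)$, contradicting $\hcap(K_\delta)=2\delta$; equivalently, that $t\mapsto\hcap(K_t)=2t$ has derivative $0$ at a.e.\ point of $T$, which is incompatible with its derivative being $2$. The heuristic is that capacity accrued while $\eta$ is on $\R$ is nil (adding subsets of $\R$ to a hull does not change $\hcap$), so essentially all of $\hcap(K_\delta)$ must be produced by the short excursions $\eta|_{[a_i,b_i]}$; the work is to make this quantitative. One works in the uniformizing coordinate at each return time: $\hcap(K_{b_i})-\hcap(K_{a_i})=\hcap\big(g_{a_i}(K_{b_i}\setminus K_{a_i})\big)$, where $g_{a_i}(K_{b_i}\setminus K_{a_i})$ is a hull anchored at $W_{a_i}\in\R$ with trace $g_{a_i}(\eta([a_i,b_i]))$, and one bounds this using (ii) together with control on the conformal distortion of $g_{a_i}$ near $\eta(a_i)\in\R$. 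The delicate point is exactly this distortion control: if $K_{a_i}$ has a deep fjord near $\eta(a_i)$ then $g_{a_i}$ is badly behaved — but a tip landing on $\R$ produces only mild, square-root-type distortion, and a macroscopic fjord would force $\hcap(K_t)$ to jump when it is plugged, contradicting the continuity of $t\mapsto\hcap(K_t)=2t$; pushing this through lets one estimate $\hcap(K_{b_i})-\hcap(K_{a_i})$ in terms of $(b_i-a_i)$ and $\diam\eta([a_i,b_i])$, and summing against $\sum_i(b_i-a_i)=o(\delta)$ gives a total capacity increment of $o(\delta)$, the required contradiction. I would expect the actual write-up to follow one of these two essentially equivalent routes, with the capacity/distortion estimate near the repeatedly-touched boundary point being where all the real effort lies.
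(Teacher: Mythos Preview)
Your argument has a genuine gap at the ``core step.'' You want to show $\hcap(K_\delta)=o(\delta)$ at a density point, and your heuristic is that ``capacity accrued while $\eta$ is on $\R$ is nil, so essentially all of $\hcap(K_\delta)$ must be produced by the short excursions.'' But this is circular: by the capacity parametrization, $\hcap(K_t)=2t$ increases at rate $2$ at \emph{every} instant, and decomposing $[0,\delta]$ into $T\cap[0,\delta]$ and the excursion intervals $(a_i,b_i)$ just gives the identity
\[
2\delta \;=\; 2\,\bigl|T\cap[0,\delta]\bigr| \;+\; \sum_i\bigl(\hcap(K_{b_i})-\hcap(K_{a_i})\bigr)
\;=\; 2\,\bigl|T\cap[0,\delta]\bigr| \;+\; 2\sum_i(b_i-a_i).
\]
So the excursion sum is indeed $o(\delta)$, but the $T$-contribution is $2\delta(1+o(1))$, not zero; the statement ``adding subsets of $\R$ does not change $\hcap$'' only rules out intervals in $T$, not density points. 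To get a contradiction you would need an \emph{independent} upper bound $\hcap(K_\delta)=o(\delta)$, but your candidates fail: the diameter bound gives only $\hcap(K_\delta)\le C\,D(\delta)^2$ with $D(\delta)\gtrsim\sqrt{\delta}$, and excursion-wise subadditivity gives $2\delta=\hcap(K_\delta)\le\sum_i\hcap(E_i)$, which is a \emph{lower} bound on $\sum_i\hcap(E_i)$ --- indeed subadditivity applied to $K_{a_i}$ and $E_i'=\overline{K_{b_i}\setminus K_{a_i}}$ gives $\hcap(E_i')\ge 2(b_i-a_i)$, so the inequality you would need for the distortion route goes the wrong way.

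The paper avoids all of this with a different monotone quantity. Fix $R$ and the thin rectangle $S_\delta=[-R,R]\times[0,\delta]$, whose capacity tends to $0$ as $\delta\downarrow 0$. Let $I$ be the union of the maximal time intervals during which $\eta$ stays in $S_\delta$ and touches $\R$; note $\{t:\eta(t)\in[-R,R]\}\subseteq I$. The function $t\mapsto\hcap\bigl(\eta(I\cap[0,t])\bigr)$ is nondecreasing and bounded by $\hcap(S_\delta)$. The key is the strong-subadditivity inequality
\[
\hcap(A\cup B\cup C)-\hcap(A\cup B)\;\le\;\hcap(A\cup C)-\hcap(A),
\]
applied with $A=\eta(I\cap[0,t_1])$, $B=\eta([0,t_1])$, $C=\eta([t_1,t_2])$ on each interval $(t_1,t_2)\subseteq I$: it gives $2(t_2-t_1)=\hcap(K_{t_2})-\hcap(K_{t_1})\le\hcap(\eta(I\cap[0,t_2]))-\hcap(\eta(I\cap[0,t_1]))$. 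Summing, $2|I|\le\hcap(S_\delta)\to 0$, so $|\{t:\eta(t)\in[-R,R]\}|=0$ for every $R$. No density-point reduction, no distortion estimate.
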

\begin{proof}
First we recall a few basic facts about the half-plane capacity (which we will denote by $\hcap$).  Let $A$ and $B$ and $C$ be bounded hulls, i.e., closed subsets of $\ol{\h}$ whose complements are simply connected.  (We interpret all bounded sets $X$ written below as hulls by including in $X$ the set of points that it disconnects from infinity and then taking the closure.)  We claim that the following hold:
\begin{enumerate}
\item $\hcap(A \cup B) \leq \hcap (A) + \hcap(B)$.
\item $\hcap(A \cup B \cup C) - \hcap (A \cup B) \leq \hcap (A \cup C) - \hcap (A)$.
\end{enumerate}
The first is seen by recalling one of the definitions of half-plane capacity: $$\hcap(A) := \lim_{s \to \infty} s \E[\Im B^{is}_\tau],$$ where $B^{is}$ is a Brownian motion started at $is$ and $\tau$ is the first time it hits $A \cup \R$.  For the second one, let $g_A: \h \setminus A \to \h$ be the conformal map normalized so that $\lim_{z \to \infty}|g_A(z) - z| = 0$.  Recalling the additivity of capacity under compositions of normalizing maps, the second claim above is equivalent to
\[\hcap\bigl(g_A( B \cup C)\bigr) - \hcap \bigl(g_A (B)\bigr) \leq \hcap \bigl(g_A (C) \bigr),\]
which follows by applying the first claim to $g_A(B)$ and $g_A(C)$.

Now, to prove the lemma, it suffices to show that for each $R$, the set of capacity times at which the tip of the path lies in $[-R,R]$ has Lebesgue measure zero.  The capacity of the closure $\ol{S}_\delta$ of the rectangle $S_\delta := \{ x+iy : -R < x < R, 0 < y < \delta \}$ tends to zero as $\delta$ tends to zero.  By the continuity of $\eta$, the set $\eta^{-1}(S_\delta)$ is a countable collection of disjoint intervals.  Let $I = \cup_j (s_j,t_j)$ be the union of those (disjoint) intervals in $\eta^{-1}(S_\delta)$ during which $\eta$ hits $\R$ at some point.  Now we claim that the total half-plane capacity time elapsed during $I$ is at most the capacity of $\ol{S}_\delta$.  Clearly, $\hcap(\eta([0,t] \cap \ol{S}_\delta))$ is non-decreasing and bounded by $\hcap(\ol{S}_\delta)$.  The change to $\hcap(\eta(I \cap [0,t]))$ during an interval $(s_j,t_j)$ is greater than or equal to the change to $\hcap(\eta([0,t]))$ during that time.  This follows from the second property above if we take $B = \eta([0,s_j])$ and $A = \eta(I \cap [0,s_j])$ and $C = \eta([s_j,t_j])$.  (We note that $A \cup \R$ is connected because $\eta((s_j,t_j)) \cup \R$ is connected for each $j$ as $\eta$ hits $\R$ at a time in $(s_j,t_j)$.  For the same reason, $C \cup \R$ is connected.)  Summing the changes over the $(s_j,t_j)$, we find that the total change to $\hcap(\eta([0,t]))$ during such intervals is at most the capacity of $\ol{S}_\delta$.  By taking $\delta \to 0$, we find that indeed the set of capacity times at which the tip of the path lies in $[-R,R]$ has Lebesgue measure zero.
\end{proof}

Consider an $\SLE_\kappa(\ul{\rho})$ process $(W,V^{i,q})$, let $(g_t)$ be the Loewner flow driven by $W$, and let $(K_t)$ be the corresponding growing family of hulls.  Then we can write $g_t(K_t) = [L_t,R_t]$.  It will be important for us in this article to know that $L$ and $R$ both solve the Loewner equation driven by $W$ (up until the continuation threshold is hit).  In particular, once either $L$ or $R$ have collided with one of the $V^{i,q}$, the two processes will continue to agree.  That this holds in the case of single-force-point $\SLE_\kappa(\rho)$ with $\rho > -2$ is remarked just before the statement of \cite[Lemma~8.3]{CONF_RES_CHORDAL}, however a proof of this fact is not given in \cite{CONF_RES_CHORDAL}.  We will now explain why this is true.  We first note that by absolute continuity (i.e., the Girsanov theorem), it suffices to explain why the result holds for single-force-point $\SLE_\kappa(\rho)$ with $\rho > -2$.  Indeed, this follows because $W$ can only interact with one force point at a time except at the times when force points merge and the set of such times is finite (at each merging time, the number of force points decreases by at least $1$).  The result in the setting of single-force-point $\SLE_\kappa(\rho)$ with $\rho > -2$ will follow from two observations.

Before making these two observations, we will first explain why the result is true for ordinary $\SLE$ (so that $\rho=0$).  Let $\eta$ be the $\SLE$ trace.  In this case, we know that $R_t$ evolves according to the Loewner flow in those time intervals in which $\eta$ is not hitting $\partial \h$.  This implies that $R_t - W_t$ evolves as $\sqrt{\kappa}$ times a Bessel process of dimension $1+\tfrac{4}{\kappa}$ in those time intervals in which it is not hitting $0$.  Moreover, by Lemma~\ref{lem::zerocapacityonboundary} we have that $R_t-W_t$ is instantaneously reflecting at $0$.  These two properties together imply that $R_t - W_t = \sqrt{\kappa} X_t$ for all $t \geq 0$ where $X_t$ is a Bessel process of dimension $1+\tfrac{4}{\kappa}$.  This proves the claim for ordinary $\SLE$.

Now we are going to generalize the result to the setting of $\SLE_\kappa(\rho)$ for $\rho > -2$.  Let $(W,V^{1,R})$ be the driving process.  First, by using absolute continuity to compare to the case of ordinary $\SLE_\kappa$ we have that $R_t$ evolves according to the Loewner flow up until time $\tau$, the first time $t$ that $R_t$ and $V_t^{1,R}$ collide.  We now claim that $R_t \geq V_t^{1,R}$ for all $t \geq \tau$.  To see this, we note that both $R_t$ and $V_t^{1,R}$ evolve according to the Loewner equation when they are not colliding with $W_t$.  The claim follows because we know that $V_t^{1,R}$ does not get any extraneous pushes when it is colliding with $W_t$ (since $V_t^{1,R} - W_t$ evolves as $\sqrt{\kappa}$ times a Bessel process of dimension $\delta > 1$), $R_t$ is monotone increasing (so extraneous pushes during collision times can only push~$R_t$ further to the right), and both $V_t^{1,R}$ and $R_t$ evolve according to the Loewner flow in those intervals when they are not colliding.  In particular, if there is an interval in which $R_t < V_t^{1,R}$ which starts after time $\tau$, then we get a contradiction because then $d(R_t - V_t^{1,R}) > 0$ in this interval.  Since $\int_0^t 2(V_s^{1,R}-W_s)^{-1} ds$ exists, that~$R_t \geq V_t^{1,R}$ after time $\tau$ implies that $\int_0^t 2(R_s - W_s)^{-1} ds$ exists.  We also have that $d(R_t - V_t^{1,R}) < 0$ whenever $R_t > V_t^{1,R}$ as both processes evolve according to the Loewner flow driven by $W$ at such times.  Therefore there cannot be such intervals, hence $V_t^{1,R} \geq R_t$ for $t \geq \tau$, and therefore $V_t^{1,R} = R_t$ for all $t \geq \tau$.  Thus since $V_t^{1,R}$ solves the Loewner equation driven by $W$ for $t \geq \tau$, so does $R_t$.

Although we will not use it in this paper, we remark (and sketch a proof) that it is possible to give an alternate martingale characterization in which one only requires $\Fh_t(z)$ to be a local martingale for a single point $z$, but one requires a particular form for its quadratic variation:

\begin{theorem} 
\label{thm::martingale2}
As in Theorem~\ref{thm::martingale}, suppose we are given a random continuous $\eta$ on $\overline \h$ from $0 \to \infty$ whose Loewner driving function $W_t$ is almost surely continuous.  Suppose further that $W_t$ describes the evolution of a random continuous path, and $V_t^{i,q}$ the image of force points under the corresponding Loewner evolution, and that for some fixed $z \in \h$ the correspondingly defined $\Fh_t(z)$ is given by $\sqrt \kappa$ times a Brownian motion when time is parameterized by minus the log of the conformal radius.  Then these processes describe an SLE$_\kappa(\ul{\rho})$ evolution at least up to the first time that $z$ is swallowed by the path.
\end{theorem}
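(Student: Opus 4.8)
The plan is to run the same Itô-calculus argument as in the proof of Theorem~\ref{thm::martingale}, but with the single test point $z$, compensating for having lost the ``all $y$'' information by exploiting the extra hypothesis on the quadratic variation. As there, it suffices to verify the three conditions of Definition~\ref{def::slekrdef} for $W_t$, the $V_t^{i,q}$, and an appropriate Brownian motion $B_t$, up to the first time $z$ is swallowed. Once Condition~\ref{lbl::awayfromcollision} is in place, the arguments for Condition~\ref{lbl::instantaneousreflection} (via Lemma~\ref{lem::zerocapacityonboundary}) and Condition~\ref{lbl::nolocalpushes} (the $\epsilon$-interval argument, which already uses only a single point $z$, the local martingale property of $\Fh_\cdot(z)$, and the Bessel comparison coming from Condition~\ref{lbl::awayfromcollision}) carry over with only notational changes. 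So the content is Condition~\ref{lbl::awayfromcollision}: producing the semimartingale decomposition of $W_t$ and matching it with \eqref{eqn::sle_kappa_rho_sde}. Write $\psi(t)$ for minus the log of the conformal radius of $\h\setminus K_t$ viewed from $z$; this is a strictly increasing $C^1$ function of $t$ up to the swallowing time, with $\psi'(t)$ a strictly positive continuous function of $f_t(z)=g_t(z)-W_t$ (the usual Loewner computation gives $\psi'(t)=4(\Im f_t(z))^2/|f_t(z)|^4$). The hypothesis says precisely that $\Fh_t(z)$ is a continuous local martingale with $\langle \Fh(z)\rangle_t=\kappa\,\psi(t)$ up to an additive constant.

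\emph{Step 1: $W_t$ is a semimartingale up to the swallowing time.} Using the boundary data in Theorem~\ref{thm::coupling_existence} together with the cancellation $f_t(z)-f_t(x^{j,q})=g_t(z)-V_t^{j,q}$, one writes $\Fh_t(z)=c\,\arg f_t(z)+G_t$, where $c\neq 0$ is a constant and $G_t$ collects the $-\chi\arg f_t'(z)$ term, the $\arg(g_t(z)-V_t^{j,q})$ terms, and constants. Since $t\mapsto g_t(z)$, $t\mapsto\log g_t'(z)$ evolve by smooth ODEs (Lebesgue integrals of continuous functions of time) and each $V_t^{j,q}$ is monotone, hence of bounded variation, the process $G_t$ is continuous and of bounded variation; combined with the local martingale property of $\Fh_t(z)$ this shows $\arg f_t(z)$ is a continuous semimartingale. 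Fix $T'$ strictly below the swallowing time of $z$. On $[0,T']$ the point $f_t(z)$ stays in a compact subset of $\h$: its imaginary part $\Im g_t(z)$ is decreasing and bounded below by $\Im g_{T'}(z)>0$, while $|f_t(z)|=|g_t(z)-W_t|$ is bounded above since $g_\cdot(z)$ and $W$ are continuous. Hence $\arg f_t(z)$ is bounded away from $0$ and $\pi$ on $[0,T']$, and since $\Im f_t(z)=\Im g_t(z)$ is $C^1$ and $|f_t(z)|=\Im f_t(z)/\sin\arg f_t(z)$, both $|f_t(z)|$ and $f_t(z)=|f_t(z)|e^{i\arg f_t(z)}$ are semimartingales on $[0,T']$; therefore $W_t=g_t(z)-f_t(z)$ is a semimartingale on $[0,T']$, and letting $T'$ increase to the swallowing time gives the claim.

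\emph{Step 2: Itô, in reverse.} Write $W_t=v_t+m_t$ with $v_t$ of bounded variation and $m_t$ a continuous local martingale, and apply Itô's formula to $\Fh_t(z)$ exactly as in the forward direction of Theorem~\ref{thm::martingale}. The quadratic variation of $\Fh_t(z)$ computed this way is $c^2(\Im f_t(z))^2|f_t(z)|^{-4}\,d\langle m\rangle_t$, a strictly positive continuous density times $d\langle m\rangle_t$; equating it with $\kappa\,d\psi(t)$, which is $\kappa$ times the same density times $dt$, forces $d\langle m\rangle_t=\kappa\,dt$ (the constant $c$ is exactly such that this works out), so by L\'evy's characterization $m_t=\sqrt\kappa\,B_t$ for a standard Brownian motion $B_t$. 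Inserting $d\langle m\rangle_t=\kappa\,dt$ into the vanishing drift of $\Fh_t(z)$ — whose coefficient of $dv_t$ equals $c\,\Im f_t(z)/|f_t(z)|^2\neq 0$ — and solving for $v_t$ reproduces the $\SLE_\kappa(\ul{\rho})$ drift $\sum_{q,i}\rho^{i,q}(W_t-V_t^{i,q})^{-1}\,dt$, just as, conversely, the forward direction of Theorem~\ref{thm::martingale} shows this drift makes $\Fh_t(z)$ a local martingale. This establishes Condition~\ref{lbl::awayfromcollision}, and with it Conditions~\ref{lbl::instantaneousreflection} and~\ref{lbl::nolocalpushes} as noted above.

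\emph{Main obstacle.} The delicate step is Step~1: passing from the single-point semimartingale property of $\Fh_t(z)$ to that of $W_t$. Unlike in Theorem~\ref{thm::martingale}, we do not have $\Fh_t(y)$ for all $y$ and so cannot use the harmonic-conjugate and Fubini argument to get $M_t$ (and hence $W_t$) directly; instead we must reconstruct $f_t(z)$ from its argument and its (a priori $C^1$) imaginary part, which requires $f_t(z)$ to be confined to a compact subset of $\h$ — and this fails exactly at the swallowing time, which is why the conclusion can only be claimed up to the first time $z$ is swallowed. A secondary point requiring care is that recovering the full force-point drift from the single scalar equation ``drift of $\Fh_t(z)$ vanishes'' depends on knowing the explicit form of that drift, i.e.\ on reversing — rather than redoing from scratch — the computation in the proof of Theorem~\ref{thm::martingale}.
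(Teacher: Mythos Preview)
Your proposal is correct and follows the same route as the paper's (very brief) sketch: reduce to the three conditions of Definition~\ref{def::slekrdef}, get Conditions~\ref{lbl::instantaneousreflection} and~\ref{lbl::nolocalpushes} from the arguments already in Theorem~\ref{thm::martingale}, and handle Condition~\ref{lbl::awayfromcollision} by the It\^o calculation in the log-conformal-radius parameterization that the paper defers to \cite{SchrammShe10,2010arXiv1012.4797S}. Your Step~1 (extracting the semimartingale property of $W_t$ from that of $\arg f_t(z)$ via $|f_t(z)|=\Im f_t(z)/\sin\arg f_t(z)$) is exactly the missing detail the paper's ``straightforward calculation'' glosses over, and it is precisely here that the restriction ``up to the swallowing time of $z$'' enters, as you identify.
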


\begin{proof}
The proof is similar to the proof of Theorem~\ref{thm::martingale}. Condition
\ref{lbl::awayfromcollision} follows from a straightforward calculation (see, e.g., \cite{SchrammShe10,SHE_WELD} for more explanation of the log conformal radius point of view) and the other two follow from the arguments in the proof of Theorem~\ref{thm::martingale}.
\end{proof}

\section{The Gaussian free field}
\label{sec::gff}

\subsection{Construction and basic properties}
\label{subsec::gff_construction}

We will now describe the construction of the two-dimensional GFF as well as some properties that will be important for us later.  The reader is referred to \cite{SHE06} for a more detailed introduction.  Let $D \subseteq \C$ be open with harmonically non-trivial boundary.  By this, we mean that a Brownian motion started at $z \in D$ almost surely hits $\partial D$.  Let $C_0^\infty(D)$ denote the set of $C^\infty$ functions compactly supported in $D$.

We let $H(D)$ be the Hilbert-space closure of $C_0^\infty(D)$ equipped with the {\em Dirichlet inner product}:
\[ (f,g)_\nabla = \frac{1}{2\pi} \int_D \nabla  f(x) \cdot \nabla g(x) dx \quad\text{for}\quad f,g \in C_0^\infty(D).\]
The GFF $h$ on $D$ can be expressed as a random linear combination of a $(\cdot,\cdot)_\nabla$-orthonormal basis $(\phi_n)$ of $H(D)$
\[ h = \sum_n \alpha_n \phi_n,\quad (\alpha_n) \quad\text{i.i.d.}\quad N(0,1).\]
Although this expansion of $h$ does not converge in $H(D)$, it does converge almost surely in the space of distributions or (when $D$ is bounded) in the fractional Sobolev space $H^{-\epsilon}(D)$ for each $\epsilon > 0$ (see \cite[Proposition~2.7]{SHE06} and the discussion thereafter).  Let $(\cdot,\cdot)$ denote the standard $L^2(D)$ inner product.  If $f,g \in C_0^\infty(D)$ then an integration by parts gives $(f,g)_\nabla = -(2\pi)^{-1} ( f,\Delta g)$.  Using this, we define
\begin{equation}
\label{eqn::ibp}
(h,f)_\nabla = -\frac{1}{2\pi} (h, \Delta f) \quad \text{for}\quad f \in C_0^\infty(D).
\end{equation}
Observe that $(h,f)_\nabla$ is a Gaussian random variable with mean zero and variance $(f,f)_\nabla$.  Hence $h$ induces a map $C_0^\infty(D) \to \CG$, $\CG$ a Gaussian Hilbert space, that preserves the Dirichlet inner product.  This map extends uniquely to $H(D)$ and allows us to make sense of $(h,f)_\nabla$ for all $f \in H(D)$ and, moreover,
\begin{equation}
\label{eqn::gff_cov}
\cov((h,f)_\nabla,(h,g)_\nabla) = (f,g)_\nabla \quad\text{for all}\quad f,g \in H(D).
\end{equation}

More generally, if $D \subseteq \C$ is not necessarily connected, then the GFF on $D$ is given by taking $h$ to be independently a GFF on each of the components of $D$.

Suppose that $U \subseteq D$ with $U \neq D$ is open.  There is a natural inclusion $\iota$ of $H(U)$ into $H(D)$ where
\[ \iota(f)(x) = \begin{cases} f(x) \quad \text{if}\quad x \in U,\\ 0\quad\text{otherwise.} \end{cases}\]

We define the following $\sigma$-algebras.  For $U \subseteq D$ open, we let $\CF_U^h$ be the $\sigma$-algebra generated by the restriction $h|_U$ of $h$ to $U$.  In other words, $\CF_U^h$ is the $\sigma$-algebra generated by $(h,f)$ for $f \in C_0^\infty(U)$.  For every closed set $K \subseteq D$, we let $\CF_{K^+}^h$ be the $\sigma$-algebra generated by the projection of $h$ onto $H^\perp(D \setminus K)$.  Throughout this article, we often consider conditional expectations where we condition on $h|_V$ for $V \subseteq D$ and by this we mean that we consider the conditional expectation given $\CF_V^h$ (resp.\ $\CF_{V^+}^h$) if $V$ is open (resp.\ closed) in $D$.

If $f \in C_0^\infty(U)$ and $g \in C_0^\infty(D)$, then as $(f,g)_\nabla = -(f,\Delta g)$ it is easy to see that $H(D)$ admits the $(\cdot,\cdot)_\nabla$-orthogonal decomposition $H(U) \oplus H^\perp(U)$ where $H^\perp(U)$ is the set of functions in $H(D)$ harmonic on $U$.  Thus we can write
\[ h = h_U + h_{U^c} = \sum_n \alpha_n^U \phi_n^U + \sum_n \alpha_n^{U^c} \phi_n^{U^c}\]
where $(\alpha_n^U),(\alpha_n^{U^c})$ are independent i.i.d.\ sequences of standard normal random variables and $(\phi_n^U)$, $(\phi_n^{U^c})$ are orthonormal bases of $H(U)$ and $H^\perp(U)$, respectively.  Observe that $h_U$ is a GFF on $U$.  The distribution $h_{U^c}$ is a random distribution that agrees with $h$ on $U^c$.  The restriction of $h_{U^c}$ to $U$ can be interpreted as the ``harmonic extension'' of $h|_{\partial U}$ to $U$.  Note that $h_U$ and $h_{U^c}$ are independent.  We arrive at the following proposition:

\begin{proposition}[Markov Property]
\label{gff::prop::markov}
We can write $h = h_U + h_{U^c}$ where $h_U,h_{U^c}$ are independent distributions on $D$, $h_U$ has the law of a zero boundary GFF on $D \setminus U$ and is zero on $U$, and $h_{U^c}$ is harmonic on $U^c$.  That is, the conditional law of $h|_U$ given $h |_{D \setminus U}$ is that of the GFF on $U$ plus the harmonic extension (in the sense described just above) of $h|_{\partial U}$ to $U$.
\end{proposition}

We emphasize that $h|_U$ in the statement of Proposition~\ref{gff::prop::markov} refers to the \emph{restriction} of $h$ to $U$ while $h_U$ above refers to the projection of $h$ to $H(U)$.  We remark that the orthogonality of $H(U)$ and the set of functions in $H(D)$ which are harmonic on $U$ is also proved in \cite[Theorem~2.17]{SHE06} and it is explained thereafter how this is related to the Markov property of the GFF.  The proposition allows us to make sense of the GFF with non-zero boundary conditions: if $f \colon \partial D \to \R$ is any function that is $L^1$ with respect to harmonic measure on $\partial D$ viewed from some point (hence every point) in $D$, and $F$ is its harmonic extension from $\partial D$ to $D$, then the law of the GFF on $D$ with boundary condition $f$ is given by the law of $F + h$ where $h$ is a zero boundary GFF on~$D$.

Using~\eqref{eqn::ibp} and~\eqref{eqn::gff_cov}, we can derive the covariance function for $(h,f), (h,g)$ for $f,g \in C_0^\infty(D)$.  Namely, we have that
\begin{align}
 \cov( (h,f), (h,g))
&= (2\pi)^2 \cov( (h, \Delta^{-1} f)_\nabla, (h, \Delta^{-1} g)_\nabla)
 = (2\pi)^2 ( \Delta^{-1} f, \Delta^{-1} g)_\nabla \notag\\
&= -2\pi ( f, \Delta^{-1} g)
  = \iint f(x) G(x,y) g(y) dx dy. \label{eqn::gff_l2_cov}
\end{align}
where $G$ is the Green's function for $\Delta$ on $D$ with Dirichlet boundary conditions.  That is, $G$ solves $\Delta G(x,y) = -2\pi \delta_x(y)$ (in the distributional sense) where $\delta_x$ denotes the Dirac mass at $x$ and $G(x,y) = 0$ if $x$ or $y$ is in $\partial D$.  We note that the Green's function is monotone in $D$ in the sense that if $D \subseteq \wt{D}$ and $G$ (resp.\ $\wt{G}$) is the Green's function on $D$ (resp.\ $\wt{D}$) then $G(x,y) \leq \wt{G}(x,y)$ for all $x,y \in D$.  This can be seen because $\wt{G} - G$ is harmonic in $D$ (viewed as a function of one of the variables with the other fixed) and has non-negative boundary conditions on $\partial D$.  We also note that the law of $h$ is determined by $(h,f)$ as $f$ ranges over $C_0^\infty(D)$.

We are now going to show that $\CF_{K^+}^h$ is given by the intersection of $\CF_U^h$ over all $U \subseteq D$ open containing $K$.

\begin{proposition}
\label{prop::gff_sigma_algebra_converges}
For any deterministic, closed set $K \subseteq D$ we have that $\CF_{K^+}^h = \cap_{U \supseteq K} \CF_U^h$ where the intersection is over all open sets $U \subseteq D$ containing $K$.
\end{proposition}
\begin{proof}
We will give the proof in the case that $D$ is bounded.  Upon establishing this, the result follows for general domains $D$ using the conformal invariance of the GFF.

Let $(\phi_n)$ be an orthonormal basis of $H^\perp(D \setminus K)$ and, for each $n$, let $\alpha_n = (h,\phi_n)_\nabla$.  Then we know that $\CF_{K^+}^h = \sigma(\alpha_n : n \in \N)$.  Let $\psi$ be a radially symmetric $C_0^\infty$ bump function supported in $\D$.  That is, $\psi(z) \geq 0$ for all $z \in \C$, $\psi(z)$ depends only on $|z|$, $\int \psi(z) dz = 1$, and $\psi(z) = 0$ for all $|z| \geq 1$.  For each~$n$ and~$m$, let
\[ \wt{\phi}_{n,m}(z) = \int \phi_n(y) m^2 \psi(m (z-y)) dy.\]
Note that $\wt{\phi}_{n,m} \in  C^\infty(D)$.  Let $\phi_{n,m}$ be given by $\wt{\phi}_{n,m}$ minus the harmonic extension of its values from $\partial D$ to $D$.  Since $\phi_n$ is harmonic in $D \setminus K$ and $\psi$ is radially symmetric, it follows that $\phi_{n,m}$ is harmonic on the set of $z \in D$ with $\dist(z,K \cup \partial D) \geq \tfrac{1}{m}$.  Moreover, note that $\phi_{n,m} \in H(D)$ as $\wt{\phi}_{n,m} \in C^\infty(D)$ and
\[ \| \phi_{n,m} \|_\nabla \leq \| \wt{\phi}_{n,m} \|_\nabla \leq \| \phi_n \|_\nabla \quad\text{for all}\quad n,m.\]
Since
\[ (\phi_{n,m},\phi)_\nabla \to (\phi_n,\phi)_\nabla \quad\text{as}\quad m \to \infty \quad\text{for all}\quad \phi \in C_0^\infty(D)\]
it therefore follows that $\phi_{n,m} \to \phi_n$ as $m \to \infty$ in $H(D)$ (recall that if $(w_m)$ is a sequence in a Hilbert space which converges weakly to a limit $w$ with the property that $\| w_m \| \leq \| w\|$ for all $m$ then $\| w_m  - w \| \to 0$ as $m \to \infty$).  Thus with $\alpha_{n,m} = (h,\phi_{n,m})_\nabla$, we have that
\[ \E[  (\alpha_n - \alpha_{n,m})^2] = \| \phi_n - \phi_{n,m} \|_\nabla^2 \to 0 \quad\text{as}\quad m \to \infty.\]
Suppose that $U \subseteq D$ is an open set which contains $K$ and a neighborhood in $D$ of $\partial D$.  Then there exists $m_0 \in \N$ depending only on $K$ and $U$ such that $\alpha_{n,m}$ is $\CF_U^h$-measurable for all $m \geq m_0$.  Indeed, this follows because
\[ \alpha_{n,m} = (h,\phi_{n,m})_\nabla = -2\pi (h,\Delta \phi_{n,m})\]
and $\Delta \phi_{n,m}(z)$ vanishes for $z \in D$ with $\dist(z,K \cup \partial D) \geq \tfrac{1}{m}$.  Therefore $\alpha_n$ is also $\CF_U^h$-measurable.  That is, $\CF_{K^+}^h \subseteq \CF_U^h$.

We note that the $\sigma$-algebra given by $\cap \CF_V^h$, where the intersection is over all $V \subseteq D$ open which contain a neighborhood of $\partial D$ in $D$, is trivial by Proposition~\ref{gff::prop::markov}.  Indeed, this follows because the variance of the integral of the projection of $h$ onto $H(D \setminus \ol{V})$ against a $C_0^\infty(D)$ test function $\phi$ increases to the variance of the integral of $h$ against $\phi$ as $\ol{V}$ decreases to $\partial D$ because the Green's function for $\Delta$ on $D \setminus \ol{V}$ converges to the Green's function on $D$.  This, in particular, implies that the variance of the integral of the projection of $h$ onto $H^\perp(D \setminus \ol{V})$ against $\phi$ decreases to $0$ as $V$ decreases (simply because the sum of the two variances must equal the variance of the integral of $h$ against $\phi$ by independence).  This implies the claim.  We therefore have that $\CF_{K^+}^h \subseteq \CF_U^h$ for all $U \subseteq D$ open which contain $K$.

Note that $\CF_U^h \subseteq \CF_{\ol{U}^+}^h$.  Thus to show that $\CF_{K^+}^h = \cap_{U \supseteq K} \CF_U^h$, it suffices to show that $\CF_{K^+}^h = \cap_{U \supseteq K} \CF_{\ol{U}^+}^h$.  For each $m \in \N$ we let $U_m$ be the $\tfrac{1}{m}$-neighborhood of $K$.  It suffices to show that for any fixed $\psi_1,\ldots,\psi_k \in C_0^\infty(D)$, we have that the joint law of $(h,\psi_j)$ for $1 \leq j \leq k$ given $\CF_{\ol{U}_m^+}^h$ converges to the joint law of $(h,\psi_j)$ for $1 \leq j \leq k$ given $\CF_{K^+}^h$ as $m \to \infty$.  Given $\CF_{\ol{U}_m^+}^h$, we know that the $(h,\psi_j)$ are jointly Gaussian with mean given by integrating the projection of $h$ onto $H^\perp(D \setminus \ol{U}_m)$ against $\psi_j$ and covariance given by the Green's function for $\Delta$ on $D \setminus \ol{U}_m$.  Their conditional law given $\CF_{K^+}^h$ admits an analogous description with $K$ in place of $\ol{U}_m$.  Since the Green's function for $\Delta$ on $D \setminus \ol{U}_m$ converges as $m \to \infty$ to the Green's function for $\Delta$ on $D \setminus K$, it follows that we have the desired convergence of the conditional covariance functions.  To see the convergence of the conditional means, we let $(\wt{\phi}_n)$ be an orthonormal basis of $H(D \setminus K)$ consisting of $C_0^\infty(D \setminus K)$ functions so that $(\phi_n)$ and $(\wt{\phi}_n)$ together form an orthonormal basis of $H(D)$.  Then we can write $h = \sum_n \alpha_n \phi_n + \sum_n \wt{\alpha}_n \wt{\phi}_n$ and we have that the $(\alpha_n)$ and $(\wt{\alpha}_n)$ are i.i.d.\ $N(0,1)$.  Thus
\begin{align}
\label{eqn::cond_exp_expand}
   \E\!\left[ (h,\psi_j) \giv \CF_{\ol{U}_m^+}^h \right] = \sum_n \alpha_n (\phi_n,\psi_j) + \sum_n \E\!\left[\wt{\alpha}_n \giv \CF_{\ol{U}_m^+}^h \right]  (\wt{\phi}_n,\psi_j).
\end{align}
The first summation on the right hand side is equal to $\E[ (h,\psi_j) \giv \CF_{K^+}^h]$.  We note that for any $n_0$ fixed we have that
\begin{align*}
\sum_{n=1}^{n_0} \E\!\left[\wt{\alpha}_n \giv \CF_{\ol{U}_m^+}^h \right]  (\wt{\phi}_n,\psi_j) \to 0 \quad\text{as}\quad m \to \infty
\end{align*}
since for each fixed $n$ there exists $m_0$ sufficiently large so that the support of $\wt{\phi}_n$ is disjoint from $\ol{U}_m$ for all $m \geq m_0$ and for such $m$ we have that $\E[ \wt{\alpha}_n \giv \CF_{\ol{U}_m^+}^h] = 0$.  Moreover, by Jensen's inequality we have that
\begin{align*}
   \E\!\left[ \left( \sum_{n > n_0} \E\!\left[\wt{\alpha}_n \giv \CF_{\ol{U}_m^+}^h \right]  (\wt{\phi}_n,\psi_j) \right)^2 \right]
&=  \E\!\left[ \left( \E\!\left[ \sum_{n > n_0} \wt{\alpha}_n (\wt{\phi}_n,\psi_j)  \giv \CF_{\ol{U}_m^+}^h \right] \right)^2 \right]\\
&\leq \sum_{n > n_0} (\wt{\phi}_n,\psi_j)^2 = (2\pi)^2 \sum_{n > n_0} (\wt{\phi}_n,\Delta^{-1} \psi_j)_\nabla^2.
\end{align*}
Since $\Delta^{-1} \psi_j$ is in $H(D)$, it follows that the summation on the right hand side tends to $0$ as $n_0 \to \infty$.  Therefore the second summation in~\eqref{eqn::cond_exp_expand} tends to $0$ in probability as $m \to \infty$ which implies that there exists a sequence $(m_k)$ tending to $\infty$ sufficiently quickly along which it tends to $0$ almost surely.  Since it is also an $L^2$-bounded martingale, the martingale convergence theorem implies that it converges almost surely.  Combining, this almost sure limit must be equal to $0$, and this implies the result.
\end{proof}

\begin{proposition}
\label{prop::gff_boundary_values}
Assume that $D$ is a non-trivial simply connected domain and let $K$ be a deterministic closed subset of $D$.  Let $h_1$ be the harmonic function on $D \setminus K$ which agrees with the projection of $h$ onto $H^\perp(D \setminus K)$, restricted to $D \setminus K$.  Then a.s.\
\[ \lim_{D \setminus K \ni z \to z_0} h_1(z) = 0 \quad\text{for all}\quad z_0 \in  \partial D \setminus K.\]
\end{proposition}
\begin{proof}
Let $U$ be a simply connected neighborhood of $z_0$ such that the distance from~$U$ to~$K$ is positive.  Then \cite[Lemma~3.2]{SchrammShe10} states that the projections of $h$ onto $H^\perp(D \setminus K)$ and $H^\perp(D \setminus U)$ are  {\em almost independent}, which means that their joint law is absolutely continuous with respect to the product of their marginal laws.  This implies that for almost all instances of the former, the conditional law of the latter is absolutely continuous with respect to its unconditioned law.   We note that the restriction to $U$ of the projection of $h$ onto $H^\perp(D \setminus U)$ is equal to $h|_U$.  Therefore the conditional law of $h|_U$ given the projection of $h$ onto $H^\perp(D \setminus K)$ is absolutely continuous with respect to its unconditioned law.  Moreover, $h|_U$ given the projection of $h$ onto $H^\perp(D \setminus K)$ can be written as the sum of a function $h_1$ which is harmonic in $D \setminus K$ and a zero-boundary GFF $\wt{h}$ in $D \setminus K$ restricted to $U$.

Assume that we have fixed an instance of $h_1$.  Let $(z_n)$ be a sequence in $D$ with $z_n \to z_0$ as $n \to \infty$.  For each $n \in \N$, we let $\varphi_n \colon U \to \D$ be the unique conformal map with $\varphi_n(z_n) = 0$ and $\varphi_n'(z_n) > 0$.  Let $\mu_n$ be the law of $h \circ \varphi_n^{-1}$ and let $\wt{\mu}_n$ (resp.\ $\wt{\mu}_n'$) be the law of $\wt{h} \circ \varphi_n^{-1}$ (resp.\ $(\wt{h}+h_1) \circ \varphi_n^{-1}$) with the instance of $h_1$ fixed.  Then $\mu_n$ and $\wt{\mu}_n'$ are almost surely (in the realization of $h_1$) mutually absolutely continuous for each $n$.  Moreover, the Radon-Nikodym derivative $Z_n$ of $\wt{\mu}_n'$ with respect to $\mu_n$ has the following property.  For every $\epsilon > 0$ there exists $\delta > 0$ (uniformly in $n$) such that if $A$ is any event with $\mu_n[A] < \delta$ then $\wt{\mu}_n'[A] = \int Z_n \one_A d\mu_n < \epsilon$.  Indeed, this follows because we can always express $A$ in terms of an event which involves $h|_U$ and then compute using the Radon-Nikodym derivative of the law of $(\wt{h} + h_1)|_U$ with respect to the law of $h|_U$.

For each $\zeta > 0$ we let $\phi_\zeta$ be a $C^\infty$ function which is non-negative, radially symmetric, has integral $1$, and which is supported in the annulus $\D \setminus \ol{B(0,1-\zeta)}$.  Then we have that both $(h \circ \varphi_n^{-1},\phi_\zeta)$ and $(\wt{h} \circ \varphi_n^{-1},\phi_\zeta)$ tend to $0$ in probability as $\zeta \to 0$ and $n \to \infty$.  Indeed, this can be seen because both are mean-zero Gaussian random variables for each $\zeta > 0$ and $n \in \N$ with variance tending to $0$ as $\zeta \to 0$ and $n \to \infty$.  That is, for each $\delta > 0$ there exists $\zeta_0 > 0$ and $n_0 \in \N$ such that $\zeta \in (0,\zeta_0)$ and $n \in \N$ with $n \geq n_0$ implies that the variance of both $(h \circ \varphi_n^{-1},\phi_\zeta)$ and $(\wt{h} \circ \varphi_n^{-1},\phi_\zeta)$ is at most $\delta$.  On the other hand, since $h_1$ is harmonic in $U$ we have that $h_1 \circ \varphi_n^{-1}$ is harmonic in $\D$, hence $(h_1 \circ \varphi_n^{-1},\phi_\zeta) = h_1 \circ \varphi_n^{-1}(0) =  h_1(z_n)$ for all $\zeta \in (0,1)$ and $n \in \N$.

Fix $\delta > 0$ so that if $\mu_n[A] < \delta$ then $\wt{\mu}_n'[A] < 1/4$.  Let $a > 0$ be arbitrary.  Assume that we have fixed $\zeta_0 > 0$ and $n_0 \in \N$ such that $\zeta \in (0,\zeta_0)$ and $n \geq n_0$ implies that $\mu_n[ |(h \circ \varphi_n^{-1},\phi_\zeta)| > a/2] < \delta$.  Then $\wt{\mu}_n'[|( (\wt{h} + h_1) \circ \varphi_n^{-1},\phi_\zeta)| > a/2] < 1/4$.  By possibly further decreasing the value of $\zeta_0 >0$ and increasing the value of $n_0$ we also have that $\zeta \in (0,\zeta_0)$ and $n \geq n_0$ implies that $\wt{\mu}_n[ |( \wt{h} \circ \varphi_n^{-1},\phi_\zeta)| > a/2] < 1/4$.  Using that the event $|h_1(z)| > a$ is the same as the event $|((\wt{h}+h_1) \circ \varphi_n^{-1}, \phi_\zeta) - (\wt{h} \circ \varphi_n^{-1},\phi_\zeta)| > a$, we have that (with a slight abuse of notation after the first equality)
\begin{align*}
\one_{\{ |h_1(z_n)| > a\}}
&=  \wt{\mu}_n[ |( (\wt{h} + h_1) \circ \varphi_n^{-1},\phi_\zeta) - (\wt{h} \circ \varphi_n^{-1}, \phi_\zeta)| > a ]\\
&\leq \wt{\mu}_n'[ |( (\wt{h} + h_1) \circ \varphi_n^{-1},\phi_\zeta)| > a/2] + \wt{\mu}_n[ |(\wt{h} \circ \varphi_n^{-1}, \phi_\zeta)| > a/2]\\
&\leq \frac{1}{2}.
\end{align*}
Since $\one_{\{ |h_1(z_n)| > a\}}$ takes values in $\{0,1\}$, we have that $\one_{\{ |h_1(z_n)| > a\}} \leq 1/2$ implies that $|h_1(z_n)| \leq a$.  Therefore $|h_1(z_n)| \leq a$ for $n \geq n_0$, hence $h_1(z_n) \to 0$ as $n \to \infty$, as desired.
\end{proof}

We finish this subsection with the following proposition, which describes the absolute continuity properties of the GFF.

\begin{proposition}[Absolute Continuity]
\label{prop::gff_abs_continuity}
Suppose that $D_1,D_2$ are simply connected domains with $D_1 \cap D_2 \neq \emptyset$. For $i=1,2$, let $h_i$ be a zero boundary GFF on $D_i$ and $F_i$ harmonic on $D_i$.  Fix a bounded simply connected open domain $U \subseteq D_1 \cap D_2$.
\begin{enumerate}[(i)]
\item \label{lem::gff_abs_continuity_interior} (Interior) If $\dist(U, \partial D_i) > 0$ for $i = 1,2$ then the laws of $(h_1+F_1)|_U$ and $(h_2+F_2)|_U$ are mutually absolutely continuous.
\item \label{lem::gff_abs_continuity_boundary} (Boundary) Suppose that there is a neighborhood $U'$ of the closure $\overline U$ of $U$ such that $\ol{D}_1 \cap U' = \ol{D}_2 \cap U'$, and that $F_1 - F_2$ tends to zero as one approaches points in the sets $\partial D_i \cap U'$.  Then the laws of $(h_1+F_1)|_U$ and $(h_2+F_2)|_U$ are mutually absolutely continuous.
\end{enumerate}
\end{proposition}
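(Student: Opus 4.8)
The plan is to deduce both parts from the Markov property (Proposition~\ref{gff::prop::markov}) together with the Cameron--Martin theorem for the GFF, which states that for a zero-boundary GFF $h^V$ on a domain $V$ and a fixed $g\in H(V)$, the laws of $h^V+g$ and $h^V$ are mutually absolutely continuous with a strictly positive Radon--Nikodym derivative. Two soft facts will be used freely: mutual absolute continuity of laws is preserved under push-forward by a measurable map, in particular under restricting a distribution from $V$ to a subdomain $U$; and if $\nu=\int\nu_q\,d\pi(q)$ is an average of probability measures each mutually absolutely continuous with a fixed $\mu$ with positive density, then $\nu\sim\mu$ too. The key mechanism in both parts is that a function which is merely harmonic on $V$ need not lie in the Cameron--Martin space $H(V)$, but multiplying it by a smooth cutoff supported in $V$ does put it in $H(V)$; since we only ever care about the restriction to $U$, this lets us replace the harmonic ``correction'' coming from the Markov property by an $H(V)$-function.

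\emph{Part (a).} Since $\dist(\overline U,\partial D_i)>0$ for $i=1,2$, fix a bounded open $V$ with $U\Subset V\Subset D_1\cap D_2$. Applying the Markov property to $h_i$ in $D_i$ with $W=V$, and using that $F_i$ is harmonic on $V$, we may write $(h_i+F_i)|_V\stackrel{d}{=}h^V+G_i$, where $h^V$ is a zero-boundary GFF on $V$ (the same law for both $i$), $G_i$ is a random function harmonic on $V$, and $h^V$ and $G_i$ are independent. Choose $\phi\in C_0^\infty(V)$ with $\phi\equiv 1$ on a neighborhood of $\overline U$. Since $G_i$ is harmonic (hence smooth) on $V$, we have $\phi G_i\in C_0^\infty(V)\subset H(V)$, while $(h^V+G_i)|_U=(h^V+\phi G_i)|_U$. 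Conditioning on $G_i$ and applying Cameron--Martin, for every realization of $G_i$ the law of $(h^V+\phi G_i)|_U$ is mutually absolutely continuous with $\mu:=\mathrm{Law}(h^V|_U)$ with positive density; averaging over $G_i$ gives $\mathrm{Law}((h_i+F_i)|_U)\sim\mu$ for $i=1,2$, hence the two laws are mutually absolutely continuous.

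\emph{Part (b).} Pick a neighborhood $N$ of $\overline U$ with $\overline N\subseteq U'$ and set $W:=D_i\cap N$ (the same set for $i=1,2$ since $D_1\cap U'=D_2\cap U'$), writing $\partial W=\partial_0W\cup\partial_1W$ with $\partial_0W\subseteq\partial D_i$ and $\partial_1W\subseteq\partial N$ at positive distance from $\overline U$. By the Markov property, $(h_i+F_i)|_W\stackrel{d}{=}h^W+F_i|_W+R_i$, where $h^W$ is a zero-boundary GFF on $W$, $R_i$ is a random function harmonic on $W$ (the harmonic extension of the boundary values on $\partial W$ of the zero-boundary GFF $h_i$), $h^W$ and $R_i$ are independent, and $R_i\to 0$ along $\partial_0W\subseteq\partial D_i$. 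Choose $\phi\in C_0^\infty$ with $\phi\equiv 1$ near $\overline U$ and $\supp\phi$ at positive distance from $\partial_1W$. Then $\phi R_i\in H(W)$ and $\phi(F_1-F_2)\in H(W)$: each has finite Dirichlet energy, vanishes near $\partial_1W$ by support, and tends to $0$ along the smooth arc $\partial_0W$ (for $R_i$ this is the boundary fact just noted, for $F_1-F_2$ it is the hypothesis), hence has zero trace on $\partial W$. Since $\phi\equiv 1$ on $U$, we get $(h_i+F_i)|_U\stackrel{d}{=}(h^W+F_i|_W+\phi R_i)|_U$, and writing $F_1|_W=F_2|_W+(F_1-F_2)|_W$ yields $(h_1+F_1)|_U\stackrel{d}{=}(h^W+F_2|_W+\phi(F_1-F_2)+\phi R_1)|_U$. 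Now repeat the argument of part (a) with base field $h^W+F_2|_W$ and $H(W)$-shift $\phi(F_1-F_2)+\phi R_1$ (respectively $\phi R_2$): conditioning on $R_i$, Cameron--Martin, and averaging show that both $\mathrm{Law}((h_1+F_1)|_U)$ and $\mathrm{Law}((h_2+F_2)|_U)$ are mutually absolutely continuous with $\mathrm{Law}((h^W+F_2|_W)|_U)$, which gives the claim.

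The steps I expect to require some care --- but to be routine --- are: verifying $\phi R_i\in H(W)$, which uses the standard facts that the harmonic part of the Markov decomposition of a zero-boundary GFF tends to $0$ along the true boundary and that a harmonic function vanishing on a smooth arc has finite Dirichlet energy up to that arc; the construction of the intermediate domain $W$ with the stated boundary structure; and the measure-theoretic bookkeeping in the averaging step (measurability of $q\mapsto\mathrm{Law}((h^V+\phi q)|_U)(A)$ and positivity of the Cameron--Martin densities). The one genuinely substantive point is the cutoff observation of the first paragraph, which is precisely what reduces the boundary-intersecting case of part (b) to a Cameron--Martin computation rather than forcing one to compare the fields all the way up to a boundary where $F_1$ and $F_2$ need not individually be bounded.
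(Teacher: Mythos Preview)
Your proof is correct and follows essentially the same approach as the paper: apply the Markov property on an intermediate domain to write each field as a common zero-boundary GFF plus a random harmonic correction, cut off the correction so it lies in the Cameron--Martin space, and invoke Cameron--Martin. The only cosmetic difference is that you compare both laws to a common reference measure $\mathrm{Law}(h^V|_U)$, whereas the paper compares $(h_1+F_1)|_U$ and $(h_2+F_2)|_U$ directly via the random shift $g$ equal to the (cut-off) difference of the harmonic parts; your treatment of part~(b) is also considerably more explicit than the paper's, which simply declares it ``very similar.''
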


Although Proposition~\ref{prop::gff_abs_continuity} is stated in the case that $U$ is bounded, an analogous result holds when $U$ is not bounded.  We will in particular use this result in the following setting without reference: $D_1 = D_2 = D$ and there exists a conformal transformation $\varphi \colon D \to \wt{D}$ where $\wt{D}$ is bounded and $\wt{U} = \varphi(U)$ satisfies the hypotheses of Proposition~\ref{prop::gff_abs_continuity} part~\eqref{lem::gff_abs_continuity_interior} or part~\eqref{lem::gff_abs_continuity_boundary}.

We will often make use of Proposition~\ref{prop::gff_abs_continuity} in the following manner.  Theorem~\ref{thm::coupling_existence} gives the existence of a coupling of an $\SLE_\kappa(\ul{\rho})$ (a flow line) or $\SLE_{\kappa'}(\ul{\rho})$ process (a counterflow line) with the GFF (say, on $\h$) provided the boundary data of the GFF is piecewise constant and changes values a finite number of times.  For more general boundary data, one can still construct the flow and counterflow lines of the GFF provided the boundary data near the starting point is piecewise constant.  The reason for this is that Proposition~\ref{prop::gff_abs_continuity} implies that the law of the field near the starting point is absolutely continuous with respect to the law of a GFF that does have piecewise constant boundary data which changes a finite number of times.

More precisely, suppose that $h$ is a GFF on $\h$ with piecewise constant boundary data and that $\eta$ is its flow or counterflow line starting from $x \in \partial \h$.  If $\wt{h}$ is another GFF on $\h$ whose boundary data agrees with that of $h$ in a neighborhood of $x$ and $U$ is any bounded, open subset of $\ol{\h}$ which contains a neighborhood of $x$ and with positive distance from those boundary segments where the boundary data of $h$ and $\wt{h}$ differ, then we know that the laws of $h|_U$ and $\wt{h}|_U$ are mutually absolutely continuous.  Let~$Z_U$ denote the Radon-Nikodym derivative of the law of the latter with respect to the former.  Then weighting the law of $(h|_U,\eta_U)$, $\eta_U$ given by~$\eta$ stopped upon first exiting~$U$, by~$Z_U$ yields a coupling where the marginal law of the first element is the same as the law of~$\wt{h}|_U$, the marginal law of the second element is mutually absolutely continuous with respect to the law of $\eta_U$, and the pair satisfies the same Markov property described in the statement of Theorem~\ref{thm::coupling_existence}.  (We will justify the Markov property more carefully just after the proof of Lemma~\ref{lem::local_char}.)

Note that all of the almost sure properties of $\eta_U$ are preserved under this change of measure.  The caveat is that by combining Theorem~\ref{thm::coupling_existence} and Proposition~\ref{prop::gff_abs_continuity} in this way (with larger and larger domains $U$), the flow or counterflow line of $\wt{h}$ will only be defined up until it hits a boundary segment which does not have piecewise constant boundary data.

One can also apply this operation in reverse.  Namely, suppose that $h$ is a GFF on $\h$, $U \subseteq \h$ is bounded and open such that $h$ has piecewise constant boundary data in $\partial \h \cap \ol{U}$, and $\eta$ is a path coupled with $h$ which satisfies the Markov property described in the statement of Theorem~\ref{thm::coupling_existence}.  Then Proposition~\ref{prop::gff_abs_continuity} and Theorem~\ref{thm::martingale} together imply that the joint law of $(h|_U,\eta_U)$, with $\eta_U$ as defined above, is mutually absolutely continuous with respect to the joint law of a GFF with piecewise constant boundary data on all of $\partial \h$ and its flow or counterflow line starting from the origin, both restricted to $U$.  In particular, all of the almost sure properties of $\eta_U$ are the same under both laws.  This is useful because the Loewner driving function for $\eta_U$ may not be described by a simple SDE.

We also remark that it is possible to construct directly (without appealing to Proposition~\ref{prop::gff_abs_continuity}) a coupling as in Theorem~\ref{thm::coupling_existence} with more general types of boundary data.  We will not need this in the present article because we will not need to analyze the specific form of the driving diffusion in the more general setting.  See \cite[Theorem~4.5]{SHE_WELD} for a precise statement of this in the setting of a very closely related coupling of $\SLE$ with the GFF (the so-called ``reverse'' coupling).

\begin{proof}[Proof of Proposition~\ref{prop::gff_abs_continuity}]
Let $\wt{U} \subseteq D_1 \cap D_2$ be such that $\dist(\wt{U},\partial D_i) > 0$ for $i=1,2$, $U \subseteq \wt{U}$, and $\dist(U,\partial \wt{U}) > 0$.  By Proposition~\ref{gff::prop::markov}, we can write $(h_i+F_i)|_{\wt{U}} = h_i^{\wt{U}} + (h_i^{\wt{U}^c}+F_i)|_{\wt{U}}$ where $h_i^{\wt{U}}$ is a zero boundary GFF on $\wt{U}$ and $h_i^{\wt{U}^c}+F_i$ is harmonic on $\wt{U}$, both restricted to $\wt{U}$.  We assume that $h_1,h_2$ are coupled together on a common probability space so that $h_1^{\wt{U}} = h_2^{\wt{U}}$ and that $h_1^{\wt{U}^c}, h_2^{\wt{U}^c}$ are independent.  Let $\phi \in C_0^\infty(\wt{U})$ be such that $\phi|_U \equiv 1$ and let $g = \phi \left( (h_2^{\wt{U}^c} + F_2) - (h_1^{\wt{U}^c} + F_1) \right)$.  Then we have that $g \in H(\wt{U})$ and $(h_1+F_1 + g)|_U= (h_2+F_2)|_U$.  We note that $g$ is a measurable function of $h_1,h_2$ since the $h_i^{\wt{U}^c}$ for $i=1,2$ are measurable functions of $h_1,h_2$ as their series expansions can be determined by taking a $(\cdot,\cdot)_\nabla$-inner product of $h_1,h_2$ with respect to an appropriately chosen orthonormal basis.  Similarly, $(h_1^{\wt{U}},g)_\nabla$ is a measurable function of $h_1,h_2$ because we can represent it as $\sum_j a_j b_j$ where $(a_j)$, $(b_j)$ are the coordinates of $h_1^{\wt{U}}, g$ with respect to a $(\cdot,\cdot)_\nabla$-orthonormal basis.  We note that, although $g$ is a random function in $H(U)$, the series $\sum_j a_j b_j$ converges almost surely because $g$ is independent of $h_1^{\wt{U}}$.  (Here, we are using that this holds if $g$ is any fixed function in $H(U)$ hence also holds in the case that $g$ is independent of $h_1^{\wt{U}}$ by Fubini's theorem.)  If we weight the law of $h_1,h_2$ by $\CZ^{-1} \exp((h_1^{\wt{U}},g)_\nabla)$ where $\CZ = \exp(\tfrac{1}{2} \| g \|_\nabla^2)$ then the law of $h_1^{\wt{U}}$ under the weighted law is equal to the law of $h_1^{\wt{U}}+g$ under the unweighted law.  As $g$ is compactly supported in $\wt{U}$ and $h_1^{\wt{U}^c} + F_1$ is harmonic in $\wt{U}$, we have that $(h_1^{\wt{U}^c} + F_1,g)_\nabla = 0$, hence $(h_1^{\wt{U}},g)_\nabla = (h_1+F_1 ,g)_\nabla$.  Therefore if we weight the law of $h_1,h_2$ by $\CZ^{-1} \exp((h_1+F_1,g)_\nabla)$ where $\CZ = \exp( \tfrac{1}{2} \| g \|_\nabla^2)$ then the law of $(h_1+F_1)|_U$ under the weighted law is equal to the law of $(h_1+F_1+g)|_U = (h_2 + F_2)|_U$ under the unweighted law.  We conclude that, under the coupling that we have constructed of $h_1,h_2$, an event for $(h_1+F_1)|_U$ has zero probability if and only if it has zero probability for $(h_2+F_2)|_U$.  That is, the law of $(h_1+F_1)|_U$ is mutually absolutely continuous with respect to the law of $(h_2+F_2)|_U$.  This proves part~\eqref{lem::gff_abs_continuity_interior}.

For part~\eqref{lem::gff_abs_continuity_boundary}, we let $\wt{U} = U' \cap D_1 = U' \cap D_2$.  Then we have $(h_i+F_i)|_{\wt{U}} = h_i^{\wt{U}} + (h_i^{\wt{U}^c} + F_i)|_{\wt{U}}$ where $h_i^{\wt{U}}$ is a zero boundary GFF on $\wt{U}$ and $h_i^{\wt{U}^c}+F_i$ is harmonic in $\wt{U}$, both restricted to~$\wt{U}$.  We assume that $h_1,h_2$ are coupled on a common probability space so that $h_1^{\wt{U}} = h_2^{\wt{U}}$ and $h_1^{\wt{U}^c}, h_2^{\wt{U}^c}$ are independent.  Let $\phi$ be a $C^\infty$ function with $\phi|_U \equiv 1$ and with $\phi$ equal to $0$ on a neighborhood of $(U')^c$ and let $g = \phi \left( (h_2^{\wt{U}^c}+F_2) - (h_1^{\wt{U}^c}+F_1) \right)$.  Then we have that $(h_1 + F_1 + g)|_U = (h_2 + F_2)|_U$ where $g \in H(\wt{U})$.  The rest of the proof thus follows using the same argument used to prove the proof of part~\eqref{lem::gff_abs_continuity_interior}.
\end{proof}

We remark that a slight variant of Proposition~\ref{prop::gff_abs_continuity} is stated and proved in \cite[Lemma~3.2]{SchrammShe10} in the case that one considers the law of the GFF on disjoint closed subsets.

\begin{remark}
\label{rem::gff_abs_continuity} Suppose that $h$ is a GFF on a domain $D$, and $f \in C_0^\infty(D)$.  As in the proof of Proposition~\ref{prop::gff_abs_continuity}, we know that reweighing the law $\mu$ of $h$ by $\CZ^{-1}\exp((h,f)_\nabla)$ yields the law $\mu_f$ of $h+f$.  Consequently, the Cauchy-Schwarz inequality implies that $\mu(E) \leq \mu_f^{1/2}(E) \exp(\| f \|_\nabla^2)$ for every event $E$.  The same holds if we reverse the roles of $\mu$ and $\mu_f$.  More generally, if we apply H\"older's inequality, we have that $\mu(E) \leq \mu_f^{1/p}(E) \exp(\tfrac{q}{2} \| f \|_\nabla^2)$ where $\tfrac{1}{p} + \tfrac{1}{q} = 1$.  These simple facts allow us to give explicit bounds which relate the probabilities of events in the setting of parts~\eqref{lem::gff_abs_continuity_interior} and~\eqref{lem::gff_abs_continuity_boundary} of Proposition~\ref{prop::gff_abs_continuity}.  For example, suppose that $D = D_1 = D_2$ is a Jordan domain and $L$ is an interval in $\partial D$.  Suppose that $h_1$ and $h_2$ are GFFs on $D$ with $h_1|_{\partial D \setminus L} = h_2|_{\partial D \setminus L}$ and $|h_1|_L| \leq M$, $|h_2|_L| \leq M$ for some constant $M > 0$.  Let $F$ be the function which is harmonic in $D$ with $F|_{\partial D \setminus L} \equiv 0$ and $F|_L = h_2|_L - h_1|_L$.  Then $h_1+F \stackrel{d}{=} h_2$.  Let $U$ be the set of points in $D$ which have distance at least $\epsilon > 0$ from $L$ and let $G$ be the function which agrees with $F$ in $U$, is $0$ on the set of points with distance at most $\tfrac{1}{2} \epsilon$ from $\partial D$, and otherwise harmonic.  Then there exists $C > 0$ depending only on $M,D,\epsilon$ such that $\| G \|_\nabla^2 \leq C$.  Moreover, with $\mu_G$ the law of $h_1+G$ restricted to $U$, we have that $h_2|_U \sim \mu_G$.  Thus with $\mu$ the law of $h_1|_U$, for each $p > 1$ there exists a constant $C_p > 0$ depending only on $M,D,\epsilon$ such that $\mu(E) \leq C_p \mu_G^{1/p}(E)$ for all events $E$.
\end{remark}

\subsection{Local sets}
\label{subsec::local_sets}

The theory of local sets, developed in \cite{SchrammShe10}, extends the Markovian structure of the field (Proposition~\ref{gff::prop::markov}) to the setting of conditioning on the values it takes on a \emph{random set} $A \subseteq D$.  In this section, we will give an overview of local sets which is very closely based on the treatment given in \cite[Section~3.3]{SchrammShe10}.  We will cite some of the results in \cite{SchrammShe10} in the proofs of the statements below.

Throughout, we shall assume for convenience that $D \subseteq \C$ is a bounded domain and $h$ is a GFF on~$D$.  Let $\Gamma$ be the collection of all nonempty, closed subsets of $\ol{D}$ which contain $\partial D$.  We view $\Gamma$ as a metric space endowed with the {\bf Hausdorff} distance.  That is, the distance between sets $S_1, S_2 \in \Gamma$ is
\[d_{\rm HAUS}(S_1,S_2):=\max\Bigl \{ \sup_{x \in S_1} \dist(x, S_2), \sup_{y \in S_2} \dist(y,S_1)\Bigr \}.\]
It is well known (and the reader may easily verify) that $\Gamma$ is a compact metric space.  (In order to treat the case that $D$ is unbounded, we can conformally map to the sphere and use the induced Euclidean metric on the sphere.)  Let $\CG$ be the Borel $\sigma$-algebra on $\Gamma$ induced by this metric.

Suppose that $(A,h)$ is a coupling of a GFF $h$ on $D$ and a random variable $A$ taking values in $\Gamma$.  Then $A$ is said to be a {\bf local set} of $h$ if there exists a law on pairs $(A,h_1)$ where $h_1$ is a distribution on $D$ with $h_1|_{D \setminus A}$ harmonic is such that a sample with the law $(A,h)$ can be produced by
\begin{enumerate}
\item choosing the pair $(h_1,A)$,
\item then sampling an instance $h_2$ of the zero boundary GFF on $D \setminus A$ and setting $h=h_1+h_2$.
\end{enumerate}

Whenever we use the phrase ``$A$ is a local set of $h$,'' we will always assume that $\partial D \subseteq A$ even though we may not say this explicitly.

Deterministic closed sets are also obviously local and, by Theorem~\ref{thm::coupling_existence}, so is the hull $K_\tau$ of an $\SLE_\kappa(\ul{\rho})$ process stopped at time $\tau$ at or before the continuation threshold is reached.  These are the motivating examples for the theory.

Given $A  \in \Gamma$, let $A_\delta$ denote the closed set containing all points in $D$ whose distance from $A$ is at most $\delta$.  Let $\CA_\delta$ be the smallest $\sigma$-algebra in which $A$ and the restriction of $h$ (as a distribution) to the interior of $A_\delta$ are measurable. Let $\CA = \bigcap_{\delta > 0} \CA_\delta$. Intuitively, this is the smallest $\sigma$-algebra in which $A$ and the values of $h$ in an infinitesimal neighborhood of $A$ are measurable.  Suppose that $A$ is local for $h$.  We let $\CC_A$ be the conditional expectation of $h$ given~$\CA$.  We note that we can view $\CC_A$ as a random variable which takes values in the space of distributions and we can identify $\CC_A$ with the harmonic function $h_1$, and this is the perspective that we will take.  Indeed, as we will explain carefully just below, the $\sigma$-algebra generated by the values of $h_2$ in $A_\delta$ becomes trivial as $\delta \to 0$, i.e., $\CA = \sigma(h_1)$.  Hence, we can write $(\CC_A,\phi) = \E[ (h,\phi) \giv \CA] = \E[ (h_1,\phi) \giv \CA] = (h_1,\phi)$.

Let us now explain carefully why $\CA = \sigma(h_1)$.  We first note that $h_1$ is obviously $\CA$-measurable because it is $\CA_\delta$-measurable for every $\delta > 0$ as it is harmonic in $D \setminus A$.  To show that $\CA \subseteq \sigma(h_1)$, it suffices to show that the conditional law of $h_2$ given $\CA$ is the same as the conditional law of $h_2$ given $\sigma(h_1)$.  (Indeed, this will imply that the conditional law of $h$ given $\sigma(h_1)$ is the same as the conditional law of $h$ given $\CA$.)  That is, it is a zero boundary GFF on $D \setminus A$.  Let $B_\delta =\{ x \in D : \dist(x,A) > \delta\}$.  Let $\CB_\delta$ be the $\sigma$-algebra generated by $h_1$ and the projection $h_{21}^\delta$ of $h_2$ onto $H^\perp(B^\delta)$.  Note that $\CB_\delta$ contains $\CA_\delta$ for every $\delta > 0$ because if $\phi \in C_0^\infty(D)$, on the event that the support of $\phi$ is contained in $A_\delta$, we have $(h,\phi) = (h_1,\phi) + (h_{21}^\delta,\phi)$.  Therefore $\CB = \cap_{\delta > 0} \CB_\delta$ contains $\CA$.  It suffices to show that the conditional law of $h_2$ given $\CB$ is that of a zero-boundary GFF on $D \setminus A$ (because then the same will be true for $\CA$).  This follows from the backwards martingale convergence theorem.  Indeed, if $\phi \in C_0^\infty(D)$ and $S_\delta$ is the event that the support of $\phi$ is contained in $B_\delta$ and $S$ is the event that the support of $\phi$ is contained in $D \setminus A$, then with $h_{22}^\delta$ the projection of $h_2$ onto $H(B_\delta)$ we have that
\[ \E[ \exp(i \theta(h_2,\phi)_\nabla ) \giv \CB_\delta] \one_{S_\delta} = \E[ \exp(i \theta(h_{22}^\delta,\phi)_\nabla) \giv \CB_\delta] \one_{S_\delta} = \exp(-\theta^2 \| \phi \|_\nabla^2 / 2) \one_{S_\delta}\]
where the final equality follows by the Markov property of the GFF.  Taking a limit as $\delta \to 0$, the right hand side converges to $\exp(-\theta^2 \| \phi \|_\nabla^2/2) \one_S$ as $S_\delta$ increases to $S$ while the left hand side converges to
\[ \E[ \exp(i \theta(h_2,\phi)_\nabla) \giv \CB] \one_S\]
by the backwards martingale convergence theorem.

We note that in the case that $A$ is a deterministic, closed set then Proposition~\ref{prop::gff_sigma_algebra_converges} implies that $\CA$ is the same as the $\sigma$-algebra generated by the projection of $h$ onto $H^\perp(D \setminus A)$ so that $\CC_A$ is the same as the projection of $h$ onto $H^\perp(D \setminus A)$.  The proof of Lemma~\ref{lem::local_char} given just below implies that $\CA$ is equal to the $\sigma$-algebra generated by $(h_1,A)$ as in the definition of a local set given above.  

In many places in this article, we will consider conditional expectations where we condition on $A$ and $h|_A$ where $A$ is a local set for $h$.  By this, we mean that we consider the conditional expectation $\CC_A$ of $h$ given the $\sigma$-algebra $\CA$ defined just above.

There are several other characterizations of local sets which are given in the following restatement of \cite[Lemma~3.9]{SchrammShe10}.

\begin{lemma}
\label{lem::local_char}
Suppose that $(A, h)$ is a random variable which is a coupling of an instance $h$ of the GFF on $D$ with a random element $A$ of $\Gamma$.  Then the following are equivalent:
\begin{enumerate}[(i)]
\item \label{it::Ucond} For each deterministic open $U \subseteq D$, we have that {\em given} the
projection of $h$ onto $H^\perp(U)$, the event $A \cap U = \emptyset$
is independent of the projection of $h$ onto $H(U)$.  In other words,
the conditional probability that $A \cap U = \emptyset$ given $h$ is a measurable function of
the projection of $h$ onto $H^\perp(U)$.

\item \label{it::U2cond} For each deterministic open $U \subseteq D$, we let $S$ be the event that $A$ intersects $U$ and let
\[ \wt{A} = \begin{cases} A  \quad\text{on}\quad S^c, \\ \emptyset \quad\text{otherwise.} \end{cases}\]
Then we have that {\em given} the projection of $h$ onto $H^\perp(U)$, the pair $(S, \wt A)$ is independent of the projection of $h$ onto $H(U)$.

\item \label{it::Acond} Conditioned on $\CA$, (a regular version of) the conditional law of $h$ is that of $h_1 + h_2$ where $h_2$ is the GFF with zero boundary values on $D\setminus A$ (extended to all of $D$) and $h_1$ is an $\CA$-measurable random distribution (i.e., as a distribution-valued function on the space of distribution-set pairs $(A, h)$, $h_1$ is $\CA$-measurable) which is almost surely harmonic on $D \setminus A$.
\item \label{it::twostep} $A$ is a local set for $h$.  That is, a sample with the law of $(A, h)$ can be produced as follows.  First choose the pair $(A, h_1)$ according to some law where $h_1$ is almost surely harmonic on $D \setminus A$.  Then sample an instance $h_2$ of the GFF on $D \setminus A$ and set $h=h_1+h_2$.
\end{enumerate}
\end{lemma}
\begin{proof}
Trivially,~\eqref{it::U2cond} implies~\eqref{it::Ucond}.

Next, suppose $A$ satisfies~\eqref{it::Ucond}.  We will show that~\eqref{it::Acond} holds.  For each $\delta > 0$, let $\CD_\delta$ be the collection of sets which can be written as $\ol{D} \cap S$ where $S$ is a closed square in $\C$ of side length $\delta$ with corners in the grid $\delta \Z^2$ and let $\wh{A}_\delta = \cup\{ S \in \CD_\delta : S \cap A_\delta \neq \emptyset\}$.  We claim that $\wh{A}_\delta$ satisfies~\eqref{it::Ucond} for each $\delta > 0$.  To see this, fix $U \subseteq D$ open and let $\wh{U}_\delta$ be given by the interior of $\cup\{ S \in \CD_\delta : S \cap U \neq \emptyset\}$.  Then $\wh{A}_\delta$ intersects $U$ if and only if $A_\delta$ intersects the intersection of $D$ and the closure of~$\wh{U}_\delta$.  For each $\delta' > 0$, let $\wt{U}_{\delta,\delta'}$ be given by the $\delta'$-neighborhood of $\wh{U}_\delta$ in $D$.  Then $A_\delta$ intersects the intersection of $D$ and the closure of~$\wh{U}_\delta$ if and only if $A$ intersects the intersection of $D$ and the closure of $\wt{U}_{\delta,\delta}$.  Equivalently, $A_\delta$ intersects the intersection of $D$ and the closure of~$\wh{U}_\delta$ if and only if $A$ intersects $\cap_{\delta' > \delta} \wt{U}_{\delta,\delta'}$.  Since $A$ satisfies~\eqref{it::Ucond}, we know that the conditional probability of $\{A \cap \wt{U}_{\delta,\delta'} \neq \emptyset\}$ given $h$ is a measurable function of the projection of $h$ onto $H^\perp(\wt{U}_{\delta,\delta'})$.  This is in turn a measurable function of the projection of $h$ onto $H^\perp(U)$ (since $U \subseteq \wt{U}_{\delta,\delta'}$).  Therefore the event $\{ \wh{A}_\delta \cap U \neq \emptyset\} = \cap_{\delta' > \delta} \{ A \cap \wt{U}_{\delta,\delta'} \neq \emptyset\}$ is a measurable function of the projection of $h$ onto $H^\perp(U)$ (note that we can represent the intersection as a countable intersection so that the event in question is measurable).  Note that there are only finitely many possible choices for $\wh{A}_\delta$ since we assumed $D$ to be bounded.

We will now show that $\wh{A}_\delta$ satisfies~\eqref{it::Acond}.  Let $\CD_\delta^\cup$ be the collection of all sets which can be expressed as a finite union of elements in $\CD_\delta$.  Fix $C \in \CD_\delta^\cup$ and assume that $\p[\wh{A}_\delta \subseteq C] > 0$.  Since $\wh{A}_\delta$ satisfies~\eqref{it::Ucond}, we have that the conditional law of $h$ in $D \setminus C$ given both $\wh{A}_\delta \subseteq C$ and the projection of $h$ onto $H^\perp(D \setminus C)$ is the same as the conditional law of $h$ in $D \setminus C$ given just the projection of $h$ onto $H^\perp(D \setminus C)$.  That $\wh{A}_\delta$ satisfies~\eqref{it::Acond} thus follows from Proposition~\ref{gff::prop::markov}.

We are now going to use a limiting procedure to deduce that $A$ satisfies~\eqref{it::Acond} from the fact that $\wh{A}_\delta$ satisfies~\eqref{it::Acond}.  Assume that $\delta=2^{-j}$ for some $j \in \N$.  Let $\wh{\CA}_\delta$ be defined analogously to $\CA_\delta$ but with $\wh{A}_\delta$ in place of $A_\delta$ and let $\wt{\CA}_\delta$ be the smallest $\sigma$-algebra which contains $\wh{\CA}_\delta$ and with respect to which $A$ is measurable.  As we will see momentarily, the $\sigma$-algebras $\wt{\CA}_\delta$ will be useful to consider because they decrease as $\delta$ decreases.  We claim that the conditional law of $h$ given $\wh{\CA}_\delta$ is the same as the conditional law of $h$ given $\wt{\CA}_\delta$.  To see that this is the case, fix $k \geq j$, let $\delta' = 2^{-k}$, and let $\wt{\CA}_{\delta,\delta'}$ be the smallest $\sigma$-algebra which contains both $\wh{\CA}_\delta$ and $\wh{\CA}_{\delta'}$.  By the argument in the previous paragraph, it follows that the conditional law of $h$ given $\wh{\CA}_\delta$ is the same as the conditional law of $h$ given $\wt{\CA}_{\delta,\delta'}$.  The claim then follows because $\wt{\CA}_\delta = \cap_{k=j}^\infty \wt{\CA}_{\delta,2^{-k}}$.

Since the $\sigma$-algebras $\wt{\CA}_{2^{-j}}$ are decreasing, the conditional law of $h$ given $\wh{\CA}_{2^{-j}}$ is the same as the conditional law of $h$ given $\wt{\CA}_{2^{-j}}$, and $\cap_{j=1}^\infty \wt{\CA}_{2^{-j}} = \CA$, the reverse martingale convergence theorem implies the almost sure convergence $\CC_{\wh A_{2^{-j}}}$ as $j \to \infty$ in the weak sense, i.e., for each fixed $\phi$, we have a.s.\ that
\begin{equation}
\label{eqn::ce_converge}
(\CC_{\wh A_{2^{-j}}}, \phi) \to \E[ (h,\phi) \giv \CA] \quad\text{as}\quad j \to \infty.
\end{equation}
In order to finish the proof that~\eqref{it::Acond} holds for $A$, we need to show that $\E[ (h,\phi)\giv \CA]$ defines a distribution on $D$ and that the conditional law of $\phi \mapsto (h,\phi) - \E[ (h,\phi)\giv \CA]$ given $\CA$ is that of a GFF on $D \setminus A$ with zero boundary conditions.  Since~\eqref{it::Acond} holds for each $\wh{A}_\delta$, we have that
\begin{equation}
\label{eqn::char_form}
\E[ \exp(i \theta (h,\phi)) \giv \wh{\CA}_{2^{-j}} ] = \exp\!\left(i \theta (\CC_{\wh{A}_{2^{-j}}},\phi) - \theta^2 \sigma_{D \setminus \wh{A}_{2^{-j}}}^2(\phi)/2 \right)
\end{equation}
where $\sigma_{D \setminus \wh{A}_{2^{-j}}}^2(\phi) = \iint \phi(x) G_{D \setminus \wh{A}_{2^{-j}}}(x,y) \phi(y) dx dy$ and $G_{D \setminus \wh{A}_{2^{-j}}}$ is the Green's function for $\Delta$ on $D \setminus \wh{A}_{2^{-j}}$.  Thus sending $j \to \infty$, by combining~\eqref{eqn::ce_converge} and~\eqref{eqn::char_form} we have a.s.\ that 
\[ \E[ \exp(i \theta (h,\phi)) \giv \CA ] = \exp\!\left(i \theta \E[(h,\phi) \giv \CA] - \theta^2 \sigma_{D \setminus A}^2(\phi)/2\right)\]
where $\sigma_{D \setminus A}^2(\phi) = \iint \phi(x) G_{D \setminus A}(x,y) \phi(y) dx dy$ and $G_{D \setminus A}$ is the Green's function for $\Delta$ on $D \setminus A$.  That is, for each fixed $\phi$ we have that the conditional law of $(h,\phi) - \E[ (h,\phi) \giv \CA]$ given $\CA$ is equal to that of $(\wt{h},\phi)$ where, given $\CA$, $\wt{h}$ has the law of a zero-boundary GFF on $D \setminus A$.  This implies that we can find a coupling of $\wt{h}$ and $h,A$ so that for each $\phi \in C_0^\infty(D)$ we almost surely have that $(\wt{h},\phi) = (h,\phi) - \E[ (h,\phi) \giv \CA]$.  By letting $(\phi_j)$ be a suitably chosen family in $C_0^\infty(D)$, it is easy to see that the family of random variables $(h,\phi_j) - \E[ (h,\phi_j) \giv \CA]$ a.s.\ determines $\wt{h}$.  Thus using this, we set $(\CC_A,\phi) = (h,\phi) - (\wt{h},\phi)$ and see that this a.s.\ defines a distribution on $D$ since $h,\wt{h}$ are a.s.\ distributions on $D$.  Therefore~\eqref{it::Acond} holds for $A$.

Now~\eqref{it::twostep} is immediate from~\eqref{it::Acond} when we set $h_1 = \CC_A$.

To obtain~\eqref{it::U2cond} from~\eqref{it::twostep}, if suffices to show that given the projection of $h$ onto $H^\perp(U)$ {\em and} the pair $(S, \wt A)$, the conditional law of the projection of $h$ onto $H(U)$ is the same as its a priori law (or its law conditioned on only the projection of $h$ onto $H^\perp(U)$), namely the law of the zero boundary GFF on $U$.  Assuming~\eqref{it::twostep}, we can write $h = h_1 + h_2$ where, conditional on $\CA$, we have that $h_1$ is harmonic in $D \setminus A$ and $h_2$ has the law of a zero-boundary GFF in $D \setminus A$.  On the event that $A \subseteq D \setminus U$ (which we emphasize is $\CA$-measurable), we can apply the Markov property to the GFF $h_2$ so that, given the $\sigma$-algebra $\CF_1$ generated by $\CA$ and $\CF_{( (D \setminus A) \setminus U)^+}^{h_2}$ we have that
	\[ h_2 = h_{21} + h_{22}\]
where $h_{21}$ is harmonic on $U$ and $h_{22}$ has the law of a GFF on $U$ with zero boundary conditions.  We claim that $\CF_1$ is the same as the $\sigma$-algebra $\CF_2$ generated by $\CA$ and $\CF_{(D \setminus U)^+}^h$.  (It is intuitively obvious that this should be true because both $\sigma$-algebras depend on the values of the GFF in the same way, we can build $h$ from $h_1$ and $h_2$, and we can also build $h_2$ from $h_1$ and $h$.  As we will see below, this intuition is what leads to the proof.)  Upon showing this, we will have (on $A \subseteq D \setminus U$) that the conditional law of $h$ given $\CA$ and $\CF_{(D \setminus U)^+}^h$ can be written as
\[ h = h_1 + h_{21} + h_{22}\]
where $h_1$ is a distribution on $D$ which is harmonic on $D \setminus A$, $h_{21}$ is a distribution on $D \setminus A$ which is harmonic on $U$, and $h_{22}$ is a zero-boundary GFF on $U$ and $h_1,h_{21}$ are determined by $\CA$ and $\CF_{(D \setminus U)^+}^h$.  This, in particular will imply that the conditional law of the projection of $h$ onto $H(U)$ given $\CA$ and $\CF_{(D \setminus U)^+}^h$, on the event that $A \subseteq D \setminus U$, is that of a zero-boundary GFF on $U$.

To see the claim, one simply has to note that, on $A \subseteq D \setminus U$, conditioning on either $\CF_1$ or $\CF_2$ is equivalent to conditioning on $\sigma(h_1,h_{21})$.  Indeed, we note that this is obviously true in the case of $\CF_1$ from how it is defined.  To see that conditioning on $\CF_2$ is equivalent to conditioning on $\sigma(h_1,h_{21})$ on $A \subseteq D \setminus U$, we first note that $h_1$ is obviously $\CF_2$-measurable.  Let $\CG_{2,\delta}$ be the $\sigma$-algebra generated by $\CA$ and the values of $h_2$ in $((D \setminus A) \setminus U)_\delta$.  That is, $\CG_{2,\delta}$ is the $\sigma$-algebra generated by $h_1$ and, with $S_\delta(\phi)$ the event that the support of $\phi$ is contained in $((D \setminus A) \setminus U)_\delta$, the random variables $(h_2,\phi) \one_{S_\delta(\phi)}$ for $\phi \in C_0^\infty(D)$.  Then we have that $\CG_{2,\delta}$ is contained in the $\sigma$-algebra generated by $h_1$ and $\CF_{(D \setminus U)_\delta}^h$ because $(D \setminus U)_\delta$ contains $((D \setminus A) \setminus U)_\delta$ and, for any $\phi \in C^\infty$ with support contained in $((D \setminus A) \setminus U)_\delta$ we have that $(h_2,\phi)$ is determined by $(h,\phi)$ and $(h_1,\phi)$.  Thus, by taking an intersection over $\delta > 0$, by Proposition~\ref{prop::gff_sigma_algebra_converges} applied to the conditional law of $h$ given $\CA = \sigma(h_1)$, this implies that (on $A \subseteq D \setminus U$) $\sigma(h_1,h_{21}) \subseteq \CF_2$.  For the reverse inclusion, we have that $h_1$ is $\sigma(h_1,h_{21})$-measurable.  Moreover, $(D \setminus U)_\delta \subseteq A_\delta \cup ((D \setminus A) \setminus U)_\delta$.  Therefore $\CF_{(D \setminus U)_\delta}^h$ is contained in the $\sigma$-algebra generated by $\CA$ and $(h_2,\phi) \one_{T_\delta(\phi)} = ((h,\phi)-(h_1,\phi)) \one_{T_\delta(\phi)}$ for $\phi \in C_0^\infty(D)$ where $T_\delta(\phi)$ is the event that the support of $\phi$ is contained either in $A_\delta$ or in $((D \setminus A) \setminus U)_\delta$.  Recall that the conditional law of $h_2$ given $\CA$ is that of a GFF on $D \setminus A$ with zero boundary conditions.  Therefore, as $\delta \to 0$, the $\sigma$-algebra generated by $\CA$ and the values of $h_2$ in $A_\delta$ (defined analogously to the case of $\CG_{2,\delta}$) decreases to the $\sigma$-algebra generated by $\CA$.  Moreover, it follows from Proposition~\ref{prop::gff_sigma_algebra_converges} that $\CG_{2,\delta}$ decreases to the $\sigma$-algebra generated by $\CA$ and $\sigma(h_{21})$.  By sending $\delta \to 0$, we thus see that $\CF_2 \subseteq \sigma(h_1,h_{21})$.

Now, we know from the Markov property (Proposition~\ref{gff::prop::markov}) for the GFF $h$ by itself that we can also write
	\[ h = \wh{h}_1 + \wh{h}_2\]
where $\wh{h}_1$ is a distribution which is harmonic on $U$ and the conditional law of $\wh{h}_2$ given $\CF_{(D \setminus U)^+}$ is that of a GFF on $U$.  Combining our two expressions tells us that \emph{further conditioning on $\CA$ tells us nothing about the projection of $h$ onto $H(U)$ on the event that $A \subseteq D \setminus U$}.  In other words, the conditional law of the projection of $h$ onto $H(U)$ given $\CF_{(D \setminus U)^+}$, $\CA$, and $A \subseteq D \setminus U$ is \emph{equal} to the conditional law of the projection of $h$ onto $H(U)$ given $\CF_{(D \setminus U)^+}$.  This implies~\eqref{it::U2cond}.
\end{proof}

\begin{figure} [h!]
\begin{center}
\includegraphics[scale=0.85]{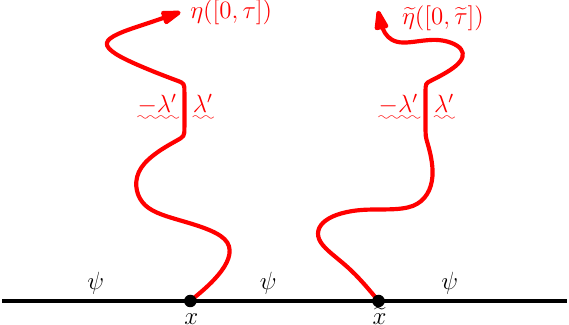}
\end{center}
\caption{\label{fig::multiple_sle}Proposition~\ref{gff::prop::cond_union_local} allows us to extend Theorem~\ref{thm::coupling_existence} to the setting of multiple $\SLE$-related paths by taking their conditionally independent union.  The illustration depicts a zero-boundary GFF plus an unspecified harmonic function $\psi$ coupled with two $\SLE$ processes $\eta, \wt{\eta}$ (possibly with different $\kappa$ values) emanating from distinct points $x,\wt{x} \in \partial \h$.  If $\tau,\wt{\tau}$ are $\eta,\wt{\eta}$ stopping times, respectively, then the field given $\eta([0,\tau]), \wt{\eta}([0,\wt{\tau}])$ is the sum of a zero boundary GFF and a harmonic function, whose boundary values are given above.  One can also change the angle of one or both paths by adding a constant to the boundary conditions along that path.  It is not obvious {\em a priori} what the boundary conditions should be at locations where the paths intersect (we will treat this issue systematically in Section~\ref{sec::interacting}).}
\end{figure}

Since $K_\tau$ is local for $h$, we can write $h = h_1 + h_2$ where $h_1$ is harmonic in $\h \setminus K_\tau$ and the conditional law of $h_2$ given $h_1$ is that of a zero-boundary GFF on $\h \setminus K_\tau$.  Equivalently, we can write $h = \wt{h} \circ f_\tau + h_1$ where $h_1$ is harmonic in $\h \setminus K_\tau$ and $\wt{h}$ is a zero-boundary GFF on $\h$ independently of $f_\tau$ and $h_1$.  Theorem~\ref{thm::coupling_existence} also implies that on $\h \setminus K_\tau$, we have that $h = \wt{h} \circ f_\tau + \Fh_\tau$ where $\wt{h}$ is a zero-boundary GFF given $f_\tau$.  Combining, it is not difficult to see that $h_1 = \Fh_\tau$ on $\h \setminus K_\tau$.  (We will see later in Lemma~\ref{lem::local_restriction_determine} that under certain weak hypotheses which are satisfied by $\SLE_\kappa$ for $\kappa \in (0,8)$ \cite{RS05} we have that $h_1$ is determined by its restriction to $\h \setminus K_\tau$.  That is, $h_1$ is determined by $\Fh_\tau$.)

Let us now elaborate further on the remark made just after the proof of Proposition~\ref{prop::gff_abs_continuity} regarding the Markov property when one performs a change of measure to the GFF which corresponds to changing its boundary data away from the starting point of the $\SLE$.  We will use the same notation introduced just after Proposition~\ref{prop::gff_abs_continuity} and write $K_t$ for the hull of $\eta([0,t])$.

Suppose that $\phi \in H(\h)$.  Then the Radon-Nikodym derivative of the law of $h+\phi$ with respect to the law of $h$ is given by a normalizing constant times $\exp( (h,\phi)_\nabla)$ (this is the infinite dimensional analog of the fact that if $Z \sim N(0,1)$ and $\mu \in \R$ then the Radon-Nikodym derivative of the law of $Z + \mu$ with respect to the law of $Z$ is given by a normalizing constant times $e^{\mu x}$).  Let $\tau$ be any stopping time for~$\eta_U$.  Then we have that $h = h_1+h_2$ where $h_2 = \wt{h} \circ f_\tau$ and $\wt{h}$ has the law of a zero-boundary GFF on~$\h$ which is independent of the pair $h_1,f_\tau$ and $h_1$ is a distribution defined on all of $\h$ which is harmonic in $\h \setminus K_\tau$.  As the Dirichlet inner product is conformally invariant, we have that
\[ \| \phi \circ f_\tau^{-1}\|_\nabla = \| \phi |_{\h \setminus K_\tau} \|_\nabla \leq \| \phi \|_\nabla < \infty.\]
This implies that it makes sense to take the $(\cdot,\cdot)_\nabla$-inner product of the partial sums in the series expansion of $\wt{h}$ with $\phi \circ f_\tau^{-1}$ and that these partial sums converge as $n \to \infty$.  Moreover, using the conformal invariance of the Dirichlet inner product, we also have that
$(\wt{h} \circ f_\tau,\phi)_\nabla = (\wt{h},\phi \circ f_\tau^{-1})_\nabla$ as this equality holds when one replaces $\wt{h}$ with the partial sums in the series expansion for $\wt{h}$.  If $\phi \in C_0^\infty(\h)$, then we can define $(h_1,\phi)_\nabla$ by taking it to be equal to $-\tfrac{1}{2\pi} (h_1,\Delta \phi)$, using that this integral is defined as $h_1$ is a distribution on all of $\h$.  With this definition, we have that $(h_1,\phi)_\nabla = (h - \wt{h} \circ f_\tau,\phi)_\nabla$ for all $\phi \in C_0^\infty(\h)$.  If $\phi \in H(\h)$, then we can find a sequence $(\phi_n)$ in $C_0^\infty(\h)$ which converges to $\phi$ in $H(\h)$.  Since $(h - \wt{h} \circ f_\tau,\phi_n)_\nabla \to (h - \wt{h} \circ f_\tau,\phi)_\nabla$ almost surely (at least along a subsequence) we can define $(h_1,\phi)_\nabla$ by taking it to be equal to $(h - \wt{h} \circ f_\tau,\phi)_\nabla$.  This gives a definition of $(h_1,\phi)_\nabla$ for all $\phi \in H(\h)$ (which, as in the case of the GFF, is defined for each $\phi$ up to a set of measure zero which \emph{a priori} depends on $\phi$).   We can write
\begin{align*}
      \exp( (h,\phi)_\nabla)
&= \exp( (\wt{h} \circ f_\tau + h_1,\phi)_\nabla)\\
&= \exp( (\wt{h},\phi \circ f_\tau^{-1})_\nabla) \times \exp( (h_1,\phi)_\nabla).
\end{align*}
Note that $(\wt{h},\phi \circ f_\tau^{-1})_\nabla$ is equal to the $(\cdot,\cdot)_\nabla$-inner product of $\wt{h}$ and the $(\cdot,\cdot)_\nabla$-orthogonal projection $\wt{\phi}_\tau$ of $\phi \circ f_\tau^{-1}$ onto $H(\h)$ as the series expansion for $\wt{h}$ is given in terms of an orthonormal basis of $H(\h)$.  Equivalently, $\wt{\phi}_\tau$ is given by subtracting from $\phi \circ f_{\tau}^{-1}$ the harmonic extension of its values from $\partial \h$ to $\h$.  For $x \in \partial \h$, we note that $\phi \circ f_{\tau}^{-1}(x)$ can only be non-zero provided $x \in f_\tau(\eta_U([0,\tau]))$ as $\phi \in H(\h)$.  Therefore the conditional law of $\wt{h}$ given $f_\tau,h_1$ under the weighted law is equal to that of a zero-boundary GFF on~$\h$ plus $\wt{\phi}_\tau$.  This implies that under the weighted law we have that
\begin{equation}
\label{eqn::weighted_markov}
h = \wh{h} \circ f_\tau + \wt{\phi}_\tau \circ f_\tau + h_1
\end{equation}
where $\wh{h}$ has the law of a zero-boundary GFF on~$\h$ given $f_\tau$, $\wt{\phi}_\tau$, and $h_1$.

Suppose that $\Fh$ is a function which is harmonic in $\h$ with zero boundary values on a neighborhood of $\partial U \cap \partial \h$.  We then take $\wh{\phi} \in C^\infty(\h)$ so that $\wh{\phi} \equiv 1$ on $U$ and $\wh{\phi} \equiv 0$ on a small enough neighborhood of $U$ so that $\phi = \Fh \wh{\phi}$ vanishes on $\partial \h$.  Thus $\phi \in H(\h)$ so that we may apply the above for this choice of $\phi$.  As mentioned earlier, we have that $h_1 = \Fh_\tau$ in $\h \setminus K_\tau$.  Observe that $\wt{\phi}_\tau \circ f_\tau + \Fh_\tau$ (for this choice of $\phi$) restricted to $U$ is equal to the restriction to $U$ of the function $\wh{\Fh}_\tau$ which is harmonic in $\h \setminus K_\tau$ with boundary values given by those of $\Fh_\tau$ on $\h \cap \partial K_\tau$ and those of $\Fh_\tau + \Fh$ on $\partial \h \setminus K_\tau$.  That is, we have that
\begin{equation}
\label{eqn::weighted_markov2}
h = \wh{h} \circ f_\tau + \wh{\Fh}_\tau
\end{equation}
in $\h \setminus K_\tau$.  This proves the claimed Markov property because we can always transform the law of the field restricted to~$U$ to the law of the field restricted to $U$ with boundary values which differ outside of~$U$ by adding such a function to the field.

One important property of local sets is that given local sets $A_1$ and $A_2$, the {\em conditionally independent union} $A_1 \wt \cup A_2$ (defined in the proposition statement below) is also local.  This result is contained in the following restatement of \cite[Lemma~3.10]{SchrammShe10}; see also Figure~\ref{fig::multiple_sle}.

\begin{proposition}
\label{gff::prop::cond_union_local}
Suppose $h$ is a GFF on $D$, $A_1,A_2$ are random variables taking values in $\Gamma$, and that $(A_1,h)$ and $(A_2,h)$ are couplings for which $A_1$ and $A_2$ are local.  Let $A = A_1 \wt{\cup} A_2$ denote the random variable taking values in $\Gamma$ which is given by first sampling $h$, then sampling $A_1, A_2$ independently from their conditional laws given $h$, and then taking the union of $A_1$ and $A_2$.  Then $A$ is also a local set of $h$.  Moreover, given $\CA$ and the pair $(A_1,A_2)$, the conditional law of $h$ is given by the sum of $\CC_A$ plus an instance of the GFF on $D \setminus A$.
\end{proposition}
\begin{proof}
We use characterization~\eqref{it::U2cond} for locality as given in Lemma~\ref{lem::local_char} and observe that \cite[Lemma~3.5]{SchrammShe10} implies the analogous result holds for the quadruple $(S_1, \wt A_1, S_2, \wt A_2)$---namely, that for each deterministic open $U \subseteq D$, we have that {\em given} the projection of $h$ onto $H^\perp(U)$ and the quadruple $(S_1, \wt A_1, S_2, \wt A_2)$, the conditional law of the projection of $h$ onto $H(U)$ is the law of the GFF on $U$.

The proof that this analog of~\eqref{it::U2cond} implies the corresponding analog of~\eqref{it::Acond} in the statement of Proposition~\ref{gff::prop::cond_union_local} is essentially the same as the proof of the equivalence of~\eqref{it::U2cond} and~\eqref{it::Acond}.
\end{proof}

\noindent We say that a local set $A$ of $h$ is \emph{almost surely determined} by $h$ if there exists a modification of $A$ which is $\sigma(h)$-measurable.  Many of the local sets we will work with in this article will be almost surely determined by the corresponding GFF, in which case the conditionally independent union is almost surely the same as an ordinary union.

The following proposition (see \cite[Lemma~3.11]{SchrammShe10}) allows us to estimate $\CC_{A_1 \wt{\cup} A_2}$ near connected components of $A_1 \setminus A_2$ and $A_1 \cap A_2$ which consist of more than a single point in terms of $\CC_{A_2}$.

\begin{proposition}
\label{gff::prop::cond_union_mean}
Assume that $D$ is a bounded, simply connected domain.  Let $A_1,A_2$ be connected local sets.  Then $\CC_{A_1 \wt{\cup} A_2} - \CC_{A_2}$ is almost surely a harmonic function in $D \setminus (A_1 \wt{\cup} A_2)$ that tends to zero on all sequences of points in $D \setminus (A_1 \wt{\cup} A_2)$ that tend to a limit in either:
\begin{enumerate}[(i)]
\item \label{it::a2_minus_a1} a connected component of $A_2 \setminus A_1$ (consisting of more than a single point) or 
\item \label{it::a1_and_a2} a connected component of $A_1 \cap A_2$ (consisting of more than a single point) at a point that has positive distance from either $A_1 \setminus A_2$ or $A_2 \setminus A_1$.
\end{enumerate}
\end{proposition}
\begin{proof}
We are going to give the argument for~\eqref{it::a2_minus_a1}; the argument for~\eqref{it::a1_and_a2} is analogous.

By Proposition~\ref{gff::prop::cond_union_local}, the union $A_1 \wt{\cup} A_2$ is itself a local set, so $\CC_{A_1 \wt{\cup} A_2}$ is well defined.  Now, conditioned on $\CA_1$ the law of the field in $D \setminus A_1$ is given by a GFF in $D \setminus A_1$ plus $\CC_{A_1}$.  We next claim that $\ol{A_2 \setminus A_1}$ is a local subset of $D \setminus A_1$, with respect to this GFF on $D \setminus A_1$.  To see this, note that characterization~\eqref{it::Acond} for locality from Lemma~\ref{lem::local_char} follows from the latter statement in Proposition~\ref{gff::prop::cond_union_local}.

By replacing $D$ with $D \setminus A_1$ and subtracting $\CC_{A_1}$, we may thus reduce to the case that $A_1$ is deterministically empty and $\CC_{A_1} = 0$.  What remains to show is that if $A$ is any local set on $D$ then $\CC_A$ (when viewed as a harmonic function on $D \setminus A$) tends to zero almost surely along all sequences of points in $D \setminus A$ that approach a point $x$ that lies in a connected component of $\partial D \setminus \wt{A}$, $\wt{A}$ given by the closure of $A \cap D$, that consists of more than a single point.

If we fix a neighborhood $U_1$ of $x$ and another neighborhood $U_2$ whose distance from $U_1$ is positive, then the fact that the statement holds on the event $A \cap D \subseteq U_2$ is immediate from Proposition~\ref{prop::gff_boundary_values}. 
\end{proof}

In the setting of Proposition~\ref{gff::prop::cond_union_mean}, we note that a given point in $A_2 \setminus A_1$ (corresponding to part~\eqref{it::a2_minus_a1}) or in $A_1 \cap A_2$ (corresponding to part~\eqref{it::a1_and_a2}) may be associated with multiple prime ends.  Proposition~\ref{gff::prop::cond_union_mean} implies that~$\CC_{A_1 \wt{\cup} A_2}$ has the same boundary behavior as~$\CC_{A_2}$ near each such prime end because we can always choose the sequence so that it approaches the given prime end.

Proposition~\ref{gff::prop::cond_union_local} and Proposition~\ref{gff::prop::cond_union_mean} allow us to extend Theorem~\ref{thm::coupling_existence} to the setting of coupling multiple $\SLE$s with the free field by taking their conditionally independent union (once Theorem~\ref{thm::coupling_uniqueness} is established, we can replace the conditionally independent union with a usual union).  Figure~\ref{fig::multiple_sle} contains an illustration of this result in the case of two (counter)flow lines of the same field emanating from different points.  See \cite[Lemma~6.1 and Theorem~6.4]{DUB_PART} for another approach to constructing couplings with multiple $\SLE$s.

The argument described after the proof of Lemma~\ref{lem::local_char} implies that if we have a path $\eta$ coupled with a GFF $h$ on $\h$ as a flow line (or counterflow line), then weighting the law of the field/path pair $h,\eta$ restricted to a subdomain $U$ in a way that produces a GFF with boundary conditions which agree with those of $h$ on $\partial \h \cap \partial U$ (but possibly differ elsewhere) yields a flow line (or counterflow line) of the new GFF which is defined at least up until when it first exits~$U$.  Moreover, Theorem~\ref{thm::martingale} allows one to identify the law of this flow line.  We will now explain a variant of this in which the domains on which the GFFs are defined are different.  Specifically, we suppose that $h$ is a GFF on $\h$ with piecewise constant boundary data which changes only finitely many times and that~$\eta$ is a flow line of $h$ starting from $0$.  Suppose that $D \subseteq \h$ is a simply connected domain whose boundary contains a segment of $\partial \h$ which is a neighborhood of $0$.  Fix a simply connected domain $U \subseteq D$ whose boundary also contains a segment of $\partial \h$ which is a neighborhood of $0$ and satisfies $\dist(\partial U \setminus \partial \h, \partial D \setminus \partial \h) > 0$.  Let $\tau_U$ be the first time that $\eta$ exits $U$ and let $\tau$ be a stopping time for $\eta$ such that $\p[ \tau \leq \tau_U] = 1$.  Note that both $\partial D$ and $\eta([0,\tau])$ are local for $h$.  Since the former is deterministic, we therefore have by Proposition~\ref{gff::prop::cond_union_local} that $\partial D \cup \eta([0,\tau])$ is local for $h$ and by Proposition~\ref{gff::prop::cond_union_mean} that the conditional law of $h$ given $\eta|_{[0,\tau]}$ and its values on $\partial D$ is given by that of the sum of a zero-boundary GFF $\wt{h}$ on $D \setminus \eta([0,\tau])$ plus the function $\Fg_\tau$ which is harmonic in $D \setminus \eta([0,\tau])$ with boundary values agreeing with those of $h$ on $\partial D$ and with $\Fh_\tau$ (as in Theorem~\ref{thm::coupling_existence}) on $\eta([0,\tau])$.

We will now argue that the boundary data for $\Fg_\tau$ in the conditional law of $h$ just above is correct for $\eta$ to be a flow line.  To this end, we fix $x \in \partial D \setminus \partial \h$ and let $\varphi \colon D \to \h$ be a conformal transformation which fixes $0$ and sends $x$ to $\infty$.  Let $\wt{\Fh}_\tau$ be as in the statement of Theorem~\ref{thm::coupling_existence} for $\varphi(\eta([0,\tau]))$  (in what follows, it will not matter how we choose the boundary data for $\wt{\Fh}_\tau$ on $\partial \h$).  Then it suffices to show that the boundary values of $\Fh_\tau$ along $\eta([0,\tau])$ agree with those of $\wt{\Fh}_\tau \circ \varphi - \chi \arg\varphi'$.  To see this, we let $f_\tau$ (resp.\ $\wt{f}_\tau$) be the centered Loewner map associated with $\eta$ (resp.\ $\varphi(\eta)$) at time $\tau$ so that $f_\tau$ (resp.\ $\wt{f}_\tau$) takes $\eta(\tau)$ (resp.\ $\varphi(\eta(\tau))$) to $0$.  Let $m$ be the function which is harmonic in $\h$ with boundary values given by $-\lambda$ (resp.\ $\lambda$) on $\R_-$ (resp.\ $\R_+$).  By definition, $\Fh_\tau$ has the same boundary behavior along $\eta([0,\tau])$ as the harmonic function
\[ m \circ f_\tau - \chi \arg f_\tau'\]
and likewise, by definition, $\wt{\Fh}_\tau$ has the same boundary behavior along $\varphi(\eta([0,\tau]))$ as the harmonic function
\[ m \circ \wt{f}_\tau - \chi \arg \wt{f}_\tau'.\]
Note that the function $m \circ \wt{f}_\tau \circ \varphi$ is harmonic in $\h \setminus \eta([0,\tau])$ with boundary values along the left (resp.\ right) side of $\eta([0,\tau])$ and $\R_-$ (resp.\ $\R_+$) given by $-\lambda$ (resp.\ $\lambda$).  These are the same as the boundary values of $m \circ f_\tau$ and therefore $m \circ f_\tau = m \circ \wt{f}_\tau \circ \varphi$.  We will next show that 
\begin{equation}
\label{eqn::arg_equiv0}
\lim_{\substack{z \to z_0\\z \in D \setminus \eta([0,\tau])}} \left( \arg f_\tau'(z) - \left(  \arg ( \wt{f}_\tau' \circ \varphi(z) ) + \arg \varphi'(z) \right) \right) = 0 \quad\text{for}\quad z_0 \in \eta([0,\tau]).
\end{equation}
Let $I_\tau = f_\tau(\eta([0,\tau])) \subseteq \partial \h$ and $D_\tau = f_\tau(D \setminus \eta([0,\tau]))$.  By precomposing both sides of~\eqref{eqn::arg_equiv0} with $f_\tau^{-1}$ it suffices to show that
\begin{equation}
\label{eqn::arg_equiv1}
\begin{split}
\lim_{\substack{z \to z_0\\ z \in D_\tau}} \left( \arg ( f_\tau' \circ f_\tau^{-1}(z) ) - \left( \arg ( \wt{f}_\tau' \circ \varphi \circ f_\tau^{-1}(z) ) + \arg \big( \varphi' \circ f_\tau^{-1}(z) \big) \right)\right) = 0\\ \quad\text{for}\quad z_0 \in I_\tau.
\end{split}
\end{equation}
By adding and subtracting $\arg (f_\tau^{-1})'$ to the left side of~\eqref{eqn::arg_equiv1} and using that
\begin{align*}
   &\arg ( f_\tau' \circ f_\tau^{-1} ) + \arg (f_\tau^{-1})' = \arg (f_\tau \circ f_\tau^{-1})' = 0 \quad\text{and}\\
    & \arg ( \wt{f}_\tau' \circ \varphi \circ f_\tau^{-1} ) + \arg \big( \varphi' \circ f_\tau^{-1} \big) + \arg( f_\tau^{-1})' = \arg (\wt{f}_\tau \circ \varphi \circ f_\tau^{-1})',
\end{align*}
we see that~\eqref{eqn::arg_equiv1} is equivalent to showing that
\begin{equation}
\label{eqn::arg_equiv2}
\lim_{\substack{z \to z_0 \\ z \in D_\tau}} \arg (\wt{f}_\tau \circ \varphi \circ f_\tau^{-1})'(z) = 0 \quad\text{for}\quad z_0 \in I_\tau.
\end{equation}
We conclude that~\eqref{eqn::arg_equiv2} holds because $\wt{f}_\tau \circ \varphi \circ f_\tau^{-1}$ is conformal map from $D_\tau$ into $\h$ which takes the interval $I_\tau$ of $\partial \h$ corresponding to $f_\tau(\eta([0,\tau]))$ to an interval of $\h$ and is orientation preserving.  This proves~\eqref{eqn::arg_equiv0}, which completes the proof of the claim.

In summary, we have argued that the conditional law of $(h|_D,\eta|_{[0,\tau_U]})$ given the values of $h$ on $\partial D$ is given by a pair consisting of:
\begin{itemize}
\item a GFF on $D$ whose boundary values agree with those of $h$ along $\partial D$ and
\item a path in $D$ with the property that if $\tau$ is any stopping time which a.s.\ occurs before $\tau_U$ we have that the conditional law of the field given $\eta|_{[0,\tau]}$ is that of a GFF on $D \setminus \eta([0,\tau])$ with the same boundary values as $h$ on $\partial D$ and the correct boundary values along $\eta([0,\tau])$ so that it is a flow line.
\end{itemize}
In other words, conditionally on the values of $h$ on $\partial D$, we have that the coupling $(h|_D,\eta|_{[0,\tau_U]})$ is that of a GFF on $D$ and a flow line of $h|_D$ stopped upon exiting $U$.

Suppose that $\wt{h}$ is a GFF on $D$ with boundary values which agree with those of $h$ on a neighborhood of $0$ in $\partial \h$.  Then, conditionally on the values of $h$ on $\partial D$, we have that both $h|_D$ and $\wt{h}$ are GFFs on \emph{the same domain} $D$ with boundary data that agrees on a neighborhood of $0$ in $\partial \h$.  Since we can transform from the conditional law of $h|_D$ given its values on $\partial D$ to the law of $\wt{h}$ by adding to the former a function which is harmonic in $D$ with zero boundary values on $\partial D \cap \partial \h$, it follows from the procedure described just after Lemma~\ref{lem::local_char} that the result of weighting the law of the field/path pair $h,\eta$ in such way so that the restriction of the field to $U$ agrees with the law of $\wt{h}|_U$ yields a flow line of $\wt{h}$ which is defined up until the first time it exits $U$.

\begin{proposition}
\label{gff::prop::local_independence}
Suppose that $D$ is a bounded, simply connected domain and that $A_1,A_2$ are connected local sets which are conditionally independent given $h$.  Suppose that $C$ is a $\sigma(A_1)$-measurable open subset of $D \setminus A_1$ which can be written as a union of components of $D \setminus A_1$ such that $C \cap A_2 = \emptyset$ almost surely.  Then $\CC_{A_1 \cup A_2}|_{C} = \CC_{A_1}|_{C}$ almost surely.  In particular, $h|_C$ is independent of the pair $(h|_{D \setminus C},A_2)$ given~$\CA_1$.
\end{proposition}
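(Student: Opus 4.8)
The plan is to reduce the statement to the identity $\CC_{A_1\cup A_2}|_C = \CC_{A_1}|_C$ and to prove the latter by conditioning on $A_1$ and exploiting the locality of $A_1$ and of $A_2$ separately. First I would record that, by Proposition~\ref{gff::prop::cond_union_local}, $A := A_1 \wt{\cup} A_2$ is a local set of $h$, and that since $C$ is a $\sigma(A_1)$-measurable connected component of $D \setminus A_1$ with $C \cap A_2 = \emptyset$ almost surely, $C$ is almost surely also a connected component of $D \setminus A$. Given the mean identity, the claimed independence is immediate from the local-set decomposition of $h$: conditionally on $A_1$ (resp.\ on $A$) one may write $h|_C = \CC_{A_1}|_C + \wt h^C$ (resp.\ $h|_C = \CC_A|_C + \wt h^A$) where $\wt h^C, \wt h^A$ are zero-boundary GFFs on $C$ that are independent of the data of $h$ off $C$ and of $A_1$ (resp.\ of $A$); once the two harmonic parts agree, the ``fresh'' Gaussian component of $h|_C$ is the same in both descriptions, and the second description exhibits it as independent of $(h|_{D\setminus C}, A_2)$.

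To prove $\CC_A|_C = \CC_{A_1}|_C$, I would work conditionally on $A_1$ throughout, so that $C$ is a fixed open set. By the Markov property (Proposition~\ref{gff::prop::markov}) and locality of $A_1$, the conditional law of $h|_C$ given $A_1$, and indeed given $(A_1, h|_{D\setminus C})$, is that of $\CC_{A_1}|_C$ (a measurable function of $(A_1,h|_{D\setminus C})$) plus an independent zero-boundary GFF $\wt h^C$ on $C$; in particular $\wt h^C$ is independent of $h|_{D\setminus C}$ given $A_1$. It then remains to show that also bringing in $A_2$ changes nothing. Here I would use that $A_1$ and $A_2$ are conditionally independent given $h$, so that the conditional law of $A_2$ given $(h,A_1)$ equals its conditional law given $h$, together with the locality of $A_2$: Lemma~\ref{lem::local_char} — in the slightly strengthened form, available from the local-set theory of \cite{SchrammShe10}, that on the event $\{A_2 \cap U = \emptyset\}$ the entire conditional law of $A_2$ given $h$ is measurable with respect to the projection of $h$ onto $H^\perp(U)$ — applied with $U = C$ (legitimate since $C$ is $\sigma(A_1)$-measurable and we have conditioned on $A_1$) shows that, on the almost sure event $C \cap A_2 = \emptyset$, the conditional law of $A_2$ given $(h,A_1)$ depends on $h$ only through $h|_{D\setminus C}$. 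Hence $\wt h^C$ is independent of $(A_2, h|_{D\setminus C})$ given $A_1$, so $\E[h|_C \mid A_1, A_2, h|_{D\setminus C}] = \CC_{A_1}|_C$; since $\CC_A|_C$ is the conditional expectation of $h|_C$ given the coarser data carried by $(A,\CC_A)$, which is dominated by $(A_1, A_2, h|_{D\setminus C})$ and with respect to which $\CC_{A_1}|_C$ is measurable, this yields $\CC_A|_C = \CC_{A_1}|_C$.

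The main obstacle is the step transferring the locality of $A_2$ — stated in Lemma~\ref{lem::local_char} only for deterministic open sets and only for the emptiness event — to the random (but $\sigma(A_1)$-measurable) set $C$ and to the full conditional law of $A_2$, while correctly tracking the conditional independence of $A_1$ and $A_2$ given $h$; one has to check that conditioning on $A_1$ does not disturb the local-set structure of $A_2$ relevant near $C$. An alternative route is to apply Proposition~\ref{gff::prop::cond_union_mean} with the roles of $A_1$ and $A_2$ interchanged, which gives directly that $\CC_A - \CC_{A_1}$ is harmonic on $C$ and tends to $0$ along $\partial C \subseteq A_1$, and then to invoke the maximum principle; but this forces one to control the exceptional part of $\partial C$ (single-point components of $A_1 \setminus A_2$ and of $A_1 \cap A_2$, and ``interface'' points between them) and to supply an a priori boundedness estimate for $\CC_A - \CC_{A_1}$, which is why I would favour the conditional-independence argument above.
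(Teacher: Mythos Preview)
Your main approach is correct and is essentially the argument the paper has in mind: the paper's proof is a one-line deferral to the technique behind Proposition~\ref{gff::prop::cond_union_mean}, namely \cite[Lemma~3.11]{SchrammShe10}, and that technique is precisely the conditional-independence computation you spell out---condition on $A_1$, use the Markov decomposition $h|_C=\CC_{A_1}|_C+\wt h^C$, and then use locality of $A_2$ together with conditional independence of $A_1,A_2$ given $h$ to see that $\wt h^C$ remains independent of $(A_2,h|_{D\setminus C})$. One small point worth tightening in your final tower step: you need $\CC_{A_1}|_C$ to be measurable with respect to the $\sigma$-algebra defining $\CC_A$, which is immediate once you recall that in the conditionally-independent-union construction the relevant data carried by $A=A_1\wt\cup A_2$ includes $A_1$ and $A_2$ separately (this is how \cite[Lemma~3.10]{SchrammShe10} is set up), so $A_1$---hence $C$ and the boundary data on $\partial C$---is already available. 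Your assessment of the alternative ``Proposition~\ref{gff::prop::cond_union_mean} plus maximum principle'' route is also apt: it works when $\partial C$ avoids the exceptional single-point configurations, but in the generality stated one would indeed need an extra boundedness input, which is why the direct conditional-independence argument is cleaner.
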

\begin{proof}
This follows from the argument used to prove Proposition~\ref{gff::prop::cond_union_mean}.
\end{proof}

A simple example of the type of application we have in mind for Proposition~\ref{gff::prop::local_independence} is the following.  Suppose that $h$ is a GFF on $\h$ and $\eta_1,\eta_2$ are flow lines of $h$ starting from $0$ which intersect $\partial \h$ only at $\{0\}$.  Suppose further that~$\eta_2$ almost surely lies to the left of~$\eta_1$.  Then Proposition~\ref{gff::prop::local_independence} implies that the restriction of~$h$ to the left side of $\h \setminus \eta_2$ is independent of the pair consisting of the restriction of $h$ to the right side of $\h \setminus \eta_2$ and $\eta_1$, conditionally on $\eta_2$.

We end this subsection with a lemma which gives a simple condition under which $h$ is determined by its restriction to $D \setminus A$ for a local set $A$.  When this condition holds, it will in particular imply that $h$ is determined by the restriction of $\CC_A$ to $D \setminus A$ and the projection of $h$ onto $H(D \setminus A)$.  Informally, this means that there is no ``extra information'' which is only contained in $\CC_A$ on $A$ itself.

\begin{lemma}
\label{lem::local_restriction_determine}
Suppose that $A$ is a local set for $h$ such that for every compact set $K \subseteq D$ there exists a sequence $(\delta_k)$ of positive numbers with $\delta_k \to 0$ as $k \to \infty$ such that we almost surely have that the number of squares with corners in $\delta_k \Z^2$ required to cover $A \cap K$ is $o( \delta_k^{-2} (\log \delta_k^{-1})^{-1})$ as $k \to \infty$.  Then $h$ is almost surely determined by the restriction $\wt{h}$ of $h$ to $D \setminus A$.
\end{lemma}
\begin{proof}
Fix a compact square $K \subseteq D$ and $\phi \in C_0^\infty(D)$ with $\supp(\phi) \subseteq K$.  For each $\delta > 0$, we let $\CD_\delta$ be the collection of half-open squares $(a,b] \times (c,d]$ of side length $\delta$ contained in $K$ with corners in $\delta \Z^2$.  We also let $\phi_\delta$ be the function on $K$ whose common value on each $S \subseteq \CD_\delta$ is given by the average of $\phi$ on $S$.  By bounding the variance of $(h,\phi-\phi_\delta)$, it is easy to see that we almost surely have
\begin{equation}
\label{eqn::phi_delta_approx}
(h,\phi_\delta) \to (h,\phi) \quad\text{as}\quad \delta \to 0,
\end{equation}
at least if the limit is taken along a subsequence of $(\delta_k)$ which tends to $0$ sufficiently quickly.  Let $A_\delta$ be the union of the set of squares in $\CD_\delta$ which intersect $A$.  Then it suffices to show that we almost surely have
\begin{equation}
\label{eqn::phi_delta_a_delta_c_approx}
\left|(\wt{h},\phi_\delta \one_{A_\delta^c}) - (h,\phi_\delta)\right| = \left|(h,\phi_\delta \one_{A_\delta^c}) - (h,\phi_\delta)\right| \to 0 \quad\text{as}\quad \delta \to 0,
\end{equation}
at least if the limit is taken along a subsequence of $(\delta_k)$ which tends to $0$ sufficiently quickly.  An argument analogous to the proof of \cite[Proposition~3.2]{DS08} implies that the variance of $(h,\delta^{-2} \one_S)$ for $S \in \CD_\delta$ is $O( \log \delta^{-1})$ where the implicit constant is uniform in $\delta > 0$.  It therefore follows from the Gaussian tail bound
\[ \p[ Z \geq \lambda] \sim \frac{1}{\sqrt{2\pi} \lambda} e^{-\lambda^2/2} \quad\text{as}\quad \lambda \to \infty\]
for $Z \sim N(0,1)$ (see, e.g., \cite[Lemma~A.4]{HMP10}) and the Borel-Cantelli lemma that there exists a constant $c > 0$ such that the average of $h$ on each $S \in \CD_{\delta}$ is at most $c \log \delta^{-1}$, at least along a subsequence of $(\delta_k)$ tending to $0$ sufficiently quickly.  Consequently, with $N_{\delta}$ equal to the number of squares in $\CD_{\delta}$ which $A$ intersects we almost surely have that
\begin{equation}
\label{eqn::bad_squares_do_not_contribute}
\left| (h,\phi_\delta\one_{A_\delta^c}) - (h,\phi_\delta) \right| \leq \| \phi \|_\infty \times N_\delta \times \delta^2 \times c \log \delta^{-1} = o(1) \quad\text{as}\quad \delta \to 0,
\end{equation}
at least along a subsequence of $(\delta_k)$ tending to $0$ sufficiently quickly.  The equality in~\eqref{eqn::bad_squares_do_not_contribute} follows because we have assumed that we almost surely have that $N_{\delta_k}$ is $o( \delta_k^{-2} (\log \delta_k^{-1})^{-1})$ as $k \to \infty$.  The result thus follows by combining~\eqref{eqn::phi_delta_approx}, \eqref{eqn::phi_delta_a_delta_c_approx}, and~\eqref{eqn::bad_squares_do_not_contribute}.
\end{proof}

\subsection{Proof of Theorem~\ref{thm::coupling_existence}}
\label{subsec::existence_proof}

The purpose of this section is to prove Theorem~\ref{thm::coupling_existence}.  As we mentioned before, many of the steps in the proof given below are slight generalizations of those from \cite[Section~4]{SHE_WELD}.  Let $W$ and $V^{i,q}$ be a solution to the $\SLE_\kappa(\ul{\rho})$ SDE as in Definition~\ref{def::slekrdef} stopped upon hitting the continuation threshold, let $(g_t)$ be the chordal Loewner evolution driven by $W$, and let $f_t = g_t - W_t$.  We let $(\CF_t)$ be the filtration generated by $(W,V^{i,q})$ and $B$ as in Definition~\ref{def::slekrdef}.  Then $(\CF_t)$ is right-continuous, i.e.\ $\CF_t = \cap_{s > t} \CF_s$ for each $t \geq 0$.  This property will be important for us later on.  We also let $(K_t)$ be the corresponding family of hulls.  In what follows, it is important that the SDEs for our $\SLE_\kappa(\ul{\rho})$ driving process make sense in integrated form (as defined in Section~\ref{sec::sle}) so that all of the SDEs written below make sense in integrated form.  We begin by writing down the \Ito\ derivatives of the four processes $f_t(z)$, $\log f_t(z)$, $f_t'(z)$, and $\log f_t'(z)$.  Here, $f_t'(z)$ denotes the spatial derivative $\frac{\partial}{\partial z} f_t$.  For $z \in \h$ and $t < \tau(z) = \sup\{t \geq 0: \im(g_t(z)) > 0\}$, we have that
\begin{align*}
df_t(z) &= \left(\frac{2}{f_t(z)} - \sum_{q \in \{L,R\}} \sum_i \frac{\rho^{i,q}}{W_t - V_t^{i,q}} \right)dt- \sqrt{\kappa} dB_t,\\
d \log f_t(z) &= \left(\frac{(4-\kappa)}{2f_t(z)^2}  - \sum_{q \in \{L,R\}} \sum_i \frac{\rho^{i,q}}{f_t(z)(W_t - V_t^{i,q})} \right)dt - \frac{\sqrt{\kappa}}{f_t(z)}dB_t,\\
d f_t'(z) &= \frac{-2f_t'(z)}{f_t(z)^2}dt, \quad\text{and}\\
d \log f_t'(z) &= \frac{-2}{f_t(z)^2}dt.
\end{align*}

We next define the martingale $\Fh_t$ and compute its stochastic derivatives.  The calculations below will show that it is a local martingale (and the fact it is a martingale will be proved later).  Let $\chi = 2/\sqrt{\kappa} - \sqrt{\kappa}/2$ as in the statement of Theorem~\ref{thm::coupling_existence}.  We also let $\Fh_t^*(z)$ be given by $1/\sqrt{\kappa}$ times the expression in~\eqref{eqn::harmonic_form} where $f_t(x^{i,q}) = V_t^{i,q} - W_t$.  Then it is not hard to see that
\begin{align}
d\Fh_t^*(z) =  \frac{2}{f_t(z)} dB_t,\quad 
&\Fh_t(z) := \im\left(\Fh_t^*(z)\right),\quad \text{and} \notag\\
d \Fh_t(z) = &\im\left(\frac{2}{f_t(z)}\right) dB_t. \label{eqn::dfht}
\end{align}
See \cite[Remark~4.1 and Remark~4.2]{SHE_WELD} for some additional interpretation.

Since $\Fh_t(z)$ is a continuous local martingale for each fixed $z$, it is thus a Brownian motion under the quadratic variation parameterization, which we can give explicitly:
\begin{align*}
d\langle \Fh_t(z), \Fh_t(z) \rangle = -d C_t(z) \quad\text{and}\quad C_t(z) := \log \im\left( f_t(z)\right) - \re\left(\log f_t'(z)\right).
\end{align*}
If $z$ is a point in a simply connected domain $D$, and $\phi$ conformally maps the unit disk to $D$, with $\phi(0) = z$, then we refer to the quantity $|\phi'(0)|$ as the {\em conformal radius} of $D$ viewed from $z$.  If, in the above definition of conformal radius, we replaced the unit disk with $\h$ and $0$ with $i$, this would only change the definition by a multiplicative constant.  Thus, $C_t(z)$ is (up to an additive constant) the log of the conformal radius of $\h \setminus K_t$ viewed from $z$.  If the time parameter $-C_t(z)$ (which is increasing as a function of $t$) then $\Fh_t(z)$ is a Brownian motion.  The fact that $d\langle \Fh_t(z), \Fh_t(z) \rangle = -d C_t(z)$ may be computed directly via \Ito's formula but it is also easy to see by taking $y \to z$ in the formulas for $\langle \Fh_t(y), \Fh_t(z)\rangle$ and the formulas we will give below.

We will now show that weighted averages of $\Fh_t$ over multiple points in $\h$ are also continuous local martingales (and hence Brownian motions when properly parameterized).
The calculation will make use of the function
\begin{equation*}
G(y,z) := \log|y-\bar z| - \log |y- z|,
\end{equation*}
which is the Dirichlet Green's function for $\Delta$ on $\h$.  That is, $G$ is the distributional solution of $\Delta G(y,\cdot) = -2\pi \delta_y(\cdot)$ with zero boundary conditions where $\delta_y$ denotes the Dirac mass at $y$.

We let $G_t(y,z) = G(f_t(y), f_t(z))$ when $y$ and $z$ are both
in the infinite component of $\h \setminus K_t$; it is otherwise equal to zero.  Observe for $t < \tau(y) \wedge \tau(z)$ that
\begin{align}
dG_t(y,z) &= -\im\left(\frac{2}{f_t(y)}\right) \im\left(\frac{2}{f_t(z)}\right) dt \quad\text{and} \label{eqn::dgt}\\
d\langle \Fh_t(y), \Fh_t(z)\rangle &= -dG_t(y,z). \label{eqn::fh_cv}
\end{align}
The details of~\eqref{eqn::dgt} in the case of $\SLE_\kappa$ (with no $\rho$ values) is worked out explicitly in \cite[Section~4]{SHE_WELD}.  The case with $\rho$ values follows from a similar calculation.  It is then immediate from~\eqref{eqn::dfht} that~\eqref{eqn::fh_cv} holds.

Now we let $\phi$ be a smooth compactly supported function on $\h$, fix $U \subseteq \h$ open which contains the support $\supp(\phi)$ of $\phi$, and let
\begin{equation}
\label{eqn::tau_u}
\tau_U = \inf\{t \geq 0 : K_t \cap U \neq \emptyset\}.
\end{equation}
We let
\begin{align*}
E_t(\phi) &:= \iint_{\h \times \h} \phi(y) G_t(y,z) \phi(z) dydz
\end{align*}
and we will show that
\begin{align}
d\langle (\Fh_t,\phi), (\Fh_t,\phi)\rangle &= -dE_t(\phi) \quad\text{for}\quad t < \tau_U. \label{eqn::dfh_int}
\end{align}
A Fubini type calculation gives~\eqref{eqn::dfh_int} but it requires some justification. First, we claim that the $(\Fh_t, \phi)$ is a continuous martingale.  We first note that $\langle (\Fh_t, \phi) \rangle|_{[0,\tau_U]}$ is characterized by the fact that
\begin{align}
 (\Fh_t, \phi)^2 - \langle (\Fh_t, \phi) \rangle \quad&\text{for}\quad t < \tau_U \notag\\
 \intertext{is a continuous local martingale.  Thus it suffices to show that}
(\Fh_t, \phi)^2 +  E_t(\phi) \quad&\text{for}\quad t < \tau_U \label{eqn::intqvmartingale}
\intertext{is a continuous local martingale.  We know from the above calculations that}
\Fh_t(y)\Fh_t(z) + G_t(y,z) \quad&\text{for}\quad t < \tau_U \notag
\end{align}
is a continuous local martingale for fixed $y,z \in \supp(\phi)$.  For each $M < \infty$, we let 
\[\tau_{U,\phi,M} =\inf\left\{t \geq 0 : \sup_{z \in \supp(\phi)} |\Fh_t(z)| \geq M \right\} \wedge \tau_U.\]
Since $G_t(y,z)$ is non-increasing and $\sup_{z \in \supp(\phi)} |\Fh_t(z)| \leq M$ for $t \leq \tau_{U,\phi,M}$, we can use Fubini's theorem to conclude that~\eqref{eqn::intqvmartingale} is a continuous martingale up to time $\tau_{U,\phi,M}$.  Since $\tau_{U,\phi,M} \to \tau_U$ as $M \to \infty$, it follows that~\eqref{eqn::intqvmartingale} is a continuous local martingale.

We will now combine the above calculations to establish the following intermediate step in the proof of Theorem~\ref{thm::coupling_existence}.

\begin{lemma}
\label{lem::coupling_up_to_u}
Fix $U \subseteq \h$ open, let $\tau_U$ be as in~\eqref{eqn::tau_u}, and let $\tau$ be any $\CF_t$-stopping time such that $\p[\tau \leq \tau_U] = 1$.  Consider the random field $h_{U,\tau}$ on $U$ which is generated by:
\begin{enumerate}
\item Sampling $K_{\tau}$,
\item Sampling a zero boundary GFF $h^0$ on $\h \setminus K_\tau$ and adding it to $\Fh_{\tau}$, and then
\item Restricting the sum to $U$.
\end{enumerate}
Then $h_{U,\tau} \stackrel{d}{=} h|_U$ where $h = \wt{h} + \Fh_0$ and $\wt{h}$ is a zero-boundary GFF on $\h$.
\end{lemma}
\begin{proof}
Since $h|_U$ is a Gaussian field, it suffices to show that $(h_{U,\tau},\phi) \stackrel{d}{=} (h,\phi)$ for each $\phi \in C_0^\infty(\h)$ with $\supp(\phi) \subseteq U$.  Note that $\Fh_{\tau}$ is measurable with respect to $\CF_{\tau}$ and that $\var( (h^0,\phi)) = E_\tau(\phi)$.  Consequently, for each $\theta \in \R$, we have that
\begin{align*}
      \E\!\left[ \exp\big( i\theta ( h_{U,\tau},\phi) \big) \right]
&= \E\!\left[ \E\!\left[ \exp\big( i\theta (h^0,\phi \big) \ \big|\  \CF_{\tau} \right] \exp\big(i \theta (\Fh_{\tau},\phi) \big) \right]\\
&= \E\!\left[ \exp\!\left( i\theta (\Fh_{\tau},\phi) -\frac{\theta^2 E_{\tau}(\phi)}{2} \right) \right]\\
&= \E\!\left[ \exp\!\left( i \theta (\Fh_0,\phi) -\frac{\theta^2 E_0(\phi)}{2} \right) \right] \quad\text{(by~\eqref{eqn::dfh_int})}\\
&= \E\!\left[ \exp\big( i \theta (h,\phi) \big) \right].
\end{align*}
This proves the lemma.
\end{proof}

In order to work towards completing the proof of Theorem~\ref{thm::coupling_existence}, we will explain how to generalize Lemma~\ref{lem::coupling_up_to_u} to the setting in which it holds for multiple stopping times and open sets.  We begin with the setting of a finite number of stopping times.

\begin{lemma}
\label{lem::coupling_finite_stopping_times}
Assume that we have the setup of Lemma~\ref{lem::coupling_up_to_u} for a given open set $U \subseteq \h$.  Assume that $n \in \N$ and $\tau_1,\ldots,\tau_n$ are $\CF_t$-stopping times with $\p[ \tau_i \leq \tau_U] =1$ for each $1 \leq i \leq n$.  For each $1 \leq i \leq n$, let $h_{U,\tau_i}$ have the law as defined in Lemma~\ref{lem::coupling_up_to_u}.  There exists a coupling of the laws $h_{U,\tau_i}$ for $1 \leq i \leq n$ so that they are all generated using the same instance $(W,V^{i,q})$ of the $\SLE_\kappa(\ul{\rho})$ driving process such that for $h = \wt{h} + \Fh_0$, $\wt{h}$ a zero-boundary GFF on $\h$, we almost surely have that $h_{U,\tau_i} = h|_U$.  Moreover, we have that the conditional law of the projection of $h$ onto $H(U)$ given $\CF_{\tau_i}$ and the projection of $h$ onto $H^\perp(U)$ is that of a zero-boundary GFF on $H(U)$ for each $1 \leq i \leq n$.
\end{lemma}
\begin{proof}
For simplicity, we will first explain the proof in the case that $n=2$ and then later explain how to generalize it to arbitrary values of $n \in \N$.  Lemma~\ref{lem::coupling_up_to_u} implies that there exists a coupling of the law of the pair $(h_{U,\tau}, (W_{t \wedge \tau_U},V_{t \wedge \tau_U}^{i,q}))$ (where the latter element is the driving $\SLE_\kappa(\ul{\rho})$ process stopped at time $\tau_U$) with $h$ such that $h_{U,\tau} = h|_U$ almost surely.

Suppose that $\tau_1,\tau_2$ are $\CF_t$-stopping times with $\p[\tau_i \leq \tau_U] = 1$ for $i=1,2$.  By applying the above first with the stopping time $\sigma_1 = \tau_1 \wedge \tau_2$ and then with $\sigma_2 = \tau_1 \vee \tau_2$ (using the conformal Markov property of $\SLE_\kappa(\ul{\rho})$), it is easy to see that we can construct a coupling of the law of the triple $(h_{U,\sigma_1}, h_{U,\sigma_2}, (W_{t \wedge \tau_U},V_{t \wedge \tau_U}^{i,q}))$ with $h$ such that $h_{U,\sigma_i} = h|_U$ almost surely for $i=1,2$.  In particular, on the event that $\tau_1 = \sigma_1$, we have almost surely that
\begin{equation}
\label{eqn::tau_1_sigma_1}
h_{U,\tau_1} = h_{U,\sigma_1} = h|_U.
\end{equation}
On the event that $\tau_1 = \sigma_2$, we almost surely have that
\begin{equation}
\label{eqn::tau_1_sigma_2}
h_{U,\tau_1} = h_{U,\sigma_2} = h|_U.
\end{equation}
Since one of the two events $\tau_1 = \sigma_1$  or $\tau_1 = \sigma_2$ must hold, we conclude from~\eqref{eqn::tau_1_sigma_1} and~\eqref{eqn::tau_1_sigma_2} that $h_{U,\tau_1} = h|_U$ almost surely.  Similarly, we have that $h_{U,\tau_2} = h|_U$ almost surely.

By construction, we have that
\[ h_{U,\sigma_i} = (h^{\sigma_i} \circ f_{\sigma_i} + \Fh_{\sigma_i})|_U\]
where $h^{\sigma_i}$ for $i=1,2$ has the law of a zero-boundary GFF on $\h$ given $\CF_{\sigma_i}$.  We let
\[ h^{\tau_i} = h^{\sigma_1} \one_{\{\sigma_1 = \tau_i\}} + h^{\sigma_2} \one_{\{\sigma_2 = \tau_i\}}.\]
Then we have that
\[ h_{U,\tau_i} = (h^{\tau_i} \circ f_{\tau_i} + \Fh_{\tau_i})|_U\]
To finish the proof of the first assertion of the lemma for $n=2$, we need to show that $h^{\tau_i}$ for $i=1,2$ has the law of a zero-boundary GFF on~$\h$ given~$\CF_{\tau_i}$.  We will explain the proof for $i=1$ (as the proof for $i=2$ is analogous).  Fix a test function $\phi \in C_0^\infty(\h)$ and $\theta \in \R$.  Using that both of the events $\{ \sigma_1 = \tau_1\}$ and $\{\sigma_2 = \tau_1\}$ are $\CF_{\tau_1}$-measurable, we have that
\begin{align*}
  &    \E\!\left[ \exp(i \theta (h^{\tau_1},\phi)) \giv \CF_{\tau_1} \right]\\
=& \E\!\left[ \exp(i \theta (h^{\tau_1},\phi)) \giv \CF_{\tau_1} \right] \one_{\{\sigma_1 = \tau_1\}} +
     \E\!\left[ \exp(i \theta (h^{\tau_1},\phi)) \giv \CF_{\tau_1} \right] \one_{\{\sigma_2 = \tau_1\}}\\
=& \E\!\left[ \exp(i \theta (h^{\sigma_1},\phi)) \giv \CF_{\sigma_1} \right] \one_{\{\sigma_1 = \tau_1\}} +
     \E\!\left[ \exp(i \theta (h^{\sigma_2},\phi)) \giv \CF_{\sigma_2} \right] \one_{\{\sigma_2 = \tau_1\}}\\
=& \exp\!\left( -\frac{\theta^2 E_0(\phi)}{2} \right)
\end{align*}
where the last equality follows because we know that $h^{\sigma_i}$ has the law of a zero-boundary GFF on $\h$ given $\CF_{\sigma_i}$ for $i=1,2$.

Suppose now that $n \in \N$.  We can iterate the same argument above with a finite collection of stopping times $\tau_1,\ldots,\tau_n$ with $\p[\tau_i \leq \tau_U] = 1$ for all $i=1,\ldots,n$ to obtain a coupling such that $h_{U,\tau_i} = h|_U$ almost surely for all $i=1,\ldots,n$.  Indeed, to do so for each $1 \leq j \leq n$ we let $\sigma_j$ be the first time $t$ that at least $j$ of the stopping times $\tau_i$ have occurred at or before time $t$.  Then each $\sigma_j$ is a stopping time and we have that $\sigma_1 \leq \sigma_2 \leq \cdots \leq \sigma_n$.  Thus we can construct the coupling iteratively as described above in the case that $n=2$.  Under this coupling, we have that $h_{U,\sigma_j} = h|_U$ almost surely for each $1 \leq j \leq n$.  Since for each $1 \leq i \leq n$ there exists $1 \leq j \leq n$ such that $\tau_i = \sigma_j$, we then have that $h_{U,\tau_i} = h|_U$ almost surely for each $1 \leq i \leq n$.  The result follows because a simple elaboration of the argument given just above implies that we can write $h_{U,\tau_i}$ as $(h^{\tau_i} \circ f_{\tau_i} + \Fh_{\tau_i})|_U$ where $h^{\tau_i}$ has the law of a zero-boundary GFF on $\h$ given $\CF_{\tau_i}$.

In the construction of the coupling given above, it is not immediately clear that the conditional law of the projection of $h$ onto $H(U)$ given \emph{both} its projection onto $H^\perp(U)$ and $\CF_{\tau_i}$ is that of a zero-boundary GFF in $U$ for each $1 \leq i \leq n$ even though it is immediate from the construction that this holds if we condition on either the projection onto $H^\perp(U)$ or $\CF_{\tau_i}$.  We can modify the coupling so that it has this property by, given both $H^\perp(U)$ and $\CF_{\tau_U}$, resampling the projection of $h$ onto $H(U)$ from the law of a zero-boundary GFF on $U$.  Note that performing this resampling operation leaves the marginal law of $h$ unchanged (since the law of the projection of $h$ onto $H^\perp(U)$ is unchanged as is the conditional law of the projection of $h$ onto $H(U)$ given its projection onto $H^\perp(U)$).  This resampling operation also preserves the conditional law of $h|_U$ given $\CF_{\tau_i}$ for each $1 \leq i \leq n$.  Indeed, this follows because this resampling operation preserves the restriction of the projection of $h$ onto $H^\perp(U)$ to $U$.  Moreover, by the Markov property for $h_{U,\tau_i}$ given $\CF_{\tau_i}$ (which is the restriction of a GFF on $\h \setminus K_{\tau_i}$ to $U$), we know that the conditional law of the projection of $h$ onto $H(U)$ given both $\CF_{\tau_i}$ and the restriction of the projection of $h$ onto $H^\perp(U)$ to $U$ is that of a zero boundary GFF on $U$.  This proves the claim.
\end{proof}

We are now going to extend Lemma~\ref{lem::coupling_up_to_u} to the setting of both a finite number of stopping times and open sets.

\begin{lemma}
\label{lem::coupling_open_sets}
Suppose that $n \in \N$, $U_1,\ldots,U_n \subseteq \h$ are open sets, and that $\tau_1,\ldots,\tau_n$ are $\CF_t$-stopping times such that $\p[\tau_i \leq \tau_{U_i}] = 1$ for each $1 \leq i \leq n$.  For each $1 \leq i \leq n$, let $h_{U_i,\tau_i}$ have the law as defined in Lemma~\ref{lem::coupling_up_to_u} with $U = U_i$.  There exists a coupling of the laws $h_{U_i,\tau_i}$ for $1 \leq i \leq n$ so that they are all generated using the same instance $(W,V^{i,q})$ of the $\SLE_\kappa(\ul{\rho})$ driving process such that for $h = \wt{h} + \Fh_0$, $\wt{h}$ a zero-boundary GFF on $\h$, we almost surely have that $h_{U_i,\tau_i} = h|_{U_i}$.  Moreover, we have that the conditional law of the projection of $h$ onto $H(U_i)$ given $\CF_{\tau_i}$ and the projection of $h$ onto $H^\perp(U_i)$ is that of a zero-boundary GFF on $H(U_i)$ for each $1 \leq i \leq n$.
\end{lemma}
\begin{proof}
As in the proof of Lemma~\ref{lem::coupling_finite_stopping_times}, we begin with the argument in the case of two open sets $U_1,U_2 \subseteq \h$; define $\tau_{U_1}$, $\tau_{U_2}$ accordingly and suppose that $\p[\tau_i \leq \tau_{U_i}]  = 1$ for $i=1,2$.  We let $\sigma_1 = \tau_1 \wedge \tau_2$ and let $\sigma_2 = \tau_1 \vee \tau_2$.  Let $V_1 = U_1 \cup U_2$.  Then we can find a coupling of the laws of $(h_{V_1,\sigma_1}, (W_{t \wedge \sigma_1}, V_{t \wedge \sigma_1}^{i,q}))$ and $h$ such that $h_{V_1,\sigma_1} = h|_{V_1}$ almost surely and such that the conditional law of the projection of $h$ onto $H(V_1)$ given both its projection onto $H^\perp(V_1)$ and $\CF_{\sigma_1}$ is that of a zero-boundary GFF on $V_1$.  Conditionally on $(W,V^{i,q})$ up to time $\sigma_1$, we know whether $\tau_1$ or $\tau_2$ occurred first.  On the event that $\tau_1$ occurred first, it is easy to see by the Markov property of the GFF that we can take our coupling so that $h|_{U_1} = h_{U_1,\tau_1}$ almost surely.  We can also repeat the same argument (using the conformal Markov property of $\SLE_\kappa(\ul{\rho})$) so that we also have that $h|_{U_2} = h_{U_2,\tau_2}$ almost surely.  Similarly, on the event that $\tau_2$ occurred first, it is easy to see by the Markov property of the GFF that we can take our coupling so that $h|_{U_2} = h_{U_2,\tau_2}$ almost surely.  We can also repeat the same argument (using the conformal Markov property of $\SLE_\kappa(\ul{\rho})$) so that we also have that $h|_{U_1} = h_{U_1,\tau_1}$ almost surely.

In summary, at this point we have a coupling such that the following hold:
\begin{itemize}
\item $h|_{U_j} = h_{U_j,\tau_j}$ for $j=1,2$ and $h|_{V_1} = h_{V_1,\sigma_1}$ almost surely.
\item The conditional law of the projection of $h$ onto $H(V_1)$ given its projection onto $H^\perp(V_1)$ and $\CF_{\sigma_1}$ is that of a zero boundary GFF on $V_1$.
\end{itemize}
We will now explain how to modify $h$ so that it satisfies the above properties and so that the conditional law of the projection of $h$ onto $H(U_j)$ given its projection onto $H^\perp(U_j)$ and $\CF_{\tau_j}$ for $j=1,2$ is that of a GFF on $U_j$ with zero boundary conditions.  To this end, we let $h_{U_j^c}$ for $j=1,2$ be the projection of $h$ onto $H^\perp(U_j)$.  We then take
\[ \wt{h} = (\wt{h}_{U_2} +  h_{U_2^c}) \one_{\{\tau_1 < \tau_2\}} + (\wt{h}_{U_1} + h_{U_1^c}) \one_{\{\tau_2 \leq \tau_1\}}\]
where $\wt{h}_{U_1}$ (resp.\ $\wt{h}_{U_2}$) has the law of a GFF on $U_1$ (resp.\ $U_2$) with zero boundary conditions where we take $\wt{h}_{U_1}, \wt{h}_{U_2}$ to be independent of each other and independent of everything else.  The Markov property of the GFF implies that the coupling of $\wt{h}$ with $(W,V^{i,q})$  satisfies all of the desired properties.  In particular, the conditional law of the projection of $\wt{h}$ onto $H(U_j)$ given its projection onto $H^\perp(U_j)$ and $\CF_{\tau_j}$ is that of a zero boundary GFF on $U_j$ for $j=1,2$.

We can iterate this argument with any finite collection of stopping times $\tau_1,\ldots,\tau_n$ and open sets $U_1,\ldots,U_n$ with $\p[\tau_i \leq \tau_{U_i}] = 1$ for each $i=1,\ldots,n$ to obtain a coupling such that $h_{U_i,\tau_i} = h|_{U_i}$ for each $i=1,\ldots,n$ almost surely and so that the conditional law of the projection of $h$ onto $H(U_i)$ given its projection onto $H^\perp(U_i)$ and $\CF_{\tau_i}$ is that of a zero-boundary GFF on $U_i$.  To do so, we let $\sigma_j$ be the first time $t$ that at least $j$ of the $\tau_i$ have occurred at or before time~$t$.  Then we know that we can construct a coupling so that with $V_1 = \cup_{i=1}^n U_i$ we have that $h_{V_1,\sigma_1} = h|_{V_1}$ almost surely and the conditional law of the projection of $h$ onto $H(V_1)$ given its projection onto $H^\perp(V_1)$ and $\CF_{\sigma_1}$ is that of a zero-boundary GFF on $V_1$.  At time $\sigma_1$, we know which of the $\tau_i$ have occurred.  Assume for simplicity that the $\tau_i$ are almost surely distinct and that $\tau_{i_1}$ is the first to occur.  Then we can repeat the same argument (using the conformal Markov property of $\SLE_\kappa(\ul{\rho})$) with $V_2 = \cup_{i \neq i_1} U_i$ and $\sigma_2$ to obtain a coupling such that we have $h_{V_j,\sigma_j} = h|_{V_j}$ almost surely and the conditional law of the projection of $h$ onto $H(V_j)$ given its projection onto $H^\perp(V_j)$ and $\CF_{\sigma_j}$ is that of a zero-boundary GFF on $V_j$  for $j=1,2$.  After iterating this $n$ times, we get a coupling with $h_{V_j,\sigma_j} = h|_{V_j}$ almost surely and such that the conditional law of the projection of $h$ onto $H(V_j)$ given its projection onto $H^\perp(V_j)$ and $\CF_{\sigma_j}$ is that of a zero-boundary GFF on $V_j$ for $j=1,\ldots,n$ .  If we let $i_j$ be such that $\tau_{i_j} = \sigma_j$, then we have that $U_{i_j} \subseteq V_j$.  Therefore $h_{U_{i_j},\tau_{i_j}} = h|_{U_{i_j}}$ almost surely for each $j$ hence $h_{U_i,\tau_i} = h|_{U_i}$ for each $i$.  Likewise, the conditional law of the projection of $h$ onto $H(U_i)$ for each $i$ given its projection onto $H^\perp(U_i)$ and $\CF_{\tau_i}$ is that of a zero-boundary GFF on $U_i$.
\end{proof}

We now turn to extend Lemma~\ref{lem::coupling_up_to_u} to the setting of a countable collection of stopping times and open sets.

\begin{lemma}
\label{lem::coupling_sequence_stopping_times}
Suppose that $(U_i)$ and $(\tau_i)$ are respectively sequences of open sets in~$\h$ and $\CF_t$-stopping times such that $\p[\tau_i \leq \tau_{U_i}] = 1$ for all $i \in \N$.  For each $i \in \N$, let $h_{U_i,\tau_i}$ have the law as defined in Lemma~\ref{lem::coupling_up_to_u} with $U = U_i$.  There exists a coupling of the laws $h_{U,\tau_i}$ for $i \in \N$ so that they are all generated with the same instance $(W,V^{i,q})$ of the $\SLE_\kappa(\ul{\rho})$ driving process such that for $h = \wt{h} + \Fh_0$, $\wt{h}$ a zero-boundary GFF on $\h$, we almost surely have that $h_{U_i,\tau_i} = h|_{U_i}$.  Moreover, we have that the conditional law of the projection of $h$ onto $H(U_i)$ given $\CF_{\tau_i}$ and the projection of $h$ onto $H^\perp(U_i)$ is that of a zero-boundary GFF on $H(U_i)$.
\end{lemma}
\begin{proof}
Fix $n \in \N$.  Lemma~\ref{lem::coupling_open_sets} implies that there exists a coupling of $h_{U_1,\tau_1},\ldots,h_{U_n,\tau_n}$ and $h,(W,V^{i,q}))$ which satisfies the property that $h_{U_i,\tau_i} = h|_{U_i}$ and that the conditional law of the projection of $h$ onto $H(U_i)$ given the projection of $h$ onto $H^\perp(U_i)$ and $\CF_{\tau_i} $ is that of a zero-boundary GFF on $U_i$ for each $1 \leq i \leq n$.  Let $\mu_n$ denote the law of this coupling.  Let $k$ be the number of force points for the $\SLE_\kappa(\ul{\rho})$ driving process.  We view $\mu_n$ as a law on $(\R^\N)^{n+1} \times C([0,\infty))^{k+1}$ by expressing each of the $h_{U_i,\tau_i}$ as well as $h$ in terms of coordinates by integrating each against a countable subset of $C_0^\infty(\h)$ which is dense in $L^2(\h)$, say.  We note that for $m \geq n$ we have the $\mu_m$ marginal law of $(h_{U_1,\tau_1},\ldots,h_{U_n,\tau_n},h,(W,V^{i,q}))$ is tight in $m$ since the marginal of each of the coordinates does not depend on $m$.  It therefore follows that for each $n \in \N$ there exists a subsequence $(\mu_{n_k^m})$ of $(\mu_n)$ such that the marginal law of $(h_{U_1,\tau_1},\ldots,h_{U_m,\tau_m},h,(W,V^{i,q}))$ under $\mu_n$ converges weakly to a limiting law as $k \to \infty$.  By passing to a further diagonal subsequence if necessary, we can find a subsequence $(\mu_{n_k})$ of $(\mu_n)$ such that for every $m \in \N$ the marginal law of $(h_{U_1,\tau_1},\ldots,h_{U_m,\tau_m},h,(W,V^{i,q}))$ under $\mu_{n_k}$ converges weakly to a limiting law as $k \to \infty$.  Letting $\mu^m$ denote this law for a given value of $m$, we observe that the $\mu^m$ are consistent in the sense that for every $j \leq n,m$ we have that the $\mu^m$ marginal law of $(h_{U_1,\tau_1},\ldots,h_{U_j,\tau_j},h,(W,V^{i,q}))$ is the same as the $\mu^n$ marginal law of the same vector.  Therefore by the Kolmogorov extension theorem, there exists a law $\mu$ on  $(\R^\N)^\N \times C([0,\infty))^{k+1}$ whose marginals agree with the $\mu^m$.  By construction, this law $\mu$ clearly satisfies the property that $h_{U_i,\tau_i} = h|_{U_i}$ almost surely for every $i \in \N$ and that the conditional law of the projection of $h$ onto $H(U_i)$ given the projection of $h$ onto $H^\perp(U_i)$ and $\CF_{\tau_i}$ is that of a zero-boundary GFF on $U_i$.
\end{proof}

We now complete the proof of Theorem~\ref{thm::coupling_existence}.

\begin{proof}[Proof of Theorem~\ref{thm::coupling_existence}]
To complete the proof of the theorem, we need to extend Lemma~\ref{lem::coupling_sequence_stopping_times} so that it holds simultaneously for \emph{all} $\CF_t$-stopping times.  We can choose our $\tau_i$ and $U_i$ so that every pair of the form $(U, r \wedge \tau_U)$ where $U$ is a finite union of balls which are centered at a point with rational coordinates and with rational radii and $r \in \Q_+$ appears in the sequence $(U_i,\tau_i)$.

Suppose that $\tau$ is any $\CF_t$-stopping time.  Given $\CF_\tau$, we fix a test function $\phi \in C_0^\infty(\h)$ whose support is disjoint from $K_\tau$.  We then let $\sigma_n = \tau_{j_n}$ be the smallest element of $\tau_1,\ldots,\tau_n$ which is larger than $\tau$ such that the support of $\phi$ is contained in $U_{j_n}$ (on the event that there is no such $\tau_j$ we take $\sigma_n=\infty$).  Fix $\theta \in \R$.  Note that for each $1 \leq j \leq n$, the event $\{\sigma_n = \tau_j\}$ is $\CF_{\tau_j}$-measurable.  On the event that $\{\sigma_n < \infty\}$, we have that
\begin{align*}
      \E\!\left[ \exp( i \theta (h,\phi)) \giv \CF_{\sigma_n} \right]
&= \sum_{j=1}^n \E\!\left[ \exp(i \theta (h,\phi)) \giv \CF_{\sigma_n} \right] \one_{\{\sigma_n = \tau_j \}}\\
&= \sum_{j=1}^n \E\!\left[ \exp(i \theta (h,\phi)) \giv \CF_{\tau_j} \right] \one_{\{\sigma_n = \tau_j \}}\\
&= \sum_{j=1}^n \exp\!\left( i \theta (\Fh_{\tau_j},\phi)-\frac{\theta^2 E_{\tau_j}(\phi)}{2} \right) \one_{\{\sigma_n = \tau_j \}}\\
&=  \exp\!\left( i \theta (\Fh_{\sigma_n},\phi)-\frac{\theta^2 E_{\sigma_n}(\phi)}{2}\right) .
\end{align*}
This proves that the conditional law of $(h,\phi)$ given $\CF_{\sigma_n}$ is equal to the law of $(h^{\sigma_n} \circ f_{\sigma_n} + \Fh_{\sigma_n},\phi)$ where $h^{\sigma_n}$ given $\CF_{\sigma_n}$ has the law of a zero-boundary GFF on $\h$.  Note that the sequence of stopping times $(\sigma_n)$ almost surely decreases to $\tau$ as $n \to \infty$.  The backwards martingale convergence theorem and the right-continuity of the filtration $(\CF_t)$ together imply that we almost surely have
\begin{align}
  \lim_{n \to \infty} \E\!\left[ \exp( i \theta (h,\phi)) \giv \CF_{\sigma_n} \right] = 
    \E\!\left[ \exp( i \theta (h,\phi)) \giv \CF_\tau \right]. \label{eqn::sigma_n_decrease}
\end{align}
Moreover, the continuity of the $\SLE_\kappa(\ul{\rho})$ driving process implies that we almost surely have
\begin{align}
     \lim_{n \to \infty} \exp\!\left( i \theta (\Fh_{\sigma_n},\phi)-\frac{\theta^2 E_{\sigma_n}(\phi)}{2}\right) = \exp\!\left( i \theta (\Fh_\tau,\phi)-\frac{\theta^2 E_\tau(\phi)}{2} \right). \label{eqn::sigma_n_char_converge}
\end{align}
Combining~\eqref{eqn::sigma_n_decrease} and~\eqref{eqn::sigma_n_char_converge} implies that the conditional law of $(h,\phi)$ given $\CF_\tau$ is equal to the law of $(h^\tau \circ f_\tau + \Fh_\tau,\phi)$ where $h^\tau$ given $\CF_\tau$ has the law of a zero-boundary GFF on $\h$.

Fix $U \subseteq \h$ open.  A similar argument using the backwards martingale convergence theorem implies that the conditional law of the projection of $h$ onto $H(U)$ given its projection onto $H^\perp(U)$ and $\CF_\tau$ is that of a zero-boundary GFF on $U$ on the event that $\tau \leq \tau_U$.  Indeed, by our choice of $\tau_i,U_i$ we can find a sequence $(j_n)$ such that $U_{j_n} \subseteq U$, $U_{j_n} \subseteq U_{j_{n+1}}$, and $\tau_{j_n} = \tau_{U_{j_n}}$ for every $n$ and such that $\cup_n U_{j_n} = U$.  We then take $\sigma_n = \tau_{j_n}$ for each $n$.  Fix $\phi \in C_0^\infty(U)$.  Then we have that $\supp(\phi) \subseteq U_{j_n}$ for all $n$ large enough.  For each $n$, we let $h_{U_{j_n}}$ (resp.\ $h_{U_{j_n}^c}$) be the projection of $h$ onto $H(U_{j_n})$ (resp.\ $H^\perp(U_{j_n})$).  We similarly let $h_U$ (resp.\ $h_{U^c}$) be the projection of $h$ onto $H(U)$ (resp.\ $H^\perp(U)$).  Proposition~\ref{prop::gff_sigma_algebra_converges} and the backwards martingale convergence theorem together imply that
\[ (h_{U_{j_n}^c},\phi) = \E[ (h,\phi) \giv \CF_{ (\h \setminus U_{j_n})^+}^h] \to  \E[ (h,\phi) \giv \CF_{(\h \setminus U)^+}^h] = (h_{U^c},\phi)\]
almost surely as $n \to \infty$.  Therefore
\begin{equation}
\label{eqn::zero_boundary_converges}
(h_{U_{j_n}},\phi) = (h,\phi) - (h_{U_{j_n}^c},\phi) \to (h,\phi) - (h_{U^c},\phi) = (h_U,\phi)
\end{equation}
almost surely as $n \to \infty$.  Let $\sigma_{U_{j_n}}^2(\phi) = \iint \phi(x) G_{U_{j_n}}(x,y) \phi(y) dx dy$ where $G_{U_{j_n}}$ is the Green's function for $\Delta$ on $U_{j_n}$.  We define $\sigma_U^2(\phi)$ similarly and note that $\sigma_{U_{j_n}}^2(\phi) \to \sigma_U^2(\phi)$ as $n \to \infty$.  By the construction of the coupling, for each $n \in \N$ we have that
\begin{align}
\label{eqn::characteristic_zero_boundary_form}
   \E[ \exp(i \theta (h_{U_{j_n}},\phi)) \giv \CF_{\sigma_n}, \CF_{(\h \setminus U_{j_n})^+}^h] = \exp\left( - \frac{\theta^2 \sigma_{U_{j_n}}^2(\phi)}{2} \right).
\end{align}
As $n \to \infty$, the right hand side of~\eqref{eqn::characteristic_zero_boundary_form} converges to $\exp(-\theta^2 \sigma_U^2(\phi)/2)$ while, by~\eqref{eqn::zero_boundary_converges} and the backwards martingale convergence theorem, the left hand side of~\eqref{eqn::characteristic_zero_boundary_form} converges to $\E[ \exp(i \theta (h_U,\phi)) \giv \CF_{\tau_U}, \CF_{(\h \setminus U)^+}^h]$.  Since $\phi \in C_0^\infty(U)$ was arbitrary, it therefore follows that the conditional law of $h_U$ given $h_{U^c}$ and $\CF_{\tau_U}$ is that of a GFF on $U$ with zero boundary conditions.  It therefore follows that the conditional law of $h_U$ given $h_{U^c}$ and $\CF_\tau$ on $\tau \leq \tau_U$ is that of a GFF on $U$ with zero boundary conditions.  That is, characterization~\eqref{it::Ucond} of Lemma~\ref{lem::local_char} implies that $K_\tau$ is local for $h$.
\end{proof}

\section{Dub\'edat's argument}
\label{sec::dubedat}

This section will present the argument from \cite{DUB_DUAL, DUB_PART} to establish Theorem~\ref{thm::coupling_uniqueness} for $\kappa < 4$ with some particular boundary conditions.  One of its nice features is that it simultaneously establishes a particular case of so-called {\em Duplantier duality}: that the outer boundary of a certain $\SLE_{16/\kappa}(\ul{\rho})$ process is equal in law to a certain $\SLE_{\kappa}(\ul{\rho})$ process.  Indeed, we find that the left and right boundaries of a counterflow line corresponding to a given $h$ are almost surely flow lines for the same $h$.  Our exposition (the interpretations, illustrations, and geometric point of view) is rather different from what appears in \cite{DUB_DUAL, DUB_PART}, but the basic argument is the same.  We will also explain how duality implies the transience (continuity upon exiting) of certain non-boundary intersecting flow lines (see \cite{RS05} for an alternative approach to proving the transience of $\SLE$).

When following the illustrations, it will be useful to keep in mind a few definitions and identities:
\begin{equation}
\label{eqn::deflist}
\lambda := \frac{\pi}{\sqrt \kappa},\quad \kappa' := \frac{16}{\kappa},\quad \lambda' := \frac{\pi}{\sqrt{\kappa'}} = \frac{\pi \sqrt{\kappa}}{4} = \frac{\kappa}{4} \lambda < \lambda,\quad \chi := \frac{2}{\sqrt \kappa} - \frac{\sqrt \kappa}{2}
\end{equation}
\begin{equation}
\label{eqn::fullrevolution}
2 \pi \chi = 4(\lambda-\lambda'), \quad \lambda' = \lambda - \frac{\pi}{2} \chi
\end{equation}
\begin{equation}
\label{eqn::fullrevolutionrho}
2 \pi \chi = (4-\kappa)\lambda = (\kappa'-4)\lambda'.
\end{equation}

Throughout, we shall assume that $\kappa \in (0,4)$ so that $\kappa' \in (4,\infty)$.  Each of $\lambda$, $\lambda'$, and $\chi$ is determined by $\kappa$ through~\eqref{eqn::deflist} and is positive.  We will frequently go back and forth between these four values using the identities above.  To interpret
\eqref{eqn::fullrevolution}, recall that an angle change of $\theta$ corresponds to a change of $\theta \chi$ in the value of the field.  Thus~\eqref{eqn::fullrevolution} says that the difference $\lambda - \lambda'$ corresponds to ``a ninety-degree turn to the left'' in the imaginary geometry, which will frequently be useful.  (This fact was already illustrated in Figure~\ref{fig::flowlineheights}.)  Recall also from Theorem~\ref{thm::coupling_existence} that a force point of weight $\rho$ corresponds to a jump of size $\rho \lambda$ in the boundary values.  Thus~\eqref{eqn::fullrevolutionrho} implies that a gap of size $2 \pi \chi$ (a ``full revolution'' gap) corresponds to a $\rho$ value of $(4-\kappa)$.  More generally, a $\theta \chi$ gap in boundary values corresponds to $\rho = \frac{\theta}{2\pi}(4-\kappa)$.

\subsection{Critical heights for boundary intersection, path continuation}
\label{subsec::intersection}

We begin by observing some basic facts about $\SLE_{\kappa}(\ul{\rho})$ processes, which tell us what kinds of boundary segments the flow and counterflow lines of Theorem~\ref{thm::coupling_existence} can intersect.  Throughout, we let $\strip = \R \times (0,1) \subseteq \C$ be an infinite horizontal strip and we decompose $\partial \strip$ into its lower and upper boundaries $\stripbot = (-\infty,\infty)$ and $\striptop = (-\infty,\infty) + i$, respectively.

\begin{figure}[h!]
\begin{center}
\includegraphics[scale=0.85]{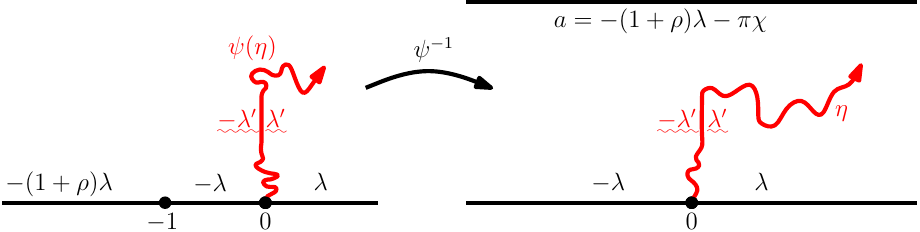}
\caption{\label{fig::criticalforintersection} Suppose that $h$ is a GFF on $\strip$ whose boundary data is as depicted on the right side.  Then the flow line $\eta$ of $h$ shown has the law of an $\SLE_{\kappa}(\rho)$ process in $\strip$ from $0$ to $+ \infty$ where the force point is located at $-\infty$.  To see this, let $\psi \colon \strip \to \h$ be the conformal map which fixes $0$, sends $-\infty$ to $-1$, and $+\infty$ to $\infty$.  Then $h \circ \psi^{-1} - \chi \arg(\psi^{-1})'$ is a GFF on $\h$ whose boundary data is depicted on the left side.  Given the path, the expectation of the field (in the Theorem~\ref{thm::coupling_existence} coupling) is the harmonic extension of the given values on $\R$ and $\pm\lambda' + \chi \cdot \mathrm{winding}$ on the curve, as in Figure~\ref{fig::flowlineheights}.   The critical $\rho$ for $\psi(\eta)$ to be able to intersect $(-\infty,-1)$ before reaching $\infty$ is $\rho_0 = \kappa/2-2$.  This implies that $\eta$ accumulates in the upper boundary $\striptop$ or at $-\infty$ before $+\infty$ if and only if $\rho < \kappa/2-2$, i.e., $a > -(1+\rho_0)\lambda - \pi \chi = -(\kappa/2-1)\lambda - \pi \chi = -\lambda$, and otherwise it accumulates at $+\infty$ without hitting $\striptop$.  (Recall that $2\pi\chi = (4-\kappa)\lambda$.)  Symmetrically, it accumulates in $\striptop$ or at $+\infty$ before $-\infty$ if and only if $a < \lambda$, and otherwise it accumulates at $-\infty$ without hitting $\striptop$.  The same result also holds when the boundary data on $\stripbot$ is piecewise constant, changes values a finite number of times, and is at most $-\lambda+\pi \chi$ to the left of $0$ and at least $\lambda-\pi\chi$ to the right of~$0$ (see Remark~\ref{rem::boundary_data}).  Furthermore, the analogous statement holds when $\eta$ is replaced by an $\SLE_{\kappa'}$ counterflow line, $\lambda$ is replaced by $\lambda'$, and $\chi$ is replaced by $-\chi$.}
\end{center}
\end{figure}

\begin{remark}
\label{rem::boundary_data}
Throughout this and the next subsection, we will often make the assumption that the boundary data of $h$ on $\stripbot$ is at most $-\lambda+\pi\chi$ to the left of $0$ and at least $\lambda-\pi \chi$ to the right of $0$.  The significance of this assumption is that, by Remark~\ref{rem::continuity_non_boundary} and absolute continuity (the Girsanov theorem), it implies that the flow line $\eta$ of $h$ is almost surely a continuous path, at least until it accumulates in $\striptop$.  Moreover, by \cite[Lemma~15]{DUB_DUAL}, it implies that $\eta$ must accumulate either in $\striptop$ or at $\pm \infty$ before accumulating in $\stripbot$ after time $0$ (and upon proving Theorem~\ref{thm::continuity}, we will later be able to show that it actually never accumulates in $\stripbot$ after time $0$).  This will be important for the results explained in this section because \cite[Lemma~15]{DUB_DUAL} only gives us information regarding the \emph{first} force point disconnected by $\eta$.
\end{remark}

\begin{remark}
\label{rem::counterflow_lines_boundary_data}  All of the results of this subsection are applicable both to $\eta \sim \SLE_\kappa(\ul{\rho})$ and $\eta' \sim \SLE_{\kappa'}(\ul{\rho})$.  In the case of the latter, the boundary data is given as in the statement of Theorem~\ref{thm::coupling_existence} (the boundary data is $-\lambda'$ to the left of $\eta'$ and $\lambda'$ to its right) to make the lemma statements consistent for $\SLE_\kappa$ and $\SLE_{\kappa'}$ processes.  In this case, the condition that implies that $\eta'$ does not hit $\stripbot$ is that the boundary data for $h$ should be at most $-\lambda'-\pi \chi$ to the left of $0$ and at least $\lambda' + \pi\chi$ to the right of $0$.  In Section~\ref{subsec::uniqueness_non_boundary_intersecting}, we will apply these results to $\eta' \sim \SLE_{\kappa'}$ where $\eta'$ is coupled with $-h$ as in the statement of Theorem~\ref{thm::coupling_existence}, so the boundary data is reversed.
\end{remark}

\begin{figure}[h!]
\begin{center}
\includegraphics[scale=0.85]{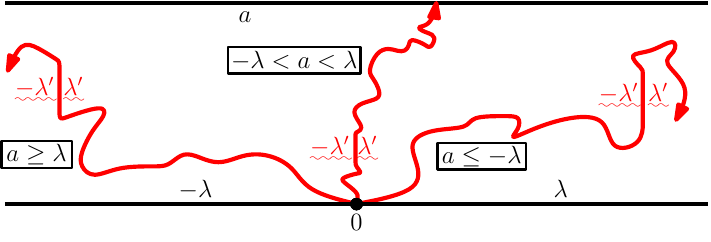}
\caption{\label{fig::hittingrange}  In the setting of Figure~\ref{fig::criticalforintersection}, the flow line behavior depends on $a$.  Curves shown represent a.s.\ behaviors corresponding to the three different regimes of $a$ (indicated by the closed boxes).  From Figure~\ref{fig::criticalforintersection}, the path hits the upper boundary of the strip a.s.\ if and only if $a \in (-\lambda, \lambda)$.  When $a \geq \lambda$, it tends to $-\infty$ (left end of the strip) and when $a \leq - \lambda$ it tends to $\infty$ (right end of the strip) without hitting the upper boundary.  These facts also hold whenever the boundary data of $h$ on $\stripbot$ is piecewise constant, changes only a finite number of times, and is at most $-\lambda+\pi \chi$ to the left of $0$ and at least $\lambda-\pi\chi$ to the right of $0$ (see Remark~\ref{rem::boundary_data}).  The same statement holds when $\eta$ is replaced by an $\SLE_{\kappa'}$ process, $\lambda$ is replaced by $\lambda'$ and $\chi$ is replaced by $-\chi$ (see Remark~\ref{rem::counterflow_lines_boundary_data}).}
\end{center}
\end{figure}

\begin{lemma}
\label{lem::hitting_and_bouncing}
Suppose that $h$ is a GFF on the strip $\strip$ whose boundary data is as depicted in Figures~\ref{fig::criticalforintersection}-\ref{fig::bouncingrange} and let $\eta$ be the flow line of $h$ starting at $0$.  If $a \geq \lambda$, then $\eta$ almost surely accumulates at $-\infty$ and if $a \leq -\lambda$, then $\eta$ almost surely accumulates at $+\infty$.  In both cases, $\eta$ almost surely does not hit $\striptop$.  If $a \in (-\lambda,\lambda)$, then $\eta$ almost surely accumulates in $\striptop$.  If, moreover,  $a > -\lambda+\chi\pi$ (resp.\ $a < \lambda -\pi \chi$), then $\eta$ can be continued when it is targeted toward $-\infty$ (resp.\ $+\infty$) --- i.e., the continuation threshold is not reached in this case when $\eta$ first accumulates in $\striptop$.  This holds more generally when the boundary data on $\stripbot$ is piecewise constant, changes only a finite number of times, and is at most $- \lambda + \pi\chi$ to the left of $0$ and is at least $\lambda - \pi \chi$ to the right of $0$ (see Remark~\ref{rem::boundary_data}).  Furthermore, the analogous statement holds for $\SLE_{\kappa'}$ processes when $\lambda$ is replaced by $\lambda'$ and $\chi$ is replaced by $-\chi$ (see Remark~\ref{rem::counterflow_lines_boundary_data}).
\end{lemma}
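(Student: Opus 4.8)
The plan is to push everything through the conformal map $\psi\colon \strip \to \h$ of Figure~\ref{fig::criticalforintersection} and reduce to a single-force-point $\SLE_\kappa(\rho)$ in $\h$, then invoke the Bessel description of the driving function from Section~\ref{sec::sle} together with \cite[Lemma~15]{DUB_DUAL}. Applying the coordinate change~\eqref{eqn::ac_eq_rel}, the flow line $\eta$ becomes an $\SLE_\kappa(\rho)$ from $0$ to $\infty$ with a single force point at $-1$, the point $-1$ being the image of the $-\infty$ end of $\strip$ so that $\psi(\striptop) = (-\infty,-1)$; the height $a$ on $\striptop$ and the weight $\rho$ are related by $a = -(1+\rho)\lambda - \pi\chi$, which one reads off by converting a $\theta\chi$ gap in boundary values into the weight $\tfrac{\theta}{2\pi}(4-\kappa)$ using~\eqref{eqn::fullrevolutionrho}. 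Under this dictionary $a \le -\lambda \iff \rho \ge \tfrac{\kappa}{2}-2$ and $a \ge \lambda \iff \rho \le \tfrac{\kappa}{2}-4$. By Remark~\ref{rem::continuity_non_boundary} and absolute continuity (Girsanov), $\eta$ is a.s.\ a continuous curve at least until it accumulates in $\striptop$, so every statement below concerns a continuous path.

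For the trichotomy, the key input is that, up until $\eta$ disconnects its force point, the difference between $W_t$ and the force point evolves as $\sqrt{\kappa}$ times a Bessel process of dimension $\delta = 1 + \tfrac{2(\rho+2)}{\kappa}$ (as in~\eqref{eqn::sle_bessel}); this process reaches $0$ --- equivalently, $\eta$ disconnects the force point, equivalently $\eta$ accumulates in $\striptop$ or at the $-\infty$ end of $\strip$ before reaching $+\infty$ --- if and only if $\delta < 2$, i.e.\ $\rho < \tfrac{\kappa}{2}-2$, i.e.\ $a > -\lambda$. This already gives the claims for $a \le -\lambda$. The analogous statement for the other end of $\strip$ --- obtained by the left--right symmetry of the picture, i.e.\ by the reflection $z \mapsto -\overline z$ together with replacing $h$ by $z \mapsto -h(-\overline z)$, which negates $a$ and only swaps the two sides of $\stripbot$ --- says that $\eta$ accumulates in $\striptop$ or at $+\infty$ before the $-\infty$ end iff $a < \lambda$. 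Combining the two dichotomies: $\eta \to +\infty$ without hitting $\striptop$ when $a \le -\lambda$; $\eta \to -\infty$ without hitting $\striptop$ when $a \ge \lambda$; and $\eta$ a.s.\ accumulates in $\striptop$ when $a \in (-\lambda,\lambda)$.

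For the continuation statement, assume $a \in (-\lambda,\lambda)$ and let $\sigma$ be the first time $\eta$ accumulates in $\striptop$. Let $U$ be the connected component of $\strip \setminus \eta([0,\sigma])$ whose boundary contains the $-\infty$ end. Mapping $U$ conformally to $\h$ with $\eta(\sigma) \mapsto 0$ and $-\infty \mapsto \infty$ and applying~\eqref{eqn::ac_eq_rel} to read off the boundary data --- noting that $\eta$ meets $\striptop$ at a right angle, so an extra $\tfrac{\pi}{2}\chi$ of winding is picked up at $\eta(\sigma)$ --- a short computation (again converting boundary-data gaps into weights via~\eqref{eqn::fullrevolutionrho}) shows that the continuation of $\eta$ targeted at $-\infty$ is an $\SLE_\kappa(\ul{\rho})$ whose force point located at $\eta(\sigma)$ carries a weight $\rho_0$ with $a = (1+\rho_0)\lambda + \pi\chi$. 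Hence the continuation threshold has not been reached at time $\sigma$ exactly when $\rho_0 > -2$, i.e.\ $a > -\lambda + \pi\chi$; the statement for continuation toward $+\infty$ is the mirror image under $a \mapsto -a$.

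Finally, the piecewise-constant case. By Remark~\ref{rem::continuity_non_boundary}, the hypothesis that the boundary data on $\stripbot$ is at most $-\lambda+\pi\chi$ to the left of $0$ and at least $\lambda-\pi\chi$ to the right of $0$ is precisely the assertion that every partial sum of the corresponding left and right weights is at least $\tfrac{\kappa}{2}-2$; by Girsanov this makes $\eta$ a.s.\ avoid $\stripbot$ after time $0$ and makes its law, up to the time it disconnects one of the two ends of $\strip$, mutually absolutely continuous with the single-force-point law above. It is then \cite[Lemma~15]{DUB_DUAL}, which controls the first force point disconnected by an $\SLE_\kappa(\ul{\rho})$, that guarantees the first end disconnected is the one relevant to the dichotomy, and the continuation-threshold computation at $\eta(\sigma)$ is unchanged since it involves only the data of $h$ on $\striptop$ near $\eta(\sigma)$. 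The $\SLE_{\kappa'}$ statement follows from the identical argument with $(\kappa,\lambda,\chi)$ replaced by $(\kappa',\lambda',-\chi)$, using that Theorem~\ref{thm::coupling_existence} and~\eqref{eqn::sle_bessel} hold for all $\kappa > 0$ and that the relevant partial-sum bound becomes $\tfrac{\kappa'}{2}-2$ (cf.\ Remark~\ref{rem::counterflow_lines_boundary_data}). The step I expect to require the most care is the winding bookkeeping in the continuation computation --- pinning down the constant $-\lambda+\pi\chi$ (and its counterflow analogue $-\lambda'-\pi\chi$) exactly --- and, in the piecewise-constant setting, verifying that the stated weight bounds genuinely force the first disconnected force point to be the one at the end of $\strip$.
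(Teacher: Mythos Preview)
Your approach is essentially the paper's: push through $\psi\colon\strip\to\h$ with $0\mapsto 0$, $-\infty\mapsto -1$, $+\infty\mapsto\infty$, use the dictionary $a=-(1+\rho)\lambda-\pi\chi$, and reduce to a single-force-point $\SLE_\kappa(\rho)$. The paper then simply cites \cite[Lemma~15]{DUB_DUAL} for each of the three regimes of $\rho$; your Bessel-plus-reflection argument is a legitimate substitute that re-derives the relevant special case of that lemma (and your reflection symmetry $(h,\eta)\mapsto(-h(-\bar\cdot),-\bar\eta)$ is indeed a symmetry of the coupling of Theorem~\ref{thm::coupling_existence}, so the worry you might have about the ODE sign is irrelevant here).

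The one place where your argument goes wrong is the continuation-threshold step. The claim that ``$\eta$ meets $\striptop$ at a right angle, so an extra $\tfrac{\pi}{2}\chi$ of winding is picked up at $\eta(\sigma)$'' is not correct: $\eta$ is a fractal and has no well-defined contact angle, and even for smooth $h$ the flow-line direction at $\eta(\sigma)$ is $e^{ia/\chi}$, not vertical. Your formula $a=(1+\rho_0)\lambda+\pi\chi$ happens to be right, but for a different reason: it is exactly the target-change identity. Viewing the \emph{same} path as an $\SLE_\kappa$ from $0$ to $-\infty$ (i.e.\ composing $\psi$ with the M\"obius map sending $-1\mapsto\infty$, $\infty\mapsto-1$), one gets $\SLE_\kappa(\kappa-6-\rho)$ with the force point at the image of $+\infty$; this force point is absorbed precisely when $\eta$ first hits $\striptop$, and the continuation threshold is avoided iff $\kappa-6-\rho>-2$, i.e.\ $a>-\lambda+\pi\chi$. (Your $\rho_0$ equals $\kappa-6-\rho$.) This is what the paper does, implicitly, via Figure~\ref{fig::bouncingrange}: it never leaves the half-plane picture and never computes any winding at the hitting point. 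The mirror statement for continuation toward $+\infty$ is even simpler---one stays in the original $\psi$-picture, where the absorbed force point has weight $\rho$, giving $\rho>-2\iff a<\lambda-\pi\chi$. So replace the ``right angle'' paragraph by this target-change computation and the argument is complete.
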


\begin{remark}
\label{rem::branching}
If it happens that $-\lambda + \pi \chi < a < \lambda - \pi \chi$, then, after the path first hits $\striptop$, it is possible to {\em branch} the path and continue it in both directions (both toward $-\infty$ and toward $+\infty$).  Since 
\[ -\lambda + \chi \pi = -\lambda + (4-\kappa)\frac{\lambda}{2} = \lambda\left(1 - \frac{\kappa}{2}\right),\]
there exists $a$ values for which this is possible whenever $\kappa > 2$.
\end{remark}

\begin{proof}[Proof of Lemma~\ref{lem::hitting_and_bouncing}]
Let $\rho \in \R$ be such that $a = -(1+\rho)\lambda - \pi \chi$ and let $\psi \colon \strip \to \h$ be the conformal transformation which sends $0$ to $0$, $-\infty$ to $-1$, and $+\infty$ to $\infty$.  Then the transformation rule~\eqref{eqn::ac_eq_rel} implies that $\psi(\eta)$ is an $\SLE_\kappa(\rho)$ process in $\h$ from $0$ to $\infty$ where the force point of weight $\rho$ is located at $-1$.  We will prove the lemma by checking the criterion of \cite[Lemma~15]{DUB_DUAL}.

We first suppose that $a \geq \lambda$.  This holds if and only if $\rho \leq \tfrac{\kappa}{2} - 4$.  Consequently, \cite[Lemma~15]{DUB_DUAL} implies that $\psi(\eta)$ almost surely hits $\partial \h$ for the first time (after its initial point) at $-1$, which translates into $\eta$ tending to $-\infty$ without hitting $\striptop$.  On the other hand, if $a \leq -\lambda$ then $\rho \geq \tfrac{\kappa}{2} - 2$.  Applying \cite[Lemma~15]{DUB_DUAL} analogously implies $\psi(\eta)$ tends to $\infty$ without exiting $\h$ so that $\eta$ tends to $+\infty$ without hitting $\striptop$.  The case where $a \in (-\lambda,\lambda)$ so that $\rho \in (\tfrac{\kappa}{2}-4,\tfrac{\kappa}{2}-2)$ is similar.  Finally, we note that $a < \lambda - \chi \pi$ (resp.\ $a > -\lambda + \chi \pi$) translates into $\rho > -2$ which means that the continuation threshold for the path targeted at $+\infty$ (resp.\ $-\infty$) is not reached when it hits $\striptop$.  This observation implies that we have the behavior described in the statement of the lemma for these ranges of $a$ values.
\end{proof}

\begin{figure}[h!]
\begin{center}
\includegraphics[scale=0.85]{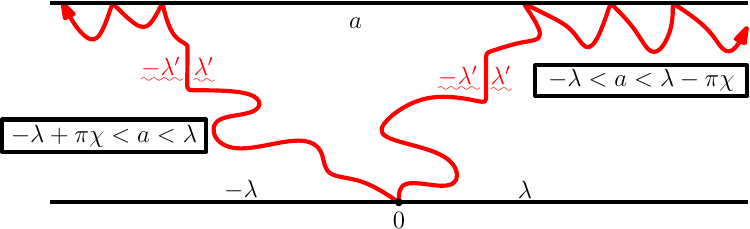}
\caption{\label{fig::bouncingrange} Take $a \in (-\lambda, \lambda)$.  Consider Figure~\ref{fig::criticalforintersection} with $\rho=-2$, the critical value of $\rho$ for the path being able to continue after the force point is absorbed.  Conformally mapping to the strip $\strip$, we find that the path may be continued to the left when $a \in (-\lambda + \pi \chi, \lambda)$ and to the right when $a \in (-\lambda, \lambda - \pi \chi)$.  In the extreme case $a = \lambda - \pi \chi$ ($\rho = -2$), the path on the right ``merges into'' the upper line.  (When the path turns right and runs parallel to $\R$, the height on its lower side is $\lambda - \pi \chi$.  The merging phenomenon will be developed in Section~\ref{subsec::monotonicity_merging_crossing}.)  A similar statement holds in the setting of $\SLE_{\kappa'}$ processes when $\lambda$ is replaced by $\lambda'$ and $\chi$ is replaced by $-\chi$ (see Remark~\ref{rem::counterflow_lines_boundary_data}).}
\end{center}
\end{figure}

\begin{figure}[h!]
\begin{center}
\includegraphics[scale=0.85]{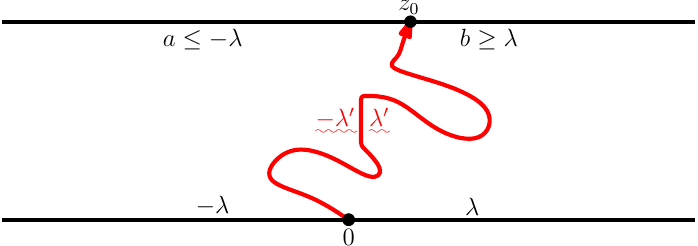}
\caption{\label{fig::hittingsinglepoint} If the left interval has height $a \leq -\lambda$ and the right has height $b \geq \lambda$, then the first accumulating point of the path on the upper line will be at $z_0$.  The same result holds if the boundary data is piecewise constant and changes at most a finite number of times, is at most $-\lambda$ to the left of $z_0$, at least $\lambda$ to the right of $z_1$, at most $-\lambda+\pi\chi$ to the left of $0$, and at least $\lambda-\pi\chi$ to the right of $0$ (see Remark~\ref{rem::boundary_data}).  The same statement holds for $\SLE_{\kappa'}$ processes provided $\lambda$ is replaced by $\lambda'$ and $\chi$ is replaced by $-\chi$ (see Remark~\ref{rem::counterflow_lines_boundary_data}).}
\end{center}
\end{figure}

\begin{lemma}
\label{lem::hitting_single_point}
Suppose that $h$ is a GFF on $\strip$ whose boundary data is as depicted in Figure~\ref{fig::hittingsinglepoint}.  Let $z_0$ be the point of $\striptop$ which separates $\striptop$ into the segments where the boundary data changes from $a$ to $b$.  Then the flow line $\eta$ of $h$ starting at $0$ almost surely exits $\strip$ at $z_0$ without otherwise hitting $\striptop$.  This result holds more generally when the boundary data of $h$ on $\striptop$ is piecewise constant, changes a finite number of times, and is at most $-\lambda$ to the left of $z_0$, at least $\lambda$ to the right of $z_0$, and on $\stripbot$ is piecewise constant, changes a finite number of times, and is at most $-\lambda + \pi \chi$ to the left of $0$ and at least $\lambda-\pi \chi$ to the right of $0$ (see Remark~\ref{rem::boundary_data}).  Moreover, this result holds in the setting of $\SLE_{\kappa'}$ processes when $\lambda$ is replaced by $\lambda'$ and $\chi$ is replaced by $-\chi$ (see Remark~\ref{rem::counterflow_lines_boundary_data}).
\end{lemma}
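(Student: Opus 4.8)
The plan is to derive Lemma~\ref{lem::hitting_single_point} from Lemma~\ref{lem::hitting_and_bouncing} together with the Markov property of the GFF (the conditional law in Theorem~\ref{thm::coupling_existence}), the coordinate-change rule~\eqref{eqn::ac_eq_rel}, and the continuity and non-return statements of Remark~\ref{rem::boundary_data}. I will treat only the $\SLE_\kappa$ case; the $\SLE_{\kappa'}$ statement follows verbatim upon replacing $\lambda$ by $\lambda'$ and $\chi$ by $-\chi$ and invoking Remark~\ref{rem::counterflow_lines_boundary_data} and the $\SLE_{\kappa'}$ forms of Lemma~\ref{lem::hitting_and_bouncing} and Remark~\ref{rem::boundary_data}. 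To begin, since the boundary data of $h$ on $\stripbot$ is at most $-\lambda+\pi\chi$ to the left of $0$ and at least $\lambda-\pi\chi$ to the right of $0$, Remark~\ref{rem::boundary_data} (via Remark~\ref{rem::continuity_non_boundary} and absolute continuity) implies that $\eta$ is almost surely a continuous path which accumulates in $\striptop\cup\{-\infty,+\infty\}$ before it can return to $\stripbot$ after time $0$. Thus it suffices to show that the accumulation set of $\eta$ on $\striptop\cup\{-\infty,+\infty\}$ is exactly $\{z_0\}$ and that $\eta((0,\infty))\cap\striptop=\emptyset$.

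\textbf{Ruling out the rest of $\striptop$.} The portion of $\striptop$ strictly to the left of $z_0$ carries boundary data at most $-\lambda$, and the portion strictly to the right of $z_0$ carries boundary data at least $\lambda$. I claim $\eta$ hits neither. Suppose it hit $\striptop$ first at a point $w$ strictly to the right of $z_0$ (the left case is symmetric). Choose a stopping time $\tau$ slightly before the hitting time and let $C$ be the component of $\strip\setminus\eta([0,\tau])$ containing $\eta((\tau,\infty))$ and having $w$ accessible on its boundary; for $\tau$ large enough the arc $\overline C\cap\striptop$ is contained in the part of $\striptop$ strictly to the right of a point $z'$ with $z_0\le z'<w$. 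By the Markov property and~\eqref{eqn::ac_eq_rel}, after mapping $C$ conformally to $\strip$ so that $\eta(\tau)\mapsto 0$ and $\overline C\cap\striptop$ maps onto the new upper boundary, the curve $\eta|_{[\tau,\infty)}$ is a flow line of a GFF on $\strip$ whose upper-boundary data is $\ge\lambda$ everywhere (the correction $-\chi\arg$ of the coordinate change being absorbed into the data along the image of $\eta([0,\tau])$, which now lies on the lower boundary) and whose lower-boundary data is piecewise constant. Lemma~\ref{lem::hitting_and_bouncing} then forces this flow line to accumulate at the left end of the new strip without touching the new upper boundary; undoing the coordinate change, $\eta$ accumulates at $z'$ rather than at $w$, a contradiction. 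The mirror argument (with $\lambda$ replaced by $-\lambda$, left and right interchanged, and ``left end'' replaced by ``right end'') rules out hitting $\striptop$ to the left of $z_0$.

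\textbf{Ruling out the ends, and conclusion.} To rule out accumulation at $+\infty$ (the case $-\infty$ is symmetric), note that near $+\infty$ the boundary data on both $\striptop$ and $\stripbot$ is $\ge\lambda-\pi\chi$; conditioning on $\eta$ up to a late stopping time and mapping the component containing $\eta((\tau,\infty))$ conformally to $\strip$ so that the end $+\infty$ is sent onto the new upper boundary, one gets a flow line whose upper-boundary data is $\ge\lambda$, and Lemma~\ref{lem::hitting_and_bouncing} shows it slides to the opposite end of the new strip without reaching $+\infty$, a contradiction. Combining the two steps, $\eta$ accumulates at $z_0$ and at no other point of $\striptop\cup\{-\infty,+\infty\}$; by continuity $\lim_{t}\eta(t)=z_0$ and $\eta((0,\infty))\cap\striptop=\emptyset$. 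Finally, that $\eta$ genuinely terminates at $z_0$ (rather than continuing along $\striptop$) is immediate from the boundary values $\le-\lambda$ and $\ge\lambda$ on the two sides of $z_0$: the weight $\rho$ of the force point created at $z_0$ satisfies $\rho\le-2$, so the continuation threshold is reached there, consistently with $2\pi\chi=(4-\kappa)\lambda$. The passage from constant to piecewise-constant boundary data is harmless, since it only introduces finitely many additional force points and does not affect any of the above.

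\textbf{Main obstacle.} The delicate point is the conditioning-and-mapping step: one must justify the choice of stopping time $\tau$ and component $C$, and then verify that the conditional field on $C$, after the conformal coordinate change, really does satisfy the hypotheses of Lemma~\ref{lem::hitting_and_bouncing} and Remark~\ref{rem::boundary_data}. In particular one has to check that the boundary values inherited from the two sides of $\eta([0,\tau])$, together with the winding correction $-\chi\arg$ of the map, remain within the admissible ranges along the new lower boundary ($\le -\lambda+\pi\chi$ on one side of the new origin and $\ge\lambda-\pi\chi$ on the other), so that the continuity and non-return-to-$\stripbot$ conclusions of Remark~\ref{rem::boundary_data} are still available in $C$. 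This is a bookkeeping argument about heights and windings along the already-drawn part of $\eta$, and it — rather than any new probabilistic input — is where the real work lies; once it is in place, each contradiction is an immediate appeal to Lemma~\ref{lem::hitting_and_bouncing}.
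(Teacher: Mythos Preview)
Your conditioning-and-mapping scheme has a genuine gap. You assume that, for $\tau$ close enough to the first hitting time of $\striptop$, the arc $\overline C\cap\striptop$ is contained in a subinterval of $\striptop$ strictly to the right of some $z'\ge z_0$. But $\eta$ is simple (being $\SLE_\kappa$, $\kappa<4$) and by hypothesis $\eta([0,\tau])\cap\striptop=\emptyset$; hence $C=\strip\setminus\eta([0,\tau])$ and $\overline C\cap\striptop=\striptop$ in its entirety, including the half-line to the left of $z_0$ where the boundary data is $\le-\lambda$. After your coordinate change the new upper boundary therefore still carries data $\le-\lambda$ on one side and $\ge\lambda$ on the other, and you are back in the exact hypothesis of the lemma you are trying to prove rather than in that of Lemma~\ref{lem::hitting_and_bouncing}. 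The same problem affects your ``ruling out $+\infty$'' step: the component $C$ again has all of $\striptop$ on its boundary, so you cannot arrange for the new upper boundary data to be uniformly $\ge\lambda$.

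The paper's proof avoids this entirely by a single global coordinate change rather than an iterative conditioning. One maps $\strip\to\h$ by the conformal map sending $0\mapsto 0$, $-\infty\mapsto-1$, $z_0\mapsto-2$ (and hence $+\infty\mapsto\infty$). Under~\eqref{eqn::ac_eq_rel} the image of $\eta$ is an $\SLE_\kappa(\rho_1,\rho_2)$ in $\h$ with force points at $-1,-2$, where $a=-(1+\rho_1)\lambda-\pi\chi$ and $b=-(1+\rho_1+\rho_2)\lambda-\pi\chi$. The inequalities $a\le-\lambda$ and $b\ge\lambda$ translate to $\rho_1\ge\tfrac{\kappa}{2}-2$ and $\rho_1+\rho_2\le\tfrac{\kappa}{2}-4$, and then \cite[Lemma~15]{DUB_DUAL} gives that the image curve first exits $\h$ exactly at $-2$. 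Pulling back, $\eta$ first exits $\strip$ at $z_0$ and nowhere else on $\striptop$. The piecewise-constant extension is immediate (extra force points with the same partial-sum inequalities), and the $\SLE_{\kappa'}$ version follows by the $\lambda\mapsto\lambda'$, $\chi\mapsto-\chi$ substitution.
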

\begin{proof}
Let $\psi \colon \strip \to \h$ be the conformal transformation which sends $0$ to $0$, $-\infty$ to $-1$, and $z_0$ to $-2$.  Applying the transformation rule (and analogously to Figure~\ref{fig::criticalforintersection}), we see that $\psi(\eta) \sim \SLE_\kappa(\rho_1,\rho_2)$ where the weights $\rho_1,\rho_2$ are located at the force points $-1,-2$, respectively, and are determined from $a,b$ by the equations
\begin{align*}
a  = -(1+\rho_1)\lambda - \chi\pi,\quad
b  = -(1+\rho_1+\rho_2)\lambda - \chi\pi.
\end{align*}
The condition $a \leq -\lambda$ implies $\rho_1 \geq \tfrac{\kappa}{2}-2$ and that $b \geq \lambda$ gives $\rho_1+\rho_2 \leq \tfrac{\kappa}{2}-4$.  Consequently, it follows from \cite[Lemma~15]{DUB_DUAL} that $\psi(\eta)$ first exits $\h$ at $-2$, which is to say that $\eta$ first exits $\strip$ at $z_0$.
\end{proof}

\begin{figure}[h!]
\begin{center}
\includegraphics[scale=0.85]{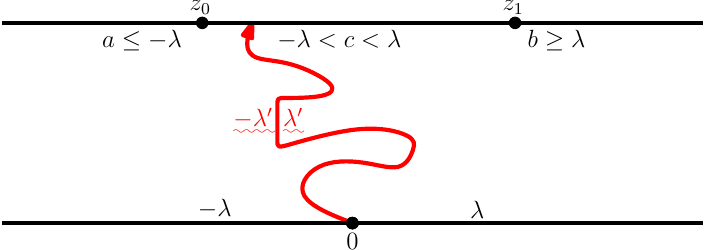}
\caption{\label{fig::hittinginterval} In the same setting as Figure~\ref{fig::hittingsinglepoint} but with an additional interval of height $c \in (-\lambda, \lambda)$, the path first hits the upper boundary in this middle interval.  The same result holds when the constants $a,b$ are replaced by piecewise constant functions which each take on a finite number of values which do not exceed $-\lambda$ and are at least $\lambda$ in their respective intervals and the boundary data on $\stripbot$ is piecewise constant, changes a finite number of times, and is at most $ - \lambda + \pi \chi$ to the left of $0$ and is at least $\lambda - \pi \chi$ to the right of $0$ (see Remark~\ref{rem::boundary_data}).  The same statement holds in the setting of counterflow lines provided $\lambda$ is replaced by $\lambda'$ and $\chi$ is replaced by $-\chi$ (see Remark~\ref{rem::counterflow_lines_boundary_data}).}
\end{center}
\end{figure}

\begin{lemma}
\label{lem::hitting_interval}
Suppose that $h$ is a GFF on $\strip$ whose boundary data is as depicted in Figure~\ref{fig::hittinginterval}.  Let $z_0,z_1$ be the points which separate $\striptop$ into the segments where the boundary data changes from $a$ to $c$ and $c$ to $b$, respectively.  Then the flow line $\eta$ of $h$ almost surely exits $\strip$ in $[z_0,z_1]$ without otherwise hitting $\striptop$.  This holds more generally when the boundary data of $h$ is piecewise constant, changes a finite number of times, is at most $-\lambda$ to the left of $z_0$, at least $\lambda$ to the right of $z_0$, at most $-\lambda+\pi \chi$ to the left of $0$, and at least $\lambda-\pi \chi$ to the right of $0$ (see Remark~\ref{rem::boundary_data}).  Moreover, this result holds when $\eta$ is replaced by a counterflow line provided $\lambda$ is replaced by $\lambda'$ and $\chi$ is replaced by $-\chi$ (see Remark~\ref{rem::counterflow_lines_boundary_data}).
\end{lemma}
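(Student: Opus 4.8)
The plan is to follow the same strategy as in the proofs of Lemmas~\ref{lem::hitting_and_bouncing} and~\ref{lem::hitting_single_point}: transfer the problem to $\h$ by a conformal map and invoke \cite[Lemma 15]{DUB_DUAL}. Let $\psi \colon \strip \to \h$ be a conformal map sending the starting point $0$ of $\eta$ to $0$ and the right end $+\infty$ of the strip to $\infty$. Then $\psi(\striptop)$ is the interval $(-\infty, \psi(-\infty)) \subseteq \R$, the images $p_0 := \psi(z_0)$ and $p_1 := \psi(z_1)$ satisfy $p_1 < p_0 < \psi(-\infty) < 0$, and $\psi([z_0,z_1]) = [p_1,p_0]$. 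By the transformation rule~\eqref{eqn::ac_eq_rel}, $\psi(\eta)$ is an $\SLE_\kappa(\ul{\rho})$ process in $\h$ from $0$ to $\infty$ whose left force points are, reading from $0$ leftward: those arising from the (piecewise constant) boundary data of $h$ on $\stripbot$ to the left of $0$, then the corner point $\psi(-\infty)$, then $p_0$, then $p_1$; its right force points all lie in $\R_{>0}$ and arise from the data on $\stripbot$ to the right of $0$. By Remark~\ref{rem::boundary_data}, $\eta$ is a.s.\ continuous until it accumulates in $\striptop$, and a.s.\ accumulates in $\striptop$ or at $\pm\infty$ before hitting $\stripbot$ after time $0$; so it suffices to show that $\eta$ a.s.\ accumulates in $\striptop$ (rather than at $\pm\infty$) and that its first accumulation point there lies in $[z_0,z_1]$.

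The next step is to translate the hypotheses on the boundary heights into conditions on the partial sums $\sigma_j := \sum_{i \le j} \rho^{i,L}$ of the left weights, remembering the $\pi\chi$ winding correction that appears when passing between $\stripbot$ and $\striptop$ (as in Figure~\ref{fig::criticalforintersection}, where the $\striptop$-height $a$ corresponds to the half-plane boundary value $-\lambda(1+\rho)-\pi\chi$). A short computation using $\pi\chi = (2-\tfrac{\kappa}{2})\lambda$ shows: the assumption that $h$ on $\stripbot$ is at most $-\lambda+\pi\chi$ to the left of $0$ together with $a \le -\lambda$ forces $\sigma_j \ge \tfrac{\kappa}{2}-2$ for every $j$ up to and including the index of the force point $p_0$; the assumption $c \in (-\lambda,\lambda)$ forces $\sigma_j \in (\tfrac{\kappa}{2}-4,\tfrac{\kappa}{2}-2)$ for $j$ equal to the index of $p_0$; and $b \ge \lambda$ forces $\sigma_j \le \tfrac{\kappa}{2}-4$ for $j$ equal to the index of $p_1$. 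Feeding these into \cite[Lemma 15]{DUB_DUAL} as in the proof of Lemma~\ref{lem::hitting_single_point}, the first left force point disconnected by $\psi(\eta)$ is the one at $p_1$, disconnected by $\psi(\eta)$ first hitting $\R$ at a point of $[p_1,p_0] = \psi([z_0,z_1])$: it cannot hit $\R_{<0}$ to the right of $p_0$ (that would need some earlier $\sigma_j < \tfrac{\kappa}{2}-2$), and it cannot reach $\R_{<0}$ strictly to the left of $p_1$ (since $\sigma$ has not dropped to $\le \tfrac{\kappa}{2}-4$ at $p_0$, so $\psi(\eta)$ neither accumulates at $p_0$ nor passes beyond $p_1$ before hitting). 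The same conditions rule out escape to the ends of the strip: $\psi(\eta) \not\to \infty$ because $\sigma$ does drop below $\tfrac{\kappa}{2}-2$ at $p_0$, so a left force point is a.s.\ disconnected; and $\psi(\eta)$ does not accumulate at $\psi(-\infty)$ (which would mean $\eta$ accumulates at $-\infty$) because $\sigma \ge \tfrac{\kappa}{2}-2$ there. Pulling back, $\eta$ a.s.\ first accumulates on $\striptop$ at a point of $[z_0,z_1]$.

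The passage from the constant-height figure to the general piecewise-constant statement uses the same point: \cite[Lemma 15]{DUB_DUAL} constrains only the first disconnected force point, so only the ranges of the partial sums $\sigma_j$ matter, and the stated inequalities on the (possibly piecewise constant) boundary data pin these ranges down exactly; the continuity of $\eta$ up to the accumulation time is supplied by Remark~\ref{rem::boundary_data} together with absolute continuity of the GFF (Proposition~\ref{prop::gff_abs_continuity}). The counterflow-line version is identical after the substitutions $\lambda \mapsto \lambda'$ and $\chi \mapsto -\chi$, as in Remark~\ref{rem::counterflow_lines_boundary_data}. I expect the main obstacle to be precisely this ``first disconnected force point'' restriction of \cite[Lemma 15]{DUB_DUAL}: one must verify that none of the force points coming from the boundary data on $\stripbot$, nor the corner force point at $\psi(-\infty)$, nor the force point $p_0$, can be disconnected or hit before the force point $p_1$, and this requires controlling all of the intermediate partial sums rather than just the three ``critical'' ones --- which is exactly why the hypotheses impose the uniform bounds $-\lambda+\pi\chi$ and $\lambda-\pi\chi$ on $\stripbot$ and the bounds $a \le -\lambda$ and $b \ge \lambda$ on the two tails of $\striptop$.
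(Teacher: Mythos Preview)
Your proof is correct and follows exactly the same strategy as the paper's: conformally map $\strip$ to $\h$ (the paper normalizes so that $\psi(-\infty)=-1$, $\psi(z_0)=-2$, $\psi(z_1)=y_1$, and treats only the constant-$a,c,b$ figure explicitly), read off the partial sums of the left $\rho$-weights from the boundary heights with the $\pi\chi$ winding correction on $\striptop$, and invoke \cite[Lemma~15]{DUB_DUAL} to conclude that the first exit lies in $[\psi(z_1),\psi(z_0)]$. There is a small indexing slip---you assert both $\sigma_j \ge \tfrac{\kappa}{2}-2$ and $\sigma_j \in (\tfrac{\kappa}{2}-4,\tfrac{\kappa}{2}-2)$ for ``$j$ equal to the index of $p_0$''---but the intended meaning (the former holds for $j$ up to the force point just \emph{before} $p_0$, the latter at $p_0$ itself) is clear and the conclusion is unaffected.
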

\begin{proof}
Let $\psi \colon \strip \to \h$ be the conformal transformation which sends $0$ to $0$, $-\infty$ to $-1$, $z_0$ to $-2$, and let $y_1 = \psi(z_1) < -2$.  Applying the transformation rule (analogously to Figure~\ref{fig::criticalforintersection}), we see that $\psi(\eta) \sim \SLE_\kappa(\rho_1,\rho_2,\rho_3)$ where the weights $\rho_1,\rho_2,\rho_3$ are located at the force points $-1,-2,y_1$, respectively, and are determined from $a,b,c$ by the equations
\begin{align*}
a  = -(1+\rho_1)&\lambda - \chi\pi,\quad
b  = -(1+\rho_1+\rho_2)\lambda - \chi\pi,\\
c  =& -(1+\rho_1+\rho_2+\rho_3)\lambda - \chi\pi.
\end{align*}
As in the proof of the previous lemma, the hypothesis on $a$ implies $\rho_1 \geq \tfrac{\kappa}{2}-2$, and that $b \geq \lambda$ gives $\rho_1+\rho_2+\rho_3 \leq \tfrac{\kappa}{2}-4$.  Finally, that $c \in (-\lambda,\lambda)$ gives $\rho_1+\rho_2 \in (\tfrac{\kappa}{2}-4,\tfrac{\kappa}{2}-2)$.  Consequently, it is easy to see from \cite[Lemma~15]{DUB_DUAL} that $\psi(\eta)$ first exits $\h$ in $[y_1,-2]$, which is to say that $\eta$ first exits $\strip$ in $[z_0,z_1]$.
\end{proof}

\subsection{A special case of Theorem~\ref{thm::coupling_uniqueness}}
\label{subsec::uniqueness_non_boundary_intersecting}

In this subsection, we will prove Theorem~\ref{thm::coupling_uniqueness} for a certain class of boundary data in which the flow line is non-boundary intersecting.

\begin{figure}[h!]
\begin{center}
\subfigure[The boundary data for a flow line and counterflow line.]{\label{subfig::dubedat1}
\includegraphics[scale=0.85,page=1]{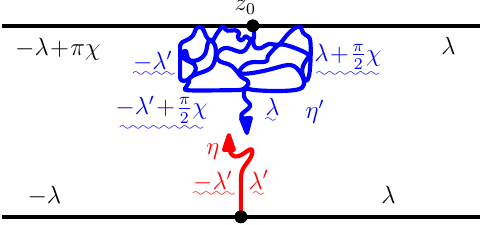}}
\hspace{0.01\textwidth}
\subfigure[Subtracting $\frac{\pi}{2}\chi$ from the boundary data on the left makes the upper boundary conditions symmetric.]{\label{subfig::dubedat2}
\includegraphics[scale=0.85,page=3]{figures/dubedat.pdf}}
\caption{\label{fig::dubedat} The path $\eta$ represents a flow line and $\eta'$ a counterflow line in the reverse direction, once we subtract $\frac{\pi}{2} \chi$ from the field.   The boundary data on the vertical segment of $\eta$ matches that of the left of the vertical segment of $\eta'$, suggesting that $\eta$ could ``merge into'' the left boundary of $\eta'$.
Since $-\lambda' + \frac{\pi}{2} \chi \in (-\lambda, \lambda)$, Figure~\ref{fig::hittingrange} shows $\eta$ must hit the upper boundary of the strip somewhere on the left (not the right) semi-infinite interval.  Using Figure~\ref{fig::hittingsinglepoint}, we see that $\eta'$ must first hit the lower boundary of the strip at the lower dot.  This is because (subtracting $\frac{\pi}{2} \chi$ from everything) we find that since $-\lambda - \frac{\pi}{2} \chi \leq  -\lambda'$ (left side) and $\lambda - \frac{\pi}{2} \chi = \lambda'$ (right side).  If we grow $\eta$ and $\eta'$ (up to some time before they intersect) then after a coordinate change, we can map the complement of these paths back to the original strip so that the boundary conditions are the same as they originally were (though the locations of the dots may be translated).  Thus $\eta'$ a.s.\ hits each of a countable dense set of points along $\eta$ (up until the first time it hits the upper interval) and $\eta$ always first hits to the left of the tip of $\eta'([0,t])$ (for some countable dense set of $t$ values).}
\end{center}
\end{figure}

\begin{lemma}
\label{lem::hit_in_order}
Suppose we are in the setting of Figure~\ref{fig::dubedat}.  That is, we fix $\kappa \in (0,4)$, let $h$ be a GFF on $\strip$ whose boundary data is as depicted in Figure~\ref{subfig::dubedat1}, $\eta$ the flow line of $h$ starting at $0$, $\eta'$ the counterflow line of $h-\tfrac{\pi}{2}\chi$ starting at $z_0$, and assume that $\eta,\eta',h$ are coupled together so that $\eta$ and $\eta'$ are conditionally independent given $h$.  Let $\tau$ be any stopping time for $\eta$.  Then $\eta'$ almost surely first hits  $\stripbot \cup \eta([0,\tau])$ at $\eta(\tau)$.  In particular, $\eta'$ contains $\eta$ and hits the points of $\eta$ in reverse chronological order: if $s < t$ then $\eta'$ hits $\eta(t)$ before $\eta(s)$.
\end{lemma}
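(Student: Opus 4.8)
\emph{Proof proposal for Lemma~\ref{lem::hit_in_order}.}

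The plan is to condition on an initial segment of $\eta$ and show that the boundary data left over for $\eta'$ in the complement forces $\eta'$ to run from $z_0$ directly to the tip of that segment. Fix a stopping time $\tau$ for $\eta$; we may assume $\tau$ occurs before $\eta$ first accumulates on $\striptop$, so that $\eta$ is continuous up to time $\tau$ by Remark~\ref{rem::continuity_non_boundary} and $D_\tau := \strip \setminus \eta([0,\tau])$ is simply connected, the general case following by applying the result to earlier stopping times together with continuity. Since $\eta$ is a continuous local set of $h$ in the coupling of Theorem~\ref{thm::coupling_existence}, the conditional law of $h$ given $\eta([0,\tau])$ is that of a GFF on $D_\tau$ with boundary data equal to the data of Figure~\ref{subfig::dubedat1} on $\partial\strip$ and equal to the flow-line values $\pm\lambda' + \chi\cdot(\text{winding})$ on the two sides of $\eta([0,\tau])$ (Proposition~\ref{gff::prop::markov} and the theory of local sets). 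Because $\eta'$ is conditionally independent of $\eta$ given $h$ and is by definition the counterflow line of $h - \tfrac{\pi}{2}\chi$, it follows, using also Proposition~\ref{gff::prop::cond_union_local} and the transformation rule \eqref{eqn::ac_eq_rel}, that the conditional law of $\eta'$ given $\eta([0,\tau])$ is exactly that of the counterflow line of $(h - \tfrac{\pi}{2}\chi)|_{D_\tau}$ started at $z_0$.

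Next I would conformally map $D_\tau$ onto $\strip$ by a map $\psi$ sending $z_0$ to a point of $\striptop$ and the tip $\eta(\tau)$ to a point of $\stripbot$, and read off the boundary data of the image counterflow line via \eqref{eqn::ac_eq_rel}. The key point is that the ``lower'' boundary of $D_\tau$ consists of the two sides of $\eta([0,\tau])$ together with the two half-lines of $\stripbot$, and that after the $-\tfrac{\pi}{2}\chi$ shift the flow-line values along $\eta$ become precisely the threshold values appearing in Lemma~\ref{lem::hitting_single_point}; this is the content of the identities \eqref{eqn::deflist}--\eqref{eqn::fullrevolution}, in particular $\lambda' = \lambda - \tfrac{\pi}{2}\chi$ and the fact that a quarter turn changes heights by $\tfrac{\pi}{2}\chi$, and is exactly the matching already indicated in Figure~\ref{fig::dubedat}. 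Meanwhile the $\partial\strip$ data of Figure~\ref{subfig::dubedat1} is chosen so that the remaining hypotheses of Lemma~\ref{lem::hitting_single_point} (and of Remark~\ref{rem::counterflow_lines_boundary_data}) hold. Applying the counterflow-line version of Lemma~\ref{lem::hitting_single_point}, with the roles of $\striptop$ and $\stripbot$ interchanged --- i.e.\ after composing with the conformal involution $w \mapsto i - w$ of $\strip$, which accounts for the orientation reversal --- then shows that the image of $\eta'$ first exits $\strip$ at $\psi(\eta(\tau))$ and at no earlier boundary point. Undoing $\psi$ gives the first assertion: $\eta'$ almost surely first exits $D_\tau$ at $\eta(\tau)$.

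For the remaining statements, apply the first assertion simultaneously to every $\tau$ in a fixed countable dense subset of $[0,\infty)$ (a countable union of null sets is null). For such $\tau$, until $\eta'$ exits $D_\tau$ it avoids all of $\eta([0,\tau])$, so $\eta'$ reaches $\eta(\tau)$ strictly before it reaches $\eta(s)$ for any $s < \tau$ in the dense set; letting $s,\tau$ range over the dense set yields the reverse-chronological ordering. Since $\eta(\tau) \in \eta'$ for a dense set of $\tau$, and both $\eta$ and $\eta'$ are continuous curves --- the latter by \cite{RS05,LSW04} together with absolute continuity in the non-boundary-intersecting regime, cf.\ Remark~\ref{rem::continuity_non_boundary} --- we conclude that $\eta' \supseteq \eta$, i.e.\ $\eta'$ contains $\eta$.

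The main obstacle is the second step: one must keep careful track of the winding contributions along $\eta$ and along $\partial\strip$, of the additive $-\tfrac{\pi}{2}\chi$ shift, and of the constant $\arg$-correction and relocation of the marked points produced by $\psi$ and by the reflection $w \mapsto i - w$, and then verify that on each connected piece of the ``lower'' boundary of the mapped domain the data lies on the correct side of $-\lambda'$ and $\lambda'$ demanded by Lemma~\ref{lem::hitting_single_point}. A secondary point is the bookkeeping of the conditioning --- that conditional independence of $\eta$ and $\eta'$ given $h$ together with Theorem~\ref{thm::coupling_existence} genuinely identifies the conditional law of $\eta'$ given $\eta([0,\tau])$ as a counterflow line in $D_\tau$ --- and the routine upgrade from ``a.s.\ for each fixed $\tau$'' to the pathwise statement.
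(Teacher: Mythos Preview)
Your proposal is correct and follows essentially the same route as the paper: condition on $\eta([0,\tau])$, conformally map $\strip\setminus\eta([0,\tau])$ back to $\strip$, check that the resulting boundary data for $h-\tfrac{\pi}{2}\chi$ on the lower boundary satisfies the $\pm\lambda'$ thresholds of Lemma~\ref{lem::hitting_single_point} (the paper makes this explicit: $-\lambda-\tfrac{\pi}{2}\chi\le-\lambda'$ on the left and $\lambda-\tfrac{\pi}{2}\chi=\lambda'$ on the right), and then pass to a countable dense set of stopping times. The one place where the paper is more precise than your sketch is in identifying the conditional law of $\eta'$ in $D_\tau$ as an $\SLE_{\kappa'}(\ul{\rho})$: rather than relying directly on conditional independence, it invokes Proposition~\ref{gff::prop::cond_union_mean} for the conditional mean, Proposition~\ref{prop::cont_driving_function} for the continuity of the Loewner driving function in the slit domain, and then Theorem~\ref{thm::martingale} to pin down the law---this is exactly the ``bookkeeping'' you flagged as a secondary point.
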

\begin{proof}
Conditional on the realization of $\eta|_{[0,\tau]}$, the field is equal in distribution to a GFF whose boundary data is $-\lambda'+\chi \cdot {\rm winding}$ on the left side of $\eta([0,\tau])$ and $\lambda'+\chi \cdot {\rm winding}$ on the right side of $\eta([0,\tau])$ (see Figure~\ref{fig::conditional_boundary_data}).  We note that $\eta'$ viewed as a path in $\strip \setminus \eta([0,\tau])$ has a continuous Loewner driving function up until it first hits $\eta([0,\tau])$ (see, for example Proposition~\ref{prop::cont_driving_function}).  On the event that $t$ occurs before the first time that $\eta'$ accumulates in $\stripbot \cup \eta([0,\tau])$, Proposition~\ref{gff::prop::cond_union_local} and Proposition~\ref{gff::prop::cond_union_mean} together tell us that the boundary data for the conditional law of $h$ given both $\eta|_{[0,\tau]}$ and $\eta'|_{[0,t]}$ in the unbounded component of $\strip \setminus (\eta([0,\tau]) \cup \eta'([0,t]))$ is that of a GFF whose boundary conditions along $\stripbot \cup \eta([0,\tau])$ (resp.\ $\striptop \cup \eta'([0,t])$) agree with those of the conditional law of $h$ given just $\eta|_{[0,\tau]}$ (resp.\ $\eta'|_{[0,t]}$) alone.  This allows us to use Theorem~\ref{thm::martingale} to compute the conditional law of $\eta'$ given $\eta([0,\tau])$.  In particular, we see that conformally mapping $\strip \setminus \eta([0,\tau])$ back to $\strip$ with $\eta(\tau)$ sent to $0$ and $\pm \infty$ fixed leaves us in the setting of the lemma with $\tau = 0$.  Thus we just need to argue that $\eta'$ first hits $\stripbot$ at $0$.  The boundary data for $\eta'$ on $\stripbot$ is (as described in Figure~\ref{subfig::dubedat2})
\[-\lambda-\frac{\pi}{2} \chi \leq -\lambda' \quad\text{on}\quad  (-\infty,0) \quad\text{and}\quad \lambda-\tfrac{\pi}{2} \chi = \lambda' \quad\text{on}\quad (0,\infty);\]
recall~\eqref{eqn::fullrevolution}.
Consequently, Lemma~\ref{lem::hitting_single_point} implies $\eta'$ almost surely first hits $\stripbot$ at $0$, as desired.  Since $\eta$ up until the first time it hits $\striptop$ is almost surely continuous (recall Remark~\ref{rem::continuity_non_boundary}), it follows that $\eta'$ almost surely contains $\eta$ by applying this argument to a countable dense set of stopping times (e.g., the positive rationals).  It is also easy to see from this that $\eta'$ hits the points of $\eta$ in reverse chronological order (recall Figure~\ref{fig::dubedat}).
\end{proof}

\begin{remark}
\label{rem::cfl_hitting_general_bd}
The proof of Lemma~\ref{lem::hit_in_order} has two inputs:
\begin{enumerate}[(a)]
\item\label{lst::cont} the continuity of $\eta$ up until it first accumulates in $\striptop$ and that
\item\label{lst::hitting_single_point} for every $\eta$ stopping time $\tau$, $\eta'$ almost surely first exits $\strip \setminus \eta([0,\tau])$ at $\eta(\tau)$.
\end{enumerate}
Condition~\eqref{lst::cont} holds more generally when the boundary data on $\stripbot$ is piecewise constant, changes a finite number of times, and is at most $-\lambda+\pi \chi$ to the left of $0$ and at least $\lambda - \pi \chi$ to the right of zero.  This ensures that $\eta$ almost surely does not hit $\stripbot$ after starting, so is almost surely continuous since its law is mutually absolutely continuous with respect to $\SLE_\kappa$ ($\rho \equiv 0$) by the Girsanov theorem (recall Remark~\ref{rem::continuity_non_boundary}).  Condition~\eqref{lst::hitting_single_point} holds when the boundary data of $h-\tfrac{\pi}{2} \chi$ on $\striptop$ is piecewise constant, changes a finite number of times, and is at most $-\lambda'-\pi\chi$ to the left of $z_0$ and at least $\lambda'+\pi\chi$ to the right of $z_0$ (or we have $-\lambda',\lambda'$ boundary data as in the statement of Lemma~\ref{lem::hit_in_order}; note that the reason for the sign is that $\chi=\chi(\kappa) =-\chi(\kappa')$).  This also implies the continuity of $\eta'$ up until it hits $\stripbot$, arguing using absolute continuity as before (this will be important for a more general version of Proposition~\ref{prop::duality}).  Additionally, we need that the boundary data of $h-\tfrac{\pi}{2}\chi$ on $\stripbot$ is piecewise constant, changes a finite number of times, is not more than $-\lambda'$ to the left of $0$, and is at least $\lambda'$ to the right of $0$.  In short, Lemma~\ref{lem::hit_in_order} holds when the boundary data of $h$ is ``large and negative'' to the left of $0$ and $z_0$ and ``large and positive'' to the right of $0$ and $z_0$.
\end{remark}

\begin{proposition}
\label{prop::duality}
Suppose we are in the setting of Figure~\ref{fig::dubedat}.  That is, we fix $\kappa \in (0,4)$, let $h$ be a GFF on $\strip$ whose boundary data is as depicted in Figure~\ref{subfig::dubedat1}, let $\eta$ be the flow line of $h$ starting at $0$, $\eta'$ the counterflow line of $h-\tfrac{\pi}{2}\chi$ starting at $z_0$, and assume that $\eta,\eta',h$ are coupled together so that $\eta,\eta'$ are conditionally independent given $h$.  Almost surely, $\eta$ is equal to the left boundary $\eta'$.
\end{proposition}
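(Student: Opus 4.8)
The plan is to upgrade the containment $\eta\subseteq\eta'$ from Lemma~\ref{lem::hit_in_order} to the statement that $\eta$ is \emph{precisely} the left boundary of $\eta'$, by combining the reverse-order information from that lemma with a topological argument. Recall from Lemma~\ref{lem::hit_in_order} and its proof that, with $\eta$, $\eta'$, $h$ coupled as above: $\eta$ is almost surely a simple continuous curve from $0$ up until it first accumulates on $\striptop$ (by Remark~\ref{rem::continuity_non_boundary}, since the boundary data of $h$ on $\stripbot$ is large and negative to the left of $0$ and large and positive to the right of $0$, the law of $\eta$ is mutually absolutely continuous with that of an ordinary $\SLE_\kappa$, $\kappa\in(0,4)$, hence $\eta$ is simple); $\eta$ accumulates on $\striptop$ at a point $w_0$ lying strictly to the left of $z_0$ (see the caption of Figure~\ref{fig::dubedat}); $\eta'\supseteq\eta$; and $\eta'$ visits the points of $\eta$ in reverse chronological order.

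First I would show that $\eta$ stays weakly to the left of $\eta'$. Fix an $\eta'$-stopping time $t$ before $\eta'$ reaches $\stripbot$, condition on $\eta'([0,t])$, and conformally map $\strip\setminus\eta'([0,t])$ back to $\strip$ fixing $0$ and $\pm\infty$ and sending the tip $\eta'(t)$ to a point of $\striptop$. As in the proof of Lemma~\ref{lem::hit_in_order}, the image of $h$ is a GFF whose boundary data is again of the ``large and negative to the left, large and positive to the right'' type on $\stripbot$ and which, along the two sides of the image of $\eta'([0,t])$, is arranged (using $\lambda'=\lambda-\tfrac{\pi}{2}\chi$ and the $-\tfrac{\pi}{2}\chi$ shift between $h$ and $h-\tfrac{\pi}{2}\chi$) exactly so that Lemma~\ref{lem::hitting_single_point} applies: the flow line $\eta$ first meets $\eta'([0,t])$ at the tip $\eta'(t)$, and moreover continues from $\eta'(t)$ into the region lying to the left of $\eta'([0,t])$. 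Together with the reverse-order property — which forces $\eta'$ to hit the point where $\eta$ first meets $\eta'([0,t])$ exactly at time $t$ — this shows $\eta$ lies weakly to the left of $\eta'([0,t])$ for every $t$ in a countable dense set of times, hence for all $t$ by continuity of $\eta$ and $\eta'$.

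I would then conclude by a topological argument. Run $\eta$ until it first hits $\striptop$; it is then a simple arc from $0\in\stripbot$ to $w_0\in\striptop$, dividing $\strip$ into a left Jordan subdomain $C_L$ and a right one $C_R$, and since $w_0$ lies to the left of $z_0$ we have $z_0\in\partial C_R$. By the previous step $\eta'$ is contained in $\overline{C_R}$, so the open connected set $C_L$ is disjoint from $\eta'$ and hence lies in a single connected component $C$ of $\strip\setminus\eta'$. Since $\eta\subseteq\eta'$, every point of $\partial C_L\cap\strip=\eta$ belongs to $\eta'$, so no point of $\strip\setminus\overline{C_L}$ can be joined to $C_L$ within $\strip\setminus\eta'$; therefore $C=C_L$. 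Thus $C_L$ is exactly the connected component of $\strip\setminus\eta'$ bordering $\eta'$ on the left, and its boundary inside $\strip$ — namely $\eta$ — is the left boundary of $\eta'$, which is the assertion of the proposition.

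The step I expect to be the main obstacle is making ``$\eta$ stays weakly to the left of $\eta'$'' fully rigorous: one must track via the conformal maps $\strip\setminus\eta'([0,t])\to\strip$ not merely that $\eta$ meets $\eta'([0,t])$ at the tip but on which \emph{side} it does so, which requires identifying the relevant prime ends and the winding-corrected boundary values precisely, and then controlling the limits as $t$ increases to the time $\eta'$ reaches $\stripbot$ and as $\eta$ approaches its terminal point $w_0$ (ruling out degenerate behavior there). The concluding topological step is then routine, and the remaining ingredients — simplicity of $\eta$, the location of $w_0$ relative to $z_0$, and the reverse-order structure — are all supplied by Lemma~\ref{lem::hit_in_order} and Remark~\ref{rem::continuity_non_boundary}.
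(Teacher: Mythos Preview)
There is a genuine gap. Your key claim --- that Lemma~\ref{lem::hitting_single_point} applies so that $\eta$ first meets $\eta'([0,t])$ at the tip $\eta'(t)$ --- is incorrect. After conformally mapping $\strip\setminus\eta'([0,t])$ back to $\strip$ and applying the winding correction, the $h$-boundary data along the image of the \emph{left} side of $\eta'([0,t])$ (and along the original $\striptop$ to the left of $z_0$) is $-\lambda'+\tfrac{\pi}{2}\chi=-\lambda+\pi\chi$, while on the right side of $\eta'([0,t])$ it is $\lambda'+\tfrac{\pi}{2}\chi=\lambda$. Since $\kappa\in(0,4)$ gives $\pi\chi\in(0,2\lambda)$, the left-side value lies strictly in $(-\lambda,\lambda)$, so the hypothesis ``$\le-\lambda$ to the left of the target point'' in Lemma~\ref{lem::hitting_single_point} fails. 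What actually holds is Lemma~\ref{lem::hitting_interval}: $\eta$ first accumulates on the \emph{left side} of $\eta'([0,t])$ (or on $\striptop$ to the left of $z_0$), not at the tip. Your follow-up assertions --- that $\eta$ ``continues from $\eta'(t)$ into the region to the left'' and that reverse order forces the first meeting to occur at $\eta'$-time exactly $t$ --- both rest on the tip claim and collapse with it.

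Once you know only that $\eta$ first exits on the left side, you still need an argument that $\eta'(\tau')$ lies to the right of $\eta$. The paper supplies this by contradiction: if $\eta'(\tau')$ were strictly to the left of $\eta$, then after its first hit on the left side of $\eta'([0,\tau'])$ the simple curve $\eta$ would have to wrap around $\eta'(\tau')$ and hit the right side, producing times $t_1<t_2<t_3$ with $\eta(t_1),\eta(t_3)\in\eta'([0,\tau'])$ and $\eta(t_2)\notin\eta'([0,\tau'])$, contradicting the reverse-chronological-order statement of Lemma~\ref{lem::hit_in_order} (see Figure~\ref{fig::dub_wrap_around}). Your concluding topological step is fine, but it only becomes available after this wrap-around argument establishes that $\eta'\subseteq\overline{C_R}$.
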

\begin{proof}
Lemma~\ref{lem::hit_in_order} implies that the range of $\eta'$ contains $\eta$.
To complete the proof, we just need to show that $\eta$ is to the left of $\eta'$.  To see this, fix any stopping time $\tau'$ for $\eta'$ such that $\eta'$ almost surely has not hit $\stripbot$.  Then $\eta$ almost surely exits $\strip \setminus \eta'([0,\tau'])$ on the left side of $\eta'([0,\tau'])$ or to the left of $z_0$ on $\striptop$.  Indeed, arguing as in the proof of Lemma~\ref{lem::hit_in_order}, we can conformally map the picture back to $\strip$ to see that it suffices to show that $\eta$ almost surely first hits $\striptop$ to the left of $z_0$.  This, in turn, is a consequence of Lemma~\ref{lem::hitting_interval}.

\begin{figure}[h!]
\begin{center}
\includegraphics[scale=0.85]{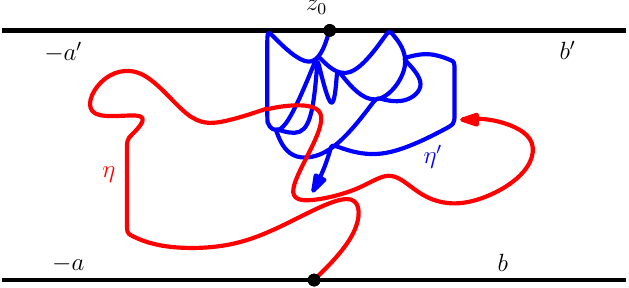}
\caption{\label{fig::dub_wrap_around} Suppose that we have the same setup as Figure~\ref{fig::dubedat}.  Fix an $\eta'$ stopping time $\tau'$.  Then it must be that $\eta'(\tau')$ is to the right of $\eta$.  Indeed, if this were not the case, then after hitting the left side of $\eta'([0,\tau'])$, say at time $\tau$, $\eta$ would have to wrap around $\eta'(\tau')$ and then hit the right side of $\eta'([0,\tau'])$, say at time $\sigma$.  This is a contradiction since $\eta'$ hits all of the points in $\eta$ in reverse chronological order.  In particular, we have that $\eta(\tau),\eta(\sigma) \in \eta'([0,\tau'])$ while there exists $s \in (\tau,\sigma)$ such that $\eta(s) \notin \eta'([0,\tau'])$.  We conclude that $\eta'$ lies to the right of $\eta$ applying this result to a dense collection of stopping times $\tau'$ (e.g., the positive rationals) and using the continuity of $\eta'$.
}
\end{center}
\end{figure}

We will now argue that $\eta'(\tau')$ is to the right of $\eta$.  Let $\tau$ be the first time $\eta$ exits $\strip \setminus \eta'([0,\tau'])$.  There are two possibilities: $\eta(\tau)$ is either in $\striptop$ or in $\eta'([0,\tau'])$.  In the former case we are done, so we shall assume that we are in the latter.  Now, the only way that $\eta'(\tau')$ could be strictly to the left of $\eta$ is if after $\tau$, $\eta$ wraps around $\eta'([0,\tau'])$ and hits its right side.  This implies the existence of times $t_1 < t_2 < t_3$ such that $\eta(t_1),\eta(t_3) \in \eta'([0,\tau'])$ but $\eta(t_2) \notin \eta'([0,\tau'])$.  This is a contradiction since $\eta'$ absorbs the points of $\eta$ in reverse chronological order by Lemma~\ref{lem::hit_in_order}.  The result now follows by taking a countable dense collection of stopping times $\tau'$ (e.g., the positive rationals) and invoking the almost sure continuity of $\eta'$ up until when it first hits $\stripbot$.
\end{proof}

\begin{remark}
\label{rem::duality_general_bc}
In addition to the hypotheses described in Remark~\ref{rem::cfl_hitting_general_bd} (which imply the almost sure continuity of $\eta'$ up until when it first accumulates in $\stripbot$), Proposition~\ref{prop::duality} requires the boundary data of $h$ to be such that $\eta$ almost surely first accumulates in $\striptop$ to the left of $z_0$.  By Lemma~\ref{lem::hitting_single_point}, this means that the boundary data for $h$ on $\striptop$ should be piecewise constant, change only a finite number of times, and be less than $\lambda$ to the left of $z_0$ and be at least $\lambda$ to the right of $z_0$.
\end{remark}

By combining Remark~\ref{rem::cfl_hitting_general_bd} and Remark~\ref{rem::duality_general_bc}, we obtain the following extension of Proposition~\ref{prop::duality}.
\begin{proposition}
\label{prop::duality_many_force_points}
Suppose we are in the setting of Figure~\ref{fig::dubedat} where the boundary data on $\stripbot$ is replaced by any piecewise constant function which changes a finite number of times, does not exceed $-\lambda+\pi \chi$ to the left of $0$, and is at least $\lambda$ to the right of $0$.  Assume furthermore that the boundary data of $h$ on $\striptop$ is piecewise constant, changes a finite number of times, and does not exceed $-\lambda$ to the left of $z_0$ and is at least $\lambda+\pi \chi$ to the right of $z_0$.  Almost surely, the flow line $\eta$ starting from $0$ is equal to the left boundary of the counterflow line $\eta'$ of the field minus $\tfrac{\pi}{2} \chi$ starting from $z_0$.  Here, we assume that $\eta,\eta',h$ are coupled together so that $\eta$ is conditionally independent of $\eta'$ given $h$.
\end{proposition}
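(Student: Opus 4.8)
The plan is to repeat, essentially verbatim, the proofs of Lemma~\ref{lem::hit_in_order} and Proposition~\ref{prop::duality}, with their two structural inputs replaced by the more general statements isolated in Remark~\ref{rem::cfl_hitting_general_bd} and Remark~\ref{rem::duality_general_bc}; the only genuine work is to check that the boundary-data hypotheses in the present statement imply the (slightly different-looking) ones appearing in those remarks. Throughout I would use the identities $\lambda' = \lambda - \tfrac{\pi}{2}\chi$ and $2\pi\chi = (4-\kappa)\lambda$ from \eqref{eqn::fullrevolution}--\eqref{eqn::fullrevolutionrho} (so $\chi > 0$ and $-\lambda + \tfrac{\pi}{2}\chi = -\lambda'$), together with the fact that the boundary data for the counterflow line $\eta'$ is read off from $h - \tfrac{\pi}{2}\chi$.

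First I would establish the analogue of Lemma~\ref{lem::hit_in_order}, for which Remark~\ref{rem::cfl_hitting_general_bd} reduces us to checking two things. Condition~\eqref{lst::cont}, the a.s.\ continuity of $\eta$ up to its first accumulation in $\striptop$, holds by absolute continuity with respect to ordinary $\SLE_\kappa$ (Girsanov, as in Remark~\ref{rem::continuity_non_boundary}) because by hypothesis the boundary data of $h$ on $\stripbot$ is at most $-\lambda + \pi\chi$ left of $0$ and at least $\lambda \geq \lambda - \pi\chi$ right of $0$. Condition~\eqref{lst::hitting_single_point}, that $\eta'$ first exits $\strip \setminus \eta([0,\tau])$ at $\eta(\tau)$ for every $\eta$ stopping time $\tau$, follows from the $\SLE_{\kappa'}$ version of Lemma~\ref{lem::hitting_single_point} once the conformal map used in the proof of Lemma~\ref{lem::hit_in_order} reduces matters to $\tau = 0$: one needs the boundary data of $h - \tfrac{\pi}{2}\chi$ to be at most $-\lambda' - \pi\chi$ (resp.\ at least $\lambda' + \pi\chi$) on the part of $\striptop$ left (resp.\ right) of $z_0$, and at most $-\lambda'$ (resp.\ at least $\lambda'$) on the part of $\stripbot$ left (resp.\ right) of $0$. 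Adding back $\tfrac{\pi}{2}\chi$ and using $\lambda' = \lambda - \tfrac{\pi}{2}\chi$, these become exactly: the boundary data of $h$ is at most $-\lambda$ (resp.\ at least $\lambda + \pi\chi$) on $\striptop$ left (resp.\ right) of $z_0$, and at most $-\lambda + \pi\chi$ (resp.\ at least $\lambda$) on $\stripbot$ left (resp.\ right) of $0$ --- precisely what is assumed. The same bounds, again by Girsanov, give the continuity of $\eta'$ up to its first hit of $\stripbot$, which is what licenses the passage to a countable dense set of stopping times. This yields the generalized Lemma~\ref{lem::hit_in_order}: the range of $\eta'$ contains $\eta$, and $\eta'$ hits the points of $\eta$ in reverse chronological order.

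Next I would show $\eta$ lies to the left of $\eta'$, following the ``wrap around'' argument of Proposition~\ref{prop::duality} (see Figure~\ref{fig::dub_wrap_around}) word for word. Its only additional input is that $\eta$ almost surely first accumulates in $\striptop$ at or to the left of $z_0$; by Remark~\ref{rem::duality_general_bc} and Lemma~\ref{lem::hitting_single_point} this holds once the boundary data of $h$ on $\striptop$ is less than $\lambda$ left of $z_0$ and at least $\lambda$ right of $z_0$, which our hypotheses guarantee ($-\lambda < \lambda$ and $\lambda + \pi\chi > \lambda$). Given an $\eta'$ stopping time $\tau'$ before $\eta'$ hits $\stripbot$, one conformally maps $\strip \setminus \eta'([0,\tau'])$ back to $\strip$ and concludes that $\eta$ first exits either on the left side of $\eta'([0,\tau'])$ or on $\striptop$ left of $z_0$; the scenario in which $\eta$ wraps around $\eta'([0,\tau'])$ and touches its right side is ruled out by the reverse-chronological-order property from the generalized Lemma~\ref{lem::hit_in_order}, exactly as before. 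Taking $\tau'$ over a dense countable set and using the continuity of $\eta'$ up to $\stripbot$ finishes the proof.

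I do not expect a substantial obstacle here: the analytic content is entirely contained in Lemmas~\ref{lem::hitting_and_bouncing}--\ref{lem::hitting_interval}, Theorem~\ref{thm::martingale}, and Propositions~\ref{gff::prop::cond_union_local}--\ref{gff::prop::cond_union_mean}, all already available, so what remains is the bookkeeping above. The one point worth being careful about is that the limiting arguments over dense sets of stopping times rely on the a.s.\ continuity of $\eta$ (up to hitting $\striptop$) and of $\eta'$ (up to hitting $\stripbot$) under merely piecewise constant boundary data with finitely many jumps --- and this is exactly the absolute continuity with respect to $\SLE_\kappa$, resp.\ $\SLE_{\kappa'}$, established via Girsanov in Remark~\ref{rem::continuity_non_boundary} and invoked in Remark~\ref{rem::cfl_hitting_general_bd}.
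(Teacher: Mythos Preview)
Your proposal is correct and is exactly the paper's approach: the paper does not give a separate proof but simply states that the proposition follows ``by combining Remark~\ref{rem::cfl_hitting_general_bd} and Remark~\ref{rem::duality_general_bc},'' and your write-up is precisely the bookkeeping that this combination entails. The arithmetic translating the hypotheses on $h$ into those on $h-\tfrac{\pi}{2}\chi$ via $\lambda'=\lambda-\tfrac{\pi}{2}\chi$ is correct, and the two structural ingredients (reverse-chronological hitting and the wrap-around contradiction) carry over verbatim once those hypotheses are verified.
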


Using Proposition~\ref{prop::duality_many_force_points}, we obtain Theorem~\ref{thm::coupling_uniqueness} in the special case we have only boundary force points with weights $\ul{\rho} = (\ul{\rho}^L,\ul{\rho}^R)$ with $|\ul{\rho}^L| = k$ and $|\ul{\rho}^R| = \ell$ satisfying:
\begin{equation}
\label{eqn::non_boundary_intersecting_rho}
 \begin{split}
 \sum_{i=1}^j \rho^{i,L} \geq \tfrac{\kappa}{2} - 2 \quad&\text{for all}\quad 1 \leq j \leq k \quad\text{and}\\
   \sum_{i=1}^j \rho^{i,R} \geq 0 \quad&\text{for all}\quad 1 \leq j \leq \ell.
   \end{split}
\end{equation}

\begin{proof}[Proof of Theorem~\ref{thm::coupling_uniqueness} for $\kappa \in (0,4)$ assuming~\eqref{eqn::non_boundary_intersecting_rho}]

We suppose that we are in the setting of Figure~\ref{fig::dubedat} but with boundary data satisfying Remark~\ref{rem::cfl_hitting_general_bd} and Remark~\ref{rem::duality_general_bc}.  That is, we consider the Gaussian free field $h$ on the strip $\strip$ with piecewise constant boundary data on $\stripbot$ which changes a finite number of times, is at most $-\lambda+\pi \chi$ on $(-\infty,0)$ and at least $\lambda$ on $(0,\infty)$.  Moreover, we assume that the boundary data of $h$ on $\striptop$ is at most $-\lambda$ to the left of $z_0$ and at least $\lambda+\pi \chi$ to the right of $z_0$.  Then Proposition~\ref{prop::duality_many_force_points} implies that the flow line $\eta$ of $h$ starting at $0$ is equal to the left boundary of the counterflow line $\eta'$ starting at $z_0$ of $h-\tfrac{\pi}{2} \chi$, where $\eta,\eta',h$ are coupled together so that $\eta$ is conditionally independent of $\eta'$ given $h$.  Since $\eta$ and $\eta'$ are conditionally independent given $h$, it follows that $\eta$ is almost surely determined by~$h$.  The result follows since by adjusting the boundary data of $h$, we can arrange so that $\eta \sim \SLE_\kappa(\ul{\rho})$ with any choice of weights $\ul{\rho}$ satisfying~\eqref{eqn::non_boundary_intersecting_rho}.
\end{proof}

At this point in the article, it follows that the flow lines thus considered (almost surely non-boundary intersecting) are deterministic functions of the GFF.  We have not yet shown that the counterflow lines in any setting are deterministic functions of the GFF; this will be shown in Section~\ref{sec::non_boundary_intersecting}.

We finish this section with the following proposition, which gives an alternative proof of the transience of certain non-boundary intersecting $\SLE_\kappa(\ul{\rho})$ processes using duality (see \cite{RS05} for another approach for ordinary $\SLE$).

\begin{proposition}
\label{prop::transience}
Suppose that $\eta \sim \SLE_\kappa(\ul{\rho})$, $\kappa \in (0,4)$, from $0$ to $\infty$ in $\h$ with the weights $\ul{\rho}$ satisfying~\eqref{eqn::non_boundary_intersecting_rho}.  Then $\lim_{t \to \infty} \eta(t) = \infty$ almost surely.
\end{proposition}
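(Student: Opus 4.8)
The plan is to read transience off of the duality established in this section, extending the reduction already used to prove Theorem~\ref{thm::coupling_uniqueness} under \eqref{eqn::non_boundary_intersecting_rho}. First I would move the picture to the strip. Choose a conformal map $\psi\colon\strip\to\h$ with $\psi(0)=0$, with $\psi(-\infty)$ and $\psi(+\infty)$ two real points lying strictly to the left of all the $x^{i,L}$ and strictly to the right of all the $x^{i,R}$, respectively, and put $z_0:=\psi^{-1}(\infty)\in\striptop$. Transforming the field by \eqref{eqn::ac_eq_rel} and pulling $\eta$ back through $\psi$, the law of $\eta$ becomes that of the flow line from $0$ of a GFF $h$ on $\strip$; and exactly as in the proof of Theorem~\ref{thm::coupling_uniqueness}, once one accounts for the $\pm\pi\chi$ winding corrections that $-\chi\arg\psi'$ contributes at the two ends of $\strip$ (using the identities \eqref{eqn::deflist}--\eqref{eqn::fullrevolutionrho}), condition \eqref{eqn::non_boundary_intersecting_rho} is exactly the statement that the boundary data of $h$ falls into the regime of Proposition~\ref{prop::duality_many_force_points}: at most $-\lambda+\pi\chi$ to the left of $0$ and at least $\lambda$ to the right of $0$ on $\stripbot$, and at most $-\lambda$ to the left of $z_0$ and at least $\lambda+\pi\chi$ to the right of $z_0$ on $\striptop$. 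Recall also from Remark~\ref{rem::continuity_non_boundary}, whose hypothesis $\sum_{i=1}^j\rho^{i,q}\geq\tfrac{\kappa}{2}-2$ holds under \eqref{eqn::non_boundary_intersecting_rho} because $\kappa<4$, that $\eta$ is almost surely a continuous curve defined for all $t\geq0$ (there is no continuation threshold) whose law is locally mutually absolutely continuous with that of $\SLE_\kappa$; in particular $\eta$ is almost surely a simple curve.

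Next I would invoke the duality. Let $\eta'$ be the counterflow line of $h-\tfrac{\pi}{2}\chi$ from $z_0$, coupled so that $\eta$ and $\eta'$ are conditionally independent given $h$. By Proposition~\ref{prop::duality_many_force_points} (whose hypotheses hold by the previous paragraph, via Remarks~\ref{rem::cfl_hitting_general_bd} and~\ref{rem::duality_general_bc}), $\eta$ is almost surely the left boundary of $\eta'$; and by Lemma~\ref{lem::hit_in_order}, $\eta'$ is almost surely a continuous curve up until it first hits $\stripbot$, it does so at the point $0=\eta(0)$, its range contains that of $\eta$, and it visits the points of $\eta$ in reverse chronological order: for $0\leq s<t$, the first time $\eta'$ reaches $\eta(t)$ is strictly before the first time it reaches $\eta(s)$.

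It remains to deduce that $\eta(t)\to z_0$ as $t\to\infty$, for then applying $\psi$ (which sends $z_0$ to $\infty$) gives $\eta(t)\to\infty$ in $\h$, as asserted. Given $t_n\uparrow\infty$, taken strictly increasing without loss of generality, let $s_n$ be the first time $\eta'$ reaches $\eta(t_n)$, which is finite since $\eta'$ contains $\eta$ and reaches each $\eta(\tau)$ with $\tau>0$ before it first hits $\stripbot$. By the reverse-order property $(s_n)$ is strictly decreasing, hence $s_n\downarrow s_\infty$ for some finite $s_\infty\geq0$; since $\eta'$ is continuous at $s_\infty$ we get $\eta(t_n)=\eta'(s_n)\to\eta'(s_\infty)=:p$, and as this holds for every such sequence, $\eta(t)\to p$ as $t\to\infty$. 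Finally, by Proposition~\ref{prop::duality_many_force_points} the range of $\eta$ coincides with the left boundary of $\eta'$, which is an arc from $0$ to $z_0$ meeting $\striptop$ only at $z_0$; since $\eta$ is a simple curve with $\eta(0)=0$ and $\eta(t)\to p$, the point $p$ must be the other endpoint of this arc, namely $p=z_0$. (As a cross-check, Lemma~\ref{lem::hitting_single_point}, in the generalized form of Remark~\ref{rem::boundary_data}, already shows that $\eta$ accumulates in $\striptop$ at $z_0$.) I expect this last step to be the main obstacle: one must rule out that $\eta$ oscillates without converging or converges to a finite boundary point --- which is exactly the possible failure of transience in $\h$ --- and this is precisely where the continuity of $\eta'$ up until it first hits $\stripbot$ and the reverse-chronological hitting order from Lemma~\ref{lem::hit_in_order} are essential.
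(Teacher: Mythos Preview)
Your argument is essentially correct and takes a route close in spirit to the paper's, though with a different mechanism for extracting convergence. Both proofs transfer to the strip and invoke Proposition~\ref{prop::duality_many_force_points} together with Lemma~\ref{lem::hit_in_order}. The paper then fixes a stopping time $\tau$ for $\eta$, lets $\tau'$ be the first time $\eta'$ hits $\eta(\tau)$, and argues that $\eta|_{[\tau,\infty)}$ coincides with the left boundary of $\eta'([0,\tau'])$; since $\eta'|_{[0,\tau']}$ is continuous (Remark~\ref{rem::continuity_non_boundary}), this left boundary is locally connected, and from this the paper concludes that $\eta$ is continuous at $z_0$. Your approach instead observes that $\eta(t)=\eta'(\sigma(t))$ where $\sigma(t)$ is the first time $\eta'$ reaches $\eta(t)$, and uses the reverse-chronological hitting to get that $\sigma$ is strictly decreasing; hence $\sigma(t)\downarrow s_\infty$ and $\eta(t)\to\eta'(s_\infty)=:p$ by continuity of $\eta'$. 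This is arguably cleaner, since it shows convergence of $\eta(t)$ directly without passing through local connectedness. (Your exposition would be tighter if you worked with the monotone function $\sigma$ rather than a sequence $t_n$; as written, the line ``as this holds for every such sequence'' hides the fact that $s_\infty$ is the same for all sequences, which is exactly what monotonicity of $\sigma$ gives.)

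There is, however, one genuine gap in your identification $p=z_0$. You assert that ``the left boundary of $\eta'$ is an arc from $0$ to $z_0$'', but this statement about the \emph{completed} counterflow line requires knowing that $\eta'$ reaches $0$ continuously --- i.e., transience of the $\SLE_{\kappa'}(\ul{\rho})$ process $\eta'$ --- which is not available at this point in the paper. (This is precisely why the paper is careful to work only with $\eta'([0,\tau'])$ for $\tau'$ strictly before the terminal time, where continuity follows from Remark~\ref{rem::continuity_non_boundary}.) Fortunately your ``cross-check'' is not a cross-check at all but the actual proof: Lemma~\ref{lem::hitting_single_point} (in the general form of Remark~\ref{rem::boundary_data}) shows that $\eta$ accumulates on $\striptop$ precisely at $z_0$, and since $\eta$ is continuous with $\eta(t)\in\strip$ for all finite $t>0$, any sequence along which $\eta(t_n)\to z_0$ must have $t_n\to\infty$; combined with your convergence $\eta(t)\to p$, this forces $p=z_0$. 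You should promote that step from a parenthetical to the main line of the argument and drop the appeal to the left boundary of the full $\eta'$.
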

\begin{proof}
This is similar to the proof of Theorem~\ref{thm::coupling_uniqueness} for $\kappa \in (0,4)$ assuming that the weights $\ul{\rho}$ satisfy~\eqref{eqn::non_boundary_intersecting_rho}.  Indeed, we can choose the boundary data of a GFF $h$ on $\strip$ so that the flow line $\eta$ of $h$ starting at $0$ is an $\SLE_\kappa(\ul{\rho})$ process for any choice of weights $\ul{\rho}$ satisfying~\eqref{eqn::non_boundary_intersecting_rho}.  Let $\eta'$ be the counterflow line of $h - \tfrac{\pi}{2} \chi$ starting at $z_0$, taken to be conditionally independent of $\eta$ given $h$.  Let $\tau$ be any positive stopping time for $\eta$ such that $\eta$ almost surely has not hit $z_0$ by time $\tau$.  We know from the proof of Lemma~\ref{lem::hit_in_order} that $\eta'$ almost surely first exits $\strip \setminus \eta([0,\tau])$ at $\eta(\tau)$, say at time $\tau'$.  Moreover, it follows from Proposition~\ref{prop::duality_many_force_points} that $\eta|_{[\tau,\infty)}$ is equal to the left boundary of $\eta'([0,\tau'])$ (we know that $\eta$ lies to the left of $\eta'([0,\tau'])$ and that $\eta'$ hits the points of $\eta$ in reverse chronological order).  By Remark~\ref{rem::continuity_non_boundary}, we know that $\eta'|_{[0,\tau']}$ is almost surely continuous which implies that the left boundary of $\eta'([0,\tau'])$ is locally connected.  Therefore the range of $\eta$ is locally connected (we know that $\eta|_{[0,\tau)}$ is locally connected by Remark~\ref{rem::continuity_non_boundary}), hence $\eta$ is continuous even when it hits $z_0$.  Applying a conformal map $\psi \colon \strip \to \h$ completes the proof of the proposition.
\end{proof}

Part of Theorem~\ref{thm::continuity}, which will be proved in Section~\ref{subsec::many_boundary_force_points} for $\kappa \in (0,4)$ and in Section~\ref{subsec::counterflow} for $\kappa' \in (4,\infty)$, is the continuity of general $\SLE_\kappa(\ul{\rho})$ and $\SLE_{\kappa'}(\ul{\rho})$ processes upon hitting their terminal point.  This is equivalent to transience when the terminal point is $\infty$.

\section{The non-boundary-intersecting regime}
\label{sec::non_boundary_intersecting}

This section contains two main results.  First, we will show that the flow lines of the GFF $h$ enjoy the same monotonicity properties as if $h$ were a smooth function: namely, if $\theta_1 < \theta_2$ and $\eta_{\theta_i}$ is the flow line of $h$ with angle $\theta_i$ for $i=1,2$ started at a given boundary point then $\eta_{\theta_1}$ almost surely lies to the right of $\eta_{\theta_2}$ (Proposition~\ref{prop::monotonicity_non_boundary}).  Second, we will show that $\SLE_{16/\kappa}$ for $\kappa \in (0,4)$ can be realized as a so-called ``light cone'' of angle-varying $\SLE_{\kappa}$ flow lines (Proposition~\ref{prop::light_cone_construction}).  The proofs of this section will apply to a certain class of boundary data in which the flow lines are non-boundary-intersecting.  In Section~\ref{sec::uniqueness}, we will extend these results to the setting of general piecewise constant boundary data, in particular in the setting in which the paths may hit the boundary.

\subsection{Monotonicity of flow and counterflow lines}
\label{subsec::monotonicity}

In order to prove our first version of the monotonicity result, it will be more convenient for us to work on the strip $\strip = \R \times (0,1)$.  Throughout, we let $\striptop$ and $\stripbot$ denote the upper and lower boundaries of $\CS$, respectively.  This puts us into a setting in which we can make use of the $\SLE$ duality theory from the previous section.  Assume that $\theta_1 < \theta_2$.  Then we know from Proposition~\ref{prop::duality_many_force_points} that $\eta_{\theta_1}$ is almost surely the left boundary of the counterflow line $\eta_{\theta_1}'$ of $h+(\theta_1-\tfrac{\pi}{2}) \chi$ emanating from the upper boundary $\striptop$ of $\strip$, provided the boundary data of $h$ is chosen appropriately (we will spell out the restrictions on the boundary data in the statements of the results below).  Thus to show that $\eta_{\theta_2}$ passes to the left of $\eta_{\theta_1}$ it suffices to show that $\eta_{\theta_2}$ passes to the left of $\eta_{\theta_1}'$.  This in turn is a consequence of the following proposition, which gives us the range of angles in which a flow line passes either to the left or to the right of a counterflow line:

\begin{figure}[h]
\begin{center}
\subfigure[On the left: $\theta > \tfrac{1}{\chi}(\lambda-\lambda') = \tfrac{\pi}{2}$.]{
\includegraphics[scale=0.85,page=1]{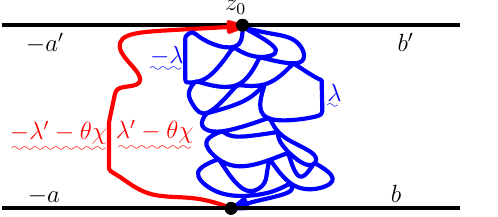}}
\hspace{-0.05\textwidth}
\subfigure[On the right: $\theta < \tfrac{1}{\chi}(\lambda' - \lambda) = -\tfrac{\pi}{2}$.]{
\includegraphics[scale=0.85,page=2]{figures/dual_left.pdf}}
\caption{\label{fig::flow_counterflow_left_right} Consider a GFF on the infinite strip $\strip$ with boundary data as described in the illustrations above.  Let $\eta_\theta$ be the flow line starting at $0$ with initial angle $\theta$ (which corresponds to adding $\theta \chi$ to the boundary data) and $\eta'$ the counterflow line starting at $z_0$.  We assume that $a,b$ are chosen sufficiently large so that both $\eta'$ and $\eta_\theta$ almost surely do not intersect the lower boundary $\stripbot$ of $\strip$ except at $0$ and $a',b'$ are sufficiently large so that $\eta'$ almost surely does not intersect the upper boundary $\striptop$ of $\strip$ except at $z_0$.  We will show in Proposition~\ref{prop::flow_counterflow_left_right} that if $\theta > \tfrac{1}{\chi}(\lambda-\lambda') = \tfrac{\pi}{2}$, then $\eta_\theta$ almost surely passes to the left of $\eta'$ and if $\theta < \tfrac{1}{\chi}(\lambda'-\lambda) = -\tfrac{\pi}{2}$, then $\eta_\theta$ almost surely passes to the right of $\eta'$.  By $\SLE$ duality, this implies the monotonicity of $\eta_\theta$ in $\theta$.}
\end{center}
\end{figure}

\begin{proposition}
\label{prop::flow_counterflow_left_right}
Suppose that $h$ is a GFF on $\strip$ with boundary data as described in Figure~\ref{fig::flow_counterflow_left_right}.  Assume $a',b' \geq \lambda'+\pi \chi$ and $a,b \geq \lambda'$.  Let $\eta'$ be the counterflow line of $h$ starting at $z_0$.  Fix $\theta$ such that
\begin{equation}
\label{eqn::angle_assump_flow_counterflow}
 \frac{\lambda-\pi \chi-b}{\chi} \leq \theta \leq \frac{a+\pi \chi-\lambda}{\chi}
\end{equation}
and let $\eta_\theta$ be the flow line of $h+\theta \chi$ starting from $0$.  If $\theta > \tfrac{1}{\chi}(\lambda-\lambda') = \tfrac{\pi}{2}$, then $\eta_\theta$ almost surely passes to the left of $\eta'$ and if $\theta < \tfrac{1}{\chi}(\lambda'-\lambda) = -\tfrac{\pi}{2}$ then $\eta_\theta$ almost surely passes to the right of $\eta'$.
\end{proposition}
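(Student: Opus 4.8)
The plan is to prove the case $\theta > \tfrac{1}{\chi}(\lambda-\lambda') = \tfrac{\pi}{2}$; the case $\theta < -\tfrac{\pi}{2}$ will follow by reflecting $\strip$ through the vertical line through $0$ and $z_0$, which negates $\theta$ and interchanges ``left'' and ``right''. First I would record the needed continuity facts. By \eqref{eqn::angle_assump_flow_counterflow}, the boundary data of $h+\theta\chi$ on $\stripbot$ is at most $-\lambda+\pi\chi$ to the left of $0$ and at least $\lambda-\pi\chi$ to the right of $0$, so by Remark~\ref{rem::continuity_non_boundary} and the Girsanov theorem the flow line $\eta_\theta$ is a.s.\ a continuous path which does not return to $\stripbot$; and the hypotheses $a,b\geq\lambda'$, $a',b'\geq\lambda'+\pi\chi$ similarly guarantee, via Remark~\ref{rem::continuity_non_boundary} and Remark~\ref{rem::counterflow_lines_boundary_data}, that $\eta'$ is a.s.\ continuous and meets $\partial\strip$ only at $z_0$ up until the first time it hits $\stripbot$.

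Next I would run the argument from the proof of Lemma~\ref{lem::hit_in_order}. Fix a stopping time $\tau'$ for $\eta'$ occurring strictly before $\eta'$ first hits $\stripbot$, and condition on $\eta'([0,\tau'])$. Since $\eta_\theta$ and $\eta'$ are conditionally independent given $h$, Theorem~\ref{thm::martingale} together with Proposition~\ref{gff::prop::cond_union_local} and Proposition~\ref{gff::prop::cond_union_mean} shows that, run until it first hits $\eta'([0,\tau'])$, the curve $\eta_\theta$ has the law of the angle-$\theta$ flow line of a GFF on the simply connected domain $\strip\setminus\eta'([0,\tau'])$ whose boundary data agrees with that of $h+\theta\chi$ on $\partial\strip$ and equals $-\lambda'+\theta\chi$ (resp.\ $\lambda'+\theta\chi$) plus the winding correction $\chi\cdot\mathrm{winding}$ on the left (resp.\ right) side of $\eta'([0,\tau'])$. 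Applying a conformal map $\strip\setminus\eta'([0,\tau'])\to\strip$ fixing $0$ and $\pm\infty$ and the transformation rule \eqref{eqn::ac_eq_rel}, the image is a GFF on $\strip$ whose data on $\stripbot$ is as above and whose data on $\striptop$, read from right to left, is $b'+\theta\chi$, then $\lambda'+\theta\chi$ (image of the right side of $\eta'([0,\tau'])$), then $-\lambda'+\theta\chi$ (image of the left side), then $-a'+\theta\chi$, all up to winding. The key point is that $\theta>\tfrac{\pi}{2}$ forces $\lambda'+\theta\chi>\lambda$ (recall $\lambda'=\lambda-\tfrac{\pi}{2}\chi$) and hence $b'+\theta\chi>\lambda+\pi\chi$, so the entire portion of $\striptop$ lying weakly to the right of the image of the left side of $\eta'([0,\tau'])$ carries boundary data at least $\lambda$.

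With these bounds in hand I would invoke the boundary-hitting lemmas of Section~\ref{subsec::intersection} in their piecewise-constant form --- Lemma~\ref{lem::hitting_and_bouncing}, Lemma~\ref{lem::hitting_single_point}, and Lemma~\ref{lem::hitting_interval} together with Remark~\ref{rem::boundary_data} --- to conclude that $\eta_\theta$ a.s.\ exits $\strip\setminus\eta'([0,\tau'])$ either on the left side of $\eta'([0,\tau'])$ or in $\striptop$ to the left of $z_0$ (or tends to $-\infty$), and in particular never crosses to the right of $\eta'([0,\tau'])$. Finally, applying this simultaneously to a countable dense collection of stopping times $\tau'$ and using the continuity of $\eta'$ (up until it hits $\stripbot$) and of $\eta_\theta$ recorded in the first paragraph, I would deduce that $\eta_\theta$ a.s.\ stays to the left of $\eta'$; the reflected argument gives, for $\theta<-\tfrac{\pi}{2}$, that $\eta_\theta$ a.s.\ stays to the right of $\eta'$.

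I expect the main obstacle to be the boundary-data bookkeeping after the conditioning and the coordinate change: one must use Proposition~\ref{gff::prop::cond_union_mean} to rule out a pathology of the conditional mean near the tip $\eta'(\tau')$ (which is why $\tau'$ is taken strictly before $\eta'$ reaches $\stripbot$), and one must phrase the hitting lemmas in enough generality --- piecewise constant boundary data satisfying the one-sided bounds $\leq-\lambda$ and $\geq\lambda$ on $\striptop$ and $\leq-\lambda+\pi\chi$ and $\geq\lambda-\pi\chi$ on $\stripbot$, up to winding --- that they apply to the flattened slit domain. The concluding topological passage from ``$\eta_\theta$ exits on the correct side of $\eta'([0,\tau'])$ for every rational $\tau'$'' to ``$\eta_\theta$ lies to the left of $\eta'$'' also requires the continuity inputs and a short limiting argument.
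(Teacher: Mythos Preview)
Your argument correctly establishes that $\eta_\theta$ \emph{first} exits $\strip\setminus\eta'([0,\tau'])$ on the left side of $\eta'([0,\tau'])$ or in $\striptop$ to the left of $z_0$, but the passage from this to ``in particular never crosses to the right of $\eta'([0,\tau'])$'' is a genuine gap, not a short limiting argument. The hitting lemmas you cite control only the location of the first exit; after $\eta_\theta$ first touches the left side of $\eta'([0,\tau'])$ at some time $\tau$, nothing you have said prevents $\eta_\theta|_{[\tau,\infty)}$ from wrapping around the tip $\eta'(\tau')$ and subsequently hitting the right side of $\eta'([0,\tau'])$, which would place $\eta'(\tau')$ to the \emph{left} of $\eta_\theta$. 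Your dense-stopping-times device does not rule this out: for every $\tau''\geq\tau'$ the \emph{first} exit of $\eta_\theta$ from $\strip\setminus\eta'([0,\tau''])$ is still on the left (it is the same hit, or an earlier one), so the wrap-around picture is fully compatible with your first-exit conclusion holding for all rational $\tau'$.

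This wrap-around scenario is precisely what the paper spends most of the proof eliminating. After the first-exit step, one assumes for contradiction that $\eta'(\tau')$ lies to the left of $\eta_\theta$, so that $\eta_\theta$ must wrap around and approach the right side of $\eta'([0,\tau'])$ from within the right connected component. A separate result --- Lemma~\ref{lem::flow_cannot_hit}, applied through Lemma~\ref{lem::flow_counterflow_not_hit_right} --- then shows this is impossible: one conformally maps the relevant component to $\strip$, observes that the images of the right side of $\eta'([0,\tau'])$ carry boundary data $\lambda'+\theta\chi>\lambda$, and uses an absolute-continuity and ball-covering argument to show that a continuous flow line cannot hit any boundary interval whose data lies outside $(-\lambda,\lambda)$. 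Note that the reverse-chronological-order trick used to resolve the wrap-around in Proposition~\ref{prop::duality} is not available here, since for $\theta>\tfrac{\pi}{2}$ the flow line $\eta_\theta$ is not contained in $\eta'$; the argument via Lemma~\ref{lem::flow_cannot_hit} is genuinely new input that your proposal omits.
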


The hypothesis~\eqref{eqn::angle_assump_flow_counterflow} implies that $\eta_\theta$ almost surely accumulates in $\striptop$ or tends to $\pm \infty$ before hitting $\stripbot$ (see Remark~\ref{rem::boundary_data}; upon proving Theorem~\ref{thm::continuity}, we in fact will be able to prove that $\eta_\theta$ almost surely does not hit $\stripbot$ after time $0$).  The reason we assume $a',b' \geq \lambda'+\pi \chi$ and $a,b \geq \lambda'$ is that the former implies that $\eta'$ almost surely does not intersect $\striptop$ except at $z_0$ and the latter implies that $\eta'$ intersects $\stripbot$ only when it terminates at $0$.  Consequently, $\eta'$ is almost surely continuous (recall Remark~\ref{rem::continuity_non_boundary}).  Moreover, it implies that if $\theta < \tfrac{1}{\chi}(\lambda'-\lambda)=-\tfrac{\pi}{2}$ then $-a'+\theta \chi < -\lambda$ so that $\eta_\theta$ does not hit the side of $\striptop$ which is to the left of $z_0$ and if $\theta > \tfrac{1}{\chi}(\lambda-\lambda')=\tfrac{\pi}{2}$ then $b' + \theta \chi > \lambda$ so that $\eta_\theta$ does not hit the side of $\striptop$ which is to the right of $z_0$ (see Figure~\ref{fig::hittinginterval}).

We are now going to give an overview of the proof of Proposition~\ref{prop::flow_counterflow_left_right}, which is based on an extension of the proof of Proposition~\ref{prop::duality}.  We assume without loss of generality that $\theta > \tfrac{1}{\chi}(\lambda-\lambda') = \tfrac{\pi}{2}$.  We begin by fixing an $\eta'$-stopping time $\tau'$ and then show that $\eta'(\tau')$ almost surely lies to the right of $\eta_\theta$.  As in the proof of Proposition~\ref{prop::duality}, we will first show that $\eta_\theta$ almost surely first exits $\strip \setminus \eta'([0,\tau'])$ on either the left side of $\eta'([0,\tau'])$ or the side of $\striptop$ which is to the left of $z_0$ (see Figure~\ref{fig::flow_counterflow_first_exit}, which also contains an explanation of the critical angles).  If we are in the latter situation, then we have the desired monotonicity.  If not, we suppose for contradiction that $\eta'(\tau')$ is to the left of $\eta_\theta$.  Then after $\eta_\theta$ hits $\eta'([0,\tau'])$, it must be that $\eta_\theta$ wraps around $\eta'(\tau')$ and hits the right side of $\eta'([0,\tau'])$ since $\eta_\theta$ cannot exit $\strip$ on the part of $\striptop$ which lies to the right of $z_0$.  This, however, cannot happen because the evolution of $\eta_\theta$ near the right side of $\eta'([0,\tau'])$ looks like (up to conformal transformation and a mutually absolutely continuous change of measures) an $\SLE$ process on the strip $\strip$ starting from $0$ which cannot hit its upper boundary $\striptop$ (see Figure~\ref{fig::criticalforintersection}).  We begin by proving the following extension of Lemma~\ref{lem::hitting_and_bouncing}, which serves to make this last point precise.

\begin{figure}[h]
\begin{center}
\subfigure{
\includegraphics[scale=0.85,page=2]{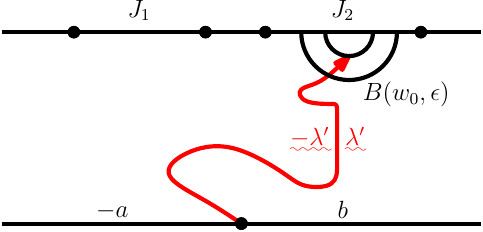}}
\subfigure{
\includegraphics[scale=0.85,page=1]{figures/cannot_hit_intervals2.pdf}}
\caption{\label{fig::cannot_hit_intervals}  Suppose that $h$ is a GFF on the strip $\strip$ and let $J \subseteq \striptop$ be open.  Write $J = \cup_k J_k$ where the $J_k$ are disjoint open intervals and assume that $h|_{J_k} \equiv c_k$ for given constants $c_k \notin (-\lambda,\lambda)$.  Assume that $0 < T_0 < \infty$ is an $\eta$-stopping time such that $\eta|_{[0,T_0]}$ is almost surely continuous.  Then $\eta$ cannot hit $J$ by time $T_0$.  To see this, fix $w_0$ in the interior of any $J_k$ and pick $\epsilon > 0$ such that $\partial B(w_0,\epsilon) \cap \striptop$ is contained in the interior of $J_k$.  Fix $\epsilon' \in (0,\epsilon)$.  The evolution of $\eta$ after it hits $\partial B(w_0,\epsilon')$ for the first time up until when it subsequently exits $B(w_0,\epsilon)$ is (after applying a conformal transformation) mutually absolutely continuous with respect to the setup described in Figure~\ref{fig::criticalforintersection} by Proposition~\ref{prop::gff_abs_continuity}, consequently $\eta$ almost surely exits $B(w_0,\epsilon)$ before hitting $B(w_0,\epsilon) \cap J_k$.  The result follows by considering the collection of balls centered at some countable dense set of points in $J_k$ with rational radii and invoking the continuity of $\eta|_{[0,T_0]}$.}
\end{center}
\end{figure}

\begin{lemma}
\label{lem::flow_cannot_hit} Suppose that $h$ is a GFF on $\strip$ whose boundary data is as described in Figure~\ref{fig::cannot_hit_intervals} and let $\eta$ be the flow line of $h$ starting at $0$.  If $\eta|_{[0,T_0]}$ is almost surely continuous for some $\eta$-stopping time $0 < T_0 < \infty$, then $\eta([0,T_0]) \cap J = \emptyset$ almost surely.
\end{lemma}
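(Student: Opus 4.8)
The plan is to follow the strategy sketched in the caption of Figure~\ref{fig::cannot_hit_intervals}: localize near the ``forbidden'' arc $J$, and there compare $\eta$ with the situation of Figure~\ref{fig::criticalforintersection}, in which, by Lemma~\ref{lem::hitting_and_bouncing}, a flow line provably cannot reach a boundary segment whose height lies outside $(-\lambda,\lambda)$. Writing $J=\bigcup_k J_k$ with the $J_k$ disjoint open intervals of $\striptop$ on which $h\equiv c_k\notin(-\lambda,\lambda)$, it suffices to prove, for each $k$ and each ball $B(w_0,\epsilon')$ with $w_0=q+i$ for rational $q$, $\epsilon'$ a positive rational, and $\overline{B(w_0,\epsilon)}\cap\striptop\subseteq J_k$ where $\epsilon:=2\epsilon'$, that almost surely $\eta([0,T_0])\cap\bigl(J_k\cap B(w_0,\epsilon')\bigr)=\emptyset$. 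There are only countably many such triples, and every point of $J$ lies in one of the balls, so the lemma follows by a union bound over them.

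Fix such a triple and introduce the excursion stopping times $\sigma_1=\inf\{t\colon\eta(t)\in\overline{B(w_0,\epsilon')}\}\wedge T_0$, $\tau_1=\inf\{t\ge\sigma_1\colon\eta(t)\in\partial B(w_0,\epsilon)\}\wedge T_0$, and inductively $\sigma_{j+1}=\inf\{t\ge\tau_j\colon\eta(t)\in\overline{B(w_0,\epsilon')}\}\wedge T_0$ and $\tau_{j+1}=\inf\{t\ge\sigma_{j+1}\colon\eta(t)\in\partial B(w_0,\epsilon)\}\wedge T_0$. Since $\eta|_{[0,T_0]}$ is almost surely continuous, hence uniformly continuous, consecutive excursions are a.s.\ separated by a definite amount of time, so the number of $j$ with $\sigma_j<T_0$ is a.s.\ finite; moreover any $t\le T_0$ with $\eta(t)\in\overline{B(w_0,\epsilon')}$ lies in some $[\sigma_j,\tau_j]$ (take the largest $j$ with $\sigma_j\le t$, and note that an intermediate exit of $B(w_0,\epsilon)$ would force $\sigma_{j+1}\le t$). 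It therefore suffices to show that for each fixed $j$, almost surely on $\{\sigma_j<T_0\}$ the path $\eta|_{[\sigma_j,\tau_j]}$ does not hit $\striptop\cap B(w_0,\epsilon)\supseteq J_k\cap B(w_0,\epsilon')$.

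For this, condition on $\mathcal{F}_{\sigma_j}:=\sigma(\eta|_{[0,\sigma_j]})$. Because $\eta$ has an a.s.\ continuous Loewner driving function and is a local set of $h$, the conditional law of $h$ given $\mathcal{F}_{\sigma_j}$ is, by Theorem~\ref{thm::coupling_existence} together with Propositions~\ref{gff::prop::markov}, \ref{gff::prop::cond_union_local} and~\ref{gff::prop::cond_union_mean}, that of a GFF on $\strip\setminus\eta([0,\sigma_j])$ with boundary data $\mp\lambda'+\chi\cdot\mathrm{winding}$ along the two sides of $\eta([0,\sigma_j])$ and the original data on $\partial\strip$, whose flow line from $\eta(\sigma_j)$ is $\eta|_{[\sigma_j,\cdot)}$. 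Let $O$ be the connected component of $B(w_0,\epsilon)\setminus\eta([0,\sigma_j])$ having $\eta(\sigma_j)$ on its boundary; then $\eta([\sigma_j,\tau_j])\subseteq\overline{O}$ and $\partial O\cap\striptop\subseteq\overline{B(w_0,\epsilon)}\cap\striptop\subseteq J_k$, so the conditional field has the constant boundary value $c_k\notin(-\lambda,\lambda)$ along $\partial O\cap\striptop$. By Proposition~\ref{prop::gff_abs_continuity}(\ref{lem::gff_abs_continuity_boundary}) and Remark~\ref{rem::gff_abs_continuity}, the law of this field restricted to a neighborhood of an interior point of $\partial O\cap\striptop$ is mutually absolutely continuous with that of a GFF in a fixed reference strip carrying a constant boundary segment of height $c_k$, and under this change of measure the law of the corresponding flow line, stopped upon exiting that neighborhood, is mutually absolutely continuous as well. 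Since by Lemma~\ref{lem::hitting_and_bouncing} the reference flow line a.s.\ never hits its constant-$c_k$ boundary segment, the same holds for $\eta|_{[\sigma_j,\tau_j]}$ and $\partial O\cap\striptop$. Averaging over $\mathcal{F}_{\sigma_j}$ and summing over the finitely many $j$ completes the argument.

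The main obstacle is the transfer in the last step: Proposition~\ref{prop::gff_abs_continuity}(\ref{lem::gff_abs_continuity_boundary}) speaks only of the field, whereas one needs absolute continuity for the flow line near the arc, and at this stage of the paper the flow line is not yet known to be determined by $h$. The route I would take is the reweighting picture of Remark~\ref{rem::gff_abs_continuity}: the conditional field on $O$ and its reference counterpart differ by a Radon--Nikodym factor $\mathcal{Z}^{-1}e^{-(h,g)_\nabla}$ with $g$ smooth and supported away from the arc $\partial O\cap\striptop$, so the coupled flow lines run until they first exit a fixed small neighborhood of a point of the arc have mutually absolutely continuous laws; combined with the fact from Lemma~\ref{lem::hitting_and_bouncing} that the reference flow line cannot reach a boundary segment of forbidden height, this forbids $\eta$ from reaching the arc too. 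Making this localization precise --- choosing the neighborhood and the reference domain so that the boundary data agrees exactly on the arc and the hypotheses of Proposition~\ref{prop::gff_abs_continuity}(\ref{lem::gff_abs_continuity_boundary}) are met, and checking that $\sigma_j$ is below the continuation threshold on the relevant event --- is where the care is required.
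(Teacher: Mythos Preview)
Your approach is essentially the same as the paper's: localize in a small ball around a point of $J_k$, use Proposition~\ref{prop::gff_abs_continuity} to compare with a GFF on the post-$\tau$ domain whose upper boundary data is identically $c_k$, and invoke Lemma~\ref{lem::hitting_and_bouncing} for the reference flow line. The paper avoids your excursion decomposition by working only with the \emph{first} entrance time $\tau$ into each ball and then arguing by contradiction over a countable dense family of balls and rational radii: if $\eta$ ever hits $J$, pick a ball around the first hitting point small enough that the first entrance precedes the hit, giving the forbidden event positive conditional probability on $\{\eta(\tau)\notin\striptop\}$. This sidesteps the inductive check that $\eta(\sigma_j)\notin\striptop$ for $j\ge 2$. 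The field-to-flow-line transfer you flag as the main obstacle is handled in the paper by a second appeal to Proposition~\ref{prop::gff_abs_continuity}, which implicitly uses exactly the Girsanov reweighting you describe.
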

In order for Lemma~\ref{lem::flow_cannot_hit} to be meaningful, there must exist a non-zero $\eta$-stopping time $T_0$ such that $\eta|_{[0,T_0]}$ is continuous.  This will be clear for the application we have in mind in this section.  Upon establishing Theorem~\ref{thm::continuity} in Section~\ref{sec::uniqueness}, we will be able to apply Lemma~\ref{lem::flow_cannot_hit} to the flow lines of a GFF with piecewise constant boundary conditions.
\begin{proof}[Proof of Lemma~\ref{lem::flow_cannot_hit}]
Write $J = \cup_k J_k$ where the $J_k$ are pairwise disjoint intervals in $\striptop$.  Fix $k \in \N$ and $w_0$ in the interior of $J_k$.  Let $\epsilon > 0$ be such that $\partial B(w_0,\epsilon) \cap \striptop$, where $\striptop$ is the upper boundary of $\strip$, is contained in the interior of $J_k$ and fix $\epsilon' \in (0,\epsilon)$.  Let $\tau$ be the first time $t$ that $\eta$ hits $\partial B(w_0,\epsilon')$ and $\sigma$ the first time $t$ after $\tau$ that $\eta$ is not in $B(w_0,\epsilon)$.  Assume that $\eta(\tau) \notin \striptop$.  Let $D$ be the connected component of $\strip \setminus \eta([0,\tau])$ which contains $w_0$.  By Proposition~\ref{prop::gff_abs_continuity}, the law of $h|_{B(w_0,\epsilon)}$ conditional on the realization of $\eta|_{[0,\tau]}$ is mutually absolutely continuous with respect to the law of the restriction of a GFF $\wt{h}$ on $D$ to $B(w_0,\epsilon)$ whose boundary data is $\lambda' - \chi \cdot {\rm winding}$ (resp.\ $-\lambda' - \chi \cdot {\rm winding}$) on the right (resp.\ left) side of $\eta([0,\tau])$, the same as $h$ on $\stripbot$, the lower boundary of $\strip$, and identically $c_k$ on all of $\striptop$.  Let $\wt{\eta}$ be the flow line of $\wt{h}$ starting at $\eta(\tau)$.  Then Lemma~\ref{lem::hitting_and_bouncing} implies that $\wt{\eta}$ almost surely does not hit $\striptop$ and, in particular, exits $B(w_0,\epsilon)$ before hitting $\striptop$.  Applying Proposition~\ref{prop::gff_abs_continuity} a second time implies that with $E(w_0,\epsilon',\epsilon)$ the event $\{\eta([\tau,\sigma]) \cap \striptop \neq \emptyset,\ \ \sigma \leq T_0\}$ we have $\p[E(w_0,\epsilon',\epsilon) \giv \eta(\tau) \notin \striptop] = 0$.

Let $(w_j)$ be a countable dense set in $J$ and let $\CD$ be the set of all triples of the form $(w_j,r',r)$ where $0 < r' < r$ are rational.  If $\eta$ hits $\striptop$ with positive probability before time $T_0$, the almost sure continuity of $\eta|_{[0,T_0]}$ implies there exists $(w_j,r',r) \in \CD$ such that $\p[E(w_j,r',r) \giv \eta(\tau) \notin \striptop ] > 0$ where $\tau \leq T_0$ is the first time $\eta$ hits $\partial B(w_j,r')$.  This is a contradiction, which proves the lemma.
\end{proof}

\begin{remark}
\label{rem::flow_cannot_hit}
By changing coordinates from $\strip$ to $\h$, Lemma~\ref{lem::flow_cannot_hit} implies the following.  Suppose that $\eta$ is an $\SLE_\kappa(\ul{\rho})$ process on $\h$ with weights $\ul{\rho} = (\ul{\rho}^L;\ul{\rho}^R)$ placed at force points $(\ul{x}^L;\ul{x}^R)$.  Assume that $\eta|_{[0,T_0]}$ is almost surely continuous for some $\eta$ stopping time $0 < T_0 < \infty$.  Then $\eta([0,T_0])$ almost surely does not intersect any interval $(x^{i+1,L}, x^{i,L})$ (resp.\ $(x^{i,R},x^{i+1,R})$) such that $\sum_{s=1}^i \rho^{s,L} \geq \tfrac{\kappa}{2}-2$ or $\sum_{s=1}^i \rho^{s,L} \leq \tfrac{\kappa}{2}-4$ (resp.\ $\sum_{s=1}^i \rho^{s,R} \geq \tfrac{\kappa}{2}-2$ or $\sum_{s=1}^i \rho^{s,R} \leq \tfrac{\kappa}{2}-4$).
\end{remark}

\begin{figure}[h]
\begin{center}
\subfigure[The evolution of $\eta'$ up to time $\tau'$ and $\eta_\theta$ up until the first time it hits $\eta'(\text{[}0,\tau'\text{]})$.]{
\includegraphics[scale=0.85,page=1]{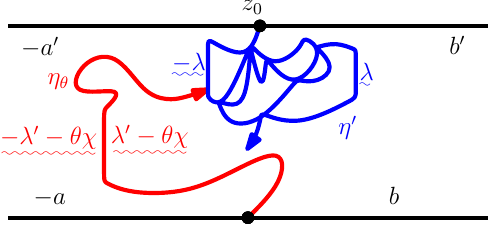}}
\subfigure[\label{fig::flow_counterflow_first_exit_conf_image}The boundary data for $\eta_\theta$ after conformally mapping the picture on the left hand side back to the strip with $\pm \infty$ and $0$ fixed, $w_0$ the image of $z_0$.]{
\includegraphics[scale=0.85,page=2]{figures/dual_first_exit_left.pdf}}
\caption{\label{fig::flow_counterflow_first_exit}Suppose that we have the same setup as in Figure~\ref{fig::flow_counterflow_left_right} and let $\tau'$ be a stopping time for $\eta'$.  We assume $\theta$ is chosen so that $\eta_\theta$ almost surely does not hit $\stripbot$ after time $0$.  If $\theta > \tfrac{1}{\chi}(\lambda-\lambda') = \tfrac{\pi}{2}$ so that $\lambda' + \theta \chi > \lambda$ and $b'+\theta \chi > \lambda$ then by Figure~\ref{fig::hittinginterval}, $\eta_\theta$ exits $\strip \setminus \eta'([0,\tau'])$ on either the left side of $\eta'([0,\tau'])$ or on the left side of $\striptop$, almost surely.  If $\theta < \tfrac{1}{\chi}(\lambda'-\lambda) = -\tfrac{\pi}{2}$ so that $-\lambda'+\theta \chi < -\lambda$ and $-a'+\theta \chi < -\lambda$, $\eta_\theta$ first exits on the right side of $\eta'([0,\tau'])$ or the right side of $\striptop$, almost surely.}
\end{center}
\end{figure}

\begin{lemma}
\label{lem::flow_counterflow_not_hit_right}
Suppose that we have the same hypotheses as Proposition~\ref{prop::flow_counterflow_left_right} with $\theta > \tfrac{1}{\chi}(\lambda-\lambda') = \tfrac{\pi}{2}$ fixed.  Let $\tau'$ be an $\eta'$ stopping time such that, almost surely, $\eta'$ has not hit $0$ by time $\tau'$.  For each $t > 0$, let $K_t'$ be the hull of $\eta'([0,t])$, i.e.\ the complement of the unbounded connected component of $\strip \setminus \eta'([0,t])$.  Let $\tau$ be any stopping time for the filtration $\CF_t = \sigma(\eta(s) : s \leq t,\ \eta'([0,\tau']))$ and let $E = \{ \dist(\eta(\tau),K_{\tau'}') > 0\}$.  Then on $E$, $\eta|_{[\tau,\infty)}$ intersects neither the right side of $\eta'([0,\tau'])$ nor the part of $\striptop$ which lies to the right of $z_0$ before hitting either the left side of $\eta'([0,\tau'])$ or the part of $\striptop$ which lies to the left of $z_0$.
\end{lemma}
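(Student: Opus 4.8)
\emph{Proof proposal.} The plan is to condition on both paths, straighten the picture with a conformal map, and then quote Lemma~\ref{lem::flow_cannot_hit}. Fix the $\eta'$-stopping time $\tau'$ and the $\CF_t$-stopping time $\tau$, and work throughout on the event $E$. Since $\dist(\eta(\tau),K_{\tau'}')>0$, the tip $\eta(\tau)$ lies in the interior of the simply connected domain $D:=\strip\setminus K_{\tau'}'$; let $D_\tau$ be the connected component of $D\setminus\eta([0,\tau])$ containing the points immediately ahead of $\eta(\tau)$. As in the proof of Lemma~\ref{lem::hit_in_order}, $\eta|_{[\tau,\infty)}$ regarded as a curve in $D_\tau$ has a continuous Loewner driving function until it first hits $\partial K_{\tau'}'$ or $\striptop$ (Proposition~\ref{prop::cont_driving_function}). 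Hence, by Theorem~\ref{thm::martingale} together with Propositions~\ref{gff::prop::cond_union_local} and~\ref{gff::prop::cond_union_mean}, if $\phi\colon D_\tau\to\strip$ is a conformal map sending $\eta(\tau)$ to $0\in\stripbot$ and sending the part of $\partial D_\tau$ coming from $\striptop$ and from $\partial K_{\tau'}'$ onto $\striptop$, then $\phi(\eta|_{[\tau,\infty)})$ is, up to the first time $T_0$ it hits $\striptop$, the flow line from $0$ of a GFF $\hat h$ on $\strip$; here $\hat h$ is obtained from $h+\theta\chi$, conditioned on $\eta'([0,\tau'])\cup\eta([0,\tau])$, via the coordinate change \eqref{eqn::ac_eq_rel}.

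Next I would read off the boundary data of $\hat h$. Let $J:=\phi\big(\,(\text{right side of }\eta'([0,\tau']))\cup(\text{part of }\striptop\text{ right of }z_0)\,\big)$, a relatively open subset of $\striptop$. On $J$ the winding is absorbed by the $-\chi\arg(\phi^{-1})'$ term, so $\hat h$ is piecewise constant there, taking (using Remark~\ref{rem::counterflow_lines_boundary_data} for the $\eta'$ side) only the values $\lambda'+\theta\chi$ and $b'+\theta\chi$; since $\theta>\tfrac1\chi(\lambda-\lambda')=\tfrac\pi2$, since $\lambda'=\lambda-\tfrac\pi2\chi$, and since $b'\ge\lambda'+\pi\chi$, both values exceed $\lambda$, so $\hat h|_J\notin(-\lambda,\lambda)$. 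On $\stripbot$, the data of $\hat h$ equals $-\lambda'$ on the image of the left side of $\eta([0,\tau])$, $\lambda'$ on the image of its right side, $-a+\theta\chi$ on the image of $\{x<0\}\subseteq\stripbot$, and $b+\theta\chi$ on the image of $\{x>0\}\subseteq\stripbot$. Using $2\pi\chi=4(\lambda-\lambda')$ (see~\eqref{eqn::fullrevolution}) one has $-\lambda'\le-\lambda+\pi\chi$ and $\lambda'\ge\lambda-\pi\chi$ (both equivalent to $\lambda>\lambda'$), while the two inequalities in~\eqref{eqn::angle_assump_flow_counterflow} are precisely $-a+\theta\chi\le-\lambda+\pi\chi$ and $b+\theta\chi\ge\lambda-\pi\chi$. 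So the boundary data of $\hat h$ on $\stripbot$ satisfies the hypothesis of Remark~\ref{rem::boundary_data}, and therefore $\phi(\eta|_{[\tau,\infty)})$ is a.s.\ continuous on $[\tau,T_0]$ (Remark~\ref{rem::boundary_data}; Remark~\ref{rem::continuity_non_boundary}).

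With this continuity in hand, Lemma~\ref{lem::flow_cannot_hit} applied to $\hat h$, the set $J$, and the stopping time $T_0$ gives $\phi(\eta([\tau,T_0]))\cap J=\emptyset$. Since $\striptop\setminus\overline J$ is exactly the $\phi$-image of the left side of $\eta'([0,\tau'])$ together with the part of $\striptop$ left of $z_0$, and $T_0$ is by definition the first time $\phi(\eta|_{[\tau,\infty)})$ hits $\striptop$, undoing the coordinate change yields the assertion: on $E$, $\eta|_{[\tau,\infty)}$ hits neither the right side of $\eta'([0,\tau'])$ nor the part of $\striptop$ right of $z_0$ before it hits the left side of $\eta'([0,\tau'])$ or the part of $\striptop$ left of $z_0$. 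If $\eta([0,\tau])$ has already met $\eta'([0,\tau'])$ before time $\tau$, the same argument runs, now using Proposition~\ref{gff::prop::cond_union_mean} to identify $\hat h$ along the boundary pieces away from the (at most countably many) intersection points, all of which lie a positive distance from $\eta(\tau)$ on $E$.

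\textbf{Main obstacle.} The delicate point is the boundary-data bookkeeping of the second paragraph: one must track the $\chi\cdot\mathrm{winding}$ corrections along the right side of $\eta'([0,\tau'])$ and along both sides of $\eta([0,\tau])$, confirm that after straightening $\hat h$ exceeds $\lambda$ everywhere on $J$ while lying in the admissible window of Remark~\ref{rem::boundary_data} on $\stripbot$, and check that the hypotheses $\theta>\tfrac\pi2$, $a',b'\ge\lambda'+\pi\chi$, $a,b\ge\lambda'$, and~\eqref{eqn::angle_assump_flow_counterflow} are exactly what this requires. The remaining points — the continuity input needed for Lemma~\ref{lem::flow_cannot_hit}, and the legitimacy of the conditioning when the two local sets are not disjoint — are routine given the machinery of Sections~\ref{sec::sle} and~\ref{sec::gff}.
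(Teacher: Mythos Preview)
Your approach is the same as the paper's: conformally straighten the component $D_\tau$ ahead of $\eta(\tau)$ to the strip, read off the boundary data of the transported field, and invoke Lemma~\ref{lem::flow_cannot_hit} on the image $J$ of the right side of $\eta'([0,\tau'])$ together with the part of $\striptop$ right of $z_0$. The boundary-data bookkeeping in your second paragraph is correct.

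The one place where the paper is more careful is the specification of the conformal map when $\eta([0,\tau])\cap K_{\tau'}'\neq\emptyset$. You ask $\phi$ to send exactly the $\striptop$- and $\partial K_{\tau'}'$-pieces of $\partial D_\tau$ onto $\striptop$, with everything else (sides of $\eta$ and the original $\stripbot$) going to $\stripbot$. But when $\eta$ has made several excursions to $K_{\tau'}'$, the boundary cycle of $D_\tau$ can contain \emph{earlier} segments of $\eta$ sandwiched between two $\partial K_{\tau'}'$-arcs; then the $\striptop\cup\partial K_{\tau'}'$ portion of $\partial D_\tau$ is disconnected and no conformal map with your prescription exists. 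The paper sidesteps this by introducing $\tau_0=\sup\{t\le\tau:\eta(t)\in K_{\tau'}'\}$ and sending only the two sides of $\eta((\tau_0,\tau))$ to $\stripbot$; everything else --- including earlier $\eta$-segments and the original $\stripbot$ --- goes to $\striptop$. The unknown boundary values at the intersection points $\eta\cap\eta'$ (the ``?'' in Figure~\ref{fig::wrap_around_right_side}) then land on $\striptop\setminus J$, where Lemma~\ref{lem::flow_cannot_hit} places no hypothesis, and the $\stripbot$ data is simply $\pm\lambda'$, so Remark~\ref{rem::boundary_data} applies cleanly. With that adjustment to the map (and the corresponding simplification of your $\stripbot$ data list), your argument coincides with the paper's. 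The paper also separates out the easy sub-case where $\partial D_\tau$ meets only the left side of $\eta'$ or $\striptop$ left of $z_0$ (so $J=\emptyset$ and there is nothing to prove), but your unified treatment covers that trivially.
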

\begin{proof}
If $\eta([0,\tau]) \cap K_{\tau'}' = \emptyset$, then the result is immediate from the argument described in Figure~\ref{fig::flow_counterflow_first_exit}.  Thus for the rest of the proof, we shall assume that $\eta([0,\tau]) \cap K_{\tau'}' \neq \emptyset$.  Since $\eta$ is almost surely a simple path, there exists a unique connected component $D$ of $\strip \setminus (\eta([0,\tau]) \cup K_{\tau'}')$ such that for some $\epsilon_0 > 0$, $\eta(\tau+\epsilon) \in D$ for all $\epsilon \in (0,\epsilon_0)$.

We consider two cases.  First, suppose that $\partial D$ has non-empty intersection with the part of $\striptop$ which is to the left of $z_0$.  Then there is nothing to prove since a simple topological argument implies that $\eta|_{[\tau,\infty)}$ can only exit $D$ either on the left side of $\eta'([0,\tau'])$ or on the part of $\striptop$ which lies to the left of $z_0$ (since $\eta([0,\tau])$ must have an intersection with either the left side of $\eta'([0,\tau'])$ or the part of $\striptop$ which is to the left of $z_0$; see Figure~\ref{fig::hittinginterval}).  Second, suppose that $\partial D$ has non-empty intersection with the part of $\striptop$ which is to the right of $z_0$ or the right side of $\eta'([0,\tau'])$.  Observe that $D$ is simply connected.  Let $\tau_0$ be the largest time $t \leq \tau$ such that $\eta(t) \in K_{\tau'}'$.  Let $\psi \colon D \to \strip$ be the conformal transformation which sends $\eta(\tau)$ to $0$, and the left and right boundaries of $\eta([\tau_0,\tau])$ to $(-\infty,0)$ and $(0,\infty)$, respectively.  Since $\eta|_{[0,\tau]}$ and $\eta'|_{[0,\tau']}$ are continuous paths, it follows that $\psi$ extends as a homeomorphism to $\ol{D}$.  Let $\wt{h} = h \circ \psi^{-1} - \chi \arg (\psi^{-1})'$.  Then $\wt{h}$ is a GFF on $\strip$ by Proposition~\ref{gff::prop::cond_union_local} since $K_{\tau'}'$ is local for $h$ and $\eta([0,\tau])$ is local for $h$ given $K_{\tau'}'$.  By Proposition~\ref{gff::prop::cond_union_mean}, we know the boundary data of $\wt{h}$ on $\stripbot$ as well as the parts of $\striptop$ whose preimage under $\psi$ lies in either $\eta([0,\tau])$ or in $\eta'([0,\tau'])$ but (at this point) we cannot determine the boundary behavior of $h$ near points in $\eta([0,\tau]) \cap \eta'([0,\tau'])$.  This is indicated in the right panel of Figure~\ref{fig::wrap_around}.  The result now follows from Lemma~\ref{lem::flow_cannot_hit} (see Figure~\ref{fig::wrap_around}).
\end{proof}

\begin{figure}[h!]
\begin{center}
\subfigure[In order for $\eta'(\tau')$ to be to the left of $\eta_\theta$, $\eta_\theta$ must wrap around $\eta'(\tau')$ after hitting on the left side of $\eta'(\text{[}0,\tau'\text{]})$.]{
\includegraphics[scale=0.85,page=1]{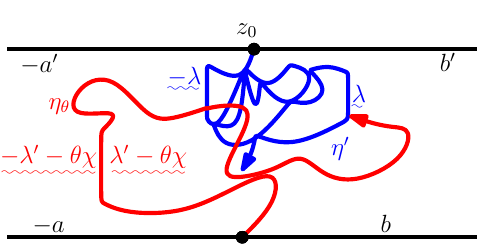}}
\hspace{0.0\textwidth}
\subfigure[\label{fig::wrap_around_right_side} The boundary data for the flow line $\eta_\theta$ as it approaches the right side of $\eta'$, conformally mapped back to the strip.]{
\includegraphics[scale=0.85,page=2]{figures/dual_wrap_around.pdf}}\caption{\label{fig::wrap_around}Suppose we have the same setup as Figure~\ref{fig::flow_counterflow_first_exit} with $\theta > \tfrac{1}{\chi}(\lambda-\lambda') = \tfrac{\pi}{2}$ so that $\eta_\theta$ first exits $\strip \setminus \eta'([0,\tau'])$ on the left side of $\eta'([0,\tau'])$.  The only way that $\eta'(\tau')$ can be to the left of $\eta_\theta$ is if, after hitting the left side of $\eta'([0,\tau'])$, $\eta_\theta$ wraps around $\eta'(\tau')$ and then hits the right side of $\eta'([0,\tau'])$.  Let $K_t'$ be the hull of $\eta'([0,t])$.  This implies that with $\tau_\delta$ the first time $t$ after $\tau$ for which $\eta(t)$ is in the right connected component of $\strip \setminus (\eta'([0,\tau']) \cup \eta([0,\tau]))$ and $\dist(\eta(t),K_{\tau'}') \geq \delta$ we have that $\p[\tau_\delta < \infty] > 0$ provided $\delta > 0$ is small enough.  Let $D$ be the connected component of $\strip \setminus (\eta([0,\tau_\delta] \cup K_{\tau'}')$ which, for some $\epsilon > 0$, contains $\eta([\tau_\delta,\tau_\delta+\epsilon])$.  Let $\psi \colon D \to \strip$ be a conformal transformation sending the left side of $\eta$ from the largest time $t < \tau_\delta$ that $\eta(t) \in \partial (\strip \setminus K_{\tau'}')$ to $\tau_\delta$ to $(-\infty,0)$ and the corresponding right side of $\eta$ to $(0,\infty)$.  Then the boundary data for the coordinate change of the GFF $h|_D + \theta \chi$ by $\psi$ in $\strip$ is shown on the right hand side.  The intervals $J_k$ are the images of the segments in $\partial D$ which also lie on the right side of $\eta'([0,\tau'])$.  By Figure~\ref{fig::cannot_hit_intervals}, we know in this case $\eta$ cannot hit any of the $J_k$ in finite time.  This leads to a contradiction.}
\end{center}
\end{figure}

\begin{proof}[Proof of Proposition~\ref{prop::flow_counterflow_left_right}]
We assume $\theta > \tfrac{1}{\chi}(\lambda-\lambda') = \tfrac{\pi}{2}$; the argument for the other case is the same.  Let $\tau'$ be any stopping time for $\eta'$ such that $\eta'$ has almost surely not yet hit $0$ by time $\tau'$.  Conditioning $h$ on $\eta'([0,\tau'])$ and conformally mapping $\strip \setminus K_{\tau'}'$ (recall that $K_t'$ is the hull of $\eta'([0,t])$) back to $\strip$, the boundary data of the corresponding field plus $\theta \chi$ is given in Figure~\ref{fig::flow_counterflow_first_exit_conf_image}.  That $\theta > \tfrac{1}{\chi}(\lambda-\lambda') = \tfrac{\pi}{2}$ implies $-\lambda' + \theta \chi > -\lambda$, $\lambda' + \theta \chi > \lambda$, and $b' + \theta \chi > \lambda$.  Consequently, it follows from that $\eta_\theta$ almost surely exits $\strip \setminus \eta'([0,\tau'])$ on the left side of $\eta'([0,\tau'])$ or on the part of $\striptop$ which lies to the left of $z_0$, say at time $\tau$, or does not hit $\striptop$ (Figure~\ref{fig::hittingrange} and Figure~\ref{fig::hittinginterval}).

We will now argue that $\eta'(\tau')$ is almost surely to the right of $\eta_\theta$.  If this is not the case, then after time $\tau$, $\eta$ must wrap around (but not hit) $\eta'(\tau')$ and then hit the right side of $\eta'([0,\tau'])$ (recall that $\eta$ almost surely does not hit the side of $\striptop$ which lies to the right of $z_0$).  Let $\tau_\delta$ be the first time $t$ after $\tau$ that $\eta(t)$ is in the right connected component of $\strip \setminus (\eta([0,\tau]) \cup \eta'([0,\tau']))$ and $\dist(\eta(\tau_\delta),K_{\tau'}') \geq \delta$ (we take $\tau_\delta = \infty$ if this never happens).  Then it must be that $\lim_{\delta \to 0^+} \p[\tau_\delta < \infty] > 0$, for otherwise $\eta'(\tau')$ is contained in the range of $\eta$ since we have assumed that $\eta'(\tau')$ is to the left of $\eta$.  This leads to a contradiction since Lemma~\ref{lem::flow_counterflow_not_hit_right} implies that $\eta$ cannot hit the right side of $\eta'([0,\tau'])$ or the part of $\striptop$ which lies to the right of $z_0$ before it hits the left side of $\eta'([0,\tau'])$, the part of $\striptop$ which lies to the left of $z_0$, or tends to $\infty$ after time $\tau_\delta$, any $\delta > 0$.
\end{proof}

\begin{proposition}
\label{prop::monotonicity_non_boundary}
Suppose that $h$ is a GFF on $\strip$ whose boundary data is as in Figure~\ref{fig::flow_counterflow_left_right}.  Fix $\theta_1,\theta_2$ such that
\begin{equation}
\label{eqn::theta_assumption}
 \frac{\lambda-b}{\chi} \leq \theta_1 < \theta_2 \leq \frac{a+\pi \chi-\lambda}{\chi}.
\end{equation}
Let $\eta_{\theta_i}$, for $i=1,2$, be the flow line of $h+\theta_i \chi$ starting at $0$ and let $\tau_i$ be the first time that $\eta_{\theta_i}$ accumulates in $\striptop$.  Then $\eta_{\theta_2}|_{[0,\tau_2]}$ almost surely lies to the left of $\eta_{\theta_1}|_{[0,\tau_1]}$.  The same result holds if $\eta_{\theta_1},\eta_{\theta_2}$ are flow lines of a GFF $h$ on $\h$ from $0$ to $\infty$ with boundary data $-a$ on $(-\infty,0)$ and $b$ on $(0,\infty)$.
\end{proposition}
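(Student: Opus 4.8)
The plan is to derive this from the duality statement Proposition~\ref{prop::duality_many_force_points} together with the flow-line/counterflow-line comparison Proposition~\ref{prop::flow_counterflow_left_right}, working first on $\strip$. I would enlarge the probability space so that $h$ is coupled with $\eta_{\theta_1}$, $\eta_{\theta_2}$, and with a counterflow line $\eta_{\theta_1}'$ of $h+(\theta_1-\tfrac{\pi}{2})\chi$ emanating from $z_0\in\striptop$, in such a way that these three paths are conditionally independent given $h$; this is legitimate by Proposition~\ref{gff::prop::cond_union_local}.

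The first step is to apply Proposition~\ref{prop::duality_many_force_points} to the flow line $\eta_{\theta_1}$ of the field $h+\theta_1\chi$. Using the identities \eqref{eqn::deflist}--\eqref{eqn::fullrevolutionrho} and the bounds \eqref{eqn::theta_assumption}, one checks directly that the boundary data of $h+\theta_1\chi$ on $\stripbot$ is at most $-\lambda+\pi\chi$ to the left of $0$ (which is $\theta_1\leq(a+\pi\chi-\lambda)/\chi$) and at least $\lambda$ to the right of $0$ (which is exactly $\theta_1\geq(\lambda-b)/\chi$), while on $\striptop$ it is at most $-\lambda$ to the left of $z_0$ and at least $\lambda+\pi\chi$ to the right of $z_0$ provided $a',b'$ are taken sufficiently large (as we may, remaining in the setting of Figure~\ref{fig::flow_counterflow_left_right}). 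Hence Proposition~\ref{prop::duality_many_force_points} gives that, almost surely, $\eta_{\theta_1}|_{[0,\tau_1]}$ is exactly the left boundary of $\eta_{\theta_1}'$ (under these boundary conditions $\eta_{\theta_1}$ first accumulates in $\striptop$ at $z_0$).

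The second step is to recognize $\eta_{\theta_2}$ as the flow line of $\hat h:=h+(\theta_1-\tfrac{\pi}{2})\chi$ with angle $\hat\theta:=\theta_2-\theta_1+\tfrac{\pi}{2}$, and to apply Proposition~\ref{prop::flow_counterflow_left_right} to the pair $(\eta_{\theta_1}',\eta_{\theta_2})$. Since $\theta_1<\theta_2$ we have $\hat\theta>\tfrac{\pi}{2}=\tfrac{1}{\chi}(\lambda-\lambda')$, and substituting the $\stripbot$ boundary values $\hat a=a-(\theta_1-\tfrac{\pi}{2})\chi$, $\hat b=b+(\theta_1-\tfrac{\pi}{2})\chi$ of $\hat h$ into the angle constraint \eqref{eqn::angle_assump_flow_counterflow} one finds, using $\lambda'=\lambda-\tfrac{\pi}{2}\chi$, that it reduces precisely to $(\lambda-b)/\chi\leq\theta_1<\theta_2\leq(a+\pi\chi-\lambda)/\chi$; moreover $\hat a,\hat b\geq\lambda'$ and (for $a',b'$ large) $\hat a',\hat b'\geq\lambda'+\pi\chi$. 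Thus Proposition~\ref{prop::flow_counterflow_left_right} gives that $\eta_{\theta_2}$ almost surely passes to the left of $\eta_{\theta_1}'$, hence to the left of its left boundary $\eta_{\theta_1}|_{[0,\tau_1]}$. Finally, the $\h$ statement follows from the $\strip$ statement by the coordinate change \eqref{eqn::ac_eq_rel}, using the standard conformal map $\strip\to\h$ sending $0\mapsto 0$, $-\infty\mapsto -1$, $+\infty\mapsto\infty$ (as in Figure~\ref{fig::criticalforintersection}): flow lines map to flow lines with the same angle, and ``lies to the left of'' is preserved because the map is orientation preserving.

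The genuine content has already been established: the substance is in Proposition~\ref{prop::flow_counterflow_left_right} (which rests on Lemma~\ref{lem::flow_cannot_hit} and the ``wrap-around'' non-crossing argument of Figure~\ref{fig::wrap_around}) and in the duality Proposition~\ref{prop::duality_many_force_points}. So the only real work in the present argument is the bookkeeping with $\lambda,\lambda',\chi$: one must verify that the single range \eqref{eqn::theta_assumption} simultaneously makes the duality hypothesis hold for $\eta_{\theta_1}$ and the angle hypothesis \eqref{eqn::angle_assump_flow_counterflow} hold for $(\hat h,\hat\theta)$, and one must keep track of how ``$a',b'$ sufficiently large'' propagates through both applications. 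The implication ``$\eta_{\theta_2}$ is to the left of the counterflow line $\eta_{\theta_1}'$'' $\Rightarrow$ ``$\eta_{\theta_2}$ is to the left of its left boundary $\eta_{\theta_1}$'' is immediate from the meaning of a flow line passing to the left of a counterflow line. I expect this constant-matching to be the only delicate point.
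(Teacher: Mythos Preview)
Your approach is essentially the same as the paper's: introduce the counterflow line $\eta_{\theta_1}'$ of $h+(\theta_1-\tfrac{\pi}{2})\chi$, use Proposition~\ref{prop::duality_many_force_points} to identify $\eta_{\theta_1}$ with its left boundary, and apply Proposition~\ref{prop::flow_counterflow_left_right} with angle $\theta_2-\theta_1+\tfrac{\pi}{2}>\tfrac{\pi}{2}$ to place $\eta_{\theta_2}$ to its left. The only differences are technical: the paper justifies ``$a',b'$ large'' by an $\epsilon$-stopping argument (stop the $\eta_{\theta_i}$ at distance $\epsilon$ from $\striptop$ and invoke Proposition~\ref{prop::gff_abs_continuity}, then let $\epsilon\downarrow 0$), and it transfers the result to $\h$ via absolute continuity (Proposition~\ref{prop::gff_abs_continuity}) rather than via the coordinate change you describe.
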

The reason for the asymmetry in the hypothesis~\eqref{eqn::theta_assumption} is to allow for ``enough space'' so that we can fit a counterflow line $\eta_{\theta_1}'$ whose left boundary is $\eta_{\theta_1}$ which does not intersect $\stripbot$.  We note that it is not necessary to make any hypotheses about the boundary data of $h$ on $\striptop$.  The reason is that the lemma is only applicable for the paths up until when they first accumulate in $\striptop$.  This means that we can prove the result with a convenient choice and then use Proposition~\ref{prop::gff_abs_continuity}.
\begin{proof}[Proof of Proposition~\ref{prop::monotonicity_non_boundary}]
Fix $\epsilon > 0$.  For $i=1,2$, let $\tau_i^\epsilon$ be the first time $t$ that $\eta_{\theta_i}$ gets within distance $\epsilon$ of $\striptop$.  It suffices to show that $\eta_{\theta_2}^{x_2}|_{[0,\tau_2^\epsilon]}$ almost surely lies to the left of $\eta_{\theta_1}^{x_1}|_{[0,\tau_1^\epsilon]}$ for every $\epsilon > 0$.  We assume that $a' \geq \lambda' + (\theta_1 + \tfrac{\pi}{2}) \chi$ and $b' \geq \lambda' + (\tfrac{3}{2} \pi-\theta_1) \chi$ (by Proposition~\ref{prop::gff_abs_continuity}, it suffices to prove this result with any choice of $a',b'$).  This implies that the boundary data of $h+(\theta_1-\tfrac{\pi}{2})\chi$ on $\striptop$ which lies to the left of $z_0$ is at most $-\lambda'-\pi \chi$ and to the right of $z_0$ is at least $\lambda'+\pi\chi$.  Moreover, the hypothesis~\eqref{eqn::theta_assumption} implies that the boundary data of $h+(\theta_1-\tfrac{\pi}{2}) \chi$ on $\stripbot$ which lies to the left of $0$ is at most $-\lambda'$ and to the right of $0$ is at least $\lambda'$.  Consequently, Proposition~\ref{prop::flow_counterflow_left_right} is applicable to the counterflow line $\eta_{\theta_1}'$ of $h+(\theta_1-\tfrac{\pi}{2})\chi$.  Since $\eta_{\theta_2}|_{[0,\tau_2^\epsilon]}$ is the flow line of $h+(\theta_1-\tfrac{\pi}{2})\chi$ with angle $\theta_2-\theta_1+\tfrac{\pi}{2} > \tfrac{\pi}{2}$, Proposition~\ref{prop::flow_counterflow_left_right} implies that $\eta_{\theta_2}|_{[0,\tau_2^\epsilon]}$ is to the left of $\eta_{\theta_1}'$.  The result then follows since Proposition~\ref{prop::duality_many_force_points} implies that $\eta_{\theta_1}|_{[0,\tau_1^\epsilon]}$ is contained in the left boundary of $\eta_{\theta_1}'$.  The result when the $\eta_{\theta_i}$, $i=1,2$, are flow lines of a GFF on $\h$ follows from the result on $\strip$ and Proposition~\ref{prop::gff_abs_continuity}.
\end{proof}

\subsection{Light cone construction of counterflow lines}
\label{subsec::light_cone}

In this section, we will prove Proposition~\ref{prop::light_cone_construction}, our first version of Theorem~\ref{thm::lightconeroughstatement}.  Along the way, we will explain the inputs we need in order to prove the result in its full generality (the technical ingredients for the general version will be developed in Section~\ref{sec::uniqueness}).  Suppose that $h$ is a GFF on $\strip$ with boundary data as depicted in Figure~\ref{fig::light_cone_construction}.  Throughout, we will make the same hypotheses on the boundary data of $h$ as in Proposition~\ref{prop::flow_counterflow_left_right}.  That is, we shall assume that $a,b \geq \lambda - \tfrac{\pi}{2} \chi = \lambda'$; the reason for this choice is that it implies that the counterflow line $\eta'$ of $h$ starting at $z_0$ almost surely hits $\stripbot$, the bottom of $\partial \strip$, only when it exits at $0$.  We also assume that $a',b' \geq \lambda'+\pi \chi$ so that $\eta'$ almost surely does not intersect $\striptop$, the top of $\partial \strip$, except at $z_0$ (recall Figure~\ref{fig::hittingsinglepoint}).

Fix angles $\theta_1,\ldots,\theta_\ell$.  Let $\eta_{\theta_1}$ be the flow line of $h$ starting at $0$ with angle $\theta_1$, let $\tau_1$ be an $\eta_{\theta_1}$ stopping time, and let $\eta_{\theta_1}^{\tau_1} = \eta_{\theta_1}|_{[0,\tau_1]}$ (i.e., $\eta_{\theta_1}$ stopped at time $\tau_1$).  For each $2 \leq j \leq \ell$, we inductively let $\eta_{\theta_1 \cdots \theta_j}^{\tau_1 \cdots \tau_{j-1}}$ be the flow line of $h$ conditional on $\eta_{\theta_1 \cdots \theta_{j-1}}^{\tau_1 \cdots \tau_{j-1}}|_{[0,\tau_{j-1}]}$ starting at $\eta_{\theta_1 \cdots \theta_{j-1}}^{\tau_1 \cdots \tau_{j-1}}(\tau_{j-1})$ with angle $\theta_j$ and let $\tau_j$ be an $\eta_{\theta_1 \cdots \theta_j}$ stopping time, as depicted in Figure~\ref{fig::light_cone_construction}.  We call $\eta_{\theta_1 \cdots \theta_\ell}^{\tau_1 \cdots \tau_j} = \eta_{\theta_1 \cdots \theta_\ell}^{\tau_1 \cdots \tau_{j-1}}|_{[0,\tau_j]}$ an {\bf angle-varying flow line} with angles $\theta_1,\ldots,\theta_\ell$ with respect to the stopping times $\tau_1,\ldots,\tau_\ell$.  Note that
\[ \tau_1 \leq \tau_2 \leq \cdots \leq \tau_\ell \quad\text{and}\quad \eta_{\theta_1}^{\tau_1} \subseteq \eta_{\theta_1 \theta_2}^{\tau_1 \tau_2} \subseteq \cdots \subseteq \eta_{\theta_1 \cdots \theta_\ell}^{\tau_1 \cdots \tau_\ell}.\]
We emphasize that $\eta_{\theta_1 \cdots \theta_j}^{\tau_1 \cdots \tau_j}$ is defined on $[0,\tau_j]$ and $\eta_{\theta_1 \cdots \theta_j}^{\tau_1 \cdots \tau_j}|_{[0,\tau_{j-1}]} = \eta_{\theta_1 \cdots \theta_{j-1}}^{\tau_1 \cdots \tau_{j-1}}$.  The {\bf light cone} $\lightcone$ of $h$ starting at $0$ is the closure of the set of points accessible by angle-varying flow lines with rational angles $\theta$ restricted by
\begin{equation}
\label{eqn::angle_assumption}
 -\frac{\pi}{2} = \frac{1}{\chi}\left(\lambda' - \lambda \right) \leq
   \theta \leq
   \frac{1}{\chi}\left(\lambda-\lambda' \right) = \frac{\pi}{2},
\end{equation}
i.e.\ always pointing in a northerly direction, and with positive rational angle change times.  More generally, if $\eta_{\phi_1 \cdots \phi_k}^{\sigma_1 \cdots \sigma_k}$ is any angle-varying flow line and $\sigma$ is an $\eta_{\phi_1 \cdots \phi_k}^{\sigma_1 \cdots \sigma_k}$ stopping time, the light cone $\lightcone(\eta_{\phi_1 \cdots \phi_k},\sigma)$ starting at $\eta_{\phi_1 \cdots \phi_k}^{\sigma_1 \cdots \sigma_k}(\sigma)$ is the closure of the set of points accessible by angle-varying flow lines starting at $\eta_{\phi_1 \cdots \phi_k}^{\sigma_1 \cdots \sigma_k}(\sigma)$ with rational angles restricted by~\eqref{eqn::angle_assumption} and with positive rational angle change times.  The main result of this subsection (Proposition~\ref{prop::light_cone_construction}) states that the range of $\eta'$ stopped when it hits the tip $\eta_{\phi_1 \cdots \phi_k}^{\sigma_1 \cdots \sigma_k}(\sigma)$ of $\eta_{\phi_1 \cdots \phi_k}^{\sigma_1 \cdots \sigma_k}|_{[0,\sigma]}$ is almost surely equal to $\lightcone(\eta_{\phi_1 \cdots \phi_k}^{\sigma_1 \cdots \sigma_k},\sigma)$.  The first step in its proof is Lemma~\ref{lem::light_cone_contains_av}, which states that any angle-varying flow line $\eta_{\theta_1 \cdots \theta_\ell}^{\tau_1 \cdots \tau_\ell}$ whose angles are restricted by~\eqref{eqn::angle_assumption} is almost surely contained in the range of $\eta'$ and that $\eta'$ hits the points of $\eta_{\theta_1 \cdots \theta_\ell}^{\tau_1 \cdots \tau_\ell}$ in reverse chronological order.  Before we prove Lemma~\ref{lem::light_cone_contains_av}, we record the following technical result which gives that non-boundary intersecting angle-varying flow lines with relative angles which are not larger than $\pi$ in magnitude are almost surely simple and determined by $h$.

\begin{figure}[h!]
\begin{center}
\includegraphics[scale=0.85]{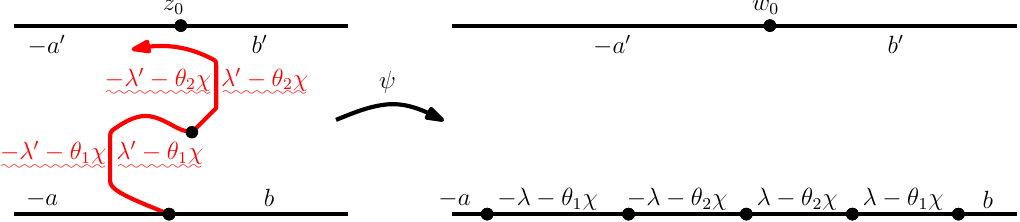}
\caption{\label{fig::light_cone_construction} Suppose that $h$ is a GFF on $\strip$ with the boundary data depicted above.  We assume that $a,b \geq \lambda - \tfrac{\pi}{2} \chi = \lambda'$ and $a',b' \geq \lambda'+\pi \chi$.  The reason for these choices is that the former implies that the counterflow line $\eta'$ of $h$ starting at $z_0$ almost surely first exits $\stripbot$ at $0$ and the latter implies that $\eta'$ intersects $\striptop$ only at $z_0$.  Fix angles $\theta_1,\ldots,\theta_\ell$.  Let $\eta_{\theta_1}$ be the flow line of $h+\theta_1 \chi$, let $\tau_1$ be an $\eta_{\theta_1}$ stopping time, and let $\eta_{\theta_1}^{\tau_1} = \eta_{\theta_1}|_{[0,\tau_1]}$.  For $j \geq 2$, inductively let $\eta_{\theta_1 \cdots \theta_j}^{\tau_1 \cdots \tau_{j-1}}$ be the flow line of $h+\theta_j \chi$ conditional on $\eta_{\theta_1 \cdots \theta_{j-1}}^{\tau_1 \cdots \tau_{j-1}}|_{[0,\tau_{j-1}]}$ starting at $\eta_{\theta_1 \cdots \theta_{j-1}}^{\tau_1 \cdots \tau_{j-1}}(\tau_{j-1})$, let $\tau_j$ be any $\eta_{\theta_1 \cdots \theta_j}^{\tau_1 \cdots \tau_{j-1}}$ stopping time, and let $\eta_{\theta_1 \cdots \theta_j}^{\tau_1 \cdots \tau_j} = \eta_{\theta_1 \cdots \theta_j}^{\tau_1 \cdots \tau_{j-1}}|_{[0,\tau_{j-1}]}$.  The random subset of $\strip$ one obtains by taking the closure of the union of the ranges of $\eta_{\theta_1 \cdots \theta_\ell}$ where the stopping times $\tau_i$ and angles $\theta_i$ with $-\tfrac{\pi}{2} = \tfrac{1}{\chi}(\lambda'-\lambda) \leq \theta_j \leq \tfrac{1}{\chi}(\lambda-\lambda') = \tfrac{\pi}{2}$ for all $1 \leq j \leq k$ vary among appropriate countable dense collections is equal in distribution to the range of the counterflow line starting at $z_0$.  The picture on the right side shows the boundary data of the GFF ($\ell=2$) after applying a conformal transformation $\psi \colon \strip \setminus \eta_{\theta_1 \cdots \theta_k}^{\tau_1 \cdots \tau_k}([0,\tau_k]) \to \strip$ which sends $\eta_{\theta_1 \cdots \theta_k}^{\tau_1 \cdots \tau_k}(\tau_k)$ to $0$ and fixes $\pm \infty$; $w_0 = \psi(z_0) \in \striptop$.
}
\end{center}
\end{figure}

\begin{lemma}
\label{lem::av_simple_determined}
Let $\eta_{\theta_1 \cdots \theta_\ell}^{\tau_1 \cdots \tau_\ell}$ be an angle-varying flow line of $h$ with angles $\theta_i$, $1 \leq i \leq \ell$, with $|\theta_i - \theta_j| \leq \pi$ for all pairs $1 \leq i,j \leq \ell$.  Assume, moreover, that $\eta_{\theta_1 \cdots \theta_\ell}^{\tau_1 \cdots \tau_\ell}$ is non-boundary-intersecting.  Then $\eta_{\theta_1 \cdots \theta_\ell}^{\tau_1 \cdots \tau_\ell}$ is almost surely simple and continuous.  If we assume further that the boundary data for $h+\theta_1 \chi$ is at least $\lambda$ on $(0,\infty)$ and at most $-\lambda+\pi \chi$ on $(-\infty,0)$, then $\eta_{\theta_1 \cdots \theta_\ell}^{\tau_1 \cdots \tau_\ell}$ is almost surely determined by $h$.  The same likewise holds if the boundary data for $h+\theta_1 \chi$ is at least $\lambda-\pi \chi$ on $(0,\infty)$ and at most $-\lambda$ on $(-\infty,0)$.
\end{lemma}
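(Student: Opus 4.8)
The plan is an induction on $\ell$. For the simplicity and continuity claims I would first note that, since those two properties are unaffected by modifying the boundary data of $h$ away from a neighbourhood of the curve (up to any finite time cutoff), Proposition~\ref{prop::gff_abs_continuity} lets us assume the boundary data of $h+\theta_1\chi$ on $\stripbot$ is as large and positive on $(0,\infty)$ and as large and negative on $(-\infty,0)$ as convenient; in particular every fixed-angle flow line of $h$ with angle in $[\min_i\theta_i,\max_i\theta_i]$ is then non-boundary-intersecting and continuous up to its endpoint by Remark~\ref{rem::continuity_non_boundary} and Proposition~\ref{prop::transience}. (The ``determined by $h$'' claim carries its own standing hypothesis on the boundary data of $h+\theta_1\chi$, which we keep throughout.)

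\emph{Simplicity and continuity.} For $\ell=1$, $\eta_{\theta_1}$ is an $\SLE_\kappa(\ul\rho)$ with $\kappa\in(0,4)$, hence simple, and is continuous up to and including its endpoint by Remark~\ref{rem::continuity_non_boundary} and Proposition~\ref{prop::transience}. For the inductive step, set $\hat\eta=\eta_{\theta_1\cdots\theta_{\ell-1}}|_{[0,\tau_{\ell-1}]}$, a simple continuous curve by induction, and condition on it: by Theorem~\ref{thm::coupling_existence} with Propositions~\ref{gff::prop::cond_union_local} and~\ref{gff::prop::cond_union_mean}, the conditional law of $h$ in the unbounded component $U$ of $\strip\setminus\hat\eta$ is that of a GFF with the original boundary data on $\stripbot$ and $\pm\lambda'-\theta_{\ell-1}\chi+\chi\cdot\mathrm{winding}$ on the two sides of $\hat\eta$, and the last piece $\tilde\eta$ of $\eta_{\theta_1\cdots\theta_\ell}$ is the flow line of $h|_U+\theta_\ell\chi$ from the tip $p=\hat\eta(\tau_{\ell-1})$. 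Up until the first time $T_0$ that $\tilde\eta$ hits $\hat\eta$, Remark~\ref{rem::continuity_non_boundary} gives that $\tilde\eta$ is continuous and (after the coordinate change~\eqref{eqn::ac_eq_rel} mapping $U$ to $\strip$) an $\SLE_\kappa$ process, $\kappa\in(0,4)$, hence simple. It remains to show $T_0=\infty$; granting this and Proposition~\ref{prop::transience}, $\eta_{\theta_1\cdots\theta_\ell}=\hat\eta\cup\tilde\eta$ is simple and continuous up to its endpoint. To see $T_0=\infty$ I would compare $\tilde\eta$ with the flow line of $h|_U+\theta_{\ell-1}\chi$ from $p$ --- which, by the domain Markov property of the coupling, continues $\hat\eta$, so that $\hat\eta$ together with it is simple by the inductive hypothesis (applied with a larger stopping time for the $(\ell-1)$st segment) --- and with the flow lines of $h|_U+(\theta_{\ell-1}\pm\pi)\chi$ from $p$, which do not hit $\hat\eta$ by the discussion around Figure~\ref{fig::pi_turn}. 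Since $|\theta_i-\theta_j|\le\pi$, Proposition~\ref{prop::monotonicity_non_boundary} sandwiches $\tilde\eta$ between these curves and so confines it to the part of $U$ disjoint from $\hat\eta$; equivalently, the relative angle between $\tilde\eta$ and each segment of $\hat\eta$ is at most $\pi$ in absolute value, so no segment of $\eta_{\theta_1\cdots\theta_\ell}$ can cross --- or, in the borderline case of Figure~\ref{fig::pi_turn}, even touch --- a previously drawn one.

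\emph{Determined by $h$.} For $\ell=1$, the hypothesis on the boundary data of $h+\theta_1\chi$ is exactly~\eqref{eqn::non_boundary_intersecting_rho} or its left--right mirror, so Proposition~\ref{prop::duality_many_force_points} and the proof of Theorem~\ref{thm::coupling_uniqueness} in that case give that $\eta_{\theta_1}$, hence $\eta_{\theta_1}|_{[0,\tau_1]}$, is $\sigma(h)$-measurable. For the inductive step, $\hat\eta$, $U$, and $h|_U$ are $\sigma(h)$-measurable by induction, so it suffices to show $\tilde\eta$ is determined by $h|_U$. Mapping $U$ to $\strip$ by~\eqref{eqn::ac_eq_rel} with $p\mapsto 0$ and the two ends fixed returns us to the setting of Figure~\ref{fig::light_cone_construction}, except that the boundary data of the image field has extra jumps, of size $(\theta_i-\theta_j)\chi$ and hence of absolute value at most $\pi\chi=\tfrac{1}{4}(4-\kappa)\lambda$ by~\eqref{eqn::fullrevolutionrho}, at the images of the angle-change points and of $p$. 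Because the standing hypothesis keeps the boundary data of $h$ large and negative to the left and large and positive to the right, these bounded jumps do not move us out of the regime to which the Dub\'edat argument of Section~\ref{sec::dubedat} applies; running that argument --- Lemma~\ref{lem::hit_in_order} and Proposition~\ref{prop::duality}, with the first-exit inputs supplied by Lemmas~\ref{lem::hitting_single_point} and~\ref{lem::hitting_interval} and the wrap-around exclusion by Lemma~\ref{lem::flow_cannot_hit} --- identifies $\tilde\eta$ with the left (resp.\ right) boundary of the counterflow line of $h|_U+(\theta_\ell-\tfrac{\pi}{2})\chi$ (resp.\ $h|_U+(\theta_\ell+\tfrac{\pi}{2})\chi$), taken conditionally independent of $\tilde\eta$ given $h|_U$, whence $\tilde\eta$ is $\sigma(h|_U)$-measurable and the induction closes.

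\emph{The main obstacle} is the boundary-data bookkeeping in the last step: one must propagate the constants $\pm\lambda$, $\pm\lambda'$, the $\chi\cdot\mathrm{winding}$ correction produced by conditioning on a flow line, and the angle jumps $(\theta_i-\theta_j)\chi$ through each conditioning and each conformal map, and verify that under the standing hypothesis the data stays in the window required by the lemmas of Section~\ref{subsec::intersection} (roughly ``at most $-\lambda$ to the left and at least $\lambda$ to the right'' for the flow line, ``at most $-\lambda'$ to the left and at least $\lambda'$ to the right'' for the counterflow line, with the appropriate $\pm\pi\chi$ margins). The condition $|\theta_i-\theta_j|\le\pi$ --- equivalently, that every angle jump has absolute value at most $\pi\chi$ --- is exactly what keeps this window wide enough, and, via Figure~\ref{fig::pi_turn}, is also the borderline case that makes the sandwiching argument for simplicity go through.
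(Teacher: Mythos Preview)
Your induction on $\ell$ is the right skeleton and matches the paper. But the paper handles both branches of the inductive step more efficiently, and your ``determined by $h$'' argument has a real gap.

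\emph{Simplicity and continuity.} The paper simply observes that after conditioning on $\hat\eta = \eta_{\theta_1\cdots\theta_{\ell-1}}|_{[0,\tau_{\ell-1}]}$ and conformally mapping the complement back to $\strip$ (as in Figure~\ref{fig::light_cone_construction}), the last piece is an $\SLE_\kappa(\ul\rho)$ whose force-point partial sums, read off from the boundary values $\lambda + (\theta_\ell - \theta_j)\chi$ on the right and $-\lambda + (\theta_\ell - \theta_j)\chi$ on the left, are all $\ge \tfrac{\kappa}{2}-2$ precisely because $|\theta_\ell-\theta_j|\le\pi$ for every $j$. Remark~\ref{rem::continuity_non_boundary} then gives simplicity and continuity in one stroke. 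Your sandwiching via flow lines at $\theta_{\ell-1}\pm\pi$ is the same fact in disguise (those are exactly the borderline $\rho = \tfrac{\kappa}{2}-2$ cases), but you invoke Figure~\ref{fig::pi_turn} --- which is informal at this point in the paper --- and Proposition~\ref{prop::monotonicity_non_boundary}, whose hypothesis~\eqref{eqn::theta_assumption} you do not verify in the mapped domain. The sandwich also directly addresses only the last segment of $\hat\eta$; for the earlier segments you fall back on ``equivalently, the relative angle\ldots,'' which is just the paper's direct observation.

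\emph{Determined by $h$.} Here there is a genuine gap. After the conformal map, the boundary data of $\tilde h + \theta_\ell\chi$ on the right of $0$ includes the values $\lambda + (\theta_\ell - \theta_j)\chi$ for all $j<\ell$, which can be as small as $\lambda - \pi\chi$; symmetrically the left side can be as large as $-\lambda + \pi\chi$. This satisfies \emph{neither} the hypothesis of Proposition~\ref{prop::duality_many_force_points} (``$\ge\lambda$ on the right, $\le -\lambda+\pi\chi$ on the left'') nor its mirror (``$\ge\lambda-\pi\chi$ on the right, $\le -\lambda$ on the left''): the asymmetry in those conditions is essential, and you cannot take the $\pi\chi$ slack on both sides simultaneously. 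Concretely, the counterflow line of $\tilde h + (\theta_\ell\mp\tfrac{\pi}{2})\chi$ need not first exit at $0$, because the criterion of Lemma~\ref{lem::hitting_single_point} fails on one side. Your appeal to the ``standing hypothesis'' does not help: that hypothesis controls only the portion of $\stripbot$ coming from the original boundary of $\strip$, not the curve-induced portion coming from the two sides of $\hat\eta$, and it is the latter that causes the failure.

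The paper closes this with Proposition~\ref{prop::gff_abs_continuity} rather than by re-running Dub\'edat. Near $0$ one of the two asymmetric conditions \emph{is} satisfied (choose the version according to the sign of $\theta_\ell-\theta_{\ell-1}$). Since the last piece is non-boundary-intersecting, one can modify the boundary data on $\stripbot\setminus[-\epsilon,\epsilon]$ to obtain a field for which the Section~\ref{subsec::uniqueness_non_boundary_intersecting} case applies; absolute continuity of the restricted laws then transfers ``determined by the field'' back to $\tilde h$. This is what the paper's terse ``by induction and Proposition~\ref{prop::gff_abs_continuity}'' encodes.
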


The exact conditions on the $\theta_j$ for $\eta_{\theta_1 \cdots \theta_\ell}^{\tau_1 \cdots \tau_\ell}$ to be non-boundary-intersecting are as follows.  In order for the path not to $\stripbot$, we need both
\begin{equation}
\label{eqn::not_hit_bottom}
\frac{a - \theta_j \chi}{\lambda} - 1 \geq \frac{\kappa}{2}-2 \quad\text{and}\quad \frac{b+ \theta_j \chi}{\lambda} -1 \geq  \frac{\kappa}{2}-2.
\end{equation}
In order for the path not to hit $\striptop$ other than at $z_0$, we need both
\begin{equation}
\label{eqn::not_hit_top}
\frac{a' - (\theta_j+\pi) \chi}{\lambda} - 1 \geq \frac{\kappa}{2}-2 \quad\text{and}\quad \frac{b+ (\theta_j-\pi) \chi}{\lambda} -1 \geq  \frac{\kappa}{2}-2.
\end{equation}
Indeed, these two conditions together with the condition that $|\theta_i-\theta_j| \leq \pi$ for all pairs $1 \leq i,j \leq \ell$ imply that all of the partial sums of the force points which are to the left and right of the driving function exceed $\tfrac{\kappa}{2}-2$.  Note that for any fixed choice of $\theta_j$ for $1 \leq j \leq \ell$, we can always pick $a,b,a',b'$ large enough so that~\eqref{eqn::not_hit_bottom} and~\eqref{eqn::not_hit_top} hold.

\begin{proof}[Proof of Lemma~\ref{lem::av_simple_determined}]
The proof is by induction on $\ell \geq 1$.  Suppose $\ell=1$.  That the path is simple and continuous in this case follows from the mutual absolute continuity of the path to usual $\SLE_\kappa$, $\kappa \in (0,4)$ (see Section~\ref{sec::sle} and Remark~\ref{rem::continuity_non_boundary}).  Suppose the result holds for angle-varying paths with $\ell-1 \geq 0$ angle changes.  To see it holds when there are $\ell$ angle changes, we condition on the realization of the path until the $(\ell-1)$st angle change and conformally map back to $\strip$ as in Figure~\ref{fig::light_cone_construction}.  Due to the restriction $|\theta_i - \theta_j| \leq \pi$, it follows that the image of the path after the $(\ell-1)$st angle change is a non-boundary intersecting $\SLE_\kappa(\ul{\rho})$, so the induction step clearly follows.  The final claim of the lemma for $\ell=1$ follows from the special case of Theorem~\ref{thm::coupling_uniqueness} we proved in Section~\ref{subsec::uniqueness_non_boundary_intersecting}.  That it holds for larger $\ell$ also follows by induction and Proposition~\ref{prop::gff_abs_continuity}.
\end{proof}

\begin{lemma}
\label{lem::light_cone_contains_av}
Let $\eta_{\theta_1 \cdots \theta_\ell}^{\tau_1 \cdots \tau_\ell}$ be an angle-varying flow line of $h$ with angles $\theta_i$, $1 \leq i \leq \ell$, satisfying~\eqref{eqn::angle_assumption}.  The counterflow line $\eta'$ of $h$ starting at $z_0$ almost surely contains $\eta_{\theta_1\cdots \theta_\ell}^{\tau_1 \cdots \tau_\ell}$ and, moreover, hits the points of $\eta_{\theta_1 \cdots \theta_\ell}^{\tau_1 \cdots \tau_\ell}$ in reverse chronological order.
\end{lemma}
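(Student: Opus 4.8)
I would argue by induction on $\ell$, keeping $\eta'$ and all the flow lines involved coupled so as to be conditionally independent given $h$ (Proposition~\ref{gff::prop::cond_union_local}). The core reduction, which I would apply at every stage of the induction, is this: if $\sigma$ is a stopping time for the angle‑varying flow line $\eta_{\theta_1\cdots\theta_\ell}$, then $\eta'$ first exits $\strip\setminus\eta_{\theta_1\cdots\theta_\ell}([0,\sigma])$ precisely at the tip $\eta_{\theta_1\cdots\theta_\ell}(\sigma)$. Granting this for $\sigma$ ranging over a countable dense set of stopping times (say the positive rationals, in a parameterization in which $\eta_{\theta_1\cdots\theta_\ell}$ is continuous --- available from Remark~\ref{rem::continuity_non_boundary} and Lemma~\ref{lem::av_simple_determined}) and using the a.s.\ continuity of $\eta'$ (Remark~\ref{rem::continuity_non_boundary}), one obtains both that $\eta'$ contains $\eta_{\theta_1\cdots\theta_\ell}$ and that it traverses it in reverse chronological order, exactly as at the end of the proof of Lemma~\ref{lem::hit_in_order}.

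\noindent For the base case $\ell=1$, when $\theta_1=\pm\tfrac{\pi}{2}$ this is precisely the generalized duality statement (Lemma~\ref{lem::hit_in_order} together with Proposition~\ref{prop::duality_many_force_points}): $\eta_{\pi/2}$ and $\eta_{-\pi/2}$ are the left and right boundaries of $\eta'$ and are traced by $\eta'$ in reverse. For intermediate $\theta_1\in(-\tfrac{\pi}{2},\tfrac{\pi}{2})$ I would fix a stopping time $\tau$ for $\eta_{\theta_1}$, condition on the local set $\eta_{\theta_1}([0,\tau])$, and apply the conformal change of coordinates carrying $\strip\setminus\eta_{\theta_1}([0,\tau])$ back to $\strip$ with $\eta_{\theta_1}(\tau)\mapsto 0$, $z_0\mapsto w_0\in\striptop$, and $\pm\infty$ fixed. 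By Propositions~\ref{gff::prop::cond_union_local} and~\ref{gff::prop::cond_union_mean} this returns us to the Figure~\ref{fig::light_cone_construction} setting with $\tau=0$; moreover, by monotonicity (Proposition~\ref{prop::monotonicity_non_boundary}) $\eta_{\theta_1}$ lies weakly between $\eta_{\pi/2}$ and $\eta_{-\pi/2}$ and hence inside the closed hull of $\eta'$. One then checks, using the hitting and continuation lemmas of Section~\ref{subsec::intersection} (Lemmas~\ref{lem::hitting_single_point} and~\ref{lem::hitting_interval}) applied to $\eta'$ and the fact that $\eta'$ cannot escape that hull, that $\eta'$ reaches the tip before any other point of $\partial(\strip\setminus\eta_{\theta_1}([0,\tau]))$; the possibility that $\eta'$ instead wraps around and touches $\eta_{\theta_1}([0,\tau])$ out of chronological order is ruled out exactly as in the wrap‑around argument of Proposition~\ref{prop::duality} (see Figure~\ref{fig::wrap_around}), which rests on Lemma~\ref{lem::flow_cannot_hit}.

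\noindent For the inductive step I would condition on the initial piece $\eta_{\theta_1\cdots\theta_{\ell-1}}([0,\tau_{\ell-1}])$; by the inductive hypothesis $\eta'$ has already swallowed it in reverse order, and in the remaining domain $D$ (the component of the complement of the appropriate hull that contains the germ of $\eta_{\theta_1\cdots\theta_\ell}$ after time $\tau_{\ell-1}$) the continuation of $\eta'$ is again a counterflow line, while $\eta_{\theta_1\cdots\theta_\ell}$ continues as a flow line of angle $\theta_\ell$. Mapping $D$ conformally to $\strip$ and invoking Propositions~\ref{gff::prop::cond_union_local}--\ref{gff::prop::cond_union_mean}, the hypothesis $|\theta_i-\theta_j|\le\pi$ and Lemma~\ref{lem::av_simple_determined} guarantee that the resulting force‑point weights fall in the range where Figure~\ref{fig::light_cone_construction} (and hence the base case) applies, which finishes the step. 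The main obstacle throughout is that, after conditioning on a flow line up to a stopping time, one has no a priori control on the conditional mean of $h$ at the points where $\eta'$ and the flow line touch --- Proposition~\ref{gff::prop::cond_union_mean} governs it only away from such points --- so one cannot simply read off ``$\eta'$ exits at the tip'' from Lemma~\ref{lem::hitting_single_point}; handling this is exactly what forces the detour through monotonicity, the wrap‑around argument, and Lemma~\ref{lem::flow_cannot_hit}, and is the delicate part of the proof.
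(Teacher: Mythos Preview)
Your plan would work, but it is substantially more elaborate than needed, and the ``main obstacle'' you identify is not actually present.

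The paper's proof is direct, with no induction on $\ell$ and no wrap-around argument. One fixes an arbitrary stopping time $\sigma$ for $\eta_{\theta_1\cdots\theta_\ell}$, conditions on $\eta_{\theta_1\cdots\theta_\ell}([0,\sigma])$, and conformally maps $\strip\setminus\eta_{\theta_1\cdots\theta_\ell}([0,\sigma])$ back to $\strip$ sending the tip to $0$ (as in Figure~\ref{fig::light_cone_construction}). The crucial observation --- which you never isolate --- is that the angle restriction $\theta_i\in[-\tfrac{\pi}{2},\tfrac{\pi}{2}]$ translates, for every $i$, into
\[
-\lambda-\theta_i\chi\le-\lambda'\quad\text{and}\quad\lambda-\theta_i\chi\ge\lambda',
\]
and these are exactly the inequalities on the $\stripbot$ boundary data that Lemma~\ref{lem::hitting_single_point} requires in its counterflow-line version. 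Hence $\psi(\eta')$ exits $\strip$ precisely at $0$, i.e.\ $\eta'$ first exits the complement of the flow-line segment at the tip. Running $\sigma$ over a countable dense set and invoking continuity finishes the proof as you describe. No appeal to monotonicity, duality, or the wrap-around device of Proposition~\ref{prop::duality} is needed: that machinery was built to show a flow line stays on one side of $\eta'$, which is a different question.

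Your worry about the conditional mean at points where $\eta'$ touches the flow line is misplaced here. In this argument one conditions \emph{only} on $\eta_{\theta_1\cdots\theta_\ell}([0,\sigma])$ and then studies $\eta'$ as the counterflow line of the conditional field in the complement, up until its first exit from that domain. Before that exit time $\eta'$ has not yet met the flow line, so Propositions~\ref{gff::prop::cond_union_local} and~\ref{gff::prop::cond_union_mean} furnish the boundary data of Figure~\ref{fig::light_cone_construction} with no intersection pathologies, and Theorem~\ref{thm::martingale} (exactly as in the proof of Lemma~\ref{lem::hit_in_order}) identifies $\psi(\eta')$ as the appropriate $\SLE_{\kappa'}(\ul{\rho})$. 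One therefore \emph{can} simply read off ``$\eta'$ exits at the tip'' from Lemma~\ref{lem::hitting_single_point}; the detour you propose is unnecessary.
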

\begin{proof}
We first note that $\eta_{\theta_1 \cdots \theta_\ell}^{\tau_1 \cdots \tau_\ell}$ is almost surely continuous by Lemma~\ref{lem::av_simple_determined}.  Fix an $\eta_{\theta_1 \cdots \theta_\ell}^{\tau_1 \cdots \tau_\ell}$ stopping time $\sigma$ such that $\eta_{\theta_1 \cdots \theta_\ell}^{\tau_1 \cdots \tau_\ell}([0,\sigma])$ almost surely does not contain $z_0$.  We apply a conformal transformation $\psi \colon \strip \setminus \eta_{\theta_1 \cdots \theta_\ell}^{\tau_1 \cdots \tau_\ell}([0,\sigma]) \to \strip$ which fixes $\pm \infty$ and sends $\eta_{\theta_1 \cdots \theta_\ell}^{\tau_1 \cdots \tau_\ell}(\sigma)$ to $0$; the boundary data for the GFF which describes the evolution of $\psi(\eta')$ is as in the right panel of Figure~\ref{fig::light_cone_construction} (with the obvious generalization from $\ell=2$ to other values of $\ell$).  Let $w_0 = \psi(z_0)$.  Our hypotheses on $\theta_i$ imply
\[ -\lambda - \theta_i \chi \leq -\lambda' \quad\text{and}\quad \lambda -  \theta_i \chi \geq \lambda' \quad\text{for all}\quad 1 \leq i \leq \ell.\]
Lemma~\ref{lem::hitting_single_point} (see also Figure~\ref{fig::hittingsinglepoint}) thus implies that $\psi(\eta')$ does not hit $\stripbot \setminus \{0\}$ and exits $\strip$ at $0$.  This implies that $\eta'$ almost surely exits $\strip \setminus \eta_{\theta_1 \cdots \theta_\ell}^{\tau_1 \cdots \tau_\ell}([0,\sigma])$ at $\eta_{\theta_1 \cdots \theta_\ell}^{\tau_1 \cdots \tau_\ell}(\sigma)$.  By choosing a dense collection of stopping times (e.g., the positive rationals) and using the continuity of $\eta'$ and $\eta_{\theta_1 \cdots \theta_\ell}^{\tau_1 \cdots \tau_\ell}$ we conclude that the range of $\eta'$ contains $\eta_{\theta_1 \cdots \theta_\ell}^{\tau_1 \cdots \tau_\ell}$.  Moreover, the proof clearly also implies that the points of $\eta_{\theta_1 \cdots \theta_\ell}^{\tau_1 \cdots \tau_\ell}$ are hit by $\eta'$ in reverse chronological order.
\end{proof}

\begin{remark}
\label{rem::light_cone_contains_av_general}
The proof of Lemma~\ref{lem::light_cone_contains_av} requires two inputs:
\begin{enumerate}[(i)]
\item $\eta'$ almost surely exits $\strip$ at $0$ and
\item $\eta_{\theta_1 \cdots \theta_\ell}^{\tau_1 \cdots \tau_\ell}$ and $\eta'$ are almost surely continuous paths.
\end{enumerate}
The reason that we chose our boundary data so that $\eta'$ does not intersect $\stripbot \setminus \{0\}$ was to ensure the continuity of $\eta'$.  Upon proving Theorem~\ref{thm::continuity} in Section~\ref{subsec::many_boundary_force_points} and Section~\ref{subsec::counterflow}, we will have shown that (i) and (ii) hold whenever $\eta'$ and $\eta_{\theta_1 \cdots \theta_\ell}^{\tau_1 \cdots \tau_\ell}$ make sense (the SDE for the driving functions of these processes has a solution), so that Lemma~\ref{lem::light_cone_contains_av} also holds in the same generality.
\end{remark}

\begin{figure}[h!]
\begin{center}
\includegraphics[scale=0.85]{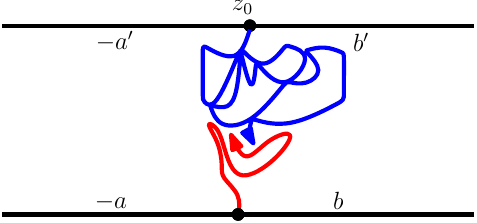}
\caption{\label{fig::light_cone_back_and_forth} Consider the GFF with the boundary data depicted in the illustration above with $a,b \geq \lambda - \tfrac{\pi}{2} \chi = \lambda'$ and $a',b' \geq \lambda' + \pi \chi$ and let $\eta'$ be the counterflow line starting at $z_0$.  Fix any $\eta'$ stopping time $\tau'$.  To show that the random set described in Figure~\ref{fig::light_cone_construction} almost surely contains $\eta'(\tau')$, we consider flow lines of the form $\eta_{\theta_1 \cdots \theta_j}^{\tau_1 \cdots \tau_j}$ where $\theta_j = (-1)^{j+1} \tfrac{1}{\chi}(\lambda-\lambda') = (-1)^{j+1} \tfrac{\pi}{2}$.  With this choice, $\eta_{\theta_1 \cdots \theta_j}^{\tau_1 \cdots \tau_j}$ can hit the left but not the right side of $\eta'$ when $j$ is odd and vice-versa when $j$ is even.  We choose the stopping times $\tau_j$ so that $\eta_{\theta_1 \cdots \theta_j}^{\tau_1 \cdots \tau_j}$ gets progressively closer to the left side if $j$ is odd and to the right side if $j$ is even.  Taking a limit as the number of angles tends to $\infty$, the corresponding curves almost surely accumulate at $\eta'(\tau')$.
}
\end{center}
\end{figure}

\begin{figure}[h!]
\begin{center}
\subfigure{
\includegraphics[scale=0.85,page=1]{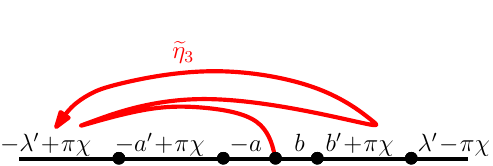}}
\subfigure{
\includegraphics[scale=0.85,page=2]{figures/light_cone_back_and_forth_proof2.pdf}}
\caption{\label{fig::light_cone_back_and_forth_proof} In order to prove that the construction in Figure~\ref{fig::light_cone_back_and_forth} accumulates at $\eta'(\tau')$, we apply a conformal map $\psi$ which takes the unbounded connected component of  $\strip \setminus \eta'([0,\tau'])$ to $\h$ and which sends $\eta'(\tau')$ to $\infty$ and fixes $0$.  Then it suffices to show that $\wt{\eta}_j := \psi(\eta_{\theta_1 \cdots \theta_j}^{\tau_1 \cdots \tau_j})$ is almost surely unbounded as $j \to \infty$.  To prove this, it suffices so show that the amount of capacity time it takes $\wt{\eta}_j$ to traverse from left to right (resp.\ right to left) if $j$ is even (resp.\ odd) is stochastically bounded from below by a non-negative random variable whose law has positive mean.}
\end{center}
\end{figure}

Lemma~\ref{lem::light_cone_contains_av} implies that $\eta'$ almost surely contains $\lightcone(\eta_{\phi_1 \cdots \phi_k}^{\sigma_1 \cdots \sigma_k},\sigma)$ for any $\phi_1,\ldots,\phi_k$ satisfying~\eqref{eqn::angle_assumption}.  We now turn to prove the reverse inclusion.

\begin{proposition}
\label{prop::light_cone_construction}
Let $\eta_{\phi_1 \cdots \phi_k}^{\sigma_1 \cdots \sigma_k}$ be an angle-varying flow line of $h$ with angles $\phi_i$ satisfying~\eqref{eqn::angle_assumption} and let $\sigma$ be any $\eta_{\phi_1 \cdots \phi_k}^{\sigma_1 \cdots \sigma_k}$ stopping time.  The random set $\lightcone(\eta_{\phi_1 \cdots \phi_k}^{\sigma_1 \cdots \sigma_k},\sigma)$ obtained by taking the closure of the union of $\eta_{\phi_1 \cdots \phi_k \theta_1 \cdots \theta_\ell}^{\sigma_1 \cdots \sigma_k \tau_1 \cdots \tau_\ell}([0,\tau_\ell])$ as $\ell$ ranges over $\N$ and $\theta_1,\ldots,\theta_\ell$ range over any countable dense set of angles satisfying~\eqref{eqn::angle_assumption}, and $\sigma < \tau_1 < \cdots < \tau_\ell$ range over any dense subset of $(\sigma,\infty)$ is almost surely equal to the range of the counterflow line $\eta'$ of $h$ starting at $z_0$ stopped upon first hitting $\eta_{\phi_1 \cdots \phi_k}^{\sigma_1 \cdots \sigma_k}(\sigma)$ (which is the same time that it hits $\eta_{\phi_1 \cdots \phi_k}^{\sigma_1 \cdots \sigma_k}([0,\sigma])$).
\end{proposition}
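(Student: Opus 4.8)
The plan is to prove the two inclusions separately, the first being short and the second carrying the real content. For $\lightcone(\eta_{\phi_1 \cdots \phi_k},\sigma)\subseteq \eta'$, observe that any curve $\eta_{\phi_1 \cdots \phi_k\theta_1 \cdots \theta_\ell}$ appearing in the definition of $\lightcone(\eta_{\phi_1 \cdots \phi_k},\sigma)$ is itself an angle-varying flow line of $h$ all of whose angles $\phi_1,\dots,\phi_k,\theta_1,\dots,\theta_\ell$ satisfy \eqref{eqn::angle_assumption}. Hence Lemma~\ref{lem::light_cone_contains_av} applies to it: $\eta'$ contains its range and hits its points in reverse chronological order. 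Since $\eta_{\phi_1 \cdots \phi_k}(\sigma)$ lies on this curve and the portion traced after that point is exactly $\eta_{\phi_1 \cdots \phi_k\theta_1 \cdots \theta_\ell}([0,\tau_\ell])$, reverse chronological order gives that $\eta_{\phi_1 \cdots \phi_k\theta_1 \cdots \theta_\ell}([0,\tau_\ell])$ is contained in the image of $\eta'$ run up to the first time it hits $\eta_{\phi_1 \cdots \phi_k}(\sigma)$; applying Lemma~\ref{lem::light_cone_contains_av} to $\eta_{\phi_1 \cdots \phi_k}$ alone also shows this first hitting time coincides with the first time $\eta'$ hits $\eta_{\phi_1 \cdots \phi_k}([0,\sigma])$. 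Taking the closure over the countable dense family of angles and times yields the inclusion.

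For the reverse inclusion it suffices to show that for each $\eta'$-stopping time $\tau'$ such that $\eta'$ a.s.\ has not yet hit $\eta_{\phi_1 \cdots \phi_k}(\sigma)$ by time $\tau'$, the point $\eta'(\tau')$ lies in $\lightcone(\eta_{\phi_1 \cdots \phi_k},\sigma)$ a.s.; running $\tau'$ over a countable dense collection of such times and using the continuity of $\eta'$ (from Remark~\ref{rem::continuity_non_boundary} and the boundary-data hypotheses) and the fact that $\lightcone(\eta_{\phi_1 \cdots \phi_k},\sigma)$ is closed then gives $\eta'\subseteq \lightcone(\eta_{\phi_1 \cdots \phi_k},\sigma)$. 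Condition on $\eta_{\phi_1 \cdots \phi_k}([0,\sigma])$ and $\eta'([0,\tau'])$; the hypotheses on $a,b,a',b'$ and the restriction \eqref{eqn::angle_assumption} keep us in a configuration to which Proposition~\ref{prop::flow_counterflow_left_right} applies after conformally mapping back to $\strip$. The construction, following Figure~\ref{fig::light_cone_back_and_forth}, is to build an angle-varying flow line $\eta_{\phi_1 \cdots \phi_k\theta_1 \cdots \theta_j}$ with the extreme alternating angles $\theta_j=(-1)^{j+1}\tfrac{\pi}{2}$ (we are free to include $\pm\tfrac{\pi}{2}$ in the dense set of angles, which is what the closure operation permits), choosing the angle-change time $\tau_j$ so that the $j$th leg runs until it comes within distance $2^{-j}$ of the left side of $\eta'([0,\tau'])$ when $j$ is odd and of the right side when $j$ is even. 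By Proposition~\ref{prop::flow_counterflow_left_right} and Lemma~\ref{lem::flow_counterflow_not_hit_right}, a flow line of angle $+\tfrac{\pi}{2}$ can touch the left but never the right side of $\eta'$, and symmetrically for $-\tfrac{\pi}{2}$, so each $\tau_j$ is a.s.\ finite and the curve is forced to oscillate across the region pinched between $\eta'([0,\tau'])$ and $\partial\strip$, approaching $\eta'(\tau')$.

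To make this limit rigorous, apply a conformal map $\psi$ sending the unbounded component of $\strip\setminus\eta'([0,\tau'])$ to $\h$ with $\psi(\eta'(\tau'))=\infty$ and $\psi(0)=0$, and set $\wt\eta_j:=\psi(\eta_{\phi_1 \cdots \phi_k\theta_1 \cdots \theta_j})$. Because the successive legs approach alternate sides of $\eta'([0,\tau'])$ ever more closely, it is enough to prove that $\wt\eta_j$ is a.s.\ unbounded as $j\to\infty$, i.e.\ that $\hcap(\wt\eta_j)\to\infty$. As in Figure~\ref{fig::light_cone_back_and_forth_proof}, the key estimate is that, conditioned on everything traced before the $j$th leg, the half-plane capacity the $j$th leg accumulates in traversing from one side of $\psi(\eta'([0,\tau']))$ to the other stochastically dominates a fixed nonnegative random variable of strictly positive mean, uniformly in $j$. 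This is where the absolute-continuity theory of the GFF (Proposition~\ref{prop::gff_abs_continuity}), the comparison with $\SLE_\kappa(\ul\rho)$ of Section~\ref{sec::sle}, and the boundary-hitting results (Lemma~\ref{lem::hitting_and_bouncing}, Remark~\ref{rem::flow_cannot_hit}) enter: near the relevant boundary arcs the $j$th leg looks, after a further conformal map and a Radon--Nikodym change, like a fixed $\SLE_\kappa(\ul\rho)$ whose law is scale invariant, so with uniformly positive probability it covers a definite amount of capacity before returning, giving a uniform lower bound on the conditional expected capacity gain. Summing these conditional expectations (the legs are not independent, but the bound is uniform) forces $\hcap(\wt\eta_j)\to\infty$, whence $\wt\eta_j$ leaves every compact set and the original curves accumulate at $\eta'(\tau')$. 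I expect the uniform capacity estimate to be the main obstacle: one must rule out the degenerate possibility that the legs shrink in capacity fast enough that the curve converges to a boundary point of $\eta'([0,\tau'])$ other than $\eta'(\tau')$, which requires a careful scale-invariant comparison near the tip of $\eta'([0,\tau'])$ together with the fact (from Proposition~\ref{prop::flow_counterflow_left_right}) that a leg cannot get trapped against the wrong side of $\eta'$.
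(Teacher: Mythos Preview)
Your overall architecture matches the paper's: the first inclusion via Lemma~\ref{lem::light_cone_contains_av}, the reduction of the reverse inclusion to showing $\eta'(\tau')\in\lightcone(\eta_{\phi_1\cdots\phi_k},\sigma)$ for a dense set of stopping times, the conformal map sending $\eta'(\tau')$ to $\infty$, and the alternating extreme angles $\theta_j=(-1)^{j+1}\tfrac{\pi}{2}$ are all exactly what the paper does. (One small correction: the paper's conformal map removes $\eta_{\phi_1\cdots\phi_k}([0,\sigma])$ as well as $K_{\tau'}'$, sending the tip $\eta_{\phi_1\cdots\phi_k}(\sigma)$ rather than $0$ to the origin; in the general case $\sigma>0$ this matters.)

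The genuine gap is precisely the point you flag as the main obstacle: the uniform lower bound on the capacity gained by each leg. Your heuristic---that after a conformal map and Radon--Nikodym change each leg ``looks like a fixed $\SLE_\kappa(\ul\rho)$''---does not survive the fact that every angle change adds two new force points, so by the $j$th leg the driving function feels $O(j)$ force points and there is no fixed process to compare to. The paper's mechanism for uniformity is different and specific: because the angles alternate between $+\tfrac{\pi}{2}$ and $-\tfrac{\pi}{2}$, the weights $\rho^{i,q}$ created by successive angle changes alternate in sign with equal magnitude. When the driving function $\wt W_t^j$ is in $[-1,1]$ and all but one force point are at distance $\geq 1$, the drift contributions from these force points telescope in pairs, leaving a residual bounded by a constant independent of $j$ (this is \eqref{eqn::angle_varying_rho_bounds}--\eqref{eqn::angle_varying_rho_bounds2}). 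That bound feeds into Lemma~\ref{lem::sle_kappa_rho_crossing}, a Girsanov estimate showing that with probability $\geq\rho_0>0$ the driving function takes at least unit time to exit $[-1,1]$, uniformly in $j$. The paper then chooses the angle-change time $\wt\tau_j$ to be when the driving function reaches $\pm2$ (not your distance-$2^{-j}$ rule), so that after recentering every leg starts in the same configuration; the strong law of large numbers applied to the resulting i.i.d.-dominated increments forces $\wt\tau_{2j}\to\infty$, hence unboundedness by the capacity--diameter inequality. Without the alternating-sign cancellation your uniformity claim has no content, and an absolute-continuity argument of the kind you sketch would see Radon--Nikodym derivatives blowing up with $j$.
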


We pause to give an overview of the proof of Proposition~\ref{prop::light_cone_construction}.  Fix an $\eta'$ stopping time $\tau'$.  We will prove for every $\epsilon > 0$ there exists an angle-varying flow line with angles restricted by~\eqref{eqn::angle_assumption} whose range comes within distance $\epsilon$ of $\eta'(\tau')$ (see Figure~\ref{fig::light_cone_back_and_forth} for an illustration).  By conformally mapping the unbounded connected component of $\strip \setminus \eta'([0,\tau'])$ to $\h$ with $\eta'(\tau')$ mapped to $\infty$ and $0$ fixed, it suffices to show for every $R > 0$ there exists an angle-varying flow line with angles restricted by~\eqref{eqn::angle_assumption} of the corresponding field on $\h$ whose diameter is at least $R$.  By rescaling, we may assume that the images of the left and right sides of $z_0$ are contained in $\R \setminus (-3,3)$.  Consider the path $\wt{\eta}_1$ which begins by flowing at angle $\tfrac{\pi}{2}$, i.e.\ maximally to the left.  By the choice of boundary data, $\wt{\eta}_1$ first hits $\R$ after time $0$ in $(-\infty,-3]$.  We let $\wt{\eta}_2$ be the path which starts at the tip of $\wt{\eta}_1$ when it gets close to $(-\infty,-3]$ and then flows at angle $-\tfrac{\pi}{2}$, i.e.\ maximally to the right.  Again by the choice of boundary data, $\wt{\eta}_2$ first hits $\R$ in $[3,\infty)$.  We inductively let $\wt{\eta}_j$ be the path which flows at angle $(-1)^{j+1} \tfrac{\pi}{2}$ starting at the tip of $\wt{\eta}_{j-1}$ when it gets close to $(-1)^j[3,\infty)$.  This is depicted in Figure~\ref{fig::light_cone_back_and_forth_proof}.  To show that the paths $\wt{\eta}_j$ are unbounded as $j \to \infty$, it suffices to show that the amount of capacity time it takes for $\wt{\eta}_j$ to traverse from left to right (resp.\ right to left) if $j$ is even (resp.\ odd) is stochastically bounded from below by a non-negative random variable with positive mean (Lemma~\ref{lem::sle_kappa_rho_crossing} serves to make this point precise).  The challenge in showing this is that each change of angle leads to the creation of two additional force points.  The amount of force applied to the driving function, nevertheless, remains bounded because the force alternates in sign but has the same magnitude with each angle change.

\begin{lemma}
\label{lem::sle_kappa_rho_crossing}
Suppose that $(W_t,V_t^{i,q})$ is an $\SLE_\kappa(\ul{\rho}^L;\ul{\rho}^R)$ process with $W_0 = 0$ and force points located $(\ul{x}^L;\ul{x}^R)$.  Let $\tau$ be the first time that $W$ exits the interval $[-1,1]$.  Assume there exists $C > 0$ such that
\[ \left|\sum_{i=1}^k \frac{\rho^{i,L}}{W_t - V_t^{i,L}}\right| + \left|\sum_{i=2}^\ell \frac{\rho^{i,R}}{W_t - V_t^{i,R}}\right| \leq C \quad\text{for all}\quad t \in [0,\tau]\]
where $k = |\ul{\rho}^L|$ and $\ell = |\ul{\rho}^R|$.  Then $\p[\tau \geq 1] \geq \rho_0 > 0$ where $\rho_0$ depends only on $C$, $\kappa$, and $\rho^{1,R}$.
\end{lemma}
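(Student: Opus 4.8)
The plan is to reduce the statement to a comparison with a plain Brownian motion via Girsanov's theorem, using the uniform bound on the drift coming from the force points. First I would write out the SDE for $W_t$ on the interval $[0,\tau]$: by \eqref{eqn::sle_kappa_rho_sde} we have
\[
W_t = \sqrt{\kappa} B_t + \int_0^t \frac{\rho^{1,R}}{W_s - V_s^{1,R}}\,ds + \int_0^t D_s\,ds,
\]
where $D_s := \sum_{i=1}^k \frac{\rho^{i,L}}{W_s - V_s^{i,L}} + \sum_{i=2}^\ell \frac{\rho^{i,R}}{W_s - V_s^{i,R}}$ satisfies $|D_s| \le C$ for $s \in [0,\tau]$ by hypothesis. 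The point is that the term involving $V^{1,R}$ — the one force point whose associated difference $W_t - V_t^{1,R}$ can vanish — is handled exactly as a single-force-point $\SLE_\kappa(\rho^{1,R})$, i.e. via the Bessel description \eqref{eqn::sle_bessel}, while every \emph{other} force point contributes only the bounded drift $D_s$.

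Next I would argue by absolute continuity. Let $\wt W$ be a single-force-point $\SLE_\kappa(\rho^{1,R})$ driving function started from $0$ with its force point started from the same location $V_0^{1,R}$, coupled to the \emph{same} Brownian motion, and let $\wt\tau$ be its exit time from $[-1,1]$. The process $W$ differs from $\wt W$ only through the extra bounded drift $D_s$; since $D_s$ is adapted and bounded by $C$ on $[0,\tau\wedge\wt\tau\wedge 1]$, Girsanov's theorem gives that the law of $(W_t)_{t \le \tau\wedge 1}$ is mutually absolutely continuous with respect to the law of $(\wt W_t)_{t\le \wt\tau\wedge 1}$, with a Radon–Nikodym derivative whose reciprocal moments are controlled by a constant depending only on $C$ (and on $\kappa$, since the quadratic variation of $\sqrt\kappa B$ enters). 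Concretely, $\E[(d\mu/d\wt\mu)^{-1}\one_E] \le e^{C^2/(2\kappa)}\,\wt\mu(E)^{1/2}\cdot(\ldots)$-type bounds, exactly as in Remark~\ref{rem::gff_abs_continuity}; the upshot is that there is a constant $c = c(C,\kappa) > 0$ with
\[
\p[\tau \ge 1] \ge c\,\wt\p[\wt\tau \ge 1]^{2}
\]
or some similar explicit inequality. It therefore suffices to produce a lower bound $\wt\p[\wt\tau \ge 1] \ge \rho_0' > 0$ depending only on $\kappa$ and $\rho^{1,R}$.

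Finally I would establish that lower bound for the single-force-point process. Here $\wt W$ is, up to the transformation \eqref{eqn::sle_bessel}, built from a $\BES^\delta$ with $\delta = 1 + 2(\rho^{1,R}+2)/\kappa$ (when $\rho^{1,R} > -2$; the case $\rho^{1,R} \le -2$ only matters up to the continuation threshold, and the same Bessel comparison applies until $W$ first hits $V^{1,R}$). The event $\{\wt\tau \ge 1\}$ is the event that $\wt W$ stays in $[-1,1]$ for a unit of time; since $\wt W_0 = 0$ and $\wt W$ is a continuous semimartingale whose law near time $0$ is that of (a time-change of) a Bessel-type diffusion started at a fixed location, standard small-time estimates for Bessel processes — e.g.\ that a $\BES^\delta$ started at a bounded point stays in a bounded interval for unit time with positive probability, uniformly in the starting point on compacts — give a strictly positive lower bound depending only on $\delta$ (hence $\kappa$ and $\rho^{1,R}$) and on $V_0^{1,R}$. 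To remove the dependence on $V_0^{1,R}$ one notes that the \emph{worst case} is when $V_0^{1,R} = 0^+$, i.e.\ the force point starts adjacent to the origin, and in that case scale invariance of $\SLE_\kappa(\rho^{1,R})$ reduces everything to a single universal constant; for $V_0^{1,R}$ away from $0$ the process is even closer to ordinary $\SLE_\kappa$ and the bound only improves. The main obstacle I anticipate is making the Girsanov comparison fully rigorous near the collision times where $W_t = V_t^{1,R}$ — one must check that the extra drift $D_s$ does not interfere with the instantaneous-reflection structure of Definition~\ref{def::slekrdef}; this is handled by stopping just before each collision, applying Girsanov on those excursion intervals, and using that the Bessel difference $W_t - V_t^{1,R}$ spends zero Lebesgue time at $0$, exactly the kind of argument already carried out in the proof of Theorem~\ref{thm::martingale}.
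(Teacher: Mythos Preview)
Your proposal is correct and follows essentially the same route as the paper: apply Girsanov with the bounded extra drift $D_s$ to compare with the single-force-point $\SLE_\kappa(\rho^{1,R})$ law $\wh\p_x$, obtain an inequality of the form $\p[\tau\ge 1]\ge c(C,\kappa)\,\wh\p_x[\tau\ge 1]^2$ via Cauchy--Schwarz on the exponential martingale, and then check that the single-force-point probability is positive uniformly in the starting position $x=V_0^{1,R}$. The only cosmetic difference is in that last uniformity step --- the paper argues via continuity of $x\mapsto\wh\p_x[\tau\ge1]$ on $[0,2]$ plus an explicit bound for $x\ge 2$ (where the force is at most $|\rho^{1,R}|$), whereas you appeal to scale invariance at $x=0^+$ and a monotonicity heuristic; either route closes the argument.
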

\begin{proof}
Let $\wh{\p}_x$ be the law under which $(W_t,V_t^{1,R})$ evolves as an $\SLE_\kappa(\rho^{1,R})$ process with a single force point of weight $\rho^{1,R}$ with initial position $x = V_0^{1,R}$.  Let $\zeta = \tau \wedge 1$, $A = \{ \tau \geq 1\}$, and
\begin{align*}
U_t &= \sum_{i=1}^k \frac{\rho^{i,L}}{W_t -  V_t^{i,L}} + \sum_{i=2}^\ell \frac{\rho^{i,R}}{W_t -  V_t^{i,R}} \quad\text{and}\quad M_t = \frac{1}{\sqrt{\kappa}}  \int_{0}^t U_s dB_s
\end{align*}
where $B$ is the standard Brownian motion driving $W$.  We denote by $\langle M \rangle_t = \frac{1}{\kappa} \int_0^t U_s^2 ds$ the quadratic variation of $M$ at time $t$.  By the Girsanov theorem \cite{RY04, KS98} and the Cauchy-Schwarz inequality, we have that
\begin{align}
      \wh{\p}_x[A]
=& \E\left[ \one_A \exp\left( M_{\zeta} - \tfrac{1}{2} \langle M \rangle_{\zeta} \right)  \right]
\leq \left( \p[A] \E[ \exp(2M_{\zeta} - \langle M \rangle_{\zeta} ) ] \right)^{1/2}. \label{eqn::cs_bound}
\end{align}
The optional stopping theorem implies that $\E[ \exp(a M_{\zeta} - \tfrac{1}{2}a^2 \langle M \rangle_\zeta)] = 1$ for all $a \in \R$.  Since $\langle M \rangle_\zeta \leq C^2/\kappa$, it consequently follows that
\[  \E[ \exp(a M_\zeta)] \leq \exp(\tfrac{1}{2 \kappa} C^2 a^2) \quad\text{for}\quad a \in \R.\]
Hence, $\E[ \exp(2M_\zeta - \langle M \rangle_\zeta)]  \leq C_0$ for some constant~$C_0$ depending only on~$C$ and~$\kappa$.  Thus rearranging~\eqref{eqn::cs_bound}, we obtain $\p[A] \geq C_0^{-1} (\wh{\p}_x[A])^2$.  To finish the proof, we just have to argue that $\wh{\p}_x[A]$ is uniformly positive in $x$.  This is clear when $x \geq 2$ since then the total amount of force $V_t^{1,R}$ applies to $W$ in the time interval $[0,\zeta]$ is bounded by $|\rho^{1,R}|$.  The result then follows since $\wh{\p}_x[A]$ is both continuous and positive for $x \in [0,2]$.
\end{proof}

\begin{proof}[Proof of Proposition~\ref{prop::light_cone_construction}]
We have already shown in Lemma~\ref{lem::light_cone_contains_av} that $\eta'$ stopped at the first time $\sigma'$ it hits $\eta_{\phi_1 \cdots \phi_k}^{\sigma_1 \cdots \sigma_k}(\sigma)$ almost surely contains $\lightcone(\eta_{\phi_1 \cdots \phi_k}^{\sigma_1 \cdots \sigma_k},\sigma)$, so we need to show that $\lightcone(\eta_{\phi_1 \cdots \phi_k}^{\sigma_1 \cdots \sigma_k},\sigma)$ almost surely contains $\eta'([0,\sigma'])$.  To this end, fix any $\eta'$ stopping time $\tau'$ such that $\tau' < \sigma'$ almost surely; we will show that $\p[\eta'(\tau') \in \lightcone(\eta_{\phi_1 \cdots \phi_k}^{\sigma_1 \cdots \sigma_k},\sigma)] = 1$.  This completes the proof since, by the continuity of $\eta'$, it suffices to have this result for a countable collection of stopping times $(\tau_j')$ such that $\{\eta'(\tau_j')\}$ is dense in $\eta'([0,\sigma'])$.

Let $K_t'$ denote the hull of $\eta'$ at time $t$, i.e.\ the complement of the unbounded connected component of $\strip \setminus \eta'([0,t])$.  We begin by applying a conformal map $\psi \colon \strip \setminus (\eta_{\phi_1 \cdots \phi_k}^{\sigma_1 \cdots \sigma_k}([0,\sigma]) \cup K_{\tau'}') \to \h$ which takes $\eta_{\phi_1 \cdots \phi_k}^{\sigma_1 \cdots \sigma_k}(\sigma)$ to $0$ and $\eta'(\tau')$ to $\infty$.  The boundary data for the corresponding GFF $\wt{h} := h \circ \psi^{-1} - \chi \arg (\psi^{-1})'$ on $\h$ is shown in Figure~\ref{fig::light_cone_back_and_forth_proof} in the special case $k=0$.  Let $z_-$ and $z_+$ be the images of the left and right sides of $K_{\tau'}' \cap \striptop$, respectively.  We may assume without loss of generality that $\tau' > 0$ almost surely, which in turn implies that $z_- < 0 < z_+$.  We have only specified $\psi$ up to rescaling since we have only fixed the image of the two boundary points $\eta_{\phi_1 \cdots \phi_k}^{\sigma_1 \cdots \sigma_k}(\sigma)$ and $\eta'(\tau')$.  Thus by choosing the scaling factor to be large enough, we may assume without loss of generality that $z_- \leq -3$ and $z_+ \geq 3$.

It suffices to exhibit angles $\theta_j$ satisfying~\eqref{eqn::angle_assumption} and stopping times $(\wt{\tau}_j)$ such that the flow line $\wt{\eta}$ of the GFF $\wt{h}$ with angles $\theta_1,\ldots,\theta_\ell$ and angle-change times $\tau_1,\ldots,\tau_{\ell-1}$ is unbounded as $\ell \to \infty$.  To this end, we let
\[ \theta_j = (-1)^{j+1} \frac{1}{\chi}\left(\lambda-\lambda'\right) = (-1)^{j+1} \frac{\pi}{2}.\]
Note that this particular choice may not lie in our fixed dense set of angles satisfying~\eqref{eqn::angle_assumption}.  It will be clear from the proof, however, that we can achieve the same effect by approximating the $\theta_j$ by elements of this set.  Indeed, to see this, we just have to explain why an angle-varying flow line with angles contained in $\{-\tfrac{\pi}{2},\tfrac{\pi}{2}\}$ can be approximated arbitrarily well by an angle-varying flow line with angles in a dense subset of $[-\tfrac{\pi}{2},\tfrac{\pi}{2}]$.  It suffices to explain why if $(\wt{\theta}_n)$ is a sequence of angles in $[-\tfrac{\pi}{2},\tfrac{\pi}{2}]$ which decrease to $-\tfrac{\pi}{2}$ as $n \to \infty$ then the sequence of flow lines with angles $\wt{\theta}_n$ converge to the flow line of angle $-\tfrac{\pi}{2}$.  Note that the SDE which describes the driving function for a flow line of angle $\wt{\theta}_n$ is that of an $\SLE_\kappa(\ul{\rho}_n)$ process which, for large $n$, has weights which are close to those which correspond to the driving function of the flow line with angle $-\tfrac{\pi}{2}$.  It is thus a simple matter using the Girsanov theorem to see that the laws of the driving functions of the flow lines with $\wt{\theta}_n$ converge to that of the driving function of the $-\tfrac{\pi}{2}$ angle flow line as $n \to \infty$.  Moreover, by monotonicity, the flow lines themselves each drawn up to a fixed time $t$ convergence almost surely (say, with respect to the Hausdorff topology after conformally mapping to a bounded domain) to a limiting hull.  This gives a coupling of two growing families of hulls whose Loewner driving functions have the same law but with one almost surely to the left of the other.  This can only be the case if the two processes are equal.  Our stopping times will also not necessarily lie in our fixed dense subset of $(0,\infty)$, however, it will also be clear from the proof that the path we construct will still be contained in $\lightcone(\eta_{\phi_1 \cdots \phi_k}^{\sigma_1 \cdots \sigma_k},\sigma)$ by the continuity of our angle varying trajectories.

We now turn to the construction of $\wt{\eta}$.  Let $\wt{\eta}_1$ be the flow line of $\wt{h} + \theta_1 \chi$ which starts at $0$ and let $\wt{W}^1$ be its Loewner driving function.  By Lemma~\ref{lem::hitting_interval}, it follows that $\wt{\eta}_1$ must hit $\R$ to the left of $z_- \leq -3$ in finite time.  Let $\wt{\tau}_1$ be the first time $t$ that $\wt{W}_t^1 \leq -2$.  Inductively let $\wt{\eta}_j$ be the curve which traces along $\wt{\eta}_{j-1}$ and then flows at angle $\theta_j$ starting at $\wt{\eta}_{j-1}(\wt{\tau}_{j-1})$ and $\wt{W}^j$ its Loewner driving function.  Let $\wt{\tau}_j$ be the first time $t$ after $\wt{\tau}_{j-1}$ that $\wt{W}_t^j$ enters $(-\infty,-2]$ (resp.\ $[2,\infty)$) if $j$ is odd (resp.\ $j$ is even).  Then $\wt{W}^j|_{t \geq \wt{\tau}_{j-1}}$ is a solution to the $\SLE_\kappa(\ul{\rho})$ SDE with $N+1$, $N := k+j+2$, force points on each side of $0$ (this can be seen by reading off the boundary data for the GFF after mapping back and applying the change of coordinates formula).  The first $k+j$ arise from the $k+j$ angle changes and the last $3$ come from the initial choice of boundary data.  Let $\wt{V}_t^{i,q,j}$, $t \geq \wt{\tau}_{j-1}$ for $q \in \{L,R\}$ and $1 \leq i \leq N+1$, denote the time evolution of the force points associated with $\wt{W}^j|_{t \geq \wt{\tau}_{j-1}}$.

To complete the proof, it suffices to show there exists i.i.d.\ non-negative random variables $(Z_j)$ with $\E[Z_1] > 0$ such that, almost surely, $\wt{\tau}_{2j} - \wt{\tau}_{2j-1} \geq Z_j$ for all $j$.  Indeed, the strong law of large numbers then implies
\[ \wt{\tau}_{2j} \geq \sum_{i=1}^j Z_i \to \infty \quad\text{almost surely as}\quad j \to \infty,\]
hence the result follows from the diameter-capacity lower bound
\[ \diam(\wt{\eta}_j([0,\wt{\tau}_j]) \geq c\sqrt{\hcap(\wt{\eta}_j([0,\wt{\tau}_j]))} = c\sqrt{\wt{\tau}_j}\]
(the inequality is \cite[Equation 3.8]{LAW05} and the equality follows since our curve is parameterized by capacity).

\begin{figure}[h]
\begin{center}
\includegraphics[scale=0.85]{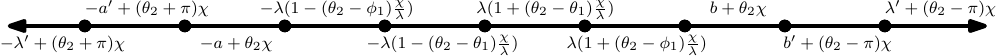}
\end{center}
\caption{\label{fig::light_cone_bd} The boundary data for $\wt{g}_{\tau_{j-1}}(\wt{\eta}_j)$ in the special case $\ell=2$ and $k=1$.}
\end{figure}

Note that $\wt{V}_{\wt{\tau}_{j-1}}^{1,L,j} = [(-1)^{j+1} 2]^-$, $\wt{V}_{\wt{\tau}_{j-1}}^{1,R,j} = [(-1)^{j+1}2]^+$ (the left and right sides of $(-1)^{j+1} 2$, respectively), and $\wt{V}_{\wt{\tau}_{j-1}}^{i,q,j} = \wt{V}_{\wt{\tau}_{j-1}}^{i-1,q,j-1}$ for $q \in \{L,R\}$.  Assume that $j$ is even.  By the monotonicity of the evolution of the $\wt{V}_t^{i,q,j}$ ($\wt{V}_t^{i,L,j}$ is decreasing in $t$ and $\wt{V}_t^{i,R,j}$ is increasing in $t$), it follows that $\wt{V}_t^{i,L,j} \leq -2$ for all $i$ and $t \geq \wt{\tau}_{j-1}$ and $\wt{V}_t^{i,R,j} \geq 2$ for all $i \geq 2$ and $t \geq \wt{\tau}_{j-1}$.  In particular, $\wt{V}_t^{1,R,j}$ is the only force point which can bounce off of $\wt{W}_t^j$ for $t \geq \wt{\tau}_{j-1}$ when $\wt{W}_t^j$ is in $[-1,1]$ and all other force points have distance at least $1$ to $\wt{W}_t^j$.  Let $\xi_j$ be the first time $t$ after $\wt{\tau}_{j-1}$ that $\wt{W}_t^j = 0$ and
\[ \zeta_j = \inf\{ t \geq \xi_j : \wt{W}_t^j = 1\} \wedge (\xi_j+1).\]
We are now going to check that $\wt{W}_t^j$, $t \geq \xi_j$, satisfies the hypotheses of Lemma~\ref{lem::sle_kappa_rho_crossing}.  Let $\ul{\rho} = (\ul{\rho}^L;\ul{\rho}^R)$ denote the weights of the force points of $\wt{W}_t^j$, $t \geq \wt{\tau}_{j-1}$.  For $1 \leq i \leq j-1$, note that
\[ \rho^{i,R} = (\theta_{j-i+1} - \theta_{j-i}) \frac{\chi}{\lambda} \quad\text{and}\quad \rho^{i,L} = -\rho^{i,R}\]
(see Figure~\ref{fig::light_cone_bd} for an example of this when $\ell=2$ and $k=1$).  In particular, by our choice of $\theta_i$, the weights $\rho^{1,q},\ldots,\rho^{j-1,q}$, $q \in \{L,R\}$, only alternate in sign but have the same magnitude.  Let $C_0 = |\rho^{1,L}|$.  We have $|\wt{W}_t^j - \wt{V}_t^{i,q,j}|^{-1} \leq 1$ for $t \geq \wt{\tau}_{j-1}$ when $\wt{W}_t^j \in [-1,1]$ for all $1 \leq i \leq j-1$ for $q = L$ and all $2 \leq i \leq j$ for $q = R$.  Thus since $|\wt{W}_t^j - \wt{V}_t^{i,q,j}|^{-1}$ is decreasing in $i$ for $t,j$ fixed, when $\wt{W}_t^j \in [-1,1]$ we have
\begin{equation}
\label{eqn::angle_varying_rho_bounds}
 \left|\sum_{i=1}^{j-1} \frac{\rho^{i,L}}{\wt{W}_t^j - \wt{V}_t^{i,L,j}}\right| \leq C_0 \quad\text{and}\quad
  \left|\sum_{i=2}^{j-1} \frac{\rho^{i,R}}{\wt{W}_t^j - \wt{V}_t^{i,R,j}} \right| \leq C_0 \quad\text{for}\quad t \geq \wt{\tau}_{j-1}.
\end{equation}
Moreover, there exists $C_1 > 0$ depending only on $k,\kappa,a,b,a',b'$, in particular not growing with $j$, such that
\begin{equation}
\label{eqn::angle_varying_rho_bounds2}
 \left| \sum_{i=j}^{N+1} \frac{\rho^{i,L}}{\wt{W}_t^j - \wt{V}_t^{i,L,j}}\right| + \left| \sum_{i=j}^{N+1} \frac{\rho^{i,R}}{\wt{W}_t^j - \wt{V}_t^{i,R,j}} \right| \leq C_1 \quad\text{for}\quad t \geq \wt{\tau}_{j-1}
\end{equation}
when $\wt{W}_t^j \in [-1,1]$.  Let $\CF_t^j = \sigma(\wt{\eta}_j(s) : s \leq t)$.  Applying Lemma~\ref{lem::sle_kappa_rho_crossing}, we see that $\p[\zeta_j - \xi_j \geq 1 \giv \CF_{\xi_j}^j] \geq \rho_0 > 0$ for $\rho_0$ depending only on $C_0, C_1, \rho^{1,R},\kappa$ which implies $\p[ \wt{\tau}_{2j} - \wt{\tau}_{2j-1} \geq 1 \giv \CF_{\xi_j}^j] \geq \rho_0 > 0$.  This completes the proof.
\end{proof}

\begin{remark}
\label{rem::light_cone_construction_general}
The proof of Proposition~\ref{prop::light_cone_construction} requires two inputs:
\begin{enumerate}[(i)]
\item $\eta'$ is almost surely a continuous path,
\item $\eta_{\phi_1 \cdots \phi_k \theta_1 \cdots \theta_\ell}^{\sigma_1 \cdots \sigma_k \tau_1 \cdots \tau_\ell}$ almost surely hits the left side of $\eta'([0,\tau'])$ or the side of $\striptop$ to the left of $z_0$ when $\theta_\ell = \tfrac{\pi}{2}$ and almost surely hits the right side of $\eta'([0,\tau'])$ or the side of $\striptop$ to the right of $z_0$ when $\theta_\ell = -\tfrac{\pi}{2}$.
\end{enumerate}
We will show in Section~\ref{sec::uniqueness} that (i)--(ii) hold whenever the counterflow line $\eta'$ and angle-varying flow line $\eta_{\phi_1 \cdots \phi_k}^{\sigma_1 \cdots \sigma_k}$ make sense (the SDEs for the Loewner driving processes have solutions).  Combining this with Remark~\ref{rem::light_cone_contains_av_general} implies that the light cone construction holds whenever $\eta'$ makes sense (i.e.,\ is an $\SLE_\kappa(\ul{\rho}^L;\ul{\rho}^R)$ process with $\sum_{i=1}^j \rho^{i,q} > -2$ for all $1 \leq j \leq |\ul{\rho}^q|$ and $q \in \{L,R\}$).
\end{remark}

Taking $\sigma \equiv 0$ in Proposition~\ref{prop::light_cone_construction} allows us to construct the range of the entire counterflow line.  We consider the result sufficiently important that we restate it as the following corollary.

\begin{corollary}
\label{cor::entire_light_cone_construction}
The random set obtained by taking the closure of $\eta_{\theta_1 \cdots \theta_\ell}^{\tau_1 \cdots \tau_\ell}([0,\tau_\ell])$ as $\theta_1,\ldots,\theta_\ell$ range over any countable dense subset of the interval~\eqref{eqn::angle_assumption} and $\tau_1,\ldots,\tau_\ell$ over any countable dense subset of $(0,\infty)$ is almost surely equal to the range of the counterflow line $\eta'$ starting at $z_0$.
\end{corollary}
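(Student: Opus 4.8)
The plan is to obtain Corollary~\ref{cor::entire_light_cone_construction} as the special case $\sigma \equiv 0$ of Proposition~\ref{prop::light_cone_construction}. Concretely, I would apply that proposition with the degenerate angle-varying flow line started at $0$: take any single admissible angle $\phi_1$ (say $\phi_1 = 0$) and the stopping time $\sigma \equiv 0$, so that $\eta_{\phi_1}|_{[0,\sigma]}$ is the single point $0$ and conditioning on it is vacuous. With this choice one has $\eta_{\phi_1}(\sigma) = \eta_{\phi_1}(0) = 0$, the prefixed angle-varying flow lines $\eta_{\phi_1 \theta_1 \cdots \theta_\ell}$ are exactly the angle-varying flow lines $\eta_{\theta_1 \cdots \theta_\ell}$ started at $0$ that appear in the corollary, and $\lightcone(\eta_{\phi_1},\sigma)$ is exactly the set $\lightcone$ described there. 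The ``any countable dense set'' qualifiers match because Proposition~\ref{prop::light_cone_construction} is already stated for an arbitrary countable dense set of admissible angles and an arbitrary dense set of angle-change times; the independence of the resulting closure from these choices is built into that statement (and ultimately rests on the a.s.\ continuity of angle-varying flow lines furnished by Lemma~\ref{lem::av_simple_determined}).

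The only point that needs a brief argument is that ``$\eta'$ stopped upon first hitting $0$'' coincides with $\eta'$ run to its terminal time, so that its range is the full range of $\eta'$ as in the corollary statement. Under the standing boundary-data hypotheses of this subsection ($a,b \geq \lambda - \tfrac{\pi}{2}\chi = \lambda'$ and $a',b' \geq \lambda'+\pi\chi$, as in Figure~\ref{fig::light_cone_construction}), Lemma~\ref{lem::hitting_single_point} applied to $\eta'$ --- with $\lambda$ replaced by $\lambda'$ and $\chi$ by $-\chi$ per Remark~\ref{rem::counterflow_lines_boundary_data} --- shows that $\eta'$ reaches $\stripbot$ only at its terminal point $0$. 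Hence the first time $\eta'$ hits $0$ is its terminal time, and the range of the stopped curve is all of $\eta'$. Substituting this identification into the conclusion of Proposition~\ref{prop::light_cone_construction} yields the corollary.

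I do not anticipate a genuine obstacle here: this is a direct corollary, and the ``proof'' consists of unwinding the definition of $\lightcone(\eta_{\phi_1},\sigma)$ at $\sigma \equiv 0$ and invoking the exit behavior of $\eta'$ already recorded in Lemma~\ref{lem::hitting_single_point}. The only mild bookkeeping is to confirm that a stopping time $\sigma \equiv 0$ is a legitimate input to Proposition~\ref{prop::light_cone_construction} and that conditioning on a single boundary point changes nothing, both of which are immediate.
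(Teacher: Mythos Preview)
Your proposal is correct and takes exactly the same approach as the paper: the paper simply remarks that taking $\sigma \equiv 0$ in Proposition~\ref{prop::light_cone_construction} gives the range of the entire counterflow line, and states the corollary without a separate proof. Your additional bookkeeping (that $\eta'$ first hits $0$ only at its terminal time, via Lemma~\ref{lem::hitting_single_point}) is sound and merely spells out what the paper leaves implicit in the standing hypotheses of the subsection.
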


We will now show that the entire path of the counterflow line $\eta'$ is determined by the light cone, not just its range.

\begin{proposition}
\label{prop::counterflow_determined}
Almost surely, $\eta'$ is determined by $h$.
\end{proposition}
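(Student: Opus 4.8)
The plan is to recover $\eta'$ from its family of hulls $(K'_t)_{t \ge 0}$, where $K'_t$ denotes the hull of $\eta'([0,t])$ (the complement in $\strip$ of the unbounded connected component of $\strip \setminus \eta'([0,t])$), by showing that this family is $\sigma(h)$-measurable along a dense set of times and then using the continuity of $\eta'$ to fill in the rest. The first input is that, by the proof of Proposition~\ref{prop::light_cone_construction}, the range of $\eta'$ is already the closure of the union of the angle-varying flow lines whose angles alternate between $\pm\tfrac{\pi}{2}$ (the two extreme angles), and that each such angle-varying flow line is a.s.\ determined by $h$: the alternating extreme angles put the first-angle boundary data into the regime in which Lemma~\ref{lem::av_simple_determined} applies --- via the special case of Theorem~\ref{thm::coupling_uniqueness} proved in Section~\ref{subsec::uniqueness_non_boundary_intersecting}, together with the Markov property (Proposition~\ref{gff::prop::markov}) and absolute continuity (Proposition~\ref{prop::gff_abs_continuity}) to handle angle-varying flow lines emanating from points along an already-constructed flow line. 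Consequently Corollary~\ref{cor::entire_light_cone_construction} gives that the range of $\eta'$ is a.s.\ determined by $h$, and more generally, for any angle-varying flow line $\gamma$ of the above type and any rational stopping time $\sigma$, Proposition~\ref{prop::light_cone_construction} identifies the range of $\eta'$ stopped at $\tau'(\sigma) := \inf\{ t : \eta'(t) \in \gamma([0,\sigma]) \}$ with $\lightcone(\gamma,\sigma)$; since the latter is a.s.\ determined by $h$, so is $K'_{\tau'(\sigma)}$ (it is the complement of the unbounded component of $\strip$ minus that range).

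The second step is to produce a single $\gamma$ for which the times $\tau'(\sigma)$ are dense in the time interval $[0,\sigma']$ on which $\eta'$ is defined. Take $\gamma = \eta_L$, the flow line of $h$ of angle $\tfrac{\pi}{2}$ started at $0$: by Proposition~\ref{prop::duality_many_force_points} it is the left boundary of $\eta'$, hence a simple curve running from $0$ to $z_0$, and it is determined by $h$. By Lemma~\ref{lem::light_cone_contains_av} the range of $\eta'$ contains $\eta_L$ and $\eta'$ hits the points of $\eta_L$ in reverse chronological order. Since $\eta_L$ is simple and $\eta',\eta_L$ are continuous, $\sigma \mapsto \tau'(\sigma)$ is injective and nonincreasing; monotonicity together with the continuity of $\eta'$ forces it to be continuous, hence a continuous strictly decreasing bijection of $[0,\zeta_L]$ onto $[0,\sigma']$ (with $\zeta_L$ the lifetime of $\eta_L$), $\tau'(0) = \sigma'$ because $\eta'$ terminates at $0 = \eta_L(0)$ and $\tau'(\zeta_L) = 0$ because $\eta'$ starts at $z_0 = \eta_L(\zeta_L)$. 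Therefore $S := \{ \tau'(\sigma) : \sigma \in \Q \cap [0,\zeta_L] \}$ is dense in $[0,\sigma']$, and $(K'_t)_{t \in S}$ is a.s.\ determined by $h$.

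For the last step, since $\eta'$ is a.s.\ a continuous path (a standing hypothesis of Section~\ref{subsec::light_cone}), the map $t \mapsto K'_t$ is continuous and strictly increasing, in particular right-continuous: $K'_t = \bigcap_{s > t} K'_s$, and by density of $S$ this equals $\bigcap_{s \in S \cap (t,\infty)} K'_s$ for every $t \ge 0$; hence the whole family $(K'_t)_{t \ge 0}$ is a.s.\ determined by $h$. The centered Loewner maps $g'_t$ are determined by the $K'_t$, the driving function is recovered as $\{W'_t\} = \bigcap_{s > t} \overline{ g'_t(K'_s \setminus K'_t) }$, which is a measurable function of $(K'_s)_{s \ge t}$, and $\eta'$ is then the (a.s.\ existing) Loewner trace of $W'$; thus $\eta'$ is a.s.\ determined by $h$. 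The step I expect to take the most care is the first one: verifying that the angle-varying flow lines needed to exhaust $\lightcone(\gamma,\sigma)$ --- which emanate from interior points of previously drawn flow lines, so that one must first condition and change coordinates --- genuinely fall into the boundary-data regime covered by Lemma~\ref{lem::av_simple_determined} and the already-established part of Theorem~\ref{thm::coupling_uniqueness}; the bijectivity used in the second step, by contrast, rests squarely on the reverse-chronological hitting order of Lemma~\ref{lem::light_cone_contains_av}.
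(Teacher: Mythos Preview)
Your second step has a genuine gap: the map $\sigma \mapsto \tau'(\sigma)$ is \emph{not} a continuous bijection of $[0,\zeta_L]$ onto $[0,\sigma']$. If it were, then for every $t\in[0,\sigma']$ there would exist $\sigma$ with $\tau'(\sigma)=t$, and hence $\eta'(t)=\eta'(\tau'(\sigma))=\eta_L(\sigma)\in\eta_L$; i.e.\ $\eta'$ would spend \emph{all} of its time on its left boundary. But $\eta'$ also visits its right boundary $\eta_R$ (the angle $-\tfrac{\pi}{2}$ flow line), which is distinct from $\eta_L$ away from the endpoints, and for $\kappa'\ge 8$ it visits interior points as well. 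So surjectivity fails; consequently your continuity claim must fail too. Concretely, your argument gives right-continuity of $\tau'$, but left-continuity would require that $\eta'$ never revisits a point $\eta_L(\sigma)$; whenever $\eta'$ makes an excursion away from $\eta_L$ between two visits to $\eta_L$, a jump is created in $\tau'$, and the closure of $S=\{\tau'(\sigma):\sigma\in\Q\}$ misses those excursion intervals. Thus $S$ is not dense in $[0,\sigma']$ and you cannot recover $(K'_t)_{t\ge 0}$ from $(K'_t)_{t\in S}$.

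The paper avoids this by using not a single flow line but the \emph{entire} countable family $\CD$ of angle-varying flow lines with rational angles in $[-\tfrac{\pi}{2},\tfrac{\pi}{2}]$ and rational angle-change times. For each $(\eta,\sigma)\in\CD\times\Q_+$ the light cone $\lightcone(\eta,\sigma)$ equals the range of $\eta'$ stopped upon hitting $\eta(\sigma)$, and these light cones are $h$-measurable and totally ordered by inclusion. Crucially, Corollary~\ref{cor::entire_light_cone_construction} makes the \emph{point} set $\Gamma'=\{\eta(\sigma):(\eta,\sigma)\in\CD\times\Q_+\}$ dense in the range of $\eta'$, not merely the associated time set dense in $[0,\sigma']$; this is exactly what your single-flow-line argument cannot deliver. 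The inclusion order on the light cones then recovers the chronological order in which $\eta'$ traverses $\Gamma'$, and hence the curve. Your first step (measurability of the relevant angle-varying flow lines via Lemma~\ref{lem::av_simple_determined}) and your third step (recovering the driving function from the hull family) are both fine; the failure is isolated to the density claim in Step~2, and the fix is precisely to enlarge the indexing family as the paper does.
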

\begin{proof}
The proof of Proposition~\ref{prop::light_cone_construction} implies that if $\eta$ is any angle varying flow line with angles satisfying~\eqref{eqn::angle_assumption} and $\sigma$ is an $\eta$ stopping time then $\eta(\sigma)$ is almost surely contained in the range of $\eta'$.  Moreover, we can realize the entire range of $\eta'$ by taking the closure of $\Gamma' := \{\eta(\sigma) : (\eta,\sigma) \in \CD \times \Q_+\}$ where $\CD$ is the set of angle varying flow lines with rational angles satisfying~\eqref{eqn::angle_assumption} and rational angle change times and $\Q_+ = \Q \cap (0,\infty)$.  Proposition~\ref{prop::light_cone_construction} also allows us to order $\Gamma'$ according to the order in which $\eta(\sigma)$, $(\eta,\sigma) \in \CD \times \Q_+$, is traced by $\eta'$.  Indeed, we can associate with the pair $(\eta,\sigma) \in \CD \times \Q_+$ the light cone $\lightcone(\eta,\sigma)$ of angle varying flow lines starting at $\eta(\sigma)$ with angles restricted by~\eqref{eqn::angle_assumption}.  We know that $\lightcone(\eta,\sigma)$ is equal to the range of $\eta'$ stopped when it first hits $\eta([0,\sigma])$ (the point of intersection is always $\eta(\sigma)$).  This implies that if $(\eta,\sigma), (\wt{\eta},\wt{\sigma}) \in \CD \times \Q_+$, then we almost surely have that either $\lightcone(\eta,\sigma) \subseteq \lightcone(\wt{\eta},\wt{\sigma})$ or $\lightcone(\wt{\eta},\wt{\sigma}) \subseteq \lightcone(\eta,\sigma)$.  That is, the sets $\lightcone(\eta,\sigma)$ are ordered by inclusion --- which in turn orders $\eta'$.
\end{proof}

We finish this section by making two remarks.  Our first remark is that we only needed to make use of maximal angles in the proof of Proposition~\ref{prop::light_cone_construction}, though the range of $\eta'$ still of course contains those angle varying flow lines in which the angles take on intermediate values.  If $\eta_{\theta_1 \cdots \theta_k}^{\tau_1 \cdots \tau_k}$ is an angle varying flow line with angle change times $\tau_1,\ldots,\tau_{k-1}$ where the $\theta_i$ satisfy~\eqref{eqn::angle_assumption} and $\theta_i = \tfrac{1}{\chi}(\lambda-\lambda') = \tfrac{\pi}{2}$, then $\eta_{\theta_1 \cdots \theta_k}^{\tau_1 \cdots \tau_k}|_{[\tau_{i-1},\tau_i]}$ traces the left boundary of $\eta'$ stopped at the time $\sigma'$ it first hits $\eta_{\theta_1 \cdots \theta_k}^{\tau_1 \cdots \tau_k}([0,\tau_{i-1}])$ starting from $\eta'(\sigma')$.  The same is also true if $\theta_i = \tfrac{1}{\chi}(\lambda'-\lambda) = - \tfrac{\pi}{2}$ but with left replaced by right.  Thus we can view the light cone construction of $\eta'$ as a refinement of $\SLE$ duality, as described in Section~\ref{sec::dubedat}. (We remark that the simulations in Figures~\ref{fig::sle6_lightcone}~{--}~\ref{fig::sle6_decomposition} were generated using maximal angle changes).  A flow line with an intermediate angle $\theta$ can be thought of as an angle-varying flow line which only takes maximal angles, but oscillates between going maximally left and right at infinitesimal scales, with the rate of oscillation depending on the particular value of $\theta$.

Second, suppose that $\ul{\theta} \leq \ol{\theta}$ are fixed angles.  Then we define $\lightcone(\ul{\theta},\ol{\theta})$ to be the closure of the set of points which are accessible by angle-restricted trajectories with rational angles which lie in the interval $[\ul{\theta},\ol{\theta}]$ and with positive rational angle change times.  Lemma~\ref{lem::light_cone_contains_av} implies that if $\ul{\theta} \geq -\tfrac{\pi}{2}$ and $\ol{\theta} \leq \tfrac{\pi}{2}$, then $\lightcone(\ul{\theta},\ol{\theta})$ is almost surely contained in the counterflow line $\eta'$.  We are now going to argue that, in this case, $\lightcone(\ul{\theta},\ol{\theta})$ is actually almost surely determined by $\eta'$ when $\kappa \in (2,4)$.  In particular, if $\theta \in [-\tfrac{\pi}{2},\tfrac{\pi}{2}]$, then the flow line $\eta_\theta$ with angle $\theta$ is almost surely determined by~$\eta'$.  To see this, we first note that since $\kappa \in (2,4)$ it follows that $\kappa' = 16/\kappa \in (4,8)$.  This implies that the range of $\eta'$ almost surely satisfies the hypothesis of Lemma~\ref{lem::local_restriction_determine} (see \cite[Theorem~8.1]{RS05}), which in turn implies that the law of $h$ given $\eta'$ is the same as the law of $h$ given both $\eta'$ and $h|_{\eta'}$.  (We emphasize here that we are conditioning $h$ on $\eta'$ as a curve as opposed to the range of $\eta'$ as a set.)  Since $\lightcone(\ul{\theta},\ol{\theta})$ is almost surely contained in $\eta'$, Proposition~\ref{gff::prop::local_independence} implies that the law of $h$ given $\eta'$ is equal to the law of $h$ given \emph{both} $\eta'$ and $\lightcone(\ul{\theta},\ol{\theta})$.  This implies that $h$ and $\lightcone(\ul{\theta},\ol{\theta})$ are conditionally independent given $\eta'$.  Since $\lightcone(\ul{\theta},\ol{\theta})$ is almost surely determined by $h$ (Lemma~\ref{lem::av_simple_determined}), this, in turn, implies that $\lightcone(\ul{\theta},\ol{\theta})$ is almost surely determined by $\eta'$. (If $h$ and a function of $h$ are conditionally independent given $\eta'$, then that function of $h$ must be determined by~$\eta'$.)

\begin{proposition}
\label{prop::light_cone_determined_by_cf}
Suppose that $\kappa \in (2,4)$ so that $\kappa' \in (4,8)$.  For any $-\tfrac{\pi}{2} \leq \ul{\theta} \leq \ol{\theta} \leq \tfrac{\pi}{2}$, $\eta'$ almost surely determines $\lightcone(\ul{\theta},\ol{\theta})$.  In particular, $\eta'$ almost surely determines $\eta_\theta$ for any $-\tfrac{\pi}{2} \leq \theta \leq \tfrac{\pi}{2}$ and $\eta'$ almost surely determines $\fan$ where $\fan$ is the closure of $\cup_{\theta} \eta_\theta$ where the union is over any fixed, countable dense subset of $[-\tfrac{\pi}{2},\tfrac{\pi}{2}]$.
\end{proposition}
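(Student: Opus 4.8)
The plan is to make precise the three-line argument given just before the statement, whose core is the following simple measure-theoretic observation: a random closed set that is simultaneously $\sigma(h)$-measurable and conditionally independent of $h$ given $\eta'$ is $\sigma(\eta')$-measurable up to null sets (apply $\E[\,\cdot\mid h]=\E[\,\cdot\mid\eta']$ to the indicators of a countable family of Borel sets generating the Borel $\sigma$-algebra of the space of closed subsets of $\h$). So it suffices to verify, for $-\tfrac{\pi}{2}\le\ul{\theta}\le\ol{\theta}\le\tfrac{\pi}{2}$: (i) $\lightcone(\ul{\theta},\ol{\theta})\subseteq\eta'$ a.s.; (ii) $\lightcone(\ul{\theta},\ol{\theta})$ is a.s. determined by $h$; and (iii) conditionally on $\eta'$, $\lightcone(\ul{\theta},\ol{\theta})$ is independent of $h$. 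The ``in particular'' assertions then follow by taking $\ul{\theta}=\ol{\theta}=\theta$, so that $\lightcone(\theta,\theta)$ is just $\eta_\theta$, and by noting that $\fan$ is the closure of a countable union of such $\eta_\theta$.

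For (i), every angle-varying flow line occurring in the definition of $\lightcone(\ul{\theta},\ol{\theta})$ has all of its angles in $[\ul{\theta},\ol{\theta}]\subseteq[-\tfrac{\pi}{2},\tfrac{\pi}{2}]$, hence obeys~\eqref{eqn::angle_assumption}, so Lemma~\ref{lem::light_cone_contains_av} puts it inside the range of $\eta'$; taking the closure of the union gives (i). For (ii), the relative angles of such a trajectory are at most $\ol{\theta}-\ul{\theta}\le\pi$ in magnitude, so Lemma~\ref{lem::av_simple_determined} (combined with Proposition~\ref{prop::gff_abs_continuity} to reduce to its boundary-data hypotheses) makes each trajectory simple, continuous, and a.s. determined by $h$; a countable union followed by a closure preserves this, and since each trajectory emanates from $0$, the set $\lightcone(\ul{\theta},\ol{\theta})$ is also connected, hence a connected local set for $h$.

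The hard part is (iii). The point of the hypothesis $\kappa\in(2,4)$ is that $\kappa'=16/\kappa\in(4,8)$, so by \cite{RS05} the range of $\eta'$ a.s. has zero Lebesgue measure. Consequently, in the coupling of Theorem~\ref{thm::coupling_existence} in which $\eta'$ is generated from $-h$, I would write, conditionally on $\eta'$, $h=\Phi_{\eta'}+\hat h$ where $\Phi_{\eta'}$ is harmonic off $\eta'$ and is a deterministic function of $\eta'$, and $\hat h$ is a zero-boundary GFF on $\h\setminus\eta'$ whose restrictions to the (countably many, $\sigma(\eta')$-measurable) connected components $C$ of $\h\setminus\eta'$ are conditionally independent given $\eta'$; the null-measure fact gives $\sigma(h)=\sigma\big(\eta',(h|_C)_C\big)$ up to null sets. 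I would then apply Proposition~\ref{gff::prop::local_independence} with $A_1=\eta'$ and $A_2=\lightcone(\ul{\theta},\ol{\theta})$ (both connected local sets for $h$, trivially conditionally independent given $h$ since $A_2$ is $\sigma(h)$-measurable), taking $C$ to run over the components of $\h\setminus\eta'$: because $\lightcone(\ul{\theta},\ol{\theta})\subseteq\eta'$ we have $C\cap\lightcone(\ul{\theta},\ol{\theta})=\emptyset$, so the proposition yields that $h|_C$ is independent of $\big(h|_{\h\setminus C},\,\lightcone(\ul{\theta},\ol{\theta})\big)$ given $\eta'$. Peeling the components off one at a time with this relation and then invoking the reverse-martingale convergence theorem together with the conditional Kolmogorov zero--one law for the conditionally independent family $(h|_C)_C$ given $\eta'$, I conclude that $\lightcone(\ul{\theta},\ol{\theta})$ is conditionally independent of $(h|_C)_C$, and hence of $h$, given $\eta'$. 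The genuinely delicate step is this last one: \emph{pairwise} conditional independence over the components is not enough (XOR-type examples show this), so the components really must be removed in a fixed order and a tail-triviality argument used; the remaining bookkeeping is routine given the local-set theory of Section~\ref{sec::gff}.
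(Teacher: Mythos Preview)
Your proposal is correct and follows the same three-step outline as the paper's argument (which appears in the paragraph immediately preceding the proposition): containment $\lightcone(\ul\theta,\ol\theta)\subseteq\eta'$, determinedness by $h$, and conditional independence of $h$ and $\lightcone(\ul\theta,\ol\theta)$ given $\eta'$, combined with the elementary measure-theoretic lemma you state at the outset.

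Where you diverge from the paper is in the execution of step~(iii), and here you make life harder for yourself than necessary. The paper's argument is a single line: since $\lightcone(\ul\theta,\ol\theta)\subseteq\eta'$, the (conditionally independent) union of the two local sets is just $\eta'$, so the conditional law of $h$ given $(\eta',\lightcone(\ul\theta,\ol\theta),h|_{\eta'})$ coincides with the conditional law given $(\eta',h|_{\eta'})$; the Lebesgue-measure-zero fact then lets one drop $h|_{\eta'}$. This yields $h\perp\lightcone(\ul\theta,\ol\theta)\mid\eta'$ in one stroke, without any component-by-component peeling. Your route through Proposition~\ref{gff::prop::local_independence} applied to each component $C$ separately also works, but the tail-triviality and reverse-martingale machinery you invoke is overkill: the conclusion of that proposition is the \emph{strong} independence $h|_C\perp(h|_{D\setminus C},\lightcone(\ul\theta,\ol\theta))\mid\eta'$, not merely $h|_C\perp\lightcone(\ul\theta,\ol\theta)\mid\eta'$. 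This stronger form is precisely what blocks the XOR-type obstruction you worry about, and a straightforward finite induction (integrate out $h|_{C_1}$, then $h|_{C_2}$, etc.) followed by a monotone-class limit already gives $(h|_C)_C\perp\lightcone(\ul\theta,\ol\theta)\mid\eta'$. No zero--one law is needed.
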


\section{Interacting flow lines}
\label{sec::interacting}

Suppose that $h$ is a GFF on $\h$ with boundary data as in Figure~\ref{fig::interacting_mean}.  For each $\theta$, let $\eta_\theta$ be the flow line of $h$ with angle $\theta$, i.e.\ the flow line of $h+\theta \chi$ from $0$ to $\infty$.  Fix $\theta_1 < \theta_2$ and assume $a,b > 0$ are large enough so that Proposition~\ref{prop::monotonicity_non_boundary} is applicable to $\eta_{\theta_1}$ and $\eta_{\theta_2}$ (the exact values of $a,b$ will not be important for the applications we have in mind in Section~\ref{sec::uniqueness}).  We know from Proposition~\ref{prop::monotonicity_non_boundary} that $\eta_{\theta_1}$ almost surely is to the right of $\eta_{\theta_2}$.  It may be that $\eta_{\theta_1}$ intersects $\eta_{\theta_2}$, depending on the choice of $\theta_1,\theta_2$.  The purpose of this section is to rule out pathological behavior in the conditional mean of $h$ given $\eta_{\theta_1},\eta_{\theta_2}$ (Section~\ref{subsec::interacting_conditional_mean}), in particular when they intersect, and to show that the Loewner driving function of $\eta_{\theta_1}$ viewed as a path in the right connected component of $\h \setminus \eta_{\theta_2}$ exists and is continuous (Section~\ref{subsec::interacting_loewner_driving}) and likewise when the roles of $\eta_{\theta_1}$ and $\eta_{\theta_2}$ are swapped.  We will also explain how similar results can be obtained in the setting of multiple flow lines as well as counterflow lines.  We will use these results in Section~\ref{sec::uniqueness} to compute the conditional law of one path given the realization of a configuration of other paths, even if they intersect.

We emphasize that, throughout this section, the results we will state and prove will be for paths which do not intersect the boundary.  The reason for this restriction is that this is the class of boundary data for which we have the almost sure continuity of flow and counterflow lines at this point in the article (recall Remark~\ref{rem::continuity_non_boundary} and \cite{RS05}) as well as the results of Section~\ref{sec::dubedat} and Section~\ref{sec::non_boundary_intersecting}.  Upon establishing Theorems~\ref{thm::coupling_uniqueness}-\ref{thm::monotonicity_crossing_merging} in Section~\ref{sec::uniqueness}, the arguments we present here will imply that the conditional mean of the field given any configuration of flow and counterflow lines does not exhibit pathological behavior and that the Loewner driving function of one path given the realization of a configuration of other paths exists and is continuous, at least until there is a crossing.

\subsection{The conditional mean}
\label{subsec::interacting_conditional_mean}

\begin{figure}[h!]
\begin{center}
\subfigure[The boundary data for the conditional mean of $h$ given flow lines $\eta_{\theta_1},\eta_{\theta_2}$.]{
\includegraphics[scale=0.85,page=1]{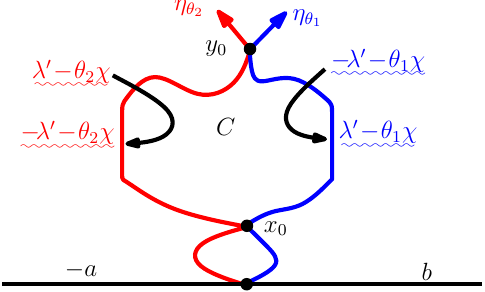}}
\hspace{0.01\textwidth}
\subfigure[The boundary data for the conditional mean of $h$ given flow lines $\eta_{\theta_1},\eta := \eta_0,\eta_{\theta_2}$ for $\theta_1 < 0 < \theta_2$.]{\includegraphics[scale=0.85,page=2]{figures/interacting_mean_three_flow_line.pdf}}
\caption{\label{fig::interacting_mean}
Suppose that $\theta_1 < \theta_2$ and $\eta_{\theta_i}$ is the flow line of a GFF on $\h$ with the boundary data depicted above with angle $\theta_i$.  Assume $\theta_1,\theta_2$ and the boundary data of $h$ are chosen so that Proposition~\ref{prop::monotonicity_non_boundary} applies to $\eta_{\theta_1},\eta_{\theta_2}$.  Since $\eta_{\theta_1} \cup \eta_{\theta_2}$ is a local set for $h$, Proposition~\ref{gff::prop::local_independence} implies that the boundary behavior of $\CC_{\eta_{\theta_1} \cup \eta_{\theta_2}}$ agrees with that of $\CC_{\eta_{\theta_2}}$ to the left of $\eta_{\theta_2}$ and with that of  $\CC_{\eta_{\theta_1}}$ to the right of $\eta_{\theta_1}$.  Proposition~\ref{gff::prop::cond_union_mean} implies the same in the connected components of $\h \setminus (\eta_{\theta_1} \cup \eta_{\theta_2})$ which lie between $\eta_{\theta_1}$ and $\eta_{\theta_2}$, except at those points where $\eta_{\theta_1}$ intersects $\eta_{\theta_2}$.  (There can only be two such points on the boundary of such a component, as illustrated.)  In Proposition~\ref{prop::cond_mean_height}, we will show that these intersection points do not introduce pathological behavior into the conditional mean.  The same result also holds when we consider multiple flow lines (see the right panel for the case $\theta_1 < 0 < \theta_2$ and $\eta := \eta_0$), as explained in Remark~\ref{rem::cond_mean_height}.}
\end{center}
\end{figure}

By Proposition~\ref{gff::prop::cond_union_local}, we know that $\eta_{\theta_1} \cup \eta_{\theta_2}$ (as a slight abuse of notation, throughout we will use $\eta_{\theta_i}$ to denote both the path and its range) is a local set for $h$.  Indeed, as a consequence of the special case of Theorem~\ref{thm::coupling_uniqueness} we proved in Section~\ref{subsec::uniqueness_non_boundary_intersecting}, the conditionally independent union of $\eta_{\theta_1}$ and $\eta_{\theta_2}$ given $h$ is almost surely the same as the usual union since $\eta_{\theta_1}$ and $\eta_{\theta_2}$ are both almost surely determined by $h$.  Recall that if $A$ is a local set for $h$, then $\CC_A$ is equal to the conditional mean of $h$ given $\CA$ (see Section~\ref{subsec::local_sets}).  By the continuity of $\eta_{\theta_1}$ and $\eta_{\theta_2}$ (recall Remark~\ref{rem::continuity_non_boundary}), we know that all of the connected components of $\eta_{\theta_1} \setminus \eta_{\theta_2}$ and $\eta_{\theta_2} \setminus \eta_{\theta_1}$ are larger than a single point.  Consequently, by Proposition~\ref{gff::prop::cond_union_mean}, we know the boundary behavior of $\CC_{\eta_{\theta_1} \cup \eta_{\theta_2}}$ away from $\eta_{\theta_1} \cap \eta_{\theta_2}$: if $z \in \eta_{\theta_1} \setminus \eta_{\theta_2}$ and $(z_k)$ is a sequence in $\h \setminus (\eta_{\theta_1} \cup \eta_{\theta_2})$ converging to $z$ then
\[ \CC_{\eta_{\theta_1} \cup \eta_{\theta_2}}(z_k) - \CC_{\eta_{\theta_1}}(z_k) \to 0 \quad\text{as}\quad k \to \infty \quad \text{almost surely}\]
and vice-versa when the roles of $\eta_{\theta_1}$ and $\eta_{\theta_2}$ are swapped.  Proposition~\ref{gff::prop::cond_union_mean} implies the same is true at those points $z$ which are at a positive distance from either $\eta_{\theta_1} \setminus \eta_{\theta_2}$ or $\eta_{\theta_2} \setminus \eta_{\theta_1}$ and are contained in a connected component of $\eta_{\theta_1} \cap \eta_{\theta_2}$ which consists of more than one point.  Also, Proposition~\ref{gff::prop::local_independence} gives us that the boundary behavior of $\CC_{\eta_{\theta_1} \cup \eta_{\theta_2}}$ agrees with that of $\CC_{\eta_{\theta_1}}$ (resp.\ $\CC_{\eta_{\theta_2}}$) to the right (resp.\ left) of $\eta_{\theta_1}$ (resp.\ $\eta_{\theta_2}$).  This leaves us to determine the boundary behavior of $\CC_{\eta_{\theta_1} \cup \eta_{\theta_2}}$ in between $\eta_{\theta_1}$ and $\eta_{\theta_2}$ at intersection points of $\eta_{\theta_1}$ and $\eta_{\theta_2}$.  This is the purpose of the next proposition.  Throughout, we let
\begin{equation}
\label{eqn::angles_filtration}
A(t) = \eta_{\theta_1}([0,t]) \cup \eta_{\theta_2}
\quad \text{and}\quad
\CF_t = \sigma(\eta_{\theta_1}(s) : s \leq t,\ \ \eta_{\theta_2}).
\end{equation}

\begin{proposition}
\label{prop::cond_mean_height}
Fix an $\CF_t$-stopping time $\tau$.  Let $\wt{h}$ be distributed according to the conditional law of $h$ given $A(\tau)$ and let $C$ be any connected component of $\h \setminus A(\tau)$ which is to the right of $\eta_{\theta_2}$.  Let $\partial C_{i,L}$ (resp.\ $\partial C_{i,R}$) be the part of $\partial C$ which is contained in the left (resp.\ right) side of $\eta_{\theta_i}$.  Let $x_0$ (resp.\ $y_0$) be the point on $\partial C$ which is visited first (resp.\ last) by $\eta_{\theta_2}$ and let $\varphi \colon C \to \h$ be a conformal transformation which takes $x_0$ (resp.\ $y_0$) to $0$ (resp.\ $\infty$).  Let $\Fg_C$ be the function which is harmonic in $\h$ with boundary values
\begin{align*}
   -\lambda - \theta_i \chi \quad\text{on}\quad \varphi(\partial C_{i,R}),\quad \lambda + \theta_i \chi \quad\text{on}\quad \varphi(\partial C_{i,L}),\quad\text{and}\quad
   b \quad\text{on}\quad \varphi((0,\infty))
\end{align*}
and let $\Fh_C = \Fg_C \circ \varphi - \chi \arg \varphi'$ (where the branch of $\arg \varphi'$ is chosen so that the boundary values of $\Fh_C$ agree with those of the conditional law of $h$ given either $\eta_{\theta_1}$ or $\eta_{\theta_2}$ on a segment of $\partial C$ which agrees with either $\eta_{\theta_1}$ or $\eta_{\theta_2}$).  Then the law of $\wt{h}|_{C}$ is equal to that of the sum of a zero boundary GFF in $C$ plus $\Fh_C$.  In particular, there is no singular contribution to $\Fh_C$ coming from the intersection points of the paths.
\end{proposition}

We note that the choice of the branch of the argument of $\arg \varphi'$ in the statement of Proposition~\ref{prop::cond_mean_height} is well-defined because Proposition~\ref{gff::prop::cond_union_mean} implies that the boundary behavior of $\wt{h}|_C$ agrees with that of $h$ given $\eta_{\theta_1}$ (resp.\ $\eta_{\theta_2}$) along the part of $\partial C$ which agrees with $\eta_{\theta_1}$ (resp.\ $\eta_{\theta_2}$), except possibly at two exceptional points.  (In fact, the proof of Proposition~\ref{prop::cond_mean_height} will in fact rule out such exceptional behavior.)

See Figure~\ref{fig::interacting_mean} for an illustration of the boundary data described in the statement of Proposition~\ref{prop::cond_mean_height}.  The main step in the proof is to show for $z \in \h \setminus \eta_{\theta_2}$ that $\CC_{A(t)}(z)$ has a modification which is continuous in $t$ up until the first time $\eta_{\theta_1}$ hits $z$.  Roughly, this suffices since pathological behavior in $\CC_{\eta_{\theta_1} \cup \eta_{\theta_2}}$ at a point $z_0 = \eta_{\theta_1}(t_0)$ for a time $t_0$ when $\eta_{\theta_1}$ intersects $\eta_{\theta_2}$ would correspond to a discontinuity in $\CC_{A(t)}(z)$ at $t_0$.  We begin by proving the following lemma, which implies that $A(\tau)$ is a local set for $h$ for every $\CF_t$-stopping time $\tau$.

\begin{lemma}
\label{lem::stopping_local_set}
Suppose that $\eta_1,\ldots,\eta_k$ are continuous paths such that for each $1 \leq i \leq k$, we have that
\begin{enumerate}
\item $\eta_i([0,\tau])$ is a local set for $h$ for every $\eta_i$-stopping time $\tau$ and
\item $\eta_i$ is almost surely determined by $h$.
\end{enumerate}
Suppose that $\tau_1$ is a stopping time for $\eta_1$ and, for each $2 \leq j \leq k$, inductively let $\tau_j$ be a stopping time for the filtration $\CF_t^j$ generated by $\eta_1|_{[0,\tau_1]},\ldots,\eta_{j-1}|_{[0,\tau_{j-1}]}$ and $\eta_j(s)$ for $s \leq t$.  Then $\cup_{i=1}^k \eta_i([0,\tau_i])$ is a local set for $h$.
\end{lemma}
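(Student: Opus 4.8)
The statement is a routine-but-fiddly consequence of the characterization of local sets in Lemma~\ref{lem::local_char} (the ``$H^\perp(U)$-measurability of the event $A \cap U = \emptyset$'' criterion) together with Proposition~\ref{gff::prop::cond_union_local}, so I would proceed by induction on $k$. The base case $k=1$ is just the hypothesis that $\eta_1([0,\tau_1])$ is local for every stopping time $\tau_1$. For the inductive step, suppose the claim holds for $k-1$ paths, and write $A' = \bigcup_{i=1}^{k-1}\eta_i([0,\tau_i])$, which is local by the inductive hypothesis. The key point is that, by hypothesis~(2), each $\eta_i$ is $\sigma(h)$-measurable, so $A'$ is in fact almost surely determined by $h$; likewise the whole path $\eta_k$ is determined by $h$, and hence so is $\eta_k([0,\tau_k])$ once we know $\tau_k$ is a legitimate stopping time of the relevant filtration. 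Thus the conditionally independent union $A' \mathbin{\widetilde\cup} \eta_k([0,\tau_k])$ coincides almost surely with the ordinary union $\bigcup_{i=1}^k \eta_i([0,\tau_i])$, and it suffices to show this union is local.

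**Main steps.** First I would reduce, via Lemma~\ref{lem::local_char}, to showing: for every deterministic open $U \subseteq D$, the event $E_U = \{A' \cup \eta_k([0,\tau_k]) \cap U = \emptyset\}$ is measurable with respect to (equivalently, its conditional probability given $h$ is a function of) the projection of $h$ onto $H^\perp(U)$. Write $E_U = E_U' \cap E_U''$ where $E_U' = \{A' \cap U = \emptyset\}$ and $E_U'' = \{\eta_k([0,\tau_k]) \cap U = \emptyset\}$. By the inductive hypothesis and Lemma~\ref{lem::local_char}, $E_U'$ is $H^\perp(U)$-measurable. For $E_U''$, the subtlety is that $\tau_k$ depends on $\eta_1|_{[0,\tau_1]},\dots,\eta_{k-1}|_{[0,\tau_{k-1}]}$ as well as on $\eta_k$ itself, so $\eta_k([0,\tau_k])$ is not manifestly a local set in the single-path sense. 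Here I would use that $\eta_k([0,t])$ is local for every fixed-path stopping time $t$, together with the fact that the stopping time $\tau_k$ is measurable with respect to $\sigma(h)$ (since all the $\eta_i$ and all the earlier $\tau_j$ are, by induction, $\sigma(h)$-measurable); then one can write $E_U'' = \bigl\{\tau_k \le \sigma_U\bigr\}$ where $\sigma_U = \inf\{t : \eta_k(t) \in U\}$, and argue as in \cite[Lemma 3.9]{SchrammShe10} that conditionally on the $H^\perp(U)$ data the events $\{\eta_k([0,t]) \cap U = \emptyset\}$ for all rational $t$ — and hence their intersection with the $\sigma(h)$-measurable event $\{\tau_k \le t\}$ appropriately assembled — remain measurable functions of that data. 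Finally I would assemble: $E_U = E_U' \cap E_U''$ is an intersection of two $H^\perp(U)$-measurable events, hence $H^\perp(U)$-measurable, which by Lemma~\ref{lem::local_char} gives that $\bigcup_{i=1}^k \eta_i([0,\tau_i])$ is local.

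**Main obstacle.** The delicate point is precisely the joint measurability in the inductive step: because $\tau_k$ is a stopping time of the enlarged filtration $\CF_t^k$ rather than of $\eta_k$ alone, one must be careful that conditioning on $\eta_1|_{[0,\tau_1]},\dots,\eta_{k-1}|_{[0,\tau_{k-1}]}$ does not spoil the localness of $\eta_k([0,\tau_k])$ with respect to $h$. The clean way around this is the observation (used already, e.g., in the proof of Theorem~\ref{thm::martingale} and in Proposition~\ref{gff::prop::cond_union_local}) that, because every $\eta_i$ is almost surely determined by $h$, \emph{all} of the random objects $\eta_1|_{[0,\tau_1]},\dots,\eta_k|_{[0,\tau_k]}$ are measurable functions of $h$; conditioning on some of them is then just conditioning on part of $\sigma(h)$, and the conditionally independent union of Proposition~\ref{gff::prop::cond_union_local} degenerates to an ordinary union. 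I would spell out this reduction carefully once and then invoke Proposition~\ref{gff::prop::cond_union_local} (applied to $A'$ and to $\eta_k([0,\tau_k])$, each local and each $\sigma(h)$-measurable) to conclude. Everything else is bookkeeping with the Hausdorff-metric measurability of the maps involved.
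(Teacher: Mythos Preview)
Your overall architecture (induction on $k$, decomposing the avoidance event, invoking the characterization of Lemma~\ref{lem::local_char}) matches the paper's, and your stopping time $\sigma_U$ is exactly the paper's $\tau_k^U = \inf\{t:\eta_k(t)\in\ol U\}$. But there is a genuine gap in how you close the induction.

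You assert that $E_U'' = \{\eta_k([0,\tau_k])\cap U=\emptyset\}=\{\tau_k\le\sigma_U\}$ is $H^\perp(U)$-measurable on its own, and then conclude because $E_U=E_U'\cap E_U''$. This step fails: $\tau_k$ is a stopping time for the filtration generated by $A'=\bigcup_{i<k}\eta_i([0,\tau_i])$ together with $\eta_k$, so the event $\{\tau_k\le\sigma_U\}$ depends on $A'$, and $A'$ is \emph{not} in general measurable with respect to the projection of $h$ onto $H^\perp(U)$. Knowing only that $\{\tau_k\le t\}$ is $\sigma(h)$-measurable does not help --- intersecting an $H^\perp(U)$-measurable event with a merely $\sigma(h)$-measurable one need not produce something $H^\perp(U)$-measurable. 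Your fallback, invoking Proposition~\ref{gff::prop::cond_union_local} for the pair $(A',\eta_k([0,\tau_k]))$, is circular: that proposition requires \emph{both} inputs to already be local, and the localness of $\eta_k([0,\tau_k])$ is precisely what is in question, since $\tau_k$ is not an $\eta_k$-stopping time.

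The paper's fix is to carry a \emph{stronger} inductive hypothesis: not merely that $A_{j-1}$ is local, but that on the event $\{A_{j-1}\cap U=\emptyset\}$ the set $A_{j-1}$ itself is almost surely determined by $h|_{U^c}$. With this in hand, on $E_U'=\{A'\cap U=\emptyset\}$ one knows $A'$ from $h|_{U^c}$; and since $\sigma_U$ is a genuine $\eta_k$-stopping time, hypothesis~(1) and~(2) give that $\eta_k|_{[0,\sigma_U]}$ is also determined by $h|_{U^c}$. Because $\tau_k$ is a stopping time of $\CF^k_t$, whether $\tau_k\le\sigma_U$ is decided by $A'$ together with $\eta_k|_{[0,\sigma_U]}$. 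Hence on $E_U'$ the event $\{\tau_k\le\sigma_U\}$ is $h|_{U^c}$-measurable, so $E_U=E_U'\cap\{\tau_k\le\sigma_U\}$ is $h|_{U^c}$-measurable, and one checks similarly that on $E_U$ the set $A_j$ is $h|_{U^c}$-measurable, propagating the strengthened hypothesis. The point you are missing is that measurability of $E_U''$ only holds \emph{conditionally on} $E_U'$, and you need the strengthened hypothesis to exploit this.
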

\begin{proof}
Let $A_j = \cup_{i=1}^j \eta_i([0,\tau_i])$.  Fix $U \subseteq \h$ open.  We are going to prove that $A_j \cap U = \emptyset$ is almost surely determined by the projection $h_{U^c}$ of $h$ onto $H^\perp(U)$ and that, on the event $A_j \cap U = \emptyset$, $A_j$ is itself almost surely determined by $h_{U^c}$.  This suffices by characterization~\eqref{it::Ucond} of local sets given in Lemma~\ref{lem::local_char}, which we will in turn check by induction on the number of paths.  The hypotheses of the lemma imply this is true for $j=1$.  Suppose the result holds for $j-1$ paths for $j \geq 2$ fixed.  We will now show that it holds for $j$ paths.  Let $\tau_j^U$ be the infimum of times in which $\eta_j$ is in $U$.  We claim that the hypotheses of the lemma imply that $\eta_j([0,\tau_j^U])$ is almost surely determined by $h_{U^c}$.  Indeed, this follows because $\eta_j([0,\tau_j^U])$ is both almost surely determined by $h$ and local for $h$.  In particular, since the projections $h_U$ and $h_{U^c}$ of $h$ onto $H(U)$ and $H^\perp(U)$ together determine $h$, it follows that $h_U,h_{U^c}$ together almost surely determine $\eta_j([0,\tau_j^U])$.  On the other hand, since $\eta_j([0,\tau_j^U])$ is local for $h$ and almost surely does not intersect $U$, it follows that the conditional law of $h_U$ given $h_{U^c}$ is equal to the conditional law of $h_U$ given both $h_{U^c}$ and $\eta_j([0,\tau_j^U])$.  That is, $h_U$ and $\eta_j([0,\tau_j^U])$ are conditionally independent given $h_{U^c}$.  Combining these two observations proves the claim.

Observe
\[ \{A_j \cap U = \emptyset\} = \{A_{j-1} \cap U = \emptyset\} \cap \{ \tau_j \leq \tau_j^U\}\]
and that $\{ \tau_j \leq \tau_j^U\}$ is almost surely determined by $A_{j-1}$ and $h_{U^c}$.  Thus on the event $\{A_{j-1} \cap U = \emptyset\}$, we have that $\{\tau_j \leq \tau_j^U\}$ is almost surely determined by $h_{U^c}$ so that the event $\{A_j \cap U = \emptyset\}$ is almost surely determined by $h_{U^c}$.  Moreover, on $\{A_j  \cap U = \emptyset\}$, we have that $A_j$ is almost surely determined by $h_{U^c}$.  This completes the proof of the induction step.
\end{proof}

We next prove the following simple lemma, which says that if $X,Y$ are independent, $Y$ is Gaussian, and $X+Y$ is Gaussian, then $X$ is Gaussian as well.

\begin{lemma}
\label{lem::independent_gaussian_sum}
Suppose that $X,Y$ are independent random variables such that $Y \sim N(\mu_Y,\sigma_Y^2)$ and $Z: = X+Y \sim N(\mu_Z,\sigma_Z^2)$.  Then $X \sim N(\mu_X,\sigma_X^2)$ where $\mu_X = \mu_Z - \mu_Y$ and $\sigma_X^2 = \sigma_Z^2 - \sigma_Y^2$.
\end{lemma}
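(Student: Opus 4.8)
The plan is to use characteristic functions, exploiting the fact that $X$ and $Y$ are independent so that the characteristic function of $Z = X+Y$ factors. Write $\varphi_X$, $\varphi_Y$, $\varphi_Z$ for the characteristic functions of $X$, $Y$, $Z$ respectively. Independence gives $\varphi_Z(t) = \varphi_X(t)\varphi_Y(t)$ for all $t \in \R$. Since $Y$ is Gaussian, $\varphi_Y(t) = \exp(i\mu_Y t - \tfrac{1}{2}\sigma_Y^2 t^2)$, which is nonvanishing on all of $\R$; similarly $\varphi_Z(t) = \exp(i\mu_Z t - \tfrac{1}{2}\sigma_Z^2 t^2)$ is nonvanishing. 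Hence we may divide to obtain
\[
\varphi_X(t) = \frac{\varphi_Z(t)}{\varphi_Y(t)} = \exp\left( i(\mu_Z - \mu_Y)t - \tfrac{1}{2}(\sigma_Z^2 - \sigma_Y^2)t^2 \right)
\]
for every $t$. The right-hand side is the characteristic function of an $N(\mu_Z - \mu_Y, \sigma_Z^2 - \sigma_Y^2)$ random variable, and by the uniqueness of characteristic functions this identifies the law of $X$, provided we know that $\sigma_Z^2 - \sigma_Y^2 \geq 0$; in fact this nonnegativity is automatic, since otherwise the displayed expression would blow up as $|t|\to\infty$ and could not be a characteristic function (a characteristic function is always bounded by $1$ in modulus). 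If $\sigma_Z^2 = \sigma_Y^2$, then $X$ is the constant $\mu_Z - \mu_Y$, i.e.\ degenerate Gaussian with variance zero, which is consistent with the stated formula.

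There are no real obstacles here; the only point requiring a word of care is that one must justify that $X$ has a characteristic function at all, but of course every real random variable does, so this is immediate. An alternative route, if one prefers to avoid quoting the injectivity of the Fourier transform, would be to compute moments directly: since $Y$ has moments of all orders and $X + Y$ has moments of all orders, independence forces $X$ to have moments of all orders too (by a binomial expansion argument on $\E|X+Y|^n$ together with $\E|Y|^n < \infty$), after which one can inductively show the cumulants of $X$ beyond the second vanish. But the characteristic function argument is cleaner and I would present that one. I do not expect any step to be genuinely hard — the statement is elementary and the proof is three lines once the factorization of $\varphi_Z$ is invoked.

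A minor remark on why this lemma is wanted in the surrounding argument: it will be applied with $Z = h$ restricted to some test function (Gaussian), $Y$ a zero-boundary GFF contribution (Gaussian), and $X$ the conditional-mean contribution $\CC_{A(\tau)}$ paired against the test function, with $X$ and $Y$ independent by the Markov/local-set decomposition; the conclusion is that $\CC_{A(\tau)}$ is itself a Gaussian process, which is the structural input needed in the proof of Proposition~\ref{prop::cond_mean_height}. The proof of the lemma itself, however, needs none of this context.

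\begin{proof}[Proof of Lemma~\ref{lem::independent_gaussian_sum}]
Let $\varphi_X$, $\varphi_Y$, $\varphi_Z$ denote the characteristic functions of $X$, $Y$, $Z$ respectively. Since $X$ and $Y$ are independent and $Z = X+Y$, we have $\varphi_Z = \varphi_X \varphi_Y$. As $Y \sim N(\mu_Y,\sigma_Y^2)$ and $Z \sim N(\mu_Z,\sigma_Z^2)$,
\[
\varphi_Y(t) = \exp\left( i\mu_Y t - \tfrac{1}{2}\sigma_Y^2 t^2 \right), \qquad
\varphi_Z(t) = \exp\left( i\mu_Z t - \tfrac{1}{2}\sigma_Z^2 t^2 \right),
\]
and in particular $\varphi_Y(t) \neq 0$ for every $t \in \R$. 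Hence
\[
\varphi_X(t) = \frac{\varphi_Z(t)}{\varphi_Y(t)} = \exp\left( i(\mu_Z - \mu_Y) t - \tfrac{1}{2}(\sigma_Z^2 - \sigma_Y^2) t^2 \right)
\]
for all $t \in \R$. Since $|\varphi_X(t)| \leq 1$ for all $t$, we must have $\sigma_Z^2 - \sigma_Y^2 \geq 0$; otherwise the right-hand side would be unbounded. Setting $\mu_X = \mu_Z - \mu_Y$ and $\sigma_X^2 = \sigma_Z^2 - \sigma_Y^2 \geq 0$, we see that $\varphi_X$ coincides with the characteristic function of the $N(\mu_X,\sigma_X^2)$ distribution (interpreted as the point mass at $\mu_X$ when $\sigma_X^2 = 0$). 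By the uniqueness theorem for characteristic functions, $X \sim N(\mu_X,\sigma_X^2)$, as claimed.
\end{proof}
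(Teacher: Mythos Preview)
Your proof is correct and follows exactly the same approach as the paper: factor the characteristic function of $Z$ via independence, divide by the nonvanishing Gaussian characteristic function of $Y$, and recognize the result as the characteristic function of $N(\mu_Z-\mu_Y,\sigma_Z^2-\sigma_Y^2)$. Your version is slightly more careful in noting explicitly that $\sigma_Z^2-\sigma_Y^2\ge 0$ is forced by $|\varphi_X|\le 1$, but otherwise the arguments are identical.
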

\begin{proof}
The proof follows from the calculus of characteristic functions.  Indeed, we know that
\[ \E[e^{i \lambda Y}] = e^{i \lambda \mu_Y- \lambda^2 \sigma_Y^2/2} \quad\text{and}\quad \E[e^{i\lambda Z}] = e^{i \lambda \mu_Z - \lambda^2 \sigma_Z^2/2}.\]
Since $X$ is independent of $Y$, we have
\[ \E[e^{i \lambda Z}] = \E[e^{i \lambda X}] \E[e^{i \lambda Y}],\]
which allows us to solve for $\E[e^{ i\lambda X}]$ to see that
\[  \E[e^{i \lambda X}] = e^{i \lambda (\mu_Z - \mu_Y) - \lambda^2(\sigma_Z^2 - \sigma_Y^2)/2}.\]
\end{proof}

Suppose that $D \subseteq \C$ is a non-trivial simply connected domain and that $z \in D$ is fixed.  Recall that the conformal radius $C(z;D)$ is the quantity $|\varphi'(0)|$ where $\varphi$ is a conformal transformation which takes the unit disk $\D$ to $D$ with $\varphi(0) = z$.  By the Koebe-$1/4$ Theorem~\cite[Theorem~3.16]{LAW05}, the ratio between the conformal radius of $z$ and the distance of $z$ to $\partial D$ is contained in $[\tfrac{1}{4},4]$.  Suppose that $h$ is a GFF on $D$ (with boundary conditions which are not necessarily equal to $0$) and let $\CC$ be the function which is harmonic in $D$ and has the same boundary data as $h$.  The following lemma gives us the law of $\CC_A(z)$ for a local set $A$ in terms of $C(z;D)$, $C(z;D \setminus A)$, and $\CC(z)$.

\begin{lemma}
\label{lem::cond_mean_height_distribution}
Suppose that $D \subseteq \C$ is a non-trivial, simply connected domain.  Let $h$ be a GFF on $D$ and fix $z \in D$.  Suppose that $A$ is a local set for $h$ such that $D \setminus A$ is simply connected and $C(z;D \setminus A)$ is almost surely constant and positive.  Then $\CC_A(z)$ is distributed as a Gaussian random variable with mean $\CC(z)$ and variance $\log C(z;D) - \log C(z;D \setminus A)$.
\end{lemma}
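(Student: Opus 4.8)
The plan is to reduce the statement to the elementary Gaussian fact recorded in Lemma~\ref{lem::independent_gaussian_sum} by taking circle averages of $h$ about $z$ and exploiting the orthogonal decomposition associated to the local set $A$. Write $c>0$ for the almost sure value of $C(z;D\setminus A)$ and let $U_z$ be the connected component of $D\setminus A$ containing $z$. By the Koebe $1/4$ theorem, $z$ has distance at least $c/4$ from $\partial U_z$, so almost surely $z$ lies at a positive, and moreover \emph{deterministic}, distance from $A$ and from $\partial D$. Recall from the theory of local sets that one may sample $(A,h)$ by first sampling the pair $(A,\CC_A)$, with $\CC_A$ harmonic in $D\setminus A$, and then, conditionally on $A$, sampling an independent zero-boundary GFF $h^A$ on $D\setminus A$ and setting $h=\CC_A+h^A$; under this decomposition $h^A$ is, conditionally on $A$, independent of $h|_A$ (hence of $\CC_A$, which is a measurable function of $(A,h|_A)$).

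First I would fix $\epsilon\in(0,c/4)$ and consider the circle averages $h_\epsilon(z)$, $(\CC_A)_\epsilon(z)$, $(h^A)_\epsilon(z)$ over $\partial B(z,\epsilon)$, which lies in $U_z$. Since $\CC_A$ is harmonic in a neighborhood of $z$, the mean value property gives $(\CC_A)_\epsilon(z)=\CC_A(z)$, so the identity $h=\CC_A+h^A$ yields the exact relation $h_\epsilon(z)=\CC_A(z)+(h^A)_\epsilon(z)$. Next I would invoke the standard computation of the variance of a circle average of a GFF: splitting the Green's function into its logarithmic singularity (contributing $-\log\epsilon$) and its harmonic remainder (whose $\epsilon$-circle average equals its on-diagonal value, namely $\log$ of the conformal radius, again by the mean value property), one gets $h_\epsilon(z)\sim N\big(\CC(z),\ \log C(z;D)-\log\epsilon\big)$ and, conditionally on $A$, $(h^A)_\epsilon(z)\sim N\big(0,\ \log C(z;D\setminus A)-\log\epsilon\big)=N\big(0,\ \log c-\log\epsilon\big)$. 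Because this last conditional law does not depend on $A$, it is also the unconditional law of $(h^A)_\epsilon(z)$.

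It then remains to verify the independence hypothesis of Lemma~\ref{lem::independent_gaussian_sum}: $\CC_A(z)$ is $\sigma(A,h|_A)$-measurable, whereas, conditionally on $A$, the variable $(h^A)_\epsilon(z)$ is independent of $h|_A$ with a law not depending on $A$; a one-line conditioning computation then shows $\CC_A(z)$ and $(h^A)_\epsilon(z)$ are unconditionally independent. Applying Lemma~\ref{lem::independent_gaussian_sum} with $X=\CC_A(z)$, $Y=(h^A)_\epsilon(z)$ and $X+Y=h_\epsilon(z)$ gives $\CC_A(z)\sim N\big(\CC(z)-0,\ (\log C(z;D)-\log\epsilon)-(\log c-\log\epsilon)\big)=N\big(\CC(z),\ \log C(z;D)-\log C(z;D\setminus A)\big)$, which is the assertion.

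I do not expect a genuine obstacle here; the only points requiring care are bookkeeping ones. The hypothesis that $C(z;D\setminus A)$ is almost surely constant and positive is used twice: positivity places $z$ at a deterministic positive distance from $A$, so $(h^A)_\epsilon(z)$ is well defined for a fixed $\epsilon$ regardless of $A$, and constancy makes the conditional variance $\log c-\log\epsilon$ deterministic, which is exactly what permits applying Lemma~\ref{lem::independent_gaussian_sum} directly to the unconditional laws rather than conditionally on $A$. The other mild point is recalling the precise value $\log C(z;D)-\log\epsilon$ of the circle-average variance, which follows by applying the mean value property to the regular part of the Green's function.
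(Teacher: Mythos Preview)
Your proof is correct and follows essentially the same approach as the paper: both use the Koebe $1/4$ theorem to fix a deterministic $\epsilon$, take circle averages, decompose $h_\epsilon(z)=\CC_A(z)+(h^A)_\epsilon(z)$ via the mean value property, compute the two Gaussian variances in terms of conformal radii, use the constancy of $C(z;D\setminus A)$ to pass from conditional to unconditional laws, and then apply Lemma~\ref{lem::independent_gaussian_sum}. Your justification of the unconditional independence of $\CC_A(z)$ and $(h^A)_\epsilon(z)$ is in fact slightly more explicit than the paper's, which simply cites the Markov property.
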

\begin{proof}
The Koebe $1/4$ theorem \cite[Theorem~3.16]{LAW05} implies there exists non-random $\epsilon > 0$ such that, almost surely, $B(z,2\epsilon) \subseteq D \setminus A$.  Let $h_\epsilon$ denote the average of $h$ on $\partial B(z,\epsilon)$ (the construction and properties of the circle average process are explained in detail in \cite[Section~3]{DS08}).  By \cite[Proposition~3.2]{DS08}, we know that $h_\epsilon(z) \sim N(\CC(z), -\log \epsilon + \log C(z;D))$.  Since $A$ is a local set for $h$, we can write $h = h_1 + h_2$ where $h_1$ is harmonic on $D \setminus A$ and the conditional law of $h_2$ given $h_1$ is that of a zero-boundary GFF on $D \setminus A$.  Since $h_1$ is harmonic in $D \setminus A$, we note that $h_1(z)$ is equal to the average of its values on $\partial B(z,\epsilon)$.  Moreover, we have that $\E[ h_\epsilon(z) \giv \CA] = h_1(z)$.  Consequently, we have that $h_\epsilon(z) - \E[h_\epsilon(z) \giv \CA]$ is equal to the average of $h_2$ on $\partial B(z,\epsilon)$.  Therefore it follows that $h_\epsilon(z) - \E[h_\epsilon(z) \giv \CA]$ given $\CA$ follows the $N(0, -\log \epsilon + \log C(z;D \setminus A))$ distribution.  This implies that
\[ h_\epsilon(z) - \E[h_\epsilon(z) \giv \CA] \sim N(0,-\log \epsilon + \log C(z;D \setminus A))\]
(no longer conditioning on $\CA$; recall that $C(z;D \setminus A)$ is almost surely constant).  Since $\E[h_\epsilon(z) \giv \CA ]$ is independent of $h_\epsilon(z) - \E[h_\epsilon(z) \giv \CA]$ (as the former is determined by $\CA$ and the latter is independent of $\CA$), Lemma~\ref{lem::independent_gaussian_sum} implies that
\[ \E[h_\epsilon(z) \giv \CA ] \sim N(\CC(z), \log C(z;D) - \log C(z;D \setminus A)).\]
Since $\CC_A(z)$ is harmonic in $D \setminus A$, we know that $\CC_A(z) = \E[h_\epsilon(z) \giv \CA]$ almost surely, which proves the lemma.
\end{proof}

From Lemma~\ref{lem::cond_mean_height_distribution} we obtain the following, which roughly says that the conditional mean at a fixed point given an increasing family of local sets evolves as a Brownian motion when it is parameterized by the log conformal radius:

\begin{proposition}
\label{prop::cond_mean_continuous}
Suppose that $D \subseteq \C$ is a non-trivial simply connected domain.  Let $h$ be a GFF on $D$ and suppose that $(Z(t) : t \geq 0)$ is an increasing family of closed sets such that
\begin{enumerate}[(i)]
\item $D \setminus Z(t)$ is simply connected for each $t \geq 0$ and
\item $Z(\tau)$ is local for $h$ for every $Z$-stopping time $\tau$.
\end{enumerate}
Suppose that $z \in D$ is such that $C(z;D \setminus Z(t))$ is almost surely continuous and strictly decreasing in $t$.  Then $\CC_{Z(t)}(z) - \CC_{Z(0)}(z)$ has a modification which is a Brownian motion when parameterized by $\log C(z;D \setminus Z(0))-\log C(z;D \setminus Z(t))$ up until the first time $\tau(z)$ that $Z(t)$ accumulates at $z$.  Moreover, with $S = \{ (t,z) : C(z; D \setminus Z(t)) > 0\}$ we have that the map $(t,z) \mapsto \CC_{Z(t)}(z)$ has a modification which is almost surely continuous.
\end{proposition}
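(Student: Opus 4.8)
The plan is to combine Lemma~\ref{lem::cond_mean_height_distribution} (which identifies the law of $\CC_A(z)$ for a single local set $A$) with the local set structure of the increasing family $(Z(t))$ and an $L^2$-martingale argument to upgrade the one-time-marginal statement to a process-level statement. First I would fix $z$ and, without loss of generality, take $D$ so that $C(z;D\setminus Z(0))$ is a finite constant; by stopping we may assume $Z(t)$ stays a fixed positive distance from $z$, i.e. we work up to a stopping time before $\tau(z)$. Reparametrize time by $u = u(t) := \log C(z;D\setminus Z(0)) - \log C(z;D\setminus Z(t))$, which by hypothesis is continuous and nondecreasing; write $M_u := \CC_{Z(t)}(z) - \CC_{Z(0)}(z)$ for the value at the (generalized inverse) time. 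The first key point is that $(M_u)$ is a martingale in the filtration generated by $(Z(t))$ and $h|_{Z(t)}$: by the tower property for conditional expectations of the GFF given nested local sets (Proposition~\ref{gff::prop::markov} together with the local set property, exactly as in \cite[Lemma 3.11]{SchrammShe10}), $\CC_{Z(s)}(z) = \E[\CC_{Z(t)}(z) \mid Z(s), h|_{Z(s)}]$ for $s \le t$, so increments are conditionally mean zero.

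The second key step is to compute the quadratic variation. For fixed times $s<t$, apply Lemma~\ref{lem::cond_mean_height_distribution} in the domain $D' := D\setminus Z(s)$ to the local set $Z(t)\setminus Z(s)$ relative to $D'$ — more precisely, condition on $(Z(s), h|_{Z(s)})$, so that the conditional law of $h$ restricted to the relevant component is a GFF on $D'$ plus a harmonic function, and $Z(t)$ restricted there is local for it. Lemma~\ref{lem::cond_mean_height_distribution} then gives that, conditionally on $Z(s)$ and $h|_{Z(s)}$, the increment $M_{u(t)} - M_{u(s)}$ is Gaussian with mean zero and variance $\log C(z;D\setminus Z(s)) - \log C(z;D\setminus Z(t)) = u(t)-u(s)$ — here one uses that $C(z; D'\setminus(Z(t)\setminus Z(s))) = C(z;D\setminus Z(t))$ and that this is almost surely constant given the conditioning (which follows from the continuity/monotonicity hypothesis on the conformal radius, after noting the conditioning fixes $Z(s)$). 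Thus $(M_u)$ has conditionally Gaussian, stationary-in-variance, independent (given the past) increments with $\E[(M_{u(t)}-M_{u(s)})^2 \mid \mathcal F_s] = u(t)-u(s)$, so $\langle M\rangle_u = u$.

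With these two facts in hand, the Lévy characterization of Brownian motion (applied to the continuous local martingale $M_u$ with $\langle M\rangle_u = u$) identifies $(M_u)$ as a standard Brownian motion, which is the first assertion; here I should check that $M$ has a continuous modification, which follows from the Gaussian increment bound $\E[(M_{u(t)}-M_{u(s)})^2] = u(t)-u(s)$ via Kolmogorov's continuity criterion (Gaussianity promotes the second moment bound to all moments). The final assertion — that $\CC_{Z(t)}(z)$ has a modification continuous in $t$ — then follows by composing the continuous modification of $u \mapsto M_u$ with the continuous map $t\mapsto u(t)$, and undoing the stopping by exhausting with an increasing sequence of stopping times approaching $\tau(z)$. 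The main obstacle I anticipate is the measure-theoretic bookkeeping in the second step: one must be careful that Lemma~\ref{lem::cond_mean_height_distribution} is being applied \emph{conditionally} on $(Z(s),h|_{Z(s)})$ and that in this conditional picture the relevant conformal radius is genuinely (conditionally) deterministic, so that the lemma's hypothesis is met; relatedly, one needs the increment to be independent of $\mathcal F_s$ rather than merely conditionally mean-zero, which again is the content of the GFF Markov property for local sets but must be invoked at the right level of generality. The rest — Lévy characterization, Kolmogorov continuity, and the time change — is routine.
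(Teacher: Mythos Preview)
Your proof is correct and takes the same route as the paper: reparametrize by log conformal radius via the stopping times $\tau_s = \inf\{t : u(t) = s\}$, then use Lemma~\ref{lem::cond_mean_height_distribution} (applied conditionally on the past, via the local set Markov property) to see that increments are independent Gaussian with variance equal to the $u$-time increment. The paper stops there---independent Gaussian increments already pin down the finite-dimensional distributions of Brownian motion---so your detour through the L\'evy characterization and Kolmogorov's criterion, while not wrong, is unnecessary scaffolding.
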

\begin{proof}
For each $s > 0$ let
\[ \tau_s(z) = \inf\{t \geq 0 : \log C(z;D \setminus Z(0)) - \log C(z;D \setminus Z(t)) = s\}.\]
Fix $s_1 < s_2$.  Then Lemma~\ref{lem::cond_mean_height_distribution} implies that
\[ \CC_{Z(\tau_{s_2}(z))}(z) - \CC_{Z(\tau_{s_1}(z))}(z) \sim N(0, s_2 - s_1).\]
Since $\CC_{Z(\tau_{s_2}(z))}(z) - \CC_{Z(\tau_{s_1}(z))}(z)$ is independent of $\CC_{Z(\tau_{s_1}(z))}(z)$, the first part of the proposition follows since $\CC_{Z(\tau_s(z))}(z)$ has the same finite dimensional distributions as a standard Brownian motion.  This, in particular, implies that $\CC_{Z(t)}(z)$ has a modification which is continuous in $t$.

That $\CC_{Z(t)}(z)$ has a modification which is jointly continuous in $t$ and $z$ is a consequence of the proof given in \cite[Section~3]{DS08} that the circle average process $h_\epsilon(z)$ has a modification which is jointly continuous in $\epsilon$ and $z$.  Fix $T > 0$ and $w \in B(z, \tfrac{1}{16} e^{-T})$ and $\epsilon \in (0,\tfrac{1}{16} e^{-T})$.  Then for $s,t \in [0,T]$ and $p \geq 2$, we have for some constants $c_1,c_2 > 0$ that
\begin{align*}
    &  \E[ |\CC_{Z(\tau_s(z))}(z) - \CC_{Z(\tau_t(z))}(w)|^p]\\
\leq&  c_1\left( \E[ | \CC_{Z(\tau_s(z))}(z) - \CC_{Z(\tau_t(z))}(z) |^p ] + \E[  | \CC_{Z(\tau_t(z))}(z) - \CC_{Z(\tau_t(z))}(w)|^p] \right)\\
\leq& c_2 \left( |t-s|^{p/2} +  \E\left[  \left| \E[ h_\epsilon(z) - h_\epsilon(w) \giv \CA_t] \right|^p \right] \right)
\end{align*}
where $\CA_t$ is as in Section~\ref{subsec::local_sets} for the local set $Z(\tau_t(z))$.  By Jensen's inequality, the second term is bounded from above by $c_2 \E[ | h_\epsilon(z) - h_\epsilon(w)|^p]$.  The moments of this type are bounded in the proof of \cite[Proposition~3.1]{DS08} (see also \cite[Proposition~3.2]{DS08}).  The final claim of the proposition then follows from the Kolmogorov-\u{C}entsov theorem.
\end{proof}

\begin{figure}[h!]
\begin{center}
\subfigure[Pocket closing time.]{\includegraphics[scale=0.85,page=1]{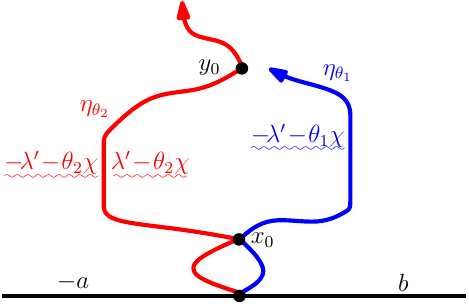}}
\hspace{0.01\textwidth}
\subfigure[Pocket opening time; this is the same as a pocket closing time for a counterflow line $\eta_{\theta_1}'$ whose left boundary is $\eta_{\theta_1}$.]{\includegraphics[scale=0.85,page=2]{figures/interacting_mean_almost_hit.pdf}}
\caption{\label{fig::interacting_mean_continuity} Suppose that we have the same setup as in Figure~\ref{fig::interacting_mean} and that $C$ is a connected component of $\h \setminus (\eta_{\theta_1} \cup \eta_{\theta_2})$ which lies between $\eta_{\theta_1}$ and $\eta_{\theta_2}$.  By the discussion in Figure~\ref{fig::interacting_mean}, we know the behavior of $\CC_{\eta_{\theta_1} \cup \eta_{\theta_2}}$ in $C$ away from $x_0$ and $y_0$, the first and last points of $\partial C$ traced by $\eta_{\theta_1}$, respectively.  To rule out pathological behavior near $x_0$, $y_0$, with $A(t) = \eta_{\theta_1}([0,t]) \cup \eta_{\theta_2}$, we first prove that $\CC_{A(t)}$ has a modification which is continuous in $t$.  We argue that the conditional mean does not behave pathologically at $y_0$ (illustrated in the left panel above) by taking a limit as $t$ increases to the time $\eta_{\theta_1}$ closes the pocket $C$ opened at $x_0$ and invoking the continuity of $\CC_{A(t)}$.  At the pocket opening point $x_0$, we can rule out pathological behavior by using that $\eta_{\theta_1}$ is almost surely the left boundary of a counterflow line $\eta_{\theta_1}'$ (see the right panel) so that $x_0$ is a pocket closing point of $\eta_{\theta_1}'$ and an analogous continuity argument.
}
\end{center}
\end{figure}

We now have all of the ingredients to complete the proof of Proposition~\ref{prop::cond_mean_height}.

\begin{proof}[Proof of Proposition~\ref{prop::cond_mean_height}]
We first assume we are in the setting of Proposition~\ref{prop::monotonicity_non_boundary}, so that $h$ is a GFF on the strip $\strip$ rather than $\h$.  The result for the GFF on $\h$ follows from absolute continuity (Proposition~\ref{prop::gff_abs_continuity}).  Note that $A(\tau)$ (recall the definition of $A(t)$ from~\eqref{eqn::angles_filtration}) is a local set for $h$ by Lemma~\ref{lem::stopping_local_set} applied for the case $k=2$.

We start by working in the special case $\tau=\infty$ and we let $A = A(\infty) = \eta_{\theta_1} \cup \eta_{\theta_2}$.  First of all, since $\eta_{\theta_1}$ and $\eta_{\theta_2}$ are almost surely continuous, the connected components of $\eta_{\theta_1} \setminus \eta_{\theta_2}$ and $\eta_{\theta_2} \setminus \eta_{\theta_1}$ consist of more than a single point.  Consequently, by Proposition~\ref{gff::prop::cond_union_local} and Proposition~\ref{gff::prop::cond_union_mean}, we know that $\CC_A - \CC_{\eta_{\theta_1}}$ tends to zero along any sequence of points $(z_k)$ which converges to a point that is contained in $\eta_{\theta_1} \setminus \eta_{\theta_2}$ or to a point in a connected component of $\eta_{\theta_1} \cap \eta_{\theta_2}$ which contains more than a single point and is at a positive distance from either $\eta_{\theta_1} \setminus \eta_{\theta_2}$ or $\eta_{\theta_2} \setminus \eta_{\theta_1}$.  Proposition~\ref{gff::prop::local_independence} implies the same if $(z_k)$ converges to a point on the right side of $\eta_{\theta_1}|_{[0,\tau]}$.  Likewise, $\CC_A - \CC_{\eta_{\theta_2}}$ converges to zero along any sequence of points $(z_k)$ which converges to a point in either $\eta_{\theta_2} \setminus \eta_{\theta_1}$ or to the left side of $\eta_{\theta_2}$.  Fix a component $C$ of $\h \setminus A$.  Then this implies that $\CC_A$ agrees with $\Fh_{C}$ if $C$ is either the unbounded connected component which lies to the right of $\eta_{\theta_1}$ or to the left of $\eta_{\theta_2}$.

Suppose that $C$ is a bounded and connected component of $\strip \setminus A$.  Then $\partial C$ has two special points, say $x_0,y_0$ which are contained in both the ranges of $\eta_{\theta_1}$ and $\eta_{\theta_2}$.  To complete the proof for $\tau = \infty$ in this case, we just need to show that the boundary behavior of $\CC_A$ agrees with $\Fh_{C}$ at $x_0$ and $y_0$.  Assume $x_0 = \eta_{\theta_1}(s_0)$ and $y_0 = \eta_{\theta_1}(t_0)$ for $s_0 < t_0$.  Proposition~\ref{prop::cond_mean_continuous} implies $\CC_{A(t)}(z)$ has a modification which is continuous in both~$t$ and~$z$ since we know that $A(\sigma)$ is local for any $\CF_t$-stopping time $\sigma$ and $\eta_{\theta_1}$ is continuous (recall Remark~\ref{rem::continuity_non_boundary}).

We can express $\CC_{A(t)}(z)$ explicitly as follows.  For each $t$, we let $C_t$ be the component of $\h \setminus A(t)$ which contains $C$.  We note that $x_0$ corresponds to two prime ends on $\partial C_t$.  We shall abuse notation in what follows and write $x_0$ for the prime end on $\partial C_t$ which corresponds to the boundary point of~$\partial C$ (as opposed to the prime end which corresponds to a boundary point of the component of $\h \setminus \eta_{\theta_1}$ which is to the right of~$\eta_{\theta_1}$).  Let $\varphi_t$ be the conformal map from $C_t$ to $\h$ which takes $x_0$ to $0$, $y_0$ to $\infty$, and a given point $w_0$ on $\partial C \cap \eta_{\theta_2}$ which is distinct from $x_0,y_0$ to $-1$.  We assume that $w_0$ does not change with $t$.  Let $\Fg_t$ be the function which is harmonic in $\h$ with boundary values given by $\lambda - \theta_2 \chi$ on $\R_-$, $-\lambda - \theta_1 \chi$ on the image of the left side of $\eta_{\theta_1}([0,t])$ under $\varphi_t$, $\lambda - \theta_1 \chi$ on the image of the right side of $\eta_{\theta_1}([0,t])$ under $\varphi_t$, and $b$ on $\varphi_t(\R_+)$.  Then $\Fg_t \circ \varphi_t - \chi \arg \varphi_t'$ is harmonic in $C_t$.

We claim that $\Fg_t \circ \varphi_t - \chi \arg \varphi_t'$ has the same  boundary behavior as $\CC_{A(t)}$ except possibly at $x_0$ (as we have not yet ruled out pathological boundary behavior at $x_0$).  That is, $\CC_{A(t)} - (\Fg_t \circ \varphi_t - \chi \arg \varphi_t')$ is harmonic in $C_t$ with zero boundary values on $\partial C_t \setminus \{x_0\}$.  To see this, we note that Proposition~\ref{gff::prop::cond_union_mean} implies that this is the case at points on $\partial C_t \setminus \{x_0\}$ which are also contained in the ranges of $\eta_{\theta_1}$ and $\eta_{\theta_2}$ after the paths have visited $x_0$.  Moreover, Proposition~\ref{gff::prop::cond_union_mean} implies that $\CC_{A(t)}$ has the same boundary behavior as $\CC_A$ at points on the right side of $\eta_{\theta_1}([0,t])$ and Proposition~\ref{gff::prop::local_independence} implies that $\CC_A$ restricted to the component of $\h \setminus \eta_{\theta_1}$ which is to the right of $\eta_{\theta_1}$ is equal to~$\CC_{\eta_{\theta_1}}$.  Combining implies the claim.

As $t \uparrow t_0$, we note that $\Fg_t$ converges locally uniformly to the function which is harmonic in $\h$ with boundary values given by $\lambda - \theta_2 \chi$ on $\R_-$ and $-\lambda - \theta_1 \chi$ on $\R_+$.  Moreover, $\varphi_t$ converges locally uniformly to the unique conformal transformation $C \to \h$ which takes $x_0$ to $0$, $y_0$ to $\infty$, and $w_0$ to $-1$.  Therefore $\Fg_t \circ \varphi_t - \chi \arg \varphi_t'$ converges locally uniformly to $\Fh_C$ as $t \uparrow t_0$.  Combining, we see that, as $t \uparrow t_0$, $\CC_{A(t)} - \Fh_C$ converges to a function which is harmonic in $C$ whose boundary values on $\partial C \setminus \{x_0\}$ are equal to $0$.  By the continuity of $\CC_{A(t)}$ in $t$ and $z$, this implies that $\CC_A - \Fh_C$ is harmonic in~$C$ with boundary values on $\partial C \setminus \{x_0\}$ are equal to~$0$.  This leaves us to deal with the boundary behavior near~$x_0$.

Let $\eta_{\theta_1}'$ be the counterflow line as in the proof of Proposition~\ref{prop::monotonicity_non_boundary} whose left boundary is almost surely $\eta_{\theta_1}$.  Note that $C$ is a bounded and connected component of $\strip \setminus A$ if and only if it is a bounded connected component of $\strip \setminus (\eta_{\theta_1}' \cup \eta_{\theta_2})$ whose boundary contains arcs from both $\eta_{\theta_1}'$ and $\eta_{\theta_2}$.  Since $\eta_{\theta_1}$ is almost surely the left boundary of $\eta_{\theta_1}'$, it follows from Proposition~\ref{gff::prop::local_independence} that $\CC_{\eta_{\theta_1}' \cup \eta_{\theta_2}}(z) = \CC_A(z)$ for all $z \in C$.  An analogous continuity argument implies $\CC_{\eta_{\theta_1}' \cup \eta_{\theta_2}}$ has the same boundary behavior as $\Fh_{C}$ near $x_0$.    Consequently, $\CC_A$ also has the same boundary behavior as $\Fh_{C}$ near $x_0$.  This completes the proof for $\tau = \infty$.

The case $\tau < \infty$ follows from the $\tau=\infty$ case.  Indeed, we know that $A(\tau)$ and $A$ are both local, so we can apply Proposition~\ref{gff::prop::cond_union_mean} to get that $\CC_{A(\tau)}$ has the same boundary behavior as $\Fh_{C}$ near $x_0,y_0$ since $\CC_{A(\tau)}$ has the same boundary behavior as $\CC_A$ near $x_0,y_0$.
\end{proof}

There are a number of other situations in which statements very similar to Proposition~\ref{prop::cond_mean_height} also hold.  We will describe these informally in the following remarks.  In each case, the justification is nearly the same as the proof of Proposition~\ref{prop::cond_mean_height} and we are careful to point out any differences in the proof.  Roughly speaking, the content is that the conditional mean of $h$ does not exhibit pathological behavior, even when many different types of flow and counterflow lines interact with each other.  The rest of this subsection may be skipped on a first reading.

\begin{remark}
\label{rem::cond_mean_general_bd}
All of the results that we state and prove here will be restricted to the regime of boundary data in which the flow and counterflow lines do not intersect the boundary.  The reason for this is that, at this point in the article, this is the setting in which we have the continuity of these curves (recall Remark~\ref{rem::continuity_non_boundary} as well as \cite{RS05}) and that they are determined by the field in the coupling of Theorem~\ref{thm::coupling_existence} (Section~\ref{subsec::uniqueness_non_boundary_intersecting}).  Moreover, our results thus far regarding the interaction of flow and counterflow lines are also restricted to this setting (Section~\ref{sec::non_boundary_intersecting}).  In Section~\ref{sec::uniqueness}, we will complete the proof of Theorems~\ref{thm::coupling_uniqueness}-\ref{thm::monotonicity_crossing_merging}, which together provide the missing ingredients to extend the arguments from this section to the setting of general piecewise constant boundary data without further modification.  
\end{remark}

\begin{remark}[Three flow lines]
\label{rem::cond_mean_height}
Suppose that $\theta_1 < 0 < \theta_2$ and $\eta := \eta_0$.  Assume that the boundary data of $h$ is chosen so that Proposition~\ref{prop::monotonicity_non_boundary} applies to $\eta_{\theta_1},\eta,\eta_{\theta_2}$.  Then we know that $\eta_{\theta_1}$ lies to the right of $\eta$ which in turn lies to the right of $\eta_{\theta_2}$, as in Figure~\ref{fig::interacting_mean}.  Let $C$ be any connected component of $\h \setminus (\eta_{\theta_1} \cup \eta_{\theta_2})$.  A statement analogous to Proposition~\ref{prop::cond_mean_height} also holds for the conditional law of $h$ given $\eta_{\theta_1}, \eta_{\theta_2}$, and $\eta|_{[0,\tau]}$ where $\tau$ is any stopping time for the filtration $\CF_t = \sigma(\eta(s) : s \leq t,\ \ \eta_{\theta_1},\eta_{\theta_2})$.  This is depicted in the right panel of Figure~\ref{fig::interacting_mean}.  Let $A(\tau) = \eta_{\theta_1} \cup \eta([0,\tau]) \cup \eta_{\theta_2}$.  Just as in the proof of Proposition~\ref{prop::monotonicity_non_boundary}, the general theory of local sets allows us to determine the boundary behavior of $\CC_{A(\tau)}$ at all points with the exception of those points where some pair of $\eta|_{[0,\tau]},\eta_{\theta_1},\eta_{\theta_2}$ intersect.  We can, however, reduce the three flow line case to the two flow line case as follows.  Proposition~\ref{gff::prop::cond_union_mean} allows us to compare $\CC_{A(\tau)}$ with $\CC_A$ where $A = \eta_{\theta_1} \cup \eta \cup \eta_{\theta_2}$.  The latter does not exhibit pathological behavior at intersection points because Proposition~\ref{prop::cond_mean_height} applies to both $\CC_{\eta \cup \eta_{\theta_1}}$ and $\CC_{\eta_{\theta_2} \cup \eta}$ and Proposition~\ref{gff::prop::cond_union_mean} and Proposition~\ref{gff::prop::local_independence} together imply that $\CC_A$ has the same boundary behavior as $\CC_{\eta \cup \eta_{\theta_1}}$ to the right of $\eta$ and the same as $\CC_{\eta_{\theta_2} \cup \eta}$ to the left of $\eta$.
\end{remark}

\begin{figure}[h!]
\begin{center}
\includegraphics[scale=0.85]{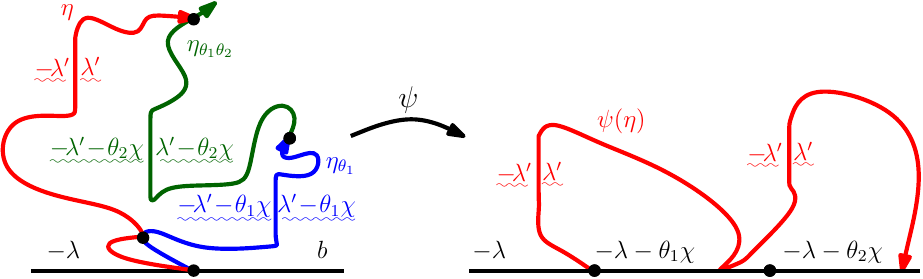}
\end{center}
\caption{\label{fig::multiple_force_points_angle_change} Suppose that $h$ is a GFF on $\h$ with boundary data as on the left side; assume $a,b > 0$ are large.  Let $\eta$ be the zero angle flow line of $h$.  Fix angles $\theta_1,\theta_2$ and let $\eta_{\theta_1 \theta_2}^{\tau_1 \tau_2}$ be an angle-varying flow line with angles $\theta_1,\theta_2$.  Assume $|\theta_1-\theta_2| \leq \pi$ so that $\eta_{\theta_1 \theta_2}^{\tau_1 \tau_2}$ is simple (Lemma~\ref{lem::av_simple_determined}; we will relax this to $|\theta_1-\theta_2| < 2\lambda/\chi$ in Section~\ref{sec::uniqueness}) and that $\eta_{\theta_1 \theta_2}^{\tau_1 \tau_2}$ passes to the right of $\eta$ (we will show in Proposition~\ref{prop::angle_varying_monotonicity} that $\theta_1,\theta_2 < 0$ is a sufficient condition for this to be true).  The boundary data for $\CC_{A(\tau)}$ for $A(t) = \eta([0,t]) \cup \eta_{\theta_1 \theta_2}^{\tau_1 \tau_2}$ and $\tau$ an $\CF_t = \sigma(\eta(s) : s \leq t,\ \ \eta_{\theta_1 \theta_2})$-stopping time is depicted in the left panel.  Changing the coordinates of the left side of $\h \setminus \eta_{\theta_1 \theta_2}^{\tau_1 \tau_2}$ to $\h$ by a conformal map $\psi$ which preserves $0$ and $\infty$ yields a GFF whose boundary data is as on the right side.}
\end{figure}

\begin{remark}[One flow line and one angle varying flow line]
\label{rem::angle_varying_mean_height}
The next analog of Proposition~\ref{prop::cond_mean_height} which we will describe is when we have a flow line $\eta$ and an angle varying flow line $\eta_{\theta_1 \cdots \theta_k}^{\tau_1 \cdots \tau_k}$ with angles $\theta_1,\ldots,\theta_k$ and with respect to the stopping times $\tau_1,\ldots,\tau_k$.  We assume that $|\theta_i - \theta_j| \leq \pi$ for all pairs $i,j$.  This implies that $\eta_{\theta_1 \cdots \theta_k}^{\tau_1 \cdots \tau_k}$ is simple, almost surely determined by $h$, and continuous (Lemma~\ref{lem::av_simple_determined}; in Section~\ref{sec::uniqueness}, we will be able to relax this to the case that $|\theta_i - \theta_j| < 2\lambda/\chi$, which is the condition which implies that $\eta_{\theta_1 \cdots \theta_k}^{\tau_1 \cdots \tau_k}$ does not cross itself).  We assume that $\eta_{\theta_1 \cdots \theta_k}^{\tau_1 \cdots \tau_k}$ almost surely stays to the right of $\eta$ (in Proposition~\ref{prop::angle_varying_monotonicity} we will prove that $\theta_1,\ldots,\theta_k < 0$ is a sufficient condition for this to be true).  Let $A(t) = \eta([0,t]) \cup \eta_{\theta_1 \cdots \theta_k}^{\tau_1 \cdots \tau_k}$ and $\CF_t = \sigma(\eta(s) : s \leq t,\ \ \eta_{\theta_1 \cdots \theta_k}^{\tau_1 \cdots \tau_k})$.  Lemma~\ref{lem::stopping_local_set} implies that $A(\tau)$ is a local set of $h$ for every $\CF_t$-stopping time $\tau$.  The boundary data for $\CC_{A(\tau)}$ is described in Figure~\ref{fig::multiple_force_points_angle_change} in the special case $k=2$.  The proof of this result is exactly the same as for the non-angle varying case: we rule out pathological behavior at pocket opening times using the continuity of $\CC_{A(t)}$ in $t$ and at pocket closing times by using the analogous quantity with $\eta$ replaced by the counterflow line $\eta'$ whose right boundary is the range of $\eta$.
\end{remark}

\begin{figure}[h!]
\begin{center}
\includegraphics[scale=0.85]{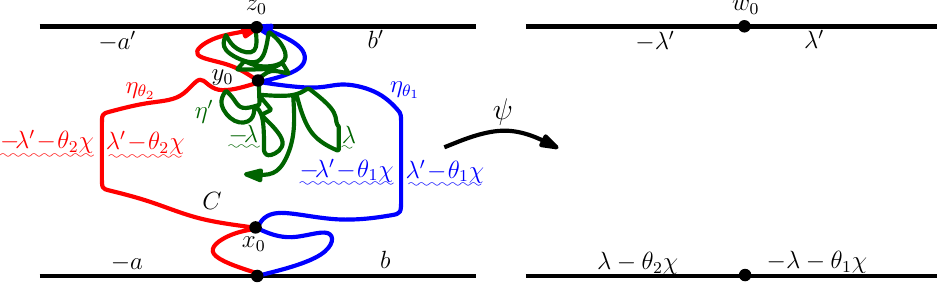}
\end{center}
\caption{\label{fig::cond_mean_non_crossing} Assume that $h$ is a GFF on the strip $\strip$ whose boundary data is depicted in the left panel.  Let $\theta_1 < \tfrac{1}{\chi}(\lambda'-\lambda) = -\tfrac{\pi}{2}$ and $\theta_2 > \tfrac{1}{\chi}(\lambda-\lambda') = \tfrac{\pi}{2}$.  Assume that the boundary data of $h$ is such that Proposition~\ref{prop::flow_counterflow_left_right} applies.  Then we know that $\eta_{\theta_1}$ and $\eta_{\theta_2}$ pass to the right and left, respectively, of the counterflow line $\eta'$ of $h$ starting at $z_0$.  Let $C$ be any connected component of $\strip \setminus (\eta_{\theta_1} \cup \eta_{\theta_2})$ which lies between $\eta_{\theta_1}$ and $\eta_{\theta_2}$ and let $\tau'$ be a stopping time for the filtration $\CF_t = \sigma(\eta'(s) : s \leq t,\ \eta_{\theta_1},\eta_{\theta_2})$ such that $\eta'(\tau') \in C$ almost surely.  Then $\CC_{\eta_{\theta_1} \cup \eta'([0,\tau']) \cup \eta_{\theta_2}}$ in $C$ has the boundary behavior depicted on the left side.  Let $\psi$ be the conformal map which takes the connected component $C_0(\tau')$ of $C \setminus \eta'([0,\tau'])$ which contains $x_0$ to $\strip$ where $x_0$ is sent to $0$ and the left and right sides of $C_0(\tau')$ which are contained in $\eta_{\theta_2}$ and $\eta_{\theta_1}$, respectively, are sent to $(-\infty,0)$ and $(0,\infty)$, respectively; $w_0 = \psi(\eta'(\tau'))$.  The boundary data for the GFF $h \circ \psi^{-1} - \chi \arg(\psi^{-1})'$ is depicted in the right panel.}
\end{figure}

\begin{remark}[Counterflow line between two flow lines]
\label{rem::cond_mean_height_cf}
We will now describe a version of Proposition~\ref{prop::cond_mean_height} which holds for counterflow lines (see Figure~\ref{fig::cond_mean_non_crossing}).  This result is easier to describe on the strip $\strip$. Assume $h$ is a GFF on $\strip$ whose boundary data is as in the left side of Figure~\ref{fig::cond_mean_non_crossing} where the constants $a,b,a',b'$ are chosen so that Proposition~\ref{prop::flow_counterflow_left_right} applies.  Let $\eta_{\theta_1},\eta_{\theta_2}$ be the flow lines of $h$ emanating from $0$ with angles
\[ \theta_1 < -\frac{\pi}{2} = \frac{1}{\chi}(\lambda'-\lambda) < \frac{1}{\chi}(\lambda-\lambda') = \frac{\pi}{2} < \theta_2,\] respectively, and let $\eta'$ be the counterflow line emanating from $z_0$.  For any stopping time $\tau'$ for the filtration $\CF_t = \sigma(\eta'(s) : s \leq t,\ \eta_{\theta_1},\eta_{\theta_2})$, we let $A(\tau') = \eta_{\theta_1} \cup \eta'([0,\tau']) \cup \eta_{\theta_2}$, which is local by Lemma~\ref{lem::stopping_local_set}, and $A = \eta_{\theta_1} \cup \eta' \cup \eta_{\theta_2}$.  Suppose that $C$ is any connected component of $\strip \setminus (\eta_{\theta_1} \cup \eta_{\theta_2})$ which lies between $\eta_{\theta_1}$ and $\eta_{\theta_2}$.  Then $C \setminus \eta'$ consists of three different types of connected components: those whose boundary does not intersect the outer boundary of $\eta'$, those whose boundary intersects $\eta_{\theta_1}$, and those whose boundary intersects $\eta_{\theta_2}$.  Proposition~\ref{gff::prop::local_independence} implies that $\CC_A$ in $C$ has the same boundary behavior as $\CC_{\eta'}$ in the former case.  The connected components which intersect $\eta_{\theta_1}$ are the same as the connected components of $\strip \setminus (\eta_{\theta_1} \cup \eta_{R})$ which intersect $C$ and $\eta_{\theta_1}$, where $\eta_R$ is the flow line of angle $\tfrac{1}{\chi}(\lambda'-\lambda) = - \tfrac{\pi}{2}$, since $\eta_R$ is the right boundary of $\eta'$.  Proposition~\ref{gff::prop::cond_union_mean} and Proposition~\ref{gff::prop::local_independence} thus imply $\CC_A$ agrees with $\CC_{\eta_R \cup \eta_{\theta_1}}$ in these connected components, so we have the desired boundary behavior here (Proposition~\ref{prop::cond_mean_height}).  The same is likewise true for those which intersect $\eta_{\theta_2}$.  The case where $\tau' < \infty$ follows from the $\tau' = \infty$ case by Proposition~\ref{gff::prop::cond_union_mean} using the same argument as in the proof of Proposition~\ref{prop::cond_mean_height}.
\end{remark}

\begin{figure}[h!]
\begin{center}
\includegraphics[scale=0.85]{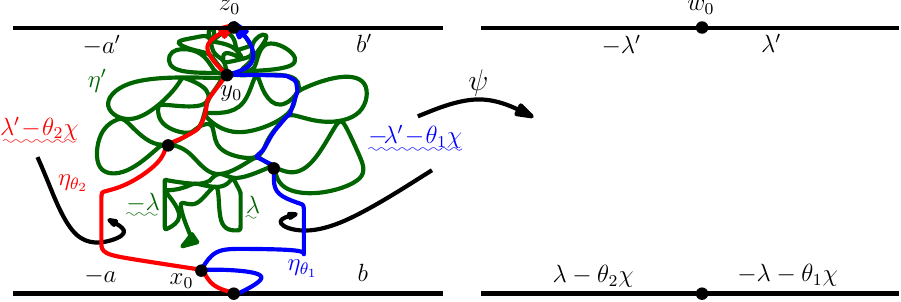}
\end{center}
\caption{\label{fig::cond_mean_contains} Assume that $h$ is a GFF on the strip $\strip$ whose boundary data is depicted in the left panel where the constants are chosen so that Proposition~\ref{prop::flow_counterflow_left_right} applies.  Let $\tfrac{1}{\chi}(\lambda'-\lambda) = -\tfrac{\pi}{2} \leq \theta_1 < \theta_2 \leq \tfrac{\pi}{2} =\tfrac{1}{\chi}(\lambda-\lambda')$.  Then $\eta_{\theta_1}$ and $\eta_{\theta_2}$ are both contained in the counterflow line $\eta'$ starting at $z_0$.  Let $C$ be any connected component of $\strip \setminus (\eta_{\theta_1} \cup \eta_{\theta_2})$ which lies between $\eta_{\theta_1}$ and $\eta_{\theta_2}$ and let $\tau'$ be a stopping time for the filtration $\CF_t = \sigma(\eta'(s) : s \leq t,\ \eta_{\theta_1},\eta_{\theta_2})$  such that $\eta'(\tau') \in C$ almost surely.  Then $\CC_{\eta_{\theta_1} \cup \eta'([0,\tau']) \cup \eta_{\theta_2}}$ in $C$ has the boundary behavior depicted on the left side.  Let $C_0(\tau')$ be the connected component of $C \setminus \eta'([0,\tau'])$ which contains $x_0$ and let $\psi \colon C_0(\tau') \to \strip$ be the conformal map which sends $x_0$ to $0$ and the left and right sides of $C_0(\tau')$ which are contained in $\eta_{\theta_2}$ and $\eta_{\theta_1}$, respectively, to $(-\infty,0)$ and $(0,\infty)$; $w_0 = \psi(\eta'(\tau'))$.  The boundary data for the GFF $h \circ \psi^{-1} - \chi \arg (\psi^{-1})'$ on $\strip$ is depicted in the right panel.  A similar result also holds when only one of the $\eta_{\theta_i}$ is contained in $\eta'$.}
\end{figure}

\begin{remark}[Counterflow line which contains flow lines.]
\label{rem::cond_mean_height_cf_contained}
Assume that we have the same setup as Remark~\ref{rem::cond_mean_height_cf}.  Let $I := [-\tfrac{\pi}{2},\tfrac{\pi}{2}]$.  If $\theta_i \in I$, then by Lemma~\ref{lem::light_cone_contains_av} we know that $\eta_{\theta_i}$ is almost surely contained in the range of $\eta'$.  Results analogous to those described in Remark~\ref{rem::cond_mean_height_cf} also hold in the case that one or both of $\theta_1 < \theta_2$ are contained in $I$.  We will describe this in a bit more detail in the case $\theta_1,\theta_2 \in I$.  Fix any connected component $C$ of $\strip \setminus (\eta_{\theta_1} \cup \eta_{\theta_2})$ which lies between $\eta_{\theta_1},\eta_{\theta_2}$.  Since $\eta'$ hits the points of $\eta_{\theta_1},\eta_{\theta_2}$ in reverse chronological order (Lemma~\ref{lem::light_cone_contains_av}), the connected components of $C \setminus \eta'$ are all completely surrounded by $\eta'$.  Thus the boundary behavior of $\CC_A$ in the connected components of $C \setminus \eta'$ agrees with that of $\CC_{\eta'}$ by Proposition~\ref{gff::prop::cond_union_mean} and Proposition~\ref{gff::prop::local_independence}.  Fix a stopping time $\tau' < \infty$ for the filtration $\CF_t = \sigma(\eta'(s) : s \leq t,\ \ \eta_{\theta_1},\eta_{\theta_2})$ and assume that $C$ is a connected component of $\strip \setminus (\eta_{\theta_1} \cup \eta_{\theta_2})$ such that $\eta'(\tau') \in C$ almost surely.  Proposition~\ref{gff::prop::local_independence} implies that $\CC_{A(\tau')}$ has the same boundary behavior as $\CC_{\eta'}$ in the connected components of $C \setminus \eta'([0,\tau'])$ which are surrounded by $\eta'([0,\tau'])$.  This leaves us to deal with the boundary behavior of $\CC_{A(\tau')}$ in the connected component $C_0(\tau')$ of $C \setminus \eta'([0,\tau'])$ which contains $x_0$, the first point in $\partial C$ traced by both $\eta_{\theta_1}$ and $\eta_{\theta_2}$.  This is depicted in Figure~\ref{fig::cond_mean_contains}.

We can rule out pathological behavior at points where $\eta'$ intersects either $\eta_{\theta_1}$ or $\eta_{\theta_2}$ in $C_0(\tau')$ as follows.  First, we assume that $\tau'$ is a rational time.  Then we can sample $C_0(\tau')$ by first picking $\eta'|_{[0,\tau']}$ then, conditional on $\eta'([0,\tau'])$, sample $\eta_{\theta_1}$ and $\eta_{\theta_2}$ up until the first time they hit $\eta'([0,\tau'])$.  It is easy to see by the continuity of the conditional mean (Proposition~\ref{prop::cond_mean_continuous}) that $\CC_{A(\tau')}$ has the desired boundary behavior where $\eta'([0,\tau'])$ intersects $\eta_{\theta_1},\eta_{\theta_2}$.  We now generalize to $\CF_t$-stopping times $\tau'$ which are not necessarily rational.  First assume that $\eta'(\tau') \notin \eta_{\theta_1} \cup \eta_{\theta_2}$.  Suppose that $r$ is any rational time.  Then on the event that neither $A(\tau') \setminus A(r)$ nor $A(r) \setminus A(\tau')$ intersects $\eta_{\theta_1} \cup \eta_{\theta_2}$, we know that the points in $\partial C_0(\tau')$ where $\eta'([0,\tau'])$ intersects $\eta_{\theta_1} \cup \eta_{\theta_2}$ are the same as those in $\partial C_0(r)$.  Proposition~\ref{gff::prop::cond_union_mean} thus implies that $\CC_{A(r)}$ has the same boundary behavior as $\CC_{A(\tau')}$ near these points.  This covers the case that $\eta'(\tau')$ is not in $\eta_{\theta_1} \cup \eta_{\theta_2}$ because the continuity of $\eta'$ implies there almost surely always exists such a rational.  If, on the other hand, $\eta'(\tau')$ is in $\eta_{\theta_1} \cup \eta_{\theta_2}$, then the desired result follows from the continuity of the conditional mean (Proposition~\ref{prop::cond_mean_continuous}) by first sampling $\eta_{\theta_1} \cup \eta_{\theta_2}$ and taking a limit of $\CC_{A(t)}$ as $t \uparrow \tau'$.
\end{remark}

\begin{figure}[h!]
\begin{center}
\includegraphics[scale=0.85]{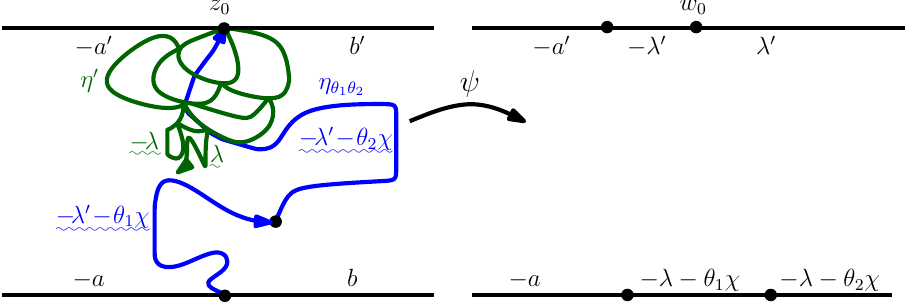}
\end{center}
\caption{\label{fig::multiple_force_points_angle_change_cf} Assume that $h$ is a GFF on the strip $\strip$ whose boundary data is depicted in the left panel.  Let $\eta_{\theta_1 \theta_2}^{\tau_1 \tau_2}$ be an angle varying flow line of $h$ with angles $\theta_1,\theta_2$ satisfying $|\theta_1-\theta_2| \leq \pi$ so that $\eta_{\theta_1 \theta_2}^{\tau_1 \tau_2}$ is simple (Lemma~\ref{lem::av_simple_determined}; we will relax this to $|\theta_1 - \theta_2| < 2\lambda/\chi$ in Section~\ref{sec::uniqueness}).  Let $\eta'$ be the counterflow line of $h$ starting at $z_0$ and assume that $\eta_{\theta_1 \theta_2}^{\tau_1 \tau_2}$ almost surely lies to the right of the left boundary of $\eta'$ (we will prove in Proposition~\ref{prop::angle_varying_monotonicity} that $\theta_1,\theta_2 < \tfrac{\pi}{2}$ is a sufficient condition for this to hold).  With $A(t) = \eta'([0,t]) \cup \eta_{\theta_1 \theta_2}^{\tau_1 \tau_2}$ and $\tau'$ any $\CF_t = \sigma(\eta'(s) : s \leq t,\ \ \eta_{\theta_1 \theta_2}^{\tau_1 \tau_2})$-stopping time, we know that $A(\tau')$ is a local set for $h$.  The boundary data for $\CC_{A(\tau')}$ is depicted on the left hand side above in the special case $\eta'$ contains part of $\eta_{\theta_1 \theta_2}^{\tau_1 \tau_2}$.  Let $C$ be the connected component of $\strip \setminus \eta_{\theta_1 \theta_2}^{\tau_1 \tau_2}$ which lies to the left of $\eta_{\theta_1 \theta_2}^{\tau_1 \tau_2}$ and let $C_0(\tau')$ be the connected component of $C \setminus \eta'([0,\tau'])$ which contains $0$.  With $\psi \colon C_0(\tau') \to \strip$ the conformal map sending $0$ to $0$, the part of $\partial C_0(\tau')$ contained in $\eta_{\theta_1 \theta_2}^{\tau_1 \tau_2}$ to $(0,\infty)$ and $(-\infty,0)$ to $(-\infty,0)$, the boundary data for the field $h \circ \psi^{-1} - \chi \arg (\psi^{-1})'$ is depicted on the right side ($w_0 = \psi(\eta'(\tau'))$).  Analogous results hold when $\eta_{\theta_1 \theta_2}^{\tau_1 \tau_2}$ does not intersect or enter the hull of $\eta'$ or we have more angles $\theta_1,\ldots,\theta_k$.}
\end{figure}

\begin{remark}[Counterflow line and an angle varying flow line]
\label{rem::cond_mean_height_cf_angle_varying}
Suppose that we have the same setup as in Remark~\ref{rem::cond_mean_height_cf} and Remark~\ref{rem::cond_mean_height_cf_contained}, except now we replace the flow lines $\eta_{\theta_1},\eta_{\theta_2}$ by a single angle varying flow line with angles $\theta_1,\theta_2$ which satisfy $|\theta_1 - \theta_2| \leq \pi$ (so that $\eta_{\theta_1 \theta_2}^{\tau_1 \tau_2}$ is simple, almost surely determined by $h$, and continuous (Lemma~\ref{lem::av_simple_determined}); we will relax this to $|\theta_1 -\theta_2| < 2\lambda/\chi$ in Section~\ref{sec::uniqueness}) and such that $\eta_{\theta_1 \theta_2}^{\tau_1 \tau_2}$ almost surely does not intersect $\partial \strip$ except at its starting point.  We further assume that $\eta_{\theta_1 \theta_2}^{\tau_1 \tau_2}$ almost surely lies to the right of the left boundary of $\eta'$ (we will prove in Proposition~\ref{prop::angle_varying_monotonicity} that $\theta_1,\theta_2 < \tfrac{\pi}{2}$ is a sufficient condition for this to hold).  Let $A(t) = \eta'([0,t]) \cup \eta_{\theta_1 \theta_2}^{\tau_1 \tau_2}$ and $\CF_t = \sigma(\eta'(s) : s \leq t,\ \ \eta_{\theta_1 \theta_2}^{\tau_1 \tau_2})$.  By Lemma~\ref{lem::stopping_local_set}, know that $A(\tau)$ is a local set for $h$ for every $\CF_t$-stopping time $\tau$.  The boundary data for $\CC_{A(\tau)}$ is described in the left panel of Figure~\ref{fig::multiple_force_points_angle_change_cf} in the special case that $\eta'$ contains part of $\eta_{\theta_1 \theta_2}^{\tau_1 \tau_2}$.  The justification of this follows from exactly the same argument as in Remark~\ref{rem::cond_mean_height_cf_contained}.  The case when $\eta_{\theta_1 \theta_2}^{\tau_1 \tau_2}$ is disjoint from $\eta'$ is analogous and follows from the argument in Remark~\ref{rem::cond_mean_height_cf}.
\end{remark}

One other case that will be especially important for us is when we have two angle varying flow lines $\eta_1,\eta_2$ which actually cross each other along with a counterflow line $\eta'$.  Since we have not yet discussed crossing of flow lines, we will defer the discussion of the case until Section~\ref{sec::uniqueness}.  (As we will explain later, flows lines of fixed angle can cross each other only when they start at distinct points $x_1 < x_2$ with respective angles $\theta_1 < \theta_2$.)

\subsection{Existence and continuity of Loewner driving functions}
\label{subsec::interacting_loewner_driving}

The purpose of this subsection is to establish the existence and continuity of the Loewner driving function of $\eta_{\theta_1}$, viewed as a path in the right connected component of $\h \setminus \eta_{\theta_2}$.  We will also describe related results which hold in the setting of multiple flow lines and counterflow lines.  We begin with the following proposition, which gives criteria which imply that a continuous curve $\eta$ in $\ol{\h}$ starting from $0$ has a continuous Loewner driving function.

\begin{proposition}
\label{prop::cont_driving_function}
Suppose that $T \in (0,\infty]$.  Let $\eta \colon [0,T) \to \ol{\h}$ be a continuous, non-crossing curve with $\eta(0) = 0$.  Assume $\eta$ satisfies the following: for every $t \in (0,T)$,
\begin{enumerate}[(a)]
\item $\eta((t,T))$ is contained in the closure of the unbounded connected component of $\h \setminus \eta((0,t))$ and
\item\label{cont_driving_connected} $\eta^{-1}(\eta([0,t]) \cup \R)$ has empty interior in $(t,T)$.
\end{enumerate}
For each $t > 0$, let $g_t$ be the conformal map which takes the unbounded connected component of $\h \setminus \eta([0,t])$ to $\h$ with $\lim_{z \to \infty} |g_t(z)-z| = 0$.  After reparameterization, $(g_t)$ solves the Loewner equation
\[ \partial_t g_t(z) = \frac{2}{g_t(z) - U_t},\quad \ g_0(z) = 0,\]
with continuous driving function $U_t$.
\end{proposition}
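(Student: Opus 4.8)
The plan is to show that $t \mapsto U_t$ is well-defined and continuous by combining a standard criterion for the existence of a continuous Loewner driving function with the two geometric hypotheses (a) and (b). Recall that for a compact $\h$-hull $K$, the conformal map $g_K$ extends continuously to the boundary away from the ``prime end'' structure, and the key classical fact is the following: if $(K_t)$ is a continuously (in the sense of Carath\'eodory, equivalently Hausdorff on complements) increasing family of hulls parameterized by half-plane capacity such that $\bigcap_{s > t} \overline{g_t(K_s \setminus K_t)}$ is a single point for every $t$, then $(g_t)$ solves the Loewner equation with $U_t$ equal to that point, and $U_t$ is continuous. So the whole task reduces to (i) reparameterizing by half-plane capacity, and (ii) verifying this ``local connectedness / shrinking to a point'' condition from (a) and (b).

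First I would set up the reparameterization. For each $t$, let $K_t$ be the complement in $\overline\h$ of the unbounded connected component of $\h \setminus \eta([0,t])$ (the ``filling'' of $\eta([0,t])$); hypothesis (a) guarantees $K_t$ is increasing in $t$ and that $\eta(t) \in \partial K_t$ (more precisely $\eta(t)$ lies on the outer boundary, so it corresponds to a prime end of $\h \setminus K_t$). Continuity of $\eta$ gives continuity of $t \mapsto K_t$ in the Carath\'eodory sense, hence $t \mapsto \hcap(K_t)$ is continuous and non-decreasing. One must check $\hcap(K_t) \to \infty$ (or to a finite limit if $T < \infty$ and the curve stays bounded — in which case one simply works on $[0,\hcap(K_{T^-}))$) and, crucially, that $\hcap(K_t)$ is \emph{strictly} increasing, so that the capacity parameterization is a genuine homeomorphism onto its image. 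Strict monotonicity is where hypothesis (b) enters: if $\hcap$ were constant on an interval $(t_1,t_2)$ then $K_{t_2} = K_{t_1}$, forcing $\eta((t_1,t_2)) \subseteq K_{t_1} \subseteq \eta([0,t_1]) \cup (\text{points disconnected by }\eta([0,t_1]))$; but points that $\eta([0,t_1])$ disconnects from $\infty$ are separated from the unbounded component, contradicting (a) unless $\eta((t_1,t_2)) \subseteq \eta([0,t_1]) \cup \R$, which contradicts (b). After this, reparameterize so that $\hcap(K_t) = 2t$.

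Next I would verify the driving point condition. Fix $t$; we must show $\bigcap_{s > t}\overline{g_t(K_s \setminus K_t)}$ is a single point, and identify it with $g_t(\eta(t))$ (interpreting $g_t$ at the prime end of $\h\setminus K_t$ determined by $\eta(t)$, which makes sense since $\eta(t)$ is accessible from the unbounded component by (a)). Since $\eta$ is continuous and non-crossing, as $s \downarrow t$ the set $K_s \setminus K_t$ is the filling of $\eta([t,s])$ inside $\h \setminus K_t$; its diameter in the $g_t$-image shrinks, and any accumulation point of $g_t(\eta(s))$ as $s \downarrow t$ must equal $g_t(\eta(t))$ by continuity of $\eta$ together with the fact (hypothesis (b) again) that $\eta((t,s))$ does not immediately return into $K_t \cup \R$, so $g_t$ is continuous up to the relevant boundary portion near $\eta(t)$. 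The cleanest way to package this is: the hull $g_t(K_s \setminus K_t)$ has capacity $2(s-t) \to 0$, so its ``base'' on $\R$ shrinks to a point in the sense that $\mathrm{diam}$ of its intersection with a fixed neighborhood goes to $0$; combined with the fact that it is connected to the single boundary prime end $g_t(\eta(t))$, this forces the intersection to be that point. Then $U_t := g_t(\eta(t))$ is the driving function, and its continuity follows from the standard argument: $|U_{t} - U_{t'}| \le \mathrm{diam}\, g_{t\wedge t'}(K_{t\vee t'}\setminus K_{t\wedge t'}) \to 0$ as $t'\to t$, using continuity of $\eta$ and of $(K_t)$.

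The main obstacle, I expect, is the boundary-regularity step: justifying that $g_t$ (and the inverse maps) behave continuously enough near the tip $\eta(t)$ and near $K_s \setminus K_t$ to conclude the image shrinks to a \emph{single} point rather than a nondegenerate set. This is precisely the place where hypothesis (b) (that $\eta$ does not spend a positive-measure set of times on $\eta([0,t])\cup\R$) is indispensable — without it the curve could slide along a previously drawn arc or along $\R$ and the ``driving function'' would fail to be a single continuous real-valued function. I would handle this by a careful Carath\'eodory-kernel argument applied to the domains $\h \setminus K_s$ as $s \downarrow t$, exactly as in the treatment of Loewner chains in Lawler's book \cite{LAW05}, checking that the hypotheses there are implied by (a) and (b); this is essentially bookkeeping once the setup above is in place.
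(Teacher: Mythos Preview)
Your proposal is correct and is essentially the same approach as the paper's: the paper's proof consists of a single sentence deferring to \cite[Proposition~4.3]{LAW05} and noting that the argument there goes through without the simplicity hypothesis, and what you have written is precisely a sketch of how that argument runs under hypotheses (a) and (b).
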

Roughly speaking, the first hypothesis states that $\eta$ never enters a loop consisting of either its own range or part of $\partial \h$ upon closing it.  The second condition intuitively means that $\eta$ traces neither $\partial \h$ nor itself.
\begin{proof}
The proof is essentially the same as that of \cite[Proposition~4.3]{LAW05}, though the statement of \cite[Proposition~4.3]{LAW05} contains the stronger hypothesis that $\eta$ is simple.
\end{proof}

The main step in our proof that $\eta_{\theta_1}$ admits a continuous Loewner driving function as a continuous path in the right connected component of $\h \setminus \eta_{\theta_2}$ is that the set of times $t$ that $\eta_{\theta_1}(t)$ is contained in the range of $\eta_{\theta_2}$ is nowhere dense in $[0,\infty)$.  The reason this holds is explained in Figure~\ref{fig::interval_overlap} and is proved rigorously in Lemma~\ref{lem::cont_loewner_driving_function}.

\begin{figure}[h!]
\begin{center}
\includegraphics[scale=0.85]{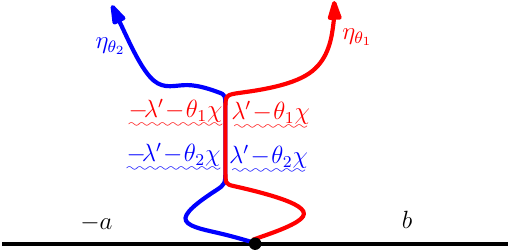}
\caption{\label{fig::interval_overlap} The set of times $I$ that $\eta_{\theta_1}(t)$ is contained in $\eta_{\theta_2}$ almost surely cannot contain an open interval.  Indeed, the contrary would imply that $(\CC_{\eta_{\theta_1} \cup \eta_{\theta_2}} - \chi \cdot {\rm winding})$ converges to both $\lambda' - \theta_1 \chi$ and $\lambda ' - \theta_2 \chi$ as $z_k$ converges to a point in an interval of intersection on the right side of $\eta_{\theta_1} \cap \eta_{\theta_2}$.  This is the key step to showing that $\eta_{\theta_1}$ has a continuous Loewner driving function viewed as a path in the right connected component of $\h \setminus \eta_{\theta_2}$.}
\end{center}
\end{figure}

\begin{lemma}
\label{lem::cont_loewner_driving_function}
Let $\psi$ be a conformal map which takes the right connected component of $\h \setminus \eta_{\theta_2}$  to $\h$ with $\psi(0) = 0$ and $\psi(\infty) = \infty$.  Then $\psi(\eta_{\theta_1})$ has a continuous Loewner driving function viewed as a path in $\h$ from $0$ to $\infty$.
\end{lemma}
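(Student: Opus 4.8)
The plan is to apply Proposition~\ref{prop::cont_driving_function} to the curve $\wt\eta := \psi(\eta_{\theta_1})$ in $\h$. We already know from Remark~\ref{rem::continuity_non_boundary} that $\eta_{\theta_1}$ and $\eta_{\theta_2}$ are continuous simple curves, and from Proposition~\ref{prop::monotonicity_non_boundary} that $\eta_{\theta_1}$ stays to the right of $\eta_{\theta_2}$, so $\eta_{\theta_1}$ lives in the closure of the right connected component of $\h\setminus\eta_{\theta_2}$; thus $\wt\eta$ is a well-defined continuous curve in $\ol\h$ from $0$ to $\infty$ (using that $\psi$ extends continuously to the boundary, which follows since the right component is a Jordan-type domain — more carefully, one works with $\psi$ restricted to compact pieces and passes to the limit, or invokes the continuity of $\eta_{\theta_1}$ up to any finite time). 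Since $\eta_{\theta_1}$ is non-crossing (it is simple), so is $\wt\eta$. It then remains to verify the two hypotheses (a) and (b) of Proposition~\ref{prop::cont_driving_function} for $\wt\eta$.

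Hypothesis (a) — that $\wt\eta((t,T))$ lies in the closure of the unbounded component of $\h\setminus\wt\eta((0,t))$ — follows from the fact that $\eta_{\theta_1}$ is a simple curve that never disconnects any of its future from $\infty$ within the right component of $\h\setminus\eta_{\theta_2}$; concretely, if $\eta_{\theta_1}$ together with an arc of $\eta_{\theta_2}$ or $\partial\h$ encloses a region, the flow line cannot re-enter that region, because $\eta_{\theta_1}$ viewed locally near such a point would be forced to hit a boundary segment of a GFF with boundary data that (by the conditional-mean computation of Proposition~\ref{prop::cond_mean_height}) it cannot hit — this is exactly the content encoded in Figures~\ref{fig::interacting_mean}--\ref{fig::interval_overlap}. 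I would phrase this as: after conformally mapping, $\eta_{\theta_1}$ near a would-be re-entry point is an $\SLE_\kappa(\ul\rho)$ started on the boundary of a pocket whose relevant boundary segment has the wrong height for intersection, contradicting Remark~\ref{rem::flow_cannot_hit}. Alternatively, since $\eta_{\theta_1}$ is the left boundary of a counterflow line $\eta_{\theta_1}'$ (Proposition~\ref{prop::duality_many_force_points}), pocket structure is controlled.

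Hypothesis (b) — that $\wt\eta^{-1}(\wt\eta([0,t])\cup\R)$ has empty interior in $(t,T)$ — is the main obstacle, and it splits into two parts. That $\wt\eta$ does not trace itself on a positive-length time interval is immediate from the fact that $\eta_{\theta_1}$ is a simple curve. The genuinely new point is that $\wt\eta$ does not trace $\partial\h$, equivalently that the set $I$ of times $t$ for which $\eta_{\theta_1}(t)\in\eta_{\theta_2}$ contains no open interval. This is proved as sketched in Figure~\ref{fig::interval_overlap}: if $I$ contained an interval $(s_1,s_2)$, then along that interval $\eta_{\theta_1}$ and $\eta_{\theta_2}$ would coincide on a boundary arc of a common complementary component $C$, and approaching the right side of this shared arc from within $C$, the harmonic function $\CC_{\eta_{\theta_1}\cup\eta_{\theta_2}} - \chi\cdot\mathrm{winding}$ would be forced by Proposition~\ref{prop::cond_mean_height} to equal simultaneously $\lambda'-\theta_1\chi$ (as a boundary value inherited from $\eta_{\theta_1}$) and $\lambda'-\theta_2\chi$ (as a boundary value inherited from $\eta_{\theta_2}$) — but $\theta_1\neq\theta_2$ and $\chi>0$, so these are distinct, a contradiction. (One has to be slightly careful that the two curves genuinely overlap on an arc rather than merely that $\eta_{\theta_1}$ repeatedly touches $\eta_{\theta_2}$; continuity of both curves plus the interval hypothesis gives that $\eta_{\theta_1}|_{(s_1,s_2)}$ is contained in $\eta_{\theta_2}$, and since $\eta_{\theta_2}$ is simple this forces a common sub-arc.)

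Once (a) and (b) are established, Proposition~\ref{prop::cont_driving_function} applies directly and yields that $\psi(\eta_{\theta_1})$, suitably reparameterized, solves the Loewner equation with a continuous driving function, which is the assertion of the lemma. I would also remark, in parallel with the other results of this subsection, that the identical argument — replacing Proposition~\ref{prop::cond_mean_height} by the appropriate variant from Remark~\ref{rem::cond_mean_height}, Remark~\ref{rem::angle_varying_mean_height}, Remark~\ref{rem::cond_mean_height_cf}, etc. — gives continuous Loewner driving functions for one flow line viewed in the complement of several others, for angle-varying flow lines, and for counterflow lines relative to flow lines, in each case because the relevant boundary heights on the two sides of a putative overlap interval disagree.
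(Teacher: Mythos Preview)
Your overall strategy --- verify the two hypotheses of Proposition~\ref{prop::cont_driving_function} for $\psi(\eta_{\theta_1})$ --- is exactly what the paper does, and your treatment of hypothesis (b) (the ``no tracing'' condition) via the boundary-height contradiction of Figure~\ref{fig::interval_overlap} matches the paper's argument essentially verbatim.

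The difference is in hypothesis (a). Your argument there is overcomplicated and somewhat vague: invoking boundary heights, Remark~\ref{rem::flow_cannot_hit}, or duality is unnecessary, and as written it is not clear the boundary-height argument even addresses the right question (the issue is not whether $\eta_{\theta_1}$ can \emph{hit} a given boundary arc --- it manifestly does hit $\eta_{\theta_2}$ --- but whether, upon hitting, it proceeds into the bounded pocket rather than the unbounded component). The paper instead gives a two-line topological argument: if $\eta_{\theta_1}$ entered a bounded component of $\h\setminus(\eta_{\theta_1}([0,t_0])\cup\eta_{\theta_2})$ after touching $\eta_{\theta_2}$ at time $t_0$, then since $\eta_{\theta_1}$ stays to the right of $\eta_{\theta_2}$ it could not exit through $\eta_{\theta_2}$, and since it is simple it could not exit through its own past --- but it must eventually reach $\infty$, contradiction. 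This uses only simplicity and monotonicity and avoids any appeal to the GFF machinery.
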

\begin{proof}
Since $\eta_{\theta_2}$ is almost surely continuous (recall Remark~\ref{rem::continuity_non_boundary}), the right connected component $C$ of $\h \setminus \eta_{\theta_2}$ is almost surely a Jordan domain.  Thus $\psi$ extends as a homeomorphism $\ol{C} \to \ol{\h}$, so that $\psi(\eta_{\theta_2})$ is almost surely a continuous path in $\h$ from $0$ to $\infty$.

We will now argue that the first criterion of Proposition~\ref{prop::cont_driving_function} holds in this case.  The reason is that the only way this could fail to be true is if the following occurs.  After intersecting $\eta_{\theta_2}$, say at time $t_0$, $\eta_{\theta_1}$ enters a bounded connected component $C_0$ of $\h \setminus (\eta_{\theta_1}([0,t_0]) \cup \eta_{\theta_2})$.  Since $\eta_{\theta_1}$ lies to the right of $\eta_{\theta_2}$, this would force $\eta_{\theta_1}$ to have a self intersection upon exiting $C_0$.  This is a contradiction since $\eta_{\theta_1}$ is a simple path.

To check the second criterion of Proposition~\ref{prop::cont_driving_function}, it suffices to show that the set $I$ of times $t \in [0,\infty)$ such that $\eta_{\theta_1}(t)$ is contained in the range of $\eta_{\theta_2}$ is nowhere dense in $[0,\infty)$ almost surely.  Indeed, we note that we do not need to check that $\psi(\eta_{\theta_1})$ does not trace itself because we know that $\eta_{\theta_1}$ does not trace itself.  Since $I$ is closed, it suffices to show the event $E$ that $I$ contains an open interval has probability zero.  Suppose for sake of contradiction that $\p[E] > 0$.  Let $\CF = \sigma(\eta_{\theta_1},\eta_{\theta_2})$.  Fix an open interval $I_0 \subseteq I$ and let $T_0$ be an $\CF$-measurable random variable taking values in $[0,\infty)$ such that $\p[ T_0 \in I_0 \giv E] = 1$.  Since $\eta_{\theta_1}$ and $\eta_{\theta_2}$ are both simple paths, on $E$ we can find a sequence of points $(z_k)$ contained in the right component of $\h \setminus (\eta_{\theta_1} \cup \eta_{\theta_2})$ converging to $\eta_{\theta_1}(T_0)$.  Note that we can apply Proposition~\ref{gff::prop::cond_union_mean} to $\CC_{\eta_{\theta_1} \cup \eta_{\theta_2}}$ evaluated at $\eta_{\theta_1}(T_0)$ since $\eta_{\theta_1}(I_0)$ is connected and contains more than one point.  This leads to a contradiction since Proposition~\ref{gff::prop::cond_union_mean} thus implies $(\CC_{\eta_{\theta_1} \cup \eta_{\theta_2}}(z_k) - \chi \cdot {\rm winding})$ converges to both
\[ \lambda' - \theta_1 \chi\quad \text{and}\quad
   \lambda' - \theta_2 \chi;\]
(see Figure~\ref{fig::interval_overlap}).
\end{proof}

In the following series of remarks, we will describe results analogous to Lemma~\ref{lem::cont_loewner_driving_function} which hold for a number of different configurations of flow and counterflow lines.  In each case, the proof is roughly the same as Lemma~\ref{lem::cont_loewner_driving_function}, except for minor modifications which we are careful to point out (we will in particular not explain in each case why the relevant path does not trace itself).  Although this might seem pedantic, we felt obliged to treat each case separately for the sake of completeness.  The reader should feel free to skip the remainder of this section on a first reading.  Remark~\ref{rem::cond_mean_general_bd} also applies here: the subsequent remarks will prove the continuity of the Loewner driving function of one path given several others, restricted to the regime of boundary data in which the paths do not intersect the boundary.  Once Theorems~\ref{thm::coupling_uniqueness}-\ref{thm::monotonicity_crossing_merging} have been proven in Section~\ref{sec::uniqueness}, the arguments we present here will also work without modification in the regime of general piecewise constant boundary data.

\begin{remark}
\label{rem::cont_loewner_multiple_angles}  The result of Lemma~\ref{lem::cont_loewner_driving_function} extends to the setting of multiple flow lines.  Suppose that $\theta_1 < 0 < \theta_2$ and $\eta := \eta_0$ is the flow line with angle $0$.  Let $C$ be any connected component of $\h \setminus (\eta_{\theta_1} \cup \eta_{\theta_2})$ which lies between $\eta_{\theta_1}$ and $\eta_{\theta_2}$ and let $x_0,y_0$ be the first and last points on $\partial C$ traced by $\eta_{\theta_1}$.  Let $\psi \colon C \to \h$ a conformal map with $\psi(x_0) = 0$ and $\psi(y_0) = \infty$.  Then $\psi(\eta)$ has a continuous Loewner driving function as a curve in $\ol{\h}$.  The justification that the first criterion of Proposition~\ref{prop::cont_driving_function} holds is exactly the same as in the setting of two flow lines.  As before, we also know that $\psi$ extends as a homeomorphism $\ol{C} \to \ol{\h}$ since $C$ is a Jordan domain by the continuity of $\eta_{\theta_1}$ and $\eta_{\theta_2}$.  The proof of Lemma~\ref{lem::cont_loewner_driving_function} implies that the set of times $t$ that $\eta(t)$ is contained in the range of either $\eta_{\theta_1}$ or $\eta_{\theta_2}$ is nowhere dense in $[0,\infty)$.  Therefore the second criterion of Proposition~\ref{prop::cont_driving_function} also holds.
\end{remark}

\begin{remark}
\label{rem::cont_loewner_av} 

A version of Lemma~\ref{lem::cont_loewner_driving_function} also holds in the setting of angle varying flow lines.  In particular, we suppose that $\theta_1,\ldots,\theta_k \in \R$ and that $\eta_{\theta_1 \cdots \theta_k}^{\tau_1 \cdots \tau_k}$ is an angle varying flow line with these angles starting at $0$.  We assume $\theta_1,\ldots,\theta_k$ are chosen so that $\eta_{\theta_1 \cdots \theta_k}^{\tau_1 \cdots \tau_k}$ almost surely stays to the right of $\eta$, the zero angle flow line of $h$ (we will prove in Proposition~\ref{prop::angle_varying_monotonicity} that $\theta_1,\ldots, \theta_k < 0$ is a sufficient condition for this to hold).  We moreover assume that $|\theta_i - \theta_j| \leq \pi$ for all pairs $1 \leq i,j \leq k$ so that by Lemma~\ref{lem::av_simple_determined} we know that $\eta_{\theta_1 \cdots \theta_k}^{\tau_1 \cdots \tau_k}$ is almost surely continuous and determined by $h$ (we will relax this to $|\theta_i-\theta_j| < 2\lambda/\chi$ in Section~\ref{sec::uniqueness}).  Let $C$ be the left connected component of $\h \setminus \eta_{\theta_1 \cdots \theta_k}^{\tau_1 \cdots \tau_k}$ and let $\psi \colon C \to \h$ be a conformal map which preserves $0$ and $\infty$.  Since $C$ is a Jordan domain, $\psi$ extends as a homeomorphism to $\partial C$.  Then $\psi(\eta)$ has a continuous Loewner driving function as a path from $0$ to $\infty$.  The proof is the same as Lemma~\ref{lem::cont_loewner_driving_function}.
\end{remark}

\begin{figure}[h!]
\begin{center}
\includegraphics[scale=0.85]{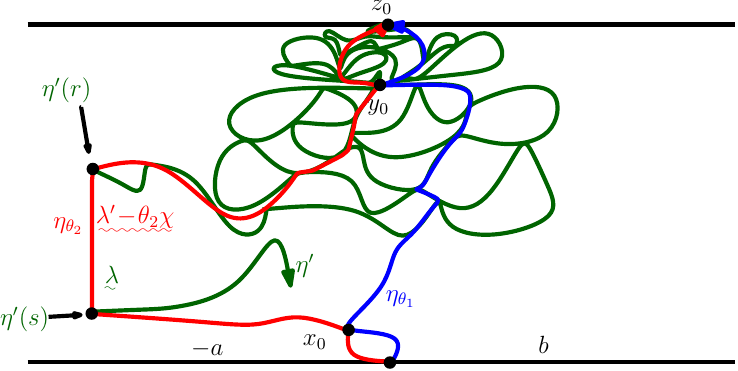}
\end{center}
\caption{\label{fig::cf_cont_driving_function}Suppose that $h$ is a GFF on the strip $\strip$, $\eta_{\theta_1}, \eta_{\theta_2}$ are flow lines with angles $\tfrac{1}{\chi}(\lambda'-\lambda)  = - \tfrac{\pi}{2} < \theta_1 < \theta_2 < \tfrac{1}{\chi}(\lambda-\lambda') = \tfrac{\pi}{2}$ so that $\eta_{\theta_1},\eta_{\theta_2}$ are almost surely contained in the range of the counterflow line $\eta'$.  Fix any bounded connected component $C$ of $\strip \setminus (\eta_{\theta_1} \cup \eta_{\theta_2})$ and let $x_0$ be the first point on $\partial C$ traced by $\eta_{\theta_1}$ and let $y_0$ be the last.  The boundary data for $\CC_{\eta_{\theta_1} \cup \eta' \cup \eta_{\theta_2}}$ is depicted above.  For every pair of rationals $r < s$, we almost surely have that $\eta'((r,s))$ does not contain a non-trivial interval of and stay to the right of $\eta_{\theta_1}$ or a non-trivial interval and stay to the left of $\eta_{\theta_2}$.  Indeed, this would lead to the contradiction that $\CC_{\eta_{\theta_1} \cup \eta' \cup \eta_{\theta_2}}$ takes on two different values on $\eta'((r,s)) \cap \partial C$.  This is the analog of Figure~\ref{fig::interval_overlap} for boundary filling counterflow lines and is the key observation for showing that $\eta'$ viewed as a continuous path in $C$ admits a continuous Loewner driving function.}
\end{figure}

\begin{remark}
\label{rem::cont_loewner_cf}
We will now describe an analog of Lemma~\ref{lem::cont_loewner_driving_function} which holds in the setting of counterflow lines.  In order to state this result, we will for convenience work with the GFF $h$ on the strip $\strip$ rather than $\h$.  We assume the boundary data for $h$ is as in Figure~\ref{fig::cf_cont_driving_function}, let $\eta'$ be the counterflow line starting at $z_0$, and $\eta_{\theta_1},\eta_{\theta_2}$ be the flow lines of $h$ starting at $0$ with angles $\theta_1,\theta_2$.  We assume that $a,b,a',b'$ are sufficiently large so that Proposition~\ref{prop::flow_counterflow_left_right} applies to $\eta_{\theta_1},\eta_{\theta_2}$, and $\eta'$.  We first consider the case $\theta_1 < \tfrac{1}{\chi}(\lambda'-\lambda) = -\tfrac{\pi}{2}$ and $\theta_2 > \tfrac{1}{\chi}(\lambda-\lambda') = \tfrac{\pi}{2}$ so that by Proposition~\ref{prop::flow_counterflow_left_right} we have that $\eta_{\theta_1}$ passes to the right of $\eta'$ and $\eta_{\theta_2}$ passes to its left.  Let $C$ be any connected component of $\strip \setminus (\eta_{\theta_1} \cup \eta_{\theta_2})$ which lies between $\eta_{\theta_1}$ and $\eta_{\theta_2}$ and $x_0$ the first point on $\partial C$ traced by $\eta_{\theta_1}$ and $y_0$ the last.  Let $\psi \colon C \to \h$ be a conformal transformation with $\psi(x_0) = \infty$ and $\psi(y_0) = 0$.  Since $\eta_{\theta_1}$ and $\eta_{\theta_2}$ are continuous, $C$ is a Jordan domain so that $\psi$ extends as a homeomorphism $\ol{C} \to \ol{\h}$.  As in Remark~\ref{rem::cont_loewner_multiple_angles}, the set $I$ of times $t$ that $\eta'(t)$ is contained in $\eta_{\theta_1} \cup \eta_{\theta_2}$ is nowhere dense in $[0,\infty$).  The reason for this is that the left (resp.\ right) boundary of $\eta'$ is $\eta_L$ (resp.\ $\eta_R$), the flow line of $h$ with angle $\tfrac{1}{\chi}(\lambda-\lambda') = \tfrac{\pi}{2}$ (resp.\ $\tfrac{1}{\chi}(\lambda'-\lambda) = -\tfrac{\pi}{2}$) (recall Proposition~\ref{prop::light_cone_construction}) and we know from the proof of Lemma~\ref{lem::cont_loewner_driving_function} that $\eta_L \cap \eta_{\theta_i}$ (resp.\ $\eta_{R} \cap \eta_{\theta_i}$) is nowhere dense in $\eta_{\theta_i}$ for $i=1,2$.  Therefore the second criterion of Proposition~\ref{prop::cont_driving_function} holds.  In order to see that the first criterion holds, suppose that $\eta'(t_0)$ is contained in the range of $\eta_{\theta_1}$.  Then $\eta'(t_0)$ is also in the right outer boundary $\eta_R$ of $\eta'$.  This implies that it is impossible for $\eta'$ to turn into a connected component of $C \setminus \eta'([0,t_0])$ which does not contain $x_0$ because it would contradict Lemma~\ref{lem::light_cone_contains_av} (see also Lemma~\ref{lem::hit_in_order}), that $\eta'$ hits points in $\eta_R$ in reverse chronological order.  A symmetrical argument applies in the case $\eta'(t_0)$ is contained in the range of $\eta_{\theta_2}$.  Therefore $\psi(\eta')$ has a continuous Loewner driving function as a path in $\h$.
\end{remark}

\begin{remark}
\label{rem::cont_loewner_cf_contains}
This is a continuation of the previous remark, in which we now consider the case that both $\theta_1 < \theta_2$ lie in the interval $I = [-\tfrac{\pi}{2}, \tfrac{\pi}{2}]$ (the case that only one of the $\theta_i$ are in $I$ follows from an analogous argument).  Let $C, x_0,y_0,\psi$ be as in the previous remark.  We will first argue that the part of $\eta'$ which traces through $C$ is a continuous path in $C$.  To see this, let $\tau = \inf\{t \geq 0 : \eta'(t) = y_0\}$ and $\sigma = \inf\{t \geq 0 : \eta'(t) = x_0\}$.  Lemma~\ref{lem::light_cone_contains_av}  (see also Lemma~\ref{lem::hit_in_order}) implies that $\eta'$ hits the points in $\eta_{\theta_1}$ in reverse chronological order.  This implies $\sigma \geq \tau$ almost surely.  Let $\eta_{C}'$ be the path with $\eta_{C}'|_{[0,\tau]} = y_0$, $\eta_{C}'|_{[\sigma,\infty)} = x_0$.  We will now describe $\eta_{C}'|_{[\tau,\sigma]}$.  Let $D = \strip \setminus \ol{C}$.  Since $D$ is open and $\eta'$ is continuous, $J = (\eta')^{-1}(D) \subseteq (0,\infty)$ is open.  We can write $J = \cup_k J_k$ where the $J_k = (a_k,b_k)$ are pairwise disjoint open intervals in $(0,\infty)$.  Suppose that $J_k \subseteq (\tau,\sigma)$.  Since $\eta'$ hits the points of $\eta_{\theta_1},\eta_{\theta_2}$ in reverse chronological order (Lemma~\ref{lem::light_cone_contains_av} and Lemma~\ref{lem::hit_in_order}), it must be that $x_k := \eta'(a_k) = \eta'(b_k)$.  We set $\eta_{C}'|_{[a_k,b_k]} \equiv x_k$ and $\eta_{C}'|_{[\tau,\sigma] \setminus J} \equiv \eta'|_{[\tau,\sigma] \setminus J}$.  Then $\eta_{C}'$ is clearly a continuous path in $\ol{C}$ which agrees with $\eta'$ at times when it is in $\ol{C}$.

We will now argue that $\psi(\eta_{C}')$ has a continuous Loewner driving function by checking the criterion of Proposition~\ref{prop::cont_driving_function}.  In order to check the first part of the proposition, it suffices to show that $\eta_{C}'((t,\infty))$ is contained in the closure of the connected component of $C \setminus \eta_{C}'((0,t))$ which contains $x_0$.  This is true because $\eta_C'((t,\infty)) = \eta'((t,\infty)) \cap C$.  Since $\eta'$ cannot cross itself and hits the points of $\eta_{\theta_1}$ and $\eta_{\theta_2}$ in reverse chronological order, it is obvious that $\eta'((t,\infty)) \cap C$ has this property.  We now turn to check the second hypothesis of the proposition.  Suppose that $0 < r < q$ are rational.  If $\eta_{C}'((r,q))$ contains a non-trivial interval of and is contained in $\partial C$, then $\eta'((r,q))$ also contains a non-trivial interval of $\partial C$ and is contained in $\ol{D}$.  This leads to a contradiction as described in Figure~\ref{fig::cf_cont_driving_function}.  Thus, almost surely, $\eta_C'$ does not contain a non-trivial interval of $\eta_{\theta_1}$ or $\eta_{\theta_2}$ in any rational time interval.  This completes the proof that $\eta'$ satisfies the second criterion of Proposition~\ref{prop::cont_driving_function}.
\end{remark}

\begin{remark}
\label{rem::cont_loewner_cf_av}
This is a continuation of the previous remark.  Let $\eta_{\theta_1 \cdots \theta_k}^{\tau_1 \cdots \tau_k}$ be an angle varying flow line of $h$ with angles $\theta_1,\ldots,\theta_k$.  We assume that the boundary data of $h$ is such that both $\eta'$ and $\eta_{\theta_1 \cdots \theta_k}^{\tau_1 \cdots \tau_k}$ almost surely intersect $\partial \strip$ only at $0$ and $z_0$.  Assume that $|\theta_i - \theta_j| \leq \pi$ for all $i,j$ so that $\eta_{\theta_1 \cdots \theta_k}^{\tau_1 \cdots \tau_k}$ is simple, continuous, and almost surely determined by $h$ by Lemma~\ref{lem::av_simple_determined}.  (This can be relaxed to $|\theta_i - \theta_j| < 2\lambda/\chi$ upon proving that such angle varying paths are almost surely continuous and determined by $h$.  This will be accomplished in Section~\ref{sec::uniqueness}).  Moreover, assume that $\eta_{\theta_1 \cdots \theta_k}^{\tau_1 \cdots \tau_k}$ stays to the right of the left boundary of $\eta'$ (we will prove in Proposition~\ref{prop::angle_varying_monotonicity} that $\theta_1, \ldots, \theta_k < \tfrac{\pi}{2}$ is a sufficient criterion for this).    Then $\eta'$, viewed as a path in the left connected component of $\strip \setminus \eta_{\theta_1 \cdots \theta_k}^{\tau_1 \cdots \tau_k}$ almost surely has a continuous Loewner driving function.  The proof of this is the same as that given in Remark~\ref{rem::cont_loewner_cf} and Remark~\ref{rem::cont_loewner_cf_contains}.
\end{remark}

There is one more configuration of paths that will be important for us: two angle varying flow lines $\eta_1,\eta_2$ and a counterflow line $\eta'$ where $\eta_1$ and $\eta_2$ actually cross each other.  We defer this case until after we study the crossing phenomenon of flow lines in Section~\ref{subsec::monotonicity_merging_crossing}.

 \section{Proofs of main theorems}
\label{sec::uniqueness}

In this section, we will complete the proof of Theorems~\ref{thm::coupling_uniqueness}--\ref{thm::monotonicity_crossing_merging}.  We will start in Section~\ref{subsec::two_boundary_force_points} by proving Theorem~\ref{thm::coupling_uniqueness} and Theorem~\ref{thm::continuity} for $\kappa \in (0,4]$ in the special case of two force points $x^L = 0^-$ and $x^R = 0^+$ with weights $\rho^L,\rho^R > -2$, respectively.  Then by an induction argument, we will deduce Theorem~\ref{thm::coupling_uniqueness} for $\kappa \in (0,4]$ in complete generality from the two force point case.  The proof of these results will also imply that the monotonicity result for flow lines established in Section~\ref{sec::non_boundary_intersecting} holds in the regime of boundary data which is constant on $(-\infty,0)$ and on $(0,\infty)$.  Next, in Section~\ref{subsec::monotonicity_merging_crossing}, we will extend the monotonicity result further to cover the case of flow lines of GFFs with general piecewise constant boundary data and then, from this, we will extract the monotonicity of angle varying flow lines.  This is one of the key tools that we will use in Section~\ref{subsec::many_boundary_force_points} to prove Theorem~\ref{thm::continuity} for $\kappa \in (0,4]$ in the setting of multiple force points, at least up until just before the continuation threshold is hit.  We also prove Theorem~\ref{thm::monotonicity_crossing_merging} in Section~\ref{subsec::monotonicity_merging_crossing}, that flow lines with the same angle almost surely merge upon intersecting and never separate and that flow lines with different angles may cross upon intersecting (depending on their relative angle and their starting points), after which they may bounce off of each other but never cross again.  The latter will then allow us to prove Theorem~\ref{thm::continuity} for $\kappa \in (0,4]$, even up to and including when the continuation threshold is hit.  We continue in Section~\ref{subsec::counterflow} by explaining the modifications necessary to prove Theorem~\ref{thm::coupling_uniqueness} and Theorem~\ref{thm::continuity} for $\kappa' > 4$.  We will also extend the light cone construction of Section~\ref{sec::non_boundary_intersecting} to the setting in which the counterflow line can intersect the boundary.  Finally, in Section~\ref{subsec::fan}, we will combine all of the machinery we have developed in this article to show that the fan $\fan$ --- the set of points accessible by flow lines of different (but constant) angles starting from an initial boundary point $x$ (recall Figures~\ref{fig::flowlines}--\ref{fig::flowlines4}, and~\ref{fig::sle64_fan}) --- almost surely has zero Lebesgue measure for $\kappa \in (0,4)$. (We remark that this follows for $\kappa \in (2,4)$ since, as suggested by the discussion at the end of Section~\ref{sec::non_boundary_intersecting}, $\fan$ is contained in a counterflow line which is an $\SLE_{\kappa'}$ type curve with $\kappa'=16/\kappa \in (4,8)$.  New arguments will be needed for $\kappa \in (0,2]$.)

\subsection{Two boundary force points}
\label{subsec::two_boundary_force_points}

\begin{figure}[h!]
\begin{center}
\includegraphics[scale=0.85]{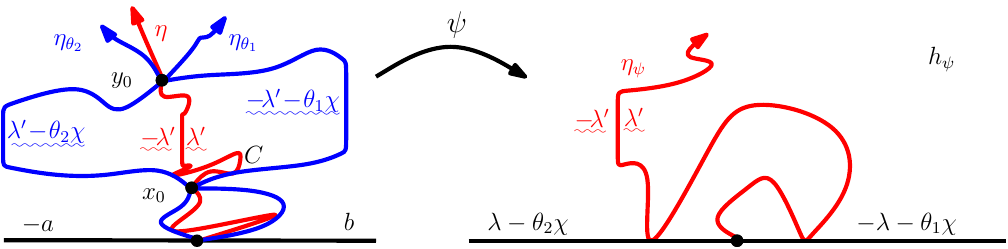}
\caption{\label{fig::flow_line_angles} Suppose that $h$ is a GFF on $\h$ with boundary data as shown in the illustration on the left hand side.  Suppose $\theta_1 < 0 < \theta_2$ and let $\eta_{\theta_i}$ be the flow line of $h$ starting from $0$ with angle $\theta_i$, i.e.\ the flow line of $h + \theta_i \chi$.  Assume $a,b$ are chosen sufficiently large so that Proposition~\ref{prop::monotonicity_non_boundary} applies to $\eta_{\theta_1},\eta_{\theta_2}$, and $\eta$, the zero angle flow line of $h$.  Then $\eta_{\theta_1}$ almost surely lies to the right of $\eta$ which in turn almost surely lies to the right of $\eta_{\theta_2}$.  We will prove in Lemma~\ref{lem::two_force_points_cond_law} and Lemma~\ref{lem::two_force_points_determined} that, conditionally on $\eta_{\theta_1},\eta_{\theta_2}$, the law of $\eta$ in every connected component $C$ of $\h \setminus (\eta_{\theta_1} \cup \eta_{\theta_2})$ which lies between $\eta_{\theta_1}$ and $\eta_{\theta_2}$ is independently that of an $\SLE_\kappa(\rho^L;\rho^R)$ process with $\rho^L = \theta_2 \tfrac{\chi}{\lambda} - 2$ and $\rho^R = -\theta_1 \tfrac{\chi}{\lambda} - 2$ and, moreover, is almost surely determined by $h|_C$.}
\end{center}
\end{figure}

\noindent{\bf Setup.}  Fix $a,b > 0$ and let $h$ be a GFF on $\h$ with boundary data as in the left side of Figure~\ref{fig::flow_line_angles}.  Fix $\theta_1 < 0 < \theta_2$ and let $\eta_{\theta_i}$ be the flow line of $h$ with angle $\theta_i$, $i=1,2$.  That is, $\eta_{\theta_i}$ is the flow line of $h+\theta_i \chi$, for $i=1,2$.  Let $\eta$ be the (zero angle) flow line of $h$.  Assume that $a,b$ are chosen sufficiently large so that Proposition~\ref{prop::monotonicity_non_boundary} applies to $\eta_{\theta_1}$, $\eta$, and $\eta_{\theta_2}$.  Thus we know that  $\eta_{\theta_1}$ lies to the right of $\eta$ which in turn lies to the right of $\eta_{\theta_2}$.  By Theorem~\ref{thm::coupling_uniqueness}, in a certain non-boundary-intersecting regime, which we proved in Section~\ref{subsec::uniqueness_non_boundary_intersecting}, we know that $\eta_{\theta_1}$, $\eta$, and $\eta_{\theta_2}$ are all almost surely determined by $h$.  Fix a connected component $C$ of $\h\setminus (\eta_{\theta_1} \cup \eta_{\theta_2})$ which lies between $\eta_{\theta_1}$ and $\eta_{\theta_2}$.  (The particular way that we select $C$ will ultimately be unimportant since what we will argue holds for all such $C$ simultaneously.  One example of an explicit rule for selecting $C$ would be to fix a positive integer $k$ and a countable, dense sequence $(r_n)$ of points in $\C$ and consider the subsequence containing those $r_i$ that lie between $\eta_{\theta_1}$ and $\eta_{\theta_2}$; we may then let $C$ be the component containing the $k$th element in the subsequence.)  Let $x_0$ be the first point in $\partial C$ traced by $\eta_{\theta_1}$ and $y_0$ the last.  Let $h_C$ be the restriction of $h$ to $C$ and $\eta_C$ the restriction of $\eta$ to the time interval in which it takes values in $\ol{C}$.  Let $\psi \colon C \to \h$ be a conformal transformation which sends $x_0$ to $0$ and $y_0$ to $\infty$ (the scale factor can be determined by an arbitrary rule --- e.g., by requiring the $k$th element in the subsequence discussed above to map to a point on the unit circle) and let $h_\psi = h \circ \psi^{-1} - \chi \arg(\psi^{-1})'$ be the GFF on $\h$ given by the coordinate change~\eqref{eqn::ac_eq_rel} of $h_C$ under $\psi$ and let $\eta_\psi$ be the flow line of $h_\psi$ starting from $0$ and targeted at $\infty$.

To complete the proof of Theorem~\ref{thm::coupling_uniqueness} for $\kappa \in (0,4]$ with two force points $\rho^L,\rho^R > -2$, we will show the following:
\begin{enumerate}
\item $\eta_\psi$ is almost surely determined by $h_\psi$.
\item $\eta_\psi \sim \SLE_\kappa(\rho^L;\rho^R)$, and by adjusting $\theta_1,\theta_2$ we can obtain any pair of weights $\rho^L, \rho^R > -2$.
\end{enumerate}
Theorem~\ref{thm::continuity} for $\kappa \in (0,4]$ with two force points $\rho^L,\rho^R > -2$ then follows by showing that $\eta_\psi$ is almost surely continuous.  We will accomplish these two steps in the following lemmas.

\begin{lemma}
\label{lem::two_force_points_cond_law}
Conditional on $\eta_{\theta_1}$, $\eta_{\theta_2}$ and $\eta$ up until the first time that it hits $\partial C$, we have that $\eta_\psi \sim \SLE_\kappa(\rho^L;\rho^R)$ where the weights $\rho^R, \rho^L$ are given by
\[ \rho^R= -\frac{ \theta_1 \chi}{\lambda} - 2 \quad\text{and}\quad \rho^L = \frac{\theta_2 \chi}{\lambda} - 2\]
and correspond to force points at $0^+$ and $0^-$, respectively.  Moreover, $\eta_\psi$ is almost surely continuous with $\lim_{t \to \infty} \eta_\psi(t) = \infty$ and $(\eta_\psi,h_\psi)$ are coupled as in Theorem~\ref{thm::coupling_existence}.
\end{lemma}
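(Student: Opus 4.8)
The plan is to apply the martingale characterization of $\SLE_\kappa(\ul\rho)$ (Theorem~\ref{thm::martingale}) to $\eta_\psi$, after first checking that $\eta_\psi$ is a continuous curve from $0$ to $\infty$ with a continuous Loewner driving function. Since $a,b$ are large, $\eta=\eta_0$ (and likewise $\eta_{\theta_1},\eta_{\theta_2}$) does not hit $\partial\h$ after time $0$, so these paths are almost surely continuous by Remark~\ref{rem::continuity_non_boundary}; hence $C$ is a Jordan domain and $\psi$ extends to a homeomorphism $\ol C\to\ol\h$. As $\eta$ lies between $\eta_{\theta_2}$ and $\eta_{\theta_1}$ (Proposition~\ref{prop::monotonicity_non_boundary}), it traverses the pocket $C$ from $x_0$ to $y_0$, so $\eta_\psi$ is a continuous curve in $\ol\h$ from $0$ to $\infty$; this already yields the asserted continuity and the transience $\lim_{t\to\infty}\eta_\psi(t)=\infty$. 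That $\eta_\psi$ has a continuous Loewner driving function is exactly the three-flow-line statement of Remark~\ref{rem::cont_loewner_multiple_angles}.

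Next I would identify the conditional law of $h_\psi$ given $\eta_\psi([0,t])$. Set $A(t)=\eta_{\theta_1}\cup\eta([0,t])\cup\eta_{\theta_2}$ and $\CF_t=\sigma(\eta(s):s\le t,\ \eta_{\theta_1},\eta_{\theta_2})$; by Lemma~\ref{lem::stopping_local_set} every $A(\tau)$ is a local set for $h$. By Remark~\ref{rem::cond_mean_height} (the three-flow-line version of Proposition~\ref{prop::cond_mean_height}), $\CC_{A(\tau)}$ exhibits no pathological behavior at the points where $\eta$ meets $\eta_{\theta_1}$ or $\eta_{\theta_2}$: in $C$, the conditional mean of $h$ given $A(\tau)$ is harmonic with boundary values $\pm\lambda'-\theta_i\chi+\chi\cdot\mathrm{winding}$ along the relevant sides of the $\eta_{\theta_i}$, together with the zero-angle flow line boundary data along $\eta$. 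Applying the coordinate change \eqref{eqn::ac_eq_rel} by $\psi$ and using Proposition~\ref{gff::prop::cond_union_local}, Proposition~\ref{gff::prop::cond_union_mean}, and Proposition~\ref{gff::prop::local_independence}, the field $h_\psi$ conditioned on $\eta_\psi([0,t])$ is a GFF on $\h$ whose conditional mean $\CC_{\eta_\psi([0,t])}$ agrees with the harmonic function $\Fh_t$ of Theorem~\ref{thm::coupling_existence} built from $\SLE_\kappa(\rho^L;\rho^R)$ data, where the unconditioned boundary data of $h_\psi$ is $\lambda'-\theta_2\chi$ on the side of $\eta_\psi$ inherited from $\eta_{\theta_2}$ (mapped to $(-\infty,0)$) and $-\lambda'-\theta_1\chi$ on the side inherited from $\eta_{\theta_1}$ (mapped to $(0,\infty)$). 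Reading off the jump sizes at the force points $0^-,0^+$ and using $\lambda'=\lambda-\tfrac{\pi}{2}\chi$ together with the winding contributions at $x_0$ and $y_0$ gives the stated weights $\rho^L=\theta_2\chi/\lambda-2$ at $0^-$ and $\rho^R=-\theta_1\chi/\lambda-2$ at $0^+$. Since $\theta_1<0<\theta_2$, both weights exceed $-2$, so the continuation threshold is infinite.

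Finally I would run the martingale argument. For fixed $z\in C$, $t\mapsto\CC_{A(t)}(z)$ is a martingale (it is a conditional mean of $h$ along the increasing family of local sets $A(t)$, and the restriction of its boundary behavior to $C$ is controlled by Remark~\ref{rem::cond_mean_height}), and by Proposition~\ref{prop::cond_mean_continuous} it has a continuous modification up to the time $\eta$ swallows $z$. Transporting by $\psi$, for each fixed $z'\in\h$ the process $\Fh_t(z')$ is a continuous local martingale in $t$ (in the filtration generated by $\eta_\psi$, to which it is adapted) until $z'$ is absorbed by the hull of $\eta_\psi$. Theorem~\ref{thm::martingale} then shows that the Loewner driving function of $\eta_\psi$ together with the images of $0^\pm$ can be coupled with a standard Brownian motion so as to describe an $\SLE_\kappa(\rho^L;\rho^R)$ evolution; as the continuation threshold is $+\infty$, this holds for all $t$, and the identity $\CC_{\eta_\psi([0,\tau])}=\Fh_\tau$ established above is precisely the assertion that $(\eta_\psi,h_\psi)$ is coupled as in Theorem~\ref{thm::coupling_existence}. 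Varying $\theta_1\in(-\infty,0)$ and $\theta_2\in(0,\infty)$, the pair $(\rho^L,\rho^R)$ ranges over all of $(-2,\infty)^2$. The step I expect to require the most care is the transport of the ``no pathological behavior'' statement through the conformal map $\psi$ and the bookkeeping of the winding terms at the pocket opening $x_0$ and closing $y_0$ that is needed to pin down the constants $\pm\lambda'-\theta_i\chi$ (rather than $\pm\lambda-\theta_i\chi$) and hence the precise values of $\rho^L,\rho^R$; once this is in place, the martingale characterization does the rest essentially mechanically.
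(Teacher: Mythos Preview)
Your approach is essentially the same as the paper's: continuity of the Loewner driving function via Remark~\ref{rem::cont_loewner_multiple_angles}, locality via Lemma~\ref{lem::stopping_local_set}, the conditional mean via Remark~\ref{rem::cond_mean_height}, its continuity in $t$ via Proposition~\ref{prop::cond_mean_continuous}, and then Theorem~\ref{thm::martingale} to identify the law; continuity and transience of $\eta_\psi$ come from the continuity of $\eta$ and the fact that $\psi$ extends to a homeomorphism $\ol C\to\ol\h$ (the paper also invokes Proposition~\ref{prop::transience} here, which is the same point).

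One small bookkeeping slip: after applying $\psi$ with the $-\chi\arg(\psi^{-1})'$ correction, the boundary data of $h_\psi$ on $(-\infty,0)$ and $(0,\infty)$ is $\lambda-\theta_2\chi$ and $-\lambda-\theta_1\chi$, not $\lambda'-\theta_2\chi$ and $-\lambda'-\theta_1\chi$. The quarter-turn at $x_0$ converts the $\pm\lambda'$ on the vertical pieces of $\eta_{\theta_i}$ into $\pm\lambda$ on the horizontal real axis (via $\lambda'+\tfrac{\pi}{2}\chi=\lambda$), and it is precisely these values that yield $-\lambda(1+\rho^L)=\lambda-\theta_2\chi$, $\lambda(1+\rho^R)=-\lambda-\theta_1\chi$, i.e.\ $\rho^L=\theta_2\chi/\lambda-2$ and $\rho^R=-\theta_1\chi/\lambda-2$. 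You flagged this as the delicate step, but your parenthetical ``$\pm\lambda'-\theta_i\chi$ (rather than $\pm\lambda-\theta_i\chi$)'' has it backwards; with the correction the argument goes through exactly as you outline.
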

Note that Lemma~\ref{lem::two_force_points_cond_law} implies that $(\eta_\psi,h_\psi)$ is independent of $(\eta_{\theta_1},\eta_{\theta_2})$.  This fact will be important for us in the proof of Lemma~\ref{lem::two_force_points_determined}.
\begin{proof}[Proof of Lemma~\ref{lem::two_force_points_cond_law}]
By Remark~\ref{rem::cont_loewner_multiple_angles}, the uniformizing conformal maps $(g_t)$ of the unbounded connected component of $\h \setminus \eta_\psi([0,t])$ with $\lim_{z \to \infty}|g_t(z) - z| = 0$ satisfy the Loewner equation with continuous driving function $W_t$.  For a local set $A$ of $h_\psi$, let $\CC_A^\psi$ be as in Section~\ref{subsec::local_sets}.  That $\eta,\eta_{\theta_1},\eta_{\theta_2}$ are almost surely determined by $h$ and are local sets for $h$ combined with Lemma~\ref{lem::stopping_local_set} implies that $\eta_\psi([0,\tau])$ is a local set for $h_\psi$ for every $\eta$ stopping time $\tau$.  Moreover, Remark~\ref{rem::cond_mean_height} implies that $\CC_{\eta_\psi([0,\tau])}^\psi$ is the harmonic function in $\h \setminus \eta_\psi([0,\tau])$ whose boundary values are described in the right hand side of Figure~\ref{fig::flow_line_angles}.  Proposition~\ref{prop::cond_mean_continuous} implies that $\CC_{\eta_\psi([0,t])}^\psi(z)$ has a modification which is continuous in $t$ and $z$.  Consequently, Theorem~\ref{thm::martingale} implies $\eta_\psi \sim \SLE_\kappa(\rho^L;\rho^R)$ in $\h$ from $0$ to $\infty$ where the values of $\rho^L,\rho^R$ are as in the statement of the lemma (recall Figure~\ref{fig::conditional_boundary_data}).  The continuity of $\eta_\psi$ follows since $\eta$ is almost surely continuous (recall Remark~\ref{rem::continuity_non_boundary} and Proposition~\ref{prop::transience}, which gives the continuity of $\eta_\psi(t)$ as $t \to \infty$) and, as explained in Remark~\ref{rem::cont_loewner_multiple_angles}, $\psi$ extends as a homeomorphism from $\ol{C}$ to $\ol{\h}$.
\end{proof}

We now turn to show that $\eta_\psi$ is almost surely determined by $h_\psi$.

\begin{lemma}
\label{lem::two_force_points_determined}
Almost surely, $h_\psi$ determines $\eta_\psi$.
\end{lemma}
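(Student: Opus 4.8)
The plan is to exploit the fact, recalled in the setup, that $a,b$ are large enough that $\eta_{\theta_1}$, $\eta$ and $\eta_{\theta_2}$ are all in the non-boundary-intersecting regime, and hence, by the special case of Theorem~\ref{thm::coupling_uniqueness} established in Section~\ref{subsec::uniqueness_non_boundary_intersecting}, are \emph{all almost surely determined by} $h$; write $\eta=\Phi(h)$ for the associated deterministic map. Together with Lemma~\ref{lem::two_force_points_cond_law} (and its proof) this gives a precise picture of the conditional structure given $\CG:=\sigma(\eta_{\theta_1},\eta_{\theta_2})$. By the Markov property (Proposition~\ref{gff::prop::markov}) the restrictions of $h$ to the connected components of $\h\setminus(\eta_{\theta_1}\cup\eta_{\theta_2})$ are conditionally independent given $\CG$; and applying the martingale characterization (Theorem~\ref{thm::martingale}, via Remark~\ref{rem::cond_mean_height} and Proposition~\ref{prop::cond_mean_continuous}) inside each such component shows that the restriction of $\eta$ to a component $C'$ is, conditionally on $\CG$ and $h|_{C'}$, an $\SLE_\kappa$-type path coupled with $h|_{C'}$ as in Theorem~\ref{thm::coupling_existence}, with these restrictions \emph{conditionally independent across components} given the component fields. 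In particular, $\eta\cap\overline C$ is conditionally independent of $h|_{\h\setminus\overline C}$ given $(\CG,h|_C)$, and $(\eta_\psi,h_\psi)$ is independent of $\CG$ (the latter as noted after Lemma~\ref{lem::two_force_points_cond_law}).

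I would then argue by a gluing/reconstruction. On an auxiliary space, sample a Theorem~\ref{thm::coupling_existence} coupling $(\widehat h,\widehat\eta)$ of an $\SLE_\kappa(\rho^L;\rho^R)$ with a GFF on $\h$ (so $(\widehat h,\widehat\eta)\stackrel{d}{=}(h_\psi,\eta_\psi)$ by Lemma~\ref{lem::two_force_points_cond_law}); independently sample $(\widehat\eta_{\theta_1},\widehat\eta_{\theta_2})$ with the marginal law of $(\eta_{\theta_1},\eta_{\theta_2})$; let $\widehat C$ be the distinguished pocket determined by $(\widehat\eta_{\theta_1},\widehat\eta_{\theta_2})$ and $\widehat\psi\colon\widehat C\to\h$ the associated conformal map; place the field $\widehat h$ and the path $\widehat\eta$ into $\widehat C$ via $\widehat\psi^{-1}$; and finally fill in the field and the flow line in the remaining components, sampling from the conditional laws identified above but \emph{conditionally independently of $\widehat\eta$ given $(\widehat\eta_{\theta_1},\widehat\eta_{\theta_2},\widehat h)$} --- which is legitimate precisely because of the conditional independence across components. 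A routine check that we have sampled from the correct conditional laws in the correct order shows that the resulting tuple $(\widetilde h,\widetilde\eta,\widetilde\eta_{\theta_1},\widetilde\eta_{\theta_2})$ has the same law as $(h,\eta,\eta_{\theta_1},\eta_{\theta_2})$, with $\widetilde h$ restricted to the pocket equal to $\widehat h$ (up to coordinate change) and $\widetilde\eta$ restricted to the pocket equal to $\widehat\eta$.

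Since $\widetilde\eta$ is determined by $\widetilde h$ (first paragraph), $\widehat\eta=\widehat\psi(\widetilde\eta\cap\overline{\widehat C})=\widehat\psi\big(\Phi(\widetilde h)\cap\overline{\widehat C}\big)$ is a deterministic function of the triple $\big(\widehat h,\,V'\big)$ where $V':=(\widehat\eta_{\theta_1},\widehat\eta_{\theta_2},\widetilde h|_{\h\setminus\overline{\widehat C}})$; but by construction $\widehat\eta$ is conditionally independent of $V'$ given $\widehat h$. Hence, conditionally on $\widehat h$, $\widehat\eta$ is simultaneously a measurable function of $V'$ and independent of $V'$, so it is a.s.\ constant; thus $\widehat\eta$ is a.s.\ a measurable function of $\widehat h$, and transferring back through $(\widehat h,\widehat\eta)\stackrel{d}{=}(h_\psi,\eta_\psi)$ shows $\eta_\psi$ is a.s.\ determined by $h_\psi$. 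The main obstacle is the first paragraph: one must verify that the per-component description of the conditional law of $\eta$ given $\CG$ really decomposes as an \emph{independent} product over components (each piece coupled with its own component field as in Theorem~\ref{thm::coupling_existence}), and it is exactly this independence --- not any (circular) claim that $\eta\cap\overline C$ is a function of $h|_C$ --- that both licenses the reconstruction and closes the argument. The continuity of $\eta_\psi$, and the fact that $(\rho^L,\rho^R)$ sweeps out all of $(-2,\infty)^2$ as $(\theta_1,\theta_2)$ varies, are already contained in Lemma~\ref{lem::two_force_points_cond_law}, so together with the present lemma this completes Theorem~\ref{thm::coupling_uniqueness} (and Theorem~\ref{thm::continuity}) for $\kappa\in(0,4]$ in the two-force-point case.
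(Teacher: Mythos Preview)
Your overall strategy coincides with the paper's: both arguments reduce to showing that $(\eta_\psi, h_\psi)$ is independent of $Q := (\eta_{\theta_1}, \eta_{\theta_2}, h|_{\h\setminus C})$, and then combine this with the fact that $(Q, h_\psi)$ determines all of $h$ (hence $\eta$, hence $\eta_\psi$) to conclude that $\eta_\psi$ is $\sigma(h_\psi)$-measurable. The paper states this independence-plus-determinism step directly; your reconstruction on an auxiliary space is a legitimate but more elaborate way to package the same logic.

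The substantive gap is exactly where you flag it. The ``conditional independence across components'' you invoke in the first paragraph does not follow from the martingale characterization. Theorem~\ref{thm::martingale} (together with Remark~\ref{rem::cond_mean_height} and Proposition~\ref{prop::cond_mean_continuous}) identifies the conditional \emph{law} of $\eta_\psi$ given $\CG$ and exhibits the Theorem~\ref{thm::coupling_existence} coupling with $h_\psi$; it says nothing about whether $(\eta_\psi, h_\psi)$ is conditionally independent of $h|_{\h\setminus C}$ given $\CG$. Since $\eta$ is at this stage only known to be a function of the \emph{entire} field $h$, there is no a priori reason $\eta_\psi$ should not carry information about $h|_{\h\setminus C}$, and this is precisely the non-circular content that must be supplied before your reconstruction step (sampling the remainder ``conditionally independently of $\widehat\eta$'') can be shown to reproduce the original joint law. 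The paper supplies it via Proposition~\ref{gff::prop::local_independence}: writing $h' = h|_{\h\setminus C}$ as $(h_1', h_2', h_3', h_4')$ --- the field to the right of $\eta_{\theta_1}$, to the left of $\eta_{\theta_2}$, in the pockets before $C$, and in the pockets after $C$ --- it peels these pieces off one at a time, using that the relevant local sets do not enter the corresponding regions. You should replace the appeal to Theorem~\ref{thm::martingale} for this step with an argument of that type.
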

\begin{proof}
We remark again that $\eta$ lies to the left of $\eta_{\theta_1}$ and to the right of $\eta_{\theta_2}$ by Proposition~\ref{prop::monotonicity_non_boundary}.  Moreover, $\eta_{\theta_1}$, $\eta$, and $\eta_{\theta_2}$ are almost surely determined by $h$ by the non-boundary-intersecting version of Theorem~\ref{thm::coupling_uniqueness}, which was proved in Section~\ref{subsec::uniqueness_non_boundary_intersecting}.   Let $h'$ be the restriction of $h$ to $\h \setminus \ol{C}$ and $Q = (\eta_{\theta_1},\eta_{\theta_2},h')$.  Since $Q$ determines $C$ hence $\psi$, it follows from Lemma~\ref{lem::local_restriction_determine} and \cite[Theorem~8.1]{RS05} that the pair $(Q,h_\psi)$ determines the entire GFF $h$, hence also $\eta_\psi$.  Thus to show that $\eta_\psi$ is determined by $h_\psi$, it suffices to show that the pair $(\eta_\psi,h_\psi)$ is independent of $Q$ since $h_\psi$ is independent of $Q$.  It in turn suffices to show that $h'$ is independent of $(\eta_\psi,h_\psi)$ given $\eta_{\theta_1},\eta_{\theta_2}$.  The reason is that the previous lemma implies $(\eta_\psi,h_\psi)$ is independent of $(\eta_{\theta_1},\eta_{\theta_2})$ since its law conditionally on $(\eta_{\theta_1},\eta_{\theta_2})$ does not depend on $\eta_{\theta_1},\eta_{\theta_2}$.

Let $h_1'$ and $h_2'$ be the restrictions of $h'$ to the right and left sides of $\eta_{\theta_1}$ and $\eta_{\theta_2}$, respectively.  Let $U$ be the set of points in $\h \setminus (\eta_{\theta_1} \cup \eta_{\theta_2})$ which lie between~$\eta_{\theta_1}$ and~$\eta_{\theta_2}$.  We can put an ordering on the set of connected components $\CU$ of $U$ by saying that $A < B$ for $A,B \in \CU$ if and only if $\eta$ intersects $A$ before $B$.  Let $h_3',h_4'$ be the restrictions of $h$ to those components which come strictly before and after $C$ in this ordering, respectively.  By Lemma~\ref{lem::local_restriction_determine} and \cite[Theorem~8.1]{RS05}, we have that $h'$ is determined by $(h_1',\ldots,h_4')$.  Proposition~\ref{gff::prop::local_independence} (with $A_1$ given by $\eta_{\theta_1} \cup \eta_{\theta_2}$, $A_2 = \eta$, and $C$ given by the components of $\h \setminus (\eta_{\theta_1} \cup \eta_{\theta_2})$ which are to the right of $\eta_{\theta_1}$ and to the left of $\eta_{\theta_2}$) implies that the pair $(h_1',h_2')$ is independent of $(\eta_\psi, h_\psi)$ given $(h_3',h_4',\eta_{\theta_1}, \eta_{\theta_2})$.  Another application of Proposition~\ref{gff::prop::local_independence} (with $A_1$ given by $\eta_{\theta_1} \cup \eta_{\theta_2}$, $A_2$ given by $\eta$ stopped upon the last time it hits $\partial C$, and $C$ given by the components of $\h \setminus (\eta_{\theta_1} \cup \eta_{\theta_2})$ which come after $C$) implies that $h_4'$ is independent of $(\eta_\psi,h_\psi)$ given $(\eta_{\theta_1},\eta_{\theta_2},h_3')$.  Finally, Proposition~\ref{gff::prop::local_independence} (with $A_1 = \eta_{\theta_1} \cup \eta_{\theta_2}$, $A_2$ given by the union $\eta_{\theta_1}$, $\eta_{\theta_2}$, and the restriction of $\eta$ to the interval of time in which it is in $\ol{C}$, and $C$ given by those components of $\h \setminus (\eta_{\theta_1} \cup \eta_{\theta_2})$ which come before $C$; we note that $A_2$ is local for $h$ by Lemma~\ref{lem::two_force_points_cond_law} because it implies that the conditional law of $\eta$ in $\ol{C}$ does not depend on its realization up until first hitting $\partial C$ when $\eta_{\theta_1},\eta_{\theta_2}$ are fixed) $(\eta_\psi,h_\psi)$ is independent of $h_3'$ given $(\eta_{\theta_1},\eta_{\theta_2})$.  This completes the proof.
\end{proof}

The important ingredients in the proof of Lemma~\ref{lem::two_force_points_determined} are that:
\begin{enumerate}
\item the conditional law of $h$ given $\eta_{\theta_1}$, $\eta_{\theta_2}$, and $\eta$ restricted to the left and right connected components of $\h \setminus (\eta_{\theta_1} \cup \eta_{\theta_2})$ does not depend either on $\eta$ or on the restriction of $h$ to the connected components which lie between $\eta_{\theta_1}$ and $\eta_{\theta_2}$,
\item the conditional law of $\eta$ in $C$ given $\eta_{\theta_1}$, $\eta_{\theta_2}$ does not depend on $h$ restricted to the connected components whose boundaries are traced by $\eta_{\theta_1}$ and $\eta_{\theta_2}$ before $C$,
\item the conditional law of $h$ given $\eta_{\theta_1}$,$\eta_{\theta_2}$ restricted to the connected components whose boundaries are traced by $\eta_{\theta_1}$ and $\eta_{\theta_2}$ after $C$ does not depend on $\eta$ stopped upon exiting $C$, and that
\item $\eta$, $\eta_{\theta_1}$, and $\eta_{\theta_2}$ are all determined by $h$.
\end{enumerate}
By the same argument, an analogous result holds in the setting of counterflow lines (see Section~\ref{subsec::counterflow}).

By combining Lemma~\ref{lem::two_force_points_cond_law} and Lemma~\ref{lem::two_force_points_determined}, we have obtained Theorem~\ref{thm::coupling_uniqueness} and Theorem~\ref{thm::continuity} in the special case of two boundary force points, one to the left and one to the right of the $\SLE$ seed.  We will record this result in the following proposition.
\begin{proposition}
\label{prop::two_force_point_uniqueness_and_continuity}  Suppose that $\eta$ is an $\SLE_\kappa(\rho^L;\rho^R)$ process in $\h$ from $0$ to $\infty$ with $\kappa \in (0,4]$ and with weights $\rho^L,\rho^R > -2$ located at the force points $0^-,0^+$, respectively.  Then $\eta$ is almost surely continuous and $\lim_{t \to \infty} \eta(t) = \infty$.  Moreover, in the coupling of $\eta$ with a GFF $h$ as in Theorem~\ref{thm::coupling_existence}, $\eta$ is almost surely determined by~$h$.
\end{proposition}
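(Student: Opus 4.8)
The plan is to assemble Proposition~\ref{prop::two_force_point_uniqueness_and_continuity} from the two preceding lemmas together with the non-boundary-intersecting special case proved in Section~\ref{subsec::uniqueness_non_boundary_intersecting} and the monotonicity result of Proposition~\ref{prop::monotonicity_non_boundary}. The essential point is to realize an arbitrary $\SLE_\kappa(\rho^L;\rho^R)$ process with $\kappa\in(0,4]$ and $\rho^L,\rho^R>-2$ as the conditional law of a zero-angle flow line $\eta$ sandwiched between two flow lines $\eta_{\theta_1},\eta_{\theta_2}$ of angles $\theta_1<0<\theta_2$ of a GFF whose boundary data is chosen ``large and negative to the left of $0$'' and ``large and positive to the right of $0$,'' exactly as in the Setup preceding the lemmas.

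First I would fix $a,b>0$ large enough (depending on $\theta_1,\theta_2$) that Proposition~\ref{prop::monotonicity_non_boundary} applies to $\eta_{\theta_1},\eta,\eta_{\theta_2}$, and large enough that the non-boundary-intersecting case of Theorem~\ref{thm::coupling_uniqueness} established in Section~\ref{subsec::uniqueness_non_boundary_intersecting} applies, so that $\eta_{\theta_1},\eta,\eta_{\theta_2}$ are all a.s.\ continuous and a.s.\ determined by $h$. By Proposition~\ref{prop::monotonicity_non_boundary}, $\eta_{\theta_1}$ lies to the right of $\eta$ which lies to the right of $\eta_{\theta_2}$. Pick a connected component $C$ of $\h\setminus(\eta_{\theta_1}\cup\eta_{\theta_2})$ lying between the two flow lines by any fixed measurable rule, with first/last points $x_0,y_0$ traced by $\eta_{\theta_1}$, and let $\psi\colon C\to\h$ be a conformal map sending $x_0\mapsto 0$, $y_0\mapsto\infty$; set $h_\psi=h\circ\psi^{-1}-\chi\arg(\psi^{-1})'$ and $\eta_\psi=\psi(\eta_C)$. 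Then I invoke Lemma~\ref{lem::two_force_points_cond_law}: using Remark~\ref{rem::cont_loewner_multiple_angles} for the continuity of the Loewner driving function of $\eta_\psi$, Lemma~\ref{lem::stopping_local_set} and Remark~\ref{rem::cond_mean_height} to identify $\CC^\psi_{\eta_\psi([0,\tau])}$ with the explicit harmonic function pictured in Figure~\ref{fig::flow_line_angles}, Proposition~\ref{prop::cond_mean_continuous} for the continuity in $t$ of the conditional mean at a fixed point, and then Theorem~\ref{thm::martingale} to conclude that $\eta_\psi\sim\SLE_\kappa(\rho^L;\rho^R)$ with $\rho^R=-\theta_1\chi/\lambda-2$ and $\rho^L=\theta_2\chi/\lambda-2$, with $(\eta_\psi,h_\psi)$ coupled as in Theorem~\ref{thm::coupling_existence}. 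Continuity of $\eta_\psi$ and $\lim_{t\to\infty}\eta_\psi(t)=\infty$ follow from the continuity of $\eta$ (Remark~\ref{rem::continuity_non_boundary}), the transience statement Proposition~\ref{prop::transience}, and the fact that $\psi$ extends to a homeomorphism $\ol{C}\to\ol{\h}$. As $\theta_1$ ranges over $(-\infty,0)$ and $\theta_2$ over $(0,\infty)$, $(\rho^L,\rho^R)$ ranges over all of $(-2,\infty)^2$, so every admissible weight pair is captured.

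Next I apply Lemma~\ref{lem::two_force_points_determined}, which gives that $h_\psi$ determines $\eta_\psi$: the argument there decomposes $h$ restricted to $\h\setminus C$ into its pieces on the right of $\eta_{\theta_1}$, on the left of $\eta_{\theta_2}$, and on the components between the two flow lines that come before/after $C$, and repeatedly uses Proposition~\ref{gff::prop::local_independence} together with the independence of $(\eta_\psi,h_\psi)$ from $(\eta_{\theta_1},\eta_{\theta_2})$ noted after Lemma~\ref{lem::two_force_points_cond_law} to conclude that $(\eta_\psi,h_\psi)$ is independent of $Q=(\eta_{\theta_1},\eta_{\theta_2},h|_{\h\setminus C})$; since $Q$ together with $h_\psi$ determines all of $h$ hence $\eta$ hence $\eta_\psi$, and $h_\psi$ is independent of $Q$, it follows that $\eta_\psi$ is $\sigma(h_\psi)$-measurable. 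Combining the two lemmas yields exactly the statement of Proposition~\ref{prop::two_force_point_uniqueness_and_continuity}. The main obstacle is not in this final assembly but in the inputs feeding it: most delicate is controlling the conditional mean $\CC_{A(t)}$ near the points where $\eta_{\theta_1}$ and $\eta_{\theta_2}$ intersect (Proposition~\ref{prop::cond_mean_height} and Remark~\ref{rem::cond_mean_height}), i.e.\ ruling out pathological boundary behavior of the conditional expectation at pocket opening/closing times, which is handled via the continuity of $\CC_{A(t)}$ in $t$ (Proposition~\ref{prop::cond_mean_continuous}) and the duality trick of replacing $\eta_{\theta_1}$ by the counterflow line whose left boundary it is. Once those estimates are in hand together with the existence and continuity of the relevant Loewner driving functions (Section~\ref{subsec::interacting_loewner_driving}), the proposition follows directly from Lemmas~\ref{lem::two_force_points_cond_law} and~\ref{lem::two_force_points_determined}.
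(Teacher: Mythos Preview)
Your proposal is correct and follows exactly the paper's own approach: the paper states Proposition~\ref{prop::two_force_point_uniqueness_and_continuity} immediately after Lemmas~\ref{lem::two_force_points_cond_law} and~\ref{lem::two_force_points_determined} and explicitly records it as the combination of those two lemmas. Your write-up simply unpacks in more detail the same assembly the paper gives in one sentence.
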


Proposition~\ref{prop::two_force_point_uniqueness_and_continuity} implies that the flow lines of the GFF on $\h$ starting at $0$ with boundary data which is constant on $(-\infty,0)$ and on $(0,\infty)$ are almost surely defined as path valued functionals of the underlying GFF.  The technique we used to prove Proposition~\ref{prop::two_force_point_uniqueness_and_continuity} can be applied to multiple flow lines simultaneously.  We obtain as a consequence the following extension of Proposition~\ref{prop::monotonicity_non_boundary} to the regime of boundary data which is constant on $(-\infty,0)$ and constant on $(0,\infty)$.

\begin{proposition}
\label{prop::monotonicity_boundary_intersecting}
Suppose that $h$ is a GFF on $\h$ with boundary data as in the left side of Figure~\ref{fig::flow_line_angles} (though we do not restrict the values of $a$ and $b$).  Assume $\theta_1 < \theta_2$ satisfy
\[  \theta_1 > -\frac{\lambda+b}{\chi} \quad\text{and}\quad \theta_2 < \frac{a+\lambda}{\chi}.\]
With $\eta_{\theta_i}$ the flow line of $h$ with angle $\theta_i$ for $i=1,2$, we have that $\eta_{\theta_2}$ almost surely lies to the left of $\eta_{\theta_1}$.  The conditional law of $\eta_{\theta_1}$ given $\eta_{\theta_2}$ is that of an $\SLE_{\kappa}((\theta_2-\theta_1) \chi /\lambda -2;(b+\theta_1 \chi)/\lambda-1)$ independently in each of the connected components of $\h \setminus \eta_{\theta_2}$ which lie to the right of $\eta_{\theta_2}$.  Similarly, the conditional law of $\eta_{\theta_2}$ given $\eta_{\theta_1}$ is that of an $\SLE_\kappa( (a-\theta_2 \chi)/\lambda -1;(\theta_2-\theta_1)\chi/\lambda-2)$ independently in each of the connected components of $\h \setminus \eta_{\theta_1}$ which lie to the left of $\eta_{\theta_1}$.
\end{proposition}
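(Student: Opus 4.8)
The plan is to bootstrap from the non-boundary-intersecting monotonicity of Proposition~\ref{prop::monotonicity_non_boundary} and the two-force-point results just obtained (Proposition~\ref{prop::two_force_point_uniqueness_and_continuity}), by realizing the pair $(\eta_{\theta_1},\eta_{\theta_2})$ as the image under a conformal map of a pair of non-boundary-intersecting flow lines in an auxiliary picture. Put $\alpha := -(b+\lambda)/\chi$ and $\beta := (a+\lambda)/\chi$, so that the hypothesis is exactly $\alpha < \theta_1 < \theta_2 < \beta$. Let $\check h$ be a GFF on $\h$ with boundary data $-\check a$ on $(-\infty,0)$ and $\check b$ on $(0,\infty)$, with $\check a,\check b$ chosen so large that each of $\eta_\alpha^{\check h},\eta_{\theta_1}^{\check h},\eta_{\theta_2}^{\check h},\eta_\beta^{\check h}$ is an $\SLE_\kappa(\ul{\rho})$ process whose weights satisfy~\eqref{eqn::non_boundary_intersecting_rho}; these four paths are then non-boundary-intersecting and, by Proposition~\ref{prop::monotonicity_non_boundary}, pairwise monotone, with $\eta_\beta^{\check h}$ to the left of $\eta_{\theta_2}^{\check h}$ to the left of $\eta_{\theta_1}^{\check h}$ to the left of $\eta_\alpha^{\check h}$. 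Since $\kappa\le 4$ and $a,b>0$ we have $\beta-\alpha=(a+b+2\lambda)/\chi > 2\lambda/\chi \ge \kappa\lambda/(2\chi)$, and together with $\check a,\check b$ large this forces (via the conditional law of $\eta_\beta^{\check h}$ given $\eta_\alpha^{\check h}$, whose weight on the side facing $\eta_\alpha^{\check h}$ is $(\beta-\alpha)\chi/\lambda-2\ge\kappa/2-2$) that $\eta_\alpha^{\check h}$ and $\eta_\beta^{\check h}$ meet neither each other nor $\partial\h$ away from $0$ and $\infty$. Hence $\h\setminus(\eta_\alpha^{\check h}\cup\eta_\beta^{\check h})$ has a single component $C$ lying between them, $C$ is a Jordan domain, and $\eta_{\theta_1}^{\check h},\eta_{\theta_2}^{\check h}$ lie entirely in $\ol{C}$.

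Next I would condition on $\eta_\alpha^{\check h}$ and $\eta_\beta^{\check h}$ and let $\psi\colon C\to\h$ be the conformal map sending the common starting point $0$ to $0$ and the common endpoint to $\infty$. By the conditional-mean computation underlying Lemma~\ref{lem::two_force_points_cond_law} (see the right panel of Figure~\ref{fig::flow_line_angles}) together with the choice of $\alpha,\beta$, the law of $\psi_*(\check h|_C)=\check h\circ\psi^{-1}-\chi\arg(\psi^{-1})'$ is that of a GFF on $\h$ with boundary data $-a$ on $(-\infty,0)$ and $b$ on $(0,\infty)$, and it is independent of $(\eta_\alpha^{\check h},\eta_\beta^{\check h})$. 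Moreover, since $\eta_{\theta_i}^{\check h}$ is the flow line of $\check h+\theta_i\chi$ and lies in $\ol{C}$, the conditional-mean machinery of Section~\ref{sec::interacting} (an analogue of Remark~\ref{rem::cond_mean_height}, combined with Proposition~\ref{prop::cond_mean_continuous}) together with the martingale characterization Theorem~\ref{thm::martingale} shows that $\psi(\eta_{\theta_i}^{\check h})$ is the flow line with angle $\theta_i$ of $\psi_*(\check h|_C)$, and the argument of Lemma~\ref{lem::two_force_points_determined} shows that, given $\eta_\alpha^{\check h}$ and $\eta_\beta^{\check h}$, it is determined by $\psi_*(\check h|_C)$. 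Since Proposition~\ref{prop::two_force_point_uniqueness_and_continuity} makes the flow lines $\eta_{\theta_1},\eta_{\theta_2}$ of the GFF $h$ in the statement well-defined, continuous, and determined by $h$, the joint law of $(h,\eta_{\theta_1},\eta_{\theta_2})$ is unambiguous and coincides with that of $(\psi_*(\check h|_C),\psi(\eta_{\theta_1}^{\check h}),\psi(\eta_{\theta_2}^{\check h}))$. As $\eta_{\theta_2}^{\check h}$ lies to the left of $\eta_{\theta_1}^{\check h}$ and $\psi$ preserves the left/right order, we conclude that $\eta_{\theta_2}$ lies to the left of $\eta_{\theta_1}$, which is the first assertion.

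For the conditional laws I would condition on $\eta_{\theta_2}$ and work in each connected component $C'$ of $\h\setminus\eta_{\theta_2}$ lying to the right of $\eta_{\theta_2}$ that $\eta_{\theta_1}$ enters, transporting $C'$ to $\h$ so that the first point at which $\eta_{\theta_1}$ enters $C'$ is sent to $0$. Now that Proposition~\ref{prop::two_force_point_uniqueness_and_continuity} supplies the continuity of $\eta_{\theta_1},\eta_{\theta_2}$ and the fact that each is determined by $h$, the arguments of Section~\ref{sec::interacting} apply to them (as anticipated in Remark~\ref{rem::cond_mean_general_bd}): $A(t):=\eta_{\theta_2}\cup\eta_{\theta_1}([0,t])$ is a local set (Lemma~\ref{lem::stopping_local_set}), the conditional mean $\CC_{A(t)}$ exhibits no pathological behavior where $\eta_{\theta_1}$ meets $\eta_{\theta_2}$ or $\partial\h$ (Proposition~\ref{prop::cond_mean_height} and Proposition~\ref{prop::cond_mean_continuous}), and $\eta_{\theta_1}$ viewed as a path in $C'$ has a continuous Loewner driving function (Proposition~\ref{prop::cont_driving_function} with the argument of Lemma~\ref{lem::cont_loewner_driving_function}). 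Reading off the boundary data as in Figure~\ref{fig::monotonicity} and invoking Theorem~\ref{thm::martingale}, $\eta_{\theta_1}|_{C'}$ is an $\SLE_\kappa((\theta_2-\theta_1)\chi/\lambda-2;(b+\theta_1\chi)/\lambda-1)$ process; its two weights exceed $-2$ since $\theta_1<\theta_2$ and $\theta_1>-(b+\lambda)/\chi$, and Proposition~\ref{gff::prop::local_independence} gives the claimed conditional independence across the components $C'$. The conditional law of $\eta_{\theta_2}$ given $\eta_{\theta_1}$ follows by the symmetric argument on the components of $\h\setminus\eta_{\theta_1}$ to the left of $\eta_{\theta_1}$.

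The step I expect to be the main obstacle is the transfer in the second paragraph: showing rigorously that conditioning on the auxiliary non-boundary-intersecting flow lines $\eta_\alpha^{\check h},\eta_\beta^{\check h}$ and restricting to $C$ reproduces \emph{exactly} the target joint law. This is precisely the content of, and uses the same tools as, Lemmas~\ref{lem::two_force_points_cond_law} and~\ref{lem::two_force_points_determined} --- that conditioning makes the transported field in $C$ an honest GFF with the right boundary data and independent of the conditioning data, and that the inner flow lines, a priori measurable with respect to all of $\check h$, are in fact measurable with respect to $\check h|_C$ once $\eta_\alpha^{\check h},\eta_\beta^{\check h}$ are given (which relies on ordering the between-components and repeatedly applying Proposition~\ref{gff::prop::local_independence}). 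Once this transfer is established, the remaining assertions are routine applications of results already in hand.
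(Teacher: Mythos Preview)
Your approach is exactly the one the paper has in mind --- the paper gives no explicit proof and simply remarks that ``the technique we used to prove Proposition~\ref{prop::two_force_point_uniqueness_and_continuity} can be applied to multiple flow lines simultaneously,'' and your auxiliary-field construction with outer angles $\alpha,\beta$ and inner angles $\theta_1,\theta_2$ is precisely that technique. The identification of $\alpha,\beta$ so that the transported boundary data is $-a,b$ is correct, and your treatment of the conditional laws via the Section~\ref{sec::interacting} machinery (now applicable by Remark~\ref{rem::cond_mean_general_bd} and Proposition~\ref{prop::two_force_point_uniqueness_and_continuity}) is fine.

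There is one genuine gap. You write ``since $\kappa\le 4$ and $a,b>0$,'' but the proposition explicitly states that we do \emph{not} restrict $a,b$; the only constraint is that the interval $(\alpha,\beta)$ is nonempty, i.e.\ $a+b>-2\lambda$. When $a+b<\lambda(\tfrac{\kappa}{2}-2)$ (which is possible, as this threshold lies in $(-2\lambda,0]$), the conditional weight $(\beta-\alpha)\chi/\lambda-2$ falls below $\tfrac{\kappa}{2}-2$, so $\eta_\alpha^{\check h}$ and $\eta_\beta^{\check h}$ \emph{do} intersect and there are infinitely many components between them rather than a single Jordan domain $C$. The fix is immediate and is exactly what the paper does in the setup preceding Lemma~\ref{lem::two_force_points_cond_law}: simply fix \emph{any one} such component $C$ (with its first and last boundary points $x_0,y_0$), and run your transfer argument there. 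The conformal image of $(\check h|_C,\eta_{\theta_1}^{\check h}|_C,\eta_{\theta_2}^{\check h}|_C)$ still has the joint law of $(h,\eta_{\theta_1},\eta_{\theta_2})$, by the same Lemma~\ref{lem::two_force_points_cond_law}/\ref{lem::two_force_points_determined} reasoning you already invoke, and monotonicity in the auxiliary picture restricted to $C$ yields monotonicity in the target. Your conditional-law argument in the third paragraph does not use the single-component claim and needs no change.
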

\noindent The hypothesis on $\theta_1,\theta_2$ is to ensure that the values of the weights of the force points associated with $\eta_{\theta_1},\eta_{\theta_2}$ exceed $-2$.

We are now able to complete the proof of Theorem~\ref{thm::coupling_uniqueness} for $\kappa \in (0,4]$.  This follows from an induction argument, the absolute continuity properties of the field (Proposition~\ref{prop::gff_abs_continuity}), and the two force point case (Lemma~\ref{lem::two_force_points_determined} and Proposition~\ref{prop::two_force_point_uniqueness_and_continuity}), and is accomplished in the following lemma.

\begin{lemma}
\label{lem::functional_many_force_points}
In the setting of Theorem~\ref{thm::coupling_existence} for $\kappa \in (0,4]$, the $\SLE_\kappa(\ul{\rho}^L;\ul{\rho}^R)$ flow line $\eta$ of $h$ is almost surely determined by $h$.
\end{lemma}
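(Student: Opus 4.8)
The plan is to reduce the general multiple-force-point case to the two-force-point case (Proposition~\ref{prop::two_force_point_uniqueness_and_continuity}) by an induction on the total number of force points, using the absolute continuity of the GFF (Proposition~\ref{prop::gff_abs_continuity}) together with a "peeling" argument in which one studies $\eta$ up to the first time a given force point is absorbed. More precisely, suppose $\eta \sim \SLE_\kappa(\ul{\rho}^L;\ul{\rho}^R)$ coupled with $h$ as in Theorem~\ref{thm::coupling_existence}. First I would dispose of the base case: if there is at most one force point on each side, located at $0^-$ and $0^+$, this is exactly Proposition~\ref{prop::two_force_point_uniqueness_and_continuity}; if the single force point on a given side is at a strictly positive (resp.\ negative) distance from $0$, then by a comparison with Bessel processes $\eta$ does not hit that point until some positive time, and on $[0,s]$ for small $s>0$ the law of $\eta$ is mutually absolutely continuous with respect to an $\SLE_\kappa$ with fewer (or no) force points, so the restriction of $\eta$ to such a time interval is determined by $h$ by absolute continuity (Proposition~\ref{prop::gff_abs_continuity} and Remark~\ref{rem::gff_abs_continuity}); letting $s\uparrow$ the absorption time covers $\eta$ up to that time.

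The inductive step has two regimes. \emph{Regime 1: a force point away from $0$.} Suppose the force point of $\ul{x}^q$ closest to $0$ on some side is not at $0^{\pm}$. Run $\eta$ until the stopping time $\sigma$ at which it first disconnects that nearest force point from $\infty$ (or, more carefully, first gets within a fixed distance of it). Up to $\sigma$, by Girsanov/absolute continuity the law of $\eta$ agrees locally with that of an $\SLE_\kappa(\ul\rho')$ with the offending force point(s) removed, so $\eta|_{[0,\sigma]}$ is determined by $h$ by the inductive hypothesis plus Proposition~\ref{prop::gff_abs_continuity}; then condition on $\eta([0,\sigma])$ and apply the Markov property of the coupling (Theorem~\ref{thm::coupling_existence}) to see that the continuation of $\eta$ in the complementary domain, conformally mapped back to $\h$, is again an $\SLE_\kappa(\ul\rho'')$ coupled with a GFF as in Theorem~\ref{thm::coupling_existence}, but now with strictly fewer force points not located at the seed --- so the inductive hypothesis applies to the continuation, and gluing the two pieces shows $\eta$ is determined by $h$. \emph{Regime 2: many force points, all at $0^{\pm}$.} Here one uses the flow-line description: realize $\eta$ as the (zero-angle) flow line of a GFF with boundary data constant on $(-\infty,0)$ and on $(0,\infty)$, exactly the setup of the "Setup" paragraph and Figure~\ref{fig::flow_line_angles}, sandwiched between flow lines $\eta_{\theta_1},\eta_{\theta_2}$ of angles $\theta_1<0<\theta_2$ chosen (via the dictionary $\rho = \theta\chi/\lambda - 2$) to produce the desired pair of weights $\rho^L,\rho^R>-2$ in each complementary component; Lemma~\ref{lem::two_force_points_cond_law} and Lemma~\ref{lem::two_force_points_determined} then give that $\eta$ restricted to each such component is determined by $h$, and since $\eta_{\theta_1},\eta_{\theta_2}$ are themselves determined by $h$ (non-boundary-intersecting case, Section~\ref{subsec::uniqueness_non_boundary_intersecting}), so is $\eta$. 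For a general configuration of weights at $0^\pm$ one first absorbs these into two effective weights $\rho^L = \sum_i \rho^{i,L}$, $\rho^R = \sum_i \rho^{i,R}$ for the purpose of getting started (the several force points at $0^-$ all stay merged until the first positive time one of them separates, by monotonicity of the $V^{i,L}_t$), reducing to the two-force-point case up to that separation time, and then proceeds as in Regime~1.

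To organize the induction cleanly I would induct on the pair (number of force points on the left not at $0^-$, number of force points on the right not at $0^+$) in lexicographic or sum order, with the base of the induction being "all force points at $0^\pm$" handled by Regime~2, and each inductive step of Regime~1 strictly decreasing this count by pushing the analysis past one force point. One technical point to be careful about: the absolute-continuity comparisons are only valid up to bounded stopping times that occur before $W_t$ comes within a fixed distance of the relevant force points, and before the conformal radius of a reference point degenerates; so the "let $s\uparrow\sigma$" and "let the distance threshold $\to 0$" limits must be taken along countable sequences and one uses that a curve is determined by $h$ as soon as each of a countable dense family of its initial segments is. One also needs the continuity of $\eta$ up to and including the relevant times, which is supplied by Remark~\ref{rem::continuity_non_boundary}, Proposition~\ref{prop::transience}, and Proposition~\ref{prop::two_force_point_uniqueness_and_continuity}.

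I expect the main obstacle to be the bookkeeping in Regime~1 at the moment a force point is absorbed: one must check that, after conditioning on $\eta([0,\sigma])$ and conformally mapping the target-containing complementary component back to $\h$, the image configuration of force points is genuinely a valid $\SLE_\kappa(\ul\rho'')$ setup with strictly fewer "bad" force points and with the GFF coupling of Theorem~\ref{thm::coupling_existence} intact --- in particular that absorbed force points on one side either disappear or merge into the image of the path's left/right boundary endpoint with the correct aggregated weight, and that we have not crossed the continuation threshold. This requires invoking Theorem~\ref{thm::martingale} (to recognize the conditional law of the continuation) together with Proposition~\ref{gff::prop::cond_union_local} and Proposition~\ref{gff::prop::cond_union_mean} to control the conditional mean, and care that the stopping time $\sigma$ is chosen before, not at, the absorption so that the local absolute continuity used to handle $\eta|_{[0,\sigma]}$ does not break down; the limiting argument as $\sigma$ approaches the true absorption time then patches the two regimes together.
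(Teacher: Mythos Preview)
Your proposal is correct and follows essentially the same approach as the paper: induction on the number of force points, using GFF absolute continuity (Proposition~\ref{prop::gff_abs_continuity}) to handle $\eta$ before a given force point is reached, and the Markov property of the coupling (Theorem~\ref{thm::coupling_existence}) to continue afterward with fewer force points. The paper organizes the induction slightly differently: it first adds zero-weight force points at $0^\pm$ so that the base case is exactly Proposition~\ref{prop::two_force_point_uniqueness_and_continuity}, and then in the inductive step it targets the \emph{outermost} left force point $x^{k+1,L}$ rather than the nearest one. The advantage of the paper's choice is that when the hull first accumulates in $(-\infty,x^{k+1,L}]$, \emph{all} left force points have been absorbed at once and merge into a single effective weight $\ol\rho^L=\sum_s\rho^{s,L}$, so the continuation immediately has only one left force point; your nearest-first peeling also works but removes one force point at a time. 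Your Regime~2 is not really a separate regime: multiple force points all located at $0^-$ (resp.\ $0^+$) satisfy the same ODE with the same initial condition and hence stay merged for all time, so this case \emph{is} the two-force-point base case. One small imprecision in your Regime~1 description: the hypothesis ``the force point closest to $0$ on some side is not at $0^\pm$'' excludes configurations with force points both at $0^-$ and at some $x^{j,L}<0$; the paper's device of always carrying (possibly zero-weight) force points at $0^\pm$ and removing the outermost one handles this uniformly.
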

\begin{proof}
Write $\ul{\rho}^L = (\rho^{1,L},\ldots,\rho^{k,L})$ and $\ul{\rho}^R = (\rho^{1,R},\ldots,\rho^{\ell,R})$.  We are going to prove the result by induction on $k,\ell$.  For simplicity of notation, we are going to assume without loss of generality that $x^{1,L} = 0^-$ and $x^{1,R} = 0^+$ by possibly adding $0$ weight force points.  Lemma~\ref{lem::two_force_points_determined} gives the desired result when $k,\ell \leq 1$.  Let $K_t$ denote the hull of $\eta$ at time $t$ and let $f_t \colon \h \setminus K_t \to \h$ be the corresponding centered Loewner map.  Assume that the statement of the lemma holds for some fixed $k,\ell \geq 1$.  We are going to prove that the result holds for $k+1$ force points to the left of $0$ and $\ell$ to the right of $0$ (and vice-versa by symmetry).  There are two possibilities: either $K_t$ does or does not accumulate in $(-\infty,x^{k+1,L}]$.  In the latter case, we are done because we can invoke Proposition~\ref{prop::gff_abs_continuity} to deduce the result from the induction hypothesis.  Suppose that we are in the former case.  Let $\tau$ be the first time $K_t$ accumulates in $(-\infty,x^{k+1,L}]$.  Note that $K|_{[0,\tau]}$ is almost surely determined by $h$ by Proposition~\ref{prop::gff_abs_continuity} and the induction hypothesis (we can apply these results to $K|_{[0,\tau_\epsilon]}$ where $\tau_\epsilon$ is the first time that $K_t$ gets within distance $\epsilon > 0$ of $(-\infty,x^{k+1,L}]$ and then send $\epsilon \to 0$).  If $\tau$ is at the continuation threshold, then we are done.  If not, we just need to show that $K|_{(\tau,\infty)}$ is almost surely determined by $h$.  Assume that the rightmost point of $K_\tau$ is contained in $[x^{j_0,R},x^{j_0+1,R})$.  Then the conditional law of $f_\tau(K_t)$ for $t \geq \tau$ given $K|_{[0,\tau]}$ is an $\SLE_\kappa(\ol{\rho}^L;\wt{\ul{\rho}}^R)$ process in $\h$ from $0$ to $\infty$ where
\begin{align*}
 \ol{\rho}^L &= \sum_{s=1}^{k+1} \rho^{s,L}
 \quad\text{and}\quad
 \wt{\rho}^{1,R} = \sum_{s=1}^{j_0} \rho^{s,R},\ 
 \wt{\rho}^{2,R} = \rho^{j_0+1,R},\ldots,
 \wt{\rho}^{\ell-j_0+1,R} = \rho^{\ell,R}.
\end{align*}
By the induction hypothesis, it thus follows that $(f_\tau(K_t) : t \geq \tau)$ is almost surely determined by $h \circ f_\tau - \chi \arg f_\tau'$, hence also by $h|_{\h \setminus K_\tau}$ given $K_\tau$.  The result now follows.
\end{proof}

The proof of Lemma~\ref{lem::functional_many_force_points} is not specific to $\SLE_\kappa(\ul{\rho})$ processes when $\kappa \in (0,4]$.  In particular, upon proving that $\SLE_{\kappa'}(\rho^L;\rho^R)$ processes are almost surely determined by $h$ for $\rho^L,\rho^R > -2$ in the coupling of Theorem~\ref{thm::coupling_existence} in Section~\ref{subsec::counterflow}, we will have completed the proof of Theorem~\ref{thm::coupling_uniqueness}.

\subsection{Monotonicity, merging, and crossing}
\label{subsec::monotonicity_merging_crossing}

Up until now, the only type of interaction between flow lines that we have considered has been when the paths have the same seed.  In this subsection, we will expand on this to complete the proof of Theorem~\ref{thm::monotonicity_crossing_merging} (contingent on a continuity assumption which will be removed upon proving Theorem~\ref{thm::continuity} for flow lines in Section~\ref{subsec::many_boundary_force_points}).  This gives a complete description of the manner in which flow lines can interact with each other and makes the phenomena observed in the simulations from the introduction (Figures~\ref{fig::flowlines}-\ref{fig::flowlines4},~\ref{fig::grid},~\ref{fig::two_fans}, and~\ref{fig::flow_line_interaction}) precise.  In particular, we will consider the following setup: we have two flow lines $\eta_{\theta_1}^{x_1}$ and $\eta_{\theta_2}^{x_2}$ of a GFF on $\h$ with piecewise constant boundary data which changes a finite number of times with angles $\theta_1$ and $\theta_2$ starting at boundary points $x_1$ and $x_2$, respectively.  We will show that if $x_1 > x_2$ and $\theta_1 < \theta_2$, then $\eta_{\theta_1}^{x_1}$ stays to the right of $\eta_{\theta_2}^{x_2}$.  This is a generalization of Proposition~\ref{prop::monotonicity_boundary_intersecting}, the monotonicity statement for boundary data which is constant on $(-\infty,0)$ and on $(0,\infty)$, to the setting of general piecewise constant boundary data and where the initial points of the flow lines can be different.  The more interesting behavior occurs when $\theta_1 = \theta_2$ or $\theta_2 < \theta_1 < \theta_2+\pi$.  In the former case, the flow lines will actually almost surely merge upon intersecting and then never separate (in obvious contrast with a Euclidean geometry).  In the latter case, upon intersecting, the flow lines will almost surely cross exactly once.  Afterwards, they may continue to intersect and bounce off of one another, but will never cross again.

\begin{figure}[h!]
\begin{center}
\subfigure[If $\theta_1 = \theta_2 = \theta$, then the flow lines almost surely merge upon intersecting and then never separate.]{
\includegraphics[scale=0.85,page=1]{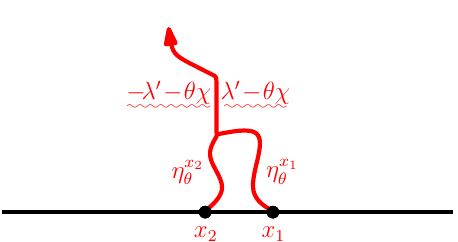}}
\hspace{0.025\textwidth}
\subfigure[If $\theta_2 < \theta_1 < \theta_2 + \pi$, then the flow lines almost surely cross upon intersecting.  Afterwards, they may bounce off of each other but will never cross again.]{
\includegraphics[scale=0.85,page=2]{figures/merging_constant_bc.pdf}}
\end{center}
\caption{\label{fig::merging_and_crossing_phases}
Suppose that $h$ is a GFF on $\h$ with piecewise constant boundary data which changes a finite number of times.  For each $x \in \R$ and angle $\theta$, let $\eta_\theta^x$ be the flow line of $h$ starting at $x$ with angle $\theta$.  If $x_1 > x_2$ and $\theta_1 = \theta_2 = \theta$, then $\eta_{\theta_1}^{x_1}$ will almost surely merge with $\eta_{\theta_2}^{x_2}$ upon intersecting (left panel).  If $\theta_2 < \theta_1 < \theta_2+\pi$, then $\eta_{\theta_1}^{x_1}$ will cross $\eta_{\theta_2}^{x_2}$ upon intersecting but will never cross back (right) panel.  If $\theta_1 < \theta_2$, then $\eta_{\theta_1}^{x_1}$ will almost surely stay to the right of $\eta_{\theta_2}^{x_2}$.}
\end{figure}

The first step to proving these results is Lemma~\ref{lem::crossing_local}, which says that the set $K$ which consists of those points of $\eta_{\theta_1}^{x_1}$ until the first time $\tau_1$ that $\eta_{\theta_1}^{x_1}$ intersects $\eta_{\theta_2}^{x_2}$ and those points of $\eta_{\theta_2}^{x_2}$ until the first time $\tau_2$ that $\eta_{\theta_2}^{x_2}$ hits $\eta_{\theta_1}^{x_1}$ is a local set for $h$.  We will in particular show that if $\tau_1,\tau_2 < \infty$, then $\eta_{\theta_1}^{x_1}(\tau_1) = \eta_{\theta_2}^{x_2}(\tau_2)$ (which is of course not in general true for continuous paths).  This is a particularly interesting example of a local set because it cannot be generated using a ``local algorithm'' that explores the values of the field along the flow lines until stopping times (without ever looking at the field off of those flow lines).  Once Lemma~\ref{lem::crossing_local} is established, we will then prove Lemma~\ref{lem::hitting_conditional_law} which gives that the conditional mean $\CC_K$ of $h$ given $\CK$ (where $\CK$ plays the same role for $K$ as $\CA$ from Section~\ref{subsec::local_sets}) does not exhibit pathological behavior in the unbounded connected component $D$ of $\h \setminus K$, even at the first intersection point of $\eta_{\theta_1}^{x_1}$ and $\eta_{\theta_2}^{x_2}$.  This in turn allows us to show that $\eta_{\theta_i}^{x_i}|_{[\tau_i,\infty)}$, $i=1,2$, is the flow line of the conditional field $h|_D$ given $\CK$ with angle $\theta_i$ starting at $\eta_{\theta_1}^{x_1}(\tau_1) = \eta_{\theta_2}^{x_2}(\tau_2)$ provided $\tau_1,\tau_2 < \infty$.  We then prove Proposition~\ref{prop::generalized_monotonicity}, which is our most general monotonicity statement, followed by Proposition~\ref{prop::angle_varying_monotonicity}, which gives the monotonicity of angle varying flow lines.  The merging and crossing behavior are proved in Proposition~\ref{prop::merging_and_crossing} and follows by using Lemma~\ref{lem::crossing_local} and Lemma~\ref{lem::hitting_conditional_law} to reduce the results to Proposition~\ref{prop::generalized_monotonicity}.  Before we prove Lemma~\ref{lem::crossing_local}, we need to record the following technical lemma.

\begin{figure}[h!]
\begin{center}
\includegraphics[scale=0.85]{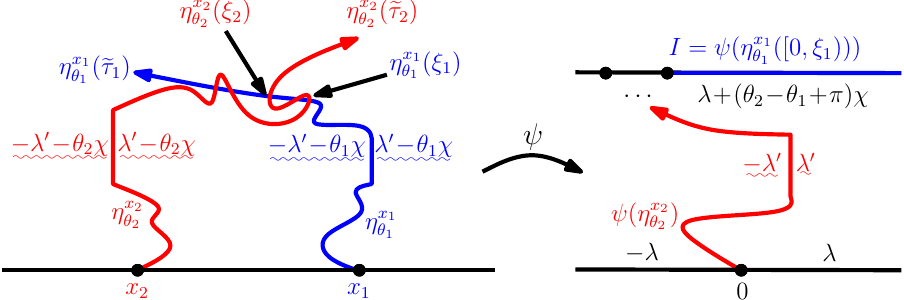}
\end{center}
\caption{\label{fig::cannot_hit_right_side}
Suppose that $h$ is a GFF on $\h$ with piecewise constant boundary data which changes a finite number of times.  Suppose that $x_1,x_2 \in \R$ with $x_2 < x_1$ and fix $\theta_1,\theta_2 \in \R$ with $\theta_1 < \theta_2 +\pi$.  Let $T_i$ be a stopping time for $\eta_{\theta_i}^{x_i}$, $i=1,2$, such that $\eta_{\theta_i}^{x_i}|_{[0,T_i]}$ is almost surely continuous.  Let $\wt{\tau}_1 \leq T_1$ be any stopping time for $\eta_{\theta_1}^{x_1}$.  We show in Lemma~\ref{lem::cannot_hit_right_side} that the following is true.  Suppose that $\wt{\tau}_2 \leq T_2$ is any stopping time for the filtration $\CF_t = \sigma(\eta_{\theta_2}^{x_2}(s) : s \leq t,\ \eta_{\theta_1}^{x_1}([0,\wt{\tau}_1]))$ such that if $\xi_2$ is the largest time $t \in [0,\wt{\tau}_2]$ so that $\eta_{\theta_2}^{x_2}(t) \in \eta_{\theta_1}^{x_1}([0,\wt{\tau}_1])$ then $\eta_{\theta_2}^{x_2}(\wt{\tau}_2)$ is contained in the unbounded connected component of $\h \setminus (\eta_{\theta_1}^{x_1}([0,\wt{\tau_1}]) \cup \eta_{\theta_2}^{x_2}([0,\xi_2]))$.  Let $\xi_1$ be the smallest time $t \in [0,\wt{\tau}_1]$ that $\eta_{\theta_1}^{x_1}(t) \in \eta_{\theta_2}^{x_2}([0,\wt{\tau}_2])$.  Then $\eta_{\theta_2}^{x_2}|_{[\wt{\tau}_2,T_2]}$ does not exit the unbounded connected component $D$ of $\h \setminus (\eta_{\theta_1}^{x_1}([0,\wt{\tau}_1]) \cup \eta_{\theta_2}^{x_2}([0,\wt{\tau}_2]))$ in the right side of $\eta_{\theta_1}^{x_1}([0,\xi_1])$.  To see this, we let $\psi$ be the conformal from $D$ to $\strip$ which takes $\eta_{\theta_2}^{x_2}(\wt{\tau}_2)$ to $0$, and the left and right sides of $\eta_{\theta_2}^{x_2}((\xi_2,\wt{\tau}_2))$ to $(-\infty,0)$ and $(0,\infty)$, respectively.  The boundary data for the GFF $\wt{h} + \theta_2 \chi$ where $\wt{h} = h|_D \circ \psi^{-1} - \chi \arg (\psi^{-1})'$ is shown on the right side.  Note that $\theta_2-\theta_1+\pi \geq 0$ so that with $I = \psi(\eta_{\theta_1}^{x_1}([0,\xi_1)))$ we have that $(\wt{h} + \theta_2 \chi)|_I \geq \lambda$.  Since $\psi(\eta_{\theta_2}^{x_2}|_{[\wt{\tau}_2,T_2]})$ is the flow line of $\wt{h} + \theta_2\chi$ starting from $0$, the result then follows from Lemma~\ref{lem::flow_cannot_hit}.}
\end{figure}

\begin{lemma}
\label{lem::cannot_hit_right_side}
Suppose that $h$ is a GFF on $\h$ with piecewise constant boundary data which changes a finite number of times.  Suppose that $x_1,x_2 \in \R$ with $x_2 < x_1$ and fix angles $\theta_1,\theta_2 \in \R$ with $\theta_1 < \theta_2 +\pi$.  Let $T_i$ be a stopping time for $\eta_{\theta_i}^{x_i}$, $i=1,2$, such that $\eta_{\theta_i}^{x_i}|_{[0,T_i]}$ is almost surely continuous.  Let $\wt{\tau}_1 \leq T_1$ be any stopping time for $\eta_{\theta_1}^{x_1}$.  Then the following is true.  Suppose that $\wt{\tau}_2 \leq T_2$ is any stopping time for the filtration $\CF_t = \sigma(\eta_{\theta_2}^{x_2}(s) : s \leq t,\ \eta_{\theta_1}^{x_1}([0,\wt{\tau}_1]))$ such that with $\xi_2$ the largest time $t \in [0,\wt{\tau}_2]$ with $\eta_{\theta_2}^{x_2}(t) \in \eta_{\theta_1}^{x_1}([0,\wt{\tau}_1])$ we have that $\eta_{\theta_2}^{x_2}(\wt{\tau}_2)$ is contained in the unbounded connected component of $\h \setminus (\eta_{\theta_1}^{x_1}([0,\wt{\tau}_1]) \cup \eta_{\theta_2}^{x_2}([0,\xi_2]))$.  Let $\xi_1$ be the smallest time $t$ that $\eta_{\theta_1}^{x_1}(t) \in \eta_{\theta_2}^{x_2}([0,\wt{\tau}_2])$.  Then $\eta_{\theta_2}^{x_2}|_{[\wt{\tau}_2,T_2]}$ either hits $\partial \h$ or $\eta_{\theta_1}^{x_1}([\xi_1,\wt{\tau}_1])$ or escapes to~$\infty$ before hitting $\eta_{\theta_1}^{x_1}([0,\xi_1))$.  The analogous result holds when the roles of $\eta_{\theta_1}^{x_1}$, $\eta_{\theta_2}^{x_2}$ are swapped.
\end{lemma}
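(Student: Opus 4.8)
The plan is to carry out the reduction indicated in Figure~\ref{fig::cannot_hit_right_side}: condition on the two drawn path segments, conformally map the relevant complementary component to the strip $\strip$, and recognize the continuation of $\eta_{\theta_2}^{x_2}$ there as a flow line that, by Lemma~\ref{lem::flow_cannot_hit}, cannot reach the image of the early part of $\eta_{\theta_1}^{x_1}$. Throughout I would work with $\eta_{\theta_i}^{x_i}$ restricted to $[0,T_i]$, which is a continuous simple curve (the flow lines here are $\SLE_\kappa(\ul{\rho})$ processes with $\kappa\in(0,4]$), and I may assume $\eta_{\theta_2}^{x_2}([0,\wt\tau_2])$ meets $\eta_{\theta_1}^{x_1}([0,\wt\tau_1])$, since otherwise $\xi_1,\xi_2$ are undefined and there is nothing to prove. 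Since each of $\eta_{\theta_1}^{x_1}$, $\eta_{\theta_2}^{x_2}$ is a local set for $h$ and is almost surely determined by $h$ (Theorem~\ref{thm::coupling_uniqueness} for $\kappa\in(0,4]$, already established), Lemma~\ref{lem::stopping_local_set} with $k=2$ shows that $A:=\eta_{\theta_1}^{x_1}([0,\wt\tau_1])\cup\eta_{\theta_2}^{x_2}([0,\wt\tau_2])$ is a local set for $h$; here I use precisely that $\wt\tau_1$ is a stopping time for $\eta_{\theta_1}^{x_1}$ and $\wt\tau_2$ is a stopping time for $\CF_t$. Let $D$ be the unbounded connected component of $\h\setminus A$; by the hypothesis on $\wt\tau_2$ we have $\eta_{\theta_2}^{x_2}(\wt\tau_2)\in\partial D$, and by continuity and simplicity $D$ is a Jordan domain.

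Next I would record what is needed about the continuation $\eta_{\theta_2}^{x_2}|_{[\wt\tau_2,T_2]}$. Conditionally on $A$, the field $h|_D$ is a zero boundary GFF plus the harmonic function $\CC_A|_D$, and $\eta_{\theta_2}^{x_2}|_{[\wt\tau_2,\cdot]}$, viewed up until it first exits $D$, is the flow line of $h|_D$ with angle $\theta_2$ started from $\eta_{\theta_2}^{x_2}(\wt\tau_2)$ (the domain Markov property for flow lines relative to the local set $A$). The only conditional-mean information I actually consume is along the open arc $\eta_{\theta_1}^{x_1}([0,\xi_1))\subseteq\partial D$: by minimality of $\xi_1$ this arc is disjoint from $\eta_{\theta_2}^{x_2}([0,\wt\tau_2])$, so it lies a positive distance from the rest of $A$ and consists of more than one point, and hence Propositions~\ref{gff::prop::cond_union_mean} and~\ref{gff::prop::local_independence} give that $\CC_A$ has the ordinary flow-line boundary data of $\eta_{\theta_1}^{x_1}$ (namely $\pm\lambda'-\theta_1\chi+\chi\cdot\mathrm{winding}$) there, with no pathological behavior.

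Then I would apply the conformal map $\psi\colon D\to\strip$ sending $\eta_{\theta_2}^{x_2}(\wt\tau_2)$ to $0$ and the left and right sides of $\eta_{\theta_2}^{x_2}((\xi_2,\wt\tau_2))$ to $(-\infty,0)$ and $(0,\infty)$, and set $\wt h=h|_D\circ\psi^{-1}-\chi\arg(\psi^{-1})'$, so that $\psi(\eta_{\theta_2}^{x_2}|_{[\wt\tau_2,\cdot]})$ is a flow line of $\wt h+\theta_2\chi$ started at $0\in\stripbot$; since $\psi$ extends to a homeomorphism of closures, this image path is continuous up to the first time it leaves $\strip$, which is a stopping time. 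The arc $I:=\psi(\eta_{\theta_1}^{x_1}([0,\xi_1)))$ lies on $\striptop$, and the boundary value of $\wt h+\theta_2\chi$ on $I$ is a constant which is at least $\lambda$ — this is the computation displayed in Figure~\ref{fig::cannot_hit_right_side}, and is the single place the angle hypothesis $\theta_1<\theta_2+\pi$ (i.e.\ $\theta_2-\theta_1+\pi\geq0$) is used; it enters through the transformation rule~\eqref{eqn::ac_eq_rel} together with $\lambda'=\lambda-\tfrac{\pi}{2}\chi$ and \eqref{eqn::fullrevolutionrho}. Lemma~\ref{lem::flow_cannot_hit} (whose statement only constrains the boundary data on the relevant subintervals of $\striptop$) then shows that $\psi(\eta_{\theta_2}^{x_2}|_{[\wt\tau_2,\cdot]})$ never hits $I$ up to leaving $\strip$, which translates back to: up until it first exits $D$, $\eta_{\theta_2}^{x_2}|_{[\wt\tau_2,T_2]}$ does not hit $\eta_{\theta_1}^{x_1}([0,\xi_1))$; since it cannot hit its own trace, that first exit (if it occurs before $T_2$) must be at $\partial\h$ or at $\eta_{\theta_1}^{x_1}([\xi_1,\wt\tau_1])$, which is the assertion. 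The version with $\eta_{\theta_1}^{x_1}$ and $\eta_{\theta_2}^{x_2}$ interchanged follows by the identical argument.

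I expect the main obstacle to be the topological bookkeeping: verifying that $\partial D$ decomposes as in Figure~\ref{fig::cannot_hit_right_side} — in particular that $\psi(\eta_{\theta_1}^{x_1}([0,\xi_1)))$ really lands in $\striptop$ and is not mixed into $\stripbot$ or into the image of $\eta_{\theta_2}^{x_2}([0,\xi_2])$ — and tracking which side of each flow line faces $D$ so that the winding corrections in the boundary-value computation come out with the sign making $\theta_1<\theta_2+\pi$ the relevant threshold. A secondary point needing care is making the domain Markov property for $\eta_{\theta_2}^{x_2}$ across the boundary-intersecting local set $A$ precise at this stage of the paper (before the general interacting-flow-line results of Section~\ref{sec::interacting} are available in the boundary-intersecting regime); since Lemma~\ref{lem::flow_cannot_hit} is itself proved by a local absolute-continuity argument (Proposition~\ref{prop::gff_abs_continuity}) near the point where the path would touch $\striptop$, it suffices to establish this Markov/absolute-continuity statement only near the arc $I$, where $\CC_A$ is already known to be well behaved.
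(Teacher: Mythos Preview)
Your proposal is correct and follows essentially the same route as the paper's proof: establish that $A=\eta_{\theta_1}^{x_1}([0,\wt\tau_1])\cup\eta_{\theta_2}^{x_2}([0,\wt\tau_2])$ is local via Lemma~\ref{lem::stopping_local_set}, conformally map the unbounded component $D$ to $\strip$ sending $\eta_{\theta_2}^{x_2}(\wt\tau_2)$ to $0$ and the two sides of $\eta_{\theta_2}^{x_2}((\xi_2,\wt\tau_2))$ to the two halves of $\stripbot$, use Proposition~\ref{gff::prop::cond_union_mean} only along the arc $I=\psi(\eta_{\theta_1}^{x_1}([0,\xi_1)))\subseteq\striptop$ (explicitly declining to analyze the conditional mean at intersection points), compute that $(\wt h+\theta_2\chi)|_I=\lambda+(\theta_2-\theta_1+\pi)\chi\geq\lambda$, and invoke Lemma~\ref{lem::flow_cannot_hit}. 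Your anticipated obstacles---the topological verification that $I\subseteq\striptop$ and the fact that one only needs the conditional mean to be well behaved near $I$, not globally---are exactly the points the paper leaves to the figure and to the remark that pathological behavior at intersection points need not be ruled out.
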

We note that in the case that either $\theta_1$ or $\theta_2$ is outside of the range of values necessary for the flow line to be defined (i.e., there is a force point of weight less than or equal to $-2$ immediately to the left or to the right of the starting point of the path), the statement of Lemma~\ref{lem::cannot_hit_right_side} trivially holds as we can view such a flow line to be the path which is always equal to its starting point.  Lemma~\ref{lem::cannot_hit_right_side} is a consequence of Lemma~\ref{lem::flow_cannot_hit}; see Figure~\ref{fig::cannot_hit_right_side} for the setup of the proof.  Its proof is also very similar to that of Proposition~\ref{prop::flow_counterflow_left_right}.
\begin{proof}[Proof of Lemma~\ref{lem::cannot_hit_right_side}]
Note that $\eta_{\theta_1}^{x_1}([0,\wt{\tau}_1]) \cup \eta_{\theta_2}^{x_2}([0,\wt{\tau}_2])$ is a local set for $h$ by Lemma~\ref{lem::stopping_local_set}.  Let $D$ be the unbounded connected component of $\h \setminus (\eta_{\theta_1}^{x_1}([0,\wt{\tau}_1]) \cup \eta_{\theta_2}^{x_2}([0,\wt{\tau}_2]))$.  Let $\psi \colon D \to \strip$ be the conformal map which takes $\eta_{\theta_2}^{x_2}(\wt{\tau}_2)$ to $0$, the left side of $\eta_{\theta_2}^{x_2}((\xi_2,\wt{\tau}_2))$ to $(-\infty,0)$, and the right side to $(0,\infty)$.  Let $\wt{h} = h|_D \circ \psi^{-1} - \chi \arg (\psi^{-1})'$.  Then $\wt{h}$ is a GFF on $\strip$; the boundary data for $\wt{h} + \theta_2 \chi$ is depicted in the right side of Figure~\ref{fig::cannot_hit_right_side}.  Here, we are using Proposition~\ref{gff::prop::cond_union_mean} to get the boundary data for $\wt{h}$ on the image $I$ under $\psi$ of the right side of $\eta_{\theta_1}^{x_1}([0,\xi_1))$ as well as on the lower boundary $\stripbot$ (in particular, we do not try to rule out pathological behavior in the conditional mean of $h$ at intersection points of $\eta_{\theta_1}^{x_1}$ and $\eta_{\theta_2}^{x_2}$).  Since $\eta_{\theta_2}^{x_2}|_{[0,T_2]}$ is almost surely continuous, we know that the image of $\eta_{\theta_2}^{x_2}|_{[\wt{\tau}_2,T_2]}$ under $\psi$ is also continuous.  Since the boundary data of $\wt{h}+\theta_2 \chi$ on $I$ is $\lambda + (\theta_2-\theta_1+\pi)\chi \geq \lambda$, Lemma~\ref{lem::flow_cannot_hit} implies that $\psi(\eta_{\theta_2}^{x_2}|_{[\wt{\tau}_2,T_2]})$ must exit $\strip$ in $\striptop \setminus I$ (or does not exit before time $T_2$).  This corresponds to $\eta_{\theta_2}^{x_2}$ exiting $D$ either in $\partial \h$ or in $\eta_{\theta_1}^{x_1}|_{[\xi_1,\wt{\tau}_1]}$ (or not exiting at all).  This completes the proof.
\end{proof}

\begin{figure}[h!]
\begin{center}
\includegraphics[scale=0.85]{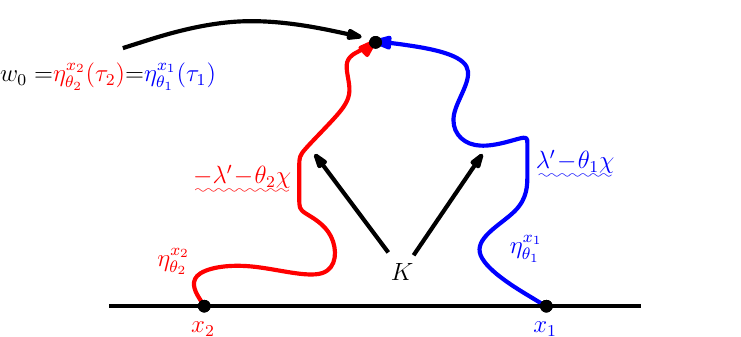}
\end{center}
\caption{\label{fig::crossing_local_proof} Suppose that $h$ is a GFF on $\h$ with piecewise constant boundary data which changes a finite number of times.  Suppose that $x_1,x_2 \in \partial \h$ with $x_2 < x_1$ and fix $\theta_1,\theta_2 \in \R$ with $\theta_1 < \theta_2 +\pi$.  For $i=1,2$, let $T_i$ be a stopping time for $\eta_{\theta_i}^{x_i}$ such that $\eta_{\theta_i}^{x_i}|_{[0,T_i]}$ is almost surely continuous.  Let $\tau_1$ be the first time that $\eta_{\theta_1}^{x_1}|_{[0,T_1]}$ hits $\eta_{\theta_2}^{x_2}|_{[0,T_2]}$ and let $\tau_2$ be the first time that $\eta_{\theta_2}^{x_2}|_{[0,T_2]}$ hits $\eta_{\theta_1}^{x_1}|_{[0,T_1]}$.  In Lemma~\ref{lem::crossing_local}, we show that $K = \eta_{\theta_1}^{x_1}([0,\tau_1 \wedge T_1]) \cup \eta_{\theta_2}^{x_2}([0,\tau_2 \wedge T_2])$ is a local set for $h$ and, if $\tau_1,\tau_2 < \infty$, then $w_0 = \eta_{\theta_1}^{x_1}(\tau_1) = \eta_{\theta_2}^{x_2}(\tau_2)$.  Let $D$ be the unbounded connected component of $\h \setminus K$.  In Lemma~\ref{lem::hitting_conditional_law}, we show that $h|_D$ given $\CK$ (where $\CK$ plays the role for $K$ of $\CA$ from Section~\ref{subsec::local_sets}) is a GFF in $D$ whose boundary data is as depicted above and that $\eta_{\theta_i}^{x_i}|_{[\tau_i,T_i]}$ is the flow line of $h|_D$ starting at $w_0$ of angle $\theta_i$ for $i=1,2$.}
\end{figure}

\begin{lemma}
\label{lem::crossing_local}
Suppose that $h$ is a GFF on $\h$ with piecewise constant boundary data which changes a finite number of times.  Fix $x_1,x_2 \in \partial \h$ with $x_2 < x_1$ and angles $\theta_1,\theta_2$ with $\theta_1 < \theta_2+\pi$.  For $i=1,2$, let $T_i$ be a stopping time for $\eta_{\theta_i}^{x_i}$ such that $\eta_{\theta_i}^{x_i}|_{[0,T_i]}$ is almost surely continuous.  Let $\tau_2$ be the first time that $\eta_{\theta_2}^{x_2}|_{[0,T_2]}$ intersects $\eta_{\theta_1}^{x_1}|_{[0,T_1]}$ and let $\tau_1$ be the first time that $\eta_{\theta_1}^{x_1}|_{[0,T_1]}$ intersects $\eta_{\theta_2}^{x_2}|_{[0,T_2]}$.  Let $K = \eta_{\theta_1}^{x_1}([0,\tau_1 \wedge T_1]) \cup \eta_{\theta_2}^{x_2}([0,\tau_2 \wedge T_2])$.  Then $K$ is a local set for $h$.  Moreover, if $\tau_i \leq T_i$ and $\tau_i < \infty$ for $i=1,2$ then $\eta_{\theta_1}^{x_1}(\tau_1) = \eta_{\theta_2}^{x_2}(\tau_2)$ and $\eta_{\theta_i}^{x_i}|_{[\tau_i,T_i]}$, for $i=1,2$, is almost surely contained in the unbounded connected component of $\h \setminus K$.
\end{lemma}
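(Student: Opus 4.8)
### Proof proposal for Lemma~\ref{lem::crossing_local}

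The plan is to verify locality of $K$ directly from the characterization in Lemma~\ref{lem::local_char}, and to handle the identity $\eta_{\theta_1}^{x_1}(\tau_1)=\eta_{\theta_2}^{x_2}(\tau_2)$ by a topological/absolute-continuity argument using Lemma~\ref{lem::cannot_hit_right_side}. First I would recall that, by Proposition~\ref{prop::two_force_point_uniqueness_and_continuity} and Lemma~\ref{lem::functional_many_force_points} together with the absolute continuity of the GFF under interior and boundary perturbations (Proposition~\ref{prop::gff_abs_continuity}), each flow line $\eta_{\theta_i}^{x_i}$, run up to a time before it exits the region where its law is mutually absolutely continuous with respect to a non-boundary-intersecting $\SLE_\kappa(\ul\rho)$, is almost surely determined by $h$ and is a local set: more precisely, $\eta_{\theta_i}^{x_i}([0,\sigma])$ is local for every stopping time $\sigma$. (The continuity hypotheses $\eta_{\theta_i}^{x_i}|_{[0,T_i]}$ are imposed precisely because, at this point in the article, this determinism/locality is only available in that regime; after Theorem~\ref{thm::continuity} is established it holds unconditionally.)

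The key structural point, and the one I expect to be the main obstacle, is to show that $K$ is local even though $K$ is \emph{not} produced by any exploration that looks only at $h$ along the two curves: the stopping time $\tau_1$ depends on $\eta_{\theta_2}^{x_2}$ and vice versa, so $K$ is a genuinely ``bilateral'' set. I would set this up as an induction on a sequence of nested stopping times, in the spirit of Lemma~\ref{lem::stopping_local_set}. Concretely: let $\sigma_0=0$ and alternately define $\sigma_{2j+1}$ to be the first time $\eta_{\theta_1}^{x_1}$ after $\sigma_{2j-1}$ hits $\eta_{\theta_2}^{x_2}([0,\sigma_{2j}])$ (capped at $T_1$), and $\sigma_{2j+2}$ the first time $\eta_{\theta_2}^{x_2}$ after $\sigma_{2j}$ hits $\eta_{\theta_1}^{x_1}([0,\sigma_{2j+1}])$ (capped at $T_2$). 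Each initial segment $\eta_{\theta_1}^{x_1}([0,\sigma_{2j+1}])\cup\eta_{\theta_2}^{x_2}([0,\sigma_{2j+2}])$ is local by Lemma~\ref{lem::stopping_local_set} (the relevant filtration is exactly the one appearing there). The content is then that $\sigma_{2j+1}\uparrow\tau_1\wedge T_1$ and $\sigma_{2j+2}\uparrow\tau_2\wedge T_2$ and the increasing union of these local sets is $K$; locality of an increasing limit of local sets follows from Lemma~\ref{lem::local_char} since the event $\{K\cap U=\emptyset\}$ is the decreasing limit of $\{K_n\cap U=\emptyset\}$, each measurable with respect to the projection of $h$ onto $H^\perp(U)$, and on that event $K=\lim K_n$ is $h|_{U^c}$-measurable. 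The subtle point requiring care is why $\sigma_{2j+1}$ actually converges \emph{up to} $\tau_1\wedge T_1$ rather than stopping short — i.e.\ why the two curves, after a near-miss, cannot ``wrap around'' and reapproach each other from the far side without first intersecting. This is exactly where Lemma~\ref{lem::cannot_hit_right_side} enters: after conditioning on $\eta_{\theta_1}^{x_1}([0,\sigma_{2j+1}])$ and the portion of $\eta_{\theta_2}^{x_2}$ traced so far, the continuation of $\eta_{\theta_2}^{x_2}$ cannot hit the ``old'' right side of $\eta_{\theta_1}^{x_1}$ before hitting its most recent arc or the boundary, and symmetrically; iterating, the only way for $\eta_{\theta_1}^{x_1}$ and $\eta_{\theta_2}^{x_2}$ to have nonempty intersection after $\sigma_{2j}$ is for one to hit the freshly grown arc of the other, which forces $\sigma_{2j+1},\sigma_{2j+2}$ to increase to the true hitting times. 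This argument also shows $\eta_{\theta_1}^{x_1}(\tau_1)=\eta_{\theta_2}^{x_2}(\tau_2)$ when both are finite: the first hit of $\eta_{\theta_2}^{x_2}$ by $\eta_{\theta_1}^{x_1}$ must occur on the arc of $\eta_{\theta_2}^{x_2}$ that is being added at that same moment (otherwise $\eta_{\theta_2}^{x_2}$ would have had to hit $\eta_{\theta_1}^{x_1}$ earlier), so the two paths meet at a common point, which is $\eta_{\theta_1}^{x_1}(\tau_1)$ on one side and the current tip $\eta_{\theta_2}^{x_2}(\tau_2)$ on the other.

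Finally, for the last assertion, that $\eta_{\theta_i}^{x_i}|_{[\tau_i,T_i]}$ lies in the unbounded connected component $D$ of $\h\setminus K$: by the monotonicity/non-crossing already available (Proposition~\ref{prop::monotonicity_boundary_intersecting} via absolute continuity) up to time $\tau_i$ the two curves do not cross, so $K$ together with the real line bounds a collection of components; the curve $\eta_{\theta_1}^{x_1}$ is simple (in the regime considered, by Remark~\ref{rem::continuity_non_boundary} and Lemma~\ref{lem::av_simple_determined}), so after reaching $w_0=\eta_{\theta_1}^{x_1}(\tau_1)=\eta_{\theta_2}^{x_2}(\tau_2)$ it cannot re-enter the region already swallowed without self-intersecting, and $w_0$ lies on $\partial D$; hence $\eta_{\theta_1}^{x_1}|_{(\tau_1,\tau_1+\epsilon)}\subseteq D$ for small $\epsilon$, and the same simple-path argument (run from $w_0$) keeps it in $D$. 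The symmetric statement holds for $\eta_{\theta_2}^{x_2}$. I expect the bookkeeping of the alternating stopping times and the repeated invocation of Lemma~\ref{lem::cannot_hit_right_side} to be the only genuinely delicate part; everything else is an assembly of locality (Lemma~\ref{lem::local_char}, Lemma~\ref{lem::stopping_local_set}, Proposition~\ref{gff::prop::cond_union_local}) and the determinism results of Section~\ref{subsec::two_boundary_force_points}.
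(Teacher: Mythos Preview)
Your use of Lemma~\ref{lem::cannot_hit_right_side} to establish $\eta_{\theta_1}^{x_1}(\tau_1)=\eta_{\theta_2}^{x_2}(\tau_2)$ is the right idea and matches the paper, though the paper executes it more directly: it fixes $\wt\tau_1=T_1$, applies Lemma~\ref{lem::cannot_hit_right_side} at a countable dense family of admissible stopping times $\wt\tau_2\ge\tau_2$, and (together with the symmetric statement) concludes that $\eta_{\theta_2}^{x_2}|_{[\tau_2,T_2]}$ cannot hit either side of $\eta_{\theta_1}^{x_1}([0,\sigma_1))$, where $\sigma_1$ is the first time path~1 reaches the point $\eta_{\theta_2}^{x_2}(\tau_2)$. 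This forces $\sigma_1=\tau_1$ and simultaneously gives the unbounded-component claim (if $\eta_{\theta_1}^{x_1}|_{[\tau_1,T_1]}$ entered a bounded pocket, $\eta_{\theta_2}^{x_2}$ would have to hit the wrong side of $\eta_{\theta_1}^{x_1}$). You should not invoke Proposition~\ref{prop::monotonicity_boundary_intersecting} here: at this point in the paper that result is only available for paths from a common seed with boundary data constant on each half-line, so it does not apply to the present configuration.

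For locality your alternating scheme is both unnecessary and, as written, ill-posed: with $\sigma_0=0$ the set $\eta_{\theta_2}^{x_2}([0,\sigma_0])$ is a single boundary point, so $\sigma_1$ is not a useful stopping time, and there is no mechanism that makes the sequence advance. Even with a repaired initialisation, showing $\sigma_{2j+1}\uparrow\tau_1$ is essentially the equality statement again and buys nothing. The paper's route is much shorter: once the equality $\eta_{\theta_1}^{x_1}(\tau_1)=\eta_{\theta_2}^{x_2}(\tau_2)$ is in hand, verify Lemma~\ref{lem::local_char} directly. For open $U$, let $\tau_i^U$ be the first time $\eta_{\theta_i}^{x_i}$ hits $\ol U$; by Theorem~\ref{thm::coupling_uniqueness} each $\eta_{\theta_i}^{x_i}([0,\tau_i^U])$ is $h|_{U^c}$-measurable. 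The point is that $\{K\cap U\neq\emptyset\}$ is determined by these two stopped paths: using the tip-equality, $K\cap U=\emptyset$ if and only if the two sets $\eta_{\theta_i}^{x_i}([0,\tau_i^U\wedge T_i])$ already intersect, or neither path reaches $\ol U$ before $T_i$. No inductive limit of local sets is needed.
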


We note that in the statement of Lemma~\ref{lem::crossing_local}, the stopping times $\tau_i$, $i=1,2$, may actually be infinite in the case that $\eta_{\theta_1}^{x_1}|_{[0,T_1]}$ does not touch $\eta_{\theta_2}^{x_2}|_{[0,T_2]}$.  This can happen, for example, if one of the $\eta_{\theta_i}^{x_i}|_{[0,T_i]}$ hits a segment of $\partial \h$ after which it is not able to continue, i.e.\ the boundary data of $h + \theta_i \chi$ is at least $\lambda$ on the left side of the intersection point or is not more than $-\lambda$ on the right side.  On the other hand, the statement is vacuous in the setting in which the boundary of $h$ is such that either $\eta_{\theta_1}^{x_1}$ or $\eta_{\theta_2}^{x_2}$ immediately hits the continuation threshold (i.e., the $\rho$ value immediately to the left or right of one of the paths is not larger than $-2$).

\begin{proof}[Proof of Lemma~\ref{lem::crossing_local}]
Let $\wt{\eta}_{\theta_i}^{x_i}(t) = \eta_{\theta_i}^{x_i}(t \wedge T_i)$ for $i=1,2$.  Assume that $\tau_i < \infty$ for $i=1,2$ for otherwise the result is trivial.  We are now going to prove that $\wt{\eta}_{\theta_1}^{x_1}(\tau_1) = \wt{\eta}_{\theta_2}^{x_2}(\tau_2)$.  To see this, we apply Lemma~\ref{lem::cannot_hit_right_side} for the stopping time $\wt{\tau}_1 = T_1$ for $\wt{\eta}_{\theta_1}^{x_1}$ and for any stopping time $\tau_2 \leq \wt{\tau}_2 \leq T_2$ for the filtration $\CF_t = \sigma(\wt{\eta}_{\theta_2}^{x_2}(s) : s \leq t,\ \wt{\eta}_{\theta_1}^{x_1}([0,T_1]))$ so that the criteria of Lemma~\ref{lem::cannot_hit_right_side} hold.  Let $\xi_1$ be the first time $t$ that $\wt{\eta}_{\theta_1}^{x_1}|_{[0,T_1]}$ hits $\wt{\eta}_{\theta_2}^{x_2}([0,\wt{\tau}_2])$.  Lemma~\ref{lem::cannot_hit_right_side} implies that $\wt{\eta}_{\theta_2}^{x_2}|_{[\wt{\tau}_2,T_2]}$ cannot hit the right side of $\wt{\eta}_{\theta_1}^{x_1}([0,\xi_1))$ before hitting either $\wt{\eta}_{\theta_1}^{x_1}([\xi_1,T_1])$ or $\partial \h$.  By applying this to a countable collection of stopping times which is dense among admissible times in $[\tau_2,T_2]$ (i.e, stopping times $\wt{\tau}_2$ so that the criteria of Lemma~\ref{lem::cannot_hit_right_side} apply), we see that $\wt{\eta}_{\theta_2}^{x_2}([\tau_2,T_2])$ cannot hit the right side of $\wt{\eta}_{\theta_1}^{x_1}([0,\sigma_1))$ where $\sigma_1$ is the first time that $\wt{\eta}_{\theta_1}^{x_1}$ hits $\wt{\eta}_{\theta_2}^{x_2}(\tau_2)$.  An analogous argument with the roles of $\eta_{\theta_1}^{x_1}$ and $\eta_{\theta_2}^{x_2}$ reversed implies that $\wt{\eta}_{\theta_1}^{x_1}([\tau_1,T_1])$ cannot hit the left side of $\wt{\eta}_{\theta_2}^{x_2}([0,\sigma_2))$ where $\sigma_2$ is the first time that $\wt{\eta}_{\theta_2}^{x_2}$ hits $\wt{\eta}_{\theta_1}^{x_1}(\tau_1)$.  It thus follows that $\sigma_1 = \tau_1$, which proves the claim.

Recall that we assumed that $\tau_i < \infty$ for $i=1,2$.  Let $K = \wt{\eta}_{\theta_1}^{x_1}([0,\tau_1]) \cup \wt{\eta}_{\theta_2}^{x_2}([0,\tau_2])$.  The above argument also implies that $\wt{\eta}_{\theta_i}^{x_i}|_{[\tau_i,T_i]}$ is contained in the unbounded connected component of $\h \setminus K$ for $i=1,2$ (i.e., if $\wt{\eta}_{\theta_1}^{x_1}|_{[\tau_1,T_1]}$ is in a bounded connected component, then $\wt{\eta}_{\theta_2}^{x_2}$ hits $\wt{\eta}_{\theta_1}^{x_1}$ on its right side first).  We are now going to prove that $K$ is local by checking characterization~\eqref{it::Ucond} of Lemma~\ref{lem::local_char}.  Fix $U \subseteq \h$ open.  For $i=1,2$, let $\tau_i^U = \inf\{t \geq 0 : \wt{\eta}_{\theta_i}^{x_i}(t) \in U\}$.  Then $\wt{\eta}_{\theta_i}^{x_i}([0,\tau_i^U])$, $i=1,2$, is almost surely determined by the projection of $h$ onto $H^\perp(U)$ as a consequence of Theorem~\ref{thm::coupling_uniqueness} and that $\wt{\eta}_{\theta_i}^{x_i}([0,\tau_i^U])$, $i=1,2$, is a local set for $h$.  Since the event that $\{K \cap U \neq \emptyset\}$ is determined by $\wt{\eta}_{\theta_1}^{x_1}([0,\tau_1^U])$ and $\wt{\eta}_{\theta_2}^{x_2}([0,\tau_2^U])$, it is therefore determined by the projection of $h$ onto $H^\perp(U)$.
\end{proof}

\begin{lemma}
\label{lem::hitting_conditional_law}
Assume that we have the same setup as Lemma~\ref{lem::crossing_local}.  Let $D$ be the unbounded connected component of $\h \setminus K$ and let $\varphi \colon D \to \h$ be a conformal transformation which fixes $\infty$ and let $\Fg$ be the function which is harmonic in $\h$ with boundary conditions given by $-\lambda-\theta_i \chi$ (resp.\ $\lambda-\theta_i \chi$) on the $\varphi$ image of the left (resp.\ right) side of $\eta_i|_{[0,\tau_i \wedge T_i]}$ for $i=1,2$ and the same as $h \circ \varphi^{-1}$ on $\varphi(\partial \h)$.  Then the conditional mean $\CC_K$ of $h$ given $\CK$ (where $\CK$ plays the role for $K$ of $\CA$ as in Section~\ref{subsec::local_sets}) restricted to $D$ is $\Fg \circ \varphi - \chi \arg \varphi'$ (see Figure~\ref{fig::crossing_local_proof}).  Moreover, if $\tau_i < \infty$ for $i=1,2$ then $\eta_{\theta_i}^{x_i}|_{[\tau_i,T_i]}$ is the flow line of the conditional field $h$ given $\CK$ restricted to $D$ with angle $\theta_i$ starting at $w_0 = \eta_{\theta_1}^{x_1}(\tau_1) = \eta_{\theta_2}^{x_2}(\tau_2)$.
\end{lemma}
\begin{proof}
Lemma~\ref{lem::crossing_local} implies that $K = \eta_{\theta_1}^{x_1}([0,\tau_1]) \cup \eta_{\theta_2}^{x_2}([0,\tau_2])$ is a local set for $h$.  The claim regarding $\CC_K$ is clear if $w_0 \in \partial \h$ by Proposition~\ref{prop::cond_mean_continuous} or if $\tau_1,\tau_2 = \infty$.  To see this in the case that $w_0 \in \h$, we note that with $K_{r,s} = \eta_{\theta_1}^{x_1}([0,r]) \cup \eta_{\theta_2}^{x_2}([0,s])$ we have that $\CC_{K_{r,s}}$ does not exhibit pathological behavior whenever $s > \tau_2$ and $r < \tau_1$ by Proposition~\ref{gff::prop::cond_union_local} and Proposition~\ref{gff::prop::cond_union_mean}.  The claim then follows by using Proposition~\ref{prop::cond_mean_continuous} and that $\CC_K = \lim_{s \downarrow \tau_2} \lim_{r \uparrow \tau_1} \CC_{K_{r,s}}$ almost surely.

To see the second claim, we first note that $\eta_{\theta_i}^{x_i}|_{[\tau_i,T_i]}$, for $i=1,2$, has a continuous Loewner driving function viewed as a path in $D$.  The reason is that $\eta_{\theta_i}^{x_i}$ cannot trace itself or $\partial \h$ since it has a continuous Loewner driving function viewed as a path in $\h$ and it cannot trace or create loops with $K$ because the proof of Lemma~\ref{lem::crossing_local} implies that $\eta_{\theta_i}^{x_i}|_{[\tau_i,T_i]}$ almost surely does not hit $K$ after time $\tau_i$.  Thus the claim follows by applying Proposition~\ref{prop::cont_driving_function}.  Combining Theorem~\ref{thm::coupling_uniqueness}, Theorem~\ref{thm::martingale}, and Proposition~\ref{prop::cond_mean_continuous} completes the proof of the result.
\end{proof}

The proofs of Lemma~\ref{lem::crossing_local} and Lemma~\ref{lem::hitting_conditional_law} also apply in the following slightly more general situation.

\begin{lemma}
\label{lem::crossing_local_stopping}
Assume that we have the same setup as Lemma~\ref{lem::crossing_local}.  Suppose that $\sigma_2 \leq T_2$ is a stopping time for $\eta_{\theta_2}^{x_2}$ such that, almost surely, $\eta_{\theta_1}^{x_1}|_{[0,T_1]}$ lies to the right of $\eta_{\theta_2}^{x_2}|_{[0,\sigma_2]}$.  Let $\tau_1$ be the first time that $\eta_{\theta_1}^{x_1}|_{[0,T_1]}$ hits $\eta_{\theta_2}^{x_2}|_{[\sigma_2,T_2]}$ and let $\tau_2$ be the first time that $\eta_{\theta_2}^{x_2}|_{[\sigma_2,T_2]}$ hits $\eta_{\theta_1}^{x_1}|_{[0,T_1]}$.  Let $K = \eta_{\theta_1}^{x_1}([0,\tau_1 \wedge T_1]) \cup \eta_{\theta_2}^{x_2}([0,\tau_2 \wedge T_2])$.  Then $K$ is a local set for $h$.  If, in addition, $\tau_1,\tau_2 < \infty$ then $w_0 = \eta_{\theta_1}^{x_1}(\tau_1) = \eta_{\theta_2}^{x_2}(\tau_2)$.  Let $D$ be the unbounded connected component of $\h \setminus K$.  Then $\eta_{\theta_i}^{x_i}|_{[\tau_i,T_i]}$ for $i=1,2$ is the flow line of the conditional field $h|_D$ given $\CK$ (where $\CK$ plays the same role for $K$ as $\CA$ from Section~\ref{subsec::local_sets}) with angle $\theta_i$ starting at $w_0$.
\end{lemma}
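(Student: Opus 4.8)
\textbf{Proof strategy for Lemma~\ref{lem::crossing_local_stopping}.} The plan is to mimic almost verbatim the proofs of Lemma~\ref{lem::crossing_local} and Lemma~\ref{lem::hitting_conditional_law}, with the only structural change being that the ``inner'' path $\eta_{\theta_2}^{x_2}$ is run starting from time $\sigma_2$ rather than from time $0$, and that the hypothesis that $\eta_{\theta_1}^{x_1}|_{[0,T_1]}$ lies to the right of $\eta_{\theta_2}^{x_2}|_{[0,\sigma_2]}$ is used to guarantee that the two paths do not interact before time $\sigma_2$. First I would reduce to the nontrivial case $\tau_1,\tau_2<\infty$ (otherwise $K=\eta_{\theta_1}^{x_1}([0,T_1])\cup\eta_{\theta_2}^{x_2}([0,T_2])$ and locality follows from Lemma~\ref{lem::stopping_local_set} directly, using that each $\eta_{\theta_i}^{x_i}|_{[0,T_i]}$ is a local set and is almost surely determined by $h$ via Theorem~\ref{thm::coupling_uniqueness}). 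Then I would observe that since $\eta_{\theta_1}^{x_1}|_{[0,T_1]}$ is to the right of $\eta_{\theta_2}^{x_2}|_{[0,\sigma_2]}$, the first intersection of $\eta_{\theta_1}^{x_1}|_{[0,T_1]}$ with $\eta_{\theta_2}^{x_2}|_{[0,T_2]}$ coincides with its first intersection with $\eta_{\theta_2}^{x_2}|_{[\sigma_2,T_2]}$, so $\tau_1$ here is the same stopping time as the $\tau_1$ of Lemma~\ref{lem::crossing_local}, and symmetrically the first time $\eta_{\theta_2}^{x_2}|_{[\sigma_2,T_2]}$ hits $\eta_{\theta_1}^{x_1}|_{[0,T_1]}$ is the relevant $\tau_2$.

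The key steps, in order, are: (i) apply Lemma~\ref{lem::cannot_hit_right_side}, exactly as in the proof of Lemma~\ref{lem::crossing_local}, with $\wt\tau_1 = T_1$ and with a countable dense family of admissible stopping times $\wt\tau_2\in[\tau_2,T_2]$ for the filtration $\CF_t=\sigma(\eta_{\theta_2}^{x_2}(s):s\le t,\ \eta_{\theta_1}^{x_1}([0,T_1]))$, to conclude that $\eta_{\theta_2}^{x_2}|_{[\tau_2,T_2]}$ cannot hit either side of $\eta_{\theta_1}^{x_1}([0,\sigma_1))$ (where $\sigma_1$ is the first time $\eta_{\theta_1}^{x_1}$ hits $\eta_{\theta_2}^{x_2}(\tau_2)$), hence $\sigma_1=\tau_1$ and $w_0:=\eta_{\theta_1}^{x_1}(\tau_1)=\eta_{\theta_2}^{x_2}(\tau_2)$; the condition $\theta_1<\theta_2+\pi$ is what makes the boundary data on the image interval $I$ at least $\lambda$, so that Lemma~\ref{lem::flow_cannot_hit} applies. (ii) The same argument gives that $\eta_{\theta_i}^{x_i}|_{[\tau_i,T_i]}$ lies in the unbounded connected component $D$ of $\h\setminus K$ for $i=1,2$. (iii) Locality of $K$ follows by checking the criterion of Lemma~\ref{lem::local_char}: for open $U\subseteq\h$, let $\tau_i^U$ be the first time $\eta_{\theta_i}^{x_i}$ hits $\ol U$; each $\eta_{\theta_i}^{x_i}([0,\tau_i^U])$ is determined by $h|_{U^c}$ by Theorem~\ref{thm::coupling_uniqueness} and locality of the individual paths, so $\{K\cap U\neq\emptyset\}$ and (on that event) $K$ itself are determined by $h|_{U^c}$. (iv) For the conditional mean, repeat the argument of Lemma~\ref{lem::hitting_conditional_law}: away from $w_0$ the boundary data of $\CC_K$ is read off from Proposition~\ref{gff::prop::cond_union_mean} and Proposition~\ref{gff::prop::local_independence}, and pathological behavior at $w_0$ is excluded by writing $\CC_K=\lim_{s\downarrow\tau_2}\lim_{r\uparrow\tau_1}\CC_{K_{r,s}}$ with $K_{r,s}=\eta_{\theta_1}^{x_1}([0,r])\cup\eta_{\theta_2}^{x_2}([0,s])$ and invoking the continuity of the conditional mean in each argument (Proposition~\ref{prop::cond_mean_continuous}); note $\CC_{K_{r,s}}$ is unambiguous for $r<\tau_1$, $s>\tau_2$ since the two sets are then non-interacting. (v) Finally, the statement that $\eta_{\theta_i}^{x_i}|_{[\tau_i,T_i]}$ is the flow line of $h|_D$ (given $K,h|_K$) of angle $\theta_i$ started at $w_0$ follows by establishing that it has a continuous Loewner driving function viewed as a path in $D$ --- it cannot trace $\partial\h$ or itself (it has a continuous driving function in $\h$) and, by step (i)--(ii), it does not hit $K$ after time $\tau_i$ --- so Proposition~\ref{prop::cont_driving_function} applies, and then combining Theorem~\ref{thm::martingale}, Theorem~\ref{thm::coupling_uniqueness}, and Proposition~\ref{prop::cond_mean_continuous} identifies the law.

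\textbf{The main obstacle.} Everything is a transcription of earlier arguments except for one genuinely new bookkeeping point: one must be careful that the hypothesis on $\sigma_2$ (that $\eta_{\theta_1}^{x_1}|_{[0,T_1]}$ lies to the right of $\eta_{\theta_2}^{x_2}|_{[0,\sigma_2]}$) is exactly what is needed for the first-intersection times relevant here to coincide with those in Lemma~\ref{lem::crossing_local}, so that the repeated application of Lemma~\ref{lem::cannot_hit_right_side} is legitimate --- in particular one must rule out that $\eta_{\theta_1}^{x_1}$ hits $\eta_{\theta_2}^{x_2}([0,\sigma_2])$ (it cannot, being strictly to the right) before hitting $\eta_{\theta_2}^{x_2}([\sigma_2,T_2])$. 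A secondary subtlety is that, unlike in Lemma~\ref{lem::crossing_local}, the path $\eta_{\theta_2}^{x_2}$ may already have a complicated range on $[0,\sigma_2]$; but since that range is, by hypothesis, entirely to the left of $\eta_{\theta_1}^{x_1}|_{[0,T_1]}$, it plays no role in the interaction and can be absorbed into the boundary via Proposition~\ref{gff::prop::cond_union_mean} exactly as the rest of $K$ is. Once these points are checked the proof is identical to that of Lemma~\ref{lem::crossing_local} and Lemma~\ref{lem::hitting_conditional_law}, which is why we state it without further elaboration.
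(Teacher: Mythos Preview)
Your overall strategy is correct and matches the paper's proof, which is indeed just ``same argument as Lemma~\ref{lem::crossing_local} and Lemma~\ref{lem::hitting_conditional_law}.'' However, you have one concrete misunderstanding that happens to be exactly the one new point the paper's proof singles out.

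You write that since $\eta_{\theta_1}^{x_1}|_{[0,T_1]}$ lies to the right of $\eta_{\theta_2}^{x_2}|_{[0,\sigma_2]}$, the two paths cannot intersect on that interval (``it cannot, being strictly to the right''), and hence the $\tau_1$ here coincides with the $\tau_1$ of Lemma~\ref{lem::crossing_local}. This is false: in this paper, ``lies to the right of'' allows the paths to bounce off each other---see e.g.\ Proposition~\ref{prop::monotonicity_boundary_intersecting} and Figure~\ref{fig::interacting_mean}, where ordered flow lines intersect repeatedly. So $\eta_{\theta_1}^{x_1}([0,\tau_1))$ and $\eta_{\theta_2}^{x_2}([0,\tau_2))$ may already share points coming from $\eta_{\theta_2}^{x_2}|_{[0,\sigma_2]}$, and the $\tau_i$ here are genuinely different from those in Lemma~\ref{lem::crossing_local}.

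The paper's proof addresses precisely this: the only sentence beyond ``same argument'' is the observation that these pre-$\sigma_2$ intersection points do \emph{not} cause singularities in $\CC_K$ restricted to the unbounded component $D$, by Proposition~\ref{gff::prop::local_independence} (not Proposition~\ref{gff::prop::cond_union_mean} as you suggest). The point is that such intersection points lie on the common boundary of $D$ with bounded pockets between the two paths, and Proposition~\ref{gff::prop::local_independence} says the conditional mean in $D$ agrees with the single-path conditional mean there, so no limiting argument at those points is needed. Your ``secondary subtlety'' paragraph gestures toward this but with the wrong proposition and after having already asserted the paths are disjoint. Once you drop the disjointness claim and invoke Proposition~\ref{gff::prop::local_independence} for the earlier intersection points, the rest of your steps (i)--(v) go through as written.
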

\begin{proof}
This follows from the same argument used to establish Lemma~\ref{lem::crossing_local} and Lemma~\ref{lem::hitting_conditional_law}.  We note that the intersection points of $\eta_{\theta_1}^{x_1}([0,\tau_1))$ and $\eta_{\theta_2}^{x_2}([0,\tau_2))$ (i.e., before $\eta_{\theta_1}^{x_1}|_{[0,T_1]}$ hits $\eta_{\theta_2}^{x_2}|_{[\sigma_2,T_2]}$ and vice-versa) do not lead to singularities in the conditional mean $\CC_K$ of $h$ given $K$ in $D$ by Proposition~\ref{gff::prop::local_independence}.
\end{proof}

By combining Lemma~\ref{lem::crossing_local_stopping} with Proposition~\ref{prop::monotonicity_boundary_intersecting}, we can now prove our general monotonicity statement for flow lines.

 \begin{figure}[h!]
\begin{center}
\includegraphics[scale=0.85]{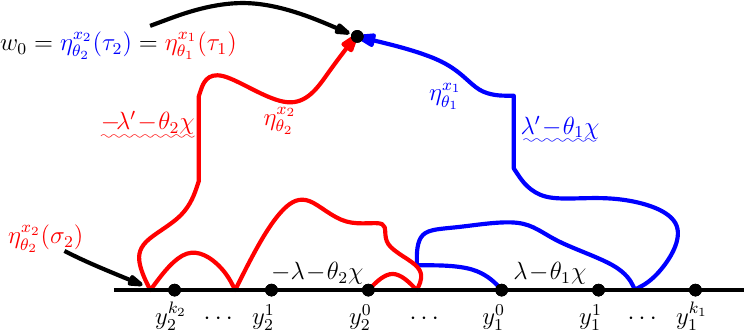}
\end{center}
\caption{\label{fig::monotonicity_induction} 
Suppose that $h$ is a GFF on $\h$ with piecewise constant boundary data which changes a finite number of times, $x_1,x_2 \in \partial \h$ with $x_1 > x_2$, and fix angles $\theta_1<\theta_2$.  Let $\ul{y}_2 = (y_2^{k_2} < \cdots < y_2^1 \leq y_2^0 = x_2)$ and $\ul{y}_1 = (y_1^{k_1} > \cdots > y_1^1 \geq y_1^0 = x_1)$.  Assume that the boundary data for $h$ changes to the left of $x_2$ only at the points of $\ul{y}_2$ and to the right of $x_1$ only at the points of $\ul{y}_1$.  Moreover, assume that the boundary data for $h$ in $[y_2^1,y_2^0)$ is $-\lambda - \theta_2 \chi$ and in $[y_1^0,y_1^1)$ is $\lambda - \theta_1 \chi$.  For $i=1,2$, let $T_i$ be a stopping time for $\eta_{\theta_i}^{x_i}$ such that $\eta_{\theta_i}^{x_i}|_{[0,T_i]}$ is almost surely continuous.  We prove in Proposition~\ref{prop::generalized_monotonicity} that $\eta_{\theta_1}^{x_1}|_{[0,T_1]}$ lies to the right of $\eta_{\theta_2}^{x_2}|_{[0,T_2]}$ by induction on $k_1,k_2$.  The result for the case $k_1,k_2 \leq 1$ is a consequence of Proposition~\ref{prop::monotonicity_boundary_intersecting} and Lemma~\ref{lem::crossing_local} (to justify growing the paths until they first hit each other).  Suppose the result holds for $k_1 = j_1$ and $k_2=j_2$, some $j_1,j_2 \geq 1$ and that $k_2=j_2+1$ and $k_1=j_1$.  Letting $\sigma_2$ be the first time $t$ that $\eta_{\theta_2}^{x_2}|_{[0,T_2]}$ hits $(-\infty,y_2^{k_2}]$, it follows from the induction hypothesis and absolute continuity (Proposition~\ref{prop::gff_abs_continuity}) that $\eta_{\theta_2}^{x_2}|_{[0,\sigma_2 \wedge T_2]}$ lies to the left of $\eta_{\theta_1}^{x_1}|_{[0,T_1]}$.  Let $\tau_1$ be the first time $t$ that $\eta_{\theta_1}^{x_1}|_{[0,T_1]}$ hits $\eta_{\theta_2}^{x_2}|_{[\sigma_2,T_2]}$ and $\tau_2$ the first time $t$ after $\sigma_2$ that $\eta_{\theta_2}^{x_2}|_{[0,T_2]}$ hits $\eta_{\theta_1}^{x_1}|_{[0,T_1]}$.  If $\tau_1=\tau_2=\infty$, there is nothing to prove.  If $\tau_1,\tau_2 < \infty$, then Lemma~\ref{lem::crossing_local_stopping} implies that $K = \eta_{\theta_1}^{x_1}([0,\tau_1]) \cup \eta_{\theta_2}^{x_2}([0,\tau_2])$ is a local set for $h$ and that $\eta_{\theta_i}^{x_i}|_{[\tau_i,T_i]}$, $i=1,2$, is the flow line of $h|_D$, $D$ the unbounded connected component of $\h \setminus K$, conditional on $\CK$ (where $\CK$ plays the role for $K$ of $\CA$ from Section~\ref{subsec::local_sets}) starting from $w_0 = \eta_{\theta_1}^{x_1}(\tau_1) = \eta_{\theta_2}^{x_2}(\tau_2)$.  The result then follows from the induction hypothesis.  We note that it could be that $w_0 \in \partial \h$, though the case $w_0 \in \h$ is depicted above.}
\end{figure}

\begin{proposition}
\label{prop::generalized_monotonicity}
Suppose that $h$ is a GFF on $\h$ with piecewise constant boundary data which changes a finite number of times, $x_1,x_2 \in \partial \h$, and fix angles $\theta_1,\theta_2$.  For $i=1,2$, let $T_i$ be a stopping time for $\eta_{\theta_i}^{x_i}$ such that $\eta_{\theta_i}^{x_i}|_{[0,T_i]}$ is almost surely continuous.  If $\theta_1 < \theta_2$ and $x_1 > x_2$, then $\eta_{\theta_1}^{x_1}|_{[0,T_1]}$ almost surely lies to the right of $\eta_{\theta_2}^{x_2}|_{[0,T_2]}$.
\end{proposition}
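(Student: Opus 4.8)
\textbf{Proof proposal for Proposition~\ref{prop::generalized_monotonicity}.}

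The plan is to prove the statement by a double induction on the numbers $k_1,k_2$ of boundary-data changes of $h$ to the right of $x_1$ and to the left of $x_2$, respectively, after first reducing to a convenient normalization of the boundary data near $x_1$ and $x_2$. Specifically, following Figure~\ref{fig::monotonicity_induction}, I would write $\ul{y}_2 = (y_2^{k_2} < \cdots < y_2^1 \leq y_2^0 = x_2)$ and $\ul{y}_1 = (y_1^{k_1} > \cdots > y_1^1 \geq y_1^0 = x_1)$ for the locations of the boundary-data discontinuities outside $[x_2,x_1]$, and arrange (by adding a constant, which just shifts all angles, and by the transformation rule \eqref{eqn::ac_eq_rel}) that the boundary data of $h$ in $[y_2^1,y_2^0)$ equals $-\lambda-\theta_2\chi$ and in $[y_1^0,y_1^1)$ equals $\lambda-\theta_1\chi$. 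With this normalization the value of $h+\theta_2\chi$ immediately to the left of $x_2$ is $-\lambda$ and the value of $h+\theta_1\chi$ immediately to the right of $x_1$ is $\lambda$, so that $\eta^{x_2}_{\theta_2}$ (resp.\ $\eta^{x_1}_{\theta_1}$) does not immediately get absorbed into the boundary near its starting point.

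\textbf{Base case.} When $k_1,k_2 \leq 1$ the boundary data of $h+\theta_1\chi$ (resp.\ $h+\theta_2\chi$) is constant on $(-\infty,x_1)$ and on $(x_1,\infty)$ (resp.\ at $x_2$), so after a conformal map sending the relevant seed to $0$ we are exactly in the setting of Proposition~\ref{prop::monotonicity_boundary_intersecting}, which gives that the flow line of larger angle started at the smaller point stays to the left. The only subtlety is that $x_1 \neq x_2$; to handle this I would use Lemma~\ref{lem::crossing_local} to grow $\eta^{x_1}_{\theta_1}$ and $\eta^{x_2}_{\theta_2}$ up until the first time $\tau_1\wedge\tau_2$ that one hits the other. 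If $\tau_1=\tau_2=\infty$ there is nothing to prove; otherwise Lemma~\ref{lem::crossing_local} and Lemma~\ref{lem::hitting_conditional_law} say that $w_0 := \eta^{x_1}_{\theta_1}(\tau_1)=\eta^{x_2}_{\theta_2}(\tau_2)$, that $K = \eta^{x_1}_{\theta_1}([0,\tau_1])\cup\eta^{x_2}_{\theta_2}([0,\tau_2])$ is local, and that past time $\tau_i$ each $\eta^{x_i}_{\theta_i}$ is the flow line of the conditional field $h|_D$ on the unbounded component $D$ of $\h\setminus K$ with angle $\theta_i$, both started at the common point $w_0$; then Proposition~\ref{prop::monotonicity_boundary_intersecting} applies directly to this conditional field in $D$ (the two paths now have the same seed) after a conformal map.

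\textbf{Induction step.} Assume the result for $(k_1,k_2)=(j_1,j_2)$ and consider $(k_1,k_2)=(j_1,j_2+1)$ (the case of increasing $k_1$ is symmetric). Let $\sigma_2$ be the first time $\eta^{x_2}_{\theta_2}|_{[0,T_2]}$ hits $(-\infty,y_2^{k_2}]$. On $\eta^{x_2}_{\theta_2}([0,\sigma_2])$ the curve has only seen boundary data with one fewer change on the left, so by the induction hypothesis together with the absolute continuity of the GFF (Proposition~\ref{prop::gff_abs_continuity}, applied to a region bounded away from the extra force point $y_2^{k_2}$, then letting the cutoff tend to that point) we get that $\eta^{x_2}_{\theta_2}|_{[0,\sigma_2\wedge T_2]}$ lies to the left of $\eta^{x_1}_{\theta_1}|_{[0,T_1]}$. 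Now let $\tau_1$ be the first time $\eta^{x_1}_{\theta_1}|_{[0,T_1]}$ hits $\eta^{x_2}_{\theta_2}|_{[\sigma_2,T_2]}$ and $\tau_2$ the first time after $\sigma_2$ that $\eta^{x_2}_{\theta_2}$ hits $\eta^{x_1}_{\theta_1}|_{[0,T_1]}$. If both are infinite we are done; otherwise Lemma~\ref{lem::crossing_local_stopping} gives $w_0 = \eta^{x_1}_{\theta_1}(\tau_1)=\eta^{x_2}_{\theta_2}(\tau_2)$, that $K = \eta^{x_1}_{\theta_1}([0,\tau_1])\cup\eta^{x_2}_{\theta_2}([0,\tau_2])$ is local, and that $\eta^{x_i}_{\theta_i}|_{[\tau_i,T_i]}$ is the flow line of the conditional field $h|_D$ on the unbounded component $D$ of $\h\setminus K$ with angle $\theta_i$ starting from the common point $w_0$ (with the boundary data along $K$ being $-\lambda'-\theta_2\chi-\chi\cdot{\rm winding}$ on the left of $\eta^{x_2}_{\theta_2}$ and $\lambda'-\theta_1\chi-\chi\cdot{\rm winding}$ on the right of $\eta^{x_1}_{\theta_1}$). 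Mapping $D$ conformally to $\h$ and counting boundary-data changes, the conditional field has at most $(j_1,j_2)$ changes outside the seed (the segment of $\stripbot$-type boundary coming from $\eta^{x_2}_{\theta_2}([\sigma_2,\tau_2])$ contributes no new jump since $y_2^{k_2}$ has been absorbed), so the induction hypothesis applies and yields the desired ordering past $w_0$; combined with the ordering on $[0,\tau_1]\times[0,\tau_2]$ established above, this completes the step. Throughout, the continuity hypotheses on $\eta^{x_i}_{\theta_i}|_{[0,T_i]}$ are exactly what is needed to invoke Lemmas~\ref{lem::crossing_local}, \ref{lem::hitting_conditional_law}, \ref{lem::crossing_local_stopping}.

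\textbf{Main obstacle.} The delicate point is the induction step's use of the conditional law: one must be sure that conditioning on $K$ (which is \emph{not} obtained by a local exploration algorithm, as emphasized before Lemma~\ref{lem::crossing_local}) genuinely leaves a GFF on $D$ with piecewise constant boundary data having no pathological behavior at $w_0$, and then that the \emph{number} of boundary-data changes strictly decreases so the induction actually terminates. The first is handled by Lemma~\ref{lem::hitting_conditional_law} (built on Proposition~\ref{prop::cond_mean_continuous} and the local-set machinery of Section~\ref{subsec::local_sets}); the second requires care in the normalization above so that the jump at $y_2^{k_2}$ is truly consumed once $\eta^{x_2}_{\theta_2}$ reaches it, which is why we arranged the boundary value on $[y_2^1,y_2^0)$ to match $-\lambda-\theta_2\chi$. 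A secondary technical nuisance is the possibility $w_0 \in \partial\h$ rather than $w_0\in\h$; in that case the conditional field argument simplifies (no interior singularity to rule out, Proposition~\ref{prop::cond_mean_continuous} suffices directly), so it does not cause real trouble.
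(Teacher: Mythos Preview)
Your proposal is essentially the paper's own proof: same double induction on $(k_1,k_2)$, same normalization near $x_1,x_2$, same base case via Lemma~\ref{lem::crossing_local}/\ref{lem::hitting_conditional_law} reducing to Proposition~\ref{prop::monotonicity_boundary_intersecting}, and same induction step via $\sigma_2$, absolute continuity, and Lemma~\ref{lem::crossing_local_stopping}.

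Two small points where you and the paper differ. First, your justification of the normalization (``by adding a constant, which just shifts all angles, and by the transformation rule~\eqref{eqn::ac_eq_rel}'') is not how it is done: adding a constant to $h$ shifts boundary values, not angles, and the coordinate-change rule is irrelevant here. The paper simply allows $y_2^1 \leq x_2$ and $y_1^1 \geq x_1$ with possible equality, so one may always take $y_2^1 = x_2$ and $y_1^1 = x_1$; the interval $[y_2^1,y_2^0)$ is then empty and the assumption on its value is vacuous. This is only a labeling device and costs nothing.

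Second, and more substantively, in the induction step you assert directly that $\eta_{\theta_2}^{x_2}|_{[0,\sigma_2 \wedge T_2]}$ lies to the left of \emph{all} of $\eta_{\theta_1}^{x_1}|_{[0,T_1]}$. Absolute continuity plus the induction hypothesis only gives this against $\eta_{\theta_1}^{x_1}|_{[0,\sigma_1 \wedge T_1]}$, where $\sigma_1$ is the first time $\eta_{\theta_1}^{x_1}$ itself reaches $(-\infty,y_2^{k_2}]$ (the absolute continuity is only valid while \emph{both} paths stay away from the extra force point). The paper closes this gap with one extra sentence: since $\eta_{\theta_1}^{x_1}|_{[0,T_1]}$ is simple and targeted at $\infty$, $\eta_{\theta_1}^{x_1}|_{[\sigma_1,T_1]}$ cannot hit $\eta_{\theta_2}^{x_2}([0,\sigma_2 \wedge T_2])$. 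You should include this step; otherwise the argument is the same.
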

\begin{proof}
Let $\ul{y}_2 = (y_2^{k_2} < \cdots < y_2^1 \leq y_2^0 = x_2)$ and $\ul{y}_1 = (y_1^{k_1} > \cdots > y_1^1 \geq y_1^0 = x_1)$.  Assume that the boundary data for $h$ in $[y_2^1,y_2^0)$ is $-\lambda-\theta_2 \chi$, in $[y_1^0,y_1^1)$ is $\lambda - \theta_1 \chi$, and otherwise changes to the left of $x_2$ only at the points of $\ul{y}_2$ and changes to the right of $x_1$ only at the points of $\ul{y}_1$ (note that we are not careful to restrict or specify the places where the boundary data for $h$ changes in $(x_2,x_1)$).  We are going to prove the result by induction on $k_1,k_2$ (see Figure~\ref{fig::monotonicity_induction} for an illustration of the setup and the proof).

We first assume that $k_1,k_2 \leq 1$.  Let $\tau_1$ be the first time that $\eta_{\theta_1}^{x_1}|_{[0,T_1]}$ hits $\eta_{\theta_2}^{x_2}|_{[0,T_2]}$ and $\tau_2$ the first time that $\eta_{\theta_2}^{x_2}|_{[0,T_2]}$ hits $\eta_{\theta_1}^{x_1}|_{[0,T_1]}$.  If $\tau_1 = \tau_2 = \infty$, then the result is trivial, so we shall assume that $\tau_1,\tau_2 < \infty$.  Let $D$ be the unbounded connected component of $\h \setminus K$ where $K = \eta_{\theta_1}^{x_1}([0,\tau_1]) \cup \eta_{\theta_2}^{x_2}([0,\tau_2])$.  Lemma~\ref{lem::hitting_conditional_law} implies that $\eta_{\theta_i}^{x_i}|_{[\tau_i,T_i]}$ for $i=1,2$ is the flow line of $h|_D$ given $\CK$ (where $\CK$ plays the same role for $K$ as $\CA$ from Section~\ref{subsec::local_sets}) starting at $w_0 = \eta_{\theta_1}^{x_1}(\tau_1) = \eta_{\theta_2}^{x_2}(\tau_2)$ with angle $\theta_i$.  Thus the case $k_1,k_2 \leq 1$ follows by Proposition~\ref{prop::monotonicity_boundary_intersecting}.

Suppose that the result holds for $k_i = j_i$, some $j_1,j_2 \geq 1$.  We are now going to show that it holds for $k_1 = j_1$ and $k_2 = j_2+1$ (the argument to show that it holds for $k_1 = j_1+1$ and $k_2=j_2$ is the same).  For $i=1,2$ and $\epsilon \geq 0$, let $\sigma_i^\epsilon$ be the first time that $\eta_{\theta_i}^{x_i}|_{[0,T_i]}$ gets within distance $\epsilon$ of $(-\infty,y_2^{k_2}]$.  The induction hypothesis combined with Proposition~\ref{prop::gff_abs_continuity} implies that $\eta_{\theta_2}^{x_2}|_{[0,\sigma_2^\epsilon \wedge T_2]}$ lies to the left of $\eta_{\theta_1}^{x_1}|_{[0,\sigma_1^\epsilon \wedge T_1]}$ for each $\epsilon >0$.  Taking a limit as $\epsilon \downarrow 0$, we see that $\eta_{\theta_2}^{x_2}|_{[0,\sigma_2 \wedge T_2]}$ stays to the left of $\eta_{\theta_1}^{x_1}|_{[0,\sigma_1 \wedge T_1]}$ where $\sigma_i = \sigma_i^0$ for $i=1,2$.  Since $\eta_{\theta_1}^{x_1}$ is targeted at $\infty$ and $\eta_{\theta_1}^{x_1}|_{[0,T_1]}$ is simple, we also know that $\eta_{\theta_1}^{x_1}|_{[\sigma_1,T_1]}$ does not hit $\eta_{\theta_2}^{x_2}|_{[0,\sigma_2 \wedge T_2]}$.  Therefore $\eta_{\theta_1}^{x_1}|_{[0,T_1]}$ lies to the right of $\eta_{\theta_2}^{x_2}|_{[0,\sigma_2 \wedge T_2]}$.  Let $\tau_1$ be the first time that $\eta_{\theta_1}^{x_1}|_{[0,T_1]}$ hits $\eta_{\theta_2}^{x_2}|_{[\sigma_2,T_2]}$ and let $\tau_2$ be the first time that $\eta_{\theta_2}^{x_2}|_{[\sigma_2,T_2]}$ hits $\eta_{\theta_1}^{x_1}|_{[0,T_1]}$.  If $\tau_1=\tau_2=\infty$, then there is nothing to prove.  If $\tau_1,\tau_2 < \infty$, Lemma~\ref{lem::crossing_local_stopping} implies that $K = \eta_{\theta_1}^{x_1}([0,\tau_1]) \cup \eta_{\theta_2}^{x_2}([0,\tau_2])$ is a local set for $h$ and that $\eta_{\theta_i}^{x_i}|_{[\tau_i,T_i]}$ is the flow line of $h|_D$, $D$ the unbounded connected component of $\h \setminus K$, of angle $\theta_i$ starting from $w_0 = \eta_{\theta_1}^{x_1}(\tau_1) = \eta_{\theta_2}^{x_2}(\tau_2)$ for $i=1,2$.  Let $\psi \colon D \to \h$ be a conformal map which fixes $\infty$ and takes $w_0$ to $0$.  Then $\psi(\eta_{\theta_i}^{x_i}|_{[\tau_i,T_i]})$ for $i=1,2$ is the flow line of the GFF $h \circ \psi^{-1} - \chi \arg (\psi^{-1})'$ on $\h$  with angle $\theta_i$.  The result now follows from the induction hypothesis.
\end{proof}

Next, we will extend the result of Proposition~\ref{prop::generalized_monotonicity} to the setting of angle varying flow lines.

\begin{proposition}
\label{prop::angle_varying_monotonicity}
Suppose that $h$ is a GFF on $\h$ with piecewise constant boundary data which changes a finite number of times.  Fix angles $\theta_1,\ldots,\theta_k$ and $\wt{\theta}$ with $\wt{\theta} > \max_i \theta_i$.  Assume
\begin{equation}
\label{eqn::angle_bounds}
|\theta_i - \theta_j| < \frac{2\lambda}{\chi} \quad\text{for all}\quad 1 \leq i,j \leq k.
\end{equation}
Let $\eta := \eta_{\theta_1 \cdots \theta_k}^{\tau_1 \cdots \tau_k}$ be an angle varying flow line of $h$ starting at $0$ and let $\tau_1,\ldots,\tau_{k-1}$ be the corresponding angle change times (recall the definition from the beginning of Section~\ref{subsec::light_cone}).  Let $\wt{\eta}$ be the flow line of $h$ starting at $0$ with angle $\wt{\theta}$.  Assume that $T, \wt{T}$ are stopping times for $\eta, \wt{\eta}$, respectively, such that $\eta|_{[0,T]},\wt{\eta}_{[0,\wt{T}]}$ are both almost surely continuous.  Then $\eta|_{[0,T]}$ almost surely lies to the right of $\wt{\eta}|_{[0,\wt{T}]}$.
\end{proposition}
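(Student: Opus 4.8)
\textbf{Proof strategy for Proposition~\ref{prop::angle_varying_monotonicity}.} The plan is to prove the statement by induction on $k$, the number of angles in the angle-varying flow line $\eta = \eta_{\theta_1\cdots\theta_k}$, reducing at each stage to the single-angle monotonicity result Proposition~\ref{prop::generalized_monotonicity} together with the gluing/locality machinery of Lemma~\ref{lem::crossing_local_stopping}. The base case $k=1$ is exactly Proposition~\ref{prop::generalized_monotonicity}, since then $\eta$ is an ordinary flow line of angle $\theta_1 < \wt\theta$ started at $0$ and $\wt\eta$ is a flow line of angle $\wt\theta$ started at $0 \geq 0$, so Proposition~\ref{prop::generalized_monotonicity} (with $x_1 = x_2 = 0$) gives that $\eta|_{[0,T]}$ stays to the right of $\wt\eta|_{[0,\wt T]}$.

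\textbf{Inductive step.} Suppose the result holds for angle-varying flow lines with $k-1$ angles. Given $\eta = \eta_{\theta_1\cdots\theta_k}$ with angle-change times $\tau_1 < \cdots < \tau_{k-1}$, I would first apply the induction hypothesis to the initial segment $\eta|_{[0,\tau_{k-1}\wedge T]}$, which is an angle-varying flow line with angles $\theta_1,\ldots,\theta_{k-1}$: it lies to the right of $\wt\eta|_{[0,\wt T]}$. (Here one uses the condition \eqref{eqn::angle_bounds} to guarantee that the relevant angle-varying paths are continuous and non-self-crossing so that the notion of ``lies to the right of'' makes sense; the continuity needed at this point in the article is supplied by Lemma~\ref{lem::av_simple_determined} in the non-boundary-intersecting regime and, once Theorem~\ref{thm::continuity} is available, in full generality.) It remains to control the final segment $\eta|_{[\tau_{k-1},T]}$, which is an ordinary flow line of angle $\theta_k$ started at $\eta(\tau_{k-1})$. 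Let $\sigma$ be the first time $\eta|_{[\tau_{k-1},T]}$ intersects $\wt\eta|_{[0,\wt T]}$ (equivalently, the first time $\wt\eta$ hits this final segment); if no such intersection occurs there is nothing more to prove. Otherwise, set $\tau$ equal to the corresponding first hitting time for $\wt\eta$ and let $K = \eta([0,\sigma\wedge T]) \cup \wt\eta([0,\tau\wedge\wt T])$. Lemma~\ref{lem::crossing_local_stopping}, applied with $\eta_{\theta_1}^{x_1} = \eta$ (using the stopping time $\tau_{k-1}$ in the role of $\sigma_2$ so that the already-established monotonicity of the initial segment plays the role of the hypothesis) and $\eta_{\theta_2}^{x_2} = \wt\eta$, yields that $K$ is a local set, that $\eta(\sigma) = \wt\eta(\tau) =: w_0$ is a single common point, and that, conditionally on $K$ and $h|_K$, the paths $\eta|_{[\sigma,T]}$ and $\wt\eta|_{[\tau,\wt T]}$ are, respectively, the flow line of angle $\theta_k$ and the flow line of angle $\wt\theta$ of the conditional field $h|_D$ (where $D$ is the unbounded component of $\h\setminus K$) both started at $w_0$.

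\textbf{Conclusion and main obstacle.} Now map $D$ conformally to $\h$ by $\psi$ with $\psi(w_0) = 0$ and $\psi(\infty) = \infty$; by the transformation rule \eqref{eqn::ac_eq_rel} the images $\psi(\eta|_{[\sigma,T]})$ and $\psi(\wt\eta|_{[\tau,\wt T]})$ are flow lines of the GFF $h\circ\psi^{-1} - \chi\arg(\psi^{-1})'$ started at $0$ with angles $\theta_k < \wt\theta$. Since $h$ has piecewise constant boundary data which changes finitely often and $\partial D$ consists of finitely many boundary arcs of $\h$ together with the traces of $\eta$ and $\wt\eta$ (along which, by Lemma~\ref{lem::hitting_conditional_law}, the conditional boundary data is piecewise constant up to the $\chi\cdot\mathrm{winding}$ terms), the transformed field again has piecewise constant boundary data changing finitely often. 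Hence Proposition~\ref{prop::generalized_monotonicity} applies and shows $\psi(\eta|_{[\sigma,T]})$ stays to the right of $\psi(\wt\eta|_{[\tau,\wt T]})$, which pulls back to the desired statement. Combining the two pieces---$\eta$ right of $\wt\eta$ before $w_0$ (induction hypothesis plus the fact that $\eta|_{[\sigma,T]}$ cannot re-enter a bounded component, from Lemma~\ref{lem::crossing_local_stopping}) and after $w_0$ (the argument just given)---completes the induction. I expect the main obstacle to be the bookkeeping needed to verify that the hypotheses of Lemma~\ref{lem::crossing_local_stopping} are genuinely met at the junction time $\tau_{k-1}$---in particular ensuring that ``$\eta|_{[0,\tau_{k-1}]}$ lies to the right of $\wt\eta$'' is exactly the kind of one-sided configuration that Lemma is designed to handle, and that the relative-angle constraint \eqref{eqn::angle_bounds} (which controls self-crossing but is weaker than the $|\theta_i-\theta_j|\le\pi$ used in Lemma~\ref{lem::av_simple_determined}) does not create difficulties; this is precisely why the continuity inputs are flagged as conditional on Theorem~\ref{thm::continuity} until Section~\ref{subsec::many_boundary_force_points}.
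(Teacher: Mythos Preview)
Your proposal is correct and follows essentially the same approach as the paper: induction on $k$ with base case Proposition~\ref{prop::generalized_monotonicity}, applying the induction hypothesis to $\eta|_{[0,\tau_{k-1}\wedge T]}$, then using Lemma~\ref{lem::crossing_local_stopping} at the first post-$\tau_{k-1}$ intersection to reduce to Proposition~\ref{prop::generalized_monotonicity} in the unbounded complementary component. The only imprecision is that Lemma~\ref{lem::crossing_local_stopping} is stated for two fixed-angle flow lines, whereas here $\eta$ is angle-varying; the paper handles this by invoking ``the argument of Lemma~\ref{lem::crossing_local_stopping}'' rather than the lemma itself, and you should do the same (the role assignment you wrote, with $\eta$ as $\eta_{\theta_1}^{x_1}$ and $\tau_{k-1}$ as $\sigma_2$, does not literally match the lemma's hypotheses, but the underlying argument via Lemma~\ref{lem::cannot_hit_right_side} and locality goes through unchanged).
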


The hypothesis that the angles $\theta_i$ satisfy~\eqref{eqn::angle_bounds} implies that $\eta$ never crosses itself, though~$\eta$ may hit itself (it turns out that if $|\theta_i - \theta_j| \leq \pi$, then $\eta$ never hits itself).

\begin{proof}[Proof of Proposition~\ref{prop::angle_varying_monotonicity}]
We will prove the result by induction on the number of times that $\eta$ changes angles.  Proposition~\ref{prop::generalized_monotonicity} implies the result for $k=1$ (the fixed angle case).  Suppose $k \geq 2$ and the result holds for $k-1$ (which corresponds to $k-2$ angle changes).  The induction hypothesis implies that $\eta|_{[0,\tau_{k-1} \wedge T]}$ almost surely stays to the right of $\wt{\eta}|_{[0,\wt{T}]}$.  Let $\wt{\tau}$ be the first time that $\wt{\eta}|_{[0,\wt{T}]}$ hits $\eta|_{[\tau_{k-1},T]}$ and let $\tau$ be the first time that $\eta|_{[0,T]}$ hits $\wt{\eta}|_{[0,\wt{T}]}$ after time $\tau_{k-1}$.  If $\tau = \wt{\tau} = \infty$, then the desired result is trivial, so we shall assume that $\tau,\wt{\tau} < \infty$.  The argument of Lemma~\ref{lem::crossing_local_stopping} implies that $K = \eta([0,\tau]) \cup \wt{\eta}([0,\wt{\tau}])$ is a local set for $h$ and that $w_0 = \eta(\tau) = \wt{\eta}(\wt{\tau})$.  Let $D$ be the unbounded connected component of $\h \setminus K$.  Moreover, the argument of Lemma~\ref{lem::crossing_local_stopping} also implies that $\wt{\eta}|_{[\wt{\tau},\wt{T}]}$ is the flow line of $h|_D$ conditional on $\CK$ (where $\CK$ plays the same role for $K$ as $\CA$ from Section~\ref{subsec::local_sets}) with angle $\wt{\theta}$ in $D$ starting at $w_0$ and that $\eta|_{[\tau,T]}$ is the flow line of $h|_D$ conditional on $\CK$ with angle $\theta_k < \wt{\theta}$, also starting at $w_0$.  Consequently, the result follows from Proposition~\ref{prop::generalized_monotonicity}.
\end{proof}

Proposition~\ref{prop::angle_varying_monotonicity} immediately implies the following.  Suppose that $\wt{\theta}_1,\ldots,\wt{\theta}_\ell$ is another collection of angles and $\wt{\eta} = \eta_{\wt{\theta}_1 \cdots \wt{\theta}_\ell}^{\wt{\tau}_1 \cdots \wt{\tau}_\ell}$ is the angle varying flow line with corresponding angle change times $\wt{\tau}_1,\ldots,\wt{\tau}_{\ell-1}$.  If $\min_i \wt{\theta}_i > \max_i \theta_i$ and $T,\wt{T}$ are stopping times for $\eta,\wt{\eta}$, respectively, such that $\eta|_{[0,T]}$ and $\wt{\eta}_{[0,\wt{T}]}$ are both almost surely continuous then $\eta|_{[0,T]}$ almost surely lies to the right of $\wt{\eta}|_{[0,\wt{T}]}$.

Next, we will complete the proof of Theorem~\ref{thm::monotonicity_crossing_merging} (contingent on continuity hypotheses which will be removed in the next subsection) in the following proposition:

\begin{proposition}
\label{prop::merging_and_crossing}
Suppose that $h$ is a GFF on $\h$ with piecewise constant boundary data which changes a finite number of times.  Fix $x_1 > x_2$ and angles $\theta_1,\theta_2$.  For $i=1,2$ let $T_i$ be a stopping time for $\eta_{\theta_i}^{x_i}$ such that $\eta_{\theta_i}^{x_i}|_{[0,T_i]}$ is almost surely continuous.  If $\theta_2 <\theta_1 < \theta_2+\pi$, then $\eta_{\theta_1}^{x_1}|_{[0,T_1]}$ almost surely crosses $\eta_{\theta_2}^{x_2}|_{[0,T_2]}$ upon intersecting.  After crossing, $\eta_{\theta_1}^{x_1}|_{[0,T_1]}$ and $\eta_{\theta_2}^{x_2}|_{[0,T_2]}$ may continue to bounce off of each other, but will never cross again.  If $\theta_1 = \theta_2$, then $\eta_{\theta_1}^{x_1}|_{[0,T_1]}$ merges with $\eta_{\theta_2}^{x_2}|_{[0,T_2]}$ upon intersecting.
\end{proposition}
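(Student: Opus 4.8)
\textbf{Proof proposal for Proposition~\ref{prop::merging_and_crossing}.}  The plan is to reduce both the crossing and the merging statements to our general monotonicity result (Proposition~\ref{prop::generalized_monotonicity}) and the local-set machinery of Lemma~\ref{lem::crossing_local}, Lemma~\ref{lem::hitting_conditional_law}, and Lemma~\ref{lem::crossing_local_stopping}.  The point is that once we run the two flow lines up to the first time they touch, Lemma~\ref{lem::crossing_local} tells us that $w_0 := \eta_{\theta_1}^{x_1}(\tau_1) = \eta_{\theta_2}^{x_2}(\tau_2)$ and Lemma~\ref{lem::hitting_conditional_law} describes the conditional field $h|_D$ in the unbounded component $D$ of $\h \setminus K$ and says that the continuations $\eta_{\theta_i}^{x_i}|_{[\tau_i,T_i]}$ are flow lines of that conditional field of angle $\theta_i$, both \emph{now started at the common point $w_0$}.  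So after one intersection we are exactly in the setting of two flow lines of the same GFF emanating from a single boundary point with angles $\theta_1$ and $\theta_2$, which is governed by Proposition~\ref{prop::monotonicity_boundary_intersecting} (same-point monotonicity, constant-on-each-side boundary data) and, after a further conformal change and absolute continuity (Proposition~\ref{prop::gff_abs_continuity}), by Proposition~\ref{prop::generalized_monotonicity}.

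\textbf{Crossing case $\theta_2 < \theta_1 < \theta_2 + \pi$.}  First I would argue that a crossing really does occur the first time the paths meet.  Let $\tau_1, \tau_2$ be the first hitting times as in Lemma~\ref{lem::crossing_local}; on the event $\tau_1 = \tau_2 = \infty$ there is nothing to prove, so assume they are finite and set $w_0 = \eta_{\theta_1}^{x_1}(\tau_1) = \eta_{\theta_2}^{x_2}(\tau_2)$.  Using Lemma~\ref{lem::hitting_conditional_law}, the continuations are flow lines from $w_0$ of the conditional field, whose boundary data just to the left of $w_0$ (along $\eta_{\theta_2}^{x_2}([0,\tau_2])$) and just to the right of $w_0$ (along $\eta_{\theta_1}^{x_1}([0,\tau_1])$) is read off from Figure~\ref{fig::crossing_local_proof}: it is $-\lambda' - \theta_2 \chi$ on the left and $\lambda' - \theta_1\chi$ on the right.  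Now the key computation: because $\theta_1 > \theta_2$, the ``angle'' of $\eta_{\theta_1}^{x_1}$ forces its flow line (from $w_0$, in the conditional geometry) to head into the region on the $\eta_{\theta_2}^{x_2}$-side, i.e.\ to pass to the left of a reference flow line of angle $\theta_2$ from $w_0$ --- precisely what Proposition~\ref{prop::monotonicity_boundary_intersecting} / Proposition~\ref{prop::generalized_monotonicity} gives once one checks the relevant force-point weights exceed $-2$, which is where the hypothesis $\theta_1 < \theta_2 + \pi$ enters (it keeps the weight $(\theta_1-\theta_2)\chi/\lambda - 2 > -2$, equivalently the relative angle is less than $2\lambda/\chi \cdot \tfrac12$... more precisely $\theta_1 - \theta_2 < \pi$ is exactly $2\lambda/\chi$ is not the bound here; the bound $\theta_1 - \theta_2 < \pi$ corresponds via $2\pi\chi = (4-\kappa)\lambda$ to the weight being $> -2$).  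Hence $\eta_{\theta_1}^{x_1}$ genuinely passes from the right side of $\eta_{\theta_2}^{x_2}$ to its left side: a crossing.  To see there is no second crossing back, I would apply Proposition~\ref{prop::generalized_monotonicity} in the conditional geometry: after the crossing, $\eta_{\theta_1}^{x_1}$ is a flow line of angle $\theta_1$ and $\eta_{\theta_2}^{x_2}$ of angle $\theta_2 < \theta_1$, but now with $\eta_{\theta_1}^{x_1}$ emanating from a point which is to the \emph{left} of the relevant part of $\eta_{\theta_2}^{x_2}$, so monotonicity forces $\eta_{\theta_1}^{x_1}$ to stay (weakly) to the left of $\eta_{\theta_2}^{x_2}$ thereafter --- they may bounce but never cross back.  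One iterates this via a density-of-stopping-times argument to cover all of $[0,T_1] \times [0,T_2]$, exactly in the style of the proof of Proposition~\ref{prop::generalized_monotonicity}, and invokes Lemma~\ref{lem::crossing_local_stopping} at each stage to handle restarting after the previous intersection.

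\textbf{Merging case $\theta_1 = \theta_2$.}  Here the new input is that two flow lines of the \emph{same} angle started at the same point must coincide, which follows from Theorem~\ref{thm::coupling_uniqueness} (in the two-boundary-force-point case already available, Proposition~\ref{prop::two_force_point_uniqueness_and_continuity}): conditionally on $K$ and $h|_K$, each of $\eta_{\theta_i}^{x_i}|_{[\tau_i,T_i]}$ is \emph{the} flow line of $h|_D$ of angle $\theta$ from $w_0$, and since that flow line is almost surely determined by the field, the two continuations are almost surely equal.  Thus from the first intersection onward the paths agree; one then runs the usual density-of-stopping-times/continuity argument to conclude they never separate (if they separated there would be a later first intersection to which the same argument applies, giving a contradiction with the fact that between those times one lies strictly on one side of the other while sharing endpoints).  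I would also record that $w_0 \in \partial\h$ is a possibility (handled by Proposition~\ref{prop::cond_mean_continuous} as in Lemma~\ref{lem::hitting_conditional_law}), not just $w_0 \in \h$.

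\textbf{Main obstacle.}  The routine parts are the bookkeeping of boundary data/force-point weights and the density argument.  The genuine difficulty is establishing that the \emph{first} contact is a true crossing rather than a bounce, i.e.\ showing in the conditional geometry that the angle-$\theta_1$ flow line from $w_0$ actually emerges on the far ($\eta_{\theta_2}^{x_2}$) side.  This requires comparing $\eta_{\theta_1}^{x_1}|_{[\tau_1,\cdot]}$ against a reference flow line of angle $\theta_2$ (which coincides with $\eta_{\theta_2}^{x_2}|_{[\tau_2,\cdot]}$) from the same point $w_0$ and arguing \emph{strict} one-sidedness (that $\eta_{\theta_1}^{x_1}$ does not merge with or stay on the same side as $\eta_{\theta_2}^{x_2}$); this is where one needs the sharp form of Proposition~\ref{prop::monotonicity_boundary_intersecting} together with the constraint $\theta_1 < \theta_2 + \pi$ (so that the relevant $\SLE_\kappa(\ul\rho)$ has a force-point weight strictly above $-2$ and the path is not trapped).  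Once that strict crossing at the first contact is in hand, the ``never cross back'' and ``merging'' conclusions are formal consequences of Proposition~\ref{prop::generalized_monotonicity} and Theorem~\ref{thm::coupling_uniqueness} respectively.
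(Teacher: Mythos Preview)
Your approach is correct and is essentially the same as the paper's: run both paths to their first mutual intersection, invoke Lemma~\ref{lem::crossing_local} and Lemma~\ref{lem::hitting_conditional_law} to see the continuations are flow lines of the conditional field from the common point $w_0$, then apply Proposition~\ref{prop::generalized_monotonicity} (crossing case) or Theorem~\ref{thm::coupling_uniqueness} (merging case).

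That said, you are making this harder than it is. A single application of Proposition~\ref{prop::generalized_monotonicity} to the two continuations from $w_0$ (same starting point, angles $\theta_1>\theta_2$) immediately gives that $\eta_{\theta_1}^{x_1}|_{[\tau_1,T_1]}$ stays to the left of $\eta_{\theta_2}^{x_2}|_{[\tau_2,T_2]}$ for all subsequent times; since before $\tau_i$ the path $\eta_{\theta_1}^{x_1}$ was to the right (it starts at $x_1>x_2$ and they have not yet met), this \emph{is} the crossing, and the ``never cross back'' statement is the very same conclusion. There is no separate ``first contact is a true crossing'' obstacle, no density-of-stopping-times iteration, and no need for Lemma~\ref{lem::crossing_local_stopping} here. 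Likewise in the merging case, Theorem~\ref{thm::coupling_uniqueness} says the angle-$\theta$ flow line from $w_0$ in the conditional field is unique, so the two continuations agree identically from $\tau_i$ onward; no further argument is needed to rule out separation. Finally, your attribution of the hypothesis $\theta_1<\theta_2+\pi$ is off: it is not used to keep a force-point weight above $-2$ in the monotonicity step (Proposition~\ref{prop::generalized_monotonicity} has no such constraint), but rather it is exactly the hypothesis of Lemma~\ref{lem::cannot_hit_right_side} and hence of Lemma~\ref{lem::crossing_local}, which is what guarantees the first meeting point is common and the continuations live in the unbounded component.
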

The statement of the proposition implies that $\eta_{\theta_1}^{x_1}|_{[0,T_1]}$ almost surely crosses $\eta_{\theta_2}^{x_2}|_{[0,T_2]}$ upon intersecting, but it is not necessarily true that $\eta_{\theta_1}^{x_1}|_{[0,T_1]}$ intersects $\eta_{\theta_2}^{x_2}|_{[0,T_2]}$ since  one of the flow lines may get stuck upon hitting the continuation threshold.
\begin{proof}
Let $\tau_1$ be the first time that $\eta_{\theta_1}^{x_1}|_{[0,T_1]}$ intersects $\eta_{\theta_2}^{x_2}|_{[0,T_2]}$ and $\tau_2$ the first time that $\eta_{\theta_2}^{x_2}|_{[0,T_2]}$ intersects $\eta_{\theta_1}^{x_1}|_{[0,T_1]}$.  If $\tau_1 = \tau_2 = \infty$, then the desired result is trivial, so we shall assume that $\tau_1,\tau_2 < \infty$.  Lemma~\ref{lem::crossing_local} implies that $K = \eta_{\theta_1}^{x_1}([0,\tau_1]) \cup \eta_{\theta_2}^{x_2}([0,\tau_2])$ is a local set for $h$ and that $\eta_{\theta_i}^{x_i}|_{[\tau_i,T_i]}$, $i=1,2$, is almost surely contained in the unbounded connected component $D$ of $\h \setminus K$.  By Lemma~\ref{lem::hitting_conditional_law}, we know that $\eta_{\theta_i}^{x_i}|_{[\tau_i,T_i]}$ is the flow line of $h|_D$ given $\CK$ (where $\CK$ plays the same role for $K$ as $\CA$ from Section~\ref{subsec::local_sets}) starting at $\eta_{\theta_1}^{x_1}(\tau_1) = \eta_{\theta_2}^{x_2}(\tau_2)$ with angle $\theta_i$ for $i=1,2$.  If $\theta_2 < \theta_1 < \theta_2+\pi$, then Proposition~\ref{prop::generalized_monotonicity} implies that $\eta_{\theta_1}^{x_1}|_{[\tau_1,T_1]}$ stays to the left of $\eta_{\theta_2}^{x_2}|_{[\tau_2,T_2]}$.  The merging claim comes as a consequence of Theorem~\ref{thm::coupling_uniqueness}, which implies that there is a unique flow line for each given angle.
\end{proof}

\begin{figure}[h!]
\begin{center}
\includegraphics[scale=0.85,page=1]{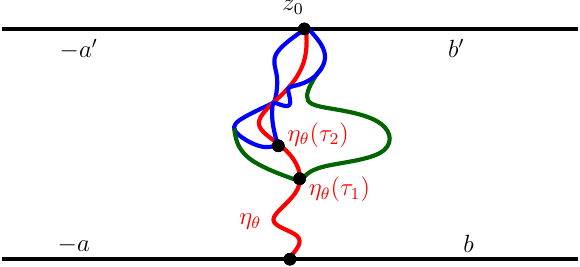}
\end{center}
\caption{\label{fig::merging_through_duality} The merging phenomenon of Figure~\ref{fig::merging_and_crossing_phases} and Proposition~\ref{prop::merging_and_crossing} can also be seen through the light cone perspective of $\SLE_{16/\kappa}$.  To see this, fix a GFF $h$ on the strip $\strip$ with the boundary data above.  Assume for simplicity that $a',b' \geq \lambda' + \pi \chi$ so that $\eta'$ almost surely does not intersect $\striptop$ except at $z_0$.  Let $\eta_\theta$ be the flow line of $h$ of angle $\theta \in (-\tfrac{\pi}{2},\tfrac{\pi}{2})$ starting at $0$.  Let $\tau_1 < \tau_2$ be stopping times for $\eta_\theta$.  Then we know by Proposition~\ref{prop::light_cone_construction} that the flow lines $\eta_i$, $i=1,2$, which run along $\eta_\theta$ until time $\tau_i$ and then flow at angle $\tfrac{1}{\chi}(\lambda-\lambda') = \tfrac{\pi}{2}$ are almost surely the left boundary of the counterflow line $\eta'$ starting at $z_0$ upon hitting $\eta_\theta(\tau_i)$.  This implies that $\eta_1,\eta_2$ almost surely merge and then never separate since $\eta'$ does not cross itself.  This gives the merging result with constant boundary data since the conditional law of $h$ given $\eta_\theta$ in the left connected component of $\strip \setminus \eta_\theta$ close to $\eta_\theta(\tau_1),\eta_\theta(\tau_2)$ looks like a GFF with constant boundary data, provided $\tau_1$ and $\tau_2$ are chosen to be very close to each other.}
\end{figure}

\begin{figure}[h!]
\begin{center}
\includegraphics[scale=0.85,page=2]{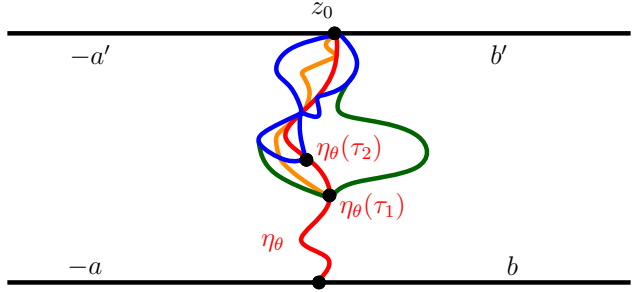}
\end{center}
\caption{\label{fig::crossing_through_duality} (Continuation of Figure~\ref{fig::merging_through_duality}.) It is also possible to see the crossing phenomenon of Figure~\ref{fig::merging_and_crossing_phases} and Proposition~\ref{prop::merging_and_crossing} through light cones and duality.  Indeed, assume we have the same setup as Figure~\ref{fig::crossing_through_duality} except we take the angle of $\eta_1$ at $\eta_\theta(\tau_1)$ to be an intermediate value in the range $(\theta,\tfrac{\pi}{2})$ (orange curve).  Then $\eta_1$ has to cross $\eta_2$ since $\eta'$ swallows $\eta_1$ (Lemma~\ref{lem::light_cone_contains_av}) and $\eta_2$ contains the left boundary of $\eta'$ when it hits $\eta_\theta(\tau_2)$ (Proposition~\ref{prop::light_cone_construction}).  It is impossible for $\eta_1$ to cross $\eta_2$ subsequently since $\eta'$ swallows the points in $\eta_1$ in reverse chronological order (Lemma~\ref{lem::light_cone_contains_av}).}
\end{figure}

\begin{remark}
\label{rem::intersect_almost_surely}
We remark that it is straightforward in the setting of Proposition~\ref{prop::merging_and_crossing} to compute the conditional law of $\eta_{\theta_1}^{x_1}$ given $\eta_{\theta_2}^{x_2}$ before $\eta_{\theta_1}^{x_1}$ crosses $\eta_{\theta_2}^{x_2}$.  For simplicity, we assume that the boundary data of $h$ is given by some constant, say $c$ (note that Proposition~\ref{prop::two_force_point_uniqueness_and_continuity} implies that $\eta_{\theta_i}^{x_i}$ is almost surely continuous for all time, $i=1,2$).  In this case, the conditional law is that of an $\SLE_{\kappa}(\rho^{1,L},\rho^{2,L};\rho^{1,R})$ process in the connected component of $\h \setminus \eta_{\theta_2}^{x_2}$ which contains $x_1$ where
\[ \rho^{1,L} = -\frac{\theta_1 \chi +c}{\lambda} - 1,\ \ \ \rho^{1,L} + \rho^{2,L} = -\frac{(\theta_1-\theta_2) \chi}{\lambda}-2,\ \ \ \rho^{1,R} = \frac{\theta_1 \chi + c}{\lambda} -1.\]
In particular, since $\theta_1 \geq \theta_2$, we have that $\rho^{1,L} + \rho^{2,L} \leq -2$.  This implies that in this case, $\eta_{\theta_1}^{x_1}$ almost surely intersects (hence crosses) $\eta_{\theta_2}^{x_2}$.  This holds more generally whenever we have boundary data which is piecewise constant and is such that $\eta_{\theta_1}^{x_1}$ and $\eta_{\theta_2}^{x_2}$ can be continued upon intersecting $\partial \h$.  The facts summarized here will be useful for us in Section~\ref{subsec::many_boundary_force_points} because we will employ them in order to prove the almost sure continuity of $\SLE_{\kappa}(\ul{\rho})$ processes right up to when the continuation threshold is hit.
\end{remark}

\begin{remark}
\label{rem::crossing_boundary_data}
It is also straightforward in the setting of Proposition~\ref{prop::merging_and_crossing} to compute the conditional law of the segment of the path $\eta_{\theta_1}^{x_1}$ after it has crossed $\eta_{\theta_2}^{x_2}|_{[0,T_2]}$.  For example, in the special case that $h$ has constant boundary data $c$ (as before, we already know that both paths are continuous from Proposition~\ref{prop::two_force_point_uniqueness_and_continuity}), the conditional law of the segment of $\eta_{\theta_1}^{x_1}$ after it has crossed $\eta_{\theta_2}^{x_2}$ given both
\begin{enumerate}
\item its realization up until hitting $\eta_{\theta_2}^{x_2}$ and
\item the entire realization of $\eta_{\theta_2}^{x_2}$
\end{enumerate}
viewed as a path in the component of $\h \setminus \eta_{\theta_2}^{x_2}$ in which it immediately enters after crossing is that of an $\SLE_\kappa(\rho^{1,L},\rho^{2,L};\rho^{1,R})$ where
\[ \rho^{1,L} = \frac{(\theta_2-\theta_1) \chi}{\lambda},\ \ \ \rho^{1,L} + \rho^{2,L} = -\frac{\theta_1\chi + c}{\lambda} - 1,\ \ \ \rho^{1,R} = \frac{(\theta_1-\theta_2) \chi}{\lambda} -2.\]
\end{remark}

\begin{remark}
\label{rem::merging_crossing_duality}
Both the merging and crossing phenomena described in Proposition~\ref{prop::merging_and_crossing} can also be seen as a consequence of the light cone construction of counter flow lines described in Section~\ref{subsec::light_cone}.  The former is explained in Figure~\ref{fig::merging_through_duality} and the latter is in Figure~\ref{fig::crossing_through_duality}.
\end{remark}

\subsection{Continuity for many boundary force points}
\label{subsec::many_boundary_force_points}

We will now complete the proof of Theorem~\ref{thm::continuity} for $\kappa \in (0,4]$ by extending the special case proved in Proposition~\ref{prop::two_force_point_uniqueness_and_continuity} to the setting of many boundary force points.  We begin by noting that absolute continuity (Proposition~\ref{prop::gff_abs_continuity}) along with the two force point case implies that in this more general setting, $\eta \sim \SLE_{\kappa}(\ul{\rho})$ is almost surely a continuous curve when it hits $\partial \h$ between force points before the continuation threshold is hit.  Indeed, in this case absolute continuity implies that $\eta$ locally evolves like an $\SLE_\kappa(\rho)$ process with just one force point with weight $\rho > -2$.  Thus to get the continuity of a general $\SLE_\kappa(\ul{\rho})$ process, we need to rule out pathological behavior when $\eta$ interacts with a force point or hits the boundary at the continuation threshold.  We will accomplish the former in the next series of lemmas, in which we systematically study the behavior of $\SLE_\kappa(\rho^{1,R},\rho^{2,R})$ processes in $\h$ from $0$ to $\infty$ with two force points located to the right of $0$.  We will show that if $\rho^{1,R}, \rho^{1,R} + \rho^{2,R} > -2$, then $\eta$ almost surely does not hit its force points.  We will prove the continuity right at the continuation threshold in Lemma~\ref{lem::cont_continuation_threshold}.  At the continuation threshold, it turns out that whether or not $\eta$ hits a particular force point depends on the sum of the weights.  This is natural to expect in view of Lemma~\ref{lem::hitting_single_point}.

The first lemma of the subsection is a simple technical result which states that the set which consists of those points where an $\SLE_\kappa(\rho)$ process with a single force point of weight $\rho > -2$ is in $\partial \h$ almost surely has zero Lebesgue measure.  This will be employed in Lemma~\ref{lem::not_intersect_force_point}, which handles the regime of $\rho^{1,R},\rho^{2,R}$ where either $|\rho^{2,R}| < 2$ or $\rho^{1,R} \in (-2,\tfrac{\kappa}{2} - 2)$.  In Lemma~\ref{lem::not_intersect_force_point2}, we will weaken the hypothesis to $\rho^{1,R},\rho^{1,R}+\rho^{2,R} > -2$.  The reason that we need to make the stronger hypothesis in Lemma~\ref{lem::not_intersect_force_point} is that its proof will proceed in analogy with the argument given in Section~\ref{subsec::two_boundary_force_points}, except rather than conditioning on flow lines with angles $\theta_1 < 0 < \theta_2$, we will condition on an angle-varying flow line.  The hypothesis that $|\rho^{2,R}| < 2$ implies that the angle-varying flow line does not cross itself.

\begin{figure}[h!]
\begin{center}
\includegraphics[scale=0.85]{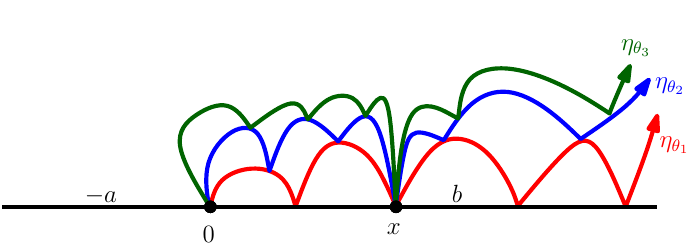}
\end{center}
\caption{\label{fig::sle_kappa_rho_boundary_intersection_zero_lebesgue_measure}  Suppose that $h$ is a GFF on $\h$ whose boundary data is as depicted above where $a,b$ are very large.  Fix $\rho^L > -2$ and $\rho^R \in (-2,\tfrac{\kappa}{2}-2)$.  Choose angles $\theta_1 < \cdots < \theta_n$ and, for each $i$, let $\eta_{\theta_i}$ be the flow line of $h$ with angle $\theta_i$.  Assume that $\eta_{\theta_1} \sim \SLE_{\kappa}(\rho_1^L;\rho^R)$ and that the angles $\theta_2,\ldots,\theta_n$ are such that, for each $k \in \{2,\ldots,n\}$, the conditional law of $\eta_{\theta_k}$ given $\eta_{\theta_{k-1}}$ is an $\SLE_{\kappa}(\rho_k^L;\rho^R)$ process in the left connected component of $\h \setminus \eta_{\theta_{k-1}}$.  By scale invariance, the probability that $\eta_{\theta_k}$ conditional on $\eta_{\theta_{k-1}}$ hits a particular point $x \neq 0$ in $\eta_{\theta_{k-1}}$ is a function of $\rho_k^L$, say $p(\rho_k^L)$, but not $x$.  This implies that the probability that $\eta_{\theta_n}$ hits $x \in \R_+$ is $p = p(\rho_1^L) \cdots p(\rho_n^L)$.  By choosing $n$ large enough, we can arrange so that $\eta_{\theta_n} \sim \SLE_\kappa(\rho_n^L;\rho_n^R)$ with $\rho_n^R \geq \tfrac{\kappa}{2}-2$.  This implies that $p = 0$, so there exists $1 \leq k_0 \leq n$ such that $p(\rho_{k_0}^L) = 0$.  From this, it is possible to see that the probability that an $\SLE_\kappa(\rho^L;\rho^R)$ process hits a particular point on $\R_+$ is zero (see Lemma~\ref{lem::sle_does_not_cover_boundary}).
}
\end{figure}

\begin{lemma}
\label{lem::sle_does_not_cover_boundary}
Suppose that $\eta \sim \SLE_\kappa(\rho^L;\rho^R)$ in $\h$ from $0$ to $\infty$ with $\rho^L > -2$ and $\rho^R \in (-2,\tfrac{\kappa}{2}-2)$ where the force points are located at $0^-$ and $0^+$, respectively.  The Lebesgue measure of $\eta \cap \partial \h$ is almost surely zero.  In particular, for any $x \in \partial \h \setminus \{0\}$, the probability that $\eta$ hits $x$ is zero.
\end{lemma}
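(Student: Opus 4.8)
The plan is to realize $\eta \sim \SLE_\kappa(\rho^L;\rho^R)$ with $\rho^R \in (-2,\tfrac{\kappa}{2}-2)$ as a flow line of a GFF $h$ on $\h$ whose boundary data is chosen so that the flow line of angle $0$ has exactly these weights at $0^-,0^+$ (cf.\ Figure~\ref{fig::flow_line_angles} and the setup of Section~\ref{subsec::two_boundary_force_points}); I take $a,b$ large so that all the flow lines below are covered by Proposition~\ref{prop::two_force_point_uniqueness_and_continuity} and Proposition~\ref{prop::monotonicity_boundary_intersecting}. Then I would build a finite increasing family of angles $\theta_1 < \theta_2 < \cdots < \theta_n$, with $\theta_1$ chosen so that $\eta_{\theta_1}$ is an $\SLE_\kappa(\rho_1^L;\rho^R)$ process (we can take $\theta_1=0$, so $\eta_{\theta_1}=\eta$), and the remaining angles chosen so that, by Proposition~\ref{prop::monotonicity_boundary_intersecting}, the conditional law of $\eta_{\theta_k}$ given $\eta_{\theta_{k-1}}$ is that of an $\SLE_\kappa(\rho_k^L;\rho^R)$ process independently in each connected component of $\h\setminus\eta_{\theta_{k-1}}$ lying to the left of $\eta_{\theta_{k-1}}$ --- here the right weight stays fixed at $\rho^R$ because the right side of $\eta_{\theta_k}$ never sees $\eta_{\theta_{k-1}}$, while the left weight $\rho_k^L$ decreases with each angle increment according to the formula $\rho^L \mapsto (\theta_k-\theta_{k-1})\chi/\lambda - 2$ from Proposition~\ref{prop::monotonicity_boundary_intersecting}. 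Since the increments of $\rho^L$ under increasing angle are (essentially) arbitrary negative numbers, after finitely many steps we reach $\eta_{\theta_n}\sim\SLE_\kappa(\rho_n^L;\rho^R)$ where $\rho_n^L$ has been pushed down enough that, when we instead track the \emph{right} side, an equivalent description makes the relevant hitting probability vanish; more precisely, I will arrange the chain so that $\eta_{\theta_n}$ almost surely does not intersect the portion of $\R_+$ in question (this is the regime $\rho \geq \tfrac{\kappa}{2}-2$, which is non-boundary-intersecting by the standard Bessel comparison, Remark~\ref{rem::continuity_non_boundary}).

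The key probabilistic step is the following product structure. Fix $x\in\R_+\setminus\{0\}$. By scale invariance of $\SLE_\kappa(\ul\rho)$ and the conformal Markov property, the probability that $\eta_{\theta_k}$, conditioned on $\eta_{\theta_{k-1}}$, ever hits a given point of $\eta_{\theta_{k-1}}$ (in the component of $\h\setminus\eta_{\theta_{k-1}}$ to its left, which is conformally a half-plane with the point a marked boundary point) depends only on $\rho_k^L$ and not on the point; call it $p(\rho_k^L)$. Iterating and using that $\eta_{\theta_n}$ can hit $x\in\R_+$ only by first having $\eta_{\theta_{n-1}}$ reach $x$, then $\eta_{\theta_{n-2}}$ reach that point of $\eta_{\theta_{n-1}}$, and so on, gives that
\[
\p[x\in\eta_{\theta_n}] = p(\rho_1^L)\,p(\rho_2^L)\cdots p(\rho_n^L).
\]
By the choice of angles, $\rho_n^L \le \tfrac{\kappa}{2}-2$ after reindexing from the right, or more directly we choose $n$ and the angles so that $\eta_{\theta_n}$ is non-boundary-intersecting on $\R_+$, forcing the left-hand side to be $0$. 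Hence some factor $p(\rho_{k_0}^L)=0$. Running the same argument with $x$ replaced by a generic point and using that $\eta = \eta_{\theta_1}$, together with a short argument that $p(\rho_1^L)$ controls $\p[x\in\eta]$ (indeed $\p[x\in\eta]\le p(\rho_1^L)$ by the same iteration applied to just the first link, or directly $\p[x\in\eta]=p(\rho^L_1)$ with $\rho_1^L=\rho^L$), we get $\p[x\in\eta]=0$ for all $x\in\R_+\setminus\{0\}$; the case $x\in\R_-$ is symmetric. Finally, by Fubini, $\E[\mathrm{length}(\eta\cap\partial\h)] = \int_{\partial\h}\p[x\in\eta]\,dx = 0$, so $\eta\cap\partial\h$ has zero Lebesgue measure almost surely.

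The main obstacle I anticipate is making rigorous the claim that $\p[x\in\eta_{\theta_n}]$ factors as a product of single-link hitting probabilities. This requires care: one must verify that the events ``$\eta_{\theta_{k}}$ hits the relevant point of $\eta_{\theta_{k-1}}$'' have the right conditional independence across $k$, which rests on Proposition~\ref{prop::monotonicity_boundary_intersecting} (the conditional law being an independent $\SLE_\kappa(\rho_k^L;\rho^R)$ in each left component) and on the fact that hitting the point in question only depends on the behavior of $\eta_{\theta_k}$ inside the single component whose boundary contains that point. There is also a measurability/regularity subtlety in defining ``the point of $\eta_{\theta_{k-1}}$ corresponding to $x$'': one should fix $x$ first and track, for each realization, whether each successive flow line reaches it, using that all the flow lines are continuous (Proposition~\ref{prop::two_force_point_uniqueness_and_continuity}) and that $x$ is swallowed in a monotone fashion. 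A secondary point is checking that $\rho^L$ can indeed be decreased by arbitrary negative amounts through positive angle increments while keeping $\rho^R$ fixed and all intermediate weights $>-2$ --- this follows directly from the formula $\rho^L(\theta_k) = (\theta_k-\theta_{k-1})\chi/\lambda - 2$ in Proposition~\ref{prop::monotonicity_boundary_intersecting} together with $\chi,\lambda>0$, so it is routine once the bookkeeping is set up.
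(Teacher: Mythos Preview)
Your proposal follows the paper's overall architecture (a telescoping chain of flow lines, a product formula for the hitting probability, and the conclusion that some factor vanishes), but several key steps are garbled in ways that matter.

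First, the conditional weights are swapped. By Proposition~\ref{prop::monotonicity_boundary_intersecting}, when $\theta_k>\theta_{k-1}$ the flow line $\eta_{\theta_k}$ lies to the \emph{left} of $\eta_{\theta_{k-1}}$, so in each left component of $\h\setminus\eta_{\theta_{k-1}}$ the \emph{right} boundary is $\eta_{\theta_{k-1}}$ and the conditional \emph{right} weight is $(\theta_k-\theta_{k-1})\chi/\lambda-2$; the conditional \emph{left} weight is $(a-\theta_k\chi)/\lambda-1$, coming from $\R_-$. Thus ``the right side of $\eta_{\theta_k}$ never sees $\eta_{\theta_{k-1}}$'' is exactly backwards, and keeping the conditional right weight equal to the given $\rho^R$ forces a \emph{fixed} angle increment $\Delta\theta=(\rho^R+2)\lambda/\chi$, not an arbitrary one.

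Second, and more seriously, your mechanism for $\p[x\in\eta_{\theta_n}]=0$ is wrong. Pushing $\rho^L_n$ down does nothing for hitting $\R_+$; what the paper uses is that the \emph{unconditional} law of $\eta_{\theta_n}$ in $\h$ is $\SLE_\kappa(\rho_n^L;\rho_n^R)$ with $\rho_n^R=(b+\theta_n\chi)/\lambda-1$, and for $n$ large this exceeds $\tfrac{\kappa}{2}-2$, whence $\eta_{\theta_n}$ does not touch $\R_+$ (Lemma~\ref{lem::flow_cannot_hit}). Your sentence ``$\eta_{\theta_n}\sim\SLE_\kappa(\rho_n^L;\rho^R)$'' conflates the conditional law (right weight $\rho^R$) with the unconditional one (right weight $\rho_n^R$), and it is the latter that forces the product to vanish.

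Third, the last deduction is incomplete. From $\prod_k p(\rho_k^L)=0$ you get $p(\rho_{k_0}^L)=0$ for one particular value $\rho_{k_0}^L$, which need not equal the given $\rho^L$. The paper closes this gap with a second short coupling: pick $\wt\rho^L>\max(\rho^L,\rho_{k_0}^L)+2$, let the zero-angle flow line be $\SLE_\kappa(\wt\rho^L;\rho^R)$, and choose angles $\theta_1,\theta_2$ so that, given $\eta_{\theta_1}$ (resp.\ $\eta_{\theta_2}$), $\eta$ is conditionally $\SLE_\kappa(\rho^L;\rho^R)$ (resp.\ $\SLE_\kappa(\rho_{k_0}^L;\rho^R)$). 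Since the latter a.s.\ misses each fixed $x\in\R_+$ and $\eta_{\theta_2}$ lies to the left of $\eta_{\theta_1}$, the former does too. Your ``$\p[x\in\eta]\le p(\rho_1^L)$'' line does not substitute for this step.
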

\begin{proof}
Suppose that $h$ is a GFF on $\h$ whose boundary data is as in Figure~\ref{fig::sle_kappa_rho_boundary_intersection_zero_lebesgue_measure}.  By choosing $a,b$ very large, we can pick angles $\theta_1 < \cdots < \theta_n$ such that $\eta_{\theta_1}$ is an $\SLE_\kappa(\rho_1^L;\rho^R)$ process in $\h$ from $0$ to $\infty$ and, for each $k \in \{2,\ldots,n\}$, the law of $\eta_{\theta_k}$ given $\eta_{\theta_{k-1}}$ is an $\SLE_\kappa(\rho_k^L;\rho^R)$ process in the left connected component of $\h \setminus \eta_{\theta_{k-1}}$ (Proposition~\ref{prop::monotonicity_boundary_intersecting}) and that $\rho_k^L \geq 0$ for all $k \in \{1,\ldots,n\}$.  By the scale invariance of $\SLE_{\kappa}(\rho_k^L;\rho^R)$ processes with force points at $0^-,0^+$, there exists $p(\rho_k^L)$ such that the probability that $\eta_{\theta_k}$ hits any particular point $x \neq 0$ in $\eta_{\theta_{k-1}}$ is $p(\rho_k^L)$.  By choosing $n$ large enough, we can arrange that $\eta_{\theta_n} \sim \SLE_\kappa(\rho_n^L;\rho_n^R)$ with $\rho_n^R \geq \tfrac{\kappa}{2}-2$.  Fix $x \in \R_+$.  This implies that $\p[x \in \eta_{\theta_n}] = 0$ (see Lemma~\ref{lem::flow_cannot_hit}).  On the other hand, we also have that
\[ \p[x \in \eta_{\theta_n}] = p(\rho_1^L) \cdots p(\rho_n^L).\]
This implies there exists $1 \leq k_0 \leq n$ such that $p(\rho_{k_0}^L) = 0$, i.e.\ so that the probability that an $\SLE_\kappa(\rho_{k_0}^L;\rho^R)$ process hits any particular point $x \in \R_+$ is $0$.

To get the result for general choices of $\rho^L$, we fix $\wt{\rho}^L > \max(\rho^L;\rho_{k_0}^L)+2$ and let $h$ be a GFF on $\h$ whose boundary data is such that the zero angle flow line $\eta$ of $h$ is an $\SLE_\kappa(\wt{\rho}^L;\rho^R)$ process.  Then by choosing $\theta_1 = \lambda(\rho^L+2)/\chi$ and $\theta_2 = \lambda(\rho_{k_0}^L+2)/\chi$ and letting $\eta_{\theta_i}$ be the flow line of $h$ with angle $\theta_i$, $i=1,2$, Proposition~\ref{prop::monotonicity_boundary_intersecting} implies that the conditional law of $\eta$ given $\eta_{\theta_1}$ is an $\SLE_\kappa(\rho^L;\rho^R)$ process and the conditional law of $\eta$ given $\eta_{\theta_2}$ is an $\SLE_{\kappa}(\rho_{k_0}^L;\rho^R)$ process.  Since the probability that the latter hits any particular point $x \in \R_+$ is zero, it follows the same is true for the former.
\end{proof}

\begin{figure}[ht!]
\begin{center}
\includegraphics[scale=0.85]{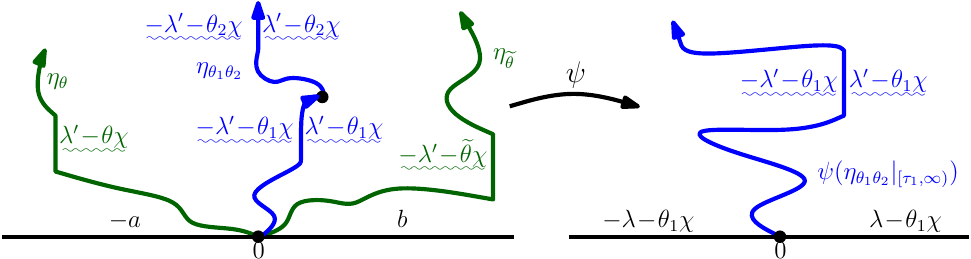}
\end{center}
\caption{\label{fig::two_angle_change_cts}  Suppose that $h$ is a GFF on $\h$ whose boundary data is as depicted above.  Assume that $|\theta_1-\theta_2| < 2\lambda/\chi$.  Let $\eta_{\theta_1 \theta_2}^{\tau_1 \tau_2}$ be the angle varying flow line of $h$ starting at $0$ with angle change time $\tau_1 > 0$ and angles $\theta_1,\theta_2$.  We can see that $\eta_{\theta_1 \theta_2}^{\tau_1 \tau_2}$ is continuous by taking $\theta$ (resp. $\wt{\theta}$) to be such that $\lambda - \theta \chi = -\lambda - \theta_1 \chi$ (resp. $-\lambda-\wt{\theta} \chi = \lambda - \theta_1 \chi$) and let $\eta_{\theta}$ (resp. $\eta_{\wt{\theta}})$ be the flow line of $h$ starting at $0$ with angle $\theta$ (resp. $\wt{\theta}$).  The conditional law of $\eta_{\theta_1 \theta_2}^{\tau_1 \tau_2}|_{[\tau_1,\infty)}$ given $\eta_{\theta_1 \theta_2}([0,\tau_1])$ and $\eta_{\theta}$ and $\eta_{\wt{\theta}}$ is that of an $\SLE_\kappa((\theta_1-\theta_2) \chi/\lambda; (\theta_2-\theta_1)\chi/\lambda)$ process (justified in the proof of Lemma~\ref{lem::av_continuous}).  Therefore Proposition~\ref{prop::two_force_point_uniqueness_and_continuity} implies that $\eta_{\theta_1 \theta_2}^{\tau_1 \tau_2}$ is almost surely continuous.
}
\end{figure}

\begin{lemma}
\label{lem::av_continuous}
Suppose that $h$ is a GFF on $\h$ whose boundary data is as depicted in Figure~\ref{fig::two_angle_change_cts}.  Fix angles $\theta_1,\theta_2$ such that $|\theta_1 - \theta_2| < 2\lambda/\chi$ and such that $b+\theta_i \chi > -\lambda$ and $-a+\theta_i \chi < \lambda$ for $i=1,2$.  Let $\eta_{\theta_1 \theta_2}^{\tau_1 \tau_2}$ be the angle varying flow line of $h$ starting from $0$ with angles $\theta_1,\theta_2$ and angle change time $\tau_1 > 0$.  Then $\eta_{\theta_1 \theta_2}^{\tau_1 \tau_2}$ is almost surely continuous.
\end{lemma}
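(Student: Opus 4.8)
The plan is to reduce the statement to the already–established two–force–point case (Proposition~\ref{prop::two_force_point_uniqueness_and_continuity}) by a conditioning argument, exactly as indicated in Figure~\ref{fig::two_angle_change_cts}. First, the initial segment $\eta_{\theta_1 \theta_2}|_{[0,\tau_1]}$ is simply the flow line $\eta_{\theta_1}$ of $h$ of angle $\theta_1$ run until time $\tau_1$; since the boundary data of $h+\theta_1\chi$ is constant ($=-a+\theta_1\chi$) on $(-\infty,0)$ and constant ($=b+\theta_1\chi$) on $(0,\infty)$, this is an $\SLE_\kappa(\rho^L;\rho^R)$ process with $\rho^L=(a-\theta_1\chi)/\lambda-1$ and $\rho^R=(b+\theta_1\chi)/\lambda-1$, and the hypotheses $b+\theta_1\chi>-\lambda$, $-a+\theta_1\chi<\lambda$ are exactly the statements $\rho^R>-2$, $\rho^L>-2$. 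Hence Proposition~\ref{prop::two_force_point_uniqueness_and_continuity} gives that $\eta_{\theta_1\theta_2}|_{[0,\tau_1]}$ is almost surely continuous and determined by $h$. The only new content is therefore continuity of the continuation $\eta_{\theta_1\theta_2}|_{[\tau_1,\infty)}$ and the fact that it attaches continuously at $\eta_{\theta_1\theta_2}(\tau_1)$.

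Next I would introduce the two auxiliary fixed–angle flow lines $\eta_\theta$ and $\eta_{\wt\theta}$ of $h$ starting at $0$, with $\theta$ and $\wt\theta$ chosen so that $\lambda-\theta\chi=-\lambda-\theta_1\chi$ and $-\lambda-\wt\theta\chi=\lambda-\theta_1\chi$ (i.e.\ $\theta=\theta_1+2\lambda/\chi$ and $\wt\theta=\theta_1-2\lambda/\chi$), taken conditionally independent of $\eta_{\theta_1\theta_2}$ given $h$. The constraint $|\theta_1-\theta_2|<2\lambda/\chi$ forces $\wt\theta<\theta_1,\theta_2<\theta$, so by Proposition~\ref{prop::angle_varying_monotonicity} (and the remark following it) the angle–varying flow line $\eta_{\theta_1\theta_2}$ stays to the right of $\eta_\theta$ and to the left of $\eta_{\wt\theta}$. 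I then condition on $\eta_\theta$, $\eta_{\wt\theta}$ and $\eta_{\theta_1\theta_2}([0,\tau_1])$: this set is local by Lemma~\ref{lem::stopping_local_set}, and the choices of $\theta,\wt\theta$ are arranged precisely so that, in the connected component $C$ of the complement whose boundary contains $\eta_{\theta_1\theta_2}(\tau_1)$, the conditional mean of $h$ (as described by Proposition~\ref{gff::prop::cond_union_local}, Proposition~\ref{gff::prop::cond_union_mean} and Proposition~\ref{gff::prop::local_independence}, together with the no–pathology statements of Proposition~\ref{prop::cond_mean_height} and Remarks~\ref{rem::cond_mean_height} and~\ref{rem::angle_varying_mean_height}) has, after the coordinate change~\eqref{eqn::ac_eq_rel} sending $\eta_{\theta_1\theta_2}(\tau_1)$ to $0$ and fixing $\infty$, exactly two jumps: one at the $\SLE$ seed $0$ and one at $\infty$. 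Computing the boundary values on the two sides of the seed as in Figure~\ref{fig::two_angle_change_cts} and applying Theorem~\ref{thm::martingale} together with Proposition~\ref{prop::cond_mean_continuous} (just as in the proof of Lemma~\ref{lem::two_force_points_cond_law}) identifies the conditional law of $\eta_{\theta_1\theta_2}|_{[\tau_1,\infty)}$, viewed in $C$, as an $\SLE_\kappa\bigl((\theta_1-\theta_2)\chi/\lambda;(\theta_2-\theta_1)\chi/\lambda\bigr)$ process with force points at $0^-,0^+$. Both weights exceed $-2$ precisely when $|\theta_1-\theta_2|<2\lambda/\chi$, which is~\eqref{eqn::angle_bounds}, so Proposition~\ref{prop::two_force_point_uniqueness_and_continuity} applies and gives that this process is almost surely continuous. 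Since $\eta_\theta$, $\eta_{\wt\theta}$ and $\eta_{\theta_1\theta_2}([0,\tau_1])$ are continuous, $C$ is a Jordan domain, so the uniformizing map extends to a homeomorphism of closures (cf.\ the argument for Proposition~\ref{prop::cont_driving_function}); pushing the continuity of the $\SLE_\kappa(\rho^L;\rho^R)$ trace back through this homeomorphism shows $\eta_{\theta_1\theta_2}|_{[\tau_1,\infty)}$ is continuous in $\overline\h$, and since it shares the endpoint $\eta_{\theta_1\theta_2}(\tau_1)$ with the (continuous) initial segment, $\eta_{\theta_1\theta_2}$ is continuous.

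The main obstacle is the execution of the conditioning step: one must verify that the auxiliary flow lines $\eta_\theta,\eta_{\wt\theta}$ genuinely make sense as continuous local sets and that conditioning on them (together with $\eta_{\theta_1\theta_2}([0,\tau_1])$) really does collapse the boundary data in $C$ to just two force points with no pathological behaviour of the conditional mean at the points where the curves touch. For the first issue, the continuity of $\eta_\theta$ and $\eta_{\wt\theta}$ up to (and strictly before) their continuation thresholds is furnished by Remark~\ref{rem::continuity_non_boundary} via absolute continuity with respect to ordinary $\SLE_\kappa$ on bounded time intervals, which is all that is needed since these curves only serve to sandwich $\eta_{\theta_1\theta_2}$ and to supply the matched boundary data along the sides of $\eta_{\theta_1\theta_2}([0,\tau_1])$; when $\eta_\theta$ or $\eta_{\wt\theta}$ reaches its continuation threshold one uses it only up to that time and, if necessary, runs the preceding argument with $\eta_{\theta_1\theta_2}|_{[\tau_1,T]}$ for finite stopping times $T$ and takes a limit. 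For the second issue, the absence of singularities in the conditional mean at intersection points of the conditioned curves is exactly the type of statement already proved in Proposition~\ref{prop::cond_mean_height} and extended in Remarks~\ref{rem::cond_mean_height}--\ref{rem::angle_varying_mean_height}, so one simply invokes those. Finally, the same local–independence argument as in Lemma~\ref{lem::two_force_points_determined} (combined with the conditional law just obtained) shows in addition that $\eta_{\theta_1\theta_2}$ is almost surely determined by $h$, although this is not needed for the statement.
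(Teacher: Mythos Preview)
Your overall strategy matches the paper's, but there are two genuine gaps that the paper handles carefully and you do not.

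\textbf{Circularity in the monotonicity step.} You invoke Proposition~\ref{prop::angle_varying_monotonicity} to sandwich $\eta_{\theta_1\theta_2}$ between $\eta_\theta$ and $\eta_{\wt\theta}$, and then use this to identify the conditional law via Theorem~\ref{thm::martingale} and Remark~\ref{rem::cont_loewner_av}. But Proposition~\ref{prop::angle_varying_monotonicity} carries an \emph{a priori} continuity hypothesis on the paths involved, and the Loewner/conditional-mean machinery of Section~\ref{sec::interacting} likewise presupposes continuity. So as written you are assuming exactly what you want to prove. The paper breaks this loop by introducing the stopping time $T_\epsilon$, the first time after $\tau_1$ that $\eta_{\theta_1\theta_2}$ comes within distance $\epsilon$ of $\partial\h$: on $[\tau_1,T_\epsilon]$ the law is mutually absolutely continuous with respect to an $\SLE_\kappa(\wt\rho^L;\wt\rho^R)$ in $\h\setminus\eta_{\theta_1\theta_2}([0,\tau_1])$ by Proposition~\ref{prop::gff_abs_continuity}, hence continuous by Proposition~\ref{prop::two_force_point_uniqueness_and_continuity}. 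Only once this bootstrapped continuity is in hand does the paper apply monotonicity and then identify the conditional law up to time $T_\epsilon$.

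\textbf{Closing the limit and handling general $a,b$.} Even after identifying the conditional law on $[\tau_1,T_\epsilon]$, one must show $T_\epsilon\to\infty$. The paper first takes $a,b$ large so that $\eta_\theta,\eta_{\wt\theta}$ do not touch $\partial\h$ after time~$0$; then the only way $\eta_{\theta_1\theta_2}|_{[\tau_1,\infty)}$ can approach $\partial\h$ is by approaching the point~$0$, and this is ruled out by Lemma~\ref{lem::sle_does_not_cover_boundary} (the conditional $\SLE_\kappa(\wt\rho^L;\wt\rho^R)$ a.s.\ misses any fixed boundary point). You do not use this lemma, and your vague ``take a limit in $T$'' does not substitute for it. Finally, for general $a,b$ satisfying only the stated hypotheses, the auxiliary flow lines $\eta_\theta,\eta_{\wt\theta}$ (with $\theta=\theta_1\pm 2\lambda/\chi$) need not even be well-defined: e.g.\ the left weight of $\eta_\theta$ is $(a-\theta_1\chi)/\lambda-3$, which can be $\leq -2$. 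The paper therefore treats large $a,b$ first and then reduces general $a,b$ to that case by the conditioning argument of Proposition~\ref{prop::two_force_point_uniqueness_and_continuity}; your suggestion to ``use $\eta_\theta,\eta_{\wt\theta}$ only up to their continuation thresholds'' does not address the case where those thresholds are hit immediately.
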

\begin{proof}
See Figure~\ref{fig::two_angle_change_cts} for an illustration of the argument.  We pick $\theta$ (resp.\ $\wt{\theta}$) so that $\lambda - \theta\chi = -\lambda - \theta_1 \chi$, i.e. $\theta \chi = 2\lambda + \theta_1 \chi$ (resp.\ $-\lambda - \wt{\theta} \chi = \lambda - \theta_1 \chi$, i.e. $\wt{\theta} \chi = -2\lambda + \theta_1\chi$) and let $\eta_\theta$ (resp. $\eta_{\wt{\theta}}$) be the flow line of $h$ with angle $\theta$ (resp. $\wt{\theta}$) starting at $0$.  Note that $\wt{\theta} < \theta_1,\theta_2 < \theta$.  We first assume that $a,b$ are sufficiently large so that $\eta_\theta$ and $\eta_{\wt{\theta}}$ almost surely do not intersect $\partial \h$ after time $0$.  Fix $\epsilon > 0$ and let $T_\epsilon$ be the first time $t \geq \tau_1$ that $\eta_{\theta_1 \theta_2}^{\tau_1 \tau_2}|_{[\tau_1,\infty)}$ comes within distance $\epsilon$ of $\partial \h$.  Proposition~\ref{prop::two_force_point_uniqueness_and_continuity} implies that $\eta_{\theta_1 \theta_2}^{\tau_1 \tau_2}|_{[0,\tau_1]}$, $\eta_\theta$, and $\eta_{\wt{\theta}}$ are almost surely continuous and Proposition~\ref{prop::gff_abs_continuity} implies that $\eta_{\theta_1 \theta_2}^{\tau_1 \tau_2}|_{[\tau_1,T_\epsilon]}$ is almost surely continuous since its law is mutually absolutely continuous with respect to that of an $\SLE_\kappa(\wt{\rho}^L;\wt{\rho}^R)$ process in $\h \setminus \eta_{\theta_1 \theta_2}^{\tau_1 \tau_2}([0,\tau_1])$ with $\wt{\rho}^L = (\theta_1-\theta_2)\chi/\lambda$ and $\wt{\rho}^R = (\theta_2-\theta_1)\chi/\lambda$.  Consequently, Proposition~\ref{prop::angle_varying_monotonicity} implies that $\eta_{\theta_1 \theta_2}^{\tau_1 \tau_2}|_{[\tau_1,T_\epsilon]}$ stays to the right of $\eta_\theta$ and to the left of $\eta_{\wt{\theta}}$.  The argument of Remark~\ref{rem::angle_varying_mean_height} implies that the conditional expectation of $h$ given $\eta_{\theta_1 \theta_2}^{\tau_1 \tau_2}$, $\eta_\theta$, and $\eta_{\wt{\theta}}$ does not have singularities at points where $\eta_{\theta_1 \theta_2}^{\tau_1 \tau_2}$ intersects one of $\eta_\theta$ or $\eta_{\wt{\theta}}$ (and the same holds when we run $\eta_{\theta_1 \theta_2}^{\tau_1 \tau_2}|_{[0,T_\epsilon]}$ up to a stopping time by Proposition~\ref{gff::prop::cond_union_mean}).  Moreover, the argument of Remark~\ref{rem::cont_loewner_av} implies that $\eta_{\theta_1 \theta_2}^{\tau_1 \tau_2}|_{[\tau_1,T_\epsilon]}$ has a continuous Loewner driving function viewed as a path in the connected component of $\h \setminus (\eta_\theta \cup \eta_{\wt{\theta}} \cup \eta_{\theta_1 \theta_1}^{\tau_1 \tau_2}([0,\tau_1]))$ which lies between $\eta_{\theta}$ and $\eta_{\wt{\theta}}$.  Consequently,  Theorem~\ref{thm::martingale} combined with Proposition~\ref{prop::cond_mean_continuous} together imply that the conditional law of $\eta_{\theta_1 \theta_2}^{\tau_1 \tau_2}|_{[\tau_1,T_\epsilon]}$ given $\eta_{\theta_1 \theta_2}^{\tau_1 \tau_2}([0,\tau_1])$, $\eta_\theta$, and $\eta_{\wt{\theta}}$ is an $\SLE_\kappa(\wt{\rho}^{L};\wt{\rho}^{R})$ process with the same weights $\wt{\rho}^L,\wt{\rho}^R$ as before.  Lemma~\ref{lem::sle_does_not_cover_boundary} implies that the distance $\eta_{\theta_1 \theta_2}^{\tau_1 \tau_2}|_{[\tau_1,T_\epsilon]}$ comes within $0$ remains strictly positive as $\epsilon \to 0$ (since $\eta_{\theta_1 \theta_2}^{\tau_1 \tau_2}|_{[\tau_1,T_\epsilon]}$ is continuous and almost surely does not hit $0^+$ or $0^-$).  Since $\eta_{\theta}$ and $\eta_{\wt{\theta}}$ otherwise do not intersect $\partial \h$ (since we picked $a,b > 0$ large), it follows that $T_\epsilon \to \infty$ as $\epsilon \to 0$ almost surely.  Therefore $\eta_{\theta_1 \theta_2}^{\tau_1 \tau_2}$ is almost surely continuous.  To see the result for general choices of $a,b$, we apply the same argument used to prove Proposition~\ref{prop::two_force_point_uniqueness_and_continuity} (we condition on flow lines with appropriately chosen angles and use the continuity of $\eta_{\theta_1 \theta_2}^{\tau_1 \tau_2}$ for large $a,b$).
\end{proof}

\begin{figure}[ht!]
\begin{center}
\includegraphics[scale=0.85]{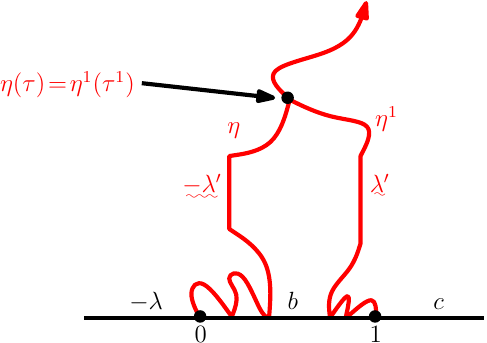}
\end{center}
\caption{\label{fig::flow_line_cannot_hit} Suppose that $h$ is a GFF whose boundary data is as depicted on the right side with $b \in (-\lambda,\lambda-\pi \chi)$ and $c \geq \lambda - \pi \chi$.  We can see that the flow line $\eta$ of $h$ does not intersect $[1,\infty)$ by noting that, if it did, it would have to intersect hence merge with the flow line $\eta^1$ of $h$ starting at $1$ (Proposition~\ref{prop::merging_and_crossing}).  Letting $\tau$ be the first time $\eta$ hits $\eta^1$ and $\tau^1$ the first time $\eta^1$ hits $\eta$, the conditional law of $\eta$ given $K = \eta([0,\tau]) \cup \eta^1([0,\tau^1])$ is that of an $\SLE_\kappa(\rho)$ process with $\rho \geq \tfrac{\kappa}{2}-2$ in the unbounded connected component of $\h \setminus K$.  By Lemma~\ref{lem::hitting_and_bouncing}, we know that such processes do not hit the boundary, which implies that $\eta$ cannot hit $[1,\infty)$.}
\end{figure}

\begin{lemma}
\label{lem::not_intersect_force_point}
Suppose $\rho^{1,R},\rho^{1,R}+\rho^{2,R} > -2$.  Additionally, assume that either
\[ |\rho^{2,R}| < 2  \quad\text{or}\quad \rho^{1,R} < \frac{\kappa}{2}-2.\]
Let $\eta$ be an $\SLE_\kappa(\rho^{1,R},\rho^{2,R})$ process in $\h$ from $0$ to $\infty$ where the force points corresponding to the weights $\rho^{1,R}$ and $\rho^{2,R}$ are located at $0^+$ and $1$, respectively.  Then $\eta$ almost surely does not hit $1$ and is generated by a continuous curve.
\end{lemma}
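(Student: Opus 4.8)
The plan is to realize $\eta$ as a flow line of a Gaussian free field on $\h$ and control it at the force point $1$ by comparison with a second flow line emanating from $1$. Away from $1$ there is nothing to do: on any compact time interval during which $\eta$ stays a fixed positive distance from $1$, the process is mutually absolutely continuous with respect to an $\SLE_\kappa(\rho^{1,R})$ process with the single force point $0^+$ (apply the Girsanov estimate of Remark~\ref{rem::gff_abs_continuity} to the bounded drift $\rho^{2,R}/(W_t-V_t^{2,R})$), hence continuous by Remark~\ref{rem::continuity_non_boundary}; so in the coupling of Theorem~\ref{thm::coupling_existence} it suffices to produce a continuous--curve realization of $\eta$ that almost surely does not hit $1$.

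Write $b=\lambda(1+\rho^{1,R})$, $c=\lambda(1+\rho^{1,R}+\rho^{2,R})$ and take $h$ with boundary data $-\lambda$ on $(-\infty,0)$, $b$ on $(0,1)$, $c$ on $(1,\infty)$, so that its flow line $\eta$ from $0$ is the process in question (Theorem~\ref{thm::martingale}). The model case is $b<\lambda-\pi\chi$ and $c\geq\lambda-\pi\chi$, i.e.\ $\rho^{1,R}<\tfrac\kappa2-2\leq\rho^{1,R}+\rho^{2,R}$. Here the flow line $\eta^1$ of $h$ from $1$ is an $\SLE_\kappa(-2-\rho^{1,R},\,2+\rho^{1,R};\,\rho^{1,R}+\rho^{2,R})$ process. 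When $\rho^{1,R}<0$ its partial--sum weights all exceed $\tfrac\kappa2-2$ (using $\kappa<4$), so $\eta^1$ is almost surely a continuous, transient curve avoiding $\partial\h$ after time $0$ (Remark~\ref{rem::continuity_non_boundary}); otherwise $\eta^1$ involves a weight of absolute value less than $2$ and its continuity is furnished by the ``$|\rho^{2,R}|<2$'' case below, which I would establish first. Now $\eta$ and $\eta^1$ are flow lines of the \emph{same} field of the \emph{same} angle, started at $0$ and $1$, so by Proposition~\ref{prop::merging_and_crossing} they merge upon meeting and never separate; since the range of $\eta^1$ joins $1$ to $\infty$ and misses $(1,\infty)$, it separates $0$ and $(-\infty,1)$ from $(1,\infty)$ in $\h$, so $\eta$ can reach a point of $[1,\infty)$ only after first hitting $\eta^1$, after which it coincides with $\eta^1$ and hence still avoids $[1,\infty)$ --- a contradiction (see Figure~\ref{fig::flow_line_cannot_hit}). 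Thus $\eta$ avoids $[1,\infty)$, and continuity follows by conditioning on $\eta^1$: the component of $\h\setminus\eta^1$ containing $\eta$ is a Jordan domain, its uniformizing map extends to a homeomorphism of closures, and the conditional law of $\eta$ in it is handled by the two--force--point result (Proposition~\ref{prop::two_force_point_uniqueness_and_continuity}) together with Proposition~\ref{prop::cont_driving_function} and Theorem~\ref{thm::martingale}.

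The remaining configurations --- $-2<\rho^{1,R}+\rho^{2,R}<\tfrac\kappa2-2$, and the case $|\rho^{2,R}|<2$ --- I would treat in direct analogy with Section~\ref{subsec::two_boundary_force_points}, cutting $\eta$ out of a larger, non--boundary--intersecting picture. Take a GFF $h$ with boundary data $-\lambda$ on $(-\infty,0)$ and a large constant on $(0,\infty)$, so its zero--angle flow line $\eta$ and the angle--varying flow lines of negative angle are continuous and transient (Proposition~\ref{prop::two_force_point_uniqueness_and_continuity}, Lemma~\ref{lem::av_continuous}); let $\eta_{\theta_1\theta_2}$ be the angle--varying flow line with one angle change and angles $\theta_1,\theta_2<0$ with $|\theta_1-\theta_2|<2\lambda/\chi$, which by Proposition~\ref{prop::angle_varying_monotonicity} lies to the right of $\eta$. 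Conditioning on $\eta_{\theta_1\theta_2}$ and mapping its (Jordan) left complementary component conformally to $\h$ fixing $0$ and $\infty$ and sending the angle--change point to $1$, Remark~\ref{rem::angle_varying_mean_height}, Remark~\ref{rem::cont_loewner_av} and Theorem~\ref{thm::martingale} identify the image of $\eta$ as an $\SLE_\kappa(\rho^{1,R},\rho^{2,R})$ process with force points at $0^+,1$, where $\rho^{1,R}=-\theta_1\chi/\lambda-2$ and $\rho^{2,R}=(\theta_1-\theta_2)\chi/\lambda$; varying $\theta_1,\theta_2<0$ this realizes exactly the weight pairs with $\rho^{1,R},\rho^{1,R}+\rho^{2,R}>-2$ and $|\rho^{2,R}|<2$. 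Continuity of this image is immediate ($\eta$ is continuous and the uniformizing map extends to a homeomorphism of closures), and that it avoids $1$ is obtained, after the conditioning, either by reducing to the preceding case via the flow line from $1$ of the conditional field (whose relevant partial--sum weight equals $-\kappa/4-1>-2$) or by a direct merging argument as above. Lemma~\ref{lem::not_intersect_force_point2} then bootstraps from the ``$|\rho^{2,R}|<2$'' case to all $\rho^{1,R},\rho^{1,R}+\rho^{2,R}>-2$.

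The main obstacle is the apparent circularity at the force point $1$: the conditional law of $\eta$ in each pocket is again an $\SLE_\kappa$ process with a boundary force point, so one cannot simply invoke the statement being proved. The resolution is to order the cases so that every auxiliary path one conditions on (a bounding angle--varying flow line, or a flow line from $1$) is itself already known to be continuous --- either because the ambient boundary data is non--boundary--intersecting, or because it falls under the two--force--point result of Section~\ref{subsec::two_boundary_force_points} or the previously handled ``$|\rho^{2,R}|<2$'' case --- and then to transfer ``$\eta$ does not hit $1$'' from that nicer path to $\eta$ using the merging phenomenon of Proposition~\ref{prop::merging_and_crossing}, and to transfer continuity via the homeomorphic extension of the uniformizing maps. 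Keeping the bookkeeping of boundary values and force--point weights consistent through these conformal coordinate changes, and verifying carefully the topological step that $\eta$ cannot reach $[1,\infty)$ without first meeting the flow line from $1$ (bearing in mind that $\eta$ may bounce on $(0,1)$ beforehand), is where the real work lies.
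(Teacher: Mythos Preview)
Your overall architecture matches the paper's: realize the $|\rho^{2,R}|<2$ case by conditioning on an angle-varying flow line $\eta_{\theta_1\theta_2}$, and handle $\rho^{1,R}<\tfrac\kappa2-2\leq\rho^{1,R}+\rho^{2,R}$ by the same-angle merging with the flow line $\eta^1$ from $1$. There are two concrete gaps, though. First, your claim that when $\rho^{1,R}<0$ all partial sums for $\eta^1$ exceed $\tfrac\kappa2-2$ is false: the first left partial sum is $-2-\rho^{1,R}$, and this exceeds $\tfrac\kappa2-2$ only when $\rho^{1,R}<-\tfrac\kappa2$; for $\rho^{1,R}\in(-\tfrac\kappa2,\tfrac\kappa2-2)$ (non-empty once $\kappa>2$) Remark~\ref{rem::continuity_non_boundary} does not give you $\eta^1$, and since here $\rho^{1,R}<\tfrac\kappa2-2<0$ your ``otherwise'' branch never fires. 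The paper avoids asserting global continuity of $\eta^1$ altogether: it stops $\eta$ at the first time $T_\epsilon$ it comes within $\epsilon$ of $[1,\infty)$ and $\eta^1$ at the first time $T_\epsilon^1$ it comes within $\epsilon$ of $(-\infty,0]$, on which both restricted paths are continuous by absolute continuity with a \emph{two}-force-point process (Proposition~\ref{prop::two_force_point_uniqueness_and_continuity}); on the event $T=\lim T_\epsilon<\infty$ the restricted paths must already intersect, so Proposition~\ref{prop::merging_and_crossing} applies, and after the merge the conditional law of $\eta$ in the unbounded component of $\h\setminus K$ is $\SLE_\kappa(\rho^{1,R}+\rho^{2,R})$ with weight $\geq\tfrac\kappa2-2$, which by Lemma~\ref{lem::hitting_and_bouncing} cannot hit the boundary --- contradiction.

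Second, your treatment of ``$\eta$ avoids $1$'' in the $|\rho^{2,R}|<2$ case is unclear (the number $-\kappa/4-1$ does not match any weight in the picture), and routing it through the merging case risks exactly the circularity you yourself flag. The paper's device here is different and self-contained: pick $\max(\theta_1,\theta_2)<\theta<0$ and let $\eta_\theta$ be the corresponding single-angle flow line, which lies strictly between $\eta$ and $\eta_{\theta_1\theta_2}$. If $\eta_\theta$ already misses the angle-change point $z_0$, then so does $\eta$; if $\eta_\theta$ hits $z_0$, then the conditional law of $\eta$ given $\eta_\theta$ and $\eta_{\theta_1\theta_2}$ in the left component of $\h\setminus\eta_\theta$ is an $\SLE_\kappa(-\theta\chi/\lambda-2)$ process, and Lemma~\ref{lem::sle_does_not_cover_boundary} says such a process a.s.\ misses any prescribed boundary point. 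This handles all subcases of $|\rho^{2,R}|<2$ uniformly and needs nothing from the merging argument, so the two cases can be proved in order without any circular dependence.
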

\begin{proof}
Assume $|\rho^{2,R}| < 2$.  Let
\[ \theta_1 = -\frac{\lambda}{\chi}\big(2 + \rho^{1,R} \big) \quad\text{and}\quad \theta_2 = -\frac{\lambda}{\chi}\big(2 + \rho^{1,R} + \rho^{2,R} \big).\]
Suppose that $h$ is a GFF with boundary data as in Figure~\ref{fig::flow_line_angles} where $a = \lambda$ and $b > 0$ is sufficiently large so that the angle-varying flow line $\eta_{\theta_1 \theta_2}^{\tau_1 \tau_2}$ with angles $\theta_1,\theta_2$ and angle change time $\tau_1 = 1$ starting at $0$ almost surely does not hit $\partial \h$ after time $0$.  Note that $|\rho^{2,R}| < 2$ implies $|\theta_1-\theta_2| < 2\lambda/\chi$, which is the condition necessary for $\eta_{\theta_1 \theta_2}^{\tau_1 \tau_2}$ not to cross itself.  Let $\eta$ be the zero angle flow line of $h$ starting at $0$.  Proposition~\ref{prop::two_force_point_uniqueness_and_continuity} implies that $\eta$ is continuous and Lemma~\ref{lem::av_continuous} implies that $\eta_{\theta_1 \theta_2}^{\tau_1 \tau_2}$ is continuous.  Thus by Proposition~\ref{prop::angle_varying_monotonicity}, we know that $\eta$ is almost surely to the left of $\eta_{\theta_1 \theta_2}^{\tau_1 \tau_2}$ because $\theta_1,\theta_2 < 0$.

Lemma~\ref{lem::av_continuous} implies that the left connected component $C$ of $\h \setminus \eta_{\theta_1 \theta_2}^{\tau_1 \tau_2}$ is almost surely a Jordan domain.  Let $\psi \colon C \to \h$ be a conformal map which fixes $0$ and $\infty$.  Then $\psi$ extends continuously to $\partial C$.  Therefore $\psi(\eta)$ is a continuous path in $\ol{\h}$.  Moreover, $\psi(\eta)$ has a continuous Loewner driving function by Remark~\ref{rem::cont_loewner_av}. (Remark~\ref{rem::cont_loewner_av} assumed $|\theta_1-\theta_2| \leq \pi$ to ensure the continuity of $\eta_{\theta_1 \theta_2}^{\tau_1 \tau_2}$ and that $\eta_{\theta_1 \theta_2}^{\tau_1 \tau_2}$ is almost surely determined by $h$.  We have now proved both of these facts in the setting we consider here.)  The boundary data for the conditional law of $h$ given $\eta_{\theta_1 \theta_2}^{\tau_1 \tau_2}$ is shown in Figure~\ref{fig::multiple_force_points_angle_change}.  Consequently, it follows from Proposition~\ref{prop::cond_mean_continuous} and Theorem~\ref{thm::martingale} that the conditional law of $\eta$ given $\eta_{\theta_1 \theta_2}^{\tau_1 \tau_2}$ is that of an $\SLE_\kappa(\rho^{1,R},\rho^{2,R})$ process.  Therefore $\SLE_\kappa(\rho^{1,R},\rho^{2,R})$ processes are continuous provided $|\rho^{2,R}| < 2$.

To see the second claim of the lemma when $|\rho^{2,R}| < 2$, we take $\max(\theta_1,\theta_2) < \theta < 0$.  Then $\eta_\theta$ lies to the left of $\eta_{\theta_1 \theta_2}^{\tau_1 \tau_2}$ and to the right of $\eta$.  Let $z_0$ be the leftmost point of the intersection of $\eta_{\theta_1 \theta_2}^{\tau_1 \tau_2}([\tau_1,\infty))$ with $\eta_{\theta_1 \theta_2}^{\tau_1 \tau_2}([0,\tau_1])$.  If $\eta_{\theta}$ does not hit $z_0$, then the desired claim follows.  If $\eta_{\theta}$ does hit $z_0$, then we know that $\eta$ almost surely does not hit $z_0$ because the conditional law of $\eta$ given $\eta_\theta$ and $\eta_{\theta_1 \theta_2}^{\tau_1 \tau_2}$ is that of an $\SLE_\kappa(- \theta \chi/\lambda-2)$ process in the left connected component of $\h \setminus \eta_\theta$ (Proposition~\ref{prop::two_force_point_uniqueness_and_continuity}), hence we can apply Lemma~\ref{lem::sle_does_not_cover_boundary}.

Alternatively, suppose $\rho^{1,R} < \tfrac{\kappa}{2}-2$.  There are two possibilities.  If $\rho^{1,R} + \rho^{2,R} \in (-2,\tfrac{\kappa}{2}-2)$ then $|\rho^{2,R}| < 2$ so that the result in this case follows as before.  Suppose $\rho^{1,R} + \rho^{2,R} \geq \tfrac{\kappa}{2}-2$.  Assume that $\eta$ is coupled with a GFF $h$ so that $\eta$ is its flow line starting from $0$.  We claim that $\eta$ almost surely does not hit $[1,\infty)$, in which case we are done because then Proposition~\ref{prop::gff_abs_continuity} implies that the law of $\eta$ (stopped upon exiting a ball of any finite size) is mutually absolutely continuous with respect to the law of an $\SLE_\kappa(\rho^{1,R},\wt{\rho}^{2,R})$ process, $\wt{\rho}^{2,R}$ such that $\rho^{1,R} + \wt{\rho}^{2,R} = \tfrac{\kappa}{2}-2$, (stopped upon exiting a ball of the same finite size) and we know from the previous argument that such processes are continuous.

To see that $\eta$ does not hit $[1,\infty)$, we let $\eta^1$ be the flow line of $h$ (with angle $0$) starting at $1$.  Let $T_\epsilon$ (resp. $T_\epsilon^1$) be the first time $t$ that $\eta$ (resp. $\eta^1$) gets within distance $\epsilon$ of $[1,\infty)$ (resp. $(-\infty,0]$) and let $T = \lim_{\epsilon \to 0} T_\epsilon$ (resp. $T^1 = \lim_{\epsilon \to 0} T_\epsilon^1$).  On the event $\{T < \infty\} \cup \{T^1 < \infty\}$, we have that $\eta|_{[0,T)}$ and $\eta^1|_{[0,T^1)}$ intersect.  Since both of the (restricted) paths are continuous by Proposition~\ref{prop::gff_abs_continuity} and Proposition~\ref{prop::two_force_point_uniqueness_and_continuity}, Proposition~\ref{prop::merging_and_crossing} then implies that $\eta^1$ and $\eta$ merge.  Let $\tau$ be the first time $\eta$ hits $\eta^1$ and $\tau^1$ be the first time $\eta^1$ hits $\eta$ and let $K = \eta([0,\tau]) \cup \eta^1([0,\tau^1])$.  The conditional law of $\eta$ given $K$ is that of an $\SLE_\kappa(\rho^{1,R}+\rho^{2,R})$ process in the unbounded connected component of $\h \setminus K$ (Proposition~\ref{prop::merging_and_crossing}).  Therefore Lemma~\ref{lem::hitting_and_bouncing} implies that $\eta$ almost surely does not hit $[1,\infty)$, which is a contradiction (see Figure~\ref{fig::flow_line_cannot_hit}).
\end{proof}

\begin{remark}
\label{rem::not_intersect_force_point_extra}
A slight modification of the proof of Lemma~\ref{lem::not_intersect_force_point} implies the continuity of $\SLE_\kappa(\rho^{1,L};\rho^{1,R},\rho^{2,R})$ processes where $\rho^{1,L} > -2$ and the same hypotheses are made on $\rho^{1,R},\rho^{2,R}$.  Indeed, this is accomplished by conditioning on the flow line with angle $(2+\rho^{1,L})\tfrac{\lambda}{\chi} > 0$.
\end{remark}

Lemma~\ref{lem::not_intersect_force_point} requires that if $\rho^{1,R} \geq \tfrac{\kappa}{2}-2$ then $\rho^{1,R} + \rho^{2,R}$ is larger than both $\rho^{1,R}-2$ and $-2$.  The purpose of the next lemma is to remove this restriction.

\begin{lemma}
\label{lem::not_intersect_force_point2}
Suppose
\[ \rho^{1,R} \geq \frac{\kappa}{2}-2 \quad\text{and}\quad \rho^{1,R}+\rho^{2,R} > -2.\]
Let $\eta$ be an $\SLE_\kappa(\rho^{1,R},\rho^{2,R})$ process in $\h$ where the force points corresponding to the weights $\rho^{1,R}$ and $\rho^{2,R}$ are located at $0^+$ and $1$, respectively.  Then $\eta$ almost surely does not hit $1$ and is generated by a continuous curve.
\end{lemma}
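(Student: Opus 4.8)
The plan is to realize $\eta$ as the flow line from $0$ of a GFF $h$ on $\h$ whose boundary data is $-\lambda$ on $(-\infty,0)$, $\lambda(1+\rho^{1,R})$ on $(0,1)$, and $\lambda(1+\rho^{1,R}+\rho^{2,R})$ on $(1,\infty)$, and then to reduce the statement to the single claim that $\eta$ almost surely does not hit the point $1$. For the reduction: since $\rho^{1,R}\ge\tfrac{\kappa}{2}-2$, comparison with the one-force-point process $\SLE_\kappa(\rho^{1,R})$ via Girsanov's theorem and Proposition~\ref{prop::gff_abs_continuity} shows that, up to the first time $T_\delta$ that $\eta$ enters $B(1,\delta)$, the law of $\eta$ is mutually absolutely continuous with respect to that of $\SLE_\kappa(\rho^{1,R})$, which is continuous and avoids $\partial\h\setminus\{0\}$ (Remark~\ref{rem::continuity_non_boundary}, Lemma~\ref{lem::hitting_and_bouncing}); hence the same is true of $\eta|_{[0,T_\delta]}$, and letting $\delta\to 0$ we learn that $\eta$ is continuous and avoids $\partial\h\setminus\{0\}$ up until any time it disconnects $1$ from $\infty$ (in particular it never hits $(0,1)$, by Remark~\ref{rem::flow_cannot_hit}). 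Granting that $\eta$ a.s.\ does not hit $1$: if $\rho^{1,R}+\rho^{2,R}\ge\tfrac{\kappa}{2}-2$, then by Remark~\ref{rem::flow_cannot_hit} $\eta$ avoids all of $\partial\h\setminus\{0\}$ and is continuous by absolute continuity with $\SLE_\kappa$; if $\rho^{1,R}+\rho^{2,R}\in(-2,\tfrac{\kappa}{2}-2)$, then near every point of $(1,\infty)$ the process is locally absolutely continuous with respect to $\SLE_\kappa(\rho^{1,R}+\rho^{2,R})$, which is continuous by Proposition~\ref{prop::two_force_point_uniqueness_and_continuity}; either way $\eta$ is a.s.\ a continuous curve. (There is no continuation threshold here, since $\rho^{1,R}>-2$ and $\rho^{1,R}+\rho^{2,R}>-2$.)

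It therefore remains to show $\mathbf{P}[\,1\in\overline{\eta([0,\infty))}\,]=0$. If $|\rho^{2,R}|<2$ this is exactly Lemma~\ref{lem::not_intersect_force_point}, so assume $|\rho^{2,R}|\ge 2$. Using $\rho^{1,R}\ge\tfrac{\kappa}{2}-2$ and $\rho^{1,R}+\rho^{2,R}>-2$ one checks that then either $\rho^{2,R}\le -2$, which forces $\rho^{1,R}>0$, so the boundary value of $h$ on $(0,1)$ exceeds $\lambda$; or $\rho^{2,R}\ge 2$, which forces $\rho^{1,R}+\rho^{2,R}>0\ge\tfrac{\kappa}{2}-2$, so the boundary value of $h$ on $(1,\infty)$ exceeds $\lambda$ and $\eta$ avoids $(1,\infty)$. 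Let $\sigma$ be the first time $\eta$ hits $\{1\}\cup(1,\infty)$ (so $\sigma$ is the first hitting time of $1$ in the second subcase); up to $\sigma$ the curve $\eta$ avoids $\partial\h\setminus\{0\}$, and $\eta$ is a.s.\ simple since $\kappa\le 4$. Suppose for contradiction that $\{\eta(\sigma)=1\}$ has positive probability. On this event, for small $\delta$ we have $T_\delta<\sigma$; let $D_\delta$ be the connected component of $\h\setminus\eta([0,T_\delta])$ that $\eta|_{(T_\delta,\sigma]}$ immediately enters, and note that because $\eta$ is simple and avoids $\partial\h\setminus\{0\}$ before $\sigma$, the curve $\eta|_{[T_\delta,\sigma]}$ stays inside $D_\delta$. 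Apply a conformal map $\psi\colon D_\delta\to\strip$ sending $\eta(T_\delta)$ to $0$ and the two sides of $\eta$ near its tip to $\stripbot$, so that $\psi(\eta|_{[T_\delta,\sigma]})$ is the flow line of $h\circ\psi^{-1}-\chi\arg(\psi^{-1})'$ from $0$; by Proposition~\ref{gff::prop::cond_union_mean} the boundary data on $\striptop$ on the side of $\psi(1)$ coming from the large boundary arc is $>\lambda$ (the winding is absorbed by the coordinate change), while the interval on the other side of $\psi(1)$ carries total weight $\ge\tfrac{\kappa}{2}-2$, hence is non-hittable (Remark~\ref{rem::flow_cannot_hit}). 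Running the localization argument from the proof of Lemma~\ref{lem::flow_cannot_hit} with balls centered at $\psi(1)$, together with Lemma~\ref{lem::hitting_and_bouncing}, Lemma~\ref{lem::hitting_single_point} and Lemma~\ref{lem::hitting_interval}, the transformed flow line hits neither interval of $\striptop$ adjacent to $\psi(1)$ nor the point $\psi(1)$ itself; thus $\eta|_{[T_\delta,\sigma]}$ does not hit $1$, contradicting $\eta(\sigma)=1$.

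The hard part will be the bookkeeping in this last step: identifying the boundary data of the transformed GFF on the image of $\partial\h$ near $\psi(1)$ (including the winding contribution and the effect of any portion of $(-\infty,0)$ or $(1,\infty)$ that ends up on $\striptop$), checking that the data on $\stripbot$ meets the hypotheses of Remark~\ref{rem::boundary_data} so that Lemma~\ref{lem::hitting_and_bouncing} and its consequences apply, verifying in each subcase that one of the two intervals of $\striptop$ flanking $\psi(1)$ carries data $>\lambda$ while the other is non-hittable for the resulting $\SLE_\kappa(\ul\rho)$ process, and making precise the topological claim that on $\{\eta(\sigma)=1\}$ the curve $\eta$ does not leave $D_\delta$ before time $\sigma$. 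It is worth remarking that the reflected statement for two left force points (with $\rho^{1,L}\ge\tfrac{\kappa}{2}-2$, $\rho^{1,L}+\rho^{2,L}>-2$) follows by the symmetry $z\mapsto-\bar z$, and that, combined with Lemma~\ref{lem::not_intersect_force_point}, Remark~\ref{rem::not_intersect_force_point_extra}, and the induction on the number of force points used in the proof of Lemma~\ref{lem::functional_many_force_points}, this lemma supplies the remaining input for Theorem~\ref{thm::continuity} when $\kappa\in(0,4]$ (away from the continuation threshold, which is handled separately in Lemma~\ref{lem::cont_continuation_threshold}).
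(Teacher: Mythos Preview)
Your reduction (first paragraph) is sound and parallels the paper's, but the heart of the argument---showing $\eta$ does not hit the point $1$---takes a different and ultimately gapped route.

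The paper's approach is much more direct: it applies a single conformal map $\psi\colon\h\to\strip$ with $\psi(0)=0$, $\psi(1)=+\infty$, $\psi(\infty)=-\infty$. This sends the force point at $1$ to the \emph{end} $+\infty$ of the strip, so ``$\eta$ accumulates at $1$'' becomes ``$\psi(\eta)$ tends to $+\infty$''. The boundary data on $\striptop$ comes entirely from the interval $(1,\infty)$, and after the winding correction one computes (in the regime $\rho^{1,R}+\rho^{2,R}\in(-2,\tfrac{\kappa}{2}-2)$, the other regime being trivial by Remark~\ref{rem::continuity_non_boundary}) that this data lies in $(-\lambda,\lambda)$. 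Lemma~\ref{lem::hitting_and_bouncing} (Figure~\ref{fig::hittingrange}) then says the flow line hits $\striptop$ at a finite point rather than escaping to $+\infty$, and the claim follows immediately.

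Your approach, by contrast, maps $1$ to a \emph{finite} point $\psi(1)\in\striptop$ and tries to argue the flow line misses that single point. This runs into a genuine problem: none of the lemmas you cite delivers that conclusion. Lemma~\ref{lem::flow_cannot_hit} and Remark~\ref{rem::flow_cannot_hit} show a flow line avoids an \emph{interval} whose data lies outside $(-\lambda,\lambda)$; they say nothing about the separating endpoint. In fact Lemma~\ref{lem::hitting_single_point} is precisely the statement that when the data is $\le-\lambda$ on one side of $z_0$ and $\ge\lambda$ on the other, the flow line \emph{does} hit $z_0$---so even if both flanking intervals were non-hittable in your sense, the single point could still be hit, and your invocation of that lemma is pointed in the wrong direction. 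Moreover, the claim that ``the winding is absorbed by the coordinate change'' is not right: the portion of $\partial\h$ that lands on $\striptop$ picks up a $\pm\pi\chi$ correction whose sign depends on the specific map, and without it your asserted inequality (data $>\lambda$ on one side of $\psi(1)$) need not hold. The paper sidesteps all of this by sending $1$ to infinity, which converts a delicate point-avoidance question into the clean trichotomy of Lemma~\ref{lem::hitting_and_bouncing}.
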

\begin{proof}
We may assume without loss of generality that $\rho^{1,R}+\rho^{2,R} \in (-2,\tfrac{\kappa}{2}-2)$ since if $\rho^{1,R}+\rho^{2,R} \geq \tfrac{\kappa}{2}-2$, we know by Remark~\ref{rem::continuity_non_boundary} that $\eta$ almost surely does not hit $\partial \h$ after time $0$ and is almost surely continuous.  Assume that $\eta$ is coupled with a GFF $h$ as in Theorem~\ref{thm::coupling_existence}.  By Proposition~\ref{prop::two_force_point_uniqueness_and_continuity} and Proposition~\ref{prop::gff_abs_continuity} we know that $\eta$ is continuous, at least up until just before the first time $\tau$ it accumulates in $[1,\infty)$.  Thus, we just need to show that $\eta$ is continuous at time $\tau$, that $\eta$ is a continuous curve for $t \geq \tau$, and that $\eta(\tau) \neq 1$.

To see that $\eta$ does not accumulate at $1$, we apply a conformal map $\psi$ taking $\h$ to the strip $\strip$ as in Figure~\ref{fig::criticalforintersection} with $0$ fixed, $1$ going to $+\infty$, and $\infty$ going to $-\infty$.  Then Figure~\ref{fig::hittingrange} implies $\psi(\eta)$ almost surely hits $\partial \strip$ after time $0$ when it accumulates on the upper boundary of $\strip$, which in turn implies that $\eta$ almost surely does not accumulate at $1$.  This implies that, by Proposition~\ref{prop::gff_abs_continuity} and Theorem~\ref{thm::coupling_uniqueness}, for any fixed $t_0 > 0$ the law of $\eta|_{[t_0,\infty)}$ conditional on $\eta([0,t_0])$ is absolutely continuous with respect to that of an $\SLE_\kappa(\rho^{1,R}+\rho^{2,R})$ process.  Therefore the continuity of $\eta$ up to time $\tau$ follows from Proposition~\ref{prop::two_force_point_uniqueness_and_continuity}.  After time $\tau$, we know that $\eta$ is a continuous curve since conditional on $\eta([0,\tau])$, $\eta$ evolves as an $\SLE_\kappa(\rho^{1,R}+\rho^{2,R})$ process in the unbounded connected component $C$ of $\h \setminus \eta([0,\tau])$.  We know that such processes are generated by continuous curves by Proposition~\ref{prop::two_force_point_uniqueness_and_continuity} in $\h$.  Since $C$ is a Jordan domain, the Loewner map $f_\tau$ extends as a homeomorphism to $\partial (\h \setminus K_\tau)$, hence we see that $\eta$ for $t \geq \tau$ is also a continuous curve.
\end{proof}

\begin{figure}[ht!]
\begin{center}
\includegraphics[scale=0.85]{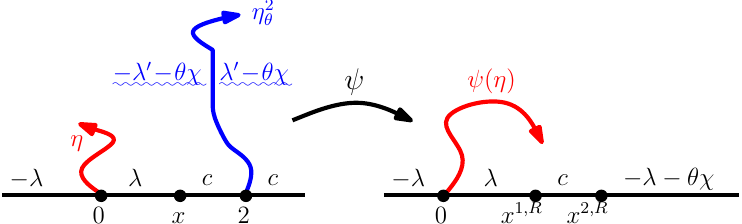}
\end{center}
\caption{\label{fig::cont_continuation_threshold1}  Suppose that $h$ is a GFF on $\h$ whose boundary data is as depicted on the left side where $c \in (-\lambda,\lambda-\pi\chi]$.  Let $\eta_\theta^2$ be the flow line of $h$ starting at $2$ with angle $\theta \in [0,\pi)$.  Note that $c+\theta \chi \in (-\lambda, \lambda)$ and $-\lambda + \theta \chi \leq -\lambda + \pi \chi$.  Thus $\eta_\theta^2$ exists (does not immediately hit the continuation threshold) and the event $E$ that $\eta_\theta^2$ is continuous for all time and does not hit $(-\infty,x)$ has positive probability (Proposition~\ref{prop::gff_abs_continuity}, Remark~\ref{rem::not_intersect_force_point_extra}, and Remark~\ref{rem::flow_cannot_hit}).  Condition on $\eta_\theta^2$, $E$ and let $C$ be the connected component of $\h \setminus \eta_\theta^2$ which contains $0$.  Let $\psi \colon C \to \h$ be the conformal map which fixes $0$, sends the leftmost intersection point $z$ of $\eta_{\theta}^2$ with $(0,\infty)$ to $x^{2,R}$, and fixes $\infty$.  We let $x^{1,R} = \psi(x)$.  Note that once we have conditioned on $\eta_\theta^2$, we can adjust $x$ between $0$ and $z$ to obtain any value of $x^{1,R}$ between $0$ and $x^{2,R}$ we like.  The boundary data for the GFF $h \circ \psi^{-1} - \chi \arg (\psi^{-1})'$ is depicted on the right side and $\psi(\eta)$ is an $\SLE_\kappa(\rho^{1,R},\rho^{2,R})$ process with $\rho^{1,R} = c/\lambda - 1$ and $\rho^{1,R}+\rho^{2,R} = -\theta \chi/\lambda - 2$.  Since $\eta$ and $\eta_{\theta}^2$ are almost surely continuous given $E$, so is $\psi(\eta)$.  By adjusting $\theta \in [0,\pi)$, we can achieve any value of $\rho^{1,R} + \rho^{2,R} \in (\tfrac{\kappa}{2}-4,-2]$ we like.  Likewise, by adjusting $c$, we can obtain any value of $\rho^{1,R} \in (-2,\tfrac{\kappa}{2}-2]$ we like.  Finally, we note that by either conditioning on an additional flow line starting at $0$ of positive angle or changing the boundary data to the left of $0$ to be smaller than $-\lambda$, we can also get the continuity of $\SLE_\kappa(\rho^{1,L};\rho^{1,R},\rho^{2,R})$ processes with $\rho^{1,R},\rho^{2,R}$ as before and any $\rho^{1,L} > -2$.
}
\end{figure}

\begin{figure}[ht!]
\begin{center}
\includegraphics[scale=0.85]{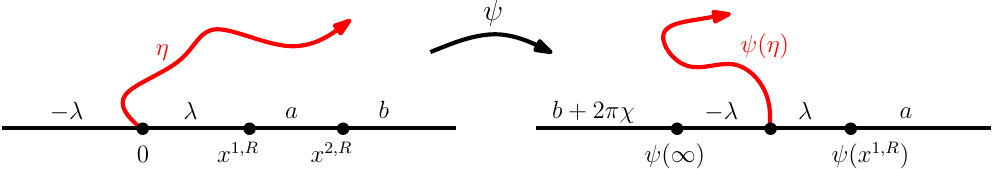}
\end{center}
\caption{\label{fig::cont_continuation_threshold2}  Suppose that $h$ is a GFF on $\h$ whose boundary data is as depicted on the left side.  We assume that $a > -\lambda$ and $b \leq -\lambda-\pi \chi$.  Then the flow line $\eta$ of $h$ is an $\SLE_\kappa(\rho^{1,R},\rho^{2,R})$ process with $\rho^{1,R} > -2$ and $\rho^{1,R} + \rho^{2,R} \leq \tfrac{\kappa}{2}-4$.  Let $\psi \colon \h \to \h$ be a conformal map which fixes $0$ and takes $x^{2,R}$ to $\infty$.  The boundary data for the GFF $h \circ \psi^{-1} - \chi \arg (\psi^{-1})'$ is depicted on the right side.  Hence $\psi(\eta)$ is an $\SLE_\kappa(\rho^{1,L};\rho^{1,R})$ process with $\rho^{1,L} \geq \tfrac{\kappa}{2}-2$ and $\rho^{1,R} > -2$ and therefore continuous by Proposition~\ref{prop::two_force_point_uniqueness_and_continuity}.  This implies the continuity of $\eta$ and that $\eta$ almost surely terminates at $x^{2,R}$ (because $\psi(\eta)$ almost surely tends to $\infty$ by Proposition~\ref{prop::two_force_point_uniqueness_and_continuity}).  If $a \in (-\lambda-\pi\chi,-\lambda]$, then we can apply the same conformal map and then get the continuity from the argument described at the end of Figure~\ref{fig::cont_continuation_threshold1} (the process one gets after applying the conformal map is an $\SLE_\kappa(\rho^{1,L};\rho^{1,R})$ with $\rho^{1,L} \geq \tfrac{\kappa}{2}-2$ and $\rho^{1,R} > \tfrac{\kappa}{2}-4$).  If $a \leq -\lambda-\pi \chi$, then we can apply a conformal map which sends both $x^{1,R}$ and $x^{2,R}$ to $(-\infty,0)$ and get the continuity from the fact that $\psi(\eta) \sim \SLE_\kappa(\wt{\rho}^{1,L},\wt{\rho}^{2,L})$ with $\wt{\rho}^{1,L} \geq \tfrac{\kappa}{2}-2$ and $\wt{\rho}^{1,L}+\wt{\rho}^{2,L}\geq \tfrac{\kappa}{2}-2$.}
\end{figure}

\begin{figure}[ht!]
\begin{center}
\includegraphics[scale=0.85]{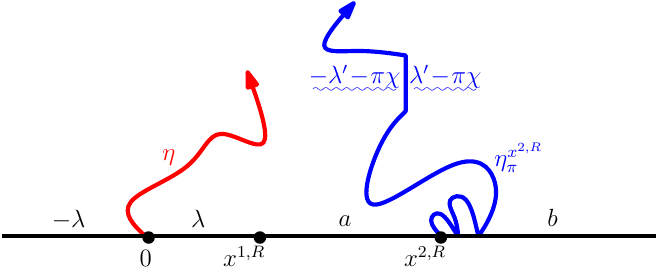}
\end{center}
\caption{\label{fig::cont_continuation_threshold3}  Suppose that $h$ is a GFF on $\h$ whose boundary data is as depicted above.  We assume that $a \leq -\lambda$ and $b > -\lambda-\pi\chi$.  Then the flow line $\eta$ of $h$ is an $\SLE_\kappa(\rho^{1,R},\rho^{2,R})$ process with $\rho^{1,R} \leq -2$ and $\rho^{1,R} + \rho^{2,R} > \tfrac{\kappa}{2}-4$.  Let $\eta_\pi^{x^{2,R}}$ be the flow line of $h$ starting at $x^{2,R}$ with angle $\pi$.  Then $\eta_\pi^{x^{2,R}}$ is an $\SLE_\kappa(\rho^{1,L},\rho^{2,L},\rho^{3,L};\rho^{1,R})$ process with $\rho^{1,L} \geq \tfrac{\kappa}{2}-2$, $\rho^{1,L}+\rho^{2,L} \leq -2$, $\rho^{1,L}+\rho^{2,L}+\rho^{3,L} = \tfrac{\kappa}{2}-2$ and $\rho^{1,R} > -2$.  There are two possibilities: either $\eta_{\pi}^{x^{2,R}}$ hits the continuation threshold upon accumulating in $[0,x^{1,R}]$ or it does not hit the continuation threshold.  In the former case, the conditional law of $\eta$ given $\eta_\pi^{x^{2,R}}$ in the leftmost connected component of $\h \setminus \eta_\pi^{x^{2,R}}$ is that of an $\SLE_\kappa(\wt{\rho}^{1,R},\wt{\rho}^{2,R})$ process with $\wt{\rho}^{1,R} = \tfrac{\kappa}{2}-2$ and $\wt{\rho}^{1,R}+\wt{\rho}^{2,R} = \rho^{1,R}+\rho^{2,R}$.  In this case, $\eta$ does not hit $\eta_\pi^{x^{2,R}}$ (Lemma~\ref{lem::hitting_interval}) and is continuous by the first part of the proof of Lemma~\ref{lem::cont_continuation_threshold} (recall Figure~\ref{fig::cont_continuation_threshold1}).  If $\eta_\pi^{x^{2,R}}$ does not hit the continuation threshold, then its law is absolutely continuous (Proposition~\ref{prop::gff_abs_continuity}) with respect to that of an $\SLE_\kappa(\tfrac{\kappa}{2}-2;\wt{\rho}^{1,R})$ process with $\wt{\rho}^{1,R} > -2$, hence continuous (Proposition~\ref{prop::two_force_point_uniqueness_and_continuity}).  In this case, the conditional law of $\eta$ given $\eta_\pi^{x^{2,R}}$ up until intersects $\eta_\pi^{x^{2,R}}$ is that of an $\SLE_\kappa(\wt{\rho}^{1,R},\wt{\rho}^{2,R})$ process with $\wt{\rho}^{1,R} = \rho^{1,R}$ and $\wt{\rho}^{1,R}+\wt{\rho}^{2,R} = \tfrac{\kappa}{2}-4$.  This implies that $\eta$ is continuous in this case by the argument of Figure~\ref{fig::cont_continuation_threshold2} (we note that on this event, $\eta$ exits $\h$ in $(x^{1,R},x^{2,R})$).}
\end{figure}

\begin{lemma}
\label{lem::cont_continuation_threshold}
Let $\eta$ be an $\SLE_\kappa(\rho^{1,R},\rho^{2,R})$ process in $\h$ starting at $0$ where the force points corresponding to the weights $\rho^{1,R}$ and $\rho^{2,R}$ are located at $0 < x^{1,R} < x^{2,R} < \infty$, respectively.  If $\rho^{1,R} \leq -2$ or $\rho^{1,R} + \rho^{2,R} \leq -2$, then $\eta$ is almost surely a continuous curve.
\end{lemma}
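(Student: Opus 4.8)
The plan is to realize $\eta$ as the flow line from $0$ to $\infty$ of a Gaussian free field $h$ on $\h$, coupled as in Theorem~\ref{thm::coupling_existence}, with force points $x^{1,R}<x^{2,R}$, and then to argue by a case analysis on the pair $(\rho^{1,R},\,\rho^{1,R}+\rho^{2,R})$ exactly along the lines of Figures~\ref{fig::cont_continuation_threshold1}--\ref{fig::cont_continuation_threshold3}. Under the hypothesis the continuation threshold is first hit either when $\eta$ accumulates at $x^{1,R}$ (if $\rho^{1,R}\le -2$) or, after $x^{1,R}$ has been swallowed, when $\eta$ accumulates at $x^{2,R}$ (if $\rho^{1,R}>-2$ and $\rho^{1,R}+\rho^{2,R}\le -2$). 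In either situation Proposition~\ref{prop::two_force_point_uniqueness_and_continuity}, together with absolute continuity (Proposition~\ref{prop::gff_abs_continuity}) and Theorem~\ref{thm::coupling_uniqueness}, already gives that $\eta$ is a continuous curve up to just before that time, so the remaining task is continuity at and after that time together with the identification of the exit point.

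I would first handle the ``small weight'' range $\rho^{1,R}\in(-2,\tfrac\kappa2-2]$, $\rho^{1,R}+\rho^{2,R}\in(\tfrac\kappa2-4,-2]$ by introducing the force points as conformal images of an auxiliary configuration and conditioning on a flow line $\eta_\theta$ of $h$ started from a boundary point to the right of $0$ with angle $\theta\in[0,\pi)$. On the positive-probability event $E$ that $\eta_\theta$ avoids its own continuation threshold, is continuous for all time, and stays off the relevant boundary interval --- which I would check using Proposition~\ref{prop::gff_abs_continuity}, Remark~\ref{rem::not_intersect_force_point_extra} and Remark~\ref{rem::flow_cannot_hit} --- the component $C$ of $\h\setminus\eta_\theta$ containing $0$ is a Jordan domain; mapping $C$ conformally back to $\h$ fixing $0$ and $\infty$ carries $\eta$ to an $\SLE_\kappa(\rho^{1,R},\rho^{2,R})$ process of exactly the stated type (via Theorem~\ref{thm::martingale}, Proposition~\ref{prop::cond_mean_continuous}, and the interaction/conditional-mean estimates of Section~\ref{sec::interacting}), and continuity of $\eta$ on $E$ follows since $\eta$ (conditionally) and $\eta_\theta$ are continuous and the uniformizing map extends to a homeomorphism of the closures. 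Varying $\theta$ and the left boundary data sweeps out this entire range.

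Next, for $\rho^{1,R}>-2$ with $\rho^{1,R}+\rho^{2,R}\le\tfrac\kappa2-4$, I would apply a conformal automorphism of $\h$ fixing $0$ and sending $x^{2,R}$ to $\infty$: the image of $\eta$ is an $\SLE_\kappa(\rho^{1,L};\rho^{1,R})$ process with $\rho^{1,L}\ge\tfrac\kappa2-2$ and $\rho^{1,R}>-2$, which by Proposition~\ref{prop::two_force_point_uniqueness_and_continuity} is continuous with $|\eta(t)|\to\infty$, so pulling back gives continuity of $\eta$ and that it terminates at $x^{2,R}$ (in the subcase where the left data is too small one instead maps both $x^{1,R}$ and $x^{2,R}$ onto $(-\infty,0)$, reducing to an $\SLE_\kappa$ with two cumulative weights $\ge\tfrac\kappa2-2$, or to the range just handled). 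Finally, for $\rho^{1,R}\le -2$ I would condition on the flow line $\eta_\pi^{x^{2,R}}$ of $h$ started at $x^{2,R}$ with angle $\pi$ and split on whether $\eta_\pi^{x^{2,R}}$ reaches its continuation threshold upon accumulating in $[0,x^{1,R}]$: if it does, then in the leftmost complementary component $\eta$ is an $\SLE_\kappa(\wt\rho^{1,R},\wt\rho^{2,R})$ with $\wt\rho^{1,R}=\tfrac\kappa2-2$ (covered by the first range) and does not hit $\eta_\pi^{x^{2,R}}$ by Lemma~\ref{lem::hitting_interval}; if it does not, then $\eta_\pi^{x^{2,R}}$ is continuous by Proposition~\ref{prop::gff_abs_continuity} and Proposition~\ref{prop::two_force_point_uniqueness_and_continuity} and, conditionally, $\eta$ has weights falling under the preceding case, with $\eta$ exiting $\h$ in $(x^{1,R},x^{2,R})$.

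The main obstacle is not a single computation but rather organizing the case analysis so that every admissible $(\rho^{1,R},\rho^{2,R})$ is genuinely reduced to an already-established configuration, and verifying in each reduction the two structural inputs: that the conditional mean of $h$ given the auxiliary flow line does not develop a singularity at the point where $\eta$ meets it --- so that Theorem~\ref{thm::martingale} together with Proposition~\ref{prop::cond_mean_continuous} identifies the conditional law of $\eta$ as the expected $\SLE_\kappa(\ul\rho)$ --- and that the relevant conformal maps extend as homeomorphisms of the closures, which requires the auxiliary flow lines to bound Jordan domains, i.e.\ to be continuous (available from Proposition~\ref{prop::two_force_point_uniqueness_and_continuity}, Remark~\ref{rem::not_intersect_force_point_extra}, and the continuity statements for favourable weights proved earlier in this subsection). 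One also has to confirm that the auxiliary flow lines exist (do not instantly hit their own continuation thresholds) and that the conditioning events have positive probability, which comes down to the critical-height bookkeeping of Lemma~\ref{lem::hitting_and_bouncing}, Lemma~\ref{lem::hitting_interval} and Remark~\ref{rem::flow_cannot_hit}.
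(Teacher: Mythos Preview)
Your proposal follows essentially the same approach as the paper's own proof, which is itself organized around the case analysis of Figures~\ref{fig::cont_continuation_threshold1}--\ref{fig::cont_continuation_threshold3} that you reference.  Two small patches are needed to make the coverage complete.  First, your Case~1 only reaches $\rho^{1,R}\in(-2,\tfrac\kappa2-2]$; for $\rho^{1,R}\ge\tfrac\kappa2-2$ with $\rho^{1,R}+\rho^{2,R}\in(\tfrac\kappa2-4,-2]$ the paper observes (via Lemma~\ref{lem::hitting_and_bouncing}) that $\eta$ first hits $\partial\h$ in $(x^{2,R},\infty)$, so by absolute continuity one reduces to the boundary value $\rho^{1,R}=\tfrac\kappa2-2$ already handled.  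Second, your Case~3 auxiliary flow line $\eta_\pi^{x^{2,R}}$ only exists when $\rho^{1,R}+\rho^{2,R}>\tfrac\kappa2-4$ (otherwise it immediately hits the continuation threshold on its right); the subregion $\rho^{1,R}\le-2$, $\rho^{1,R}+\rho^{2,R}\le\tfrac\kappa2-4$ is instead absorbed by the conformal-map argument of Case~2, which the paper states without the restriction $\rho^{1,R}>-2$ (splitting on whether $\rho^{1,R}>\tfrac\kappa2-4$ or $\rho^{1,R}\le\tfrac\kappa2-4$, the latter sending both force points to the left as you mention parenthetically).
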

In order to prove Lemma~\ref{lem::cont_continuation_threshold}, we will need to consider several different cases.  These are described in Figure~\ref{fig::cont_continuation_threshold1} ($\rho^{1,R}  > -2$, $\rho^{1,R} + \rho^{2,R} \in (\tfrac{\kappa}{2}-4,-2]$), Figure~\ref{fig::cont_continuation_threshold2} ($\rho^{1,R} \in \R$, $\rho^{1,R} + \rho^{2,R} \leq \tfrac{\kappa}{2}-4$), and Figure~\ref{fig::cont_continuation_threshold3} ($\rho^{1,R} \leq -2$, $\rho^{1,R} + \rho^{2,R} > \tfrac{\kappa}{2}-4$).
\begin{proof}[Proof of Lemma~\ref{lem::cont_continuation_threshold}]
The proof of the first claim in the special case $\rho^{1,R} \in (-2,\tfrac{\kappa}{2}-2]$ and $\rho^{1,R} + \rho^{2,R} \in (\tfrac{\kappa}{2}-4,-2]$ is described in Figure~\ref{fig::cont_continuation_threshold1}.  For $\rho^{1,R} \geq \tfrac{\kappa}{2}-2$, Lemma~\ref{lem::hitting_and_bouncing} implies that $\eta$ first hits $\partial \h$ after time $0$ in $(x^{2,R},\infty)$.   Therefore the laws of the paths when $\rho^{1,R} = \tfrac{\kappa}{2}-2$ and $\rho^{1,R} > \tfrac{\kappa}{2}-2$  are mutually absolutely continuous (Proposition~\ref{prop::gff_abs_continuity}) upon hitting the continuation threshold (this is the same argument used in the proof of Lemma~\ref{lem::not_intersect_force_point2}), which completes the proof for $\rho^{1,R} > \tfrac{\kappa}{2}-2$ and $\rho^{1,R} + \rho^{2,R} \in (\tfrac{\kappa}{2}-4,2]$.  Note that these results hold if $\eta$ has an additional force point at $0^-$ of weight $\rho^{1,L} > -2$, as explained in Figure~\ref{fig::cont_continuation_threshold1}.

We now suppose that $\rho^{1,R}+\rho^{2,R} \leq \tfrac{\kappa}{2}-4$.  If $\rho^{1,R} > \tfrac{\kappa}{2}-4$, by applying a conformal map $\psi \colon \h \to \h$ which fixes $0$, sends $x^{1,R}$ to $\infty$, and $x^{2,R}$ to $-1$, we see that $\psi(\eta) \sim \SLE_\kappa(\wt{\rho}^{1,L};\rho^{1,R})$ where $\wt{\rho}^{1,L} \geq \tfrac{\kappa}{2}-2$ (this argument is described in Figure~\ref{fig::cont_continuation_threshold2}).  This puts us into the setting of the case considered in the previous paragraph.  Therefore $\psi(\eta)$ is continuous, hence also $\eta$.  If $\rho^{1,R} \leq \tfrac{\kappa}{2}-4$, then we can apply a conformal map $\psi \colon \h \to \h$ which fixes $0$ and sends both $x^{1,R}$ and $x^{2,R}$ to $(-\infty,0)$.  Then $\psi(\eta)$ is an $\SLE_\kappa(\rho^{1,L}.\rho^{2,L})$ process with $\rho^{1,L} \geq \tfrac{\kappa}{2}-2$ and $\rho^{1,L}+\rho^{2,L} \geq \tfrac{\kappa}{2}$, so the continuity in this case follows as well.

The final possibility is when $\rho^{1,R} \leq -2$ and $\rho^{1,R} + \rho^{2,R} > \tfrac{\kappa}{2}-4$.  The proof in this case is explained in the caption of Figure~\ref{fig::cont_continuation_threshold3}.
\end{proof}

\begin{figure}[ht!]
\begin{center}
\includegraphics[scale=0.85]{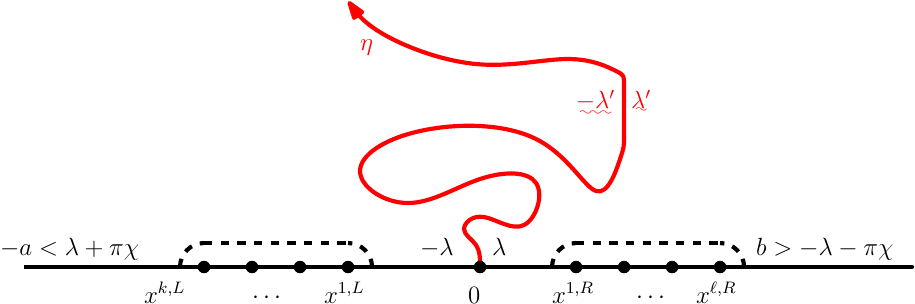}
\end{center}
\caption{\label{fig::escape_force_points} Suppose that $h$ is a GFF on $\h$ whose boundary data is as depicted above and let $\eta$ be the flow line of $h$ starting at $0$ targeted at $\infty$.  Then $\eta \sim \SLE_\kappa(\ul{\rho})$, $|\ul{\rho}^R| = \ell$ and $|\ul{\rho}^L| = k$, with $\sum_{i=1}^k \rho^{i,L}  = a/\lambda-1 > \tfrac{\kappa}{2}-4$ and $\sum_{i=1}^\ell \rho^{i,R} = b/\lambda-1 > \tfrac{\kappa}{2}-4$.  Let $U \subseteq \h$ consist of those points $z$ whose distance to $[x^{1,R},x^{\ell,R}]$ and $[x^{k,L},x^{1,L}]$ is at least $\tfrac{1}{2}$.  Then the law of $h|_U$ is mutually absolutely continuous with respect to the law of $\wt{h}|_U$ where $\wt{h}$ is a GFF on $\h$ whose boundary data agrees with $h$ in $(-\infty,x^{k,L}) \cup (x^{\ell,R},\infty)$, is $-\lambda$ in $[x^{k,L}, 0]$ and $\lambda$ in $[0,x^{\ell,R}]$.  Proposition~\ref{prop::two_force_point_uniqueness_and_continuity} and Lemmas~\ref{lem::not_intersect_force_point}--\ref{lem::cont_continuation_threshold} imply that the flow line $\wt{\eta}$ of $\wt{h}$ from $0$ targeted at $\infty$ either reaches $\infty$ or hits $(-\infty,x^{k,L}) \cup (x^{\ell,R},\infty)$ without leaving $U$ with positive probability.  Consequently, it follows from Proposition~\ref{prop::gff_abs_continuity} that the same is likewise true for $\eta$.  Moreover, from the discussion in Remark~\ref{rem::gff_abs_continuity} it is easy to see that this probability admits a positive lower bound which depends only on $|x^{k,L}|, |x^{\ell,R}|$, and $\|h|_{[x^{k,L},x^{1,L}]}\|_\infty$ and $\|h|_{[x^{1,R},x^{\ell,R}]}\|_\infty$.}
\end{figure}

\begin{lemma}
\label{lem::make_it_to_infty}
Suppose that $\eta$ is an $\SLE_\kappa(\ul{\rho})$ process in $\h$ from $0$ targeted at $\infty$, $k = |\ul{\rho}^L|$ and $\ell = |\ul{\rho}^R|$, with $\sum_{i=1}^k \rho^{i,L} > \tfrac{\kappa}{2}-4$ and $\sum_{i=1}^{\ell} \rho^{i,R} > \tfrac{\kappa}{2}-4$.  Assume that $|x^{1,L}|, |x^{1,R}| \geq 1$.  Fix $M > 0$ such that the locations of the force points $\ul{x} = (\ul{x}^L;\ul{x}^R)$ satisfy $x^{i,R}/x^{1,R} \leq M$ for all $1 \leq i \leq \ell$ and $x^{i,L}/x^{1,L} \leq M$ for all $1 \leq i \leq k$ and the weights $\ul{\rho}$ satisfy $|\rho^{i,q}| \leq M$ for all $1 \leq i \leq |\ul{\rho}^q|$ and $q \in \{L,R\}$.  Let $E_1$ be the event that either $\lim_{t \to \infty} \eta(t) = \infty$ or $\eta$ disconnects $x^{\ell,R}$ or $x^{k,L}$.  Let $E_2$ be the event that $\dist(\eta([0,\infty)), [x^{1,R}, x^{\ell,R}]) \geq \tfrac{1}{2}$ and $\dist(\eta([0,\infty)), [x^{k,L},x^{1,L}]) \geq \tfrac{1}{2}$.  With $E = E_1 \cap E_2$, we have that $\p[E] \geq \rho_0$ where $\rho_0 > 0$ depends only on $M$, $\sum_{i=1}^k \rho^{i,L}$, and $\sum_{i=1}^\ell \rho^{i,R}$.
\end{lemma}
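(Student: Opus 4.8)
The plan is to reduce the statement to the same assertion for a flow line of the \emph{comparison field} $\wt h$ of Figure~\ref{fig::escape_force_points}, in which all the ``inner'' force points have been erased, and then transfer the estimate back by absolute continuity. Let $\wt h$ be the GFF on $\h$ whose boundary data agrees with that of $h$ on $(-\infty,x^{k,L})\cup(x^{\ell,R},\infty)$, is $-\lambda$ on $[x^{k,L},0]$, and is $\lambda$ on $[0,x^{\ell,R}]$, and let $\wt\eta$ be its flow line from $0$ targeted at $\infty$. Since the boundary data of $\wt h$ is piecewise constant with a single jump on each side of $0$, $\wt\eta\sim\SLE_\kappa(\ol\rho^L;\ol\rho^R)$ with a single left force point at $x^{k,L}$ of weight $\ol\rho^L=\sum_{i=1}^k\rho^{i,L}$ and a single right force point at $x^{\ell,R}$ of weight $\ol\rho^R=\sum_{i=1}^\ell\rho^{i,R}$, with the obvious conventions when $k=0$ or $\ell=0$ (if $k=\ell=0$ then $\eta=\wt\eta$ is ordinary $\SLE_\kappa$ and the statement is immediate from transience, Proposition~\ref{prop::transience}). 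A key point is that $h$ and $\wt h$ in fact agree off the interiors of the two closed segments $[x^{k,L},x^{1,L}]$ and $[x^{1,R},x^{\ell,R}]$ --- exactly the segments that the set $U$ stays $\tfrac12$ away from.

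I would then prove the statement for $\wt\eta$. By scale invariance of $\SLE_\kappa(\ul\rho)$ one may normalise so that $x^{1,R}=1$ (so the right segment lies in $[1,M]$), handling the case $\ell=0$ symmetrically via $x^{1,L}=-1$. Because $\ol\rho^L,\ol\rho^R>\tfrac\kappa2-4$, the boundary-hitting and continuity results of Lemma~\ref{lem::hitting_and_bouncing}, Proposition~\ref{prop::two_force_point_uniqueness_and_continuity}, and Lemmas~\ref{lem::not_intersect_force_point}--\ref{lem::cont_continuation_threshold} (together with Remark~\ref{rem::flow_cannot_hit}) show that $\wt\eta$ is almost surely a continuous curve which is never permanently trapped at a real point before it either tends to $\infty$ or exits in $(-\infty,x^{k,L})\cup(x^{\ell,R},\infty)$ --- and in the latter case it has disconnected $x^{k,L}$ or $x^{\ell,R}$. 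It remains to bound from below, uniformly over the admissible configurations, the probability that this escape occurs while $\wt\eta$ stays inside $U$. Here I would handle the a priori uncontrolled location of the left force point by a dichotomy: if the left segment is far from $0$ relative to $M$, it is automatically avoided by any excursion of diameter $O(M)$ and the problem reduces to a one-(right-)force-point escape estimate; if it is close, the whole configuration lives in a bounded window and a uniform lower bound follows from continuity of the escape probability as a function of the configuration, a standard consequence of Proposition~\ref{prop::gff_abs_continuity} and the continuity of $\SLE_\kappa(\ul\rho)$. In either case the relevant escape event for $\wt\eta$ can be realised, after a mutually absolutely continuous change of the field on a bounded subregion bounded away from the force points (Proposition~\ref{prop::gff_abs_continuity}, Remark~\ref{rem::gff_abs_continuity}), by the corresponding event for ordinary $\SLE_\kappa$, which has positive probability by transience and the Markov property; this yields the required lower bound $\p[E]\ge\wt\rho_0>0$ for $\wt\eta$, with $\wt\rho_0$ depending only on $M,\ol\rho^L,\ol\rho^R$.

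To transfer this back to $\eta$, observe that on the event that $\eta$ stays in $U$ the event $E$ is measurable with respect to $h|_U$: this follows from Theorem~\ref{thm::coupling_uniqueness} together with a local-set argument (stop $\eta$ when it first leaves $U$), exactly as in the discussion preceding Remark~\ref{rem::gff_abs_continuity}. Since $h$ and $\wt h$ differ only on the two segments and $U$ is at distance $\tfrac12$ from them, part~\eqref{lem::gff_abs_continuity_boundary} of Proposition~\ref{prop::gff_abs_continuity} gives mutual absolute continuity of $h|_U$ and $\wt h|_U$, and Remark~\ref{rem::gff_abs_continuity} upgrades this to $\p_{\wt h}[E]\le C_p\,\p_h[E]^{1/p}$ for any $p>1$, where $C_p$ is controlled by the Dirichlet energy of a cut-off harmonic function whose boundary values are $h-\wt h$ on the two segments --- hence in terms of $|x^{k,L}|,|x^{\ell,R}|$ and the sup-norms of $h$ on the two segments, quantities the hypotheses bound in terms of $M,\ol\rho^L,\ol\rho^R$. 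Combining with the previous paragraph gives $\p_h[E]\ge(C_p^{-1}\wt\rho_0)^p=:\rho_0>0$ with the asserted dependence.

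The step I expect to be the main obstacle is the uniformity in the second paragraph: producing an escape-while-in-$U$ lower bound for $\wt\eta$ that does not degenerate as the two surviving force points move --- in particular as $|x^{k,L}|$ ranges over $(0,\infty)$ with $x^{1,R}$ normalised, which is what forces the far/near dichotomy --- and then controlling $C_p$ uniformly, where one must verify that although $k$ and $\ell$ are unbounded the relevant boundary sup-norms, and hence the Dirichlet energy, remain governed by $M$ together with the cumulative weights $\ol\rho^L,\ol\rho^R$.
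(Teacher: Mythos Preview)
Your approach matches the paper's: both introduce the comparison field $\wt h$ (boundary data $-\lambda$ on $[x^{k,L},0]$, $\lambda$ on $[0,x^{\ell,R}]$, and equal to that of $h$ elsewhere), argue that its two--force-point flow line $\wt\eta$ satisfies $E$ with positive probability using the results already established for one and two force points, and then transfer the estimate to $\eta$ via Proposition~\ref{prop::gff_abs_continuity} and the quantitative Remark~\ref{rem::gff_abs_continuity}. The paper's proof is a single sentence pointing to Figure~\ref{fig::escape_force_points}, whose caption sketches exactly this argument; your write-up is a more careful expansion of the same idea, and your identification of the uniformity in the force-point locations as the delicate point is apt.

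One small gap to fix: your reduction ``by scale invariance normalise so that $x^{1,R}=1$'' is not valid as stated, because $E_2$ involves the \emph{absolute} distance $\tfrac12$ and hence is not scale invariant. Indeed, if $x^{1,R}<\tfrac12$ then $\eta(0)=0$ already lies within distance $\tfrac12$ of $[x^{1,R},x^{\ell,R}]$ and $E_2$ is empty, so the lemma as literally stated requires an implicit lower bound on $x^{1,R}$ and $|x^{1,L}|$. In the sole application (the induction in the proof of Theorem~\ref{thm::continuity}) one has $x^{1,R}\ge1$ and $|x^{1,L}|\ge1$ at every step by construction, so this is harmless there; but your uniformity argument should be phrased with such a lower bound assumed (and then the far/near dichotomy for the opposite side goes through), rather than by rescaling.
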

\begin{proof}
This follows from Proposition~\ref{prop::gff_abs_continuity} and Remark~\ref{rem::gff_abs_continuity}; see Figure~\ref{fig::escape_force_points} for an explanation of the proof.
\end{proof}

\begin{figure}[ht!]
\begin{center}
\includegraphics[scale=0.85]{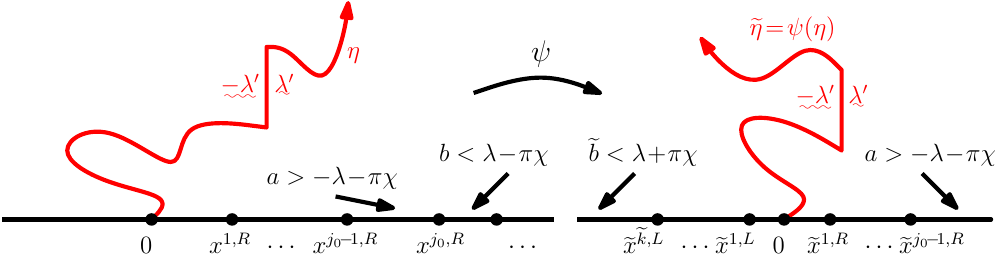}
\end{center}
\caption{\label{fig::map_to_infinity}  Let $h$ be a GFF on $\h$ whose boundary data is as depicted on the left side.  Let $\eta \sim \SLE_\kappa(\ul{\rho})$ be the flow line of $h$ starting at $0$. Suppose that $j_0$ is such that $\sum_{i=1}^{j_0-1} \rho^{i,R} > \tfrac{\kappa}{2}-4$ and $\sum_{i=1}^{j_0} \rho^{i,R} < \tfrac{\kappa}{2}-2$.  This particular choice implies that $\eta$ can hit $(x^{j_0-1,R},x^{j_0+1,R})$.  Let $\psi \colon \h \to \h$ be the conformal map which takes $\infty$ to $-1$, $x^{1,R}$ to $1$, and $x^{j_0,R}$ to $\infty$.  Then it is possible for $\wt{\eta} = \psi(\eta)$ to hit $(-\infty,\wt{x}^{k,L}) \cup (\wt{x}^{j_0-1},\infty)$ or reach $\infty$ before hitting $[\wt{x}^{k,L},\wt{x}^{1,L}] \cup [\wt{x}^{1,R},\wt{x}^{j_0-1,R}]$ where $\wt{\ul{x}}$ denotes the locations of the force points of $\wt{\eta}$.}
\end{figure}

\begin{proof}[Proof of Theorem~\ref{thm::continuity} for $\kappa \in (0,4)$]
We are going to prove the result by induction on the number of force points.  Proposition~\ref{prop::two_force_point_uniqueness_and_continuity} and Lemmas~\ref{lem::not_intersect_force_point}--\ref{lem::cont_continuation_threshold} imply the result for $\SLE_\kappa(\ul{\rho})$ processes with two force points.  Suppose that the result holds for all $\SLE_\kappa(\ul{\rho})$ processes with at most $n$ force points, some $n \geq 2$, and that $\eta \sim \SLE_\kappa(\ul{\rho})$ in $\h$ from $0$ to $\infty$ with $n+1$ force points.  If $\eta$ immediately hits the continuation threshold upon starting, there is nothing to prove.  Otherwise, running $\eta$ for a small amount of time and then applying a conformal mapping, we may assume that all of the force points are to the right of $0$; we denote their locations by $\ul{x}^R$.

Suppose that there exists $j_0 \geq 2$ such that $\sum_{i=1}^{j_0-1} \rho^{i,R} > \frac{\kappa}{2}-4$ and $\sum_{i=1}^{j_0} \rho^{i,R} < \tfrac{\kappa}{2}-2$ (if we couple $\eta$ with a GFF $h$ as in Theorem~\ref{thm::coupling_existence} on $\h$, this corresponds to the boundary data of $h$ in $[x^{j_0-1,R},x^{j_0,R})$ being larger than $-\lambda-\pi\chi$ and in $[x^{j_0,R},x^{j_0+1,R})$ being less than $\lambda - \pi \chi$).  Let $\psi \colon \h \to \h$ be the conformal map which sends $x^{1,R}$ to $1$, $x^{j_0,R}$ to $\infty$, and $\infty$ to $-1$ (see Figure~\ref{fig::map_to_infinity}).

Let $\wt{\eta} = \psi(\eta)$ and let $\ul{\wt{x}} = (\ul{\wt{x}}^L;\ul{\wt{x}}^R)$ denote the locations of the force points of $\wt{\eta}$.  Let $\wt{k} = |\ul{\wt{x}}^L|$ and note that $|\ul{\wt{x}}^R| = j_0-1$ by construction.  Let $\wt{W}_t$ be the Loewner driving function of $\wt{\eta}$, $\wt{g}_t$ the corresponding family of conformal maps, and let $\wt{V}_t^{i,q} = \wt{g}_t(\wt{x}^{i,q})$ denote the time evolution of the force points of $\wt{\eta}$ under $\wt{g}_t$.  We define stopping times as follows.  We let $\wt{\xi}_1$ be the first time $t$ that $\wt{W}_t = 0$ and let $\wt{\zeta}_1$ be the first time $t$ after $\wt{\xi}_1$ that $\wt{\eta}$ comes within distance $\tfrac{1}{2}$ of either $[\wt{V}_0^{\wt{k},L},\wt{V}_0^{1,L}]$ or $[\wt{V}_0^{1,R},\wt{V}_0^{j_0-1,R}]$.  For each $k \geq 2$, we inductively let $\wt{\xi}_k$ be the first time $t$ after $\wt{\zeta}_{k-1}$ that $\wt{W}_t = 0$ and $\wt{\zeta}_k$ the first time $t$ after $\wt{\xi}_k$ that $\wt{g}_{\wt{\xi}_{k}}(\wt{\eta}(t))$ comes within distance $\tfrac{1}{2}$ of either $[\wt{V}_{\wt{\xi}_{k}}^{\wt{k},L},\wt{V}_{\wt{\xi}_{k}}^{1,L}]$ or $[\wt{V}_{\wt{\xi}_{k}}^{1,R}, \wt{V}_{\wt{\xi}_{k}}^{j_0-1,R}]$.  Let $\wt{\tau}$ be the first time $t$ that $\wt{\eta}$ hits $(-\infty,\wt{V}_0^{2,L}]$, $[\wt{V}_0^{2,R},\infty)$, escapes to $\infty$, or hits the continuation threshold.

We are now going to show that $\wt{\eta}|_{[0,\wt{\tau} \wedge \wt{\xi}_k]}$ is almost surely continuous for every $k$.  We will argue that this holds by induction on $k$.  It holds for $k=1$ as a consequence of the induction hypothesis: if $\wt{W}_t$ is to the left or right of $0$, then the evolution of $\wt{\eta}$ is absolutely continuous with respect to the evolution of an $\SLE_\kappa(\ul{\rho})$ process with at most $n$ force points by the Girsanov theorem.  Suppose that $\wt{\eta}|_{[0,\wt{\tau} \wedge \wt{\xi}_k]}$ is continuous for some $k \geq 1$; we will argue that the same holds with $k+1$ in place of $k$.  For $t \in (\wt{\xi}_k,\wt{\zeta}_k]$, the desired continuity follows from Proposition~\ref{prop::gff_abs_continuity} and the two force point case.  For $t \in (\wt{\zeta}_k, \wt{\xi}_{k+1}]$, the claim follows by applying the Girsanov theorem and the induction hypothesis in the same manner we used to handle the case that $k=1$.

Let $E = \cup_{k} \{\wt{\tau} \leq \wt{\xi}_k\}$.  Then on $E$ we know that $\wt{\eta}|_{[0,\wt{\tau}]}$ is continuous.  Moreover, the conditional law of $\wt{\eta}|_{[\wt{\tau},\infty)}$ given $\wt{\eta}|_{[0,\wt{\tau}]}$ in the connected component $C$ of $\h \setminus \wt{\eta}([0,\wt{\tau}])$ which contains $\psi(\infty)=-1$ is that of an $\SLE_\kappa(\ul{\rho})$ process with at most $n$ force points.  Thus since $C$ is a Jordan domain, the desired result follows from the induction hypothesis.  We will complete the proof by showing that $\p[E^c] = 0$.  Since $\partial_t (\wt{V}_t^{j,R} - \wt{V}_t^{i,R}) < 0$ if $j > i$ and $\partial_t \wt{V}_t^{i,R} > 0$ for all $i$ (these facts come directly from the Loewner evolution and analogously hold when $R$ is replaced with $L$), Lemma~\ref{lem::make_it_to_infty} implies the existence of $\rho_0 > 0$ such that $\p[ \wt{\tau} < \wt{\xi}_{k+1} \giv \wt{\tau} > \wt{\xi}_k] \geq \rho_0$ for all $k$.  Therefore $\p[E^c] = 0$, as desired.

In order to complete the proof, we need to argue continuity in the case that there exists $J$ so that $\sum_{i=1}^j \rho^{i,R} \leq \tfrac{\kappa}{2}-4$ for all $1 \leq j \leq J$ and $\sum_{i=1}^j \rho^{i,R} \geq \tfrac{\kappa}{2}-2$ for all $J+1 \leq j \leq n+1$.  If $J = n+1$, we can see the continuity by applying a conformal map $\psi$ which fixes $0$ and sends all of the force points to the other side.  Indeed, then $\psi(\eta) \sim \SLE_\kappa(\ul{\rho})$ where the partial sums of the weights are all at least $\tfrac{\kappa}{2}-2$, so we can use Remark~\ref{rem::continuity_non_boundary}.  If $1 \leq J \leq n$, we can use the same argument described in Figure~\ref{fig::cont_continuation_threshold3} (condition on an auxiliary flow line at angle $\pi$ starting from $x^{J+1,R}$).  This completes the proof.
\end{proof}

Now that we have established Theorem~\ref{thm::continuity} for $\kappa \leq 4$, we can prove the almost sure continuity of angle varying flow lines.

\begin{proposition}
\label{prop::angle_varying_continuous}
Suppose that we have the same setup as Proposition~\ref{prop::angle_varying_monotonicity} (without the \emph{a priori} assumption of continuity).  Let $\theta_1,\ldots,\theta_k$ be angles satisfying~\eqref{eqn::angle_bounds}.  The angle varying flow line $\eta_{\theta_1 \cdots \theta_k}$ is almost surely a continuous path.
\end{proposition}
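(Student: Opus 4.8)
\textbf{Proof proposal for Proposition~\ref{prop::angle_varying_continuous}.}

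The plan is to proceed by induction on the number $k$ of angles, exactly as in the proof of Proposition~\ref{prop::angle_varying_monotonicity}, but now using Theorem~\ref{thm::continuity} for $\kappa \in (0,4)$ (which has just been proved in Section~\ref{subsec::many_boundary_force_points}) in place of the ad hoc continuity inputs that were assumed there. The base case $k=1$ is immediate: a single fixed-angle flow line of $h$ (with piecewise constant boundary data) is an $\SLE_\kappa(\ul{\rho})$ process, which is almost surely continuous by Theorem~\ref{thm::continuity}. For the inductive step, suppose the result holds for angle-varying flow lines with $k-1$ angle changes, and let $\eta := \eta_{\theta_1 \cdots \theta_k}$ with angle change times $\tau_1 < \cdots < \tau_{k-1}$. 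By the induction hypothesis, $\eta|_{[0,\tau_{k-1}]}$ is almost surely continuous. Conditioning on $\eta|_{[0,\tau_{k-1}]}$, the set $\eta([0,\tau_{k-1}])$ is local for $h$ (Lemma~\ref{lem::stopping_local_set}), and by Proposition~\ref{gff::prop::cond_union_local} and Proposition~\ref{gff::prop::cond_union_mean} the conditional field on the complement, after conformally mapping back to $\h$ by a map $\psi$ fixing $0$ and $\infty$, is again a GFF with piecewise constant boundary data which changes a finite number of times. Under this map, the tail $\psi(\eta|_{[\tau_{k-1},\infty)})$ is the flow line with angle $\theta_k$ of the new field, hence an $\SLE_\kappa(\ul{\rho}')$ process, which is almost surely continuous by Theorem~\ref{thm::continuity}.

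It then remains to pull the continuity of $\psi(\eta|_{[\tau_{k-1},\infty)})$ back through $\psi^{-1}$, which requires $\psi$ to extend continuously to the boundary. This is where the condition \eqref{eqn::angle_bounds}, $|\theta_i-\theta_j| < 2\lambda/\chi$, enters: it guarantees that $\eta$ never crosses itself, so that the complement $\h \setminus \eta([0,\tau_{k-1}])$ has connected components which, after possibly restricting to the component containing the tip $\eta(\tau_{k-1})$ in its closure, are locally connected domains. More precisely, I would argue that the component $C$ of $\h \setminus \eta([0,\tau_{k-1}])$ into which $\eta$ continues has locally connected boundary because $\partial C$ consists of arcs of the continuous curve $\eta|_{[0,\tau_{k-1}]}$ and intervals of $\partial \h$, and the non-crossing property prevents $\eta$ from entering a pocket that it has closed off (the same topological argument used in Remark~\ref{rem::cont_loewner_av} and in Lemma~\ref{lem::cont_loewner_driving_function}). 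By Carath\'eodory's theorem the conformal map $\psi \colon C \to \h$ extends to a homeomorphism $\ol{C} \to \ol{\h}$, so $\psi^{-1}$ is continuous and $\eta|_{[\tau_{k-1},\infty)} = \psi^{-1}(\psi(\eta|_{[\tau_{k-1},\infty)}))$ is a continuous path. Concatenating with the continuous piece $\eta|_{[0,\tau_{k-1}]}$ gives that $\eta$ is continuous.

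The main obstacle is the boundary regularity of the conformal map, i.e.\ verifying that the relevant complementary component is a locally connected (in fact Jordan-type) domain so that Carath\'eodory extension applies. This requires care because the curve $\eta|_{[0,\tau_{k-1}]}$ may touch itself and touch $\partial \h$; one must use the non-crossing hypothesis \eqref{eqn::angle_bounds} together with Theorem~\ref{thm::continuity} to rule out degenerate pinch points that would obstruct the homeomorphic extension. Once this topological input is in hand, everything else is a routine iteration of the local-set machinery (Propositions~\ref{gff::prop::cond_union_local}, \ref{gff::prop::cond_union_mean}) and the continuity of ordinary $\SLE_\kappa(\ul{\rho})$ from Theorem~\ref{thm::continuity}, applied one angle change at a time; I would also note that the monotonicity in Proposition~\ref{prop::angle_varying_monotonicity} may be invoked (now unconditionally) to control the geometry of the tail and confirm it stays in the expected component.
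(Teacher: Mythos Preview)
Your proposal is correct and follows essentially the same induction as the paper's own proof: base case $k=1$ from Theorem~\ref{thm::continuity}, then for the inductive step conformally map the unbounded component of $\h \setminus \eta([0,\tau_{k-1}])$ to $\h$, identify the tail as an $\SLE_\kappa(\ul{\rho})$ flow line of the pushed-forward field, apply Theorem~\ref{thm::continuity} again, and pull back through the boundary-extended conformal map. One small remark: you attribute the Carath\'eodory extension to the non-crossing condition \eqref{eqn::angle_bounds}, but in fact the homeomorphic boundary extension follows already from the \emph{continuity} of $\eta|_{[0,\tau_{k-1}]}$ (the boundary of the unbounded complementary component of a continuous curve together with arcs of $\partial\h$ is locally connected); the paper invokes exactly this, in one line, and does not use \eqref{eqn::angle_bounds} at the extension step.
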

\begin{proof}
We prove the result by induction on $k$.  Theorem~\ref{thm::continuity}, which we have now proven for $\kappa \in (0,4]$, states that this result holds for $k=1$ (which corresponds to the constant angle case).  Suppose that $k \geq 2$ and the result holds for $k-1$.  Let $\tau_1,\ldots,\tau_{k-1}$ be the angle change times (and take $\tau_k = \infty$).  By assumption, $\eta_{\theta_1 \cdots \theta_k}^{\tau_1 \cdots \tau_k}|_{[\tau_{k-2},\tau_{k-1}]}$ given $\eta_{\theta_1 \cdots \theta_k}^{\tau_1 \cdots \tau_k}([0,\tau_{k-2}])$ evolves as an $\SLE_\kappa(\ul{\rho}^L;\ul{\rho}^R)$ process.  By induction, $\eta_{\theta_1 \cdots \theta_k}^{\tau_1 \cdots \tau_k}|_{[0,\tau_{k-2}]}$ is continuous so that a conformal map $\psi$ which takes the unbounded connected component of $\h \setminus \eta_{\theta_1 \cdots \theta_k}([0,\tau_{k-2}])$ to $\h$ with $\eta_{\theta_1 \cdots \theta_k}^{\tau_1 \cdots \tau_k}(\tau_{k-2})$ mapped to $0$ extends as a homeomorphism to the boundary.  Thus the continuity of $\eta_{\theta_1 \cdots \theta_k}^{\tau_1 \cdots \tau_k}$ follows from Theorem~\ref{thm::continuity} for $\kappa \in (0,4]$, which completes the proof of the induction step.
\end{proof}

\subsection{Counterflow lines}
\label{subsec::counterflow}

We will now explain how to modify the proofs from the previous subsections to complete the proof of Theorem~\ref{thm::coupling_uniqueness} and Theorem~\ref{thm::continuity} for $\kappa' > 4$.  Throughout this subsection, we will often work with a GFF $h$ on the strip $\strip$ in order to make the setting compatible with $\SLE$ duality (recall Section~\ref{sec::dubedat}).  We assume that the boundary data for $h$ is as in the left side of Figure~\ref{fig::counter_flow_line_crossing} and Figure~\ref{fig::counter_flow_line_angles_crossing}, where $a,b,a',b'$ are taken to be sufficiently large so that the configuration of flow and counterflow lines we consider almost surely does not interact with $\partial \strip$.

We let $\eta'$ be the counterflow line of $h$ starting at $z_0$.  The proof follows a strategy similar to but more involved than what we employed for $\kappa \in (0,4]$.  In Section~\ref{subsec::counterflow_two_boundary_force_points}, we will focus on the case with two boundary force points.  It turns out that in order to generate an $\SLE_{\kappa'}(\rho^L;\rho^R)$ process for arbitrary choices of $\rho^L,\rho^R > -2$ by conditioning on auxiliary flow lines in a manner similar to that used for $\kappa \in (0,4]$, we are already led to consider the law of $\eta'$ conditional on two angle varying flow lines (for $\kappa \in (0,4]$, we only had to consider angle varying trajectories when generalizing the two force point case to the many force point case).  Extending these results from two force points to many force points also follows a similar but more elaborate version of the strategy we used for $\kappa \in (0,4]$, since we will need to consider four different cases as opposed to three.  This is carried out in Section~\ref{subsec::counterflow_many_boundary_force_points}.  Finally, in Section~\ref{subsubsec::light_cone_general}, we will explain how to extend the light cone construction to the setting of general $\SLE_{\kappa'}(\ul{\rho}^L;\ul{\rho}^R)$ processes and, in particular, obtain general forms of $\SLE$ duality.

\subsubsection{Two boundary force points}
\label{subsec::counterflow_two_boundary_force_points}

We are now going to prove Theorem~\ref{thm::coupling_uniqueness} and Theorem~\ref{thm::continuity} for counterflow lines with two boundary force points.  The proof is a bit more elaborate than what we employed for flow lines because we will need to consider different types of configurations of flow and counterflow lines depending on the values of $\rho^L,\rho^R$.  In the first step, we will handle the case that $\rho^L > -2$ and $\rho^R \geq \tfrac{\kappa'}{2}-4$ (and vice-versa) --- recall that $\tfrac{\kappa'}{2}-4$ is the threshold at which $\eta'$ becomes boundary filling.  This will be accomplished in Lemma~\ref{lem::cfl_cont_det_part1} by considering a configuration consisting of two flow lines in addition to $\eta'$ (see Figure~\ref{fig::counter_flow_line_crossing}).  In the second step, accomplished in Lemma~\ref{lem::cfl_cont_det_part2} (see Figure~\ref{fig::counter_flow_line_angles_crossing}), we will take care of the case that $\rho^L,\rho^R  \in (-2,\tfrac{\kappa'}{2}-4)$ using a configuration which consists of two angle varying flow lines in addition to $\eta'$.  Combining these two lemmas completes the proof of Theorem~\ref{thm::coupling_uniqueness} for $\kappa ' > 4$ with many boundary force points (recall that the proof of Lemma~\ref{lem::functional_many_force_points} was not flow line specific) and Theorem~\ref{thm::continuity} for $\kappa' > 4$ with two boundary force points with weight exceeding $-2$, one on each side of the counterflow line seed.

\begin{figure}[h]
\begin{center}
\includegraphics[scale=0.85]{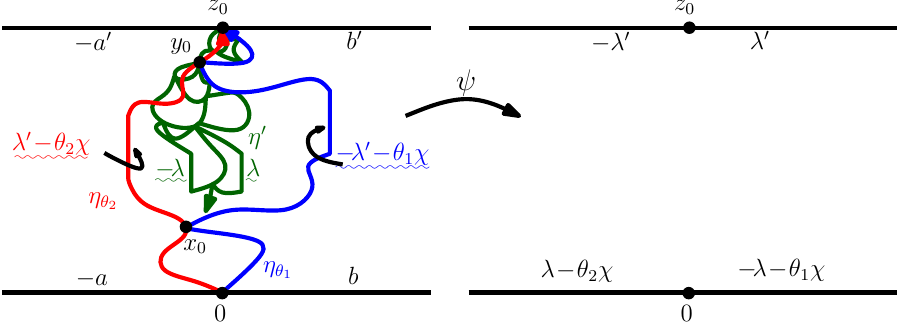}
\caption{\label{fig::counter_flow_line_crossing}
Let $h$ be a GFF on the strip $\strip$ whose boundary data is depicted in the left panel.  Fix $\theta_1 < \theta_2$ and let $\eta_{\theta_i}$ be the flow line of $h$ starting at $0$ with angle $\theta_i$.  Suppose that $C$ is any connected component of $\strip \setminus (\eta_{\theta_1} \cup \eta_{\theta_2})$ which lies between $\eta_{\theta_1}$ and $\eta_{\theta_2}$.  We assume that both $\theta_1 < \tfrac{\pi}{2}$ and $\theta_2 > -\tfrac{\pi}{2}$; this choice implies that $\eta'$ almost surely intersects $C$.  Let $x_0$ be the first point on $\partial C$ to be traced by $\eta_{\theta_1},\eta_{\theta_2}$ and $y_0$ the last.  Fix a stopping time $\tau'$ for $\CF_t = \sigma(\eta'(s) : s \leq t, \eta_{\theta_1},\eta_{\theta_2})$ such that $\eta'(\tau') \in C$ almost surely.  The boundary data for the conditional law of $h$ given $\eta_{\theta_1},\eta_{\theta_2}$ and $\eta'([0,\tau'])$ in $C$ is depicted in the left panel.  Let $\psi$ be a conformal map which takes the connected component of $C \setminus \eta'([0,\tau'])$ which contains $x_0$ to $\strip$ with $\eta'(\tau')$ taken to $z_0$ and $x_0$ taken to $0$.  The boundary data for the GFF $h \circ \psi^{-1} - \chi \arg (\psi^{-1})'$ on $\strip$ is depicted on the right side.  From this, we can read off the conditional law of $\eta'$ viewed as a path in $C$ given $\eta_{\theta_1},\eta_{\theta_2}$: it is an $\SLE_{\kappa'}(\rho^L;\rho^R)$ process where $\rho^L = (1/2 + \theta_2/\pi)(\kappa'/2-2)- 2$ and $\rho^R = (1/2-\theta_1/\pi)(\kappa'/2-2) - 2$.
}
\end{center}
\end{figure}

\begin{lemma}
\label{lem::cfl_cont_det_part1}
Suppose that $\rho^L > -2$ and $\rho^R \geq \tfrac{\kappa'}{2}-4$ or $\rho^L \geq \tfrac{\kappa'}{2}-4$ and $\rho^R > -2$.  In the coupling of an $\SLE_{\kappa'}(\rho^L;\rho^R)$ process $\eta_0'$ with a GFF $h_0$ as in Theorem~\ref{thm::coupling_existence}, $\eta_0'$ is almost surely determined by $h_0$.  Moreover, $\eta_0'$ is almost surely a continuous path.
\end{lemma}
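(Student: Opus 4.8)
The plan is to adapt the argument of Section~\ref{subsec::two_boundary_force_points} to counterflow lines, using $\SLE$ duality to realize $\eta_0'$ as the restriction of a counterflow line of a GFF on the strip $\strip$ conditioned on a pair of auxiliary flow lines. Let $h$ be a GFF on $\strip$ with the boundary data of Figure~\ref{fig::counter_flow_line_crossing}, let $\eta'$ be the counterflow line of $h$ from $z_0$, and for angles $\theta_1<\theta_2$ with $\theta_2>-\tfrac{\pi}{2}$ and $\theta_1\le-\tfrac{\pi}{2}$ let $\eta_{\theta_i}$ be the flow line of $h$ from $0$ with angle $\theta_i$. The hypotheses $\rho^L>-2$ and $\rho^R\ge\tfrac{\kappa'}{2}-4$ translate, via the formulas in Figure~\ref{fig::counter_flow_line_crossing}, into precisely this range of angles, and the symmetric case is obtained by reflection; taking $a,b,a',b'$ large puts us in the non-boundary-intersecting regime, so $\eta'$, $\eta_{\theta_1}$, $\eta_{\theta_2}$ are all continuous (Remark~\ref{rem::continuity_non_boundary}, Theorem~\ref{thm::continuity} for $\kappa\in(0,4)$), the two flow lines are determined by $h$ (Lemma~\ref{lem::functional_many_force_points}), and by Proposition~\ref{prop::flow_counterflow_left_right} and Lemma~\ref{lem::light_cone_contains_av} the line $\eta_{\theta_1}$ stays to the right of $\eta'$ while $\eta_{\theta_2}$ either stays to its left or is contained in it.

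First I would identify the conditional law of $\eta'$, restricted to a fixed connected component $C$ of $\strip\setminus(\eta_{\theta_1}\cup\eta_{\theta_2})$ lying between the two flow lines, given $(\eta_{\theta_1},\eta_{\theta_2})$. Since $\eta_{\theta_1}\cup\eta'([0,\tau])\cup\eta_{\theta_2}$ is local (Lemma~\ref{lem::stopping_local_set}), Remark~\ref{rem::cond_mean_height_cf} (or Remark~\ref{rem::cond_mean_height_cf_contained} when $\eta_{\theta_2}$ is contained in $\eta'$) gives that the conditional mean of $h$ given this set has the boundary behaviour drawn in Figure~\ref{fig::counter_flow_line_crossing}, with no singularities at the points where $\eta'$ meets $\eta_{\theta_1}$ or $\eta_{\theta_2}$, and Remark~\ref{rem::cont_loewner_cf}/\ref{rem::cont_loewner_cf_contains} gives that $\eta'$ viewed as a path in $C$ has a continuous Loewner driving function. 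Combining Proposition~\ref{prop::cond_mean_continuous} with Theorem~\ref{thm::martingale} then identifies the law of the image of $\eta'|_C$ under a conformal map $C\to\h$ (sending the appropriate prime ends to $0$ and $\infty$) as an $\SLE_{\kappa'}(\rho^L;\rho^R)$ process. Since $C$ is a Jordan domain, this conformal map extends to a homeomorphism of closures, whence the image is a continuous path, and hence so is $\eta_0'$; moreover the conditional law of the pair $(\eta'|_C,h|_C)$ given $(\eta_{\theta_1},\eta_{\theta_2})$ does not depend on the flow lines and so is independent of them.

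The determinism statement then follows exactly as in Lemma~\ref{lem::two_force_points_determined}: writing $h'$ for the restriction of $h$ to $\strip\setminus C$ and $Q=(\eta_{\theta_1},\eta_{\theta_2},h')$, the pair $(Q,h|_C)$ determines $h$ hence $\eta'$, so it suffices to show that $h'$ is conditionally independent of $(\eta'|_C,h|_C)$ given $(\eta_{\theta_1},\eta_{\theta_2})$; decomposing $h'$ into its restrictions to the two sides of $\eta_{\theta_1}\cup\eta_{\theta_2}$ and to the components ordered before and after $C$, and applying Proposition~\ref{gff::prop::local_independence} three times together with the independence just noted, gives this. Transferring back to the coupling of an $\SLE_{\kappa'}(\rho^L;\rho^R)$ process $\eta_0'$ with a GFF $h_0$ on $\h$ is done through the coordinate change \eqref{eqn::ac_eq_rel} and absolute continuity (Proposition~\ref{prop::gff_abs_continuity}), exactly as for flow lines.

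I expect the main obstacle to be the boundary-filling regime $\rho^R\ge\tfrac{\kappa'}{2}-4$, i.e.\ $\theta_1\le-\tfrac{\pi}{2}$, in which $\eta_{\theta_1}$ only touches (rather than skirts) $\eta'$ and $\eta'$ is boundary filling along $\partial C$: here one must verify that the conditional-mean and Loewner-driving-function arguments of Section~\ref{sec::interacting} still go through at the intersection points of $\eta'$ with the flow lines, which is precisely what Remark~\ref{rem::cond_mean_height_cf_contained} and Remark~\ref{rem::cont_loewner_cf_contains} establish, using that $\eta'$ absorbs the points of the flow lines in reverse chronological order (Lemma~\ref{lem::light_cone_contains_av}), and that the $\SLE_{\kappa'}(\rho^L;\rho^R)$ one reads off carries exactly the threshold weight. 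The remainder is a faithful transcription of the flow-line argument with $\lambda$, $\chi$ replaced by $\lambda'$, $-\chi$.
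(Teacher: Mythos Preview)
Your approach is essentially the paper's and is correct. One correction to your closing paragraph: you have the geometry inverted. The regime $\rho^R \ge \tfrac{\kappa'}{2}-4$ (i.e.\ $\theta_1 \le -\tfrac{\pi}{2}$) is precisely where $\eta'$ is \emph{not} boundary filling on the $\eta_{\theta_1}$ side, and $\eta_{\theta_1}$ passes to the right of $\eta'$ (or coincides with its right boundary) rather than being absorbed by it; Lemma~\ref{lem::light_cone_contains_av} does not apply to $\eta_{\theta_1}$ in this range. The case where a flow line is contained in the range of $\eta'$---and hence where Remarks~\ref{rem::cond_mean_height_cf_contained} and~\ref{rem::cont_loewner_cf_contains} and the reverse-chronological-order argument are genuinely needed---is $\theta_2 \in (-\tfrac{\pi}{2}, \tfrac{\pi}{2})$, corresponding to $\rho^L \in (-2, \tfrac{\kappa'}{2}-4)$, where $\eta'$ is boundary filling along the $\eta_{\theta_2}$ side of $\partial C$. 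The paper in fact allows the wider range $\theta_1 < \tfrac{\pi}{2}$, so that either flow line may be absorbed, but your restriction suffices for the stated hypothesis. This mislabelling does not affect the validity of your argument, since you invoke the correct remarks and the proof goes through as written.
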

The reason that we used the notation $\eta_0'$ and $h_0$ in the statement of Lemma~\ref{lem::cfl_cont_det_part1} is to avoid confusing $\eta_0'$ with $\eta'$ and $h_0$ with $h$.  Recall also from Remark~\ref{rem::continuity_non_boundary} that by absolute continuity (the Girsanov theorem) we know that non-boundary intersecting $\SLE_{\kappa'}(\ul{\rho})$ processes are almost surely continuous, at least up until just before terminating (or tending to $\infty$ if the terminal point is at $\infty$).  The proof of Lemma~\ref{lem::cfl_cont_det_part1} allows us to deduce the continuity of such processes even upon terminating by reducing the result to the transience of $\SLE_{\kappa'}$ processes established in \cite{RS05}.  This is accomplished by picking angles $\theta_1,\theta_2$ so that the conditional law of $\eta'$ is an $\SLE_{\kappa'}$ process given flow lines $\eta_{\theta_1}, \eta_{\theta_2}$ in each of the connected components of $\h \setminus (\eta_{\theta_1} \cup \eta_{\theta_2})$ which lie between $\eta_{\theta_1}$ and $\eta_{\theta_2}$.
\begin{proof}[Proof of Lemma~\ref{lem::cfl_cont_det_part1}]
We suppose that we have the setup described in Figure~\ref{fig::counter_flow_line_crossing}.  That is, we fix $\theta_1 < \theta_2$ and let $\eta_{\theta_i}$ be the flow line of $h$ with angle $\theta_i$ starting from $0$ and let $\eta'$ be the counterflow line starting from $z_0$.   We assume that $a,a',b,b'$ are large enough so that $\eta_{\theta_1},\eta_{\theta_2},$ and $\eta'$ almost surely do not intersect $\partial \strip$ except at their initial and terminal points.  We also assume that $\theta_1 < \tfrac{\pi}{2}$ and $\theta_2 > -\tfrac{\pi}{2}$.  This implies that $\eta_{\theta_1}$ lies to the right of the left boundary of $\eta'$ and likewise that $\eta_{\theta_2}$ lies to the left of the right boundary of $\eta'$ (recall Proposition~\ref{prop::flow_counterflow_left_right} and Proposition~\ref{prop::monotonicity_non_boundary}).  Figure~\ref{fig::counter_flow_line_crossing} describes the conditional mean $\CC_{A(\tau')}$ of $h$ given $A(\tau')$ where $A(t) = \eta_{\theta_1} \cup \eta'([0,t]) \cup \eta_{\theta_2}$ and where $\tau'$ is any stopping time for the filtration $\CF_t = \sigma(\eta'(s) : s \leq t,\ \ \eta_{\theta_1}, \eta_{\theta_2})$.  Indeed, we know that $A(\tau')$ is a local set for $h$ by Lemma~\ref{lem::stopping_local_set}.  Moreover, Remark~\ref{rem::cond_mean_height_cf} and Remark~\ref{rem::cond_mean_height_cf_contained} imply that $\CC_{A(\tau')}$ does not exhibit pathological behavior at points where any pair of $\eta_{\theta_1},\eta',\eta_{\theta_2}$ intersect.

Recall also Remark~\ref{rem::cont_loewner_cf} and Remark~\ref{rem::cont_loewner_cf_contains}, which imply that $\eta'$ has a continuous Loewner driving function viewed as a path in each of the connected components of $\strip \setminus (\eta_{\theta_1} \cup \eta_{\theta_2})$ which lie between $\eta_{\theta_1},\eta_{\theta_2}$.  Note that if either $\theta_1 >- \tfrac{\pi}{2}$ or $\theta_2 < \tfrac{\pi}{2}$ so that one of the $\eta_{\theta_i}$ is actually contained in the range of $\eta'$ (Lemma~\ref{lem::light_cone_contains_av}), we need to interpret what it means for $\eta'$ to be a path in one of these complementary connected components.  This is explained in complete detail in Remark~\ref{rem::cont_loewner_cf_contains}.  Applying Theorem~\ref{thm::martingale} and Proposition~\ref{prop::cond_mean_continuous}, we find that the conditional law of $\eta'$ given $\eta_{\theta_1}$ and $\eta_{\theta_2}$ in each of the connected components of $\strip \setminus (\eta_{\theta_1} \cup \eta_{\theta_2})$ which lie between $\eta_{\theta_1}$ and $\eta_{\theta_2}$ is independently an $\SLE_\kappa(\rho^L;\rho^R)$ process where
\begin{equation}
\label{eqn::cfl_cont_det_part1_rho}
 \rho^L = \left(\frac{1}{2} + \frac{\theta_2}{\pi}\right)\left(\frac{\kappa'}{2}-2\right) - 2
   \quad\text{and}\quad
   \rho^R = \left(\frac{1}{2}- \frac{\theta_1}{\pi}\right)\left(\frac{\kappa'}{2}-2\right) -2.
\end{equation}
Indeed, the values of $\rho^L,\rho^R$ are determined by solving the equations:
\[ -\lambda'(1+\rho^L) = \lambda - (\theta_2+\pi) \chi \quad\text{and}\quad \lambda'(1+\rho^R) = -\lambda + (-\theta_1+\pi) \chi\]
(see Figure~\ref{fig::counter_flow_line_crossing} and recall Figure~\ref{fig::conditional_boundary_data}).  The continuity statement of the lemma follows by adjusting $\theta_1,\theta_2$ appropriately and noting that each complementary component is almost surely a Jordan domain by the almost sure continuity of $\eta_{\theta_1}$ and $\eta_{\theta_2}$.  The first statement of the lemma follows from the same argument as in the proof of Lemma~\ref{lem::two_force_points_determined}.
\end{proof}

Note that in~\eqref{eqn::cfl_cont_det_part1_rho}, as $\theta_2 \downarrow -\tfrac{\pi}{2}$ --- which corresponds to $\eta_{\theta_2}$ approaching the right boundary of $\eta'$ --- we have that $\rho^L \downarrow -2$.  Likewise, as $\theta_1 \uparrow \tfrac{\pi}{2}$ --- which corresponds to $\eta_{\theta_1}$ approaching the left boundary of $\eta'$ --- we have that $\rho^R \downarrow -2$.  The constraint $\theta_1 < \theta_2$ means that it is not possible to obtain the full range of $\rho^L,\rho^R > -2$ values by computing the conditional law of $\eta'$ given configurations of flow lines of this type.

\begin{figure}[h!]
\begin{center}
\includegraphics[scale=0.85]{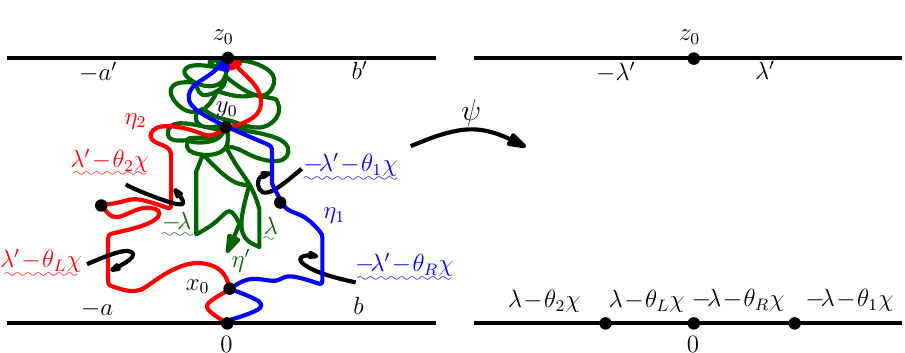}
\caption{\label{fig::counter_flow_line_angles_crossing}
Assume that $h$ is a GFF on the strip $\strip$ whose boundary data is depicted in the left panel.  Let $\theta_L = \tfrac{\pi}{2}$ and $\theta_R = - \tfrac{\pi}{2}$.  Suppose that $|\theta_1-\theta_R| \leq \pi$ and $|\theta_2-\theta_L| \leq \pi$.  Let $\eta_1 := \eta_{\theta_R \theta_1}^{\tau_1^1 \tau_2^1}$ and $\eta_2 := \eta_{\theta_L \theta_2}^{\tau_1^2 \tau_2^2}$ be angle varying flow lines of $h$ with the aforementioned angles.  We take the angle change times for both $\eta_1$ and $\eta_2$ to be when they first hit unit capacity as seen from $z_0$.  If $\theta_1 > \theta_R$, then $\eta'$ will cross $\eta_1$ and likewise if $\theta_2 < \theta_L$ then $\eta'$ will cross $\eta_2$.  We assume that the boundary data of $h$ is large enough so that $\eta_1,\eta_2,\eta'$ intersect $\partial \strip$ only at their initial and terminal points.  Let $y_0$ be the first point where $\eta_1$ and $\eta_2$ intersect after both $\eta_1$ and $\eta_2$ change directions and are traveling with angles $\theta_1$ and $\theta_2$, respectively.  Let $C$ be the connected component of $\strip \setminus (\eta_1 \cup \eta_2)$ which lies between $\eta_1$ and $\eta_2$ such that $y_0$ is the last point on $\partial C$ to be traced by $\eta_1,\eta_2$; let $x_0$ be the first.  Fix a stopping time $\tau'$ for $\CF_t = \sigma(\eta'(s) : s \leq t,\ \ \eta_1,\eta_2)$ such that $\eta'(\tau') \in C$ almost surely.  The boundary data for the conditional law of $h$ given $\eta_1,\eta_2,$ and $\eta'([0,\tau'])$ is depicted in the left panel.  Let $\psi$ be a conformal map which takes the connected component of $C \setminus \eta'([0,\tau'])$ which contains $x_0$ to $\strip$ with $\eta'(\tau')$ taken to $z_0$ and $x_0$ taken to $0$.  The boundary data for the GFF $h \circ \psi^{-1} - \chi \arg (\psi^{-1})'$ on $\strip$ is depicted on the right side.  From this, we can read off the conditional law of $\eta'$ viewed as a path in $C$ given $\eta_1,\eta_2$.  It is an $\SLE_{\kappa'}(\rho^{1,L},\rho^{2,L};\rho^{1,R},\rho^{2,R})$ process where the weights of the force points are given in~\eqref{eqn::cfl_cont_det_part2_rho}.
}
\end{center}
\end{figure}

In the next lemma, we are going to explain how to extend this method to the case that $\rho^L,\rho^R \in (-2,\tfrac{\kappa'}{2}-4)$, which corresponds to an $\SLE_{\kappa'}$ with both weights below the critical boundary filling threshold.

\begin{lemma}
\label{lem::cfl_cont_det_part2}
Suppose that $\rho^L,\rho^R \in (-2,\tfrac{\kappa'}{2}-4)$.  Then in the coupling of an $\SLE_{\kappa'}(\rho^L;\rho^R)$ process $\eta_0'$ with a GFF $h_0$ as in Theorem~\ref{thm::coupling_existence}, we have that $\eta_0'$ is almost surely determined by $h_0$.  Moreover, $\eta_0'$ is almost surely continuous.
\end{lemma}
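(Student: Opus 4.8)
The plan is to repeat the strategy of Lemma~\ref{lem::cfl_cont_det_part1}, but now using a configuration of \emph{two angle-varying flow lines} surrounding $\eta'$ in place of two fixed-angle flow lines, as illustrated in Figure~\ref{fig::counter_flow_line_angles_crossing}. The point is that the constraint $\theta_1<\theta_2$ forced the weights in \eqref{eqn::cfl_cont_det_part1_rho} to satisfy $\rho^L,\rho^R$ bounds that cannot reach the regime $\rho^L,\rho^R\in(-2,\tfrac{\kappa'}{2}-4)$; but by letting $\eta_1:=\eta_{\theta_R\theta_1}$ flow first at the extreme angle $\theta_R=-\tfrac{\pi}{2}$ (so that it runs along the right boundary of $\eta'$ by Proposition~\ref{prop::light_cone_construction}) and then switch to angle $\theta_1>\theta_R$, and symmetrically for $\eta_2:=\eta_{\theta_L\theta_2}$ with $\theta_L=\tfrac{\pi}{2}$ and $\theta_2<\theta_L$, we gain the freedom to push $\eta_1,\eta_2$ across $\eta'$ and thereby realize the remaining range of weights. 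First I would fix a GFF $h$ on $\strip$ with boundary data as in Figure~\ref{fig::counter_flow_line_angles_crossing}, chosen large enough (via $a,b,a',b'$) that none of $\eta_1,\eta_2,\eta'$ interacts with $\partial\strip$ except at endpoints, and require $|\theta_1-\theta_R|\le\pi$, $|\theta_2-\theta_L|\le\pi$ so that the angle-varying flow lines are simple and determined by $h$ (Lemma~\ref{lem::av_simple_determined}, Proposition~\ref{prop::angle_varying_continuous}).

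The key steps, in order, are as follows. (1) Establish the monotonicity/crossing picture: $\eta_1$ stays to the right of the left boundary of $\eta'$ and crosses $\eta'$ exactly once when $\theta_1>\theta_R$, and symmetrically for $\eta_2$; this follows from Proposition~\ref{prop::flow_counterflow_left_right}, Proposition~\ref{prop::angle_varying_monotonicity}, and Proposition~\ref{prop::merging_and_crossing} applied (after conformally mapping and using Lemma~\ref{lem::crossing_local_stopping}) to the pieces of $\eta_1,\eta_2$ and the two boundary flow lines $\eta_L,\eta_R$ of $\eta'$. (2) Identify the relevant component $C$ of $\strip\setminus(\eta_1\cup\eta_2)$ lying between $\eta_1$ and $\eta_2$ whose last traced boundary point $y_0$ is the first intersection of $\eta_1,\eta_2$ after both have switched to their final angles, with first traced point $x_0$. (3) Show $A(t)=\eta_1\cup\eta'([0,t])\cup\eta_2$ is a local set for $h$ for every stopping time $\tau'$ of $\CF_t=\sigma(\eta'(s):s\le t,\ \eta_1,\eta_2)$ (Lemma~\ref{lem::stopping_local_set}), and that $\CC_{A(\tau')}$ exhibits no pathological behaviour at the various intersection points --- here I would invoke the angle-varying counterflow-line versions of the conditional-mean control, namely Remark~\ref{rem::cond_mean_height_cf_angle_varying} together with Remark~\ref{rem::cond_mean_height_cf_contained} (and, for the crossing configuration, the deferred case noted at the end of Section~\ref{sec::interacting}, which is dealt with by Lemma~\ref{lem::crossing_local}, Lemma~\ref{lem::hitting_conditional_law} exactly as in the flow-line crossing argument). (4) Verify that $\eta'$, viewed as a path in $C$, has a continuous Loewner driving function: this is the angle-varying analogue of Remark~\ref{rem::cont_loewner_cf}--Remark~\ref{rem::cont_loewner_cf_av}, using that the outer boundaries of $\eta'$ are $\eta_L,\eta_R$ and that $\eta_L\cap\eta_i$, $\eta_R\cap\eta_i$ are nowhere dense (proof of Lemma~\ref{lem::cont_loewner_driving_function}), plus the crossing structure to rule out $\eta'$ entering the wrong complementary component. (5) Apply Theorem~\ref{thm::martingale} and Proposition~\ref{prop::cond_mean_continuous} to read off that, conditionally on $\eta_1,\eta_2$, the law of $\eta'$ in $C$ is an $\SLE_{\kappa'}(\rho^{1,L},\rho^{2,L};\rho^{1,R},\rho^{2,R})$ process with weights computed by matching boundary heights as in Figure~\ref{fig::counter_flow_line_angles_crossing}; I expect the analogue of \eqref{eqn::cfl_cont_det_part1_rho}, namely solving $-\lambda'(1+\rho^{1,L})=\lambda-(\theta_2+\pi)\chi$ etc.\ together with the jump of size $(\theta_L-\theta_2)\chi$ (resp.\ $(\theta_1-\theta_R)\chi$) at the crossing point, to give
\begin{align}
\label{eqn::cfl_cont_det_part2_rho}
\rho^{1,L} &= \Big(\tfrac12 + \tfrac{\theta_L}{\pi}\Big)\Big(\tfrac{\kappa'}{2}-2\Big)-2,\quad
\rho^{1,L}+\rho^{2,L} = \Big(\tfrac12 + \tfrac{\theta_2}{\pi}\Big)\Big(\tfrac{\kappa'}{2}-2\Big)-2,\\
\rho^{1,R} &= \Big(\tfrac12 - \tfrac{\theta_R}{\pi}\Big)\Big(\tfrac{\kappa'}{2}-2\Big)-2,\quad
\rho^{1,R}+\rho^{2,R} = \Big(\tfrac12 - \tfrac{\theta_1}{\pi}\Big)\Big(\tfrac{\kappa'}{2}-2\Big)-2,\notag
\end{align}
so that by choosing $\theta_1\in(\theta_R,\theta_R+\pi)$ and $\theta_2\in(\theta_L-\pi,\theta_L)$ the \emph{cumulative} weights $\rho^{1,L}+\rho^{2,L}$ and $\rho^{1,R}+\rho^{2,R}$ range over all of $(-2,\tfrac{\kappa'}{2}-4)$ while the first weights stay above $\tfrac{\kappa'}{2}-4$. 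Then I would absorb the extra force point on each side by comparing, via absolute continuity (Proposition~\ref{prop::gff_abs_continuity}) and the argument of Lemma~\ref{lem::functional_many_force_points}, to reduce to the case of a single force point of weight $\rho^L$ (resp.\ $\rho^R$) on each side; alternatively, one shows directly that the two-force-point process on each side has the same total weight, which is what governs boundary hitting. (6) Conclude that $\eta'$ is determined by $h$ by the independence argument of Lemma~\ref{lem::two_force_points_determined} (with $\eta_1,\eta_2$ in place of $\eta_{\theta_1},\eta_{\theta_2}$), and that $\eta_0'$ is continuous because each complementary component is a Jordan domain (continuity of $\eta_1,\eta_2$) and $\SLE_{\kappa'}$ is transient \cite{RS05}.

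The main obstacle I anticipate is Step (3)--(4): controlling the conditional mean and the Loewner driving function of $\eta'$ at the point $y_0$, which is simultaneously a crossing point of $\eta_1$ with $\eta'$, a crossing point of $\eta_2$ with $\eta'$, and an intersection point of $\eta_1$ with $\eta_2$ --- a more degenerate configuration than anything handled explicitly in Section~\ref{sec::interacting}. The resolution is to sample in stages: first grow $\eta'([0,\tau'])$ up to a stopping time before it reaches $y_0$, then grow $\eta_1,\eta_2$ in the conditional field, reducing to the already-established flow-line crossing results (Lemma~\ref{lem::crossing_local}, Lemma~\ref{lem::hitting_conditional_law}) applied inside the conditioned domain, and finally take a limit using the continuity of the conditional mean (Proposition~\ref{prop::cond_mean_continuous}); this is the same device used at the end of Remark~\ref{rem::cond_mean_height_cf_contained} to pass from rational to general stopping times. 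A secondary subtlety is checking that $\eta'$ actually does reach into $C$ almost surely (so that the stopping time $\tau'$ with $\eta'(\tau')\in C$ is well-defined on a positive-probability event), which follows from Lemma~\ref{lem::light_cone_contains_av} and the crossing structure. Once these points are in place the rest is the by-now-standard martingale bookkeeping.
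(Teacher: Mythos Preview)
Your overall strategy is the same as the paper's: condition $\eta'$ on the pair of angle-varying flow lines $\eta_1=\eta_{\theta_R\theta_1}$, $\eta_2=\eta_{\theta_L\theta_2}$, read off the conditional law in the component $C$ via Theorem~\ref{thm::martingale}, then conclude continuity and determinacy as in Lemmas~\ref{lem::two_force_points_cond_law}--\ref{lem::two_force_points_determined}. The steps you list (locality, conditional-mean control, Loewner driving function, martingale characterization) and the references you cite are the right ones.

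There is, however, a concrete error in Step~(5). You have the roles of the two angles swapped when identifying the force-point weights. Recall that $y_0$---the point where $\eta'$ \emph{starts} in $C$---is the first intersection of $\eta_1,\eta_2$ \emph{after both have switched to their second angles}. Hence near $y_0$ the boundary of $C$ is traced by $\eta_2$ at angle $\theta_2$ (left) and $\eta_1$ at angle $\theta_1$ (right); the pre-switch angles $\theta_L,\theta_R$ appear only farther along $\partial C$, near $x_0$. Solving the boundary-height equation you wrote down, $-\lambda'(1+\rho^{1,L})=\lambda-(\theta_2+\pi)\chi$, therefore gives
\[
\rho^{1,L}=\Bigl(\tfrac12+\tfrac{\theta_2}{\pi}\Bigr)\Bigl(\tfrac{\kappa'}{2}-2\Bigr)-2,\qquad
\rho^{1,R}=\Bigl(\tfrac12-\tfrac{\theta_1}{\pi}\Bigr)\Bigl(\tfrac{\kappa'}{2}-2\Bigr)-2,
\]
with the \emph{cumulative} sums fixed at $\rho^{1,q}+\rho^{2,q}=\tfrac{\kappa'}{2}-4$ (coming from $\theta_L,\theta_R$)---the opposite of what you wrote. (Your displayed equation and your displayed formula are in fact mutually inconsistent: the equation has $\theta_2$, the formula has $\theta_L$.)

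This matters. With the correct identification, varying $\theta_1,\theta_2\in(\theta_R,\theta_L)$ makes the \emph{adjacent} weights $\rho^{1,L},\rho^{1,R}$ sweep through all of $(-2,\tfrac{\kappa'}{2}-4)$, while the second force point on each side sits a positive distance from $y_0$ (at the angle-change location). A single application of absolute continuity (Proposition~\ref{prop::gff_abs_continuity}) then removes those distant force points and yields the target $\SLE_{\kappa'}(\rho^L;\rho^R)$ law directly; no further ``absorption'' argument or appeal to Lemma~\ref{lem::functional_many_force_points} is needed. Under your swapped formula the adjacent weight would be pinned at $\tfrac{\kappa'}{2}-4$, and absolute continuity cannot alter the weight immediately adjacent to the seed, so the reduction you sketch would not go through. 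Once you correct the orientation at $y_0$, the rest of your outline matches the paper's proof.
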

\begin{proof}
In order to prove the lemma, we first need to understand the conditional mean of $h$ given $\eta'$ and two angle varying flow lines (which have the possibility of crossing each other).  Specifically, we let $\theta_L= \tfrac{\pi}{2}$ and $\theta_R = -\theta_L = -\tfrac{\pi}{2}$.  Suppose that $\theta_1,\theta_2$ are such that $|\theta_1 - \theta_R| \leq \pi$ and $|\theta_2 - \theta_L| \leq \pi$.  Let $\eta_1 := \eta_{\theta_R \theta_1}^{\tau_1^1 \tau_2^1}$ be an angle varying flow line with angles $\theta_R,\theta_1$ and $\eta_2 := \eta_{\theta_L \theta_2}^{\tau_1^2 \tau_2^2}$ be an angle varying flow line with angles $\theta_L,\theta_2$.  We take the angle change time for both $\eta_1,\eta_2$ to be when the curves first reach unit capacity (the actual choice here is not important).  Our hypotheses on $\theta_1,\theta_2$ imply that $\eta_1,\eta_2$ are simple (but may cross each other).  We assume $a,b,a',b'$ are large enough so that $\eta_1,\eta_2,\eta'$ almost surely do not intersect $\partial \strip$ except at their starting and terminal points.

It follows from Proposition~\ref{prop::merging_and_crossing} that if $\eta_1,\eta_2$ do cross, they cross precisely once, after which they may bounce off of one another.  Let $A(t) = \eta_1 \cup \eta'([0,t]) \cup \eta_2$ and $\CF_t = \sigma(\eta'(s) : s \leq t,\ \ \eta_{1}, \eta_2)$.  By Lemma~\ref{lem::stopping_local_set}, we know that $A(\tau')$ is a local set for $h$ for every $\CF_t$ stopping time $\tau'$.  The boundary data for $\CC_{A(\tau')}$ is described in the left panel of Figure~\ref{fig::counter_flow_line_angles_crossing}, depicted in the case that $\eta_1,\eta_2$ actually do cross.  The justification of this follows from exactly the same argument as in Remark~\ref{rem::cond_mean_height_cf_angle_varying}.  Let $C$ be the connected component of $\strip \setminus (\eta_1 \cup \eta_2)$ which lies between $\eta_1$ and $\eta_2$ such that the last point on $\partial C$ traced by $\eta_1$ is the point where $\eta_1,\eta_2$ first intersect after changing angles.  It follows from the same argument as Remark~\ref{rem::cont_loewner_cf_av} that $\eta'$ viewed as a path in $C$ from $y_0$ to $x_0$ (recall Remark~\ref{rem::cont_loewner_cf_contains}), the last and first points on $\partial C$ traced by $\eta_1$ and $\eta_2$, respectively, has a continuous Loewner driving function.  Consequently, it follows from Theorem~\ref{thm::martingale} and Proposition~\ref{prop::cond_mean_continuous} that the conditional law of $\eta'$ in $C$ given $\eta_1,\eta_2$ is an $\SLE_{\kappa'}(\ul{\rho}^L;\ul{\rho}^R)$, with the weights given by
\begin{align}
  \rho^{1,L} = \left(\frac{1}{2} + \frac{\theta_2}{\pi} \right)\left(\frac{\kappa'}{2}-2\right) & - 2,\ \ \
  \rho^{1,R} = \left(\frac{1}{2} - \frac{\theta_1}{\pi}\right) \left(\frac{\kappa'}{2}-2\right) - 2, \label{eqn::cfl_cont_det_part2_rho}\\
  \rho^{1,q} + &\rho^{2,q} = \frac{\kappa'}{2}-4 \quad\text{for}\quad q \in \{L,R\}. \notag
\end{align}
(see the right panel of Figure~\ref{fig::counter_flow_line_angles_crossing}, the values of $\rho$ follow from the same argument explained in Figure~\ref{fig::counter_flow_line_crossing}; recall also Figure~\ref{fig::criticalforintersection}).  For the final expression, we used $\theta_L = \tfrac{1}{\chi}(\lambda - \lambda')$ so that
\[ \frac{(\pi+\theta_L) \chi}{\lambda'} - \frac{\lambda}{\lambda'} - 1 = \frac{\kappa'}{2} - 1 -1 = \frac{\kappa'}{2}-4.\]
By choosing $\theta_2 \in (\theta_R,\theta_L)$ we can obtain any value of $\rho^{1,L} \in (-2,\tfrac{\kappa'}{2}-4)$ we like.  Likewise, by choosing $\theta_1 \in (\theta_R,\theta_L)$ we can obtain any value of $\rho^{1,R} \in (-2, \tfrac{\kappa'}{2}-4)$ we desire.  Therefore the continuity statement of the lemma follows by adjusting $\theta_1,\theta_2$ appropriately, using the almost sure continuity of $\eta_1,\eta_2$ to get that $C$ is almost surely a Jordan domain, and then applying the absolute continuity of the field (Proposition~\ref{prop::gff_abs_continuity}).  The first statement of the lemma follows from the same argument as the proof of Lemma~\ref{lem::two_force_points_determined} along with another application of Proposition~\ref{prop::gff_abs_continuity}.
\end{proof}

\subsubsection{Many boundary force points}
\label{subsec::counterflow_many_boundary_force_points}

The reduction of the statement of Theorem~\ref{thm::coupling_uniqueness} for counterflow lines to the two boundary force point case is exactly the same as the analogous reduction for flow lines, which was given in the proof of Lemma~\ref{lem::functional_many_force_points}.  This means that Theorem~\ref{thm::coupling_uniqueness} for $\kappa' > 4$ follows from Lemma~\ref{lem::cfl_cont_det_part1} and Lemma~\ref{lem::cfl_cont_det_part2}.  Thus we are left to complete the proof of Theorem~\ref{thm::continuity} for counterflow lines with many boundary force points.  Just as in the case of flow lines, it suffices to prove the continuity of counterflow lines with two boundary force points on the same side of $0$ (recall that the proof of Lemma~\ref{lem::make_it_to_infty} and Theorem~\ref{thm::continuity} for $\kappa \in (0,4]$ was not flow line specific).  The proof in this setting is more involved, though.  The reason is that for certain ranges of $\rho$ values, a counterflow line will almost surely hit a force point even before the continuation threshold is reached (flow and counterflow lines only hit force points with positive probability when the partial sum of the weights is at most $\tfrac{\kappa}{2}-4 \leq -2$ and $\tfrac{\kappa'}{2}-4 > -2$, respectively).  This leads us to consider four different types of local behavior:
\begin{enumerate}
\item $\rho^{1,R} > -2$, $\rho^{1,R} + \rho^{2,R} > -2$ with $|\rho^{2,R}| < \tfrac{\kappa'}{2}$ (Lemma~\ref{lem::cfl_cont_case1}),
\item $\rho^{1,R} \in (-2, \tfrac{\kappa'}{2}-2)$, $\rho^{1,R} + \rho^{2,R} \geq \tfrac{\kappa'}{2}-2$ (Lemma~\ref{lem::cfl_cont_case2}),
\item $\rho^{1,R} \geq \tfrac{\kappa'}{2}-2$, $\rho^{1,R} + \rho^{2,R} \in (-2, \tfrac{\kappa'}{2}-2)$ (Lemma~\ref{lem::cfl_cont_case3}), and
\item at least one of $\rho^{1,R} \leq -2$ or $\rho^{1,R} + \rho^{2,R} \leq -2$ (Lemma~\ref{lem::cfl_continuation_threshold}).
\end{enumerate}

In the following sequence of lemmas, it may appear to the reader that the roles of the superscripts ``$L$'' and ``$R$'' have been reversed.  The reason is that the results will be stated for a counterflow line growing from the bottom of $\h$ as opposed to the top of the strip $\strip$, so everything is rotated by $180$ degrees.

\begin{lemma}
\label{lem::cfl_cont_case1}
Suppose that $\eta_0'$ is an $\SLE_{\kappa'}(\rho^{1,R},\rho^{2,R})$ process in $\h$ from $0$ to $\infty$ with force points located at $-1$ and $-2$ with weights satisfying $\rho^{1,R}, \rho^{1,R} + \rho^{2,R} > -2$ and $|\rho^{2,R}| < \tfrac{\kappa'}{2}$.  Then $\eta_0'$ is almost surely continuous.
\end{lemma}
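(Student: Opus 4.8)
The plan is to reduce the assertion to the two--force--point case with the force points on opposite sides of the seed, i.e.\ to Lemma~\ref{lem::cfl_cont_det_part1}, by realizing $\eta_0'$ as the conditional law of a counterflow line $\eta'$ of a GFF $h$ on $\strip$ given an angle--varying flow line of $h$. This is the exact analogue, for counterflow lines, of the passage from Proposition~\ref{prop::two_force_point_uniqueness_and_continuity} to Lemma~\ref{lem::not_intersect_force_point}; the role played there by $|\rho^{2,R}| < 2$ (which guaranteed that the conditioning angle--varying flow line does not cross itself) will here be played by $|\rho^{2,R}| < \tfrac{\kappa'}{2}$.

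First I would fix angles $\theta_1,\theta_2$ adapted to $\rho^{1,R},\rho^{2,R}$ through the usual dictionary: $\theta_1$ is chosen so that the initial angle of an angle--varying flow line contributes a force point of weight $\rho^{1,R}$ to the conditional law of $\eta'$, and $\theta_2$ so that the angle change $\theta_1 \mapsto \theta_2$ contributes a force point of weight $\rho^{2,R}$, the relevant boundary--value jumps being read off exactly as in Figure~\ref{fig::counter_flow_line_angles_crossing} and Figure~\ref{fig::light_cone_bd}. Two observations make the hypotheses fit. Since a weight--$\rho$ force point of a counterflow line corresponds to a boundary jump of $\rho\lambda'$, an angle change of $\theta$ to a jump of $\theta\chi$, and $\tfrac{\kappa'}{2}\lambda' = 2\lambda$ (because $\lambda' = \tfrac{\kappa}{4}\lambda$ and $\kappa' = 16/\kappa$), the condition $|\rho^{2,R}| < \tfrac{\kappa'}{2}$ is precisely $|\theta_1 - \theta_2| < \tfrac{2\lambda}{\chi}$, so $\eta_{\theta_1\theta_2}$ never crosses itself and is a.s.\ continuous by Proposition~\ref{prop::angle_varying_continuous}. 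And the conditions $\rho^{1,R} > -2$ and $\rho^{1,R}+\rho^{2,R} > -2$ translate into $\theta_1 < \tfrac{\pi}{2}$ and $\theta_2 < \tfrac{\pi}{2}$, which by Proposition~\ref{prop::angle_varying_monotonicity} is what is needed for $\eta_{\theta_1\theta_2}$ to stay to the right of the left boundary of $\eta'$.

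Next I would take a GFF $h$ on $\strip$ whose boundary data on $\stripbot$ and on $\striptop$ is large, chosen so that: (a) the counterflow line $\eta'$ of $h$ from $z_0 \in \striptop$ targeted at $0$ is an $\SLE_{\kappa'}(\wt\rho^L;\wt\rho^R)$ with $\wt\rho^L,\wt\rho^R$ both large (hence both $\ge \tfrac{\kappa'}{2}-4$), so that $\eta'$ is a.s.\ continuous up to and including its terminal point by Lemma~\ref{lem::cfl_cont_det_part1}; and (b) the angle--varying flow line $\eta_{\theta_1\theta_2}$ of $h$ from $0$, changing angle at unit capacity, is non--boundary--intersecting and a.s.\ continuous (Lemma~\ref{lem::av_continuous}, Proposition~\ref{prop::angle_varying_continuous}). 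Take $\eta'$ and $\eta_{\theta_1\theta_2}$ conditionally independent given $h$; both are a.s.\ determined by $h$ at this point, so their conditionally independent union is the ordinary union. Condition on $\eta_{\theta_1\theta_2}$ and let $C_0$ be the connected component of $\strip \setminus \eta_{\theta_1\theta_2}$ lying to the left of $\eta_{\theta_1\theta_2}$; it contains $z_0$ and the terminal point of $\eta'$ and, by continuity of $\eta_{\theta_1\theta_2}$, is a Jordan domain. By Remark~\ref{rem::cond_mean_height_cf_angle_varying} the conditional mean of $h$ given $\eta_{\theta_1\theta_2}$ and an initial segment of $\eta'$ exhibits no pathological behavior where $\eta'$ meets $\eta_{\theta_1\theta_2}$ (nor at the self--intersection point of $\eta_{\theta_1\theta_2}$), and by Remark~\ref{rem::cont_loewner_cf_av} the image of $\eta'$ under a conformal map $\psi\colon C_0\to\strip$ has a continuous Loewner driving function. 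Then Theorem~\ref{thm::martingale} together with Proposition~\ref{prop::cond_mean_continuous} identify the conditional law of $\psi(\eta')$ given $\eta_{\theta_1\theta_2}$ as an $\SLE_{\kappa'}(\rho^{1,L};\rho^{1,R},\rho^{2,R})$ process, where $\rho^{1,L}$ is the large weight inherited from $\partial\strip$ and $\rho^{1,R},\rho^{2,R}$ are the prescribed weights coming from the two angles of $\eta_{\theta_1\theta_2}$. Since $\eta'$ is a continuous path reaching its terminal point and $\psi$ extends to a homeomorphism of $\overline{C_0}$, $\psi(\eta')$ is a continuous path up to and including its terminal point; hence $\SLE_{\kappa'}(\rho^{1,L};\rho^{1,R},\rho^{2,R})$ is a.s.\ continuous whenever $\rho^{1,L}$ is large. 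Finally I would discard the auxiliary weight $\rho^{1,L}$ and move the two remaining force points to $-1,-2$: because $\rho^{1,L}$ is large the curve a.s.\ never returns to that side of $\partial\h$ after time $0$, so its law restricted to any finite time is mutually absolutely continuous with that of $\SLE_{\kappa'}(\rho^{1,R},\rho^{2,R})$ by Proposition~\ref{prop::gff_abs_continuity} and the Girsanov theorem, while the precise locations of the two remaining force points are irrelevant by the scale invariance of $\SLE_{\kappa'}(\ul\rho)$; continuity past any finite time is again inherited from the continuity of $\eta'$.

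The main obstacle is the step just described: controlling the conditional mean of $h$ and the Loewner driving function of $\eta'$ at the points where $\eta'$ interacts with $\eta_{\theta_1\theta_2}$, in particular near the angle--change point of $\eta_{\theta_1\theta_2}$, which is simultaneously where the two force points $\rho^{1,R},\rho^{2,R}$ are created and, potentially, where $\eta_{\theta_1\theta_2}$ touches itself. This is precisely the content of the interaction results of Section~\ref{sec::interacting} (Remark~\ref{rem::cond_mean_height_cf_angle_varying}, Remark~\ref{rem::cont_loewner_cf_av}), which were set up for exactly this purpose; granting those, the remaining work --- extracting the weights $\rho^{1,R},\rho^{2,R}$ from the boundary jumps and carrying out the absolute--continuity and scale--invariance reductions --- is routine.
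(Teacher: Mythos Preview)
Your proposal is correct and follows essentially the same route as the paper. Both arguments condition a non-boundary-intersecting counterflow line $\eta'$ (continuous by Lemma~\ref{lem::cfl_cont_det_part1}) on a single angle-varying flow line $\eta_{\theta_1\theta_2}$ with $\theta_1,\theta_2<\tfrac{\pi}{2}$ and $|\theta_1-\theta_2|<\tfrac{2\lambda}{\chi}$, invoke the interaction results of Section~\ref{sec::interacting} (your Remarks~\ref{rem::cond_mean_height_cf_angle_varying} and~\ref{rem::cont_loewner_cf_av}) together with Theorem~\ref{thm::martingale} and Proposition~\ref{prop::cond_mean_continuous} to identify the conditional law in the left component as an $\SLE_{\kappa'}(\ul{\rho}^L;\rho^{1,R},\rho^{2,R})$, and read off continuity; the paper simply writes down the resulting formulas for $\rho^{1,R}$ and $\rho^{1,R}+\rho^{2,R}$ and stops, while you spell out the final absolute-continuity step removing the auxiliary left-side weights. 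One minor imprecision: ``scale invariance'' alone does not pin down both force-point locations simultaneously, but since the construction already produces a nondegenerate family of locations (by varying the angle-change time) and you also invoke absolute continuity, the conclusion still goes through.
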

\begin{proof}
Suppose that $h$ is a GFF on the strip $\strip$ with the same boundary data as in Figure~\ref{fig::counter_flow_line_angles_crossing}.  Let $\eta_{\theta_1 \theta_2}^{\tau_1 \tau_2}$ be an angle varying flow line with angles $\theta_1,\theta_2$ such that $|\theta_1 - \theta_2| < \frac{2\lambda}{\chi}$.  We assume that $\theta_1,\theta_2 < \tfrac{\pi}{2}$ so that $\eta_{\theta_1 \theta_2}^{\tau_1 \tau_2}$ almost surely stays to the right of the left boundary of the counterflow line $\eta'$ (recall Proposition~\ref{prop::angle_varying_monotonicity} as well as Proposition~\ref{prop::flow_counterflow_left_right}).  Assume that $a,b,a',b'$ are sufficiently large so that $\eta'$ and $\eta_{\theta_1 \theta_2}^{\tau_1 \tau_2}$ intersect $\partial \strip$ only at $0$ and $z_0$.  Arguing as in the proof of Lemma~\ref{lem::cfl_cont_det_part2}, the conditional law of $\eta'$ viewed as a path in the left connected component $C$ of $\strip \setminus \eta_{\theta_1 \theta_2}^{\tau_1 \tau_2}$ is that of an $\SLE_\kappa(\ul{\rho}^L;\ul{\rho}^R)$ where
\begin{align*}
  \rho^{1,R} = \left(\frac{1}{2} - \frac{\theta_2}{\pi} \right)\left(\frac{\kappa'}{2}-2\right) - 2 \quad\text{and}\quad
  \rho^{1,R} + \rho^{2,R} = \left(\frac{1}{2} - \frac{\theta_1}{\pi} \right)\left(\frac{\kappa'}{2}-2\right) - 2.
\end{align*}
Consequently, it is not difficult to see that by adjusting the angles $\theta_1,\theta_2$, we can obtain any combination of values of $\rho^{1,R}, \rho^{2,R}$ such that $\rho^{1,R} > -2$, $\rho^{1,R}+\rho^{2,R} > -2$, and $|\rho^{2,R}| < \tfrac{\kappa'}{2}$ (the restriction on $|\rho^{2,R}|$ comes from the restriction $|\theta_1 - \theta_2| < \tfrac{2\lambda}{\chi}$).  This completes the proof because we know that $\eta'$ viewed as a path in $C$ is almost surely continuous and, in particular, is continuous when it interacts with the force points corresponding to the weights $\rho^{1,R},\rho^{2,R}$.
\end{proof}

\begin{figure}[h!]
\begin{center}
\includegraphics[scale=0.85]{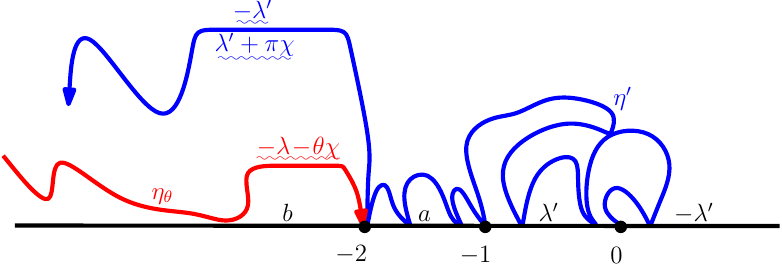}
\caption{\label{fig::counter_flow_line_small_big}
The configuration of paths used to prove the continuity of an $\SLE_{\kappa'}(\rho^{1,R},\rho^{2,R})$ process with $\rho^{1,R} \in (-2,\tfrac{\kappa'}{2}-2)$ and $\rho^{1,R} + \rho^{2,R} \geq \tfrac{\kappa'}{2}-2$ (the roles of ``$R$'' and ``$L$'' are flipped since we are growing the counterflow line from the bottom of $\h$ rather than the top of $\strip$, so everything is rotated by $180$ degrees).  We suppose that $h$ is a GFF on $\h$ with the boundary data depicted above.  We assume that $a = \lambda'(1+\rho^{1,R}) \in (-\lambda', \lambda' + \pi \chi)$ and $b = \lambda'(1+\rho^{1,R}+\rho^{2,R}) \geq \lambda'+\pi\chi$.  We let $\eta'$ be the counterflow line of $h$ starting at $0$ and $\eta_\theta$ be the flow line of $h$ starting at $\infty$ with angle $\theta$.  Taking $\theta$ so that $-\lambda-\theta \chi = \lambda' + \pi \chi$, we see that the conditional law of $\eta'$ given $\eta_\theta$ in the unbounded connected component $C$ of $\h \setminus \eta_\theta([0,\tau])$, $\tau$ the first time $\eta_\theta$ hits $[-2,\infty)$, is an $\SLE_{\kappa'}(\wt{\rho}^{1,R},\wt{\rho}^{2,R})$ process with $\wt{\rho}^{1,R} = \rho^{1,R}$ and $\wt{\rho}^{1,R} + \wt{\rho}^{2,R} = \tfrac{\kappa'}{2}-2$.  Since $\eta_\theta$ is continuous, $C$ is a Jordan domain, so the continuity of $\eta'$ follows from the case $|\rho^{2,R}| < \tfrac{\kappa'}{2}$.  We note that the precise location that $\eta_\theta$ hits $[-2,-1]$ depends on the choice of $a$.  There are choices of $a$ for which $\eta_\theta$ almost surely hits $-2$ first, which is the case shown in the illustration, and there are choices of $a$ for which $\eta_\theta$ hits somewhere in $(-2,-1)$ first.
}
\end{center}
\end{figure}

\begin{lemma}
\label{lem::cfl_cont_case2}
Suppose that $\eta_0'$ is an $\SLE_{\kappa'}(\rho^{1,R},\rho^{2,R})$ process with force points located at $-1$ and $-2$ satisfying $\rho^{1,R} \in (-2,\tfrac{\kappa'}{2}-2)$ and $\rho^{1,R}+\rho^{2,R} \geq \tfrac{\kappa'}{2}-2$.  Then $\eta_0'$ is almost surely continuous.
\end{lemma}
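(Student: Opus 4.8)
The plan is to reduce the assertion to Lemma~\ref{lem::cfl_cont_case1} by conditioning the counterflow line on a single auxiliary flow line started from $\infty$, chosen so that the conditional law of the counterflow line becomes an $\SLE_{\kappa'}(\wt{\rho}^{1,R},\wt{\rho}^{2,R})$ process of the type already handled there (see Figure~\ref{fig::counter_flow_line_small_big}). Couple $\eta_0'$ with a GFF $h$ on $\h$ as in Theorem~\ref{thm::coupling_existence}, with boundary data chosen so that $\eta' := \eta_0'$ is the counterflow line of $h$ from $0$; the two boundary values are $a = \lambda'(1+\rho^{1,R})$ and $b = \lambda'(1+\rho^{1,R}+\rho^{2,R})$, and the hypotheses $\rho^{1,R} \in (-2,\tfrac{\kappa'}{2}-2)$, $\rho^{1,R}+\rho^{2,R} \geq \tfrac{\kappa'}{2}-2$ are exactly $a \in (-\lambda',\lambda'+\pi\chi)$ and $b \geq \lambda'+\pi\chi$. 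Let $\theta$ be the angle with $-\lambda-\theta\chi = \lambda'+\pi\chi$ and let $\eta_\theta$ be the flow line of $h$ from $\infty$ with angle $\theta$; after a conformal change of coordinates this is an $\SLE_\kappa(\ul{\rho})$ process with $\kappa = 16/\kappa' \in (0,4)$, so by Theorem~\ref{thm::continuity} (now established for $\kappa \in (0,4]$) it is almost surely continuous. Let $\tau$ be the first time $\eta_\theta$ hits $[-2,\infty)$ (so $\eta_\theta|_{[0,\tau]}$ does not reach its continuation threshold) and let $C$ be the unbounded connected component of $\h \setminus \eta_\theta([0,\tau])$.

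Next I would identify the conditional law of $\eta'$ given $\eta_\theta([0,\tau])$, following verbatim the arguments of Lemmas~\ref{lem::cfl_cont_det_part1}--\ref{lem::cfl_cont_det_part2} and Lemma~\ref{lem::cfl_cont_case1}. The set $\eta_\theta([0,\tau])$ is local for $h$ by Lemma~\ref{lem::stopping_local_set}; the choice of $a,b$ together with the monotonicity and duality estimates (Proposition~\ref{prop::flow_counterflow_left_right} in its boundary-intersecting form, cf.\ Proposition~\ref{prop::angle_varying_monotonicity}) forces $\eta'$ to stay in $\overline{C}$; the conditional mean $\CC_{\eta_\theta([0,\tau]) \cup \eta'([0,t])}$ has no pathological behaviour at points where $\eta'$ meets $\eta_\theta$ (the arguments behind Remark~\ref{rem::cond_mean_height_cf}); and $\eta'$ viewed as a path in $C$ has a continuous Loewner driving function (Remark~\ref{rem::cont_loewner_cf} together with Proposition~\ref{prop::cont_driving_function}). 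Applying Theorem~\ref{thm::martingale} and Proposition~\ref{prop::cond_mean_continuous}, the conditional law of $\eta'$ in $C$ is that of an $\SLE_{\kappa'}(\wt{\rho}^{1,R},\wt{\rho}^{2,R})$ process with force points at the images of $-1,-2$, where $\wt{\rho}^{1,R} = \rho^{1,R}$ and $\wt{\rho}^{1,R}+\wt{\rho}^{2,R} = \tfrac{\kappa'}{2}-2$; this last identity is the boundary-data computation of Figure~\ref{fig::counter_flow_line_crossing}, the point being that the choice $-\lambda-\theta\chi = \lambda'+\pi\chi$ is precisely what makes the effective second weight-sum land on the boundary-filling threshold $\tfrac{\kappa'}{2}-2$ (one uses $2\lambda'+\pi\chi = \tfrac{\kappa'}{2}\lambda'$).

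Finally, since $\rho^{1,R} \in (-2,\tfrac{\kappa'}{2}-2)$ one has $\wt{\rho}^{2,R} = \tfrac{\kappa'}{2}-2-\rho^{1,R} \in (0,\tfrac{\kappa'}{2})$, so $|\wt{\rho}^{2,R}| < \tfrac{\kappa'}{2}$ and $\wt{\rho}^{1,R}, \wt{\rho}^{1,R}+\wt{\rho}^{2,R} > -2$; hence Lemma~\ref{lem::cfl_cont_case1} applies and $\SLE_{\kappa'}(\wt{\rho}^{1,R},\wt{\rho}^{2,R})$ on $\h$ is almost surely continuous. Because $\eta_\theta$ is continuous, $C$ is a Jordan domain, so a conformal map $C \to \h$ extends to a homeomorphism of the closures; transporting the continuity of the conditional $\SLE_{\kappa'}(\wt{\rho})$ process back through this map shows $\eta'$ is almost surely continuous given $\eta_\theta([0,\tau])$, and integrating over the realizations of $\eta_\theta$ gives the unconditional statement. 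I expect the main obstacle to be the middle step: verifying in the boundary-intersecting regime that $\eta'$ genuinely remains in $\overline{C}$ and that its conditional law in $C$ is the asserted $\SLE_{\kappa'}(\wt{\rho})$ --- this requires assembling the local-set, conditional-mean-non-degeneracy, and Loewner-driving-function-continuity machinery exactly as in Lemmas~\ref{lem::cfl_cont_det_part1}--\ref{lem::cfl_cont_case1}, and getting the boundary-value arithmetic right so that the effective weight sum is exactly $\tfrac{\kappa'}{2}-2$ (and not, say, $\tfrac{\kappa'}{2}-4$).
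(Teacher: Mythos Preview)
Your overall strategy is exactly the paper's: couple with a GFF on $\h$, condition on the flow line $\eta_\theta$ from $\infty$ with the angle fixed by $-\lambda-\theta\chi=\lambda'+\pi\chi$, stop it at $\tau$ when it first hits $[-2,\infty)$, and reduce to an $\SLE_{\kappa'}(\wt\rho^{1,R},\wt\rho^{2,R})$ with $\wt\rho^{1,R}=\rho^{1,R}$ and $\wt\rho^{1,R}+\wt\rho^{2,R}=\tfrac{\kappa'}{2}-2$, which falls under Lemma~\ref{lem::cfl_cont_case1}. The boundary arithmetic and the final Jordan-domain transport are correct.

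The one place where your logical order differs from the paper is precisely your self-identified ``main obstacle,'' and there the paper's argument is cleaner than what you propose. You want to first show $\eta'$ stays in $\overline{C}$ by invoking Proposition~\ref{prop::flow_counterflow_left_right} (or a boundary-intersecting analogue), and only then compute the conditional law. But those monotonicity statements, and the driving-function continuity arguments of Remarks~\ref{rem::cont_loewner_cf}--\ref{rem::cont_loewner_cf_contains}, were proved in the non-boundary-intersecting regime; applying them here presupposes continuity of $\eta'$, which is what you are trying to establish. The paper avoids this circularity by a bootstrap: let $\tau'$ be the first time $\eta'$ hits $\eta_\theta\setminus\partial\h$. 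Up to time $\tau'$ the path $\eta'$ has not touched $\eta_\theta$ off the boundary, so its Loewner driving function in $C$ is automatically continuous (it inherits this from $\h$), and the conditional-mean regularity follows from Lemma~\ref{lem::stopping_local_set} and Proposition~\ref{gff::prop::cond_union_mean} without needing the more delicate intersection analysis. Theorem~\ref{thm::martingale} then identifies the conditional law as $\SLE_{\kappa'}(\wt\rho^{1,R},\wt\rho^{2,R})$ on $[0,\tau')$, so Lemma~\ref{lem::cfl_cont_case1} gives continuity up to $\tau'$. With continuity in hand, Lemma~\ref{lem::flow_cannot_hit} (Remark~\ref{rem::flow_cannot_hit}) now applies and yields $\tau'=\infty$ almost surely. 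So instead of proving $\eta'\subset\overline{C}$ first, the paper deduces it \emph{last}, after the conditional law and continuity are already established on $[0,\tau')$.
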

\begin{proof}
See Figure~\ref{fig::counter_flow_line_small_big} for an explanation of the proof.  The one aspect of the proof which was skipped in the caption is that we did not explain why the conditional law of $\eta'$ given $\eta_\theta$ is actually an $\SLE_{\kappa'}(\wt{\rho}^{1,R},\wt{\rho}^{2,R})$ process with $\wt{\rho}^{1,R} = \rho^{1,R}$ and $\wt{\rho}^{1,R} + \wt{\rho}^{2,R} = \tfrac{\kappa'}{2}-2$.  This follows from an application of Theorem~\ref{thm::martingale}.  In order to justify the usage of this result, we just need to explain why the conditional mean $\CC_{A(t)}$ of $h$ given $A(t) = \eta_\theta([0,\tau]) \cup \eta'([0,t])$, $\tau$ the first time that $\eta_\theta$ hits $[-2,\infty)$, does not exhibit pathological behavior and is continuous in $t > 0$ as well as why $\eta'$ has a continuous Loewner driving function viewed as a path in the unbounded connected component $C$ of $\h \setminus \eta_\theta([0,\tau])$.  The latter holds up until the first time $\tau'$ that $\eta'$ hits $\eta_{\theta} \setminus \partial \h$ since $\eta'$ itself has a continuous Loewner driving function.  The former also holds up until time $\tau'$ by Lemma~\ref{lem::stopping_local_set} and Proposition~\ref{gff::prop::cond_union_mean}.  Since we know that $\SLE_{\kappa'}(\wt{\rho}^{1,R},\wt{\rho}^{2,R})$ processes are almost surely continuous by Lemma~\ref{lem::cfl_cont_case1}, we thus have the continuity of $\eta'$ up until either $\tau'$.  Lemma~\ref{lem::flow_cannot_hit} (see also Remark~\ref{rem::flow_cannot_hit}) implies that $\tau' = \infty$ almost surely.
\end{proof}

\begin{figure}[h!]
\begin{center}
\includegraphics[scale=0.85]{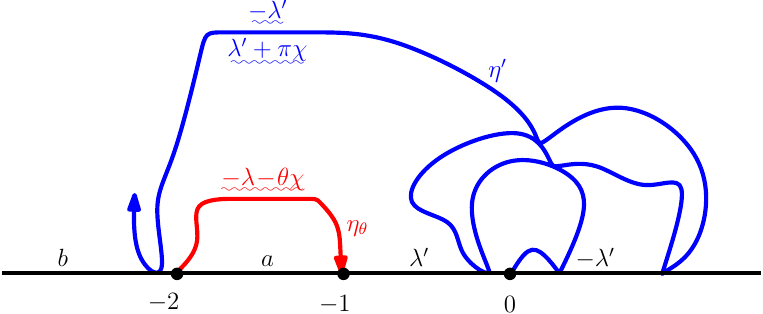}
\caption{\label{fig::counter_flow_line_big_small} The configuration of paths used to prove the continuity of an $\SLE_{\kappa'}(\rho^{1,R},\rho^{2,R})$ counterflow line with $\rho^{1,R} \geq \tfrac{\kappa'}{2}-2$ and $\rho^{1,R} + \rho^{2,R} \in (-2,\tfrac{\kappa'}{2}-2)$.  We suppose that $h$ is a GFF on $\h$ with the boundary data depicted above.  Here, we assume that $a = \lambda'(1+\rho^{1,R}) \geq \lambda' + \pi \chi$ and $b = \lambda'(1+\rho^{1,R}+\rho^{2,R}) \in (-\lambda',\lambda'+\pi\chi)$.  We let $\eta'$ be the counterflow line of $h$ starting at $0$ and let $\eta_\theta$ be the flow line of $h$ starting at $-2$ with angle $\theta$.  Taking $\theta$ so that $-\lambda-\theta \chi = \lambda' + \pi \chi$, we see that the conditional law of $\eta'$ given $\eta_\theta$ in the unbounded connected component $C$ of $\h \setminus \eta_\theta([0,\tau])$, $\tau$ the first time $\eta_\theta$ hits $[-1,\infty)$, is an $\SLE_{\kappa'}(\wt{\rho}^{1,R},\wt{\rho}^{2,R})$ process with $\wt{\rho}^{1,R} = \tfrac{\kappa'}{2}-2$ and $\wt{\rho}^{1,R} + \wt{\rho}^{2,R} = \rho^{1,R} + \rho^{2,R}$.  Since $\eta_\theta$ is continuous, $C$ is a Jordan domain, so the continuity of $\eta'$ follows from the case $|\rho^{2,R}| < \tfrac{\kappa'}{2}$.
}
\end{center}
\end{figure}

\begin{lemma}
\label{lem::cfl_cont_case3}
Suppose that $\eta'$ is an $\SLE_{\kappa'}(\rho^{1,R},\rho^{2,R})$ process with $\rho^{1,R} \geq \tfrac{\kappa'}{2}-2$ and $\rho^{1,R}+\rho^{2,R} \in (-2,\tfrac{\kappa'}{2}-2)$.  Then $\eta'$ is almost surely continuous.
\end{lemma}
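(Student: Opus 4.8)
The plan is to run the argument of Lemma~\ref{lem::cfl_cont_case2} in its dual form, following the configuration of Figure~\ref{fig::counter_flow_line_big_small}. First I would take a GFF $h$ on $\h$ with the boundary data shown there, so that by Theorem~\ref{thm::coupling_existence} the counterflow line $\eta'$ of $h$ from $0$ to $\infty$ is an $\SLE_{\kappa'}(\rho^{1,R},\rho^{2,R})$ process with force points at $-1$ and $-2$; the hypotheses $\rho^{1,R}\ge\tfrac{\kappa'}{2}-2$ and $\rho^{1,R}+\rho^{2,R}\in(-2,\tfrac{\kappa'}{2}-2)$ translate (using $\pi\chi=(\tfrac{\kappa'}{2}-2)\lambda'$) into $a=\lambda'(1+\rho^{1,R})\ge\lambda'+\pi\chi$ and $b=\lambda'(1+\rho^{1,R}+\rho^{2,R})\in(-\lambda',\lambda'+\pi\chi)$. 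Next, let $\eta_\theta$ be the flow line of $h$ started at $-2$ with the angle $\theta$ determined by $-\lambda-\theta\chi=\lambda'+\pi\chi$, and let $\tau$ be the first time $\eta_\theta$ hits $[-1,\infty)$. Since $\eta_\theta$ is an $\SLE_\kappa(\ul\rho)$ process and Theorem~\ref{thm::continuity} for $\kappa\in(0,4]$ has already been established in Section~\ref{subsec::many_boundary_force_points}, $\eta_\theta$ is almost surely a continuous path; the choice of $a,b$ (via Remark~\ref{rem::boundary_data} and Lemma~\ref{lem::hitting_and_bouncing}) ensures it reaches $[-1,\infty)$ without hitting its continuation threshold, so that the unbounded component $C$ of $\h\setminus\eta_\theta([0,\tau])$ is a.s.\ a Jordan domain.

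I would then argue, exactly as in Lemma~\ref{lem::cfl_cont_case2}, that up until the first time $\tau'$ at which $\eta'$ hits $\eta_\theta\setminus\partial\h$, the set $A(t)=\eta_\theta([0,\tau])\cup\eta'([0,t])$ is local for $h$ (Lemma~\ref{lem::stopping_local_set}), the conditional mean $\CC_{A(t)}$ exhibits no pathological behaviour and is continuous in $t>0$ (Proposition~\ref{gff::prop::cond_union_mean}, Proposition~\ref{prop::cond_mean_continuous}), and $\eta'$ has a continuous Loewner driving function viewed as a path in $C$ (immediate, since $\eta'$ already has a continuous driving function in $\h$). Theorem~\ref{thm::martingale} then identifies the conditional law of $\eta'$ given $\eta_\theta([0,\tau])$, as a path in $C$, with an $\SLE_{\kappa'}(\wt\rho^{1,R},\wt\rho^{2,R})$ process where $\wt\rho^{1,R}=\tfrac{\kappa'}{2}-2$ (the new boundary arc traced by $\eta_\theta$ carries boundary value $\lambda'+\pi\chi$, precisely the weight $\tfrac{\kappa'}{2}-2$, by our choice of $\theta$) and $\wt\rho^{1,R}+\wt\rho^{2,R}=\rho^{1,R}+\rho^{2,R}$ (the boundary data far to the left of $-2$ is untouched). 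Since $\rho^{1,R}+\rho^{2,R}\in(-2,\tfrac{\kappa'}{2}-2)$, we get $\wt\rho^{2,R}=\rho^{1,R}+\rho^{2,R}-(\tfrac{\kappa'}{2}-2)\in(-\tfrac{\kappa'}{2},0)$, so $|\wt\rho^{2,R}|<\tfrac{\kappa'}{2}$ and $\wt\rho^{1,R},\wt\rho^{1,R}+\wt\rho^{2,R}>-2$; hence Lemma~\ref{lem::cfl_cont_case1} applies and $\eta'$ is a.s.\ continuous as a path in $C$. Because $C$ is a Jordan domain, the conformal map $C\to\h$ extends to a homeomorphism of closures, so $\eta'$ is continuous in $\h$ up to time $\tau'$.

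Finally, to upgrade this to continuity for all time it suffices to check $\tau'=\infty$ almost surely. For this I would invoke Lemma~\ref{lem::flow_cannot_hit} (and Remark~\ref{rem::flow_cannot_hit}) applied to the counterflow line $\eta'$ with $\lambda$ replaced by $\lambda'$ and $\chi$ by $-\chi$: after changing coordinates from $C$ to $\h$, the boundary value on the arc traced by $\eta_\theta$ is $\lambda'+\pi\chi>\lambda'$, which lies outside $(-\lambda',\lambda')$, so the counterflow line cannot accumulate there. The probabilistic content is thus entirely inherited from Lemma~\ref{lem::cfl_cont_case1}; the only real work is bookkeeping, and the step I expect to require the most care is verifying that the single angle $\theta$ and the constants $a,b$ are mutually consistent — namely that $\eta_\theta$ exists (does not hit its continuation threshold before $\tau$), that the conditional weights come out as $(\tfrac{\kappa'}{2}-2,\ \rho^{1,R}+\rho^{2,R}-\tfrac{\kappa'}{2}+2)$, and that one is on the side of $\eta_\theta$ facing $\eta'$ when applying Lemma~\ref{lem::flow_cannot_hit}.
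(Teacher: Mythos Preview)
Your proposal is correct and follows essentially the same approach as the paper: you condition on the auxiliary flow line $\eta_\theta$ from Figure~\ref{fig::counter_flow_line_big_small}, identify the conditional law of $\eta'$ via Theorem~\ref{thm::martingale} as an $\SLE_{\kappa'}(\wt\rho^{1,R},\wt\rho^{2,R})$ with $\wt\rho^{1,R}=\tfrac{\kappa'}{2}-2$ and $\wt\rho^{1,R}+\wt\rho^{2,R}=\rho^{1,R}+\rho^{2,R}$, reduce to Lemma~\ref{lem::cfl_cont_case1}, and then use Lemma~\ref{lem::flow_cannot_hit} to conclude $\tau'=\infty$. Your identification of the weights agrees with the figure caption (the proof text in the paper contains a typo, writing $\wt\rho^{1,R}=\rho^{1,R}$ and $\wt\rho^{1,R}+\wt\rho^{2,R}=\tfrac{\kappa'}{2}-2$, but the figure and your computation are the correct ones).
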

\begin{proof}
See Figure~\ref{fig::counter_flow_line_big_small} for an explanation of the proof.  As in the proof of Lemma~\ref{lem::cfl_cont_case2}, we did not explain in the caption why the conditional law of $\eta'$ given $\eta_\theta$ is an $\SLE_{\kappa'}(\wt{\rho}^{1,R},\wt{\rho}^{2,R})$ process with $\wt{\rho}^{1,R} = \rho^{1,R}$ and $\wt{\rho}^{1,R} + \wt{\rho}^{2,R} = \tfrac{\kappa'}{2}-2$.  This follows from Theorem~\ref{thm::martingale} using an argument similar to what we employed for Lemma~\ref{lem::cfl_cont_case2}.  Indeed, we need to explain why the conditional mean $\CC_{A(t)}$ of $h$ given $A(t) = \eta_{\theta}([0,\tau]) \cup \eta'([0,t])$, $\tau$ the first time that $\eta_\theta$ hits $[-1,\infty)$, does not exhibit pathological behavior and is continuous in $t$ as well as why $\eta'$ has a continuous Loewner driving function viewed as a path in the unbounded connected component $C$ of $\h \setminus \eta_\theta$.  The latter holds up until the first time $\tau'$ that $\eta'$ hits $\eta_{\theta} \setminus \partial \h$ since $\eta'$ has a continuous Loewner driving function.  The former also holds up until time $\tau'$ by Lemma~\ref{lem::stopping_local_set} and Proposition~\ref{gff::prop::cond_union_mean}.  Since we know that $\SLE_{\kappa'}(\wt{\rho}^{1,R},\wt{\rho}^{2,R})$ processes are almost surely continuous by Lemma~\ref{lem::cfl_cont_case1}, we thus have the continuity of $\eta'$ up until $\tau'$.  This allows us to apply Lemma~\ref{lem::flow_cannot_hit} to $\eta'|_{[0,\tau')}$, which in turn implies that $\p[\tau' = \infty]~{=1}$.
\end{proof}

\begin{figure}[h!]
\begin{center}
\includegraphics[scale=0.85]{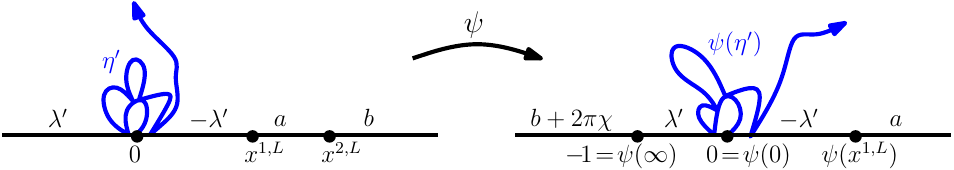}
\end{center}
\caption{\label{fig::cont_continuation_threshold_cfl}  Suppose that $h$ is a GFF on $\h$ whose boundary data is as depicted on the left side.  We assume that at least one of $a \geq \lambda'$ or $b \geq \lambda'$.  Then the counterflow line $\eta'$ of $h$ is an $\SLE_{\kappa'}(\rho^{1,L},\rho^{2,L})$ process with at least one of $\rho^{1,L} \leq -2$ or $\rho^{2,L} \leq -2$.  Assume, for example, that $\rho^{1,L} > -2$ and $\rho^{1,L}+\rho^{2,L} \leq -2$.  We can see that $\eta'$ is almost surely continuous by applying the conformal map $\psi \colon \h \to \h$ which fixes $0$, takes $x^{2,L}$ to $\infty$, and $\infty$ to $-1$.  The boundary data for the GFF $h \circ \psi^{-1} - \chi \arg (\psi^{-1})'$ is depicted on the right side.  Hence $\psi(\eta')$ is an $\SLE_{\kappa'}(\rho^{1,L};\rho^{1,R})$ process with $\rho^{1,R} > \kappa'-4 > -2$ and $\rho^{1,L} > -2$ and therefore continuous by Lemma~\ref{lem::cfl_cont_det_part1}.  This implies the continuity of $\eta'$ and that $\eta'$ almost surely terminates at $1$ because $\psi(\eta')$ is almost surely transient.  If both $\rho^{1,L} \leq -2$ and $\rho^{1,L}+\rho^{2,L} \leq -2$, the same argument works except we apply a conformal map which switches the sides of both $x^{1,L}$ and $x^{2,L}$ (as opposed to just $x^{2,L}$).}
\end{figure}

\begin{figure}[h!]
\begin{center}
\includegraphics[scale=0.85]{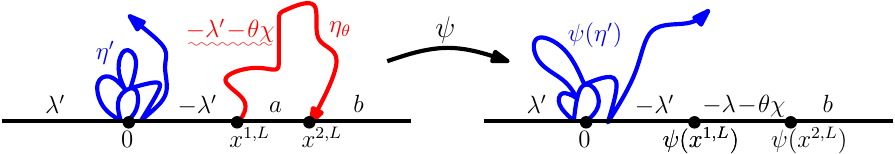}
\end{center}
\caption{\label{fig::cont_continuation_threshold_cfl2} (Continuation of Figure~\ref{fig::cont_continuation_threshold_cfl}.)  Suppose that $\h$ is a GFF whose boundary data is as depicted on the left side where $a \leq \lambda'$ and $b > \lambda'$.  Then $\eta'$ is an $\SLE_{\kappa'}(\rho^{1,L},\rho^{2,L})$ process with $\rho^{1,L} \leq -2$ and $\rho^{1,L}+\rho^{2,L} > -2$.  Let $\eta$ be the flow line of $h$ starting at $x^{1,L}$ with angle $\theta$.  Taking $\theta = -\tfrac{3}{2} \pi$, we know that $\eta_\theta$ lies to the right of $\eta'$ and that $\eta_\theta$ is almost surely continuous.  There are two possibilities.  Either $\eta_\theta$ first hits $(-\infty,0)$ or $(x^{2,L},\infty)$, say at the time $\tau$.  In the former case, the conditional law of $\eta'$ given $\eta_\theta([0,\tau])$ is that of an $\SLE_{\kappa'}(\tfrac{\kappa'}{2}-4)$ process, hence continuous.  In the latter case, the conditional law of $\eta'$ given $\eta_\theta([0,\tau])$ is that of an $\SLE_{\kappa'}(\tfrac{\kappa'}{2}-4,\wt{\rho}^{2,L})$ process where $\tfrac{\kappa'}{2}-4+\wt{\rho}^{2,L} = \rho^{1,L}+\rho^{2,L} > -2$ , hence continuous.}
\end{figure}

\begin{lemma}
\label{lem::cfl_continuation_threshold}
Suppose that $\eta'$ is an $\SLE_{\kappa'}(\rho^{1,L},\rho^{2,L})$ process with either $\rho^{1,L} \leq -2$ or $\rho^{1,L}+\rho^{2,L} \leq -2$ with force points located at $0 < x^{1,L} < x^{2,L}$.  Then $\eta'$ is almost surely continuous.
\end{lemma}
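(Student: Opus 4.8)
The plan is to mimic the structure of the proof of Lemma~\ref{lem::cont_continuation_threshold} (the flow-line analogue), splitting into cases according to the signs of $\rho^{1,L}$ and $\rho^{1,L}+\rho^{2,L}$ and reducing each case to a configuration of weights whose continuity has already been established in Lemma~\ref{lem::cfl_cont_det_part1} and Lemmas~\ref{lem::cfl_cont_case1}--\ref{lem::cfl_cont_case3}. In each case the reduction is carried out either by a conformal coordinate change that moves a force point to the opposite side of the seed, or by conditioning $\eta'$ on an auxiliary flow line of an associated GFF $h$ (coupled to $\eta'$ as in Theorem~\ref{thm::coupling_existence}) and reading off the conditional law of $\eta'$ in the complementary Jordan domain via the martingale characterization Theorem~\ref{thm::martingale}. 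Throughout we use that non-boundary-hitting $\SLE_{\kappa'}(\ul\rho)$ is continuous before terminating (Remark~\ref{rem::continuity_non_boundary}), so the only issue is continuity as $\eta'$ interacts with a force point or reaches the continuation threshold; by reflection it suffices to treat force points labelled as in the statement (the ``$R$'' cases are symmetric). One may also run $\eta'$ a short time and apply a conformal map, so without loss of generality both force points may be assumed to lie strictly to the left of $0$.

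First I would handle the case $\rho^{1,L}>-2$ and $\rho^{1,L}+\rho^{2,L}\le -2$ (and the subcase in which both are $\le -2$). As in Figure~\ref{fig::cont_continuation_threshold_cfl}, I apply the conformal map $\psi\colon\h\to\h$ fixing $0$, sending $x^{2,L}$ to $\infty$ and $\infty$ to $-1$; then $\psi(\eta')\sim\SLE_{\kappa'}(\rho^{1,L};\rho^{1,R})$ with $\rho^{1,R}>\kappa'-4>-2$ and $\rho^{1,L}>-2$, which is continuous by Lemma~\ref{lem::cfl_cont_det_part1}. Since $\eta'$ is a continuous curve iff $\psi(\eta')$ is (the boundary behaviour being governed by the coordinate change rule), this gives continuity of $\eta'$, and transience of $\psi(\eta')$ (Lemma~\ref{lem::cfl_cont_det_part1}) shows $\eta'$ terminates continuously at $x^{2,L}$. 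If both $\rho^{1,L}$ and $\rho^{1,L}+\rho^{2,L}$ are $\le -2$, the same argument works using a conformal map sending \emph{both} $x^{1,L}$ and $x^{2,L}$ to the opposite side, again landing in a weight configuration covered by Lemma~\ref{lem::cfl_cont_det_part1}.

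Next I would handle $\rho^{1,L}\le -2$ and $\rho^{1,L}+\rho^{2,L}>-2$, following Figure~\ref{fig::cont_continuation_threshold_cfl2}. Couple $\eta'$ with a GFF $h$ on $\h$ whose counterflow line from $0$ is $\eta'$, and let $\eta_\theta$ be the flow line of $h$ started from $x^{1,L}$ with angle $\theta=-\tfrac32\pi$. Proposition~\ref{prop::flow_counterflow_left_right} (together with the monotonicity of Proposition~\ref{prop::angle_varying_monotonicity} if needed) places $\eta_\theta$ to the right of $\eta'$, and $\eta_\theta$ is a.s.\ continuous by Theorem~\ref{thm::continuity} for $\kappa\in(0,4]$. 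Let $\tau$ be the first time $\eta_\theta$ hits $(-\infty,0)\cup(x^{2,L},\infty)$. On the event that it hits $(-\infty,0)$ first, the conditional law of $\eta'$ given $\eta_\theta([0,\tau])$, in the unbounded component of $\h\setminus\eta_\theta([0,\tau])$ (a Jordan domain since $\eta_\theta$ is continuous), is that of an $\SLE_{\kappa'}(\tfrac{\kappa'}{2}-4)$ process, continuous by Lemma~\ref{lem::cfl_cont_case1}; on the complementary event it is an $\SLE_{\kappa'}(\tfrac{\kappa'}{2}-4,\wt\rho^{2,L})$ with $\tfrac{\kappa'}{2}-4+\wt\rho^{2,L}=\rho^{1,L}+\rho^{2,L}>-2$, continuous by Lemmas~\ref{lem::cfl_cont_case1}--\ref{lem::cfl_cont_case3}. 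To legitimately invoke Theorem~\ref{thm::martingale} here I would check, exactly as in Remarks in Section~\ref{sec::interacting}, that $\eta_\theta([0,\tau])$ is a local set (Lemma~\ref{lem::stopping_local_set}), that $\CC$ of $h$ given $\eta_\theta([0,\tau])\cup\eta'([0,t])$ is harmonic with the stated boundary data and continuous in $t$ up to the first time $\tau'$ that $\eta'$ meets $\eta_\theta\setminus\partial\h$ (Proposition~\ref{gff::prop::cond_union_mean}, Proposition~\ref{prop::cond_mean_continuous}), and that $\eta'$ has a continuous Loewner driving function as a path in that component (Proposition~\ref{prop::cont_driving_function}, using that $\eta'$ does not trace itself or $\eta_\theta$); finally Lemma~\ref{lem::flow_cannot_hit} / Remark~\ref{rem::flow_cannot_hit} applied to $\eta'|_{[0,\tau')}$ shows $\tau'=\infty$ a.s., so the conditional law holds for all time.

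The main obstacle I anticipate is the last point: verifying that no singularity appears in the conditional mean at the intersection points of $\eta'$ with $\eta_\theta$ and that $\eta'$ genuinely satisfies the hypotheses of Proposition~\ref{prop::cont_driving_function} as a curve in the complementary component. These are precisely the kinds of facts recorded in the remarks following Proposition~\ref{prop::cond_mean_height} and in Section~\ref{subsec::interacting_loewner_driving}, but they were stated there only for the non-boundary-intersecting regime, so one must note that the arguments go through verbatim here because $\eta_\theta$ is a.s.\ continuous and we are only using $\eta'$ up to time $\tau'$ (at which point it has not yet touched $\eta_\theta$ away from $\partial\h$). Once this is in place, combining the above cases exhausts all sign patterns of $(\rho^{1,L},\rho^{1,L}+\rho^{2,L})$ permitted by the hypothesis, completing the proof; together with Lemmas~\ref{lem::cfl_cont_case1}--\ref{lem::cfl_cont_case3} and the reduction from many force points to two (identical to Lemma~\ref{lem::functional_many_force_points}), this also finishes Theorem~\ref{thm::continuity} for $\kappa'>4$.
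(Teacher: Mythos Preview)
Your proposal is correct and follows essentially the same approach as the paper: the paper's proof consists entirely of a reference to Figures~\ref{fig::cont_continuation_threshold_cfl} and~\ref{fig::cont_continuation_threshold_cfl2}, and your case split (conformal map to the other side when $\rho^{1,L}+\rho^{2,L}\le -2$, auxiliary flow line of angle $-\tfrac{3}{2}\pi$ from $x^{1,L}$ when $\rho^{1,L}\le -2$ and $\rho^{1,L}+\rho^{2,L}>-2$) reproduces those figures exactly, with the extra justification via Theorem~\ref{thm::martingale} filled in as in the proofs of Lemmas~\ref{lem::cfl_cont_case2}--\ref{lem::cfl_cont_case3}. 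Two minor slips to fix: the force points in the statement lie to the \emph{right} of $0$ (the ``$L$'' label reflects the $180^\circ$ rotation noted before Lemma~\ref{lem::cfl_cont_case1}), and the fact that $\eta_\theta$ lies to the right of $\eta'$ is more naturally obtained from Proposition~\ref{prop::generalized_monotonicity} together with duality (comparing $\eta_\theta$ to the angle-$(-\tfrac{\pi}{2})$ flow line from $0$) rather than Proposition~\ref{prop::flow_counterflow_left_right}.
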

\begin{proof}
The proof is explained in Figure~\ref{fig::cont_continuation_threshold_cfl} and Figure~\ref{fig::cont_continuation_threshold_cfl2}.
\end{proof}

\begin{proof}[Proof of Theorem~\ref{thm::continuity} for $\kappa > 4$]
Exactly the same as the proof for $\kappa \in (0,4]$ (recall Lemma~\ref{lem::make_it_to_infty}).
\end{proof}

\subsubsection{Light cones with general boundary data}
\label{subsubsec::light_cone_general}

\begin{figure}[ht!]
\begin{center}
\subfigure[]{\includegraphics[scale=0.85,page=1]{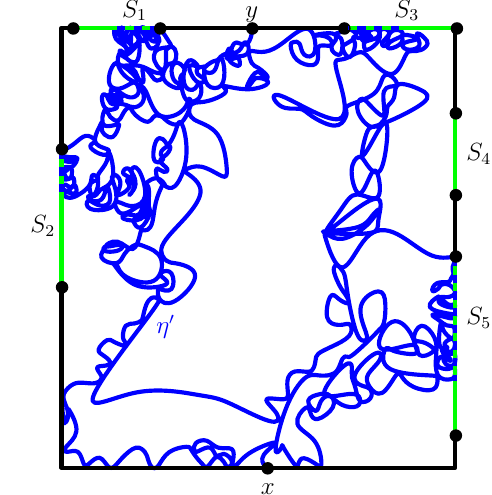}}
\hspace{0.015\textwidth}
\subfigure[]{\includegraphics[scale=0.85,page=2]{figures/duality_boundary_filling2.pdf}}
\end{center}
\vspace{-0.03\textheight}
\caption{\label{fig::lightcone_boundary_filling} { Suppose that $h$ is a GFF on a Jordan domain $D$ and $x,y \in \partial D$ are distinct.  Let $\eta'$ be the counterflow line of $h$ starting at $x$ aimed at $y$.  Let $K = K_L \cup K_R$ be the outer boundary of $\eta'$, $K_L$ and $K_R$ its left and right sides, respectively, and let $I$ be the interior of $K_L  \cap \partial D$.  We suppose that the event $E = \{I \neq \emptyset\}$ that $\eta'$ fills a segment of the left side of $\partial D$ has positive probability, though we emphasize that this does not mean that $\eta'$ {\em traces} a segment of $\partial D$---which would yield a discontinuous Loewner driving function---with positive probability.  In the illustrations above, $\eta'$ fills parts of $S_1,\ldots,S_5$ with positive probability (but with positive probability does not hit any of $S_1,\ldots,S_5$).  The connected component of $K_L \setminus I$ which contains $x$ is given by the flow line $\eta_L$ of $h$ with angle $\tfrac{\pi}{2}$ starting at $x$ (left panel).  On $E$, $\eta_L$ hits the continuation threshold before hitting $y$ (in the illustration above, this happens when $\eta_L$ hits $S_2$).  On $E$ it is possible to describe $K_L$ completely in terms of flow lines using the following algorithm.  First, we flow along $\eta_L$ starting at $x$ until the continuation threshold is reached, say at time $\tau_1$, and let $z_1 = \eta_L(\tau_1)$.  Second, we trace along $\partial D$ in the clockwise direction until the first point $w_1$ where it is possible to flow starting at $w_1$ with angle $\tfrac{\pi}{2}$ without immediately hitting the continuation threshold.  Third, we flow from $w_1$ until the continuation threshold is hit again.  We then repeat this until $y$ is eventually hit.  This is depicted in the right panel above, where three iterations of this algorithm are needed to reach $y$ and are indicated by the colors red, yellow, and purple, respectively.}
}
\end{figure}

We are now going to explain how the light cone construction extends to the setting of general piecewise constant boundary data.  Recall that Remark~\ref{rem::light_cone_contains_av_general} and Remark~\ref{rem::light_cone_construction_general} from Section~\ref{subsec::light_cone} imply that the missing ingredients to prove that the light cone construction for counterflow lines is applicable in this general setting are:
\begin{enumerate}
\item the continuity of $\SLE_{\kappa'}(\ul{\rho})$ processes for general weights $\ul{\rho}$ and
\item the continuity of angle varying flow lines.
\end{enumerate}
We have at this point in the article established both of these results, which completes the proof of Theorem~\ref{thm::lightconeroughstatement}.  We remark that the light cone is a bit different if $\eta'$ fills some segment of the boundary, say on its left side (see Figure~\ref{fig::lightcone_boundary_filling}); let $K_L$ be the left boundary of $\eta'$.  The reason is that, in this case, $K_L$ is no longer a flow line, though it is still possible to express $K_L$ as a union of flow lines with angle $\theta_L = \tfrac{\pi}{2}$ and boundary segments.  In particular, if $\eta'$ does not fill the boundary all of the way until it hits its terminal point, say $x$, then the connected component of the closure of $K_L \setminus \partial D$ which contains $x$ is given by the flow line starting from $x$ with angle $\theta_L=\tfrac{\pi}{2}$.  The same is likewise true if the roles of left and right are swapped.

Suppose that $\eta'$ is non-boundary filling, i.e. $\sum_{i=1}^j \rho^{i,q} > \tfrac{\kappa'}{2}-4$ for all $1 \leq j \leq |\ul{\rho}^q|$ and $q \in \{L,R\}$, so that the left and right boundaries $\eta_L$ and $\eta_R$ of $\eta'$ are given by flow lines with angles $\tfrac{\pi}{2}$ and $-\tfrac{\pi}{2}$, respectively.  Then we can write down the conditional law of $\eta'$ given $\eta_L$ and $\eta_R$. (This is referred to as ``strong duality'' in \cite{DUB_PART}; see also \cite[Section~8]{DUB_PART} for related results).

\begin{proposition}
\label{prop::cfl_given_outer_boundary}
Suppose that $\eta'$ is an $\SLE_{\kappa'}(\ul{\rho}^L;\ul{\rho}^R)$ process on a Jordan domain $D$ from $y$ to $x$ with $x,y \in \partial D$ distinct.  Assume $\sum_{i=1}^j \rho^{i,q} > \tfrac{\kappa'}{2}-4$ for all $1 \leq j \leq |\ul{\rho}^q|$ and $q \in \{L,R\}$.  Then the conditional law of $\eta'$ given its left and right boundaries $\eta_L$ and $\eta_R$ is that of an $\SLE_{\kappa'}(\tfrac{\kappa'}{2}-4;\tfrac{\kappa'}{2}-4)$ process independently in each of the connected components of $D \setminus (\eta_L \cup \eta_R)$ which lie between $\eta_L$ and $\eta_R$.
\end{proposition}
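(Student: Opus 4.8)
The plan is to condition on the pair $(\eta_L,\eta_R)$, identify the conditional law of $h$ in each ``pocket'' (connected component of $D\setminus(\eta_L\cup\eta_R)$ lying between $\eta_L$ and $\eta_R$) using the local set machinery, and then read off the conditional law of $\eta'$ in that pocket from the martingale characterization, exactly as in the proofs of Lemma~\ref{lem::cfl_cont_det_part1} and Lemma~\ref{lem::cfl_cont_det_part2} but specialized to the extreme angles $\theta_L=\tfrac{\pi}{2}$ and $\theta_R=-\tfrac{\pi}{2}$, so that the two auxiliary flow lines literally coincide with the left and right boundaries of $\eta'$; the $\rho^{2}$ force points that appear in Lemma~\ref{lem::cfl_cont_det_part2} then vanish and the surviving weights are each $\tfrac{\kappa'}{2}-4$. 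Independence across pockets will come from the conditional independence of the GFF restricted to distinct complementary components given $\eta_L\cup\eta_R$.

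First I would fix the coupling: after transferring to $\H$ (or $\strip$) and replacing $h$ by $-h$ we may take $\eta'$ to be the counterflow line of $-h$ as in Theorem~\ref{thm::coupling_existence}; then, by Theorem~\ref{thm::lightconeroughstatement} (now fully established) together with the hypothesis $\sum_{i=1}^j\rho^{i,q}>\tfrac{\kappa'}{2}-4$, the curves $\eta_L,\eta_R$ are the flow lines of $h$ with angles $\tfrac{\pi}{2},-\tfrac{\pi}{2}$ started at $x$, they do not hit their continuation thresholds before reaching $y$, and they are the left and right boundaries of $\eta'$. By Theorem~\ref{thm::coupling_uniqueness} each of $\eta',\eta_L,\eta_R$ is almost surely determined by $h$, so $K:=\eta_L\cup\eta_R$ is a local set for $h$ (Proposition~\ref{gff::prop::cond_union_local}) and the conditionally independent union is the ordinary one. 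By Proposition~\ref{gff::prop::cond_union_mean} together with the no-pathology analysis of Section~\ref{sec::interacting} --- precisely the case $\theta_1=-\tfrac{\pi}{2}$, $\theta_2=\tfrac{\pi}{2}$ of Remark~\ref{rem::cond_mean_height_cf} and Remark~\ref{rem::cond_mean_height_cf_contained} --- the conditional law of $h$ given $K$, restricted to a pocket $C$, is that of a GFF plus the harmonic function equal to $\chi\cdot\mathrm{winding}$ together with the angle-$\tfrac{\pi}{2}$ boundary constant of $h$ along $\partial C\cap\eta_L$ and the angle-$(-\tfrac{\pi}{2})$ constant along $\partial C\cap\eta_R$, with no extra force points at the two tips of $\partial C$. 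Transferring $C$ to $\H$, this is exactly the boundary data of an $\SLE_{\kappa'}(\tfrac{\kappa'}{2}-4;\tfrac{\kappa'}{2}-4)$ process, by the same computation as in Lemma~\ref{lem::cfl_cont_det_part2}.

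Next I would run $\eta'$ inside a fixed pocket $C$: it has a continuous Loewner driving function viewed as a path in $C$ (Remark~\ref{rem::cont_loewner_cf_contains}, again at the extreme angles), the set $A(t):=K\cup\eta'([0,t])$ is local for every stopping time $t$ (Lemma~\ref{lem::stopping_local_set}), and $\CC_{A(t)}$ is continuous in $t$ with boundary data of the form required by Theorem~\ref{thm::martingale} (Proposition~\ref{prop::cond_mean_continuous}, the no-pathology inputs above handling the times when $\eta'$ touches $\eta_L$ or $\eta_R$). Hence, conditionally on $K$, the portion of $\eta'$ traced in $C$ is an $\SLE_{\kappa'}(\tfrac{\kappa'}{2}-4;\tfrac{\kappa'}{2}-4)$ from the point where $\eta'$ enters $C$ to the point where it exits. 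Finally, for independence: given $K$ the restrictions of $h$ to distinct pockets are independent GFFs-plus-harmonic-functions (Markov property, Proposition~\ref{gff::prop::local_independence}); the pocket decomposition and the order in which $\eta'$ visits the pockets are $\sigma(K)$-measurable (Lemma~\ref{lem::light_cone_contains_av}: $\eta'$ hits the points of $\eta_L$, and of $\eta_R$, in reverse chronological order); and within a pocket $C$ the traced piece of $\eta'$ is the counterflow line of the conditional field in $C$, hence determined by $h|_C$ by Theorem~\ref{thm::coupling_uniqueness} applied in $C$. Therefore the conditional laws of $\eta'$ in different pockets are independent, which is the assertion.

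The main obstacle I expect is bookkeeping rather than a new idea: one must check that the Section~\ref{sec::interacting} estimates (locality of $A(t)$, absence of pathologies of $\CC_{A(t)}$ at points where $\eta'$ meets $\eta_L$ or $\eta_R$, continuity of the Loewner driving function of $\eta'$ inside a pocket) remain valid at the \emph{extreme} angles $\pm\tfrac{\pi}{2}$, where $\eta_L,\eta_R$ genuinely are the boundaries of $\eta'$ and may share long arcs with it, and one must confirm that $\sum_{i=1}^j\rho^{i,q}>\tfrac{\kappa'}{2}-4$ is exactly what makes $\eta_L,\eta_R$ honest flow lines reaching $y$, so that the pocket decomposition and the weight computation are legitimate.
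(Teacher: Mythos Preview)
Your proposal is correct and follows essentially the same approach as the paper: run the argument of Lemma~\ref{lem::cfl_cont_det_part1} with the auxiliary flow lines taken at the extreme angles $\theta_2=\tfrac{\pi}{2}$ and $\theta_1=-\tfrac{\pi}{2}$, so that they coincide with $\eta_L,\eta_R$ and formula~\eqref{eqn::cfl_cont_det_part1_rho} yields the weights $\tfrac{\kappa'}{2}-4$ on each side. The paper's proof is simply the one-line remark that, now that continuity of general boundary-intersecting counterflow lines is established, the restriction in Lemma~\ref{lem::cfl_cont_det_part1} to non-boundary-intersecting $\eta'$ can be dropped and the same argument goes through; your write-up spells out in more detail the local-set, driving-function, and independence-across-pockets steps that this entails.
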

\begin{proof}
This follows from the same proof used to establish the continuity of $\SLE_{\kappa'}(\rho^L;\rho^R)$ processes for $\rho^L,\rho^R \geq \tfrac{\kappa'}{2}-4$ and is given explicitly in Lemma~\ref{lem::cfl_cont_det_part1}.  The only difference is that the proof of Lemma~\ref{lem::cfl_cont_det_part1} required $\eta'$ not to hit the boundary (with the exception of its initial and terminal points).  The reason for this is that, at that point in the article, we had not yet established the continuity of general boundary hitting counterflow lines.  Now that this has been proved, we can repeat the same argument again to get the proposition.
\end{proof}

If there exists a boundary point $z$ which $\eta'$ almost surely hits, then we can use the light cone construction to describe the outer boundary $\eta_z^1$ of $\eta'$ upon hitting $z$ as well as compute the conditional law of $\eta'$ given $\eta_z^1$ before and after hitting $z$.  This is formulated in the following proposition (see also Figure~\ref{fig::duality2} and Figure~\ref{fig::duality3}).

\begin{figure}[h!]
\begin{center}
\includegraphics[scale=0.85]{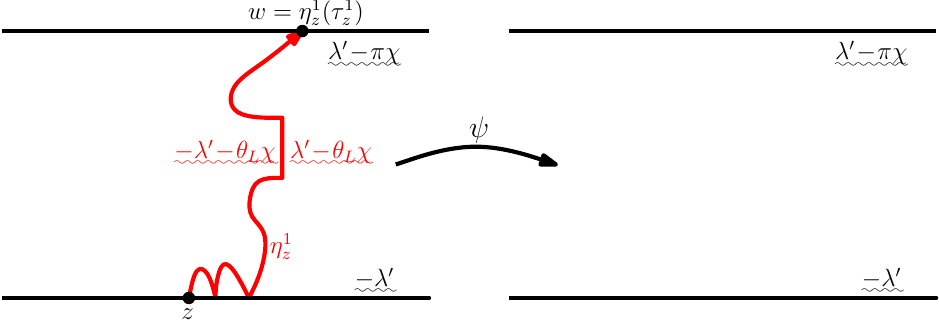}
\end{center}
\caption{\label{fig::duality2} Suppose that $h$ is a GFF on $\strip$ whose boundary data is depicted above and fix $z$ in the lower boundary $\stripbot$ of $\strip$.  Then the counterflow line $\eta' \sim \SLE_{\kappa'}(\tfrac{\kappa'}{2}-4;\tfrac{\kappa'}{2}-4)$ of $h$ from $\infty$ to $-\infty$ almost surely hits $z$, say at time $\tau_z'$.  The left boundary of $\eta'([0,\tau_z'])$ is almost surely equal to the flow line $\eta_z^1$ of $h$ starting at $z$ with angle $\theta_L = \tfrac{\pi}{2}$ stopped at time $\tau_z^1$, the first time it hits the upper boundary $\striptop$ of $\strip$.  The connected components of $\strip \setminus \eta_z^1([0,\tau_z^1])$ which lie to the right of $\eta_z^1([0,\tau_z^1])$ are visited by $\eta'$ in the reverse order that their boundaries are traced by $\eta_z^1$ (recall Lemma~\ref{lem::light_cone_contains_av} and Remark~\ref{rem::light_cone_contains_av_general}).  The right and left most points where the boundary of such a component intersects $\stripbot$ are the entrance and exit points of $\eta'$.  The conditional law of $h$ given $\eta_z^1([0,\tau_z^1])$ in each such component is (independently) the same as $h$ itself, up to a conformal change of coordinates which preserves the entrance and exit points of $\eta'$ and the conditional law of $\eta'$ is (independently) an $\SLE_{\kappa'}(\tfrac{\kappa'}{2}-4;\tfrac{\kappa'}{2}-4)$ process.
}
\end{figure}

\begin{figure}[h!]
\begin{center}
\includegraphics[scale=0.85]{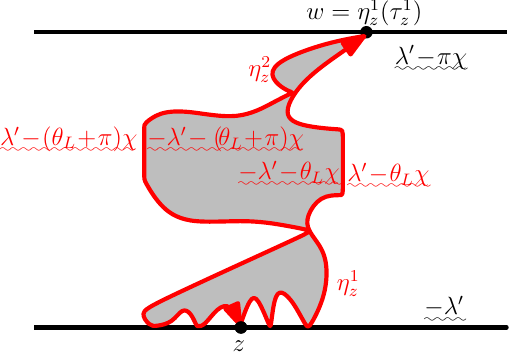}
\end{center}
\caption{\label{fig::duality3}(Continuation of Figure~\ref{fig::duality2})  Moreover, $\eta'([\tau_z',\infty))$ almost surely stays to the left of $\eta_z^1([0,\tau_z^1])$ and is the counterflow line of $h$ given $\eta_z^1([0,\tau_z^1])$ starting at $z$ and running to $-\infty$.  Let $w = \eta_z^1(\tau_z^1)$.  Since $\eta'$ is boundary filling and cannot enter into the loops it creates with itself and the boundary, the first point on $\striptop$ that $\eta'$ hits after $\tau_z'$ is $w$.  The left boundary of $\eta'|_{[\tau_z',\infty)}$ is given by the flow line $\eta_z^2$ of $h$ given $\eta_z^1([0,\tau_z^1])$ in the left connected component of $\strip \setminus \eta_z^1([0,\tau_z^1])$ stopped at the time $\tau_z^2$ that it first hits $z$ (Proposition~\ref{prop::strong_duality_boundary} and Theorem~\ref{thm::lightconeroughstatement}).  The order in which $\eta'$ hits those connected components which lie to the left of $\eta_z^2([0,\tau_z^2])$ is determined by the reverse chronological order that $\eta_z^2$ traces their boundary (Lemma~\ref{lem::light_cone_contains_av} and Remark~\ref{rem::light_cone_contains_av_general}) and the conditional law of $\eta'$ in each is independently an $\SLE_{\kappa'}(\tfrac{\kappa'}{2}-4;\tfrac{\kappa'}{2}-4)$ process.}
\end{figure}

\begin{proposition}
\label{prop::strong_duality_boundary}
Suppose that $h$ is a GFF on a Jordan domain $D$ and $x,y \in \partial D$ are distinct.  Let $\eta'$ be the counterflow line of $h$ from $y$ to $x$.  Suppose that $z \in \partial D$ is such that the first time $\tau_z'$ that $\eta'$ hits $z$ is finite almost surely.  If $z$ is on the right side of $\partial D$, then the outer boundary of $\eta'([0,\tau_z'])$ is given by the flow line $\eta_z^1$ of $h$ starting at $z$ with angle $\theta_L = \tfrac{\pi}{2}$.  Let $C$ be a connected component of $D \setminus \eta_z^1$ which lies to the right of $\eta_z^1$.  Then $\eta'|_{[0,\tau_z']}$ given $\eta_z^1$ in $C$ is equal to the counterflow line of the GFF given by conditioning $h$ on $\eta_z^1$ and restricting to $C$ starting from the point where $\eta'$ first enters $C$.  Let $C_x$ be the connected component of $D \setminus \eta_z^1$ which contains $x$.  Then $\eta'|_{[\tau_z',\infty)}$ given $\eta_z^1$ is equal to the counterflow line of the GFF starting at $z$ given by conditioning $h$ on $\eta_z^1$ and restricting to $C_x$.  Analogous results hold when the roles of left and right are swapped.
\end{proposition}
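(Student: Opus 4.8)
The plan is to deduce the proposition from the light cone construction (Theorem~\ref{thm::lightconeroughstatement}) together with the conditional-mean martingale machinery of Section~\ref{sec::interacting} and Theorem~\ref{thm::martingale}, in the spirit of the proof of Proposition~\ref{prop::cfl_given_outer_boundary}, now that Theorem~\ref{thm::continuity} and Theorem~\ref{thm::coupling_uniqueness} are available for all $\SLE_{\kappa'}(\ul{\rho})$ processes and (via Proposition~\ref{prop::angle_varying_continuous}) for angle varying flow lines. Throughout I would couple $\eta'$ and $\eta_z^1$ conditionally independently given $h$; by Theorem~\ref{thm::coupling_uniqueness} each is a.s.\ determined by $h$, so the conditionally independent union agrees with the usual union, and by Theorem~\ref{thm::continuity} and Proposition~\ref{prop::angle_varying_continuous} the paths and the angle varying trajectories used below are a.s.\ continuous. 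By symmetry (reflecting $D$, or replacing $h$ by $-h$) it suffices to treat the case $z$ on the right side of $\partial D$.

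First I would identify $\eta_z^1$ with the outer boundary of $\eta'([0,\tau_z'])$ and show that $\eta'$ hits the points of $\eta_z^1$ in reverse chronological order. Since $\SLE_{\kappa'}(\ul{\rho})$ processes are target independent, $\eta'$ stopped at $\tau_z'$ agrees with the counterflow line of $h$ run from $y$ to $z$; applying Theorem~\ref{thm::lightconeroughstatement} in its general, boundary-interacting form from Section~\ref{subsubsec::light_cone_general} to this counterflow line shows that its left boundary is the flow line $\eta_z^1$ of $h$ from $z$ with angle $\tfrac{\pi}{2}$, stopped when it reaches $x$ (or, if $\eta'$ fills a segment of $\partial D$, stopped at the continuation threshold and extended as in Figure~\ref{fig::lightcone_boundary_filling}), and the reverse chronological hitting is then Lemma~\ref{lem::light_cone_contains_av} with Remark~\ref{rem::light_cone_contains_av_general}. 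Alternatively, one can bypass target independence and rerun the Dub\'edat type argument of Lemma~\ref{lem::hit_in_order} and Proposition~\ref{prop::duality} directly: the boundary data hypotheses there were used only to guarantee continuity, which now holds in full generality, and the hitting criteria of Lemmas~\ref{lem::hitting_single_point}--\ref{lem::hitting_interval} apply verbatim.

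Next I would condition on $\eta_z^1$. It is a local set for $h$ (Lemma~\ref{lem::stopping_local_set}), and the conditional mean $\CC_{\eta_z^1}$ has boundary data $\pm \lambda' + \chi \cdot {\rm winding}$ on the two sides of $\eta_z^1$ by Theorem~\ref{thm::coupling_existence} and Proposition~\ref{gff::prop::cond_union_mean}, using $\lambda - \tfrac{\pi}{2}\chi = \lambda'$ from \eqref{eqn::fullrevolution}; hence in each connected component $C$ of $D \setminus \eta_z^1$ lying to the right of $\eta_z^1$, the conditional field $h|_C$ given $\eta_z^1$ is, after a conformal coordinate change preserving the entrance and exit points of $\eta'$ on $\partial C$, a GFF with boundary data of the same form. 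To see that $\eta'|_{[0,\tau_z']}$ restricted to $C$ is the counterflow line of this conditional field started from the point where $\eta'$ first enters $C$, I would verify the hypotheses of Theorem~\ref{thm::martingale}: the conditional mean $\CC_{\eta_z^1 \cup \eta'([0,t])}$ evolves as a continuous local martingale by Proposition~\ref{prop::cond_mean_continuous} and the interaction estimates of Section~\ref{sec::interacting} (which, by Remark~\ref{rem::cond_mean_general_bd}, now apply for general piecewise constant boundary data), and $\eta'$ admits a continuous Loewner driving function viewed as a path in $C$ by the argument of Remark~\ref{rem::cont_loewner_cf} and Remark~\ref{rem::cont_loewner_cf_contains}; the conditional law is then pinned down and $\eta'$ is determined by the field via Theorem~\ref{thm::coupling_uniqueness}.

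Finally, for $\eta'|_{[\tau_z',\infty)}$: the reverse chronological hitting shows that after $\tau_z'$ the curve $\eta'$ has already passed every point of $\eta_z^1$ that it will ever hit, so (using that $\eta'$ does not cross itself) $\eta'|_{[\tau_z',\infty)}$ stays to the left of $\eta_z^1$ and lies in the component $C_x$ containing $x$; rerunning the martingale identification of the previous paragraph in $C_x$ shows that $\eta'|_{[\tau_z',\infty)}$ is the counterflow line of $h|_{C_x}$ given $\eta_z^1$ started from $z$ and run to $x$. The case $z$ on the left side follows by the symmetry noted above. I expect the main obstacle to be this second step, namely the clean identification of the outer boundary of the stopped (and possibly boundary filling) counterflow line with $\eta_z^1$, together with the reverse chronological order statement; once that is in place, everything else is a routine application of the Theorem~\ref{thm::martingale} / Theorem~\ref{thm::coupling_uniqueness} package, now that continuity is known without restriction on the boundary data.
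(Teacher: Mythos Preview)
Your proposal is correct and follows essentially the same approach as the paper: identify $\eta_z^1$ as the outer boundary of $\eta'([0,\tau_z'])$ by viewing $\eta'$ as a counterflow line from $y$ to $z$ and invoking Theorem~\ref{thm::lightconeroughstatement}, then determine the conditional law of $\eta'$ in each component of $D \setminus \eta_z^1$ via Theorem~\ref{thm::martingale} and Proposition~\ref{prop::cond_mean_continuous} after checking the continuous Loewner driving function (Remarks~\ref{rem::cont_loewner_cf}, \ref{rem::cont_loewner_cf_contains}) and non-pathological conditional mean (Remark~\ref{rem::cond_mean_height_cf}). The paper's proof is terser but the ingredients and their order of use are the same; your explicit mention of target independence is not needed (and is not invoked by the paper), since the GFF coupling and Theorem~\ref{thm::coupling_uniqueness} already identify the stopped counterflow line with the counterflow line targeted at $z$.
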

\begin{proof}
The statement regarding the law of the outer boundary of $\eta'$ upon hitting $z$ follows from Theorem~\ref{thm::lightconeroughstatement} by viewing $\eta'$ as a counterflow line from $y$ to $z$.  Thus, to complete the proof of the proposition, we just need to deduce the conditional law of $\eta'$ given $\eta_z^1$.  This follows the same strategy we used to compute the conditional law of a counterflow line given a flow line used in Section~\ref{subsec::counterflow_two_boundary_force_points} and Section~\ref{subsec::counterflow_many_boundary_force_points}.  In particular, we know that $\eta'$ has a continuous Loewner driving function viewed as a path in each of the complementary connected components of $\eta_z^1$ using the same argument described in Remark~\ref{rem::cont_loewner_cf} and Remark~\ref{rem::cont_loewner_cf_contains}.  Moreover, the conditional mean of $h$ given $\eta'([0,\tau])$ and $\eta_z^1$, $\tau$ any stopping time for the filtration generated by $\eta'(s)$ for $s \leq t$ and $\eta_z^1$, does not exhibit pathological behavior at intersection points of $\eta_z^1$ and $\eta'$ using the same technique as described in Remark~\ref{rem::cond_mean_height_cf}.  The desired result then follows by invoking Theorem~\ref{thm::martingale} and Proposition~\ref{prop::cond_mean_continuous}.
\end{proof}

We chose not to write down the precise law of $\eta'$ given $\eta_z^1$ in the statement of Proposition~\ref{prop::strong_duality_boundary}, though in general this is very easy to do.  One special case of this result that will be especially important for us in a subsequent work is illustrated in Figure~\ref{fig::duality2} and Figure~\ref{fig::duality3} and stated precisely in the following proposition:

\begin{proposition}
\label{lem::counterflow_given_boundary_on_hit}
Suppose that $D$ is a Jordan domain and $x,y \in \partial D$ are distinct.  Let $\eta' \sim \SLE_{\kappa'}(\tfrac{\kappa'}{2}-4;\tfrac{\kappa'}{2}-4)$ from $y$ to $x$ in $D$.  Suppose that $z \in \partial D$ is in the right boundary of $D$.  Then the conditional law of $\eta'$ given its left boundary $\eta_z^1$ upon hitting $z$ is an $\SLE_{\kappa'}(\tfrac{\kappa'}{2}-4;\tfrac{\kappa'}{2}-4)$ process independently in each of the connected components of $D \setminus \eta_z^1$ which lie to the right of $\eta_z^1$.  Let $C_x$ be the connected component of $D \setminus \eta_z^1$ which contains $x$.  Then $\eta'$ restricted to $C_x$ is equal to the counterflow line of the conditional GFF $h|_{C_x}$ given $\eta_z^1$.  Let $\eta_z^2$ be the flow line of $h|_{C_x}$ starting at the first point $w$ where $\eta_z^1$ hits the left side of $\partial D$ with angle $\tfrac{\pi}{2}$.  Then $\eta_z^2$ is the left boundary of $\eta'$ restricted to $C_x$.  Moreover, the conditional law of $\eta'$ in $C_x$ given $\eta_z^2$ is independently that of an $\SLE_{\kappa'}(\tfrac{\kappa'}{2}-4;\tfrac{\kappa'}{2}-4)$ process in each of the connected components of $C_x \setminus \eta_z^2$ which lie to the left of $\eta_z^2$.  Analogous results likewise hold when the roles of left and right are swapped and the angle $\tfrac{\pi}{2}$ is replaced with $-\tfrac{\pi}{2}$.
\end{proposition}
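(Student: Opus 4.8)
The plan is to derive Proposition~\ref{lem::counterflow_given_boundary_on_hit} as a special case of Proposition~\ref{prop::strong_duality_boundary}, applied twice --- once in $D$ and once inside $C_x$ --- together with the explicit boundary-data bookkeeping indicated in Figures~\ref{fig::duality2}--\ref{fig::duality3}, the merging property of flow lines of a common angle, and the light-cone results of Section~\ref{subsec::light_cone} in their general form (Theorem~\ref{thm::lightconeroughstatement}). Since $\eta' \sim \SLE_{\kappa'}(\tfrac{\kappa'}{2}-4;\tfrac{\kappa'}{2}-4)$ sits at the borderline non-boundary-filling threshold, a point $z$ on the right side of $\partial D$ is a.s.\ hit, so the hypothesis of Proposition~\ref{prop::strong_duality_boundary} is met; and the continuity of all the flow, counterflow, and angle-varying trajectories that appear below is now available from Theorem~\ref{thm::continuity}. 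First I would invoke Proposition~\ref{prop::strong_duality_boundary} directly: with $z$ on the right boundary it gives that the outer (left) boundary of $\eta'([0,\tau_z'])$ is the flow line $\eta_z^1$ of $h$ from $z$ with angle $\theta_L=\tfrac{\pi}{2}$; that, for each component $C$ of $D\setminus \eta_z^1$ to the right of $\eta_z^1$, the path $\eta'|_{[0,\tau_z']}$ given $\eta_z^1$ agrees in $C$ with the counterflow line of $h$ conditioned on $\eta_z^1$ and restricted to $C$, started where $\eta'$ enters $C$; and that $\eta'|_{[\tau_z',\infty)}$ --- exactly the portion of $\eta'$ in $C_x$ --- is the counterflow line of $h$ conditioned on $\eta_z^1$ and restricted to $C_x$, started at $z$.

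The second step is to upgrade the ``counterflow line of a conditional field'' statements to the explicit law $\SLE_{\kappa'}(\tfrac{\kappa'}{2}-4;\tfrac{\kappa'}{2}-4)$. Conditioning on $\eta_z^1$ and using Proposition~\ref{gff::prop::cond_union_mean} and Proposition~\ref{gff::prop::local_independence}, the conditional field in a component $C$ to the right of $\eta_z^1$ is a GFF whose boundary data along $\eta_z^1$ is the $\pm\lambda'\mp\chi\cdot{\rm winding}$ data attached to an angle-$\tfrac{\pi}{2}$ flow line, i.e.\ precisely the configuration appearing in Theorem~\ref{thm::coupling_existence} for a counterflow line, after a conformal coordinate change fixing the entrance and exit points of $\eta'$ on $\partial C$; the order in which $\eta'$ visits these components is the reverse chronological order in which $\eta_z^1$ traces their boundaries, by Lemma~\ref{lem::hit_in_order} and Lemma~\ref{lem::light_cone_contains_av}. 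As in the proof of Lemma~\ref{lem::cfl_cont_det_part1}, the conditional mean of $h$ given $\eta_z^1$ and an initial segment of $\eta'$ is non-pathological at the $\eta_z^1\cap\eta'$ intersections (Remark~\ref{rem::cond_mean_height_cf}, Remark~\ref{rem::cond_mean_height_cf_contained}) and $\eta'$ has a continuous Loewner driving function viewed as a path in $C$ (Remark~\ref{rem::cont_loewner_cf}, Remark~\ref{rem::cont_loewner_cf_contains}), so Theorem~\ref{thm::martingale} together with Proposition~\ref{prop::cond_mean_continuous} identifies the conditional law of $\eta'$ in $C$ as $\SLE_{\kappa'}(\tfrac{\kappa'}{2}-4;\tfrac{\kappa'}{2}-4)$. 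The same computation will apply verbatim inside $C_x$ once $\eta_z^2$ has been identified.

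The third step is to pin down $\eta_z^2$ --- the (interior part of the) left boundary of $\eta'|_{C_x}$ --- as the flow line of the conditional field on $C_x$ started at $w$ (the first point where $\eta_z^1$ meets the left side of $\partial D$) with angle $\tfrac{\pi}{2}$, and then to read off the conditional law of $\eta'$ in $C_x$ given $\eta_z^2$. By Theorem~\ref{thm::lightconeroughstatement} the left boundary of the counterflow line $\eta'|_{C_x}$ is an angle-$\tfrac{\pi}{2}$ flow line of the conditional field; since $\eta_z^1$ is itself the angle-$\tfrac{\pi}{2}$ flow line of $h$ from $z$ and flow lines of equal angle merge rather than cross (Theorem~\ref{thm::monotonicity_crossing_merging}, Proposition~\ref{prop::merging_and_crossing}), this left-boundary flow line must agree, beyond $w$, with the flow line of $h$ (equivalently of $h$ conditioned on $\eta_z^1$) started at $w$ with angle $\tfrac{\pi}{2}$, which is $\eta_z^2$. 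Applying Proposition~\ref{prop::strong_duality_boundary} (equivalently Proposition~\ref{prop::cfl_given_outer_boundary}) once more inside $C_x$ and repeating the Step~2 boundary-data computation then gives that, conditionally on $\eta_z^2$, the path $\eta'$ in each component of $C_x\setminus\eta_z^2$ lying to the left of $\eta_z^2$ is independently $\SLE_{\kappa'}(\tfrac{\kappa'}{2}-4;\tfrac{\kappa'}{2}-4)$. The left/right-interchanged version, with angle $-\tfrac{\pi}{2}$, follows from the same argument under the symmetry $\chi\leftrightarrow-\chi$, $\lambda\leftrightarrow\lambda'$ used throughout.

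The main obstacle is the boundary-data bookkeeping: one must verify that conditioning on the angle-$\tfrac{\pi}{2}$ flow line $\eta_z^1$ (and later $\eta_z^2$) leaves, in each complementary component, \emph{exactly} the $\pm\lambda'$ configuration of Theorem~\ref{thm::coupling_existence}, up to a conformal map fixing the $\eta'$ entrance and exit points, so that Theorem~\ref{thm::martingale} produces the weights $\tfrac{\kappa'}{2}-4$ on the nose rather than some nearby values. The only point genuinely beyond Proposition~\ref{prop::strong_duality_boundary} itself is the identification of $\eta_z^2$ with a flow line started at $w$ instead of at the target $x$; this rests on the merging statement and, more delicately, on checking that the angle-$\tfrac{\pi}{2}$ left-boundary flow line of $\eta'|_{C_x}$ actually merges with $\eta_z^1$ precisely at the point $w$ where $\eta_z^1$ terminates on $\partial D$, which requires being careful about how the left boundary of $\eta'$ is compatible with the left boundaries of its restrictions to the subdomains cut off by $\eta_z^1$ (the same kind of analysis used in Figure~\ref{fig::lightcone_boundary_filling}). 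Once the correct boundary data is in hand, the remaining arguments are direct applications of results already established in the article.
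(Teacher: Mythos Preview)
Your proposal is correct and matches the paper's approach: the paper's proof is one line stating that this is a special case of Proposition~\ref{prop::strong_duality_boundary}, with Figures~\ref{fig::duality2}--\ref{fig::duality3} supplying the boundary-data computation that yields the weights $\tfrac{\kappa'}{2}-4$. One small remark on your Step~3: the merging language is not quite the right picture, since $\eta_z^1$ is part of $\partial C_x$ rather than an interior flow line that the left-boundary flow line of $\eta'|_{C_x}$ could merge with; the mechanism you invoke at the end via Figure~\ref{fig::lightcone_boundary_filling} is the correct one---in $C_x$ the angle-$\tfrac{\pi}{2}$ flow line from $x$ sits at its continuation threshold along $\partial C_x \cap \striptop$ and first detaches into the interior precisely at $w$, where the boundary data changes from the $\striptop$ value to the left-side-of-$\eta_z^1$ value.
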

\begin{proof}
This is a special case of Proposition~\ref{prop::strong_duality_boundary}.  See Figure~\ref{fig::duality2} and Figure~\ref{fig::duality3} for further explanation as to why these are the correct weights for the conditional law of~$\eta'$.
\end{proof}

\subsection{The fan is not space filling}
\label{subsec::fan}

Suppose that $h$ is a GFF on the infinite strip $\strip$ with boundary data as in Figure~\ref{fig::fan_counterflow}.  We will first assume that $a,b \geq \lambda - \tfrac{\pi}{2} \chi = \lambda'$ and that $a',b' \geq \lambda' + \pi \chi$ so that the counterflow line $\eta'$ of $h$ starting from $z_0$ almost surely hits $\stripbot$ only when it exits at $0$ and does not hit $\striptop$ except where it starts at $z_0$.  Recall from Section~\ref{subsec::light_cone} that the fan $\fan$ is the closure of the union of the ranges of any collection of flow lines $\eta_\theta$ of $h$ where $\theta$ ranges over a countable, dense subset of $[-\tfrac{\pi}{2},\tfrac{\pi}{2}]$ (recall also the simulations from Figures~\ref{fig::flowlines}--\ref{fig::flowlines4}).  By Lemma~\ref{lem::light_cone_contains_av}, we know that the range of $\eta'$ almost surely contains $\fan$.  The main purpose of this subsection is to establish the following proposition, which implies that $\fan$ almost surely has zero Lebesgue measure for all $\kappa \in (0,4)$ (recall Figure~\ref{fig::sle64_fan}):

\begin{figure}[h!]
\begin{center}
\includegraphics[scale=0.85]{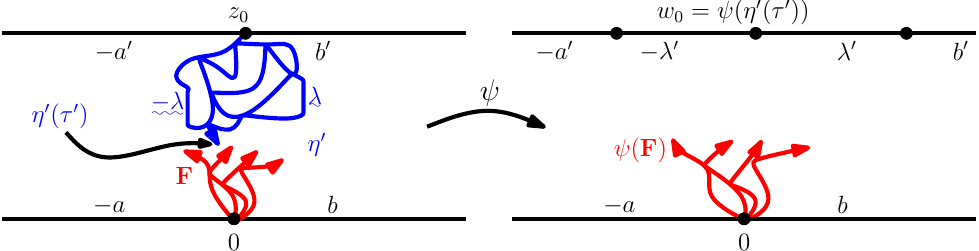}
\end{center}
\caption{\label{fig::fan_counterflow} The setup for Proposition~\ref{prop::fan_does_not_hit}.  Suppose that $h$ is a GFF on the strip $\strip$ with the boundary data depicted in the left hand side above.  We assume that $a,b \geq \lambda-\tfrac{\pi}{2} \chi = \lambda'$ and that $a',b' \geq \lambda'+\pi \chi$ so that the counterflow line $\eta'$ of $h$ starting at $z_0$ intersects $\partial \strip$ only at $z_0$ and $0$, its starting and terminal points, respsectively.  Let $\tau'$ be any stopping time for the counterflow line $\eta'$ of $h$ starting at $z_0$ such that $\eta'(\tau') \neq 0$ almost surely.  We will prove that $\eta'(\tau')$ almost surely is not contained $\fan$.  To prove this, we let $\psi$ be the conformal map which takes the unbounded connected component of $\strip \setminus \eta'([0,\tau'])$ back to $\strip$ which fixes $\pm \infty$ and $0$.  Let $w_0 = \psi(\eta'(\tau')) \in \striptop$.  The boundary data for the GFF $\wt{h} := h \circ \psi^{-1} - \chi \arg (\psi^{-1})'$ is depicted on the right side.  We show that the fan of $\wt{h}$ almost surely does not contain $w_0$.}
\end{figure}

\begin{proposition}
\label{prop::fan_does_not_hit}
Suppose that we have a GFF $h$ on $\strip$ whose boundary data is as in Figure~\ref{fig::fan_counterflow} with $a,b \geq \lambda - \tfrac{\pi}{2} \chi = \lambda'$ and $a',b' \geq \lambda' + \pi \chi$.  Let $\tau'$ be any $\eta'$ stopping time such that $\eta'(\tau') \neq 0$ almost surely.  Then we have that $\p[\eta'(\tau') \in \fan] = 0$.  In particular, the Lebesgue measure of $\fan$ is almost surely zero.
\end{proposition}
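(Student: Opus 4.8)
The plan is to reduce the statement to a one-point estimate via the light cone picture, and then to prove that one-point estimate by a ``monotonicity'' argument analogous to Lemma~\ref{lem::sle_does_not_cover_boundary}. First I would fix an $\eta'$ stopping time $\tau'$ with $\eta'(\tau') \neq 0$ almost surely and, as indicated in Figure~\ref{fig::fan_counterflow}, condition on $\eta'([0,\tau'])$, apply the conformal map $\psi$ sending the unbounded connected component of $\strip \setminus \eta'([0,\tau'])$ back to $\strip$ fixing $\pm\infty$ and $0$, and work with $\wt h = h \circ \psi^{-1} - \chi \arg (\psi^{-1})'$. By Proposition~\ref{gff::prop::cond_union_local}, Proposition~\ref{gff::prop::cond_union_mean} and Lemma~\ref{lem::cont_loewner_cf}-type arguments, $\wt h$ is a GFF on $\strip$ with the boundary data shown in the right panel of Figure~\ref{fig::fan_counterflow}, and $w_0 := \psi(\eta'(\tau')) \in \striptop$ is a deterministic point given the conditioning. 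It then suffices to show that for this boundary data the fan $\fan$ of $\wt h$ almost surely does not contain $w_0$, i.e.\ $\p[w_0 \in \fan] = 0$. Once that point estimate is established, Fubini (integrating $\one_{\{z \in \fan\}}$ against Lebesgue measure and using the point estimate applied to points of the form $\eta'(\tau')$ — more precisely applied along a countable dense family of stopping times, using that $\eta'$ fills a positive-measure set and Corollary~\ref{cor::entire_light_cone_construction}, or directly via a point estimate for each deterministic interior point of $\strip$ as in the Lemma~\ref{lem::sle_does_not_cover_boundary} argument) gives that $\fan$ has zero Lebesgue measure almost surely.

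The core point estimate $\p[w_0 \in \fan] = 0$ I would prove by the same ``cascade of conditional hitting probabilities'' device used in Lemma~\ref{lem::sle_does_not_cover_boundary}. Choose a large finite family of angles $-\tfrac{\pi}{2} = \phi_0 < \phi_1 < \cdots < \phi_n$ and run the corresponding flow lines $\eta_{\phi_0}, \eta_{\phi_1}, \ldots, \eta_{\phi_n}$ of $\wt h$ from $0$. By Proposition~\ref{prop::generalized_monotonicity} and Proposition~\ref{prop::monotonicity_boundary_intersecting}, $\eta_{\phi_{j}}$ lies to the left of $\eta_{\phi_{j-1}}$, and the conditional law of $\eta_{\phi_j}$ given $\eta_{\phi_{j-1}}$ is an $\SLE_\kappa(\ul\rho)$ process in the left connected component of $\strip \setminus \eta_{\phi_{j-1}}$ whose force-point weights we can read off (cf.\ Figure~\ref{fig::monotonicity}, Figure~\ref{fig::flow_line_angles}). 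By the conformal Markov property, the probability that $\eta_{\phi_j}$, conditionally on $\eta_{\phi_{j-1}}$, hits a fixed point of $\eta_{\phi_{j-1}}$ depends only on the weights, say it is $p_j < 1$. Hence $\p[w_0 \in \eta_{\phi_n}] \leq p_1 p_2 \cdots p_n$; but by choosing $n$ large and the $\phi_j$ spread across $[-\tfrac{\pi}{2},\tfrac{\pi}{2}]$ we can arrange $\eta_{\phi_n}$ to have a left force point whose weight is $\geq \tfrac{\kappa}{2}-2$, so that by Lemma~\ref{lem::flow_cannot_hit} / Remark~\ref{rem::flow_cannot_hit} (applied after a conformal change of coordinates to $\h$) $\eta_{\phi_n}$ almost surely does not hit the relevant boundary segment containing $w_0$ — forcing the product, hence each individual flow line's hitting probability for any fixed point, to vanish. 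Since $\fan$ is the closure of $\bigcup_{\theta} \eta_\theta$ over a countable dense set, a fixed point $w_0$ of $\striptop$ is in $\fan$ only if it lies in the closure of this union; a short continuity argument (using that $\eta_\theta$ is continuous in $\theta$ away from $w_0$, or alternatively that membership of a fixed point in $\fan$ can be tested by the union not the closure, since hitting a boundary point is a closed condition compatible with the a.s.\ events above) shows $\p[w_0 \in \fan] = 0$.

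The main obstacle I expect is passing from ``$\eta_\theta$ misses $w_0$ for each fixed $\theta$'' to ``the \emph{closure} of $\bigcup_\theta \eta_\theta$ misses $w_0$.'' Taking closures can in principle create new intersections, so one must argue that the countable union $\bigcup_{\theta \in \Q} \eta_\theta$, restricted to a neighborhood of $w_0$, is already closed near $w_0$, or else rule out the bad scenario that $w_0$ is an accumulation point of the fan without lying on any single ray. I would handle this by working instead with a second point estimate: show that for \emph{every} fixed deterministic point $z$ in the interior of $\strip$ one has $\p[z \in \fan] = 0$ (again via the cascade argument, now using interior non-intersection bounds for $\SLE_\kappa$ and $\SLE_\kappa(\ul\rho)$ coming from Remark~\ref{rem::continuity_non_boundary} and dimension/hitting facts, or by mapping the problem to the boundary via an auxiliary flow line as in the second half of the proof of Lemma~\ref{lem::sle_does_not_cover_boundary}), and then apply Fubini directly: $\expect[\length(\fan)] = \int_{\strip} \p[z \in \fan]\,dz = 0$, so $\length(\fan) = 0$ almost surely, and in particular $\p[\eta'(\tau') \in \fan]=0$ follows from the light-cone description of $\eta'(\tau')$ as a limit of angle-varying flow-line endpoints together with $\length(\fan)=0$ and the fact (Theorem~\ref{thm::continuity}, Theorem~\ref{thm::lightconeroughstatement}) that $\eta'$ fills a set of positive Lebesgue measure when $\kappa' \in (4,8)$, resp.\ has the light-cone structure for all $\kappa' > 4$. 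This route sidesteps the closure subtlety entirely, at the cost of needing the interior point estimate, which is where the real work lies.
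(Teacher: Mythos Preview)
Your cascade argument, modeled on Lemma~\ref{lem::sle_does_not_cover_boundary}, establishes only that each individual flow line $\eta_\theta$ misses $w_0$ almost surely. You correctly identify that this does not control the \emph{closure} $\fan$, but your proposed workaround repeats the same error. The interior point estimate $\p[z \in \fan] = 0$ that you need for Fubini is exactly the statement that the closure of the union misses $z$; the cascade only gives $\p[z \in \eta_\theta] = 0$ for each fixed $\theta$, which is a much weaker statement (and for interior $z$ already follows from the fact that $\SLE_\kappa$ for $\kappa < 8$ has zero probability of hitting a fixed interior point). So the closure problem is not sidestepped, merely relocated. Separately, your final step is backwards: knowing $\mathrm{Leb}(\fan) = 0$ a.s.\ does not imply $\p[\eta'(\tau') \in \fan] = 0$ for a specific random point, and your claim that $\eta'$ fills a set of positive Lebesgue measure when $\kappa' \in (4,8)$ is false (the Hausdorff dimension is $1 + \kappa'/8 < 2$).

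The paper's argument is genuinely different and resolves the closure issue directly. Instead of running flow lines from $0$ and trying to show they miss $w_0$, it builds a \emph{shield at $w_0$}: a finite family of flow lines $\wt\eta_{\theta_1}^{w_0}, \ldots, \wt\eta_{\theta_n}^{w_0}$ of $\wt h$ starting \emph{from} $w_0$, with adjacent angles close enough that consecutive lines intersect each other (and, by Lemma~\ref{lem::hit_boundary_infinitely}, do so infinitely often in every neighborhood of $w_0$). This produces infinitely many nested pockets accumulating at $w_0$. Now any flow line $\wt\eta_\theta$ from $0$ can cross each shield line at most once (Proposition~\ref{prop::merging_and_crossing}), so it can traverse at most $n$ pocket layers and therefore cannot enter the pockets beyond depth $n$. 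This exhibits an explicit open neighborhood of $w_0$ that \emph{every} flow line from $0$ avoids, so the closure $\fan$ avoids it too. The key idea you are missing is that the crossing bound of Theorem~\ref{thm::monotonicity_crossing_merging} converts a finite shield into an impenetrable barrier, which is what upgrades the pointwise miss to a neighborhood miss.
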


\begin{figure}[ht!]
\begin{center}
\subfigure[The shielding fan.]{
\includegraphics[scale=0.85]{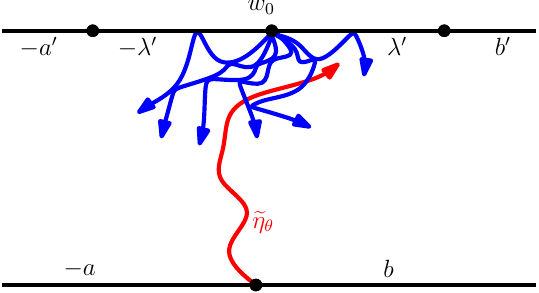}}
\subfigure[Since flow lines can only cross each other at most one time, it follows that $\wt{\eta}_\theta$ can only intersect at most one pocket between each pair $\wt{\eta}_i^{w_0}$, $\wt{\eta}_{i+1}^{w_0}$.]{
\includegraphics[scale=0.85]{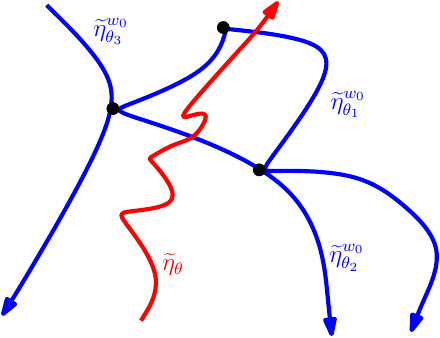}}
\end{center}
\caption{\label{fig::fan_bd} Suppose we have the same setup as the right panel of Figure~\ref{fig::fan_counterflow}.  We take $n = n(\kappa) \in \N$ flow lines $\wt{\eta}_{\theta_1}^{w_0},\ldots,\wt{\eta}_{\theta_n}^{w_0}$ with angles $\theta_1,\ldots,\theta_n$ so that $\wt{\eta}_{\theta_i}^{w_0}$ almost surely intersects both of its neighbors (or $\striptop$ if $i=1$ or $i=n$).  Fix $r > 0$ such that $B(w_0,r) \cap \partial \strip$ is almost surely contained in the part of $\striptop$ where the boundary data of $\wt{h}$ is either $\lambda'$ or $-\lambda'$.  For each $i$, we let $\wt{\tau}_i$ be the first time that $\wt{\eta}_{\theta_i}^{w_0}$ first exits $B(w_0,r)$.  Lemma~\ref{lem::hit_boundary_infinitely} implies that each of the $\wt{\eta}_{\theta_i}^{w_0}$ intersects its neighbors almost surely infinitely many times in every neighborhood of $w_0$.  Take any flow line $\wt{\eta}_\theta$ of $\wt{h}$ starting at $0$ with initial angle $\theta$.  By Proposition~\ref{prop::generalized_monotonicity} and Proposition~\ref{prop::merging_and_crossing}, $\wt{\eta}_\theta$ can only intersect at most one pocket between each pair $\wt{\eta}_{\theta_i}^{w_0},\wt{\eta}_{\theta_{i+1}}^{w_0}$.  Thus the set of points that $\wt{\eta}_\theta$ can access is contained in the set of pockets between pairs $\wt{\eta}_{\theta_i}^{w_0},\wt{\eta}_{\theta_{i+1}}^{w_0}$ which are connected to the unbounded connected component of $\strip \setminus \cup_{i=1}^n \wt{\eta}_{\theta_i}^{w_0}$ by a chain of at most $n$ such pockets.  Therefore $\fan$ almost surely does not contain $w_0$.}
\end{figure}

Before we proceed to the proof of Proposition~\ref{prop::fan_does_not_hit}, we need to record the following simple fact about $\SLE_\kappa(\rho^L;\rho^R)$ processes.  In what follows, $|\cdot|$ is used to denote counting measure. 
\begin{lemma}
\label{lem::hit_boundary_infinitely}
Suppose that $\eta$ is an $\SLE_\kappa(\rho^L;\rho^R)$ process in $\h$ with $\rho^L,\rho^R \in (-2,\tfrac{\kappa}{2}-2)$ and with the force points located at $0^-,0^+$, respectively.  For every $t > 0$, we have that both $|\eta([0,t]) \cap \R_-| = \infty$ and $|\eta([0,t]) \cap \R_+| = \infty$ almost surely.
\end{lemma}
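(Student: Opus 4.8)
The plan is to identify the set of times at which $\eta(t)$ lies on $\R_+$ with the zero set of $X^R_t:=V^{1,R}_t-W_t$, to observe that the hypothesis $\rho^R\in(-2,\tfrac{\kappa}{2}-2)$ is exactly the condition that $X^R$ behaves like $\sqrt{\kappa}$ times a Bessel process of dimension $\delta^R:=1+\tfrac{2(\rho^R+2)}{\kappa}\in(1,2)$, and then to use Brownian scaling together with the Blumenthal $0$--$1$ law to promote ``$\eta$ touches $\R_+$ with positive probability'' to ``$\eta$ touches $\R_+$ along a sequence of times decreasing to $0$, almost surely.'' By Proposition~\ref{prop::two_force_point_uniqueness_and_continuity} we may assume $\eta$ is a continuous curve; its Loewner driving function $W$ is continuous by construction. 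Writing $K_t$ for the hull and $g_t$ for the corresponding Loewner maps, set $\beta_t:=\sup(\overline{K_t}\cap\R)$. Standard Loewner theory shows that for $t>0$ one has $\eta(t)\in\R_+$ if and only if $\eta(t)$ is the rightmost real point of $\overline{K_t}$, equivalently $W_t=V^{1,R}_t$, and in that case $\eta(t)=\beta_t$; moreover $\beta$ is non-decreasing and $\beta_t\to 0$ as $t\downarrow0$ by continuity of $\eta$. Let $Z^R:=\{t>0:W_t=V^{1,R}_t\}$.

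First I would reduce the lemma to the claim that $Z^R$ accumulates at $0$ almost surely. Indeed, if $Z^R$ accumulates at $0$ then $Z^R\cap(0,\epsilon]$ is infinite for every $\epsilon>0$ (a nonempty set accumulating at $0$ must meet $(0,\delta]$ for all $\delta$); since $\beta_t\to0$ along $Z^R$ as $t\downarrow0$ while $\beta_t=0$ only at $t=0$, the non-decreasing function $\beta$ takes infinitely many distinct positive values on $Z^R\cap(0,\epsilon]$, each equal to $\eta(t)\in\R_+$ for some $t\in Z^R$; hence $\eta([0,t])\cap\R_+$ is infinite for every $t>0$. The statement for $\R_-$ follows from the identical argument with $V^{1,L}$ and the leftmost real point of $\overline{K_t}$ in place of $V^{1,R}$ and the rightmost.

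Next, for $t>0$ put $p(t):=\p[\eta([0,t])\cap\R_+\neq\emptyset]=\p[\exists\,s\in(0,t]:W_s=V^{1,R}_s]$. Brownian scaling of the Loewner flow, the force-point configuration $\{0^-,0^+\}$ being scale invariant, shows $p(t)$ is independent of $t$; call the common value $p$. The events $\{\eta([0,t])\cap\R_+\neq\emptyset\}$ are nested and decrease, as $t\downarrow0$, to $\{Z^R\text{ accumulates at }0\}$, which lies in the germ $\sigma$-algebra $\mathcal{F}_{0+}$ of the continuous, deterministically started Markov process $(W,V^{1,L},V^{1,R},B)$ of Theorem~\ref{thm::slekrdef}; by the Blumenthal $0$--$1$ law this event has probability $0$ or $1$, and by continuity from above that probability equals $p$. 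So it remains to check $p>0$, i.e.\ that $\SLE_\kappa(\rho^L;\rho^R)$ with $\rho^R<\tfrac{\kappa}{2}-2$ touches $(0,\infty)$ with positive probability. After conditioning $\eta$ on a short initial segment, the gap $V^{1,R}_t-V^{1,L}_t$ and the quantity $X^L_t:=W_t-V^{1,L}_t$ are of order $1$; on any subsequent interval on which $X^L$ stays bounded away from $0$, the process $X^R=V^{1,R}-W$ satisfies an SDE whose law is, by Girsanov, absolutely continuous with respect to that of $\sqrt{\kappa}$ times a $\BES^{\delta^R}$ with $\delta^R\in(1,2)$ (compare \eqref{eqn::bes_definition}--\eqref{eqn::integral_bes_definition} and \cite{RY04}), and such a Bessel process reaches $0$ in finite time almost surely, in particular before $X^L$ leaves a fixed compact subinterval of $(0,\infty)$ with positive probability. (Alternatively, positivity follows from the threshold $\rho=\tfrac{\kappa}{2}-2$ in \cite[Lemma 15]{DUB_DUAL}; cf.\ the discussion around Figure~\ref{fig::criticalforintersection}.) Hence $p>0$, so $p=1$ and $Z^R$ accumulates at $0$ almost surely.

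The main obstacle is this positivity input and, underlying it, the fact that the SDE for $X^R=V^{1,R}-W$ carries the extra drift $-\rho^L/X^L$, which is unbounded as $t\downarrow0$ precisely because the gap $V^{1,R}-V^{1,L}$ then degenerates; this is why one cannot compare $X^R$ with a Bessel process directly from time $0$, and must instead extract positivity only on a fixed scale, leaving the scaling and Blumenthal $0$--$1$ machinery to carry the conclusion to all scales near $0$.
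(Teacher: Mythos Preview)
Your argument is essentially correct and takes a genuinely different route from the paper's proof. Both proofs ultimately reduce to the single-force-point case, where $V^{1,R}-W$ (or $W-V^{1,L}$) is $\sqrt\kappa$ times a Bessel process of dimension in $(1,2)$ whose zero set is perfect and accumulates at the origin. The difference lies in how the second force point is handled. The paper couples $\eta$ with a GFF chosen so that the flow line is an $\SLE_\kappa(\rho^L)$ with a \emph{single} left force point, where the Bessel picture is immediate; it then conditions on an auxiliary flow line $\eta_\theta$ to the right, so that the conditional law of $\eta$ becomes $\SLE_\kappa(\rho^L;\rho^R)$, and observes that the almost-sure event ``$\eta$ meets $\R_-$ at times accumulating at $0$'' survives the conditioning. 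Your approach instead stays at the level of the driving SDE: you use scale invariance of the configuration $(0^-,0^+)$ to make $p(t)$ constant, the Blumenthal $0$--$1$ law to force $p\in\{0,1\}$, and a Girsanov comparison with the single-force-point process to get $p>0$. The paper's argument is shorter because the heavy GFF machinery of Section~\ref{subsec::two_boundary_force_points} is already in place; your argument is more self-contained and would work without any imaginary-geometry input.

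One point that deserves tightening is the positivity step. Your Girsanov comparison is carried out on a random interval $[\epsilon,\sigma_\delta\wedge T]$ where $\sigma_\delta$ is the first time $X^L$ drops to $\delta$, and you need the event $\{X^R\text{ hits }0\text{ before }\sigma_\delta\}$ to have positive probability under the single-force-point reference measure. This is true (for $\kappa\le 4$ one can check that under the reference law $X^L$ is dominated below by $\sqrt\kappa$ times a Bessel process of dimension $1+4/\kappa\ge 2$ plus a nonnegative drift, so $X^L$ does not reach $0$ and hence $\sigma_\delta\to\infty$ as $\delta\downarrow 0$), but as written your sketch stops just short of this verification. The alternative appeal to \cite[Lemma~15]{DUB_DUAL} is also not quite on point, since that lemma concerns a single force point rather than two. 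None of this is a real obstruction; the argument goes through once the comparison is stated carefully.
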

\begin{proof}
It is obvious that $|\eta([0,t]) \cap \R_-| = \infty$ for all $t > 0$ almost surely when $\rho^R = 0$ because in this case $W_t - V_t^L$ evolves as a positive multiple of a boundary intersecting Bessel process (Section~\ref{sec::sle}).  This remains true for $\rho^R \in (-2,\tfrac{\kappa}{2}-2)$ because we can couple $\eta$ with a GFF $h$ so that $\eta$ is the flow line of $h$.  As in Section~\ref{subsec::two_boundary_force_points}, we can condition on a flow line $\eta_\theta$ of $h$ with $\theta$ chosen so that $\rho^R=-\theta \chi/\lambda -2$. Then the law of $\eta$ conditional on $\eta_\theta$ is an $\SLE_\kappa(\rho^L;\rho^R)$ process, which proves our claim.  Reversing the roles of $\rho^L$ and $\rho^R$ gives that $|\eta([0,t]) \cap \R_+| = \infty$ for all $t > 0$ almost surely, as well.
\end{proof}

We can now proceed to the proof of Proposition~\ref{prop::fan_does_not_hit}.  The idea is to construct a ``shield'' consisting of a finite number of flow lines at $w_0$, the image of $\eta'(\tau')$ under the conformal map of the unbounded connected component of $\strip \setminus \eta'([0,\tau'])$ back to $\strip$ which fixes $\pm \infty$ and $0$.  This is described in Figure~\ref{fig::fan_bd}.

\begin{proof}[Proof of Proposition~\ref{prop::fan_does_not_hit}]
Fix any stopping time $\tau'$ for $\eta'$ such that $\eta'(\tau') \neq 0$ almost surely.  Let $\psi$ be the conformal map which takes the unbounded connected component of $\strip \setminus \eta'([0,\tau'])$ back to $\strip$ and fixes $\pm \infty$ and $0$.  Let $w_0 = \psi(\eta'(\tau'))$ and note that $w_0 \in \striptop$.  Then the boundary data for the GFF $\wt{h} = h \circ \psi^{-1} - \chi \arg (\psi^{-1})'$ is depicted in the right panel of Figure~\ref{fig::fan_counterflow}.  Fix $r > 0$ such that $B(w_0,r) \cap \partial \strip$ is contained in the image of the outer boundary of $\eta'([0,\tau'])$ under $\psi$ in $\striptop$.  By Proposition~\ref{prop::gff_abs_continuity}, $\wt{h}|_{B(w_0,r)}$ is mutually absolutely continuous with respect to a GFF on $\strip$ whose boundary data is constant $-\lambda'$ on the part of $\striptop$ which lies to the left of $w_0$ and constant $\lambda'$ on the part of $\striptop$ which lies to the right of $w_0$.  Therefore there exists $n = n(\kappa) \in \N$ and angles $\theta_1,\ldots,\theta_n$ such that with $\wt{\eta}_{\theta_i}^{w_0}$ the flow line of $\wt{h}$ starting from $w_0$ with angle $\theta_i$, we have that $\wt{\eta}_{\theta_i}^{w_0}$ almost surely intersects both $\wt{\eta}_{\theta_{i-1}}^{w_0}$ and $\wt{\eta}_{\theta_{i+1}}^{w_0}$ at infinitely many points for each $i$.  For each $i$, let $\wt{\tau}_i$ be the first time $t$ that $\wt{\eta}_{\theta_i}^{w_0}$ first exits $B(w_0,r)$.  Note that $\cup_{i=1}^n \wt{\eta}_{\theta_i}^{w_0}([0,\wt{\tau}_i])$ is a local set for $\wt{h}$ by Proposition~\ref{gff::prop::cond_union_local} since each $\wt{\eta}_{\theta_i}^{w_0}([0,\wt{\tau}_i])$ is local and almost surely determined by $\wt{h}$ (Theorem~\ref{thm::coupling_uniqueness}).

Let $U_1$ be the union of the set of connected components of $U_0 = (B(w_0,r) \cap \strip) \setminus \cup_{i=1}^n \wt{\eta}_{\theta_i}^{w_0}([0,\tau_i])$ whose boundary intersects $\partial B(w_0,r)$.  Inductively let $U_k$ for $k \geq 2$ be the union of those connected components of $U_0$ whose boundary intersects the boundary of $U_{k-1}$.  Finally, let $U = \cup_{i=1}^n U_i$.  Lemma~\ref{lem::hit_boundary_infinitely} implies that $\ol{U}$ does not contain $w_0$.

We next claim that, almost surely, $\wt{\eta}_\theta = \psi(\eta_\theta)$ for each $\theta \in [-\tfrac{\pi}{2},\tfrac{\pi}{2}]$ cannot traverse $\ol{U}$ and, therefore, cannot hit $w_0$.  Once we have established this, the proof of the proposition will be complete.  Fix $\theta \in [-\tfrac{\pi}{2},\tfrac{\pi}{2}]$.  Note that $\wt{\eta}_\theta$ can hit only one side of each $\wt{\eta}_{\theta_i}([0,\wt{\tau}_i])$ and, upon hitting $\wt{\eta}_{\theta_i}^{w_0}([0,\wt{\tau}_i])$ will cross but cannot cross back (see Figure~\ref{fig::fan_bd} and Proposition~\ref{prop::merging_and_crossing}).  Since $\wt{\eta}_\theta$ must cross one of the $\wt{\eta}_{\theta_i}^{w_0}([0,\wt{\tau}_i])$ when it passes from $U_k$ to $U_{k+1}$, it follows that $\wt{\eta}_\theta$ cannot enter $U_{n+1}$ and therefore cannot traverse $\ol{U}$.
\end{proof}

\bibliographystyle{hmralphaabbrv}
\addcontentsline{toc}{section}{References}
\bibliography{sle_kappa_rho}

\bigskip

\filbreak
\begingroup
\small
\parindent=0pt

\bigskip
\vtop{
\hsize=5.3in
Microsoft Research\\
One Microsoft Way\\
Redmond, WA, USA }

\bigskip
\vtop{
\hsize=5.3in
Department of Mathematics\\
Massachusetts Institute of Technology\\
Cambridge, MA, USA } \endgroup \filbreak \end{document}